\title{Twisted Teichmüller Curves}
\author{Christian Weiß\thanks{The author is partially supported by the ERC-StG 257137.}}
\newtheorem{thm}{Theorem}[section]
\newtheorem*{thm3}{Theorem~\ref{thm_Hilbert_modular_moduli}}
\newtheorem{defi}[thm]{Definition}
\newtheorem{rem}[thm]{Remark}
\newtheorem{lem}[thm]{Lemma}
\newtheorem{prop}[thm]{Proposition}
\newtheorem{cor}[thm]{Corollary}
\newtheorem{exa}[thm]{Examples}
\newcommand{\SL}{{\rm{SL}}}
\newcommand{\Sp}{{\rm{Sp}}}
\newcommand{\SO}{{\rm{SO}}}
\newcommand{\Id}{{\rm{Id}}}
\newcommand{\PSL}{{\rm{PSL}}}
\newcommand{\GL}{{\rm{GL}}}
\newcommand{\PGL}{{\rm{PGL}}}
\newcommand{\Stab}{{\rm{Stab}}}
\newcommand{\St}{{\rm{St}}}
\newcommand{\Comm}{{\rm{Comm}}}
\newcommand{\Mat}{{\rm{Mat}}}
\newcommand{\Mod}{{\rm{Mod}}}
\newcommand{\Prym}{{\rm{Prym}}}
\newcommand{\Diff}{{\rm{Diff}}}
\newcommand{\Jac}{{\rm{Jac}}}
\newcommand{\Aut}{{\rm{Aut}}}
\newcommand{\inter}{{\rm{int}}}
\newcommand{\Imag}{{\rm{Im}}}
\newcommand{\Real}{{\rm{Re}}}
\newcommand{\End}{{\rm{End}}}
\newcommand{\tr}{{\rm{tr}}}
\newcommand{\diag}{{\rm{diag}}}
\newcommand{\N}{{\mathcal{N}}}
\newcommand{\Aff}{{\rm{Aff}}}
\newcommand{\HH}{{\mathbb{H}}}
\newcommand{\CC}{{\mathbb{C}}}
\newcommand{\RR}{{\mathbb{R}}}
\newcommand{\ZZ}{{\mathbb{Z}}}
\newcommand{\NN}{{\mathbb{N}}}
\newcommand{\QQ}{{\mathbb{Q}}}
\newcommand{\Mg}{{\mathcal{M}_g}}
\newcommand{\OD}{\mathcal{O}_D}
\begin{document}




\newpage

\maketitle

\begin{abstract}
  Let $X_D$ denote the Hilbert modular surface $\HH \times \HH^- / \SL_2(\OD)$. In \cite{HZ76}, F. Hirzebruch     and D. Zagier introduced Hirzebruch-Zagier cycles, that could also be called twisted            diagonals. These are maps $\HH \to \HH  \times \HH^-$ given by $z \mapsto       (Mz,-M^\sigma z)$ where $M \in \GL_2^+(K)$ and $\sigma$ denotes the Galois conjugate. The       projection of a twisted diagonal to $X_D$ yields a Kobayashi curve, i.e. an algebraic curve     which is a geodesic for the Kobayashi metric on $X_D$. Properties of Hirzebruch-Zagier cycles   have been abundantly studied in the literature.\\Teichmüller curves are algebraic curves in the moduli space of Riemann surfaces $\mathcal{M}_g$, which are geodesic for the Kobayashi metric. Some Teichmüller curves in $\mathcal{M}_2$, namely the primitive ones, can also be regarded as Kobayashi curves on $X_D$. This implies that in the universal cover the curve is of the form $z \mapsto (z,\varphi(z))$ for some holomorphic map          $\varphi$. A possibility to construct even more Kobayashi curves on $X_D$ is to consider the    projection of $(Mz,M^\sigma\varphi(z))$ to $X_D$ where again $M \in \GL_2^+(K)$. These new      objects are called twisted Teichmüller curves  because their construction reminds very much     of twisted diagonals. In these notes we analyze  twisted Teichmüller curves in detail and       describe some of their main properties. In particular, we calculate their volume and            partially classify components.
\end{abstract}

\tableofcontents

\newpage




\section{Introduction} \label{chapter_introduction}

Let $X_D$ denote the Hilbert modular surface $\HH \times \HH^- /\SL_2(\mathcal{O}_D)$, where $\HH$ is the complex upper half plane, $\HH^-$ is the complex lower half plane, $\mathcal{O}_D$ is a real quadratic order of discriminant $D$ and $\SL_2(\OD)$ acts on $\HH \times \HH^-$ by the M\"obius transformations given by the two embeddings $\SL_2(\OD) \to \SL_2(\RR)$. The  simplest algebraic curve in $X_D$ is the diagonal, i.e. the image of the composition of the map $z \mapsto (z,-z)$ with the quotient map $\pi: \HH \times \HH^- \to X_D$. In their paper $\cite{HZ76}$, F. Hirzebruch and D. Zagier implicitly introduced \textbf{twisted diagonals}, i.e. the algebraic curves $z \mapsto (Mz,-M^\sigma z)$ for any matrix $M \in \GL_2^+(K)$ where $M^\sigma$ is the Galois conjugate of $M$. Their $\pi$-images are known as modular curves or Hirzebruch-Zagier cycles. These curves have been extensively treated in the literature due to their importance for the geometry and arithmetic of Hilbert modular surfaces (see e.g \cite{vdG88}). In other words, a twisted diagonal is in $\HH \times \HH^-$ given by an equation 
$$\begin{pmatrix} z_2 & 1 \end{pmatrix} \begin{pmatrix} a \sqrt{D} & \lambda  \\ -\lambda^\sigma & b \sqrt{D} \end{pmatrix} \begin{pmatrix} z_1 \\ 1 \end{pmatrix} = 0 $$ 
with $a,b \in \QQ$ and $\lambda \in K$. If $a,b \in \ZZ$ and $\lambda \in \mathcal{O}_D$ such a matrix is called \textbf{integral skew-hermitian}. The (finite) number of integral skew-hermitian matrices which have the same determinant but yield different curves in $X_D$ was calculated by H.-G. Franke and W. Hausmann in \cite{Fra78} and \cite{Hau80}.\footnote{This theorem is one of the main reasons why it makes sense to define the well-known curves $F_N$ and $T_N$, which are in many cases unions of certain twisted diagonals (see e.g. \cite{HV74} or \cite{McM07}). Moreover H.-G. Franke and W. Hausmann gave explicit formulas for the volume of the curves $F_N$ and $T_N$ (\cite{Hau80}, Satz~3.10 and Korollar~3.11). See Section~\ref{sec_special_algebraic_curves}.}  \index{twisted diagonal} It is well-known how to calculate the volume of twisted diagonals. People have also been interested in calculating the intersection numbers of these curves (see \cite{HZ76}). \\[11pt] In the preceding cases, both components of the universal covering are given by Möbius transformations. A curve $C \to X_D$ is still \textit{rather special} if only (at least) one of the components of the universal covering map $\HH \to \HH \times \HH^-$ is a Möbius transformation. It is folklore that it is equivalent to ask that $C \to X_D$ is totally geodesic for the Kobayashi metric (see e.g. \cite{MV10}) and that is the reason why these curves are called \textbf{Kobayashi geodesics}.\footnote{The Kobayashi metric $d$ is the largest pseudo-metric on $X_D$ such that for all holomorphic maps  $f: \mathbb{D} \to X_D$ we have: $d(f(x),f(y)) \leq \rho(x,y)$ where $\rho$ is the Poincaré metric on the unit disk $\mathbb{D}$.} An algebraic curve that is a Kobayashi geodesic is called a \textbf{Kobayashi curve}. Very few examples of Kobayashi curves other than twisted diagonals are known so far.

\paragraph{Teichmüller curves in genus~$2$.} Some of these rare examples are certain Teichmüller curves. Let $\mathcal{M}_g$ denote the moduli space of Riemann surfaces of genus $g$. A \textbf{Teichmüller curve} $C$ is an algebraic curve in $\mathcal{M}_g$ that is totally geodesic with respect to the Teichmüller (or equivalently Kobayashi) metric.  At first sight Teichmüller curves might not seem to have anything to do with Hilbert modular surfaces. By the work in \cite{McM03} (and more generally \cite{Möl06}) both worlds are however closely connected and Teichmüller curves yield indeed Kobayashi curves on $X_D$.  Let us describe Teichmüller curves in a different and more intuitive way. There is a natural bundle $\Omega\Mg$ over $\Mg$: an element in $\Omega\Mg$ is specified by a pair $(X,\omega)$ where $X \in \Mg$ and where $\omega \in \Omega(X)$ is a nonzero, holomorphic 1-form on $X$. The group $\SL_2(\RR)$ acts on  $\Omega\Mg$ in the following way: a matrix $A \in \SL_2(\RR)$ maps the differential form $\omega$ to the differential form $\eta$ where
$$\eta = \begin{pmatrix} 1 & i \end{pmatrix} \begin{pmatrix} a & b \\ c & d \end{pmatrix} \begin{pmatrix} \textrm{Re}\omega \\ \textrm{Im}\omega \end{pmatrix}.$$ 
There is a unique complex structure on $X$ as topological surface such that $\eta$ is holomorphic. We denote the corresponding Riemann surface by $Y$ and define $A(X,\omega):= (Y,\eta)$. It is well-known (see e.g. \cite{Möl11a}) that all Teichmüller curves arise as the projection of an $\SL_2(\RR)$-orbit of a pair $(X,\omega)$ to $\mathcal{M}_g$.\footnote{This is not completely correct: A Teichmüller curve might also be the projection of a $\SL_2(\RR)$-orbit of a half-translation surface $(X,q)$ where $q$ is a quadratic differential. We may however restrict to the case $(X,\omega)$ by the so-called double covering construction.}  The action of $\SO_2(\RR) \subset \SL_2(\RR)$ only rotates the differential form, but does not change the Riemann surface structure. Hence we get the following commutative diagram:
$$
\begin{xy}
 \xymatrix{ \SL_2(\RR) \ar[r] \ar[d] & \Omega\Mg \ar[d]_{\pi} \\ \SO_2(\RR) \backslash \SL_2(\RR) \approx \HH \ar[r]^{\ \ \ \ \ \ \ \ f} & \Mg\\
 	}
\end{xy}
$$
On the other hand, the projection of the $\SL_2(\RR)$-orbit of a pair $(X,\omega)$ to $\Mg$ yields a Teichmüller curve if and only if the stabilizer of the function $f$ under the $\SL_2(\RR)$-action is a lattice (as Fuchsian group). In this case we say that $(X,\omega)$ \textbf{generates} the Teichmüller curve. The stabilizer of the action is called the \textbf{Veech group} of $(X,\omega)$. \\[11pt]
C. McMullen and K. Calta independently constructed a series of Teichmüller curves in $\mathcal{M}_2$ generated by certain L-shaped polygons $P(a,b)$ with opposite sides glued (see Figure~1.1). The holomorphic $1$-form $dz$ on $\CC$ is translation-invariant and hence descends to a natural $1$-form $\omega$ on the glued surface. They proved in \cite{McM03} and \cite{Cal04} that $P(a,a)$ generates a Teichmüller curve in $\mathcal{M}_2$ if $a = (1 + \sqrt{D})/2$ and $D \equiv 1 \mod 4$ and that $P(a,1+a)$ generates a Teichmüller curve if $a = \sqrt{D}/2$ and $D \equiv 0 \mod 4$ (Theorem~\ref{thm_L_shaped}). The number $D$ is called the \textbf{discriminant} of the Teichmüller curve and the mentioned generating surface is denoted by $L_D^1$. The Veech group of such a $L_D^1$ is contained in $\SL_2(\OD)$ (Proposition~\ref{prop_fix_veech_1}, Proposition~\ref{prop_fix_veech_3}) and will be denoted by $\SL(L_D^1)$. 
\begin{center}
\psset{xunit=0.8cm,yunit=0.8cm,runit=1cm}
\begin{pspicture}(-0.5,-0.5)(4.5,3.5) 


\psline[linewidth=0.5pt,showpoints=false]{-}(0,0)(4,0)
\psline[linewidth=0.5pt,showpoints=false]{-}(4,0)(4,3)
\psline[linewidth=0.5pt,showpoints=false]{-}(4,3)(3,3)
\psline[linewidth=0.5pt,showpoints=false]{-}(3,3)(3,1)
\psline[linewidth=0.5pt,showpoints=false]{-}(3,1)(0,1)
\psline[linewidth=0.5pt,showpoints=false]{-}(0,1)(0,0)

\uput[l](0,0.5){1}
\uput[d](2,0){b}
\uput[r](4.0,1.5){a}
\uput[u](3.5,3){1}
\end{pspicture}
\\ Figure~1.1 An L-shaped polygon of the form $P(a,b)$.
\end{center}

\newtheorem*{thme1}{Theorem~\ref{thm_L_shaped}}

We denote these Teichmüller curves by $C_{L,D}^1$ if we want to stress their origin. These are almost all primitive Teichmüller curves in $\mathcal{M}_2$:\footnote{A Teichmüller curve in $\mathcal{M}_g$ is called primitive if it does not arise by a covering construction from a Teichmüller curve in lower genus.} by a theorem of C. McMullen if $D>9$ there is a second primitive Teichmüller curve of discriminant $D$ if and only if $D \equiv 1 \mod 8$ (Theorem~\ref{thm_classification_Teichmüller_genus_2}).\footnote{If $D=5$ then there is also a second primitive Teichmüller curve given by the regular decagon (see \cite{McM06b}).	} The corresponding Veech group is also contained in $\SL_2(\OD)$ (Proposition~\ref{prop_fix_veech_2}) and will be denoted by $\SL(L_D^0)$. The Teichmüller curves are denote by $C_{L,D}^0$ The two Teichmüller curves of the same discriminant are distinguished by their \textbf{spin invariant} (even or odd). Whenever we do not care which of the two Teichmüller curves we actually consider, we just write $\SL(L_D)$ for the Veech group and $C_{L,D}$ for the Teichmüller curve. It is known that such a Teichmüller curve with discriminant $D$ lies on the Hilbert modular surface $X_D$. The reason is shortly speaking the following: by mapping each point $X$ of the Teichmüller curve to its Jacobian $\Jac(X)$ one gets an embedding of the Teichmüller curve into the space of principally polarized Abelian surfaces $\mathcal{A}_2$. C. McMullen proved that all these Jacobians have real multiplication by $\mathcal{O}_D$.\footnote{As the Hilbert modular surface $X_D$ parametrizes all principally polarized Abelian surfaces with real multiplication by $\mathcal{O}_D$ (Theorem~\ref{thm_Hilbert_modular_moduli}) one gets indeed an embedding of the Teichmüller curve into $X_D$.}

\newtheorem*{thme2}{Theorem~\ref{thm_mcm_embedding}}

\begin{thme2} \textbf{(McMullen, \cite{McM03})} Let $f: C_{L,D}^\epsilon \to \mathcal{M}_2$ be one of the Teichmüller curves of discriminant $D$. Then $C_{L,D}^\epsilon$ gives rise to a Kobayashi curve in $X_D$. More precisely we have the following commutative diagram
 $$
\begin{xy}
 \xymatrix{
 	\mathbb{H} \ar@{^(->}[rr]^{\Phi(z)=(z,\varphi(z))} \ar[d]^{/\SL(L_D)}& &	\mathbb{H} \times \mathbb{H}^- \ar[d]^{/\SL_2(\mathcal{O}_D)}	\ar[d] \\ 
 	C_{L,D}^\epsilon \ar@{^(->}[rr] \ar[d]_{f} & & X_D \ar[d]  \\
 	\mathcal{M}_2 \ar@{^(->}[rr]^{\Jac} & & \mathcal{A}_2
 	}
\end{xy}
$$
where $\SL(L_D)=\Stab_{\SL_2(\RR)}(\Phi) \cap \SL_2(\OD)$, the stabilizer of the \textbf{graph of the Teichmüller curve} $(z,\varphi(z))$ in $\SL_2(\OD)$, is the Veech group. Moreover $\varphi$ is holomorphic but not a Möbius transformation. \end{thme2}

\paragraph{Twisted Teichmüller curves.} We have just seen that Teichmüller curves yield some of the very few known examples of Kobayashi curves on $X_D$ that are not twisted diagonals. Using Teichmüller curves another new class of examples of Kobayashi curves can be constructed: these objects remind one very much of twisted diagonals and are therefore called twisted Teichmüller curves. Similar as in \cite{HZ76} we twist Teichmüller curves by a matrix $M \in \GL_2^+(K)$ where $K = \QQ(\sqrt{D})$. These curves are the main objects of these notes. Consider the following diagram
$$
\begin{xy}
 \xymatrix{ \mathbb{H} \ar[rr]^{(Mz,M^\sigma\varphi(z))} \ar[d]^{/\SL_M(L_D)} & & \mathbb{H} \times \mathbb{H}^-              \ar[d]^{/\SL_2(\OD)} \\
             C_M \ar[rr]& & X_D}
\end{xy}
$$
where $\SL_M(L_D)$ is the stabilizer of the graph of the twisted Teichmüller curve $\Phi_M=(Mz,M^\sigma\varphi(z))$ inside $\SL_2(\OD)$. We call such a curve a \textbf{twisted Teichmüller curve}. Note that one may by multiplying all the entries of $M$ with all of the denominators always assume that every entry of $M$ is in $\OD$. It is a first main observation that twisted Teichmüller curves are indeed Kobayashi curves.\footnote{More generally it is, of course, true that all twists of Kobayashi curves are again Kobayashi curves, but we are only concerned about twisted Teichmüller curves in these notes.}

\newtheorem*{thme3}{Proposition~\ref{thm_twisted_finite_volume}}

\begin{thme3} All twisted Teichmüller curves are Kobayashi curves. \end{thme3}

 One might then be interested in the same questions as those that have been answered for twisted diagonals.
\begin{itemize}
\item[(1)] What is the volume of a twisted Teichmüller curve? 
\item[(2)] When do two matrices $M,N \in \GL_2^+(K)$ yield different twisted Teichmüller curves? 
\item[(3)] Are all Kobayashi curves in $X_D$ given by twisted diagonals and twisted Teichmüller curves?
\end{itemize}
These three questions are answered with some restrictions imposed by a simplifying assumption on the class number $h_D$ and some congruence conditions on $D$ in these notes. Moreover one might ask:
\begin{itemize}
\item[(4)] What are the intersection numbers of twisted Teichmüller curves? 
\end{itemize}
This question remains unanswered and is left open as a problem for future research.\\[11pt]When one wants to do explicit calculations for a twisted Teichmüller curve (e.g. of the volume), there arises a major problem: in general, it is very hard to calculate $\SL_M(L_D)$. There are three main reasons for this phenomenon:
\begin{itemize}
\item A theorem of E. Gutkin and C. Judge in \cite{GJ00} implies that the Veech groups $\SL(L_D)$ are all non-arithmetic Fuchsian groups (Theorem~\ref{thm_Gutkin_Judge2}). In particular, this makes it hard to decide whether a matrix in $\SL_2(\OD)$ lies in the Veech group or not. 
\item Furthermore, it is still unknown how to calculate the Veech group for a given flat surface $(X,\omega)$. Although this problem is solved in some special cases (see e.g. \cite{Sch05} and \cite{McM03}), in our case the Veech group can for large $D$ only  be calculated partially.\footnote{There is a recent preprint \cite{Muk12} of R. Mukamel where he claims to give an algorithm to find the Veech group of any Veech surface.}
\item The Taylor expansion of $\varphi$ is known by the theorem of M. Möller and D. Zagier in \cite{MZ11}. Even with this knowledge, it is not easy to decide whether there exist elements of $\SL_2(\RR)$ which are not in the Veech group but lie in $\Stab_{\SL_2(\RR)^2}(\Phi)$, the stabilizer of the graph of the Teichmüller curve, or even if there exist any such element in $\SL_2(K) \smallsetminus \SL_2(\OD)$.
\end{itemize} 
\paragraph{Statement of main results.} If the class number $h_D=1$ then the twist-matrix $M$ can be normalized such that $$M=\begin{pmatrix} m & x \\ 0 & n \end{pmatrix}$$
with $m,n,x \in \OD$ and $(m,n,x)=1$ (Proposition~\ref{prop_matrix_decomposition}). This rests on the fact that the number of cusps of a Hilbert modular surface is equal to the class number of the quadratic order (see e.g. \cite{vdG88}, Proposition~1.1). For $M \in \GL_2^+(K)$ let $$X_D(M):=\HH \times \HH^-/(\SL_2(\OD) \cap M^{-1} \SL_2(\OD) M),$$
 denote a (finite index) cover of $X_D$. If $D \equiv 5 \mod 8$ is fundamental discriminant, then we are able to calculate the Euler characteristic $\chi(C_M)$ of the twisted Teichmüller curve and thus its volume $\mu(C_M)$ since $\mu(C_M) = 2\pi \chi(C_M)$. Our results are cleanest for the case where $D \equiv 5 \mod 8$ and $h_D^+=1$ and therefore we will be mainly concerned about this case now. Nevertheless, some results can be stated (sometimes partially) in a more general setting and so we will later also explain other cases when we describe the results in more detail.

\newtheorem*{thme8}{Theorem~\ref{thm_summarize_euler_II}}

\begin{thme8} Let $D \equiv 5 \mod 8$ be a fundamental discriminant with $h_D^+=1$ and let $m,n,x \in \OD$ be arbitrary elements with $(m,n,x)=1$. Then 
$$ \chi(C_M) = \deg(X_D(M) \to X_D) \cdot \chi(C_{L,D}).$$
\end{thme8}

This result also implies that the Teichmüller curve has minimal volume among all twisted Teichmüller curves.\\[11pt] Surprisingly enough, the surrounding arithmetic of $\SL_2(\OD)$ determines the arithmetic of the twisted Teichmüller curves to a very high degree. It is well-known how to calculate the degree of the covering $X_D(M) \to X_D$ (compare Proposition~\ref{prop_index_congruence}). For instance, if $M= \left( \begin{smallmatrix} \pi & 0\\ 0 & 1 \end{smallmatrix} \right)$ with some prime element $\pi \in \OD$ then the degree of the covering is equal to $|\N(\pi)|+1$. Another interpretation of Theorem~\ref{thm_summarize_euler_II} is that the Veech groups of Teichmüller curves are the opposite of being arithmetic, since it implies in particular that all Hecke congruence subgroups of $\SL(L_D)$ have maximal possible index in $\SL(L_D)$. \\[11pt]
Let us make a short comment on the proof of Theorem~\ref{thm_summarize_euler_II}. The Euler characteristic is calculated in two steps: at first we show that the Euler characteristic of $C_M$ does not change by lifting $C_M$ to a curve $C_M(M)$ on $X_D(M)$. This curve $C_M(M)$ has the same volume as the curve $C^M(M)$ lying over $C_{L,D}$. As second step of the proof we then show that the covering $C^M(M) \to C_{L,D}$ has the maximal possible degree. The main idea of this part of the proof is to calculate indexes of Hecke congruence subgroups in $\SL(L_D)$. The coset representatives are given by words in known parabolic elements of the Veech group. A similar idea is used by G. Weitze-Schmithüsen in \cite{Wei12} to calculate the congruence level of the Veech groups of L-shaped square-tiled surfaces. She shows that these Veech groups are "as far as possible" from being a congruence subgroups.\\[11pt] The knowledge about the volume of twisted Teichmüller curves enables us to classify twisted Teichmüller curves at least partially under some additional assumptions. 
\newtheorem*{thme9}{Theorem~\ref{thm_classificiation1}}
\begin{thme9} Suppose $D \equiv 5 \mod 8$ is a fundamental discriminant with narrow class number $h_D^+=1$. If $M= \left( \begin{smallmatrix} m & x \\ 0 & n \end{smallmatrix} \right)$ and $N = \left( \begin{smallmatrix} a & b \\ 0 & c \end{smallmatrix} \right)$ with $a,b,c,m,n,x \in \OD$ and $(a,b,c)=1$ and $(m,n,x)=1$ define the same twisted Teichmüller curve then $$\det(M) = \det(N).$$
\end{thme9}
Indeed, equality of the determinant is also a sufficient criterion for the twisted Teichmüller curves to coincide whenever the determinant is prime in $\OD$. 
\newtheorem*{thmeprime}{Theorem~\ref{thm_prime_classification}}
\begin{thmeprime} Suppose $D \equiv 5 \mod 8$ is a fundamental discriminant with narrow class number $h_D^+=1$ and let $\pi \in \OD$ be a prime element. Then there is exactly one twisted Teichmüller curve of determinant $\pi$, i.e. all matrices in $\GL_2^+(K) \cap \Mat^{2x2}(\OD)$ of determinant $\pi$ with relative prime entries define the same twisted Teichmüller curve. \end{thmeprime}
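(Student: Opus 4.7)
The plan is to normalize a matrix of determinant $\pi$ to a canonical form using Proposition~\ref{prop_matrix_decomposition} together with the available action of $\SL_2(\OD)$ on the left and $\SL(L_D)$ on the right, and then to identify the few remaining canonical forms via the volume formula of Theorem~\ref{thm_summarize_euler_II} combined with the determinant invariant of Theorem~\ref{thm_classificiation1}. By Proposition~\ref{prop_matrix_decomposition}, any such $M$ is $\SL_2(\OD)$-equivalent to $\left(\begin{smallmatrix} m & x \\ 0 & n \end{smallmatrix}\right)$ with $mn=\pi$ and $(m,n,x)=1$. Since $\pi$ is a prime element of $\OD$, the factorization $mn=\pi$ forces, up to units, $(m,n)=(1,\pi)$ or $(m,n)=(\pi,1)$; rescaling $M$ by a unit in $\OD$ does not change the twisted Teichm\"uller curve (the scalar vanishes inside each M\"obius transformation), so one may take the diagonal to be exactly $(1,\pi)$ or $(\pi,1)$.

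Next I would reduce the off-diagonal entry $x$. For $M = \left(\begin{smallmatrix} \pi & x \\ 0 & 1 \end{smallmatrix}\right)$, left multiplication by $\left(\begin{smallmatrix} 1 & -x \\ 0 & 1 \end{smallmatrix}\right) \in \SL_2(\OD)$ clears $x$ completely, leaving the canonical form $M_1=\left(\begin{smallmatrix} \pi & 0 \\ 0 & 1 \end{smallmatrix}\right)$. For $M = \left(\begin{smallmatrix} 1 & x \\ 0 & \pi \end{smallmatrix}\right)$, left multiplication in $\SL_2(\OD)$ only changes $x$ by multiples of $\pi$, so one must combine it with right multiplication by parabolic generators of $\SL(L_D)$ whose translation lengths appear explicitly in the coset analysis behind Theorem~\ref{thm_summarize_euler_II}. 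Because $\pi$ is prime, the residue ring $\OD/\pi\OD$ is a field, so these parabolic translations act transitively on $\OD/\pi\OD$; thus $x$ can be reduced to $0$, yielding the second canonical form $M_2=\left(\begin{smallmatrix} 1 & 0 \\ 0 & \pi \end{smallmatrix}\right)$.

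To merge $M_1$ and $M_2$, I would use the Euler-characteristic count. By Theorem~\ref{thm_summarize_euler_II} both curves $C_{M_1}$ and $C_{M_2}$ have Euler characteristic $(|\N(\pi)|+1)\,\chi(C_{L,D})$, and by Theorem~\ref{thm_classificiation1} there is no twisted Teichm\"uller curve of determinant $\pi$ distinct from $C_{M_1}$ and $C_{M_2}$. Pulling $C_{L,D}$ back under the degree-$(|\N(\pi)|+1)$ cover $X_D(M_1) \to X_D$ (Proposition~\ref{prop_index_congruence}) yields a preimage whose total Euler characteristic equals $(|\N(\pi)|+1)\chi(C_{L,D})$, matching exactly one $C_{M_i}$; this forces the preimage to be connected, so $M_1$ and $M_2$ project to the same twisted Teichm\"uller curve.

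The main obstacle is the transitivity claim in the second step: showing that the explicit parabolic Veech elements of $\SL(L_D)$ generate enough translations modulo $\pi$ to reduce every residue of $x$ to zero. This requires finer bookkeeping than Theorem~\ref{thm_summarize_euler_II}, tracking not only the indices of Hecke congruence subgroups inside $\SL(L_D)$ but also an explicit set of coset representatives. The primality of $\pi$, which makes $\OD/\pi\OD$ a field in which every nonzero element is invertible, is precisely what promotes a single parabolic translation into a transitive action on $\OD/\pi\OD$ and thereby collapses all twist matrices of determinant $\pi$ into a single twisted Teichm\"uller curve.
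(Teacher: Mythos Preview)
Your outline has the right skeleton---normalize to upper triangular, land on $M_1=\left(\begin{smallmatrix}\pi&0\\0&1\end{smallmatrix}\right)$ or $N_x=\left(\begin{smallmatrix}1&x\\0&\pi\end{smallmatrix}\right)$, then collapse everything---but two of the steps do not go through as written.

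\textbf{Reducing $x$.} Right multiplication by $T^l$ sends $N_x$ to $N_{x+lw}$, so together with left multiplication in $\SL_2(\OD)$ you can shift $x$ by anything in $\pi\OD + w\ZZ$. When $\pi$ is inert, $\OD/\pi\OD$ has $p^2$ elements while the image of $w\ZZ$ has only $p$, so a single parabolic does \emph{not} act transitively; the same failure occurs when $\pi\mid w$ (then $w\ZZ$ maps to $0$). Your closing claim that primality ``promotes a single parabolic translation into a transitive action on $\OD/\pi\OD$'' is therefore false in exactly these cases. The paper does not rely on parabolics alone: for inert $\pi$ it uses words $T^{k}ST^{l}$ (exploiting $S\in\SL(L_D)$), and for $\pi\mid w$ it brings in the second parabolic $L$ of Lemma~\ref{lem_second_parabolic}. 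In each case one exhibits an explicit $W\in\SL(L_D)$ with $N_xWM_1^{-1}\in\SL_2(\OD)$, identifying $C_{N_x}$ with $C_{M_1}$ directly and making a separate merge step unnecessary.

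\textbf{Merging $M_1$ and $M_2$.} Connectedness of the preimage of $C_{L,D}$ in $X_D(M_1)$ does not by itself yield $C_{M_1}=C_{M_2}$; the sentence ``this forces the preimage to be connected, so $M_1$ and $M_2$ project to the same twisted Teichm\"uller curve'' is a non-sequitur. What connectedness gives you is that $\SL(L_D)$ surjects onto $\mathbb{P}^1(\OD/\pi\OD)$; to finish you would have to \emph{use} this by picking $A\in\SL(L_D)$ hitting $(1{:}0)$ and checking $M_1AM_2^{-1}\in\SL_2(\OD)$, exactly as in the proof of Lemma~\ref{lem_facilitate}. But there is a one-line shortcut the paper takes instead: since $S\in\SL(L_D)\subset\SL_2(\OD)$ and $SM_1S^{-1}=M_2$, the curves $C_{M_1}$ and $C_{M_2}$ coincide immediately.
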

For arbitrary determinants we are unfortunately not able to prove a corresponding result in full generality. Nevertheless, we have strong numerical evidence that the following conjecture holds. It is based on computer experiments for many different determinants $n \in \OD$ including all types of splitting behavior of the prime divisors of $n$.

\newtheorem*{conje}{Conjecture}

\begin{conje} Suppose $D \equiv 5 \mod 8$ is a fundamental discriminant with narrow class number $h_D^+=1$. All matrices $M \in \GL_2^+(K) \cap \Mat^{2x2}(\OD)$ of determinant $n \in \OD$ with relative prime entries define the same twisted Teichmüller curve, i.e. there is exactly one twisted Teichmüller curve of determinant $n$.\end{conje}

\paragraph{Comparison to twisted diagonals.} Since twisted diagonals are our main motivation to introduce twisted Teichmüller curves, we now want to shortly compare these two classes of objects. In some aspects they behave similar while they differ in others.\\[11pt]
To a certain extent an analogue of the classification theorem by H.-G. Franke and W. Hausmann for twisted diagonals is also true for twisted Teichmüller curves: after normalizing the involved matrices appropriately, there are only finitely many different twisted Teichmüller curves of a given determinant.\\[11pt]  
Moreover, Theorem~\ref{thm_prime_classification} gives that for arbitrary prime elements $\pi \in \OD$ the number of different twisted Teichmüller curves of determinant $\pi$ is always $1$ and thus agrees with the number of different twisted diagonals of determinant $\pi$ (see \cite{Fra78}, Theorem~2.3.2 or \cite{vdG88}, Chapter~V.3.).\\[11pt] 
On the other hand, Theorem~\ref{thm_summarize_euler_II} implies that the volume of twisted Teichmüller curves behaves very differently than the volume of twisted diagonals. For example, for $M=\left( \begin{smallmatrix} \pi & 0 \\ 0 & 1 \end{smallmatrix} \right)$, where $\pi$ is an inert prime number or a divisor of a ramified prime number, the stabilizer of the twisted diagonal is always $\SL_2(\ZZ)$ and hence unlike the corresponding twisted Teichmüller curve does not depend on $\pi$.\\[11pt]
Furthermore, Theorem~\ref{thm_classificiation1} tells us that if two twisted Teichmüller curves agree, then the determinants of the involved matrices have to agree. As we have seen in the preceding example this is not true for twisted diagonals. However, a corresponding result does hold after passing to skew-hermitian matrices.

\paragraph{More Kobayashi curves.} One might wonder if twisted diagonals and twisted Teichmüller curves yield all Kobayashi curves on $X_D$. This is not the case. For each $D$, C. McMullen constructed two different Kobayashi curves on $X_D$ stemming from certain Teichmüller curves in $\mathcal{M}_3$ and $\mathcal{M}_4$ using Prym varieties (see \cite{McM06a}). The generating surfaces are certain $S$-shaped respectively $X$-shaped polygons with opposite sides glued (Theorem~\ref{thm_polygons_determine_teich}). We denote these curves on $X_D$ therefore by $C_{S,D}$ and $C_{X,D}$.\footnote{To be more precise: While $C_{X,D}$ lies on $X_D$ for all $D$, this is not always the case for $C_{S,D}$. The curves $C_{S,D}$ always do allow real multiplication by $\OD$ but are not principally polarized. Therefore, for some $D$ the curve lies $C_{S,D}$ on a different Hilbert modular surface (see Chapter~\ref{chapter_prym_modular}).} 
\paragraph{The Kontsevich-Zorich cocycle.} There is a very interesting cocycle over the Teichmüller flow on a Teichmüller curve, namely the \textbf{Kontsevich-Zorich cocycle} (see \cite{Zor06}). Oseledet's Multiplicative Theorem (Theorem~\ref{thm_Oseledet}) ensures the existence of some invariants of the Kontsevich-Zorich cocycle, its so-called \textbf{Lyapunov exponents}. To each of the Teichmüller curves which appear in these notes one can uniquely associate a pair of Lyapunov-exponents. The greater of these two Lyapunov exponents is always equal to one. The second Lyapunov exponent of each Teichmüller curve $C_{L,D}^\epsilon$ in $\mathcal{M}_2$ has been calculated by M. Bainbridge in \cite{Bai07} to be equal to $\frac{1}{3}$ (Theorem \ref{thm_lyapunov_M2}). Recent results of D. Chen and M. Möller in \cite{CM11} and of A. Eskin, M. Kontsevich and A. Zorich in \cite{EKZ11} enable us to also calculate the second Lyapunov exponent of the curves $C_{S,D}$ and $C_{X,D}$. It is equal to $\frac{1}{5}$ respectively $\frac{1}{7}$ (Corollary~\ref{cor_lyap1/5} and Corollary~\ref{cor_lyap1/7}). On the other hand, it can be  shown that:
\newtheorem*{core2}{Corollary~\ref{cor_twist_lyapunov}}
\begin{core2} Twists do not change the Lyapunov exponents. \end{core2}
The following statement are then immediate consequences.

\newtheorem*{core3}{Corollary~\ref{cor_lyap1/5}}
\newtheorem*{thme10}{Theorem~\ref{thm_notwist_general}}

\begin{thme10} For all discriminants $D$ the genus~$4$ Teichmüller curve $C_{X,D}$ is neither a twist curve of a $C_{L,D}^\epsilon$ nor a twisted diagonal. \end{thme10}

\begin{core3} For all discriminants $D$ the genus~$3$ Teichmüller curve $C_{S,D}$ is neither a twist curve of a $C_{L,D}^\epsilon$ nor a twisted diagonal. \end{core3}
This result gives a hint that we should not expect to have found all Kobayashi curves on $X_D$ so far. However, this is, of course, still an open question. We now describe more detailed how the results are achieved.
\paragraph{The stabilizer of the graph.} When we want to calculate the Euler characteristic of the twisted Teichmüller curve we have to gain more knowledge on the stabilizer $\SL_M(L_D)$. It is immediately clear from the definition of the Veech group that $M \SL(L_D) M^{-1} \cap \SL_2(\OD) \subset \SL_M(L_D)$ holds for all $M \in \GL_2^+(K)$. For most $M$ we can show that even equality holds because the stabilizer of the graph of a Teichmüller curve in $\SL_2(K)$ cannot be (too much) greater than $\SL(L_D)$. In order to state the results precisely we introduce the technical term of \textbf{pseudo parabolic maximal} Fuchsian groups. The group $\SL(L_D)$ is called pseudo parabolic maximal if there does not exist a Fuchsian group $\Gamma$ containing $\SL(L_D)$ of finite index and also containing a parabolic element which is in $\SL_2(K) \smallsetminus \SL_2(\OD)$. Pseudo parabolic maximal is a good property to consider because of the following theorem.	

\newtheorem*{thme4}{Theorem~\ref{thm_parabolic_maximal_implies_stabilizer}}

\begin{thme4} For all fundamental discriminants $D \equiv 1 \mod 4$ where $\SL(L_D^1)$ is pseudo parabolic maximal $\Stab_{\SL_2(\RR)}(\Phi) \cap \SL_2(K) = \SL(L_D^1)$ holds. This implies that $\SL_M(L_D^1) = M \SL(L_D^1) M^{-1} \cap \SL_2(\OD)$. \end{thme4}
The proof of the theorem heavily relies on Margulis' commensurator theorem (Theorem~\ref{thm_Margulis}) and the arithmetic of $\OD$. Indeed, many groups $\SL(L_D)$ are pseudo parabolic maximal.



\newtheorem*{thme6}{Theorem~\ref{thm_pseudo_parabolic}}

\begin{thme6} For all $D \equiv 5 \mod 8$ the group $\SL(L_D)$ is pseudo parabolic maximal.\end{thme6}

For $D=5$ this theorem goes back to a result of A. Leutbecher in \cite{Leu67}.

\begin{conje} All the groups $\SL(L_D)$ are (pseudo) parabolic maximal. \end{conje}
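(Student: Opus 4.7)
The plan is to mimic and extend the strategy behind Theorem~\ref{thm_pseudo_parabolic}, whose engine is Margulis' commensurator theorem. Since $\SL(L_D)$ is a non-arithmetic lattice in $\SL_2(\RR)$ by the Gutkin--Judge theorem (Theorem~\ref{thm_Gutkin_Judge2}), Margulis (Theorem~\ref{thm_Margulis}) guarantees that the commensurator $\Comm_{\SL_2(\RR)}(\SL(L_D))$ is itself a Fuchsian group containing $\SL(L_D)$ with finite index. Any Fuchsian group $\Gamma$ that contains $\SL(L_D)$ with finite index must therefore sit inside this commensurator. Consequently the conjecture reduces to the purely arithmetic statement that
\[
\Comm_{\SL_2(\RR)}(\SL(L_D)) \cap \bigl(\SL_2(K) \smallsetminus \SL_2(\OD)\bigr)
\]
contains no parabolic element.

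To attack this, I would analyze parabolic elements through the cusp structure of $\SL(L_D)$. The cusps of a Teichm\"uller curve correspond bijectively to the $\SL(L_D)$-orbits of cylinder decompositions of $L_D$, and the parabolic generator fixing a given cusp is (up to conjugation) a Dehn multitwist whose translation length is a specific $\OD$-linear combination of cylinder moduli. Concretely, after conjugating a hypothetical ``bad'' parabolic element $\gamma \in \Comm_{\SL_2(\RR)}(\SL(L_D))$ to fix $\infty$, one obtains $\gamma = \bigl(\begin{smallmatrix} 1 & t \\ 0 & 1 \end{smallmatrix}\bigr)$ with $t \in K$, and commensurability forces $t$ to be commensurable with the translation length $t_0 \in \OD$ of the parabolic in $\SL(L_D)$ fixing the same cusp. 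I would then use the integrality of $t_0$ together with the fact that $\gamma^k \in \SL(L_D) \subset \SL_2(\OD)$ for some $k$ to conclude $t \in \OD$, thus forcing $\gamma \in \SL_2(\OD)$.

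The hard part, and the reason the conjecture is only partially resolved, is making this cusp-by-cusp argument work uniformly in $D$. For $D \equiv 5 \mod 8$ the argument presumably succeeds because the narrow class number and the congruence properties of $\OD$ allow one to pin down the relevant cylinder moduli and their rationality behavior exactly (mirroring Leutbecher's analysis in the case $D=5$). In the general case two difficulties arise: first, when $h_D^+ > 1$ the cusps of $X_D$ split further and the cusp widths of $\SL(L_D)$ are no longer controlled by a single $\OD$-parameter, so one must carry out the integrality verification over each ideal class separately; second, when $D$ is not a fundamental discriminant, nonmaximal orders intervene and conductor ideals must be tracked carefully to rule out half-integral translation lengths of the form $t_0/2$ or $t_0/f$ with $f \mid \mathrm{cond}(\OD)$.

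The main obstacle, therefore, is not conceptual but arithmetic: one needs a uniform description of the cusp widths of $\SL(L_D^\epsilon)$ in terms of the data $(D,\epsilon,h_D^+)$, ruling out any hidden ``fractional'' parabolic that could enlarge the group. I would try to obtain this by combining Bainbridge's description of the cusps of $X_D$ with McMullen's cylinder computations for $L_D^\epsilon$, and then invoke Margulis' theorem as above. If such a uniform cusp formula can be extracted, the conjecture should follow; without it, the argument of Theorem~\ref{thm_pseudo_parabolic} must remain restricted to congruence classes where the arithmetic is transparent.
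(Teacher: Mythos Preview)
First, note that this statement is a \emph{Conjecture} in the paper, not a theorem; the paper explicitly says ``Due to a lack of information on the Veech group we are however not able to prove this conjecture in the cases left open in Theorem~\ref{thm_pseudo_parabolic}.'' So there is no proof in the paper to compare against, and the question is whether your proposal could close the gap.

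Your proposal contains a misattribution and a genuine gap. The misattribution: Margulis' theorem is \emph{not} the engine of Theorem~\ref{thm_pseudo_parabolic}. The paper's proof of (pseudo) parabolic maximality in Section~\ref{sec_pseudo} never invokes the commensurator. It uses the root lemma (Lemma~\ref{lem_roots}) to reduce an extra parabolic to an $n$-th root $M_n$ of a parabolic already in $\SL(L_D)$, and then computes explicit products $(M_nT)^k$, $(M_nZ)^k$, $(M_nE)^k$, $(M_nL)^k$ and shows via their leading coefficients (Lemmas~\ref{lem_leading_coefficient_1}, \ref{lem_leading_coefficient_2}) that these cannot all lie in $\SL_2(\OD)$ unless $n$ satisfies sharp divisibility constraints. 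Margulis enters only later (Section~\ref{sec_stab_comm}) for a different purpose.

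The genuine gap is your central step: after conjugating the bad parabolic to $\gamma=\bigl(\begin{smallmatrix}1&t\\0&1\end{smallmatrix}\bigr)$ you propose to ``use the integrality of $t_0$ together with the fact that $\gamma^k\in\SL(L_D)\subset\SL_2(\OD)$ for some $k$ to conclude $t\in\OD$.'' But $\gamma^k\in\SL_2(\OD)$ only yields $kt\in\OD$, which gives no information beyond $t\in K$; deducing $t\in\OD$ from this is exactly the assertion to be proved, so the argument is circular. Moreover, after conjugating you have replaced $\SL(L_D)$ by a conjugate that need not lie in $\SL_2(\OD)$, so the premise ``$t_0\in\OD$'' is not available either. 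Your diagnosis of the obstruction is also off: the paper's difficulty is not the class number or the cusps of $X_D$ (the argument in Section~\ref{sec_pseudo} works for arbitrary $h_D$), but that for $D\not\equiv 5\bmod 8$ the explicitly known parabolics $T,Z,E,L$ are insufficient to exclude second roots at the prime over~$2$. Closing the conjecture would require either more explicit parabolic elements of $\SL(L_D)$ or a genuinely new structural argument, neither of which your proposal supplies.
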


Due to a lack of information on the Veech group we are however not able to prove this conjecture in the cases left open in Theorem~\ref{thm_pseudo_parabolic}. In general, the results become therefore a bit more complicated to state. In this introduction we will stick to the language of class number $1$ although we will work in the general setting later. We call a Fuchsian group $\Gamma \subset \SL_2(\OD)$ \textbf{$n$-pseudo parabolic maximal} (or shortly	 $n$-ppm) for $n \in \OD$ if there does not exist a Fuchsian group $\Gamma'$ containing $\Gamma$ with finite index and containing also a parabolic element in $\SL_2(K) \smallsetminus (\SL_2(K) \cap \Mat^{2x2}(\frac{1}{n}\OD))$. In other words, there might only appear divisors of $n$ as denominators of the entries of elements in $\Gamma'$. If we set $w:=\frac{1+\sqrt{D}}{2} \in \OD$ if $D \equiv 1 \mod 4$ and set $w:=\frac{\sqrt{D}}{2}$ if $D \equiv 0 \mod 4$ and let $\pi_2$ be the (unique) common prime divisor of $2$ and $w$ if $D \equiv 1 \mod 8$ and let $\widetilde{\pi_2}$ the unique prime divisor of $2$ for $D \equiv 0 \mod 4$then the following table summarizes the situation if $h_D=1$ 

\begin{center} \begin{tabular}{|c|c|c|}
\hline
$D$ & $\SL(L_D^1)$ & $\SL(L_D^0)$ \\
\hline
$5 \mod 8$ & $1$-ppm  &  --- \\
\hline
$1 \mod 8$ & $\pi_2$-ppm  & $\pi_2^\sigma$-ppm \\
\hline
$0 \mod 4$ & $\widetilde{\pi_2}$-ppm & ---\\
\hline
\end{tabular}
\end{center}
This enables us to prove a weaker version of Theorem~\ref{thm_parabolic_maximal_implies_stabilizer} in all the cases where $\SL(L_D)$ is only $n$-pseudo-parabolic maximal.
\newtheorem*{thme11}{Theorem~\ref{thm_cov_degree_10}}

\begin{thme11} Let $M \in \GL_2^+(K) \cap \Mat^{2x2}(\OD)$ and $D$ be a fundamental discriminant and $C_{L,D}^\epsilon$ be a Teichmüller curve of discriminant $D$. Suppose that
\begin{itemize}
\item[(i)] $D \equiv 1 \mod 8$, $C_{L,D}^\epsilon$ has odd spin and $\det(M)$ is not divisible by $\pi_2$ or
\item[(ii)] $D \equiv 1 \mod 8$, $C_{L,D}^\epsilon$ has even spin and $\det(M)$ is not divisible by $\pi_2^\sigma$ or
\item[(iii)] $D \equiv 9 \mod 16$ and $C_{L,D}^\epsilon$ has odd spin or 
\item[(iv)] $D \equiv 5 \mod 8$ or
\item[(v)] $D \equiv 0 \mod 4$ and $\det(M)$ is not divisible by $\widetilde{\pi_2}$
\end{itemize}
then 
\begin{itemize}
\item[(i)] the degree of the covering $\pi: C_M(M) \to C_M$ is equal to $1$ and
\item[(ii)] we have $\SL_M(L_D)=M\SL(L_D)M^{-1}\cap \SL_2(\OD)$. 
\end{itemize}
\end{thme11} 

\paragraph{Euler characteristic.} Using M. Bainbridge's formula in \cite{Bai07} for the volume of Teichmüller curves in genus $2$ (Theorem~\ref{thm_bain_euler}), we are, in the situation of Theorem~\ref{thm_cov_degree_10}, able to give explicit formulas for the Euler characteristic of twisted Teichmüller curves $\chi(C_M)$. We again look at normalized matrices $$M=\begin{pmatrix} m & x \\ 0 & n \end{pmatrix}$$
with $m,n,x \in \OD$ and $(m,n,x)=1$. If $D \equiv 1 \mod 8$ and the spin of the Teichmüller curve is odd let $\widetilde{\eta}$ be the number $\pi_2$ and if $D \equiv 1 \mod 8$ and the spin of the Teichmüller curve is even let $\widetilde{\eta}$ be $(w+1)(w-1)$ and if $D \equiv 5 \mod 8$ let $\widetilde{\eta}$ be $1$ and if $D \equiv 0 \mod 4$ let $\widetilde{\eta}$ be $w(w+1)$.
\newtheorem*{thme7}{Theorem~\ref{thm_summarize_euler_calculations_triangular}}

\begin{thme7} Let $D$ be a fundamental discriminant and let $m,n,x \in \OD$ be arbitrary elements with $(m,n,x)=1$. If $h_D=1$ and $(n,\widetilde{\eta})=1$ and $(m,\widetilde{\eta})=1$, then $$\chi(C_M) = \deg(X_D(M) \to X_D) \cdot \chi(C_{L,D}^\epsilon).$$
\end{thme7}

We stress the fact that for $D \equiv 5 \mod 8$ the result holds with $\widetilde{\eta}=1$ (Theorem~\ref{thm_summarize_euler_II}). We cannot expect such a result to hold without any (congruence) condition on $D$: for instance for $D=17$ and the Teichmüller curve of odd spin there exists a matrix $M_0 \in \GL_2^+(K)$ such that the degree of $\pi: X_D(M_0) \to X_D$ is greater than one, but $\chi(C_{M_0}) = \chi(C_{L,D}^1)$ (see Section~\ref{subsec_non_prime}).
\paragraph{Classification.} We use the knowledge about the volume to prove Theorem~\ref{thm_classificiation1}. Besides the lack of knowledge about the Veech group in the case $D \not \equiv 5 \mod 8$, there arise many more problems in the general situation which prevent our methods from working. In particular, if the class number $h_D$ is greater than one then we may not restrict to upper triangular matrices since the number of cusps of the Hilbert modular surface is equal to $h_D$.
\paragraph{Structure of the notes.} These notes are organized as follows:\\[11pt]In Chapter~\ref{cha_background} we give an overview of the basic concepts which are used everywhere else in these notes. We recall well-known results about real quadratic number fields, Fuchsian groups, moduli spaces and Hilbert modular surfaces. This chapter will be helpful in particular for people who are not already familiar with the mentioned topics.\\[11pt]
In Chapter~\ref{cha_Teichmüller_curves} we introduce Teichmüller curves. In particular, we recall C. McMullen's construction of Teichmüller curves in $\mathcal{M}_2$, which was already mentioned in the introduction.\\[11pt]
In Chapter~\ref{cha_twisted_Teichmüller_curves} the main new objects of these notes, namely twisted Teichmüller curves, are defined. Only some main properties of twisted Teichmüller curves are derived here. Most importantly, it is shown that twisted Teichmüller curves yield indeed Kobayashi curves.  \\[11pt]
In Chapter~\ref{cha_maximality} we describe the relation between the stabilizer of the graph of the Teichmüller curve and the commensurator of the Veech group. Furthermore we introduce the notion of pseudo parabolic maximal Fuchsian groups and show why this property is useful for calculating the stabilizer.\\[11pt]
In Chapter~\ref{chapter_calculations} the volume of twisted Teichmüller curves is calculated for most matrices $M$ if $h_D=1$. From this the classification of twisted Teichmüller curves can be derived. Finally, we present some ideas how quantities like the number of elliptic fixed points, the number of cusps and the genus of twisted Teichmüller curves can be calculated in some special cases.\\[11pt]
In Chapter~\ref{chapter_prym_modular} we recall C. McMullen's construction of Teichmüller curves in $\mathcal{M}_3$ and $\mathcal{M}_4$ using Prym varieties. Also these Teichmüller curves yield Kobayashi curves on $X_D$.\\[11pt]
In Chapter~\ref{chapter_Lyapunov_exponents} we recall Oseledet's Theorem on the existence of Lyapunov exponents and introduce the Kontsevich-Zorich cocycle over Teichmüller curves. Furthermore the connection of the Teichmüller flow to the geodesic flow on $T^1\HH$ is discussed.\\[11pt]
In Chapter~\ref{chapter_comparing} it is proven by using Lyapunov exponents that the Teichmüller curves from Chapter~\ref{chapter_prym_modular} are never twisted Teichmüller curves.
\paragraph{Acknowledgments.} These notes are the expanded version of my PhD-thesis at the Goethe Universität Frankfurt am Main. First and foremost I would like thank my advisor Prof. Dr. Martin Möller for his continuous and motivating support during all stages of this work. Furthermore I am very grateful to my second advisor Prof. Dr. Don Zagier for his idea to consider twisted Teichmüller curves at all and for helpful discussions. Parts of the work was carried out at the Max-Planck-Institute for Mathematics in Bonn, whom I thank for hospitality and an inspiring scientific atmosphere. Many thanks go to Dr. Anke Pohl, who was always willing to answer my mathematical question, to Prof. Dr. Kai-Uwe Bux, who helped me to work out Appendix~\ref{sec_check_commens}, and to my colleagues in Bonn in Frankfurt, in particular to my office mates Soumya Bhattacharya, Dr. Tobias Fritz and Quentin Gendron. I also thank Prof. Dr. Curtis McMullen and JProf. Dr. Gabriela Weitze-Schmithüsen for useful comments on an earlier version of these notes. Moreover, I am grateful to the referees for their appropriate and constructive suggestions and for proposed corrections to improve these notes. Last but not least I would like to thank my friends, my family and in particular my parents for always having a sympathetic ear for me.

\newpage


\section{Background} \label{cha_background}

As we try to keep these notes as self-contained as possible this chapter gathers together many different concepts and results which will be used at various stages of these notes. It will also introduce notation and serve as a reference section. There are only very few proofs included in this chapter. Nevertheless, the reader will find references to the literature where one finds more detailed explanations of the presented material. The reader who is familiar with the topic can without compunction skip the corresponding section.

\subsection{Real Quadratic Number Fields} \label{sec_quadratic_nf}

In this section a short overview over real quadratic number fields is given. We will mainly concentrate on results which are important for the further understanding of these notes. We refer the reader who wants to find out more about the details to \cite{Hec23}, \cite{Neu05}, \cite{Sch07} and \cite{Zag81}, where one finds most of the presented material.
\paragraph{Real quadratic number fields and the ring of integers.} A number field \label{glo_K_real_quadratic} $K$ is a finite field extension of $\mathbb{Q}$. An element $z \in K$ is called \textbf{integer} if there exists a polynomial $f(X) = X^n + a_{n-1} X^{n-1} + ... + a_1X +a_0 \in \mathbb{Z}[X]$ such that $f(z)=0$. The set of all integers in $K$ is denoted by $\mathcal{O}$. In fact $\mathcal{O}$ is known to be a Noetherian ring (see \cite{Neu05}, Theorem 3.1) and therefore called the \textbf{ring of integers}  of $K$. We are interested only in a special type of number fields: let $d>1$ be a square-free integer. Then $K=\mathbb{Q}(\sqrt{d})$ is a \textbf{real quadratic number field}\index{real quadratic number field}. The \textbf{discriminant} of $K$ is
$$D= \left\{ \begin{matrix} d & \textit{if} \ d \equiv 1 \mod 4 \\ 4d & \textit{if} \ d \equiv 2,3 \mod 4 \end{matrix} \right. .$$
Note that $\QQ(\sqrt{d}) = \QQ(\sqrt{D})$. A discriminant (of any - not necessarily real - quadratic number field) is called a \textbf{prime discriminant} if it is divisible by only one prime, i.e. $D=-4,-8,+8$ or $D$ is equal to a prime number $p \equiv 1 \mod 4$ or $D$ is equal to $-p$ where $p$ is a prime number $3 \mod 4$. Every discriminant of a real quadratic number field is the product of prime discriminants, i.e. $D=D_1 \cdots D_t$ where all $D_t$ are prime discriminants.\\[11pt] For all $d$ there exist exactly two embeddings $K \hookrightarrow \RR \subset \CC$. We write $z = x + y\sqrt{d} \mapsto z^{\sigma}=x-y\sqrt{d}$ for the \textbf{(Galois-)conjugation}. The \textbf{norm} \index{real quadratic number field!norm} of an element $z \in K$ is defined as $\textrm{N}(z):=zz^{\sigma}$ \label{glo_N}. In most situation we will only use the \textbf{absolute value of the norm} which we denote by $\N(z):=|\textrm{N}(z)|$. The \textbf{trace} of an element $z \in K$ is defined as $\tr(z):=z+z^{\sigma}$ \label{glo_tr}. The ring of integers \index{real quadratic number field!ring of integers} of the real quadratic number field of discriminant $D$ will be denoted by $\OD$. \label{glo_OD} We know that (see \cite{Sch07}, Theorem 6.1.10):
$$\OD= \left\{ \begin{matrix} \mathbb{Z} + \frac{1+\sqrt{D}}{2} \mathbb{Z} & \textit{if} \ D \equiv 1 \mod 4 \\ \mathbb{Z} + \frac{\sqrt{D}}{2} \mathbb{Z} & \textit{if} \ D \equiv 0 \mod 4 \end{matrix} \right. .$$
We denote this basis of $\OD$ by $(1,w)$. \label{glo_w} As we will always only deal with one real quadratic number field at a time, this notation will not cause any confusion. Kronecker's Approximation Theorem (see e.g. \cite{Mac01}, Theorem 4.1) implies that for all $D$ the ring of integers $\OD$ is dense in $\mathbb{R}$. An element $x \in \OD$ is called \textbf{totally positive (negative)}\index{real quadratic number field!totally positive}, if for all embeddings $\alpha_i: \OD \to \mathbb{R}$, $i=1,2$ the inequality $\alpha_i(x)>0$ ($\alpha_i(x)<0$) holds. If $x$ is totally positive (negative), we write $x \succ 0$ ($x \prec 0$). We denote by $\OD^*$ the group of units in $\OD$. \label{glo_OD*} By \textbf{Dirichlet's unit theorem} (see \cite{Neu05}, Theorem 7.4) there is a unique element $\epsilon >1$ such that $\OD^* = \left\{ \pm 1 \right\} \times \left\{\epsilon^n \mid n \in \ZZ \right\}$. The element $\epsilon$ \label{glo_epsilon} is then called the \textbf{fundamental unit} \index{real quadratic number field!fundamental unit} of $K$. If the discriminant of the real quadratic number field is equal to a prime $p \equiv 1 \mod 4$, then the fundamental unit always has negative norm. If the discriminant has a prime factor $p$ that is congruent to $3 \mod 4$ then the fundamental unit has positive norm (see e.g. \cite{Ste93}). We define the \textbf{inverse different} \index{real quadratic number field!inverse different} as the fractional ideal
$$\OD^\vee = \left\{ x \in K \mid \tr(xy) \in \ZZ \ \forall y \in \OD \right\}.$$
More concretely, \label{glo_ODvee} $\OD^\vee = \frac{1}{\sqrt{D}} \OD$. \\[11pt]Let $J_K$ be the group of (fractional) ideals in $K$, i.e. those subsets $I \neq \left\{ 0 \right\}$ of $K$ for which there exists an element $r \in \OD$ such that $rI \subset \OD$ is an ideal in $\OD$, and let $P_K$ denote the group of fractional principal ideals $(a)=a\OD$ with $a \in K^*$. \label{glo_(a)} The \textbf{ideal class group} is defined as the quotient $Cl_K:=J_K/P_K$. The number $h_D$ \label{glo_hK} of elements of $Cl_K$ is known to be finite (\cite{Neu05}, Theorem 6.3) and is called the \textbf{class number} \index{real quadratic number field!class number} of $K$. The \textbf{norm} \index{real quadratic number field!norm} of an (ordinary) ideal $\mathfrak{a}$ is equal to the number of elements of $\OD  / \mathfrak{a}$. The norm of a fractional ideal $I$ such that $rI$ is an ideal is given by the quotient of the norm of $rI$ and the norm of the principal ideal $(r)$. \\[11pt] If the class number of $K$ is greater than $1$, there do exist ideals which cannot be generated by a single element. However, for Noetherian rings the following fact is well known (see \cite{Bru08}, p. 107): if $\mathfrak{a} \subset K$ is a fractional ideal, then there exist $\alpha, \beta \in K$ such that $\mathfrak{a} = \alpha \OD + \beta \OD$.\\[11pt]
One has to carefully distinguish the class number from the narrow class number. \index{real quadratic number field!narrow class number} Two ideals $\mathfrak{a}$ and $\mathfrak{b}$ are equivalent in the narrow sense if there exists a $\lambda \in K$ with $\textrm{N}(\lambda)>0$ and $(\lambda)\mathfrak{a} = \mathfrak{b}$. The equivalence classes form a group of order $h_D^+$. The number $h_D^+$ is called the \textbf{narrow class number} of $\OD$. By definition, two (fractional) ideals $\mathfrak{a}$ and $\mathfrak{b}$ belong to the same \textbf{genus} if there exists a $\lambda \in K$ with $\textrm{N}(\lambda)>0$ and $\textrm{N}(\lambda)\textrm{N}(\mathfrak{a}) = \textrm{N}(\mathfrak{b})$ (compare \cite{Zag81}, p. 111). If the fundamental unit of $\OD$ has negative norm, then the narrow class number equals the class number. If the fundamental unit of $\OD$ has norm $1$ then $h_D^+ = 2 h_D$.  It goes back to C.-F. Gau\ss \ that the narrow class number $h_D^+$ is always divisible by $2^{t-1}$ if $D=D_1 \cdots D_t$ where all $D_t$ are prime discriminants (see e.g. \cite{Zag81}, p. 112). Hence the only possible discriminants (of a real quadratic number field) of narrow class number $1$ are $8$ and primes $p \equiv 1 \mod 4$. On the other hand, it was conjectured by H. Cohen and H. Lenstra in \cite{CL84} that approximately $76 \%$ of all real quadratic number fields of prime discriminant have (narrow) class number $1$. Nevertheless, it is not even known yet if there are infinitely many real quadratic number fields of class number $1$. 
\paragraph{Euclidean number fields.} An integral ideal domain $R$ is said to be \textbf{Euclidean} if there exists a map $\phi: R \smallsetminus \left\{ 0 \right\} \to \NN$ such that given any $a,b \in R$ with $b \neq 0$ there exist $q,r \in R$ such that $a=bq+r$ with either $r=0$ or $\phi(r) < \phi(b)$. $\phi$ is called \textbf{Euclidean norm function}. We would like to know under which conditions $\OD$ is Euclidean. Clearly a necessary condition for $\OD$ to be Euclidean is that $h_D=1$. More precisely we have the following conditional theorem going back to P. Weinberger: 





\begin{thm} (\cite{Sch07}, Satz 6.6.2) For a real quadratic number field $K=\mathbb{Q}(\sqrt{D})$ the following are equivalent:
\begin{itemize}
\item[(i)] $h_D=1$.
\item[(ii)] $\OD$ is a principal ideal domain.
\item[(iii)] $\OD$ is a unique factorization domain.
\end{itemize}
(\textbf{Weinberger}, \cite{Wei73}) Under the additional assumption of the generalized Riemann hypothesis, also the following is equivalent to $(i)-(iii)$:
\begin{itemize}
\item[(iv)] $\OD$ is Euclidean.
\end{itemize}
\end{thm}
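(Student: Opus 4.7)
The plan is to split the theorem into two clearly distinct parts: the purely algebraic equivalences (i)--(iii), which do not require any analytic input, and the much deeper equivalence with (iv), which is Weinberger's conditional theorem.

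For (i)$\Leftrightarrow$(ii), I would simply unwind the definition: by construction $h_D$ is the order of $Cl_K = J_K/P_K$, so $h_D = 1$ is equivalent to every fractional ideal being principal, which is the definition of $\OD$ being a principal ideal domain (one also uses that a fractional ideal is principal iff its integral scaling is). For (ii)$\Rightarrow$(iii) one invokes the classical fact that every PID is a UFD, which goes through Bezout's identity and the ascending chain condition on principal ideals. The converse (iii)$\Rightarrow$(ii) uses that $\OD$ is a Dedekind domain: in a Dedekind domain that is moreover a UFD, every nonzero prime ideal $\mathfrak{p}$ has height one and hence contains an irreducible element $\pi$; the factorization of $(\pi)$ into prime ideals must reduce to $\mathfrak{p}$ alone, so $\mathfrak{p} = (\pi)$ is principal, and by unique factorization of ideals every ideal is principal.

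For (iv)$\Rightarrow$(ii) I would give the textbook argument: if $\phi$ is a Euclidean norm function on $\OD$, then for any nonzero ideal $I \subset \OD$ pick $b \in I \smallsetminus \{0\}$ with $\phi(b)$ minimal; then for any $a \in I$ the Euclidean division gives $a = bq + r$ with $r \in I$ and either $r=0$ or $\phi(r) < \phi(b)$, and minimality forces $r=0$, so $I = (b)$.

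The genuinely hard direction is (ii)$\Rightarrow$(iv), which is Weinberger's theorem and where I would lean on \cite{Wei73}. The strategy (under GRH) is to \emph{not} use the norm as Euclidean function but instead construct an auxiliary function by exploiting the infinitude of units. The key idea is the following: one shows that for any non-unit $a \in \OD$ with $h_D=1$ there is a unit $\epsilon^n$ (where $\epsilon$ is the fundamental unit of $\OD$) and a representative of a small residue class mod $a$ which can be made smaller in a controlled way. Controlling this reduction for \emph{all} residue classes requires ruling out the existence of so-called exceptional primes; and this is precisely where GRH enters, via an effective version of the Chebotarev density theorem that guarantees enough small split primes in every residue class modulo $a$. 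One then defines $\phi$ recursively (essentially as a length function in a tree of reductions) and checks that the resulting $\phi$ satisfies the Euclidean axiom.

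The main obstacle by far is this last step: producing the Euclidean function unconditionally is an open problem in general, and even under GRH the proof requires the analytic input on primes in residue classes, together with a careful combinatorial argument that the unit $\epsilon$ of infinite order supplies sufficiently many reductions. I would therefore present (i)--(iii) in full, give a short proof of (iv)$\Rightarrow$(ii), and refer the reader to Weinberger's paper for the quantitative analytic ingredients needed in (ii)$\Rightarrow$(iv).
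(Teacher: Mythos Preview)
The paper does not give a proof of this statement at all: it is quoted as background with references to \cite{Sch07} for (i)--(iii) and to \cite{Wei73} for the conditional equivalence with (iv). Your outline is mathematically correct and is exactly the standard argument one finds in those references --- the elementary Dedekind-domain equivalences for (i)--(iii), the trivial direction Euclidean $\Rightarrow$ PID, and deferring the hard direction (ii)$\Rightarrow$(iv) to Weinberger's analytic argument under GRH --- so there is nothing substantive to compare.
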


A list of all real quadratic number fields with class number 1 in the range $2 \leq D \leq 100$ can be found in \cite{Neu05}, p. 37. It is conjectured that there are infinitely many real quadratic number fields with $h_D=1$. So far all known examples of Euclidean real quadratic number fields have their norm as Euclidean norm function ($D=2$, $3$, $5$, $6$, $7$, $11$, $13$, $17$, $19$, $21$, $29$, $33$, $37$, $41$, $57$, $73$). It was moreover proven that this list is complete if one only admits the norm as Euclidean norm function (see \cite{JQS85}).
\paragraph{The different types of prime numbers.} Any prime number $p \in \ZZ$ gives rise to an ideal $(p)$ in $\OD$. A prime number is either \textbf{inert} if $(p)$ is a prime ideal, or \textbf{splits} if $(p)$ is a product of two conjugated prime ideals $\mathfrak{p}, \mathfrak{p}^\sigma$, \label{glo_matp} or is \textbf{ramified} if $(p)$ is the square of a prime ideal $\mathfrak{p}$. The splitting theorem (see e.g. \cite{Sch07}, Theorem~6.5.18) describes precisely when each of these cases occur: let $\left( \frac{\cdot}{\cdot} \right)$ denote the Legendre symbol. \label{glo_Legendre} If $p \in \ZZ$ is odd then $(p)$ is inert whenever $\left( \frac{D}{p} \right) =-1$ and $(p)$ is split if $\left( \frac{D}{p} \right) =+1$ and $(p)$ ramified if $\left( \frac{D}{p} \right) =0$, i.e. if $p|D$. The prime number $2$ is inert if $D\equiv 5 \mod 8$, it splits if $D \equiv 1 \mod 8$ and is ramified if $D \equiv 0 \mod 2$. Whenever $h_D=1$ one does not have to distinguish between prime ideals and prime elements in $\OD$. If $h_D>1$ then it only makes sense to speak of a prime decomposition of an element $n \in \OD$ in the sense of ideals. For two ideal $\mathfrak{a}, \mathfrak{b}$ we write $\mathfrak{a}|\mathfrak{b}$ if $\mathfrak{b} \subset \mathfrak{a}$. If $\mathfrak{a}$ is an ideal and $n \in \OD$, then we mean by $\mathfrak{a} | n$ that $\mathfrak{a}| (n)$. 
\paragraph{Quadratic orders.} Let $K$ be a quadratic number field. A \textbf{quadratic order} is a subring $\mathcal{O}$ of $K$ such that $\mathcal{O} \otimes \QQ = K$. Each integer $D \equiv 0 \ \textrm{or} \ 1 \mod 4$ determines a quadratic order
$$\OD = \ZZ[T]/(T^2+bT+c)$$
where $b,c \in \ZZ$ and $b^2-4c = D$. Indeed, these are up to isomorphism all quadratic orders and the isomorphism class of $\OD$ only depends on $D$.\label{glo_ODgen} Note that $\OD$ is the ring of integers of $\QQ(\sqrt{D})$ whenever $D$ is a fundamental discriminant. Thus the notation $\OD$ makes sense. If $D$ is not of the form $f^2E$ for some integers $f>1$ and $E$ with $E \equiv 0 \ \textrm{or} \ 1 \mod 4$ then $D$ is a \textbf{fundamental discriminant} and $f:=1$. In both cases $f$ is called the \textbf{conductor} of $D$.  If $D$ is not a square (but not necessarily fundamental), then $\OD$ is at least a subring of some ring of integers of $\QQ(\sqrt{D})$. Note that if $D \equiv 1 \mod 4$ is not a square then $\OD$ is always spanned by $1$ and $w = \frac{1+\sqrt{D}}{2}$ and if $D \equiv 0 \mod 4$ is not a square then $\OD$ is spanned by $1$ and $w=\frac{\sqrt{D}}{2}$. The notions of class number and narrow class number also make sense in the context of quadratic orders, though the definition of the class group is slightly more sophisticated: instead of considering all fractional ideals of $\OD$ one only considers those fractional ideals $\mathfrak{a}$ which are \textbf{proper}, i.e. fulfill $\mathfrak{a} = \left\{ \beta \in K \mid \beta \mathfrak{a} \subset \mathfrak{a} \right\}$. The proper ideals of $\OD$ are exactly the invertible ones. The class group is then defined as the quotient of the proper ideals by the principal ideals (see e.g. \cite{Cox89}, Chapter~7, for details). Moreover, it is important to note that for $f>1$ and $D=f^2E$ the quadratic order $\OD$ is not a unique factorization domain any more. In particular, if $p \in \ZZ$ is a prime number with $p|f$ then $(p)$ is irreducible but not prime. Thus $p$ behaves to a certain extent similarly as an inert prime number from an arithmetic point of view. On the other hand, any ideal which is relatively prime to $(f)$ can be factored uniquely into a product of prime ideals. For details on the factorization we refer the reader to \cite{Rob09}. If $D$ is a square then everything is slightly different. As we will not deal with this case in the following we refer the reader e.g. to \cite{Bai07}, Chapter~2.2 for details. 
\paragraph{Some arithmetic properties of real quadratic number fields.} \index{real quadratic number field!types of prime numbers}  For the convenience of the reader we recall two (well-known) arithmetic properties of real quadratic number fields, that will be used frequently in the following chapters.

\begin{lem} \label{lem_fund_discriminant_quadratic} \begin{itemize} 
\item[(i)] Let $D$ be a fundamental discriminant and let $K$ be the real quadratic number field of discriminant $D$. If $x \in K$ with $x^2 \in \OD$ then $x \in \OD$.
\item[(ii)] Let $D$ be a fundamental discriminant and let $K$ be the real quadratic number field of discriminant $D$. Let $p \in \ZZ$ be a prime number which is not ramified. If $x \in K$ with $x^2 \in \frac{1}{p} \cdot \OD$ then $x \in \OD$.
\end{itemize}
\end{lem}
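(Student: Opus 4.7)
My plan for part (i) is to invoke the fact that $\OD$, being the full ring of integers of $K$ (this uses crucially that $D$ is fundamental, so no proper sub-order intervenes), is integrally closed in $K$. Given $x \in K$ with $x^2 \in \OD$, the element $x$ satisfies the monic polynomial $T^2 - x^2$ with coefficients in $\OD$, so $x$ is integral over $\OD$; by integral closedness we conclude $x \in \OD$. No case analysis is needed here, and the argument is essentially formal.

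For part (ii), I would first reduce to part (i). From $x^2 \in \frac{1}{p}\OD$ it follows that $(px)^2 = p^2 x^2 \in p\OD \subset \OD$, so by part (i) we obtain $px \in \OD$. Writing $x = y/p$ with $y \in \OD$, the hypothesis $x^2 \in \frac{1}{p}\OD$ translates into $y^2 \in p\OD$, i.e. the principal ideal $(p)$ divides $(y^2)$ in $\OD$. The goal is then to upgrade this to $(p) \mid (y)$, which immediately forces $x = y/p \in \OD$.

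The core step is to deduce $(p) \mid (y)$ from $(p) \mid (y^2)$, and this is where the non-ramified hypothesis enters through the unique factorization of ideals in the Dedekind domain $\OD$. If $p$ is inert, $(p)$ is already a prime ideal and the implication is immediate from primality. If $p$ splits as $(p) = \mathfrak{p}\mathfrak{p}^\sigma$ with $\mathfrak{p} \neq \mathfrak{p}^\sigma$, then primality of $\mathfrak{p}$ and $\mathfrak{p}^\sigma$ gives $\mathfrak{p} \mid (y)$ and $\mathfrak{p}^\sigma \mid (y)$; since these primes are coprime their product $(p)$ divides $(y)$. In either case $y = pz$ for some $z \in \OD$, whence $x = z \in \OD$.

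The expected obstacle, and the reason the hypothesis cannot be dropped, is exactly the ramified case $(p) = \mathfrak{p}^2$: there $\mathfrak{p}^2 \mid (y^2)$ only forces $\mathfrak{p} \mid (y)$ and not $\mathfrak{p}^2 \mid (y)$. Any $y \in \mathfrak{p} \smallsetminus \mathfrak{p}^2$ then produces $x = y/p \notin \OD$ satisfying $x^2 \in \frac{1}{p}\OD$, confirming that the exclusion of ramified primes in (ii) is genuinely necessary.
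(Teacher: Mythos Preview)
Your proof is correct. For part (i), the paper takes a closely related route: rather than invoking integral closure abstractly, it uses the valuation criterion $x \in \OD \iff \nu_\mathfrak{p}(x) \geq 0$ for all prime ideals $\mathfrak{p}$, from which the conclusion is immediate since $\nu_\mathfrak{p}(x^2) = 2\nu_\mathfrak{p}(x)$. Your integral-closure argument is the cleaner abstract packaging of the same fact (and the paper even remarks that the result holds for arbitrary number fields, which your formulation makes evident).

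For part (ii), the paper is terser: writing $x = c/d$, it observes that if $x \notin \OD$ then one would need $d^2 \mid p$, impossible since an unramified $p$ has squarefree ideal factorization. Your reduction to $y = px \in \OD$ followed by the implication $(p) \mid (y^2) \Rightarrow (p) \mid (y)$ via the inert/split case split makes the role of the unramified hypothesis more explicit, and your closing remark explaining exactly how the ramified case fails is a useful complement. Both arguments hinge on the same point---squarefreeness of $(p)$ in $\OD$---so the difference is one of presentation rather than substance.
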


\begin{rem} The lemma is not only true for real quadratic number fields, but for all number fields and the corresponding rings of integers.
\end{rem}

\begin{proof} $(i)$ The statement can either be proven by direct calculation or in a more sophisticated way by using valuations, i.e. function $\nu_\mathfrak{p}: K \to \ZZ$ which send $x$ to the multiplicity of the prime ideal $\mathfrak{p}$ in $(x)$ or $\infty$ if the multiplicity is $0$. It is well-known that $x \in \OD$ if and only if $\nu_\mathfrak{p} \geq 1$ for all prime ideals. Therefore, the claim immediately follows. 

$(ii)$ Let $x = \frac{c}{d}$ with $c,d \in \OD$. If the claim was not true, then $d^2$ would necessarily divide $p$ by $(i)$. This is not possible since $p$ is not ramified (and thus has no square divisors).
\end{proof}
The lemma is not true any more if $D$ is not a fundamental discriminant. For instance, if $D=45$ then $(2+5w)/3 \notin \OD$, but $((2+5w)/3)^2 = 31+5w$.


\begin{lem} If $\mathfrak{p}$ is a prime ideal which is a divisor of the principal ideal generated by a split or ramified prime number in $\OD$, then $$\mathfrak{p} \nmid 1,...,\N(\mathfrak{p})-1.$$ \end{lem}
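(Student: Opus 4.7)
The plan is to exploit the fact that a prime ideal $\mathfrak{p}$ dividing a rational prime $p$ that is split or ramified has norm exactly $p$, and then invoke a Bezout argument to derive a contradiction from the assumption that $\mathfrak{p}$ divides a smaller positive integer.

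First I would recall the shape of the decomposition of $(p)$ in $\OD$. In the split case $(p) = \mathfrak{p}\mathfrak{p}^\sigma$ with $\N(\mathfrak{p}) = \N(\mathfrak{p}^\sigma) = p$, and in the ramified case $(p) = \mathfrak{p}^2$ with $\N(\mathfrak{p}) = p$. (This is exactly the splitting theorem recalled a few paragraphs above.) Thus in both cases $\N(\mathfrak{p}) = p$, so the statement to be proven is equivalent to: $\mathfrak{p}$ does not divide any of $1, 2, \ldots, p-1 \in \ZZ \subset \OD$.

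Then I would argue by contradiction. Suppose $\mathfrak{p} \mid k$ for some $k \in \{1, \ldots, p-1\}$, i.e. $k \in \mathfrak{p}$. We also have $p \in \mathfrak{p}$ since $\mathfrak{p}$ divides $(p)$. Because $1 \le k \le p-1$ and $p$ is a rational prime, $\gcd(k,p) = 1$ in $\ZZ$, so Bezout in $\ZZ$ provides integers $a,b$ with $ak + bp = 1$. Since $\mathfrak{p}$ is an ideal containing both $k$ and $p$, it contains $ak + bp = 1$, forcing $\mathfrak{p} = \OD$ and contradicting the primality of $\mathfrak{p}$.

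This is essentially the whole proof; there is no real obstacle. The only subtlety to flag is why the inert case is excluded: if $p$ were inert, then $\mathfrak{p} = (p)$ has norm $p^2$, and the conclusion $\mathfrak{p} \nmid 1,\ldots, p^2-1$ is false in general because $\mathfrak{p}$ obviously contains $p$ itself and any of its multiples in the range $\{p, 2p, \ldots, (p-1)p\}$. So the split/ramified hypothesis is used precisely to guarantee $\N(\mathfrak{p}) = p$, which is what makes the Bezout step go through.
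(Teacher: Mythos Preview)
Your proof is correct. Both you and the paper first reduce to the observation that $\N(\mathfrak{p})=p$ when $\mathfrak{p}$ lies over a split or ramified rational prime $p$, but the concluding step differs: the paper argues via multiplicativity of the norm (if $\mathfrak{p}\mid a$ then $\N(\mathfrak{p})\mid \N(a)=a^2$, which is impossible since $p$ is prime and $p>a$), whereas you argue via Bezout (since $p\in\mathfrak{p}$ and $k\in\mathfrak{p}$ with $\gcd(k,p)=1$, one gets $1\in\mathfrak{p}$). Both arguments are equally elementary and equally short; the norm argument has the slight conceptual advantage of making transparent why the inert case fails (there $\N(\mathfrak{p})=p^2$ is not prime), though you also addressed this point directly.
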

\begin{proof} Suppose $\mathfrak{p} | a$ for $a \in \ZZ$ with $0 < a < \N(\mathfrak{p})$. Then $\N(\mathfrak{p})$ is a prime number strictly greater than $a$ and $\N(a)=a^2$. Therefore, $\N(\mathfrak{p})|\N(a)$ is impossible. 
\end{proof}

As we will make extensive use of the arithmetic of $\OD$ later on, we collect some more of its most important properties in a separate lemma.

\begin{lem} \label{lem_properties_OD} \textbf{(Properties of $w$)} Let $\OD$ be a real quadratic order.
\begin{itemize}
\item[(i)] If $x = a + bw \in \OD$ and $n \in \NN$ then $n|x$ if and only if $n|a$ and $n|b$.
\item[(ii)] If $D \equiv 1 \mod 4$ then $w^\sigma=1-w$ and for the absolute value of the norm we have $\N(w)=\N(w-1)= \frac{D-1}{4}$.
\item[(iii)] If $D \equiv 1 \mod 4$ then all the prime ideal divisors of $w$ are divisors of split prime numbers.
\item[(iv)] For $D \equiv 1 \mod 4$ we have $w^2 = w + \frac{D-1}{4}$.
\item[(v)] For $D \equiv 1 \mod 4$ and all $k \in \NN$ we have $w^k=c_k + d_kw$ with $c_k,d_k \in \ZZ$ and the greatest common divisor of $c_k$ and $d_k$ is $1$.
\item[(vi)] If $D \equiv 0 \mod 4$ then we have $\N(w)=\frac{D}{4}$ and $w^\sigma=-w$. Therefore, all prime ideal divisors of $w$ are divisors of ramified prime numbers.
\end{itemize}
\end{lem}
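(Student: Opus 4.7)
The plan is to tackle the six parts essentially in the order given, since several later parts use earlier ones. Part (i) is the foundation, and the rest split into straightforward computations (parts (ii), (iv)) and arithmetic arguments about ideal divisibility (parts (iii), (v), (vi)).

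For (i), the key point is that $(1,w)$ is a $\ZZ$-basis of $\OD$, so $1$ and $w$ are $\QQ$-linearly independent. If $n \mid x$, then $x/n \in \OD$ can be written as $c + dw$ with $c,d \in \ZZ$; equating with $a/n + (b/n)w$ and using linear independence gives $a/n = c$ and $b/n = d$ in $\ZZ$. For (ii) and (iv), I would simply compute with $w = (1+\sqrt{D})/2$: the formula $w^\sigma = 1-w$ is immediate, the norm follows from $w(1-w) = (1-D)/4$, and expanding $w^2$ gives $w + (D-1)/4$ directly.

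For (iii), suppose $\mathfrak{p}$ is a prime ideal divisor of $w$ lying over a rational prime $p$. If $p$ is inert, then $\mathfrak{p} = (p)$ and $p \mid w$ in $\OD$; writing $w = 0 + 1 \cdot w$, part (i) forces $p \mid 1$, a contradiction. If $p$ is ramified, then $\mathfrak{p}^\sigma = \mathfrak{p}$, so applying Galois conjugation to $\mathfrak{p} \mid w$ yields $\mathfrak{p} \mid w^\sigma = 1-w$, whence $\mathfrak{p} \mid 1$, again a contradiction. Only the split case remains. For (vi), the same template works: $w^\sigma = -w$ shows $\mathfrak{p}^\sigma$ also divides $w$, so in the split case $(p) = \mathfrak{p}\mathfrak{p}^\sigma$ divides $w$, forcing $p \mid w$ and contradicting (i); the inert case is identical to before; and so only the ramified case survives. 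The formula $\N(w^\sigma) = \N(-w) = \N(w) = D/4$ for $D \equiv 0 \bmod 4$ is direct from $w = \sqrt{D}/2$.

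Part (v) is the one that takes a little care. The existence of $c_k, d_k \in \ZZ$ is immediate by induction, using (iv) to reduce powers: explicitly, $w^{k+1} = (c_k+d_k)w + d_k(D-1)/4$, giving the recursion $c_{k+1} = d_k(D-1)/4$ and $d_{k+1} = c_k + d_k$. For the coprimality, rather than wrestling with the recursion directly, I would argue as follows: if a rational prime $p$ divides both $c_k$ and $d_k$, then by (i) we have $p \mid w^k$ in $\OD$, i.e.\ $(p) \mid (w)^k$. The prime-ideal factorization of $(p)$ falls into three cases: if $p$ is inert then $(p)$ itself must divide $(w)$; if $p$ ramifies as $\mathfrak{p}^2$ then $\mathfrak{p} \mid (w)$; if $p$ splits as $\mathfrak{p}\mathfrak{p}^\sigma$ then comparing multiplicities in $(w)^k$ shows both $\mathfrak{p}$ and $\mathfrak{p}^\sigma$ divide $(w)$, hence again $(p) \mid (w)$. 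The inert and split cases contradict (i) applied to $w = 0 + 1 \cdot w$, and the ramified case contradicts (iii). The main (mild) obstacle in the whole lemma is precisely this synthesis for (v)—passing from elementwise divisibility in $\ZZ$ to ideal divisibility in $\OD$ and then back—but once (i) and (iii) are in place it is a short argument.
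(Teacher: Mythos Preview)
Your proposal is correct and follows essentially the same approach as the paper's own proof, which is extremely terse (e.g., for (v) the paper simply writes ``follows since $w^k$ has up to multiplicity the same prime divisor as $w$''). Your argument is a faithful expansion of that sketch: using the $\ZZ$-basis for (i), direct computation for (ii), (iv), (vi), the case split on inert/ramified using (i) and $w^\sigma=1-w$ for (iii), and the passage from $p\mid w^k$ to prime-ideal divisors of $(w)$ for (v).
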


\begin{proof} $(i)$ follows from the definition.\\$(ii)$ is just a direct calculation.\\$(iii)$: $w$ cannot have a prime divisor which is an inert prime number by $(i)$ and $w$ cannot have a prime divisor which divides a ramified prime divisor by $(ii)$.\\$(iv)$ is again just a calculation.\\$(v)$ follows since $w^k$ has up to multiplicity the same prime divisor as $w$.\\$(vi)$ is again a simple calculation.
\end{proof}



\subsection{Fuchsian Groups} \label{sec_Fuchsian_groups}

Fuchsian groups are certain subgroups of $\PSL_2(\RR)$. They will appear almost everywhere in the following chapters. For a detailed review of this topic the reader may consult for example \cite{Bea89}, \cite{Kat92} or \cite{Mac01}. \\[11pt]
Let $\HH:=\left\{ z \in \CC \mid \Imag(z)>0 \right\}$ denote the \textbf{complex upper half plane} \label{glo_H}. \index{complex upper half plane} The closure of $\HH$ in the \textbf{projective plane} $\mathbb{P}^1(\CC):=\CC \cup \left\{ \infty \right\}$ \label{glo_P1} is $\overline{\HH}=\HH \cup \RR \cup \left\{ \infty \right\}.$ \label{glo_Hbar} We shall use the usual notations for real and imaginary parts, namely $z=x+iy$ for $z \in \CC$. Recall that $\HH$ is equipped with the Riemannian metric derived from the differential $$ds = \frac{\sqrt{dx^2+dy^2}}{y}.$$ 
The metric is called \textbf{Poincaré metric}. The Gaussian curvature of $\HH$ equipped with the Poincaré metric is constant and equal to $-1$ and the geodesics in $\HH$ are then given by vertical lines and semicircles orthogonal to $\RR$ (see e.g. \cite{Bea83}, Chapter~7.3). For a subset $A \subset \HH$ we define $\mu(A)$, the \textbf{hyperbolic area} \index{hyperbolic area} of $A$, by
$$\mu(A):= \int_{A} \frac{dxdy}{y^2} \label{glo_hyp}$$
whenever the integral exists. Another model for the hyperbolic plane is the \textbf{unit disc} $\mathbb{D}:=\left\{ z \in \CC \mid |z|<1 \right\}$ \label{glo_D}. Here the Poincaré metric corresponds to the differential $ds = \frac{2|dz|}{1-|z|^2}$.\\[11pt]
A matrix $M = \left( \begin{smallmatrix} a & b \\ c & d \end{smallmatrix} \right) \in \SL_2(\RR)$, i.e. $a,b,c,d \in \RR$ and $ad-bc=1$, \label{glo_SL2} (or rather the group $\SL_2(\RR)$) acts on $\HH$ by \textbf{Möbius transformation}, namely \index{Möbius transformation}
$$ z \mapsto \frac{az+b}{cz+d} $$ 
and this map is an orientation-preserving isometry (see e.g. \cite{Kat92}, Theorem~1.3.1). Two matrices which differ only by a multiple of $\pm \Id$ define the same Möbius transformation. Indeed, the group of all biholomorphic maps of $\HH$ to itself is given by $\PSL_2(\RR):=\SL_2(\RR)/\left\{\pm \Id \right\}$. \label{glo_PSL2}\\[11pt]
A \textbf{Fuchsian group} \index{Fuchsian group} $\Gamma$ \label{glo_Gamma} is a discrete subgroup of $\PSL_2(\mathbb{R})$. The most prominent example of a Fuchsian group is the \textbf{modular group} $\PSL_2(\ZZ)$. Elements in a Fuchsian group are distinguished by the value of their \textbf{trace} $\tr(M):=|a+d|$ for $M= \left( \begin{smallmatrix} a & b \\ c & d \end{smallmatrix} \right).$ If $\tr(M)<2$ \label{glo_trace} then $M$ is called \textbf{elliptic}, if $\tr(M)=2$ then $M$ is called \textbf{parabolic} and if $\tr(M)>2$ then $M$ is called \textbf{hyperbolic}. Equivalently one could distinguish these elements by their fixed points: an elliptic transformation has exactly one fixed point in $\HH$, a parabolic element has only one fixed point on the boundary and a hyperbolic transformation has two fixed points on the boundary. A closed domain $\mathcal{F} \subset \HH$ is called a \textbf{fundamental domain} \label{glo_F} \index{Fuchsian group!fundamental domain} for the Fuchsian group $\Gamma$ if $(i)$ for every $z \in \HH$ there exists a $M \in \Gamma$ with $Mz \in \mathcal{F}$, $(ii)$ $\inter(\mathcal{F}) \cap M (\inter(\mathcal{F})) = \emptyset$ for all $M \neq \pm \Id$ and $(iii)$ the number of $M \in \Gamma$ with $M\mathcal{F} \cap \mathcal{F} \neq \emptyset$ is finite. A Fuchsian group $\Gamma$ is a \textbf{lattice} or \textbf{cofinite} \index{Fuchsian group!lattice} if the orbit space $\HH/\Gamma$ (or equivalently its fundamental domain) has finite (hyperbolic) area $\mu(\HH/\Gamma)$. A Fuchsian group $\Gamma$ is called \textbf{(co-)compact} \index{Fuchsian group!(co-)compact} if the quotient $\HH/\Gamma$ is compact. It is very often convenient to look at conjugated groups: for a Fuchsian group $\Gamma$ and $M \in \SL_2(\RR)$ we denote by $\Gamma^M$ the group $M^{-1}\Gamma M$. \label{glo_conj}\\[11pt]
Here are two motivations why Fuchsian groups are interesting for us:\footnote{The reader who is not familiar with the objects which are mentioned here should consult the following sections first.} First of all, every Veech group is a Fuchsian group (see e.g. \cite{Vor96}). Secondly recall that the moduli space of Riemann surfaces characterizes the set of isomorphism classes of Riemann surfaces of a given genus. It goes back to ideas of Riemann that the universal cover of every Riemann surface is biholomorphically equivalent either to $\mathbb{P}^1(\CC)$ or to $\CC$ or to $\HH$ (\textbf{Uniformization theorem}, see e.g. \cite{FK92}, Chapter~IV.4). All Riemann surfaces of genus~$g \geq 2$ have $\HH$ as their universal cover. Since the automorphism group of $\HH$ is exactly $\PSL_2(\RR)$, every Riemann surface of genus~$g \geq 2$ can be regarded as the quotient of $\HH$ by a Fuchsian group. On the other hand, for a cofinite Fuchsian group $\Gamma$ the quotient space $\HH/\Gamma$ becomes a Riemann surface, if the charts are chosen properly. The surface $\HH/\Gamma$ has orbifold points at the fixed points of the elliptic elements of $\Gamma$.
\paragraph{Signature of a Fuchsian Group.} The most general presentation of a cofinite Fuchsian group $\Gamma$ is the following (see e.g. \cite{Sin72}): the set of generators is given by some hyperbolic elements $a_1,b_1,...,a_g,b_g$, some elliptic elements $x_1,x_2,...,x_r$ and some parabolic elements $p_1,...,p_s$. Note that the $x_j$ and $p_j$ are representatives of the conjugacy classes of the elliptic and parabolic generators. Furthermore the relations are given by $$x_1^{m_1}=x_2^{m_2}=...=x_r^{m_r}=\prod_{i=1}^g [a_i,b_i] \prod_{j=1}^r x_j \prod_{k=1}^s p_k =1$$
where $[\cdot,\cdot]$ denotes the commutator. Then we say that $\Gamma$ has \textbf{signature} $(g;m_1,...m_r;s)$. \label{glo_signature}\index{Fuchsian group!signature}The signature contains precise information about the Euler characteristic $\chi(\HH/\Gamma)$ \label{glo_eul} and therefore also about its volume since $\mu(\HH/\Gamma)=2\pi\chi(\HH/\Gamma)$.

\begin{thm} \label{thm_Riemann_Hurwitz} \textbf{(Riemann-Hurwitz Formula)} \index{Riemann-Hurwitz formula} The Euler characteristic $\chi(\HH / \Gamma)$ of a cofinite Fuchsian group $\Gamma$ of signature $(g;m_1,...m_r;s)$ is:
$$2 - 2g - \sum_{i=1}^r \left( 1 - \frac{1}{m_i} \right) - s.$$
\end{thm}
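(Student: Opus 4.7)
The plan is to realize $\HH/\Gamma$ as a two-dimensional orbifold, determine its topology from the given presentation, and then read off $\chi$ via the orbifold Euler characteristic formula (equivalently, via Gauss--Bonnet applied to a fundamental polygon).

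First I would choose a Dirichlet fundamental domain $F$ for $\Gamma$, centered at a point with trivial stabilizer. Since $\Gamma$ is cofinite, the closure of $F$ in $\overline{\HH}$ is a hyperbolic polygon with finitely many sides, whose vertices split into three classes. The ordinary vertices lie in the interior of $\HH$ and their $\Gamma$-orbits glue to points around which the total angle is $2\pi$. The elliptic vertices are the fixed points of the generators $x_i$; at such a point the total angle in the quotient is $2\pi/m_i$ and we get a cone point of order $m_i$. The ideal vertices lie on $\partial\HH$ and coincide with the fixed points of the parabolic generators $p_k$, becoming cusps of $\HH/\Gamma$.

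Next I would analyze the side-pairing of $F$ induced by the generators $a_i,b_i,x_j,p_k$ and the single relation $\prod[a_i,b_i]\prod x_j\prod p_k = 1$. This is the classical Poincar\'e polygon picture: a $(4g+2r+s)$-gon whose sides are identified in the standard genus-$g$ surface pattern, with $r$ extra pairs of sides meeting at cone points and $s$ further vertices sent to infinity. A cutting-and-pasting argument (or invocation of Poincar\'e's polygon theorem) shows that the underlying topological space of $\HH/\Gamma$, after filling the $s$ cusps, is a closed orientable surface of genus $g$ carrying $r$ marked cone points of orders $m_1,\ldots,m_r$.

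Given this, the computation is immediate: the filled surface has topological Euler characteristic $2-2g$; removing the $s$ cusps subtracts $s$; and each cone point of order $m_i$ contributes $1/m_i$ instead of $1$ to the orbifold Euler characteristic, lowering it by $1-1/m_i$. Summing these contributions yields
$$ \chi(\HH/\Gamma) \;=\; 2 - 2g - s - \sum_{i=1}^{r}\Bigl(1-\tfrac{1}{m_i}\Bigr), $$
which is the claim. The main obstacle is the careful bookkeeping showing that the abstract presentation really produces the expected topology (Poincar\'e's polygon theorem). One can bypass this by applying hyperbolic Gauss--Bonnet directly to $F$: the interior-angle sum contributes $2\pi$ per ordinary orbit, $2\pi/m_i$ per elliptic vertex, and $0$ per cusp, and combining with $V-E+F$ for the identified polygon reproduces the same formula without having to isolate the genus by hand.
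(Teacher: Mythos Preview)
Your sketch is the standard proof and is correct in outline; the paper itself does not give a proof at all but simply cites \cite{Miy89}, Theorem~2.4.3. So there is nothing to compare against: you have supplied more than the paper does, and your Gauss--Bonnet/orbifold Euler characteristic argument is essentially what one finds in Miyake or any standard reference.
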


\begin{proof} See \cite{Miy89}, Theorem 2.4.3. \end{proof}

\paragraph{Commensurability and Arithmeticity} \index{commensurator} \index{commensurability} Let G be a group and $A,B < G$ two subgroups: \begin{itemize} 
\item[(i)] $A$ and $B$ are called \textbf{directly commensurable} if $A \cap B$ has finite index in both $A$ and $B$, i.e. $[A:A\cap B] < \infty$ and $[B:A\cap B] < \infty$. \label{glo_Index}
\item[(ii)] $A$ and $B$ are called \textbf{commensurable (in G)} if there exits a $g \in G$ such that $A$ and $gBg^{-1}$ are directly commensurable.
\item[(iii)] For a subset $A$ of $G$ we denote by $\Comm_G  (A)$ the \textbf{commensurator of $A$ in $G$} \label{glo_Comm} namely the set of all $g \in G$ such that $A$ and $gAg^{-1}$ are directly commensurable. \index{commensurator}
\end{itemize}
\begin{rem} 
\begin{itemize}
\item[(i)] Two subgroups $A$ and $B$ are directly commensurable if and only if there exists a common subgroup $C$ which has finite index in both $A$ and $B$.
\item[(ii)] If $A,B,C$ are subgroups of $G$ and $A$ and $B$ are directly commensurable then $A \cap C$ and $B \cap C$ are directly commensurable.
\item[(iii)]If $C$ is a subgroup of $B$ of finite index and $A$ is directly commensurable to $C$, then $A$ is also directly commensurable to $B$. 
\item[(iv)] We have $A \subset \Comm_G  (A)$.
\item[(v)]  Always $[\Comm_G  (A):A] \neq 2$ since $A$ cannot be a normal subgroup.
\item[(vi)] The set $\Comm_G (A)$ is a subgroup of $G$.
\item[(vii)] If $A$ and $B$ are commensurable then $\Comm_G  (A) = \Comm_G  (B)$.
\item[(viii)] For all $g \in G$ we have $\Comm_G  (gAg^{-1}) = g \Comm_G  (A) g^{-1}$.
\end{itemize}
\end{rem}


An \textbf{order} $\mathcal{O}$ in a quaternion\footnote{The word quaternion itself is taken from a sentence of the King James Bible: "And when he had apprehended him, he put him in prison, and delivered him to four quaternions of soldiers to keep him"(Acts 12:4); compare \cite{Ebb92}, p.159.} algebra $A$ over a totally real number field $F$ is a subring of $A$ containing $1$ which is a finitely generated $\mathcal{O}_F$-module generating the algebra $A$ over $F$, where $\mathcal{O}_F$ is the ring of integers in $F$. Recall that the group of units in such an order $\mathcal{O}$ of reduced norm $1$ can be embedded into $\SL_2(\RR)$. This group, denoted by $\Gamma(\mathcal{O}^1,A)$, is in fact a Fuchsian group (\cite{Kat92}, Theorem 5.2.7).\\[11pt] A Fuchsian group $\Gamma$ is called \textbf{arithmetic} \index{Fuchsian group!arithmetic} if it is commensurable with the group $\Gamma(\mathcal{O}^1,A)$ for some $\mathcal{O}$ and $A$. Another equivalent definition of arithmeticity was given in \cite{Tak75} using traces and their Galois conjugates.\\[11pt]
A very deep result of G. Margulis (see \cite{Mar89} for a proof) links the commensurator of a Fuchsian group to the notion of arithmeticity. This theorem will play an important role in particular in Chapter~\ref{cha_maximality}. 
\begin{thm} \index{commensurator!Margulis' Theorem} \textbf{(Margulis' Commensurator Theorem)} \label{thm_Margulis}\\ A Fuchsian group $\Gamma \subset \PSL_2(\mathbb{R})$ is arithmetic if and only if its commensurator $\Comm_{\PSL_2(\mathbb{R})}(\Gamma)$ is dense in $\PSL_2(\mathbb{R})$. Furthermore if $\Gamma$ is non-arithmetic, then $\Comm_{\PSL_2(\mathbb{R})}(\Gamma)$ contains $\Gamma$ as finite index subgroup and hence $\Comm_{\PSL_2(\mathbb{R})}(\Gamma)$ is itself a Fuchsian group.
\end{thm}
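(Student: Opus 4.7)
The plan is to prove the two implications of the main equivalence separately and then deduce the "furthermore" clause. The easy direction (arithmetic $\Rightarrow$ dense commensurator) is explicit. Suppose $\Gamma$ is arithmetic, so it is commensurable with $\Gamma(\mathcal{O}^1,A)$ for some order $\mathcal{O}$ in a quaternion algebra $A$ over a totally real field $F$. Let $\widetilde{\Lambda}$ be the image of the $F$-points of the algebraic group $\PGL_1(A)$ inside $\PSL_2(\RR)$ via the chosen archimedean place. Every element of $\widetilde{\Lambda}$ conjugates the order $\mathcal{O}$ to a commensurable order, so $\widetilde{\Lambda} \subseteq \Comm_{\PSL_2(\RR)}(\Gamma(\mathcal{O}^1,A))$. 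The group $\PGL_1(A)(F)$ is Zariski-dense in the real algebraic group $\PGL_1(A)(\RR) \cong \PSL_2(\RR)$ (Borel density), and combining this with strong approximation gives density in the real topology. Since $\Comm_{\PSL_2(\RR)}(-)$ only depends on the commensurability class, density of the commensurator of $\Gamma$ follows.

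The reverse implication (dense commensurator $\Rightarrow$ arithmetic) is the deep half of Margulis' theorem, and is where I expect the essential difficulty to lie. The strategy I would follow is to exploit the commensurator $\Lambda := \Comm_{\PSL_2(\RR)}(\Gamma)$ as a countable group acting on $\Gamma \backslash \PSL_2(\RR)$ by finite-to-finite correspondences (Hecke-type operators). Density of $\Lambda$ in the real topology, together with ergodicity of $\Gamma$ on appropriate boundary spaces, allows one to push this action through a measurable super-rigidity argument that produces a $\QQ$-structure on $\PSL_2(\RR)$ with respect to which $\Gamma$ is commensurable with the integer points. In higher rank this is Margulis' classical super-rigidity; in rank one, which is our situation for $\PSL_2(\RR)$, super-rigidity fails for lattices but the specific commensurator-rigidity statement still holds and is established in \cite{Mar89} by analyzing the action of $\Lambda$ on the Poisson boundary and using the density of $\Lambda$ to compensate for the lack of classical super-rigidity. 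The main obstacle, and the reason this step cannot be shortened, is precisely reconstructing the arithmetic $\QQ$-structure purely from the topological data that $\Lambda$ is dense.

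For the "furthermore" clause I would argue as follows. Assume $\Gamma$ is non-arithmetic; by the main equivalence just established, $\Lambda = \Comm_{\PSL_2(\RR)}(\Gamma)$ is not dense. Consider its topological closure $\overline{\Lambda}$, a proper closed subgroup of $\PSL_2(\RR)$ containing the lattice $\Gamma$. I would invoke the classification of closed subgroups of $\PSL_2(\RR)$: the identity component $\overline{\Lambda}^{\circ}$ is either trivial, a one-parameter subgroup, the normalizer of such a subgroup, or all of $\PSL_2(\RR)$. The last is ruled out by non-density; the intermediate possibilities are excluded because $\Gamma \subset \overline{\Lambda}$ is a lattice, and no lattice in $\PSL_2(\RR)$ can be contained in a solvable (one-dimensional or two-dimensional) subgroup. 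Hence $\overline{\Lambda}^{\circ} = \{1\}$, so $\Lambda$ itself is discrete, and thus a Fuchsian group. Finally, $\Gamma \subseteq \Lambda$ is an inclusion of discrete subgroups of $\PSL_2(\RR)$ with $\Gamma$ of finite covolume, which forces $[\Lambda:\Gamma] < \infty$ by comparing hyperbolic volumes of fundamental domains.
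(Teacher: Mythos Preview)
The paper does not prove this theorem at all: it is stated as a deep background result with the reference ``(see \cite{Mar89} for a proof)'' and then used as a black box throughout Chapter~\ref{cha_maximality}. So there is no ``paper's own proof'' to compare against; anything you write here goes strictly beyond what the paper does.

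Your sketch is broadly correct in structure and in its identification of where the depth lies, but two attributions in the easy direction are off. Density of $\PGL_1(A)(F)$ in $\PGL_1(A)(\RR)\cong\PSL_2(\RR)$ is not ``Borel density'' (that theorem concerns Zariski density of \emph{lattices}, not of rational points), nor does it require strong approximation; it is simply weak approximation at a single archimedean place, or even more directly the density of $F$ in $\RR$ together with the fact that $A\otimes_F\RR\cong M_2(\RR)$. Your ``furthermore'' argument is clean and correct: ruling out positive-dimensional identity component for $\overline{\Lambda}$ is exactly where Borel density (applied to the lattice $\Gamma$) is genuinely used, since a Zariski-dense lattice cannot sit inside the normalizer of a proper connected algebraic subgroup. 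Given that the paper treats this as a citation, you should probably do the same rather than attempt a self-contained proof of the hard direction, which cannot be made precise at this level of detail.
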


There is mainly one property beside Margulis' Theorem of arithmetic Fuchsian groups which is important for us (this justifies also the sketchy definition):
\begin{thm} \label{thm_nonarithmetic_commensurable} (\cite{Mac01}, Theorem~5.2) If $\Gamma(\mathcal{O}^1,A)$ is non-compact, then it is commensurable with $\PSL_2(\ZZ)$. \end{thm}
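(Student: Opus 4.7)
The plan is to show that non-compactness of $\Gamma(\mathcal{O}^1,A)$ forces the quaternion algebra $A$ to be the split algebra $M_2(\QQ)$; once this is established the commensurability statement is essentially automatic. I would first invoke the standard dichotomy for lattices in $\PSL_2(\RR)$: a cofinite Fuchsian group has a non-compact quotient if and only if it contains a parabolic element. Since the hypothesis says $\Gamma(\mathcal{O}^1,A)$ is non-compact (but it is automatically cofinite, as this is a general feature of arithmetic Fuchsian groups coming from quaternion orders), there exists some $\gamma \in \mathcal{O}^1$ whose image in $\SL_2(\RR)$ is parabolic, that is $\tr(\gamma) = \pm 2$.

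Next I would translate this into algebraic information about $A$. An element $\gamma \in \mathcal{O}^1$ of reduced norm $1$ and reduced trace $\pm 2$ satisfies $\gamma^2 \mp 2\gamma + 1 = 0$, i.e.\ $(\gamma \mp 1)^2 = 0$. Hence $A$ contains a non-zero nilpotent element, so $A$ is not a division algebra. The Wedderburn classification of quaternion algebras then forces $A \cong M_2(F)$, where $F$ is the totally real number field of definition.

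Now comes the step I expect to be the real obstacle: pinning down $F$. The embedding $\Gamma(\mathcal{O}^1,A) \hookrightarrow \SL_2(\RR)$ used to make it a Fuchsian group requires $A$ to be ramified at every real place of $F$ except one (otherwise, using the other real embeddings of $\mathcal{O}^1$ together with the Borel density / strong approximation picture, the image in $\SL_2(\RR)$ would fail to be discrete). But $M_2(F)$ is split at every real place of $F$, so $F$ must have exactly one real place, forcing $F = \QQ$. Thus $\mathcal{O}$ is an order in $M_2(\QQ)$.

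Finally, to conclude, I would use the fact that any two $\ZZ$-orders in a finite-dimensional $\QQ$-algebra are commensurable: both $\mathcal{O}$ and the standard order $M_2(\ZZ)$ are $\ZZ$-lattices of full rank in $M_2(\QQ)$, so their intersection has finite index in each. Passing to the norm-one units and then to $\PSL_2$ preserves finite index, which yields that $\Gamma(\mathcal{O}^1,A)$ is directly commensurable with $\PSL_2(\ZZ)$. The main difficulty, as noted, is the rigorous justification that a split quaternion algebra cannot produce a Fuchsian (discrete) subgroup of $\PSL_2(\RR)$ unless $F = \QQ$; everything else is an assembly of standard facts about parabolic elements and $\ZZ$-orders.
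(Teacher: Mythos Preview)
Your argument is correct and is exactly the standard proof one finds in the reference the paper cites; the paper itself does not supply a proof but simply quotes \cite{Mac01}, Theorem~5.2, as a background result. The step you flag as potentially delicate---forcing $F=\QQ$ once $A\cong M_2(F)$---is in fact immediate from the definition of $\Gamma(\mathcal{O}^1,A)$, which requires $A$ to be ramified at every real place of the totally real field $F$ except one, whereas $M_2(F)$ is split at all places.
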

In other words $\PSL_2(\ZZ)$ is a representative for the (single) commensurability class of non-compact arithmetic Fuchsian groups. Vice versa: a non-compact Fuchsian group which is not commensurable to $\PSL_2(\ZZ)$ is non-arithmetic.
\paragraph{Trace fields.} \index{Fuchsian group!trace field} The \textbf{trace field} of a Fuchsian group $\Gamma$ is the subfield of $\RR$ generated by $\tr(A)$ for $A \in \Gamma$. If the volume of the Fuchsian group is finite, then the trace field is always a finite extension of $\QQ$ (\cite{MR03}, Theorem~3.1.2).

\subsection{Congruence Subgroups of $\SL_2(\OD)$} \label{sec_congruence_subgroups}

We denote by $\SL_2(\OD)$ the set of all 2 by 2 matrices with entries in $\OD$ and determinant $1$ and set $\PSL_2(\OD)$ as $\SL_2(\OD)$ modulo diagonal matrices of determinant $1$. Accordingly we define $\SL_2(K)$ and $\PSL_2(K)$.\footnote{$\SL_2(K)$ is generated by the matrices $U(b)=\left( \begin{smallmatrix} 1 & b \\ 0 & 1 \end{smallmatrix} \right), \ b \in K$ and $S=\left( \begin{smallmatrix} 0 & -1 \\ 1 & 0 \end{smallmatrix} \right)$ (see e.g. \cite{Lan85}, p. 209-211).} On the level of matrices Kronecker's Approximation Theorem implies that $\PSL_2(\OD)$ is not a Fuchsian group.\footnote{In the next section we will however see that the group $\PSL_2(\OD)$ acts properly discontinuously on $\HH \times \HH^-$; the group $\PSL_2(K)$ is neither a Fuchsian group nor does it act properly discontinuously on $\HH \times \HH^-$.}  
Beside $\SL_2(\OD)$ we will also be interested in a special type of subgroups of $\SL_2(\OD)$, which will appear in some of the explicit calculations, namely certain congruence subgroups. In this section we will introduce these subgroups of $\SL_2(\OD)$. \cite{Fre90} and \cite{New72} give a broader overview of the topic.
\begin{defi} Two matrices $A$ and $B$ are equivalent modulo $n \in \OD$, if they are entrywise equivalent modulo $n$. Then $$\Gamma^D(n):= \{ M \in \SL_2(\OD) \mid M \equiv E \mod n\} \label{glo_GammaD}$$ is called \textbf{principal congruence subgroup}. A \textbf{congruence subgroup} \index{congruence subgroup|(} is a subgroup of $\SL_2(\OD)$ containing a principal congruence subgroup $\Gamma^D(n)$ for some $n \in \OD$. \end{defi}

Sometimes $n$ is called the \textbf{level} of the congruence subgroup. A principal congruence subgroup is equal to the kernel of the projection map $\SL_2(\OD) \to \SL_2(\OD/n\OD)$ (see e.g. \cite{Fre90}, Chapter~3).

\begin{lem} \label{lem_exact_cong_sub} If $h_D=1$ then for all $n \in \OD$ the following sequence is exact:
$$1 \to \Gamma^D(n) \to \SL_2(\OD) \to \SL_2(\OD  / n \OD) \to 1.$$ \end{lem}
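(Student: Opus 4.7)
Exactness at $\Gamma^D(n)$ is the injectivity of the inclusion $\Gamma^D(n) \hookrightarrow \SL_2(\OD)$, which is immediate, and exactness at $\SL_2(\OD)$ is the very definition of $\Gamma^D(n)$ as the kernel of the reduction map modulo $n$. All the content therefore lies in the surjectivity of $\SL_2(\OD) \to \SL_2(\OD/n\OD)$, and this is where the hypothesis $h_D = 1$ enters, via the fact (recalled in the previous subsection) that $\OD$ is then a principal ideal domain.

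Given $\bar M = \bigl( \begin{smallmatrix} \bar a & \bar b \\ \bar c & \bar d \end{smallmatrix} \bigr) \in \SL_2(\OD/n\OD)$, the plan is to build a lift in three steps. First, I choose any lift $a,c \in \OD$ of the first column; the relation $\bar a \bar d - \bar b \bar c = \bar 1$ forces $(a,c,n) = \OD$ as an ideal. Second, I adjust $a$ within its class modulo $n$, replacing it by $a' := a + \lambda n$, so as to achieve $(a',c) = \OD$: for each of the finitely many prime divisors $\mathfrak{p}$ of $c$, either $n \in \mathfrak{p}$, in which case the initial coprimality forces $a \notin \mathfrak{p}$ and no condition on $\lambda$ is needed, or $n \notin \mathfrak{p}$, in which case exactly one residue class of $\lambda$ modulo $\mathfrak{p}$ has to be avoided; a simultaneous such $\lambda$ is produced by the Chinese Remainder Theorem in $\OD$. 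Bezout in the principal ideal domain $\OD$ then yields $u,v \in \OD$ with $a'v - u c = 1$, so that $M' := \bigl( \begin{smallmatrix} a' & u \\ c & v \end{smallmatrix} \bigr) \in \SL_2(\OD)$ already matches the first column of $\bar M$ modulo $n$.

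Finally, $\bar M^{-1}\bar{M'}$ fixes the first standard basis vector of $(\OD/n\OD)^2$ and hence equals a unipotent matrix $\bigl( \begin{smallmatrix} 1 & \bar t \\ 0 & 1 \end{smallmatrix} \bigr)$ for some $\bar t \in \OD/n\OD$; lifting $\bar t$ to an arbitrary $t \in \OD$ and replacing $M'$ by $M' \bigl( \begin{smallmatrix} 1 & t \\ 0 & 1 \end{smallmatrix} \bigr)^{-1}$ produces an element of $\SL_2(\OD)$ whose reduction modulo $n$ equals $\bar M$, as required.

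The only non-routine step is the coprime-adjustment $a \mapsto a + \lambda n$; this is where the principal ideal structure on $\OD$ is genuinely needed, as the remaining steps (Bezout completion to $\SL_2$ and correction by a unipotent lift) are formal once $(a',c) = \OD$ has been arranged. The finiteness of the prime divisors of $c$, which is required for the Chinese Remainder step, holds because $\OD$ is Noetherian of Krull dimension one, so no extra hypothesis beyond $h_D = 1$ is necessary.
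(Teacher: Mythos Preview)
Your argument is correct and is precisely the standard $\SL_2(\ZZ)$ proof adapted to a PID, which is exactly what the paper invokes (it simply cites the classical argument, e.g.\ Koecher--Krieg, Satz~II.3.2, and remarks that it goes through verbatim once $h_D=1$ guarantees that $\OD$ is a PID). So you have spelled out in full what the paper leaves as a reference.

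One small edge case to tidy up: when you write ``choose any lift $a,c$'' and then appeal to the \emph{finitely many} prime divisors of $c$, you are tacitly assuming $c \neq 0$. If $\bar c = 0$ and you happen to lift to $c = 0$, the coprime-adjustment step would ask you to make $a' = a + \lambda n$ a unit, which need not be possible. The fix is immediate---just pick the lift $c = n$ in that case---but it is worth saying explicitly, since your closing paragraph singles out exactly this finiteness as the non-routine ingredient.
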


\begin{proof} Evidently $\Gamma^D(n)$ is the kernel of the projection $\pi: \SL_2(\OD) \to \SL_2(\OD  / n \OD)$. The surjectivity follows just like in the $\SL_2(\ZZ)$-case (see e.g. \cite{KK07}, Satz II.3.2) since $h_D=1$. \end{proof}

For general $h_D$ the map is still surjective; to prove this one may use use rather sophisticated techniques (strong approximation). Nevertheless, there is a very nice elementary proof, that we want to sketch here: let $S,R$ be two commutative rings and let $R \to S$ be a surjective homomorphism. If $\SL_2(S)$ is generated by elementary matrices then the induced map $\SL_2(R) \to \SL_2(S)$ is also surjective since an elementary matrix over $S$ obviously lifts to an elementary matrix over $R$. Furthermore, it follows e.g. from the results in \cite{Ros94}, Chapter~2.2, that $\SL_2(S)$ is generated by elementary matrices if $S$ is a local ring. We now apply these facts to $R=\OD$ and $S=\OD/n\OD$. Since $\OD/n\OD$ is finite and since every finite commutative ring is a direct product of local rings, the claim thus follows. \\[11pt]As the absolute value of the norm of an element $n \in \OD$ is equal to the number of elements of $\OD  / n \OD$ and thus finite, the number of elements in $\SL_2(\OD  / n \OD)$ and thereby of $\Gamma^D(n)$ can be very roughly bounded by $\mathcal{N}(n)^4$. If the ring of integers $\OD$ has class number $h_D=1$ then the index of principal congruence subgroups in $\SL_2(\OD)$ can be calculated.  

\begin{thm} \label{thm_cong_index} Let $n \in \OD$ and $\Gamma^D(n)$ be an principal congruence subgroup where 4$$(n) = \prod_{k=1}^{s} (\mathfrak{p}_k)^{e_k}$$ is the unique prime decomposition of $n$ up to multiplication by units. Then
$$\Gamma^D(n) = \Gamma^D(\mathfrak{p}_1)^{e_1}) \times \cdots \times \Gamma^D(\mathfrak{p}_s)^{e_s})$$
and
$$[\SL_2(\OD) : \Gamma^D(n)] =\N(n)^{3} \prod_{\mathfrak{p} | n} \left( 1 - \frac{1}{\N(\mathfrak{p})^2}\right).$$ \end{thm}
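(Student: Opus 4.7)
The strategy is to pass to the finite quotient via Lemma~\ref{lem_exact_cong_sub}, which gives $[\SL_2(\OD) : \Gamma^D(n)] = |\SL_2(\OD/n\OD)|$. Both halves of the theorem then reduce to understanding the structure of $\SL_2(\OD/n\OD)$.

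First, I would apply the Chinese Remainder Theorem: the ideals $(\mathfrak{p}_k^{e_k})$ are pairwise coprime in the Dedekind domain $\OD$, so one has a ring isomorphism $\OD/n\OD \cong \prod_{k=1}^s \OD/\mathfrak{p}_k^{e_k}$ and, functorially, a group isomorphism $\SL_2(\OD/n\OD) \cong \prod_k \SL_2(\OD/\mathfrak{p}_k^{e_k})$. Combined with Lemma~\ref{lem_exact_cong_sub} this gives $\SL_2(\OD)/\Gamma^D(n) \cong \prod_k \SL_2(\OD)/\Gamma^D(\mathfrak{p}_k^{e_k})$, which is the asserted product decomposition (equivalently $\Gamma^D(n) = \bigcap_k \Gamma^D(\mathfrak{p}_k^{e_k})$). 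It therefore suffices to compute $|\SL_2(\OD/\mathfrak{p}^e)|$ for a single prime power.

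Writing $q = \N(\mathfrak{p})$, so that $\OD/\mathfrak{p} \cong \mathbb{F}_q$ is the residue field, the standard count over a finite field gives $|\SL_2(\mathbb{F}_q)| = q(q^2-1)$. For $e \geq 2$ I would use the filtration by the kernels $K_i := \ker\bigl(\SL_2(\OD/\mathfrak{p}^e) \to \SL_2(\OD/\mathfrak{p}^i)\bigr)$ and compute the successive quotients. Fixing a uniformizer $\pi \in \mathfrak{p}$, every element of $K_i$ has the form $\Id + \pi^i M$ with $M \in \Mat_2(\OD/\mathfrak{p})$; for $i \geq 1$ one has $\det(\Id + \pi^i M) \equiv 1 + \pi^i \tr(M) \pmod{\mathfrak{p}^{i+1}}$ because the quadratic term $\pi^{2i}\det(M)$ vanishes modulo $\mathfrak{p}^{i+1}$, so the determinant-one condition reads $\tr(M) \equiv 0 \pmod{\mathfrak{p}}$, leaving $q^3$ choices. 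Hence $|K_i/K_{i+1}| = q^3$ and $|\SL_2(\OD/\mathfrak{p}^e)| = q(q^2-1)\cdot q^{3(e-1)} = q^{3e}\bigl(1 - q^{-2}\bigr)$.

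Multiplying over the primes dividing $n$ and using $\prod_k \N(\mathfrak{p}_k)^{3e_k} = \N(n)^3$ then produces the stated index formula. The only slightly delicate point is the careful bookkeeping of the determinant condition modulo $\mathfrak{p}^{i+1}$ when identifying $\mathfrak{p}^i/\mathfrak{p}^{i+1}$ with $\OD/\mathfrak{p}$; aside from that, the proof is essentially a transcription of the classical $\SL_2(\ZZ/N\ZZ)$-calculation to the quadratic-field setting, made possible by $\OD$ being a Dedekind domain with finite residue fields and by the surjectivity in Lemma~\ref{lem_exact_cong_sub}.
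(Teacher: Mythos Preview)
Your proof is correct and follows the standard route. The paper itself does not give a proof but simply cites \cite{New72}, pp.~113--115, noting that the argument there (stated for Euclidean rings) goes through verbatim when $h_D=1$; the argument in Newman is exactly the one you wrote down---pass to $\SL_2(\OD/n\OD)$ via the exact sequence, split with the Chinese Remainder Theorem, and count $|\SL_2(\OD/\mathfrak{p}^e)|$ via the filtration by reduction kernels. One small remark: you invoke a uniformizer $\pi \in \mathfrak{p}$, which need not exist globally in a Dedekind domain, but since the computation takes place entirely in the local ring $\OD/\mathfrak{p}^e$ (where $\mathfrak{p}$ is principal) this is harmless; under the ambient hypothesis $h_D=1$ it is of course literally true.
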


\begin{proof} See \cite{New72}, p. 113-115. There it is assumed that the ring is Euclidean. However, the proof also works verbatim if only $h_D=1$. \end{proof}

Note that this formula is a generalization of the $\SL_2(\ZZ)$-case. There are even some more special congruence subgroups, where one can calculate the index. These are the analogs to the Hecke congruence subgroups of $\SL_2(\ZZ)$, which are considered e.g. in the theory of newforms (see e.g. \cite{Wei08}). These groups will also be important for this work.

\begin{defi} \label{glo_GammaD0} The groups $\Gamma^D_0(n)$ and $\Gamma^{D,0}(n)$ are defined by $$\Gamma^D_0(n) := \left\{ \begin{pmatrix} a & b \\ c & d \end{pmatrix} \mid \begin{pmatrix} a & b \\ c & d \end{pmatrix} \equiv \begin{pmatrix} * & * \\ 0 & * \end{pmatrix} \mod n \right\} $$
and
$$\Gamma^{D,0}(n) := \left\{ \begin{pmatrix} a & b \\ c & d \end{pmatrix} \mid \begin{pmatrix} a & b \\ c & d \end{pmatrix} \equiv \begin{pmatrix} * & 0 \\ * & * \end{pmatrix} \mod n \right\}.$$
Finally we set $\Gamma^D(m,n):= \Gamma^D_0(m) \cap \Gamma^{D,0}(n).$ Accordingly, we define all these groups for proper ideals $\mathfrak{m},\mathfrak{n}$.
\end{defi} \index{congruence subgroup|)}

\begin{prop} \label{prop_index_congruence} Let $n \in \OD$ and let $(n) = \prod_{k=1}^{s} (\mathfrak{p}_k)^{e_k}$ be a  decomposition of $n$ into prime ideals then:  $$[\SL_2(\OD) : \Gamma^D_0(n)] = [\SL_2(\OD) : \Gamma^{D,0}(n)]=\N((n)) \prod_{\mathfrak{p}_k} \left( 1 + \frac{1}{\N(\mathfrak{p}_k)}\right).$$ 
If $m \in \OD$ with $(m,n)=1$ and if $(m) = \prod_{k=1}^{r} (\widetilde{\mathfrak{p}_k})^{e_k}$ is a decomposition of $(m)$ into prime ideals then
\begin{eqnarray*} [\SL_2(\OD) : \Gamma^D_0(n) \cap \Gamma^{D,0}(m)] & = & \N((n)) \prod_{\mathfrak{p}_k} \left( 1 + \frac{1}{\N(\mathfrak{p}_k)}\right) \cdot \\ & & \N(m) \prod_{\widetilde{\mathfrak{p}_k}} \left( 1 + \frac{1}{\N(\widetilde{\mathfrak{p}_k})}\right).\end{eqnarray*}
\end{prop}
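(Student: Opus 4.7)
The plan is to reduce both indices to counting arguments in the finite quotient $\SL_2(\OD/n\OD)$, and then apply the Chinese Remainder Theorem to split into prime power factors. First I would invoke the short exact sequence
$$1 \to \Gamma^D(n) \to \SL_2(\OD) \to \SL_2(\OD/n\OD) \to 1$$
whose surjectivity holds for general class number by the elementary ``local ring / elementary matrix'' argument given right after Lemma~\ref{lem_exact_cong_sub}. Under this reduction, $\Gamma^D_0(n)$ is precisely the preimage of the Borel subgroup $B(\OD/n\OD)$ of upper triangular matrices, and consequently
$$[\SL_2(\OD) : \Gamma^D_0(n)] = [\SL_2(\OD/n\OD) : B(\OD/n\OD)] = |\mathbb{P}^1(\OD/n\OD)|,$$
where the right hand side is the number of classes of unimodular pairs $(a,b)$ under the unit action.

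Next I would apply CRT. Since the $\mathfrak{p}_k$ are pairwise coprime, one has the ring isomorphism $\OD/n\OD \cong \prod_{k=1}^{s} \OD/\mathfrak{p}_k^{e_k}$, and this is compatible with the notion of ``coordinates generating the whole ring,'' giving
$$|\mathbb{P}^1(\OD/n\OD)| = \prod_{k=1}^{s} |\mathbb{P}^1(\OD/\mathfrak{p}_k^{e_k})|.$$
Each factor $R_k := \OD/\mathfrak{p}_k^{e_k}$ is a local ring of order $\N(\mathfrak{p}_k)^{e_k}$ with maximal ideal of order $\N(\mathfrak{p}_k)^{e_k-1}$, so counting pairs in $R_k^2$ not both lying in the maximal ideal and dividing by $|R_k^\ast|$ yields
$$|\mathbb{P}^1(R_k)| = \N(\mathfrak{p}_k)^{e_k}\Bigl(1 + \tfrac{1}{\N(\mathfrak{p}_k)}\Bigr).$$
Multiplying these and using $\prod_k \N(\mathfrak{p}_k)^{e_k} = \N((n))$ produces the claimed formula for $\Gamma^D_0(n)$.

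For $\Gamma^{D,0}(n)$ the same argument applies with the opposite Borel $B^-$ replacing $B$; since $|B^-| = |B|$ in $\SL_2(\OD/n\OD)$ the indices coincide (alternatively, conjugation by $\bigl(\begin{smallmatrix} 0 & 1 \\ -1 & 0 \end{smallmatrix}\bigr)$ intertwines the two). For the intersection with $(m,n) = 1$, I would apply CRT to the ideal $(mn)$ to get $\OD/mn\OD \cong \OD/n\OD \times \OD/m\OD$; then $\Gamma^D_0(n) \cap \Gamma^{D,0}(m)$ is the preimage of $B(\OD/n\OD) \times B^-(\OD/m\OD)$ inside $\SL_2(\OD/mn\OD)$, so its index is the product of the two previously computed ones.

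The only mildly delicate step is the computation of $|\mathbb{P}^1(R_k)|$ for higher prime powers $e_k \ge 2$, since then $R_k$ is not a field and ``coprime'' must be interpreted as ``not both lying in the maximal ideal''; I would either do the direct count above or argue that the reduction map $\mathbb{P}^1(R_k) \to \mathbb{P}^1(\OD/\mathfrak{p}_k)$ is surjective with each fibre of cardinality $\N(\mathfrak{p}_k)^{e_k-1}$, corresponding to lifts that differ by elements of the maximal ideal. Everything else is standard bookkeeping via CRT.
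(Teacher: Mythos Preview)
Your argument is correct and is precisely the standard route the paper has in mind: the paper's own proof merely cites \cite{Wei08} for $h_D=1$ and invokes weak approximation for $h_D>1$, whereas you spell out the actual computation via $\mathbb{P}^1(\OD/n\OD)$ and CRT. Your version is in fact slightly cleaner, since by appealing directly to the surjectivity of $\SL_2(\OD)\to\SL_2(\OD/n\OD)$ established after Lemma~\ref{lem_exact_cong_sub} you treat all class numbers uniformly rather than splitting into cases; the paper even uses the same $\SL_2(\OD)/\Gamma^D_0(n)\cong\mathbb{P}^1(\OD/n\OD)$ identification later in Section~\ref{sec_further_calculations}.
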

\begin{proof} In the case $h_D=1$ the proof can be done exactly like e.g. in \cite{Wei08}, Chapter~2.1. If $h_D>1$ it is immediately clear that the claimed equality hold with $\leq$ instead of $=$. As is commonly known in the literature, equality is also true in the latter case (weak approximation). We also implicitly prove this fact in Chapter~\ref{chapter_calculations}. 
\end{proof}

We conclude this section by giving another class of examples of congruence subgroups, which are in general not principal congruence subgroups.

\begin{prop} \label{prop_finite_index_GL2K} If $M \in \GL_2^+(K)$ then  $\SL_2(\OD) \cap M \SL_2(\OD) M^{-1}$ is a congruence subgroup and therefore
$$[\SL_2(\OD) : \SL_2(\OD) \cap M \SL_2(\OD) M^{-1}] < \infty $$
holds.\end{prop}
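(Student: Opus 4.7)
The plan is to exhibit an explicit principal congruence subgroup of $\SL_2(\OD)$ that is contained in $\SL_2(\OD) \cap M \SL_2(\OD) M^{-1}$; the finite-index statement will then follow automatically.

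First, I would clear denominators in $M$ and $M^{-1}$ simultaneously: since $M \in \GL_2^+(K)$, every entry of $M$ and of $M^{-1} = \det(M)^{-1} \cdot \mathrm{adj}(M)$ lies in $K$, and $\OD$ is dense in the sense that we can find a nonzero $m \in \OD$ such that both $mM$ and $mM^{-1}$ have entries in $\OD$. (Clear denominators in $M$ and in $M^{-1}$ separately, then multiply.) From here on $m$ is fixed.

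The key calculation is that $\Gamma^D(m^2) \subset \SL_2(\OD) \cap M\SL_2(\OD) M^{-1}$. Indeed, take any $A \in \Gamma^D(m^2)$ and write $A = \Id + m^2 B$ with $B \in \Mat^{2\times 2}(\OD)$. Then
$$M^{-1} A M \;=\; \Id + m^2\, M^{-1} B M \;=\; \Id + (mM^{-1})\, B\, (mM),$$
and the second summand has entries in $\OD$ because $mM$, $mM^{-1}$, and $B$ all do. Since $\det(M^{-1}AM) = \det(A) = 1$, we conclude $M^{-1}AM \in \SL_2(\OD)$, i.e., $A \in M \SL_2(\OD) M^{-1}$. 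Combined with the obvious inclusion $\Gamma^D(m^2) \subset \SL_2(\OD)$, this gives the desired containment.

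Finally, the finite-index claim is immediate: the kernel of the reduction map $\SL_2(\OD) \to \SL_2(\OD/m^2\OD)$ is exactly $\Gamma^D(m^2)$, and $\SL_2(\OD/m^2\OD)$ is finite because $\OD/m^2\OD$ is finite (its cardinality is $\N(m^2) = \N(m)^2$). Hence $[\SL_2(\OD):\Gamma^D(m^2)] < \infty$, and a fortiori $[\SL_2(\OD) : \SL_2(\OD) \cap M\SL_2(\OD)M^{-1}] < \infty$. By definition, a subgroup containing $\Gamma^D(m^2)$ is a congruence subgroup, so the proposition follows. There is essentially no serious obstacle here; the only point requiring a little care is to use $m^2$ rather than $m$ in the level, so that the denominators introduced by both $M$ and $M^{-1}$ can be absorbed simultaneously. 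Note also that the argument works for arbitrary class number $h_D$, since no use is made of unique factorization.
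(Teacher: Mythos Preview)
Your proof is correct and follows essentially the same approach as the paper: clear denominators in $M$ and $M^{-1}$ by some element of $\OD$, then show that the corresponding principal congruence subgroup is mapped into $\SL_2(\OD)$ under conjugation by $M^{-1}$, via the computation $M^{-1}(\Id + (\text{level})B)M = \Id + (\text{integral matrix})$. The paper uses level $d$ equal to the product of all denominators in $M$ and $M^{-1}$ (so that $dM^{-1}AM$ is already integral), whereas you use level $m^2$ with a single common denominator $m$; this is a cosmetic difference only.
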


\begin{proof} Let $M \in \GL_2^+(K)$ and let $d$ be the product of all the denominators appearing in $M$ and $M^{-1}$. Then $M^{-1} \Gamma^D(d) M \subset \SL_2(\OD)$ since $M^{-1}\Id M= \Id$ and since for all $A \in \Mat^{2x2}(\OD)$ we have $dM^{-1}AM \in \Mat^{2x2}(\OD)$. In other words the claim follows from the following exact sequence:
$$1 \to \Gamma^D(d) \to \SL_2(\OD) \to \SL_2(\OD/d\OD) \to 1.$$
\end{proof}
\subsection{Moduli Spaces} \label{sec_moduli_spaces}

One of the objects we will deal with everywhere in these notes is the moduli space of compact Riemann surfaces. It solves the moduli problem of parameterizing all Riemann surfaces of a given genus~$g$. In Section~\ref{sec_moduli_space_rs} this topic will be dealt with. In Section~\ref{sec_flat_surfaces} we introduce flat surfaces. These are Riemann surfaces together with a non-zero holomorphic differential form. Flat surfaces have a very important invariant, the Veech group. Furthermore, they lead to a stratification of the moduli space, that we describe in detail in Section~\ref{sec_strata}. Finally, we consider in Section~\ref{sec_moduli_ablian} another moduli space, namely the one of Abelian varieties.

\subsubsection{Flat Surfaces and Veech Groups} \label{sec_flat_surfaces}

\paragraph{Riemann surfaces.} By a \textbf{Riemann surface} \index{Riemann surface} we shall mean a connected holomorphic manifold of complex dimension one. More precisely, a Riemann surface is a connected Hausdorff space $X$ of real dimension two together with a maximal set $\Sigma$ of \textbf{charts} $\left\{U_\alpha,z_\alpha\right\}$ on $X$ such that the \textbf{transition functions}
$$f_{\alpha\beta} = z_\alpha \circ z_\beta^{-1}: z_\beta(U_\alpha \cap U_\beta) \to z_\alpha(U_\alpha \cap U_\beta)$$
are holomorphic maps. The maps $z_\alpha$ are also called \textbf{local parameters}. $\Sigma$ is called a \textbf{complex structure} on $X$. Note that given any set $M$ of (holomorphic) charts covering the surface, $M$ is contained in a unique $\Sigma$ (see e.g. \cite{Shi71}, p. 15). It is well-known that there is an equivalence of categories between (non-singular, connected, projective) algebraic curves over $\CC$ and compact Riemann surfaces (see e.g. \cite{Rey89}, Chapitre~VII, 3). This is the reason why there is a double terminology, i.e. why the terms \textit{compact Riemann surface} and \textit{algebraic curve} are used as synonyms in the existing literature.
\paragraph{Flat surfaces.} Morally speaking, a flat surface is a Riemann surface with a metric which has curvature zero everywhere with the only exception that the metric may have several singular points. These points are also called conical singularities. An intuitive example of a flat surface in this sense is a cube: it is flat on all of its sides; the vertices of the cube are the conical singularities; they carry all the curvature of the cube (see \cite{Zor06}, Section~1.1 for more details on this example). We now give the precise definition: a \textbf{flat surface} \index{flat surface} is a pair $(X,\omega)$ \label{glo_Xw} where $X$ is a Riemann surface and $\omega$ is a non-zero holomorphic differential form on $X$. One can define charts on $X$ by integrating $\omega$ locally (see e.g. \cite{HS06}, Section~1.1.3). The transition maps are then given by translations.  Let us explain how this gives rise to a metric with the properties mentioned previously. Let $Z(\omega)$ denote the zeroes of $\omega$. On $X \smallsetminus Z(\omega)$ a flat (Riemannian) metric is given by pulling back the Euclidean metric of $\CC$ via the charts. A zero $P \in Z(\omega)$ leads to a singularity of this metric. The total angle around a singularity, called the \textbf{cone angle}, is an integer multiple of $2\pi$. If the form $\omega$ has a zero of degree $d$, then the cone angle at this point is equal to $2\pi(d+1)$ (see e.g. \cite{Zor06}, Section~3.3). A geodesic segment connecting two singular points is called \textbf{saddle connections}. Equivalently, flat surfaces arise from gluing rational angled planar polygons by parallel translations along their faces. This implies in particular that there exists an atlas on $X$ such that all transition maps of $X$ away from the zeroes are given by translations. Therefore, flat surfaces are sometimes also called \textbf{translation surfaces} (see also e.g. \cite{Mas06}). 
a zero.\\[11pt]
The simplest class of examples of flat surfaces are \textbf{square-tiled surfaces} or \textbf{Origamis}, i.e. flat surfaces $(X,\omega)$, where $X$ is obtained as a covering of a square torus ramified over one point only and $\omega$ is the pullback of the holomorphic one-form on the torus. A rather comprehensive and understandable paper on square-tiled surfaces is \cite{Sch05}.\\[11pt]A \textbf{holomorphic quadratic differential} $q$ on a Riemann surface is locally defined by $q=f(z)(dz)^2$ where $f(z)$ is a holomorphic function defined on a chart $(U,z)$. A pair $(X,q)$ is sometimes called a \textbf{half-translation surface}, because there always exists an atlas on $X$ such that the transition functions are given by compositions of $\pm \Id$ and translations. 
\paragraph{Veech groups.} At least from our point of view, the Veech group is the most important invariant of a flat surface. Take the charts on $X \smallsetminus Z(\omega)$ defined by integrating $\omega$ and let $\Aff^+(X,\omega)$ denote the group of orientation-preserving homeomorphisms of $X$ that are affine diffeomorphisms on $X \smallsetminus Z(\omega)$ with respect to these charts. The matrix part of the affine map is independent of the carts and provides a map
$$D: \Aff^+(X,\omega) \to \SL_2(\RR).$$
The image of $D$ is called the \textbf{affine group} or \textbf{Veech group} of $(X,\omega)$ and often denoted by $\SL(X,\omega)$. \label{glo_SLXw} \index{Veech group}Let us point out three well-known basic properties of Veech groups.
\begin{prop} A Veech group $\SL(X,\omega)$ is always discrete, i.e. a Fuchsian group. \end{prop}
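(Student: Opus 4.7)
The plan is to exhibit a discrete subset of $\RR^2$ on which $\SL(X,\omega)$ acts, and use this action to rule out accumulation at the identity. The natural choice is the set $V_{sc} \subset \RR^2$ of holonomy vectors of saddle connections on $(X,\omega)$, i.e.\ vectors of the form $\int_\gamma \omega$ where $\gamma$ runs over geodesic segments connecting two points of $Z(\omega)$ (possibly equal). These are precisely the holonomy vectors one wants to track because affine diffeomorphisms permute saddle connections.

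First I would verify that $V_{sc}$ is a discrete subset of $\RR^2$. Since $X$ is compact and the flat metric has only finitely many conical singularities, for any bound $R>0$ there are only finitely many saddle connections of length $\leq R$ (their holonomy vectors lie in a compact region of $\RR^2$, and a standard compactness argument on the moduli of geodesics bounds their number). In particular, $V_{sc}$ has no accumulation point. Next, I would verify that $\SL(X,\omega)$ acts on $V_{sc}$: if $f \in \Aff^+(X,\omega)$ with derivative $A = D(f) \in \SL_2(\RR)$, then $f$ sends $Z(\omega)$ to itself and maps saddle connections to saddle connections, and in the flat charts given by $\int \omega$ the holonomy transforms by $\mathrm{hol}(f(\gamma)) = A \cdot \mathrm{hol}(\gamma)$.

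With these two ingredients, discreteness is straightforward. Choose two $\RR$-linearly independent vectors $v_1, v_2 \in V_{sc}$; such a pair exists because saddle connections occur in infinitely many directions on any compact flat surface (alternatively, one can pick a basis of relative homology given by saddle connections). Suppose a sequence $A_n \in \SL(X,\omega)$ converges to the identity in $\SL_2(\RR)$. Then $A_n v_i \to v_i$ for $i=1,2$, and since $A_n v_i \in V_{sc}$ and $V_{sc}$ has no accumulation point, we have $A_n v_i = v_i$ for all sufficiently large $n$. But an element of $\SL_2(\RR)$ fixing two linearly independent vectors is the identity, so $A_n = \Id$ eventually. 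This shows $\SL(X,\omega)$ is discrete in $\SL_2(\RR)$, hence a Fuchsian group.

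The main obstacle is the first step, discreteness of $V_{sc}$. It is the only place where one uses anything beyond formal properties of the derivative map $D$; it relies genuinely on compactness of $X$ and the finiteness of $Z(\omega)$. Everything else is an elementary consequence of the fact that the Veech group acts on a discrete set by linear maps.
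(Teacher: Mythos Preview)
Your proof is correct and follows essentially the same approach as the paper: both arguments rest on the fact that saddle connections of bounded length are finite in number (equivalently, that the set of holonomy vectors is discrete in $\RR^2$) and that the Veech group acts on this set. The paper's version is a terse sketch using only the shortest saddle connection $\gamma$ and the finiteness of saddle connections of length at most $2l$; your version is more careful in that you explicitly pick two linearly independent holonomy vectors and note that a matrix fixing both must be the identity, which fills in the step the paper leaves implicit.
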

\begin{proof} Let $l$ be the length of (one of) the shortest saddle connections $\gamma$ on $X$. There exist only finitely many saddle connection of length at most $2l$. This implies that there are only finitely many possible images of $\gamma$ under $\SL(X,\omega)$. Hence $\SL(X,\omega)$ is discrete. 
\end{proof}
\begin{prop} A Veech group $\SL(X,\omega)$ is never cocompact. \end{prop}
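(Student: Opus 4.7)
The plan is to use the existence and discreteness of saddle connection holonomies to contradict cocompactness. Let $V \subset \RR^2$ denote the set of holonomy vectors $v_\gamma = (\Real \int_\gamma \omega, \Imag \int_\gamma \omega)$ where $\gamma$ ranges over saddle connections of $(X,\omega)$. On any compact flat surface there are only finitely many saddle connections of length at most $L$ for each $L > 0$, so $V$ is a nonempty discrete subset of $\RR^2$. An element of $\Aff^+(X,\omega)$ permutes the saddle connections, and its image under $D$ acts on the corresponding holonomy vectors via the standard linear action; hence $V$ is invariant under $\Gamma := \SL(X,\omega)$ acting linearly on $\RR^2$.

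Suppose toward a contradiction that $\Gamma$ is cocompact and pick a nonzero $v \in V$. Let $H := \Stab_{\SL_2(\RR)}(v)$; this is a closed one-parameter unipotent subgroup of $\SL_2(\RR)$, and every nontrivial element of $H$ is parabolic. Via the identification $\SL_2(\RR)/H \cong \RR^2 \smallsetminus \{0\}$ given by $g \mapsto g\cdot v$, the orbit $\Gamma \cdot v$ corresponds to the image of $\Gamma$ in $\SL_2(\RR)/H$. Since $\Gamma \cdot v \subset V$ is discrete and hence closed in $\RR^2 \smallsetminus \{0\}$, the set $\Gamma H$ is closed in $\SL_2(\RR)$.

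I would then project $\Gamma H$ down to the compact quotient $\Gamma \backslash \SL_2(\RR)$. The resulting $H$-orbit is closed, hence compact, and is homeomorphic as an $H$-space to $H/(\Gamma \cap H)$. Since $H \cong \RR$, compactness of $H/(\Gamma \cap H)$ forces $\Gamma \cap H$ to be a nontrivial cocompact subgroup of $H$. But any nontrivial element of $\Gamma \cap H$ is parabolic, contradicting the well-known fact that a cocompact Fuchsian group contains no parabolic elements. The main technical point is the passage from discreteness of $\Gamma \cdot v$ in $\RR^2 \smallsetminus \{0\}$ to closedness of $\Gamma H$ in $\SL_2(\RR)$, which relies only on the openness of the quotient map $\SL_2(\RR) \to \SL_2(\RR)/H$; once this is secured, the compactness of $\Gamma \backslash \SL_2(\RR)$ together with the absence of parabolic elements in cocompact lattices finishes the argument.
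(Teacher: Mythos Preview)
Your proof is correct but follows a genuinely different route from the paper. The paper's argument is direct and geometric: it exhibits the function $A \mapsto l(A\cdot(X,\omega))$, where $l$ denotes the length of the shortest saddle connection, as a continuous $\SL(X,\omega)$-invariant function on $\SL_2(\RR)$; applying the diagonal flow to a surface with a vertical saddle connection drives this function to zero, so it attains no positive minimum and the quotient cannot be compact. Your argument is instead homogeneous-dynamical: from discreteness of the holonomy set you extract a closed $H$-orbit in $\Gamma\backslash\SL_2(\RR)$, and compactness of the ambient space forces $\Gamma\cap H$ to be nontrivial, producing a parabolic element in a cocompact group. The paper's approach is shorter and needs nothing beyond the diagonal flow; yours is more structural and is essentially the standard mechanism behind why lattice Veech groups have cusps, though here it is deployed only to reach a contradiction. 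One small quibble: the closedness of $\Gamma H$ in $\SL_2(\RR)$ follows simply from the continuity of the orbit map $g\mapsto gv$ (since $\Gamma H$ is the preimage of the closed set $\Gamma v$), not from openness of the quotient map as you state.
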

\begin{proof}[Proof (following \cite{HS06})] We only need to find a continuous function on $\SL_2(\RR)/\SL(X,\omega)$ which has no minimum value. Consider the function $\Lambda: \SL_2(\RR) \to \RR^+$ given by $A \mapsto l(A (X,\omega))$ where $l(\cdot)$ denotes the length of the shortest saddle connection. By rotating we normalize $(X,\omega)$ such that its shortest saddle connection is in the vertical direction. Via the geodesic flow $\left( \begin{smallmatrix} e^{t/2} & 0 \\ 0 & e^{-t/2} \end{smallmatrix} \right)$ the length of this geodesic tends to zero. \end{proof} 
Finally, it follows from a theorem by E. Gutkin and C. Judge that $(X,\omega)$ is a square-tiled surface if and only if $\SL(X,\omega)$ is arithmetic (see \cite{GJ96}). \index{Theorem of Gutkin-Judge} We will discuss this theorem in more detail in Section~\ref{sec_definition_tmcurves}. If the Veech group $\SL(X,\omega)$ is a lattice, then $(X,\omega)$ is called a \textbf{Veech surface}. For more general background on Veech groups we refer the reader for instance to \cite{Möl09} and \cite{Sch05}.



\subsubsection{The Moduli Space of Compact Riemann Surfaces} \label{sec_moduli_space_rs}

The uniformization theorem (1851) can be considered as starting point for the moduli problem for Riemann surfaces. Roughly speaking, it states that every connected Riemann surface of genus $g \geq 2$ has $\HH$ as its universal cover (see e.g. \cite{FK92}, p. 194ff). The moduli space of compact Riemann surfaces of genus $g$ parametrizes all compact Riemann surfaces of genus $g$. We will briefly review some of the relevant material. There are some different approaches to construct moduli space. All approaches are similar in the sense that each construction involves a priori curves with additional structure and then taking the quotient by the relation that identifies these additional structures. We present here the topological approach which involves Teichmüller space. \index{Teichmüller space}	There is a wide variety of material on the moduli space and related topics, for example \cite{Ham11}, \cite{HM98}, \cite{IT92}, \cite{Sch89}, \cite{SS92} to name a few. 
\paragraph{Teichmüller space.} Fix an arbitrary closed (i.e. compact without boundary) Riemann surface $S$ of genus $g \geq 2$.\footnote{As the Riemann mapping theorem already indicates, the case $g \leq 1$ is by far easier (see e.g. \cite{Sch05}). We will therefore in the following restrict to the case $g \geq 2$.} A \textbf{marked complex structure} on $S$ is a pair $(X,\varphi)$ consisting of a Riemann surface $X$ and a orientation-preserving diffeomorphism $\varphi: S \to X$. Two such pairs $(X,\varphi)$ and $(X',\varphi')$ are equivalent if $\varphi' \circ \varphi^{-1}$ is homotopic to a biholomorphic mapping $h: X \to X'$. The space of equivalence classes is called the \textbf{Teichmüller space} $\mathcal{T}(S)$. \label{glo_TS} Note that by definition the Teichmüller space has a base point, namely $(S,\Id)$. Teichmüller space has a canonical structure as a complex manifold of dimension $3g-3$. Indeed, it is biholomorphic equivalent to a bounded domain in $\CC^{3g-3}$. The \textbf{mapping class group} is defined as the quotient \label{glo_ModS} \index{mapping class group}
$$\Mod(S) = \Diff^+(S)/\Diff^+_0(S)$$
of orientation-preserving diffeomorphisms of $S$ modulo those orientation-preserving diffeomorphisms of $S$ which are homotopic to the identity. An element $\zeta$ of the mapping class group acts on Teichmüller space by leaving the base-surface invariant and pre-composing the marking with $\zeta^{-1}$.
\paragraph{Moduli space.} The \textbf{moduli space of compact Riemann surfaces of genus g} \index{moduli space of Riemann surfaces} is defined as the quotient \label{glo_Mg}
$$\mathcal{M}_g := \mathcal{T}(S) / \Mod(S)$$
of the Teichmüller space of an arbitrary Riemann surface $S$ of genus $g$ by the action of the mapping class group. The definition does not depend on the chosen base point of Teichmüller space. Since the action of the mapping class group is properly discontinuously, the moduli space is a non-compact orbifold of complex dimension $3g-3$. There is a good compactification $\overline{\Mg}$ \label{glo_Mgbar} of moduli space which goes back to P. Deligne and D. Mumford and which is therefore known as the \textbf{Deligne-Mumford compactification} (for details see e.g. \cite{MB09}).
\paragraph{The Teichmüller metric.} When $S_0$ and $S_1$ are Riemann surfaces with different complex structures, there does (by definition) not exist any conformal map from $S_0$ to $S_1$. The deviation of the complex structures can be measured. An orientation-preserving smooth map $f: S_0 \to S_1$ sends an infinitesimal circle at $x \in S_0$ to an infinitesimal ellipse at $f(x) \in S_1$. The coefficient of quasiconformality of $f$ at $x \in S_0$ is the ratio $K_x(f) = \frac{a}{b}$ of the demi-axis of this ellipse. The \textbf{coefficient of quasiconformality of f} is defined by
$$K(f) := \sup_{x \in S_0} K_x(f).$$
Note that a map is conformal if and only if $K(f)=1$. Hence one can define a metric, the \textbf{Teichmüller metric} \index{moduli space of Riemann surfaces!Teichmüller metric} on $\Mg$, by $d(S_0,S_1)=\inf_{f} \frac{1}{2} \log K(f)$.
\paragraph{A vector bundle over $\Mg$} An element of $\Omega\Mg$ is specified by a pair $(X,\omega)$ where $X \in \Mg$ and where $\omega \in \Omega(X)$ \label{glo_Omx} is a non-zero, holomorphic 1-form on $X$. In other words an element of $\Omega \Mg$ \label{glo_OmMg} is given by a flat surface. So $\Omega\Mg$ is a natural vector bundle minus the zero section over $\Mg$. Moreover, $\Omega\Mg$ naturally has the structure of a complex algebraic orbifold, whose dimension is equal to $4g-3$ (see e.g. \cite{KZ03}). We will treat $\Omega\Mg$ intensively in the following chapters mainly because of the fact that there is a natural action of $\SL_2(\RR)$ on $\Omega\Mg$. We will describe this action in Section~\ref{sec_strata}.
\paragraph{Families of curves.} The concept of families of curves which we will explain now goes back to A. Grothendieck. We do not use the language of categories here but follow closely the textbook \cite{Sch89}. Let $\mathcal{X}, \mathcal{B}$ be complex spaces. A \textbf{family of curves (Riemann surfaces)} \index{moduli space of Riemann surfaces!family of curves|(} over $\mathcal{B}$ is a surjective map $\pi: \mathcal{X} \to \mathcal{B}$ such that $\pi$ is a holomorphic map and the fiber $\mathcal{X}_b:=\pi^{-1}(b)$ is a Riemann surface of genus $g$ for every point $b \in B$. The space $\mathcal{B}$ is also called the \textbf{base space}. Note that a  family of curves $\pi : \mathcal{X} \to \mathcal{B}$ is necessarily locally topologically trivial, i.e. every point $b \in \mathcal{B}$ has a neighborhood $U$ such that there exists a homeomorphism $h: \pi^{-1} (U) \to U \times \mathcal{X}_b$.\\[11pt] If we have such a family $\pi: \mathcal{X} \to \mathcal{B}$ we can define a map $\Psi_{\mathcal{B},\mathcal{X}}: \mathcal{B} \to \Mg$ by assigning to $b \in \mathcal{B}$ the isomorphy class of the fiber $\mathcal{X}_b$ over $b$. The geometric structure of $\Mg$ can be defined such that each $\Psi_{\mathcal{B},\mathcal{X}}$ is a holomorphic map and such that $\Mg$ is universal in the sense described e.g. in \cite{Sch89}, p.69. Therefore, $\Mg$ is called a \textbf{coarse moduli space} although $\Mg$ is not a \textbf{fine moduli space}. This is because there does not exist a universal family of curves over $\Mg$. A family of curves $\pi : \mathcal{U} \to \Mg$ is a \textbf{universal family of curves over $\Mg$} if the fiber over every point $Y \in \Mg$ is a representative for the class $Y$ and every family of curves $\mathcal{X}$ over $\mathcal{B}$ is induced by pulling back $\mathcal{U}$ via the map $\Psi_{\mathcal{B},\mathcal{X}}$. Analogously, the notion of fine moduli space can be defined for every moduli space. The obstruction to the existence of a universal family over $\Mg$ is the existence of curves with nontrivial automorphisms (see \cite{HM98}, Chapter~2.A). A way to bypass this problem is to consider families of curves with level $l$-structures. A \textbf{level $l$-structure} is the choice of an isomorphism from $(\ZZ/(l))^{2g}$ to the $l$-torsion points of the Jacobian (see also Section~\ref{sec_moduli_ablian}). If $l \geq 3$, then there is a fine moduli space representing the moduli problem (see e.g. \cite{Kap11}, Chapter~5). \index{moduli space of Riemann surfaces!family of curves|)}

\subsubsection{Strata} \label{sec_strata}
A \textbf{stratification} of a topological $X$ is a decomposition $X= \bigcup_{i \in I} X_i$ where $I$ is a finite set of indexes and the \textbf{strata} $X_i$ are disjoint orbifolds such that the closure of a stratum is a union of strata. In this section we want to describe a natural stratification of $\Omega\Mg$: from the Riemann Roch Theorem one can deduce a well known formula for the sum of the degrees $k_i$ of the zeroes of a holomorphic 1-form on a Riemann surface of genus~$g$ (see e.g. \cite{For77}, Satz 17.12), namely
\begin{equation} \label{equ_degree_of_zeroes}
\hspace{3.9cm} \sum_{i} k_i = 2g-2.
\end{equation}
Although it is not completely obvious that condition (\ref{equ_degree_of_zeroes}) is also sufficient for the existence of a holomorphic 1-form with corresponding order of zeroes, this is nevertheless true. The key ingredient for the stratification of $\Omega\Mg$ is equation (\ref{equ_degree_of_zeroes}). Let $k_1,...,k_n$ be a finite sequence of positive integers such that the sum $\sum_i k_i$ is equal to $2g-2$. Then denote by $\Omega\Mg(k_1,...,k_n)$ \label{glo_strat} the subspace of $\Omega\Mg$ consisting of equivalence classes of pairs $(X,\omega)$ where $\omega$ has exactly $n$ zeroes with multiplicities $k_1,...,k_n$ (for some ordering of the zeroes). Since the definition does not depend on the ordering of the $k_i$ this yields a decomposition:
$$\Omega\Mg = \bigcup_{\substack{n,(k_1,...,k_n) \\ k_1\leq...\leq k_n \\ k_1+...+k_n = 2g-2}} \Omega\Mg(k_1,...,k_n).$$
It is well-known that each of the $\Omega\Mg(k_1,...,k_n)$ is an orbifold of complex dimension $2g+n-1$ (see e.g. \cite{KZ03}). \index{moduli space of Riemann surfaces!stratum|(} Thus this is indeed a stratification of $\Omega\Mg$.\\[11pt]
Each stratum $\Omega\Mg(k_1,...,k_n)$ can be locally modeled on a cohomology space:  any differential form $\omega$ on $X$ defines an element $[\omega]$ of the relative cohomology $H^1(X,\left\{\textrm{zeroes of }\omega\right\};\CC)$. \label{glo_coh} For a sufficiently small neighborhood of a generic point $(X_0,\omega_0)$ the resulting map from $U$ to the relative cohomology yields  local charts on $\Omega\Mg(k_1,...,k_n)$. More details on these charts can be found e.g. in \cite{EKZ11}.\\[11pt]M. Kontsevich and A. Zorich have been able to calculate the connected components of the strata. It turns out that each stratum decomposes into at most $3$ connected components.\\[11pt] One invariant involved is the so-called \textbf{parity of spin structure}. For a general definition we refer the reader to \cite{BL04} and \cite{McM05}. If the spin structure is determined by an Abelian differential with even degrees of zeroes - which is the only case of interest for us -, there is a geometric way to define the spin structure: let $\alpha$ be a smooth simple closed oriented curve on $(X,\omega)$ which does not contain a zero of $\omega$. The index $ind_\alpha \in \ZZ$ is defined as the total change of the angle between the vector tangent to the curve and the vector tangent to the horizontal foliation divided by $2 \pi$ (note that there is always a well-defined notion of horizontal direction on flat surfaces). Now choose oriented smooth paths $(\alpha_i,\beta_i)_{i=1,...,g}$ representing a symplectic basis of $H_1(X,\ZZ/2)$ \label{glo_hom} with respect to the intersection pairing (see \cite{FK92}, Chapter~III.1). Then the parity of spin structure or \textbf{spin invariant} \index{spin invariant} can be defined as \label{glo_spin}
$$\epsilon(X,\omega):= \sum_{i=1}^g (ind_{\alpha_i}+1)(ind_{\beta_i}+1) \mod 2.$$ 
We say that a connected component of $\Omega\Mg(2k_1,...,2k_n)$ has \textbf{even} or \textbf{odd} spin structure depending on whether the corresponding spin invariant of all pairs $(X,\omega)$ in the connected component is even or odd. Finally $\Omega\Mg^{hyp}(k_1,...,k_n)$ is the \textbf{hyperelliptic locus} which consist of all pairs $(X,\omega) \in \Omega\Mg(k_1,...,k_n)$ such that $X$ has an hyperelliptic involution. Having defined all this, let us now state the classification of the connected components of $\Omega\Mg$.

\begin{thm} (\textbf{Kontsevich, Zorich}, \cite{KZ03}) \label{thm_components_of_strata} The strata of $\Omega\Mg$ have at most three connected components, distinguished by the parity of spin structure and by being hyperelliptic or not. For $g\geq4$, the strata $\Omega\Mg(2g-2)$ and $\Omega\Mg(2k,2k)$ with an integer $k=(g-1)/2$ have three components, namely the component of hyperelliptic flat surfaces and two components with odd or even parity of the spin structure but not consisting exclusively of hyperelliptic curves.\\
The stratum $\Omega\mathcal{M}_3(4)$ has two components $\Omega\mathcal{M}_3(4)^{hyp}$ and $\Omega\mathcal{M}_3(4)^{odd}$.\\ 
Also the stratum $\Omega\mathcal{M}_3(2,2)$ has two components, namely $\Omega\mathcal{M}_3(2,2)^{hyp}$ and $\Omega\mathcal{M}_3(2,2)^{odd}$.\\
Each stratum $\Omega\Mg(2k_1,...,2k_r)$ for $k\geq3$ or $r=2$ and $k_1 \neq (g-1)/2$ has two components determined by even and odd spin structure.\\	
Each stratum $\Omega\Mg(2k-1,2k-1)$ for $k\geq2$ has two components, the component of hyperelliptic flat surface $\Omega\Mg(2k-1,2k-1)^{hyp}$ and the other component $\Omega\Mg(2k-1,2k-1)^{non-hyp}$.\\
In all other cases, the stratum is connected. \index{moduli space of Riemann surfaces!stratum|)}\end{thm}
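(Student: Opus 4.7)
The plan is to follow the classical three-step strategy: identify the candidate discrete invariants, show they are constant on connected components (the upper bound on the number of components), and show that any two flat surfaces with matching invariants can be joined by a path inside the stratum (the lower bound). The two invariants in play are the hyperellipticity of $(X,\omega)$ and, in strata with only even zeros, the spin parity $\epsilon(X,\omega)$ introduced above. Throughout, I would work locally in the period coordinates $[\omega] \in H^1(X,Z(\omega);\mathbb{C})$ described earlier, since these give charts on each stratum and make ``small deformations'' a concrete notion.

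First I would check that both invariants are locally constant on $\Omega\Mg(k_1,\ldots,k_n)$. For the spin parity, one chooses smooth representatives of a symplectic basis of $H_1(X,\mathbb{Z}/2)$ disjoint from $Z(\omega)$; these representatives survive any sufficiently small period perturbation, and the indices $\mathrm{ind}_{\alpha_i}$, $\mathrm{ind}_{\beta_i}$ are integers that cannot jump continuously, so the mod $2$ sum defining $\epsilon$ is locally constant. Hyperellipticity is more subtle: the hyperelliptic locus is closed because a limit of hyperelliptic involutions is again an involution with the right quotient genus, and for the two signatures $(2g-2)$ and $(g-1,g-1)$ a dimension count together with the Riemann-Hurwitz formula shows that hyperellipticity is also an open condition, so it cuts out a union of components there. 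This provides the upper bound of three components in the listed strata and two in all other non-connected strata.

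Second, for the lower bound I would exhibit explicit flat surfaces realizing each claimed combination of invariants in each stratum, typically via polygon representatives. Hyperelliptic components are populated by slit constructions on a pillowcase covered by $\mathbb{P}^1$ ramified at appropriate points, which directly gives the hyperelliptic involution. To realize both parities in a non-hyperelliptic component, one glues carefully chosen polygons so that a symplectic basis can be read off, and one tunes the combinatorics so the sum $\sum (\mathrm{ind}_{\alpha_i}+1)(\mathrm{ind}_{\beta_i}+1)$ has the prescribed parity; in genus $3$ strata one checks by hand that the ``even spin, non-hyperelliptic'' combination cannot be realized, accounting for the exceptional cases $\Omega\mathcal{M}_3(4)$ and $\Omega\mathcal{M}_3(2,2)$.

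The main obstacle is the connectedness step: showing that within the locus of fixed invariants all surfaces can be deformed into each other. The standard tool is a pair of surgery moves on $(X,\omega)$, namely \emph{breaking up a zero} (splitting a zero of order $k_1+k_2$ into two nearby zeros of orders $k_1$ and $k_2$ joined by a short saddle connection) and \emph{bubbling a handle} (attaching a small slit torus), both realized as continuous paths in $\Omega\Mg$. The idea is to use these to reduce an arbitrary $(X,\omega)$ in a given stratum to a ``normal form'' that depends only on the invariants, by induction on genus and on the number of zeros; a careful bookkeeping is required because bubbling a handle changes the spin parity in a way that depends on the homology class of the attaching curve, so one must consistently choose attaching data and verify the induction closes. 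The delicate low-genus base cases, and the verification that the invariants really are preserved by every surgery one performs, form the heart of the proof and are where I would expect most of the technical effort to go.
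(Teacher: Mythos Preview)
The paper does not prove this theorem at all: it is stated as a background result attributed to Kontsevich and Zorich \cite{KZ03} and is simply quoted without argument. So there is no ``paper's own proof'' to compare your proposal against.

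That said, your sketch is a reasonable outline of the actual Kontsevich--Zorich strategy from \cite{KZ03}: local constancy of the spin and hyperelliptic invariants gives the upper bound on the number of components, explicit constructions realize the distinct invariant values, and the connectedness within each invariant class is obtained via an inductive surgery argument (breaking up zeros and bubbling handles) that reduces to low-genus base cases. The genuine difficulty, which you correctly flag, lies in the connectedness step and the careful tracking of how the surgeries interact with the spin parity; your proposal identifies this but does not resolve it, so as written it is an outline rather than a proof. For the purposes of this paper, however, no proof is expected---the result is used as a black box.
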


\paragraph{The action of $\SL_2(\RR)$ on $\Omega\Mg$.} After having repeated some of the main properties of $\Omega\Mg$, we can introduce a natural action of $\SL_2(\RR)$ on $\Omega\Mg$: for $A = \left( \begin{smallmatrix} a & b \\ c & d \end{smallmatrix} \right) \in \SL_2(\RR)$ consider the harmonic 1-form
$$\omega'= \begin{pmatrix} 1 & i \end{pmatrix} \begin{pmatrix} a & b \\ c & d \end{pmatrix} \begin{pmatrix} \textrm{Re}(\omega) \\ \textrm{Im}(\omega) \end{pmatrix}$$
on $X$. There is a unique complex structure with respect to which $\omega'$ is holomorphic. Its charts yield a new Riemann surface $X'$ - topologically the surface has not changed at all. We define $A\cdot(X,\omega):=(X',\omega')$. In other words the action can be described by  identifying $\CC$ with $\RR^2$ and letting $A$ act on the charts by linear transformations. Note that the action of $\SL_2(\RR)$ preserves the stratification of $\Omega\Mg$ and even its connected components. Additional information on this topic can be found e.g. in \cite{MT02}, \cite{McM03} and \cite{Möl11a}.
\paragraph{Projection.} For each $\underline{k}=(k_1,...,k_n)$ with $\sum_{i=1}^n k_i = 2g-2$ there is a natural projection from $\Omega\Mg(\underline{k})$ to $\Mg$ which sends $(X,\omega)$ to $X$. The map only remembers the complex structure on the surface defined by the Abelian differential and forgets the Abelian differential itself.

\subsubsection{The Moduli Space of Abelian Varieties $\mathcal{A}_g^D$} \label{sec_moduli_ablian} \label{section_abelian_varieties}

In this section we want to define the moduli space of Abelian varieties. Before we can do so, we give a general overview over the theory of Abelian varieties. In this section we mainly follow the exposition in \cite{BL04}.\\[11pt]Let $V$ be a complex vector space of dimension $g$ and $\Lambda$ be a \textbf{lattice} in $V$. \label{glo_Lambda} By definition $\Lambda$ is a discrete subgroup of rank $2g$ of V. Then the quotient $X=V/\Lambda$  is called a \textbf{complex torus of dimension $g$}. \index{complex torus} An \textbf{homomorphism} of a complex torus $X$ to a complex torus $X'$ is a holomorphic map $f: X \to X'$, compatible with the group structures of the tori. An \textbf{isogeny} is by definition a surjective homomorphism $f: X \to X'$ with finite kernel.  Recall that we can associate to each Chern class $c_1(L)$ \label{glo_c1} of a \textbf{line bundle} $L$ on $X$ (i.e. a vector bundle of dimension 1) a unique alternating form $E: V \times V \to \mathbb{R}$ with $E(\Lambda,\Lambda) \subset \ZZ$ and $E(iv,iw)=E(v,w)$ for all $v,w$ in $V$. Moreover $E$ is known to be the imaginary part of some unique Hermitian form $H$. The line bundle is \textbf{positive (definite)} if $H$ is a positive definite Hermitian form. The group which consists of all such forms is called the \textbf{Néron-Severi group} (for details, see \cite{BL04}, Chapter~1 and 2). \index{Néron-Severi group}According to the elementary divisor theorem there is a basis $\lambda_1,...,\lambda_g, \mu_1,...,\mu_g$ of $\Lambda$, with respect to which $E$ is given by the matrix
$$\begin{pmatrix} 0 & D \\ -D & 0 \end{pmatrix}$$
where $D=\diag(d_1,...,d_g)$ with integers $d_i \geq 0$ satisfying $d_i|d_{i+1}$ for $i=1,...,g-1$ (see e.g. \cite{Mur93}, Chapter~5). The \textbf{elementary divisors} $d_1,...,d_g$ are uniquely determined by $E$ and $\Lambda$ and thus by $L$. Then $\lambda_1,...,\lambda_g,\mu_1,...,\mu_g$ is called a \textbf{symplectic basis} \index{symplectic basis} of $\Lambda$ for $L$ (or $H$ or $E$).

\begin{defi} \begin{itemize}
\item[(i)] A \textbf{polarization} on $X$ is the Hermitian form $H$ associated to the first Chern class $c_1(L)$ of a positive definite line bundle $L$ on $X$. \index{complex torus!polarization}
\item[(ii)] The vector $(d_1,...,d_g)$ is called the \textbf{type} of the polarization of the line bundle $L$ on $X$. Sometimes also $D$ itself is called the type of the polarization.
\item[(iii)] A polarization is called \textbf{principal} if it is of the type $(1,...,1)$.
\end{itemize} 
\end{defi}

By abuse of notation one sometimes calls the line bundle $L$ itself a polarization. Obviously not every polarization is principal. 

\begin{defi} An \textbf{Abelian variety} \index{Abelian variety} is a complex torus $X$ admitting a polarization $H$. The pair $(X,H)$ is called a \textbf{polarized Abelian variety}. 
A \textbf{homomorphism of polarized Abelian varieties} $f: (Y,K=c_1(M)) \to (X,H=c_1(L))$ is a homomorphism of complex tori $f:Y \to X$ such that $f^*c_1(L)=c_1(M)$. \end{defi}

For example every elliptic curve is an Abelian variety while not every complex torus of dimension $\geq 2$ is an Abelian variety (see \cite{BL04}, Chapter~4). An Abelian variety $X$ of dimension 2 is also called an \textbf{Abelian surface}.  \\
Choose bases $e_1,...,e_g$ of $V$ and $\lambda_1,...,\lambda_{2g}$ of $\Lambda$. Writing all the $\lambda_i$ in terms of the basis $e_1,...e_g$, i.e. $\lambda_i=\sum_{j=1}^g \lambda_{j,i} e_j$ yields a matrix
$$\Pi = \begin{pmatrix} \lambda_{1,1} & \cdots & \lambda_{1,2g} \\ \vdots & & \vdots\\ \lambda_{g,1} & \cdots & \lambda_{g,2g} \end{pmatrix}.$$ This matrix $\Pi$ \label{glo_Pi} is called a \textbf{period matrix} \index{period matrix} of $X$. Note that with respect to these bases we have $X=\mathbb{C}^g/\Pi\ZZ^{2g}$. When one looks at period matrices there is an useful criterion which decides whether a complex torus is an Abelian variety or not: 

\begin{thm} (\cite{BL04}, Chapter~4.2) The space $X$ is an Abelian variety if and only if there is a nondegenerate alternating matrix $A \in M_{2g}(\ZZ)$ such that
\begin{itemize}
\item[(i)] $\Pi A^{-1} \Pi^t=0$
\item[(ii)] $i\Pi A^{-1} \overline{\Pi}^t>0$.
\end{itemize}
\end{thm}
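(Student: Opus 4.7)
The strategy is to reduce the geometric condition (existence of a polarization) to explicit linear algebra in two bases: the $\RR$-basis $(\lambda_1, \ldots, \lambda_{2g})$ of $\Lambda$ and the $\CC$-basis $(e_1, \ldots, e_g, \overline{e_1}, \ldots, \overline{e_g})$ of the complexification $V \otimes_\RR \CC = V^{1,0} \oplus V^{0,1}$. A polarization on $X$ is equivalent data to an alternating $\RR$-bilinear form $E$ on $V$ which takes integer values on $\Lambda$, satisfies $E(iv, iw) = E(v, w)$, and is such that $E(iv, v) > 0$ for every $v \neq 0$. If such an $E$ is given, then its matrix $A = (E(\lambda_i, \lambda_j))_{i,j}$ is automatically a nondegenerate alternating element of $\Mat^{2g \times 2g}(\ZZ)$; the task is therefore to recognize the Hermitian-compatibility $E(iv, iw) = E(v, w)$ and the positivity $E(iv, v) > 0$ as the conditions $(i)$ and $(ii)$ respectively.

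For the forward direction I would extend $E$ $\CC$-linearly to a form $E_\CC$ on $V \otimes_\RR \CC$ and compute its matrix $B$ in the basis $(e_j, \overline{e_j})$. The change of basis from $(\lambda_i)$ to $(e_j, \overline{e_j})$ is encoded in the invertible $2g \times 2g$ matrix $M = \left(\begin{smallmatrix} \Pi \\ \overline{\Pi} \end{smallmatrix}\right)$, giving $B = M^{-t} A M^{-1}$, which after inversion yields the crucial identity
\[
B^{-1} \;=\; M A^{-1} M^{t} \;=\; \begin{pmatrix} \Pi A^{-1} \Pi^t & \Pi A^{-1} \overline{\Pi}^t \\ \overline{\Pi} A^{-1} \Pi^t & \overline{\Pi} A^{-1} \overline{\Pi}^t \end{pmatrix}.
\]
The compatibility $E(iv, iw) = E(v, w)$ is precisely the statement that $E_\CC$ is of Hodge type $(1,1)$, i.e.\ that the two diagonal $g \times g$ blocks of $B$ vanish; since the inverse of a block-antidiagonal $2g \times 2g$ matrix is again block-antidiagonal, this is equivalent to the diagonal blocks of $B^{-1}$ vanishing, which by the displayed formula is condition $(i)$. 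For the positivity, I would parametrize $v \in V$ by $u = \Pi v \in \CC^g$, observe that $v$ and $iv$ embed in $V \otimes \CC$ as $(u, \overline{u})$ and $(iu, -i\overline{u})$ respectively, and contract with $B$: using block-antidiagonality together with the identification of the off-diagonal blocks of $B$ through those of $B^{-1}$, this reduces $H(v,v) = E(iv, v)$ to a Hermitian form in $u$ whose matrix is a positive multiple of $i \Pi A^{-1} \overline{\Pi}^t$, so positive definiteness of $H$ becomes condition $(ii)$.

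The converse is obtained by running the same computations in reverse: given a nondegenerate alternating $A \in \Mat^{2g \times 2g}(\ZZ)$ satisfying $(i)$ and $(ii)$, define $E$ on $V$ by declaring its matrix in the $\lambda$-basis to be $A$; integrality on $\Lambda$ is built in, $(i)$ forces the diagonal blocks of $B$ to vanish and hence gives $E(iv, iw) = E(v, w)$, and $(ii)$ yields $E(iv, v) > 0$ for all nonzero $v$. Then $H(v, w) := E(iv, w) + i E(v, w)$ is a positive definite Hermitian form with $\Imag H$ integer-valued on $\Lambda$, i.e.\ a polarization of $X$, and so $X$ is an Abelian variety. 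The main obstacle I anticipate is not any single computation but the block-matrix bookkeeping around the map $B \leftrightarrow B^{-1}$: it is essential that this map both preserves block-antidiagonality and rewrites $B$ (which is awkward to express directly in terms of $\Pi$ and $A$) in terms of the blocks $\Pi A^{-1} \Pi^t$ and $\Pi A^{-1} \overline{\Pi}^t$, so that all conditions on $E$ can ultimately be formulated using $\Pi$ and $A^{-1}$ alone.
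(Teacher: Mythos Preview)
The paper does not give its own proof of this statement: it is quoted as a result from \cite{BL04}, Chapter~4.2, and no argument is supplied. Your proposal is the standard textbook derivation of the Riemann relations (essentially the one in Birkenhake--Lange or Griffiths--Harris): translate the polarization data into an alternating form $E$, pass to the complexification $V\otimes_\RR\CC$ with its Hodge splitting, and read off the two conditions from the block structure of $MA^{-1}M^t$ with $M=\left(\begin{smallmatrix}\Pi\\\overline{\Pi}\end{smallmatrix}\right)$. The argument is correct and there is nothing further to compare.
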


The conditions $(i)$ and $(ii)$ are called \textbf{Riemann relations}. An important class of examples of principally polarized Abelian varieties is the following: recall that $H^0(X,\Omega(X))$, the $\CC$-vector space of holomorphic 1-forms on a compact Riemann surface $X$ of genus $g$, has dimension $g$. The homology group $H_1(X,\ZZ)$ is a free abelian group of rank $2g$. We can associate to any element $\gamma \in H_1(X,\ZZ)$ in a canonical way a linear form on $H^0(X,\Omega(X))$, namely $\gamma: \omega \mapsto \int_{\gamma} \omega$ which does by Stokes' theorem not depend on the choice of the representative. This map is injective. The \textbf{Jacobian} \index{Jacobian} of $X$ is then defined as \label{glo_Jac}
$$\Jac(X):=H^0(X,\Omega(X))^\vee/H_1(X,\ZZ)$$
Jacobians of Riemann surfaces of genus~$g \geq 1$ can be canonically polarized and this polarization is principal (see \cite{BL04}, p. 317, for details).\\[11pt]The notion of real multiplication for Abelian varieties, that we define here only for Abelian surfaces, seems to be very technical at first (the general definition can be found e.g. in \cite{Möl11a}, Chapter~4). Nevertheless, its significance will be seen throughout these notes.

\begin{defi} \label{def_real_multiplication} \begin{itemize}
\item[(i)] Let $\End(X)$ \label{glo_End} denote the endomorphism ring of a polarized Abelian variety $X=V/\Lambda$. Elements of $\End(X)$ can be regarded as complex-linear maps $T: V \to V$ with $T(\Lambda) \subseteq \Lambda$. An endomorphism is called \textbf{self-adjoint} if it satisfies $E(Tx,y)=E(x,Ty)$ with respect to the alternating form $E$ on $x,y \in \Lambda$. 
\item[(ii)] \index{real multiplication} Let $\OD$ be a quadratic order. An Abelian surface has \textbf{real multiplication by $\OD$} if there is a monomorphism $\rho: \OD \to \End(X)$ with the following properties:
\begin{itemize}
\item For each $\lambda \in \OD$ the lift $\tilde{\rho}(\lambda): V \to V$ is self-adjoint.
\item $\rho$ is \textbf{proper} in the sense that it does not extend to a monomorphism $\rho': \mathcal{O}_E \to \End(X)$ for some $\mathcal{O}_E \supset \OD$.
\end{itemize}
\end{itemize}
\end{defi}


It is now crucial to parametrize the set of polarized Abelian varieties of a given type $D$ with symplectic basis: there is a bijection between the set of polarized Abelian varieties of type $D$ with symplectic basis $\lambda_1,...,\lambda_g,\mu_1,...,\mu_g$ and the \textbf{Siegel upper half space} \index{Siegel upper half space} \label{glo_Hg}
$$\mathbb{H}_g := \left\{ Z \in M_g(\mathbb{C}) \mid Z^t=Z, \Imag(Z)>0 \right\}.$$	 
The bijective map is explicitly given in \cite{BL04}, Chapter~8.1. However we just want to parametrize isomorphism classes of Abelian varieties. The matrix group
$$\Gamma_D := \left\{M \in M_{2g}(\ZZ) \mid M \begin{pmatrix} 0 & D \\ -D & 0 \end{pmatrix} M^t = \begin{pmatrix} 0 & D \\ -D & 0 \end{pmatrix} \right\}$$
acts properly discontinuously on $\HH_g$ in the following way: if $M= \left( \begin{smallmatrix} a & b\\c &d \end{smallmatrix} \right) \in \Gamma_D$ then the action on $\HH_g$ is given by $$Z \mapsto M \langle Z \rangle:= (aZ+bD)(D^{-1}cZ+D^{-1}dD)^{-1}$$ (see \cite{BL04}, Proposition 8.2.5). This yields the desired result:
\begin{thm} (\cite{BL04}, Chapter~8.1.-8.2.) The normal complex analytic space $\mathcal{A}_g^D:=\mathbb{H}_g/\Gamma_D$ \label{glo_Agd} of dimension $\frac{1}{2}g(g+1)$ is a (coarse) moduli space for polarized Abelian varieties of type $D$. \end{thm}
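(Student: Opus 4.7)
The plan is to reduce the statement to three separate assertions: (a) the set of isomorphism classes of polarized Abelian varieties of type $D$ is in bijection with $\HH_g / \Gamma_D$; (b) the quotient carries a natural structure of normal complex analytic space of the stated dimension; (c) it satisfies the universal property of a coarse moduli space.

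For (a), I would start from the parametrization (already cited) of polarized Abelian varieties of type $D$ \emph{together with a symplectic basis} by points of $\HH_g$: given $Z \in \HH_g$, take the lattice $\Lambda_Z = Z \ZZ^g \oplus D \ZZ^g \subset \CC^g$ and equip $X_Z = \CC^g / \Lambda_Z$ with the polarization whose alternating form in the chosen symplectic basis has matrix $\left( \begin{smallmatrix} 0 & D \\ -D & 0 \end{smallmatrix} \right)$; the Riemann relations translate exactly into $Z = Z^t$ and $\Imag(Z) > 0$. To get isomorphism classes of polarized Abelian varieties \emph{without} the extra datum, I would show that two period matrices $Z, Z' \in \HH_g$ give rise to isomorphic polarized Abelian varieties precisely when the two symplectic bases differ by a change of basis of $\Lambda$ that preserves the alternating form $\left( \begin{smallmatrix} 0 & D \\ -D & 0 \end{smallmatrix} \right)$; such change-of-basis matrices form exactly the group $\Gamma_D$, and a direct computation using the definition of $M \langle Z \rangle$ shows that the induced identification of tori intertwines the two polarizations. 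Thus the orbits of $\Gamma_D$ on $\HH_g$ are in bijection with isomorphism classes.

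For (b), I would verify that $\Gamma_D$ acts on $\HH_g$ by biholomorphisms and properly discontinuously. The biholomorphy is immediate from the explicit formula for $M \langle Z \rangle$; proper discontinuity can be shown by the same sort of argument as for $\SL_2(\ZZ)$ acting on $\HH$, using that for any compact $K \subset \HH_g$ the translates $M K$ with $M \in \Gamma_D$ can be controlled via the positivity of $\Imag(Z)$ and $\Imag(M\langle Z \rangle)$. The quotient of a complex manifold by a properly discontinuous group of holomorphic automorphisms is a normal complex analytic space by a standard result (see e.g. Cartan). The dimension assertion is automatic: $\HH_g$ is an open subset of the space of symmetric complex $g \times g$ matrices, which has complex dimension $\tfrac{1}{2}g(g+1)$, and the quotient by a discrete group preserves the dimension.

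For (c), I would have to produce, for every holomorphic family $\pi: \mathcal{X} \to \mathcal{B}$ of polarized Abelian varieties of type $D$, a holomorphic classifying map $\Psi_{\mathcal{B},\mathcal{X}}: \mathcal{B} \to \mathcal{A}_g^D$, and to check that the induced map on points is the bijection from (a) and that $\mathcal{A}_g^D$ is universal for this property. The map is built locally: shrinking $\mathcal{B}$, one can choose a holomorphically varying symplectic basis of the polarized lattice of each fiber, obtaining a holomorphic map to $\HH_g$; post-composing with the quotient gives a map that, by (a), is independent of the chosen local trivializations and hence glues to a global holomorphic map to $\mathcal{A}_g^D$. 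I expect (c) to be the main obstacle, precisely because of the automorphism phenomenon that prevents $\mathcal{A}_g^D$ from being a fine moduli space: one has to be careful that the classifying map is well-defined in spite of fiberwise automorphisms, and checking universality reduces to verifying that any other complex analytic space receiving such classifying maps factors uniquely through $\mathcal{A}_g^D$, which is where the concrete construction of $\HH_g / \Gamma_D$ as a quotient does the required work.
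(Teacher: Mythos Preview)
The paper does not actually give a proof of this statement: it is stated with a citation to \cite{BL04}, Chapter~8.1--8.2, and no argument is supplied in the text. Your outline is correct and is essentially the route taken in that reference (bijection with $\HH_g/\Gamma_D$ via forgetting the symplectic basis, normal analytic structure via Cartan's theorem on properly discontinuous quotients, and the coarse moduli property by locally lifting families to $\HH_g$), so there is nothing to compare beyond noting that your sketch reconstructs the cited proof.
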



Whenever $D$ is omitted in the notation of $\mathcal{A}_g^D$ we mean the moduli space of principally polarized Abelian varieties.  One of the main properties of the moduli space of principally polarized Abelian varieties $\mathcal{A}_g$ \label{glo_Ag} is that for $g \geq 2$ the moduli space of compact Riemann surfaces $\Mg$ can be embedded into $\mathcal{A}_g$ by the \textbf{Torelli map}, that assigns to each element $[X] \in \Mg$ its Jacobian $\Jac([X])$. Indeed the following theorem holds:

\begin{thm} \textbf{(Torelli)} \index{Torelli's Theorem} Let $X$ and $X'$ be two Riemann surfaces of genus~$g \geq 2$. Then $X$ and $X'$ are biholomorphically equivalent if and only if their Jacobians $\Jac(X)$ and $\Jac(X')$ are isomorphic as (principally) polarized Abelian varieties. \\
In other words: the Torelli map $j: \Mg \to \mathcal{A}_g$ is injective. \end{thm}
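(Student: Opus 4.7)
The plan is to prove the two directions of Torelli's theorem separately, with nearly all of the work in the nontrivial direction. In one direction, suppose $f\colon X \to X'$ is a biholomorphism. Then $f$ induces an isomorphism $f^*\colon H^0(X',\Omega(X')) \to H^0(X,\Omega(X))$ of the spaces of holomorphic $1$-forms, and a compatible isomorphism $f_*\colon H_1(X,\ZZ) \to H_1(X',\ZZ)$ that preserves the intersection pairing. Together these induce an isomorphism $\Jac(X) \to \Jac(X')$ that carries the canonical principal polarization to the canonical principal polarization, because the latter is defined entirely in terms of the intersection form on $H_1$ and the Hodge decomposition. So this direction is essentially functoriality.

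For the hard direction, given an isomorphism $(\Jac(X),\Theta_X) \cong (\Jac(X'),\Theta_{X'})$ of principally polarized abelian varieties, the task is to reconstruct $X$ from $(\Jac(X),\Theta_X)$. My approach would be to follow Andreotti's reconstruction via the Gauss map. Fix a base point $p_0 \in X$ and consider the Abel--Jacobi map $u\colon X \to \Jac(X)$, $p \mapsto [p-p_0]$. Its $(g-1)$-fold symmetric image $W_{g-1}\subset \Jac(X)$ is a translate of the theta divisor $\Theta_X$. On the smooth locus $\Theta_{\rm sm}$ one has a well-defined Gauss map
$$\gamma\colon \Theta_{\rm sm} \longrightarrow \mathbb{P}(T_0\Jac(X))^{\vee}\cong \mathbb{P}^{g-1},$$
sending a smooth point $D$ of $\Theta_X$ to its tangent hyperplane translated to the origin. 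The key geometric fact is that, for a non-hyperelliptic curve, the branch locus of $\gamma$ is projectively dual to the canonical image $\varphi_K(X)\subset \mathbb{P}^{g-1}$; by biduality one recovers the canonical model of $X$ intrinsically from $(\Jac(X),\Theta_X)$, and since the canonical model determines the non-hyperelliptic curve uniquely, this yields $X \cong X'$.

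The hyperelliptic case has to be handled separately, since then the canonical map is $2{:}1$ onto a rational normal curve and the Gauss-map argument degenerates. Here I would argue first that hyperellipticity of $X$ is detected by $(\Jac(X),\Theta_X)$: for example, $X$ is hyperelliptic precisely when the singular locus of $\Theta_X$ has dimension $g-3$ rather than $g-4$ (Riemann's singularity theorem), so the hyperelliptic case cannot be confused with the non-hyperelliptic one. For hyperelliptic Jacobians one can then recover the $2g+2$ Weierstrass points directly from the $2$-torsion geometry of $\Theta_X$ together with the hyperelliptic involution, which suffices to reconstruct $X$ up to isomorphism.

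The principal obstacle is of course the reconstruction step, i.e.\ extracting the canonical model of $X$ from the pair $(\Jac(X),\Theta_X)$; this is where all the geometry enters. The verification that the Gauss branch locus really is the dual of $\varphi_K(X)$, and the Riemann singularity theorem underlying the detection of hyperellipticity, are the two technical ingredients I would need to invoke (or prove) carefully. Once those are in hand, the rest of the argument is formal, since biduality of projective varieties and the fact that a canonical model determines a non-hyperelliptic curve up to isomorphism are standard.
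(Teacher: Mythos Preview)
The paper does not actually prove Torelli's theorem; it simply cites Weil's 1957 paper \cite{Wei57} and moves on. So there is nothing to compare your argument against at the level of detail you have written.

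That said, your outline follows Andreotti's proof via the Gauss map of the theta divisor, which is a different strategy from Weil's. Weil's argument is more algebraic: it works by analyzing certain endomorphisms and correspondences induced on the Jacobian and showing that an isomorphism of polarized Jacobians forces an isomorphism of the underlying curves, without passing through the canonical model or projective duality. Andreotti's approach, which you sketch, is more geometric and arguably more transparent, since it literally reconstructs the canonical curve from the branch locus of the Gauss map. Both are standard and complete; your separation into the non-hyperelliptic case (Gauss map and biduality) and the hyperelliptic case (singular locus of $\Theta$ via Riemann's singularity theorem, then recovery of Weierstrass points) is the correct way to organize Andreotti's argument. The only caveat is that your proposal is still a plan rather than a proof: the two technical inputs you flag (identification of the Gauss branch locus with the dual of the canonical image, and Riemann's singularity theorem) are exactly where the content lies, and a full write-up would need to supply or precisely cite them.
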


\begin{proof} See e.g. \cite{Wei57}. 
\end{proof}


The following fact is straightforward but nevertheless rather important:
\begin{prop} Let $D_1=(d_1,d_2,...,d_g)$ and $D_2=(1,\frac{d_2}{d_1},...,\frac{d_g}{d_1})$ then $\mathcal{A}_g^{D_1}$ and $\mathcal{A}_g^{D_2}$ are canonically isomorphic. \end{prop}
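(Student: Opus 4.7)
My plan is to give an explicit isomorphism at the level of the uniformizing Siegel space and check that it is equivariant with respect to the two group actions. The key observation is that the defining alternating forms for $D_1$ and $D_2$ differ only by a positive scalar: if $J_{D_i}=\left(\begin{smallmatrix} 0 & D_i \\ -D_i & 0 \end{smallmatrix}\right)$, then $J_{D_2}=\tfrac{1}{d_1}J_{D_1}$. Consequently the equation $MJ_{D_1}M^t=J_{D_1}$ is equivalent to $MJ_{D_2}M^t=J_{D_2}$, so the integral groups $\Gamma_{D_1}$ and $\Gamma_{D_2}$ coincide as subgroups of $M_{2g}(\ZZ)$. Thus the two moduli spaces differ only in how this common group acts on $\HH_g$.

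Next I would define the candidate map $\phi\colon\HH_g\to\HH_g$, $\phi(Z)=Z/d_1$. This is well-defined since $\Imag(Z)>0$ implies $\Imag(Z/d_1)>0$, it is holomorphic with holomorphic inverse $W\mapsto d_1 W$, and it preserves symmetry, so it is a biholomorphism of $\HH_g$. The central calculation is to check equivariance: for $M=\left(\begin{smallmatrix}a&b\\ c&d\end{smallmatrix}\right)$ in the common group and $W=Z/d_1$, one computes
\[
M\langle W\rangle_{D_2}=\bigl(aW+bD_2\bigr)\bigl(D_2^{-1}cW+D_2^{-1}dD_2\bigr)^{-1}=\tfrac{1}{d_1}\bigl(aZ+bD_1\bigr)\bigl(D_1^{-1}cZ+D_1^{-1}dD_1\bigr)^{-1}=\phi\bigl(M\langle Z\rangle_{D_1}\bigr),
\]
using $D_2=D_1/d_1$ and clearing the factor $1/d_1$ from numerator and denominator. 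Hence $\phi$ descends to a biholomorphism $\overline{\phi}\colon\mathcal{A}_g^{D_1}\to\mathcal{A}_g^{D_2}$.

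To obtain the geometric (and moduli-theoretic) interpretation that justifies the word \emph{canonical}, I would then reinterpret $\phi$ on points: a polarized Abelian variety $(X,H)$ of type $D_1=(d_1,\ldots,d_g)$ is sent to $(X,H/d_1)$. Since the divisibility conditions $d_1\mid d_2\mid\cdots\mid d_g$ guarantee that $(1,d_2/d_1,\ldots,d_g/d_1)$ is again an integral type, the rescaled Hermitian form $H/d_1$ still satisfies the Riemann relations and defines a polarization of type $D_2$ on the same underlying complex torus $X$. The inverse assignment $(X,H')\mapsto(X,d_1H')$ shows this is a bijection on isomorphism classes, and it is clearly independent of choices, justifying calling it canonical.

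I do not expect a serious obstacle here: the whole content is the one-line identity $J_{D_2}=\tfrac{1}{d_1}J_{D_1}$ plus the straightforward conjugation computation above. The only point to be slightly careful about is the bookkeeping with the non-standard action formula $M\langle Z\rangle=(aZ+bD)(D^{-1}cZ+D^{-1}dD)^{-1}$, where one must track the $D$'s correctly when rescaling; this is what makes $\phi(Z)=Z/d_1$ (rather than, say, $Z/d_1^2$ or $D_1^{-1}Z$) the right choice.
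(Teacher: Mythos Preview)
Your argument is correct. The observation that $\Gamma_{D_1}=\Gamma_{D_2}$ because the defining alternating matrices differ by the scalar $d_1$ is exactly the right starting point, the equivariance computation for $\phi(Z)=Z/d_1$ with respect to the action $M\langle Z\rangle=(aZ+bD)(D^{-1}cZ+D^{-1}dD)^{-1}$ checks out line by line, and your moduli-theoretic reading $(X,H)\mapsto (X,H/d_1)$ is the right one (the point that $E/d_1$ remains integral on the lattice is implicit in the fact that every value of $E$ in a symplectic basis of type $(d_1,\ldots,d_g)$ is a $\ZZ$-linear combination of the $d_i$, hence divisible by $d_1$).

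As for comparison with the paper: there is nothing to compare against. The paper states the proposition as ``straightforward but nevertheless rather important'' and gives no proof, only the remark that $D_2$ is again a valid polarization type because $d_1\mid d_i$. Your write-up supplies exactly the details the paper omits.
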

Note that $D_2$ is also a type of polarization since $d_1|d_i$ for all $i=1,...,g$.
In fact the type of the polarization of an Abelian variety cannot always be seen immediately. Main tools for calculating polarizations are the following two propositions:
\begin{prop} \label{prop_polar_complementary_abelian} (\cite{BL04}, Corollary~12.1.5) Let $(Y,Z)$ be a pair of complementary Abelian subvarieties of a principally polarized Abelian variety $X$ with $\dim Y \geq \dim Z = r$. If $Z$ has polarization of type $(d_1,...,d_r)$ then $Y$ has induced polarization of type $(1,...,1,d_1,...,d_r)$. 
\end{prop}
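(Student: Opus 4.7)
The plan is to analyze the isogeny $\mu\colon Y\times Z \to X$ given by addition; because $Y,Z$ are complementary, $\mu$ is surjective with finite kernel $\ker\mu \cong Y\cap Z$, and $\dim Y+\dim Z=g$. The strategy is to pull back the principal polarization via $\mu$, read off the kernels of the induced polarizations on $Y$ and $Z$, and match elementary divisors on the nose.

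First I would show that the pullback decomposes. The definition of \emph{complementary} subvarieties forces the Hermitian form $H$ defining the polarization to split as $H=H|_{V_Y}\oplus H|_{V_Z}$ with respect to $V=V_Y\oplus V_Z$. By the see-saw principle this translates to an isomorphism of line bundles
\begin{equation*}
\mu^*L \;\cong\; p_Y^*(L|_Y)\otimes p_Z^*(L|_Z),
\end{equation*}
so that the polarization map splits as $\phi_{\mu^*L}=\phi_{L|_Y}\times\phi_{L|_Z}\colon Y\times Z\to \hat Y\times\hat Z$.

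Next I would compare kernels. Since $L$ is principal, $\phi_L\colon X\to\hat X$ is an isomorphism, hence
\begin{equation*}
\ker\bigl(\phi_{L|_Y}\times\phi_{L|_Z}\bigr) \;=\; \mu^{-1}\bigl(\phi_L^{-1}(\ker\hat\mu)\bigr),
\end{equation*}
which has order $|\ker\mu|\cdot|\ker\hat\mu|=|\ker\mu|^2$ by Pontryagin duality of kernels of dual isogenies. The inclusion $\ker\mu\hookrightarrow \ker(\phi_{\mu^*L})$ via $y\mapsto (y,-y)$ fits into an exact sequence
\begin{equation*}
0\to \ker\mu\to \ker(\phi_{L|_Y})\times\ker(\phi_{L|_Z})\to (\ker\mu)^\vee\to 0,
\end{equation*}
and projecting to the two factors, together with the observation that $Y\cap Z$ maps identically into both $\ker(\phi_{L|_Y})$ and $\ker(\phi_{L|_Z})$, produces canonical group isomorphisms $\ker(\phi_{L|_Y})\cong\ker\mu\cong\ker(\phi_{L|_Z})$.

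Finally I would translate into elementary divisors. A polarization of type $(e_1,\ldots,e_s)$ has kernel isomorphic to $\prod_i(\ZZ/e_i)^2$, and the $e_i>1$ are recovered from the group structure. Hence the isomorphism $\ker(\phi_{L|_Y})\cong\ker(\phi_{L|_Z})$ forces the non-trivial elementary divisors of $L|_Y$ and $L|_Z$ to coincide. Since $\dim Y=g-r\geq r$ and $L|_Z$ has type $(d_1,\ldots,d_r)$, the induced polarization on $Y$ must have type $(1,\ldots,1,d_1,\ldots,d_r)$ with $g-2r$ leading ones. The main obstacle is the first step: the decomposition $\mu^*L\cong p_Y^*(L|_Y)\otimes p_Z^*(L|_Z)$ is precisely where the \emph{complementary} (not merely \emph{transverse}) hypothesis does real work, and it rests on the see-saw principle together with the splitting of $H$; once this is in place, everything that follows is a finite abelian group computation.
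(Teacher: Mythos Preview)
The paper does not give its own proof of this proposition; it simply records the statement and cites \cite{BL04}, Corollary~12.1.5. So there is no in-paper argument to compare against.

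Your outline is essentially the proof given in \cite{BL04}: one uses the addition isogeny $\mu\colon Y\times Z\to X$, pulls back the principal polarization, and exploits the fact that for complementary subvarieties the pullback splits as a box product, whence $K(L|_Y)\cong K(L|_Z)$ as finite groups (this is their Corollary~12.1.4), and the statement on types follows by reading off elementary divisors. Your identification of the crux --- that the splitting $\mu^*L\cong p_Y^*(L|_Y)\otimes p_Z^*(L|_Z)$ is exactly where the complementarity hypothesis (orthogonality of $V_Y$ and $V_Z$ under $H$, equivalently the norm-endomorphism definition) is used --- is correct. One small point: your exact sequence and the assertion that both projections $\ker(\phi_{L|_Y})\times\ker(\phi_{L|_Z})\to\ker(\phi_{L|_Y})$ and $\to\ker(\phi_{L|_Z})$ restrict to isomorphisms on the image of $\ker\mu$ is right in spirit, but to turn it into a clean proof that $\ker(\phi_{L|_Y})\cong\ker(\phi_{L|_Z})$ you should argue more carefully that the antidiagonal copy of $\ker\mu$ is maximal isotropic for the Weil pairing on $\ker(\phi_{\mu^*L})$, which is how \cite{BL04} organizes the finite-group computation.
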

\begin{prop} \label{prop_polar_cover} (\cite{BL04}, Lemma~12.3.1) Let $X$, $X'$ be two compact Riemann surfaces. Let $\Jac(X')$ be the canonically polarized Jacobian of $X'$ and let $f: X' \to X$ be a covering map of degree $n$.  Then the induced polarization of $\Jac(X)$ as subvariety of $\Jac(X')$ is $(n,...,n)$. \end{prop}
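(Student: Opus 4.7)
The plan is to use the fact that a finite covering $f \colon X' \to X$ of degree $n$ induces by pullback of holomorphic $1$-forms (or equivalently of degree-$0$ line bundles) a homomorphism $f^* \colon \Jac(X) \to \Jac(X')$ with finite kernel, so that $\Jac(X)$ (up to isogeny) embeds as an Abelian subvariety of $\Jac(X')$. The induced polarization on $\Jac(X)$ is by definition the pullback of the canonical polarization of $\Jac(X')$ along this embedding, and the whole task is to compute the elementary divisors of this pullback.

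The canonical (principal) polarization on $\Jac(X')$ is given by the intersection form on $H_1(X',\ZZ)$, which with respect to any symplectic basis has the standard matrix $\left(\begin{smallmatrix} 0 & I_{g'} \\ -I_{g'} & 0 \end{smallmatrix}\right)$. Via Poincaré duality, the embedding $\Jac(X) \hookrightarrow \Jac(X')$ corresponds on the cohomology side to the injection $f^* \colon H^1(X,\ZZ) \hookrightarrow H^1(X',\ZZ)$. The decisive computation is then the projection-formula-type identity
$$\int_{X'} f^*\alpha \wedge f^*\beta \;=\; \deg(f)\cdot \int_X \alpha \wedge \beta \;=\; n \int_X \alpha \wedge \beta$$
for $\alpha,\beta \in H^1(X,\ZZ)$, which follows directly from $f_*(1_{X'}) = n\cdot 1_X$ (equivalently, a generic fiber of $f$ consists of $n$ points).

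Choosing a symplectic basis of $H_1(X,\ZZ)$ with respect to the intersection form on $X$, the above identity shows that the alternating form $E'$ induced by the principal polarization of $\Jac(X')$ restricts, via $f^*$, to exactly $n$ times the standard symplectic form on $H_1(X,\ZZ)$. Therefore, in this basis, the induced polarization is represented by the matrix
$$\begin{pmatrix} 0 & n\cdot I_g \\ -n\cdot I_g & 0 \end{pmatrix},$$
which is already in elementary-divisor normal form. Reading off the diagonal we obtain that the induced polarization on $\Jac(X) \subset \Jac(X')$ is of type $(n,\ldots,n)$, as claimed. The only slightly subtle step is verifying that the chosen symplectic basis of $H_1(X,\ZZ)$ maps, under $f^*$ composed with Poincaré duality, to a primitive sublattice of $H_1(X',\ZZ)$, so that no additional factor has to be extracted before reading off the elementary divisors; this is where the properness of the induced sublattice really has to be checked, and it is the essential input that prevents the type from degenerating to something coarser than $(n,\ldots,n)$.
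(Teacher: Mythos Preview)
The paper does not supply its own proof of this proposition; it is simply quoted from Birkenhake--Lange \cite{BL04}, Lemma~12.3.1, and used as a black box in the computation of Lemma~\ref{lem_polygon_polarization}. Your argument via the projection formula
\[
\int_{X'} f^*\alpha \wedge f^*\beta \;=\; (\deg f)\int_X \alpha \wedge \beta \;=\; n\int_X \alpha \wedge \beta
\]
is precisely the standard proof one finds in that reference, and it is correct.

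One comment on your final paragraph: the primitivity check you flag is not actually needed here. The ``induced polarization on $\Jac(X)$'' means the alternating form $(f^*)^*E'$ on the lattice $H_1(X,\ZZ)$ of $\Jac(X)$ itself, where $E'$ is the intersection form on $H_1(X',\ZZ)$. The projection formula already tells you this equals $n$ times the intersection form on $X$, so a symplectic basis of $H_1(X,\ZZ)$ puts it immediately into the normal form $\left(\begin{smallmatrix} 0 & nI_g \\ -nI_g & 0 \end{smallmatrix}\right)$, giving type $(n,\ldots,n)$. Whether the image of $H_1(X,\ZZ)$ sits primitively inside $H_1(X',\ZZ)$ is a different question --- it governs whether $f^*\colon \Jac(X)\to\Jac(X')$ is injective or merely an isogeny onto its image --- but it does not enter into the computation of the polarization type pulled back to $\Jac(X)$.
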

This means that if one knows the type of the polarization of the smaller subvariety $Z$ one also gets the type of the polarization of the greater subvariety $Y$. The second statement implies that $\Jac(X)$ as subvariety of $\Jac(X')$ is not principally polarized, but has $n$-times a principal polarization.

\subsection{Hilbert Modular Surfaces} \label{sec_hilbert_modular_surfaces}

At the end of the 19th century it was one main aim to create a higher dimensional theory of modular forms (O. Blumenthal, D. Hilbert). This development also led to the definition of Hilbert modular surfaces. During the 1970s Hilbert modular surfaces became more and more popular in modern mathematics, notably by the work of F. Hirzebruch and J.-P. Serre.\\[11pt]
This section will serve as a short summary of some of the most important aspects. More comprehensive accounts to Hilbert modular surfaces can be found e.g. in \cite{Bru08}, \cite{Fre90}, \cite{Hir73} and \cite{vdG88}. We remark that in principal many definitions and results in this section can be generalized to higher dimensions (Hilbert modular varieties).\\[11pt]
Let $\OD$ be a real quadratic order and let $K$ be the real quadratic number field containing $\OD$. The group $\SL_2(K)$ is embedded into $\SL_2(\RR) \times \SL_2(\RR)$ by the two real embeddings of $K$ into $\RR$. Hence it acts on $\HH \times \HH$ via
$$\begin{pmatrix} a & b \\ c & d \end{pmatrix} z := \left( \frac{az_1+b}{cz_1+d}, \frac{a^{\sigma}z_2+b^{\sigma}}{c^{\sigma}z_2+d^{\sigma}} \right),$$
where $z=(z_1,z_2)$ is the standard variable in $\HH \times \HH$. If $\mathfrak{a}$ is a fractional ideal of $K$, we write
$$\SL(\OD \oplus \mathfrak{a}) := \left\{ \begin{pmatrix} a & b \\ c & d \end{pmatrix} \in \SL_2(K) | a,d \in \OD , b \in \mathfrak{a}^{-1}, c \in \mathfrak{a} \right\}$$
for the \textbf{Hilbert modular group}, \label{glo_HMG} respectively  $\PSL_2(\OD \oplus \mathfrak{a})$ for $\SL(\OD \oplus \mathfrak{a})$ modulo the invertible diagonal matrices. Since every Hilbert modular group is discrete in $\SL_2(\RR)^2$, it acts properly discontinuously on $\HH^2$ (see \cite{Fre90}, Proposition 2.1). Thus it makes perfectly sense to consider the quotient $\HH \times \HH / \SL(\OD \oplus \mathfrak{a})$. The \textbf{Hilbert modular surface} \index{Hilbert modular surface} $X_D$ \label{glo_XD} is defined as the quotient $\HH \times \HH / \SL(\OD \oplus \mathcal{O}^\vee_D)$. Moreover any Hilbert modular group $\Gamma$ acts on $\mathbb{P}^1(K)$ by fractional linear transformations. The orbits of $\mathbb{P}^1(K)$ under this action are called the \textbf{cusps} of $\Gamma$. Mostly one choses a set of representatives of these orbits and calls, by abuse of notation, also the representatives  cusps. \index{cusp!Hilbert modular group} For every cusp $x$ of $\Gamma$ there exists an element $\rho \in \PGL_2(\RR)$ with $\rho x = \infty = (1:0)$ such that $\rho\Gamma_x\rho^{-1} \subset \PSL_2(K)$ where $\Gamma_x$ is the isotropy group of the cusp. According to \cite{Hir73} we then have an exact sequence
$$0 \to M \to \rho \Gamma_x \rho^{-1} \to V \to 1$$
where $M$ is a \textbf{complete submodule} in $K$, i.e. an additive subgroup of $K$ which is a free abelian group of rank $2$, and $V$ is a subgroup of the group of totally positive units of $\OD$ of rank 1. $M$ and $V$ do neither depend on the choice of $\rho$ nor on the cusp representative $x$ and are therefore called the \textbf{type of a cusp}. \index{cusp!type} The group $\rho \Gamma_x \rho^{-1}=:G(M,V)$ \label{glo_type} is the semi-direct product $M \rtimes V$ (see also \cite{vdG88}, Chapter~2.1).\\[11pt]An element $(\alpha_1,\alpha_2) \in \Gamma$ is called \textbf{elliptic} if $\tr(\alpha_i)^2 - 4 < 0$ for $i=1,2$. A point $z \in \HH^2$ is called an \textbf{elliptic fixed point} for $\Gamma$ if it is the fixed point of an elliptic element of $\Gamma$.  

\begin{prop} (\cite{vdG88}, p.16) If $\alpha \in \SL(\OD \oplus \mathfrak{a} )$ is elliptic then $\tr(\alpha)$ is confined to the following possibilities:
$$0,\pm 1, \pm \sqrt{2}, \pm \sqrt{3}, \pm \frac{1 \pm \sqrt{5}}{2}.$$
The order of $\alpha$ with such a trace as element in $\PSL(\OD \oplus \mathfrak{a})$ equals $2,3,4,6,5$ respectively.
\end{prop}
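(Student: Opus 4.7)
The plan is to exploit three facts in sequence: that $\alpha$ has finite order (as an element of $\SL_2$, not just $\PSL_2$), that its trace is then a specific algebraic integer of the form $2\cos(2\pi k/m)$, and that this trace must lie in a field of degree at most two over $\QQ$ because $\alpha \in \SL(\OD \oplus \mathfrak{a}) \subset \SL_2(K)$.

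First I would establish finite order. By the hypothesis $\tr(\alpha_i)^2 - 4 < 0$ for $i=1,2$, both embeddings of $\alpha$ into $\SL_2(\RR)$ are elliptic and hence fix a point $z_i \in \HH$. Thus $\alpha$ fixes the point $(z_1,z_2) \in \HH \times \HH$. Since $\SL(\OD \oplus \mathfrak{a})$ acts properly discontinuously on $\HH \times \HH$ (cited earlier in the section), the stabilizer of $(z_1,z_2)$ is finite, so $\alpha$ has finite order in $\SL_2$, say $m$.

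Next I would analyze the arithmetic. The eigenvalues of $\alpha$ are then primitive $m$-th roots of unity $\zeta, \bar\zeta$, so $\tr(\alpha) = \zeta + \bar\zeta = 2\cos(2\pi k/m)$ for some $k$ coprime to $m$. This value lies in the maximal real subfield $\QQ(\zeta_m)^+ = \QQ(\cos(2\pi/m))$, of degree $\phi(m)/2$ over $\QQ$ (for $m \geq 3$). On the other hand, $\tr(\alpha) \in \OD \subset K$, so $[\QQ(\tr(\alpha)):\QQ] \leq 2$, forcing $\phi(m) \leq 4$. The finite list of such integers with $m \geq 3$ is $m \in \{3,4,5,6,8,10,12\}$ (the case $m=1,2$ is excluded by the elliptic condition $\tr(\alpha)^2<4$).

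Then I would tabulate, for each such $m$, the possible values $2\cos(2\pi k/m)$ with $\gcd(k,m)=1$, obtaining respectively $-1$, $0$, $\tfrac{-1\pm\sqrt{5}}{2}$, $1$, $\pm\sqrt{2}$, $\tfrac{1\pm\sqrt{5}}{2}$, $\pm\sqrt{3}$. Taking the union gives precisely the stated list $\{0,\pm 1, \pm\sqrt{2}, \pm\sqrt{3}, \pm\tfrac{1\pm\sqrt{5}}{2}\}$. For the order in $\PSL$: if $m$ is even then $\alpha^{m/2}$ has both eigenvalues equal to $-1$ and is diagonalizable (because $\alpha$ has finite order), so $\alpha^{m/2}=-\Id$ and the projective order is $m/2$; if $m$ is odd, the projective order equals $m$. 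Matching each trace to the smallest $m$ realizing it modulo sign yields the orders $2,3,4,6,5$ as claimed, and I would note that trace $\pm 1$ arises both from $m=3$ and $m=6$ (different signs), hence its $\PSL$-order is $3$, and similarly the two families of golden-ratio traces both give $\PSL$-order $5$.

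The main obstacle is essentially bookkeeping: verifying that one can freely choose the sign of the trace without leaving the Hilbert modular group (replacing $\alpha$ by $-\alpha$ if necessary, which is legitimate since $-\Id \in \SL(\OD\oplus\mathfrak{a})$), and checking that for each value of $m$ the Galois conjugate $\tr(\alpha^\sigma)$ is again of the form $2\cos(2\pi k'/m)$ and thus automatically satisfies $|\tr(\alpha^\sigma)|<2$, consistent with $\alpha$ being elliptic at the second embedding. No arithmetic restriction beyond $\phi(m)\leq 4$ appears, so no case in the list is ruled out a priori, though of course which traces actually occur depends on whether $\sqrt{2}, \sqrt{3}, \sqrt{5} \in K$.
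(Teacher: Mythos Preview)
The paper does not give its own proof of this proposition; it is simply quoted from \cite{vdG88}, p.~16, with no argument supplied. Your proof is correct and is essentially the standard argument one finds in the literature: finite order from proper discontinuity, trace equal to $2\cos(2\pi k/m)$ with $\gcd(k,m)=1$, and then the degree bound $[\QQ(\zeta_m)^+:\QQ]=\phi(m)/2\le 2$ coming from $\tr(\alpha)\in\OD\subset K$. The case analysis and the determination of the $\PSL$-orders are accurate.

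One small remark on presentation: you write that $\tr(\alpha)$ ``lies in'' $\QQ(\zeta_m)^+$, but for the argument to work you need that it actually \emph{generates} this field, i.e.\ $\QQ(\zeta+\zeta^{-1})=\QQ(\zeta_m)^+$ for any primitive $m$-th root $\zeta$; this is standard but worth stating explicitly, since otherwise the inequality $\phi(m)/2\le 2$ does not follow. Everything else is fine.
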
 
If the discriminant $D$ is different from $5,8,12$ then only order 2 and order 3 occur. For $D=5,8$ and $12$ elliptic elements with order $5,4$ and $6$ do exist. The number of cusps and the number of $\Gamma$-inequivalent elliptic fixed points of a Hilbert modular group $\Gamma$ are always finite (see \cite{Fre90}, Chapter~2). \\[11pt]
There is an isomorphism $\SL(\OD \oplus \OD^\vee) \to \SL_2(\OD)$ given by
$$\begin{pmatrix} a & b \\ c & d \end{pmatrix} \mapsto \begin{pmatrix} a & \sqrt{D}/b \\ \sqrt{D}c & d \end{pmatrix}$$
and the map $T: \HH \times \HH \to \HH \times \HH^-$ induces an isomorphism $X_D \cong \HH \times \HH^- / \SL_2(\OD)$. Both isomorphic descriptions of the Hilbert modular surface will be used frequently in these notes. In general these surfaces are not isomorphic to $\HH \times \HH / \SL_2(\OD)$. Only if $\OD$ contains a unit $\epsilon$ with $\epsilon\epsilon^\sigma = -1$ then multiplication by $(\epsilon,\epsilon^\sigma)$ induces an isomorphism
$$\HH \times \HH^- / \SL_2(\OD) \to \HH \times \HH / \SL_2(\OD).$$
Recall that the occurrence of a unit $\epsilon$ with $\epsilon\epsilon^\sigma=-1$ is impossible if $D$ is divisible by a prime $p \equiv 3 \mod 4$ (see e.g. \cite{Ste93}).\\[11pt]
A classical formula which goes back to C.-L. Siegel implies that the Euler characteristic of a Hilbert modular surface $X_D$ is closely related to the Zeta-function of the number field $K$ (see \cite{Hir73}, Chapters~1.3 and 1.4 for details and references).
\begin{thm} \textbf{(Siegel)} Let $K=\QQ(\sqrt{D})$ be a real quadratic number field. Then the Euler characteristic of the Hilbert modular surface $X_D$ is equal to $2\zeta_K(-1)$ where $\zeta_K$ is the Zeta-function of $K$, i.e. the Euler characteristic is given as \label{glo_zeta}
$$\frac{1}{30} \sum_{|b| < \sqrt{D}, b^2 \equiv D \mod 4} \sigma_1\left(\frac{D-b^2}{4}\right),$$
where $\sigma_1(a)$ \label{glo_sigma} is the sum of the divisors of $a$ (in $\ZZ$).
\end{thm}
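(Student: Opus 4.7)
The theorem makes two claims: the analytic identity $\chi(X_D) = 2\zeta_K(-1)$, and the arithmetic identity expressing this common value as the displayed divisor sum.

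For the first claim, the natural tool is Gauss-Bonnet applied to the Hilbert modular surface equipped with the product of Poincar\'e metrics on $\HH \times \HH$. Each factor has constant Gaussian curvature $-1$, so the integrated curvature form of $X_D$ is, up to an explicit power of $2\pi$, the hyperbolic volume of a fundamental domain, and this integral computes $\chi(X_D)$ in the orbifold sense. The task thereby reduces to computing the hyperbolic volume of a fundamental domain for $\SL(\OD \oplus \OD^\vee)$. Here I would follow Siegel's classical strategy: attach Eisenstein series $E_{\mathfrak{c}}(z,s)$ to each cusp $\mathfrak{c}$ of the Hilbert modular group, whose constant Fourier coefficients involve ratios of $\zeta_K$. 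Evaluating the leading behavior at $s=1$ (equivalently, using the functional equation, at $s=-1$) yields the volume as an explicit multiple of $\zeta_K(-1)$, and chasing the normalization produces $\chi(X_D) = 2\zeta_K(-1)$.

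For the second claim, I would use the factorization $\zeta_K(s) = \zeta(s)\, L(s,\chi_D)$, where $\chi_D$ denotes the Kronecker character attached to $K$, together with $\zeta(-1) = -\tfrac{1}{12}$. This reduces the question to computing $L(-1,\chi_D)$ as an explicit finite sum. The required formula is essentially due to Hurwitz, in the form later popularized by Zagier: one parameterizes integral binary quadratic forms of discriminant $D$ by integers $b$ with $b^2 \equiv D \bmod 4$ and $|b|<\sqrt{D}$, and each value of $b$ contributes the divisor sum $\sigma_1\!\left(\tfrac{D-b^2}{4}\right)$. Combining this parameterization with the functional equation of $L(s,\chi_D)$ pins down the coefficient $\tfrac{1}{60}$ for $\zeta_K(-1)$, hence $\tfrac{1}{30}$ for $\chi(X_D)$.

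The main obstacle is the volume computation. The analytic continuation of Eisenstein series in two variables is more subtle than for a single copy of $\HH$, and for $h_D>1$ one must work with a vector-valued Eisenstein system indexed by the cusps and analyze its scattering matrix. A separate but easier issue is tracking the orbifold contributions from the elliptic fixed points, whose possible orders are bounded by the proposition just above and are only exceptional for $D \in \{5,8,12\}$. Once the volume formula is in hand, the passage to the explicit divisor sum is a standard consequence of the theory of $L$-values at negative integers, and the proportionality $\mu(X_D) = 2\pi \chi(X_D)$ emphasized earlier (in the curve setting) admits the analogous higher-dimensional version with factor $(2\pi)^2$.
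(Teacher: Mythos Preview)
The paper does not prove this theorem. It is stated as a classical result attributed to Siegel, with the parenthetical reference ``(see \cite{Hir73}, Chapters~1.3 and 1.4 for details and references)'' immediately preceding the statement; no argument or sketch is given in the paper itself. So there is no proof in the paper to compare your proposal against.

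That said, your outline is broadly the classical route one finds in Hirzebruch's exposition: Siegel's volume formula for $\HH^2/\SL_2(\OD)$ obtained via Eisenstein series, the identification of the orbifold Euler number with a constant multiple of the volume (the 4-dimensional Gauss--Bonnet--Chern theorem, not the 2-dimensional version you allude to at the end), and then the evaluation of $\zeta_K(-1)$ via the factorization $\zeta_K(s)=\zeta(s)L(s,\chi_D)$ together with the finite-sum formula for $L(-1,\chi_D)$. One small caution: your final sentence suggests lifting the curve identity $\mu=2\pi\chi$ to a surface identity with factor $(2\pi)^2$, but the correct mechanism in real dimension~4 is Chern--Gauss--Bonnet, where the integrand is the Pfaffian of the curvature rather than a scalar curvature, and the normalization has to be tracked carefully to land on the factor~$2$ in $2\zeta_K(-1)$. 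This is exactly the computation carried out in the reference the paper cites.
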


There is also a moduli interpretation of Hilbert modular surfaces. Indeed, the Hilbert modular surface $X_D$ is the moduli space of all pairs $(X,\rho)$, where $X$ is a principally polarized Abelian surface and $\rho$ is a choice of real multiplication on $X$ by $\OD$. This claim follows from the much more general work of P. Deligne in \cite{Del70}. Maybe this is the reason why it is hard to find a complete explicit proof of this theorem in the existing literature. We present a very detailed proof in Appendix~\ref{appendix_proof} such that our exposition does not need to be interrupted here for quite a few pages.
\begin{thm} \label{thm_Hilbert_modular_moduli} \textbf{(Deligne)} The Hilbert modular surface $X_D$ is the moduli space of all pairs $(X,\rho)$, where $X$ is a principally polarized Abelian surface and $\rho$ is a choice of real multiplication on $X$ by $\OD$. \end{thm}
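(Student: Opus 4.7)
The plan is to construct an explicit map $\Psi\colon \HH\times \HH^- \to \mathcal{M}$, where $\mathcal{M}$ denotes the set of isomorphism classes of pairs $(X,\rho)$ as in the statement, then verify that $\Psi$ is $\SL_2(\OD)$-equivariant and descends to a bijection $\widetilde{\Psi}\colon X_D \to \mathcal{M}$. For the construction, given $(z_1,z_2)\in \HH\times \HH^-$ I would use the two real embeddings $\alpha_1,\alpha_2\colon K\to \RR$ to define
$$\Lambda_{z_1,z_2} := \bigl\{(az_1+b,\, a^\sigma z_2 + b^\sigma) : a\in \OD^\vee,\; b\in \OD\bigr\}\subset \CC^2,$$
which is a rank-$4$ $\ZZ$-lattice because $z_i\notin\RR$. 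Set $X_{z_1,z_2} := \CC^2/\Lambda_{z_1,z_2}$ and let $\OD$ act diagonally via $r\cdot(v_1,v_2) = (\alpha_1(r)v_1,\alpha_2(r)v_2)$, which preserves $\Lambda_{z_1,z_2}$ and is obviously a proper monomorphism $\OD\hookrightarrow \End(X_{z_1,z_2})$. On $\Lambda_{z_1,z_2}$ define the alternating form $E\bigl((a,b),(c,d)\bigr) := \tr_{K/\QQ}(ad-bc)$; the appearance of $\OD^\vee$ in the first factor is precisely what makes $E$ integer-valued, and a short computation shows that the associated Hermitian form is positive definite (using $\Imag z_1 > 0$, $\Imag z_2 < 0$) and of type $(1,1)$, hence a principal polarization. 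Self-adjointness of $\OD$ with respect to $E$ follows from $\tr((ra)d - b(rc)) = \tr(rad - rbc)$.

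Next I would check equivariance under the isomorphism $\SL(\OD\oplus\OD^\vee)\cong \SL_2(\OD)$ recalled in the excerpt. Given $M = \left(\begin{smallmatrix} a & b\\ c & d\end{smallmatrix}\right)\in \SL(\OD\oplus\OD^\vee)$, a direct substitution shows that the $\CC$-linear map of $\CC^2$ sending $(v_1,v_2) \mapsto \bigl((cz_1+d)^{-1}v_1,(c^\sigma z_2+d^\sigma)^{-1}v_2\bigr)$ identifies $\Lambda_{z_1,z_2}$ with $\Lambda_{M\langle z_1,z_2\rangle}$ while preserving the $\OD$-action and the form $E$. Hence $\Psi$ descends to $\widetilde{\Psi}\colon X_D\to\mathcal{M}$.

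For bijectivity: given $(X,\rho)\in\mathcal{M}$ with $X = V/\Lambda$, the real multiplication makes $V$ a module over $K\otimes_\QQ\RR \cong \RR\times\RR$, hence $V$ splits canonically as $V = V_1\oplus V_2$ into the two eigenlines, and $\Lambda$ becomes a rank-$2$ projective $\OD$-module. The structure theorem for modules over Dedekind domains gives $\Lambda\cong \mathfrak{a}\oplus\mathfrak{b}$ as $\OD$-modules; the requirement that $E$ be $\OD$-self-adjoint and define a \emph{principal} polarization, together with the known description of the alternating $\OD$-bilinear forms, forces $\mathfrak{a}\mathfrak{b}\sim \OD^\vee$ in the ideal class group, and after a change of symplectic basis one may take $(\mathfrak{a},\mathfrak{b}) = (\OD^\vee,\OD)$. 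The ratio of the two generators in the eigenline decomposition then furnishes $(z_1,z_2)\in \HH\times \HH^-$ with $(X,\rho)\cong \Psi(z_1,z_2)$. For injectivity one lifts an isomorphism of polarized $\OD$-Abelian surfaces to a $\CC$-linear map of $\CC^2$ commuting with the $\OD$-action; such a map is forced to be diagonal in the eigenline decomposition, and reading off its effect on $\Lambda$ produces an element of $\SL(\OD\oplus\OD^\vee)$ relating the two parameters.

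The main obstacle is the surjectivity step, specifically the reduction of the abstract pair $(\mathfrak{a},\mathfrak{b})$ of fractional ideals to the canonical form $(\OD^\vee,\OD)$: one has to check carefully that the freedom in choosing a symplectic basis for a principally polarized $\OD$-Abelian surface is exactly large enough to absorb the class of $\mathfrak{a}\mathfrak{b}^{-1}$ in $\mathrm{Cl}(K)$, and that the polarization being principal (rather than merely of some type $(1,d)$) is what pins down the inverse different. This is the one place where the interaction between the polarization type and the class group genuinely enters, and where one most transparently sees why $\OD^\vee$ — rather than $\OD$ itself — appears in the natural lattice model.
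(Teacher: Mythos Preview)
Your proposal is correct and follows the same overall architecture as the paper's proof: construct the explicit family of $\OD$-Abelian surfaces over $\HH^2$, check $\SL(\OD\oplus\OD^\vee)$-equivariance via the diagonal automorphy factor, then argue surjectivity by reducing an arbitrary $(X,\rho)$ to the standard lattice model. The paper works on $\HH\times\HH$ rather than $\HH\times\HH^-$, but this is the harmless change of model recalled in Section~\ref{sec_hilbert_modular_surfaces}.

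The one substantive difference lies in the surjectivity step you flag as the main obstacle. You propose to argue via the structure theory of symplectic $\OD$-modules and the ideal class group, showing that principality forces $\mathfrak{a}\mathfrak{b}\sim\OD^\vee$ and that the symplectic freedom absorbs the remaining class. The paper instead bypasses the class group entirely: after writing $\Lambda\cong\OD\oplus\mathcal{I}$, it proves directly that \emph{any} $\ZZ$-linear symplectic isomorphism $\Theta\colon(\OD\oplus\mathcal{I},E)\to(\OD\oplus\OD^\vee,\langle\,,\,\rangle)$ sending a symplectic basis to the standard one is automatically $K$-linear. The trick is to verify $\Theta(kx)=k\Theta(x)$ for a single $k\in K\smallsetminus\QQ$, chosen as the ratio $b_1/b_2$ of two elements of the symplectic basis, by exploiting the identities $E(a_i,kb_j)=E(ka_i,b_j)$ forced by self-adjointness. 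This argument is more elementary and self-contained than the module-theoretic reduction you sketch, though yours is the standard route in the literature and has the merit of explaining conceptually why $\OD^\vee$ appears. Your injectivity argument (lift to a diagonal $\CC$-linear map) is in fact cleaner than the paper's, which detours through the $\Sp_4(\ZZ)$-action on period matrices in $\HH_2$.
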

In the proof a natural map $j: X_D \to \mathcal{A}_2$ which simply forgets the choice of real multiplication is explicitly constructed. When one wants to prove a higher dimensional analog, i.e. $g>2$, of this theorem, there arises another problem: if one defines \textbf{Hilbert modular varieties} as $\HH^g / \Gamma$, where $\Gamma = \SL(\mathcal{O} \oplus \mathcal{O}^\vee)$ for some order $\mathcal{O}$ of a totally real number field $F$ (see e.g. \cite{Fre90}), then the natural map from $\HH^g / \Gamma$ to the moduli space of Abelian varieties is in general not surjective on the locus of Abelian varieties with real multiplication by $\mathcal{O}$ since the real multiplication locus needs not to be connected (see e.g. \cite{MB09} for details).
\paragraph{Automorphisms of $X_D$.} The group of automorphisms of $\HH \times \HH^-$ is given by the semi-direct product $\SL_2(\RR)^2 \rtimes \left\langle \iota \right\rangle$, where $\iota$ is the involution $(z_1,z_2) \mapsto (-z_2,-z_1)$. For a given $M \in \GL_2^+(K)$ it is now natural to ask whether $\psi_M: \HH \times \HH^- \to \HH \times \HH^-$, with $(z_1,z_2) \mapsto (Mz_1,M^\sigma z_2)$, descends to an automorphism of $X_D$. This is the case if and only if $M$ normalizes $\SL_2(\OD)$, i.e. $M \SL_2(\OD) M^{-1} = \SL_2(\OD)$. If $D$ is a fundamental discriminant then $M$ must therefore be in $\SL_2(\OD)$ and so the only automorphism of $X_D$ coming from a matrix $M \in \GL_2^+(K)$ is the identity.\footnote{If $D$ is not fundamental discriminant, then this is not true any more (compare Lemma~\ref{lem_fund_discriminant_quadratic}).} Nevertheless, the group of automorphisms of $X_D$ might be much bigger than the trivial group. An explicit example of an automorphism of $X_8$ not stemming from an automorphism of $\HH \times \HH^-$ is described in \cite{vdG88}, Chapter~VII.2. For a given $D$, it is in general an unsolved problem how to explicitly write down all automorphisms of $X_D$ . 
\paragraph{Hilbert modular forms.} Hilbert modular forms are a higher dimensional analogue of elliptic modular forms. Many parts of the theory of elliptic modular forms can be translated into the language of Hilbert modular forms. Similarly as the former are related to the modular surface $\HH / \SL_2(\ZZ)$, the latter are related to Hilbert modular surfaces. Let $\Gamma$ be a finite index subgroup of a Hilbert modular group. A holomorphic function $f: \HH^2 \to \CC$ is called a \textbf{Hilbert modular form} of weight $\textbf{k}=(k_1,k_2) \in \ZZ^2$ on $\Gamma$ if for all $\gamma = \left( \begin{smallmatrix} a & b \\ c & d \end{smallmatrix} \right) \in \Gamma$ and for all $z=(z_1,z_2) \in \HH^2$ one has
\begin{equation} 
f(\gamma z) = (cz_1+d)^{k_1} (c'z_2+d')^{k_2} f(z).
\end{equation}
If $\textbf{k}$ is of the form $(k,k)$ then one speaks of a Hilbert modular form of weight $k$. It is a striking fact that, in contrast to the case of elliptic modular forms, a holomorphic Hilbert modular form is automatically holomorphic at the cusps. This is known as the \textbf{Koecher principle} (see \cite{Bru08}, Theorem~1.20).  Non-zero holomorphic Hilbert modular forms only exist if $k=(0,0)$ or if the two $k_i$ are both positive (see \cite{vdG88}, Lemma~6.3). Let $f|_k\gamma:= (cz_1+d)^{-k_1}(c'z_2+d')^{-k_2} f(\gamma z)$. A Hilbert modular form $f$ is a \textbf{cusp form} if the constant term $a_0$ in the Fourier series of $f|_k\gamma$ vanishes for all $\gamma \in \GL_2^+(K)$.  If one extends the definition of Hilbert modular forms to half-integral weight one has (as in the case of elliptic modular forms) to deal with characters. For more background on Hilbert modular forms the reader is invited to consult the books \cite{Bru08}, \cite{Fre90} and \cite{vdG88}.

\subsubsection{Special Algebraic Curves on Hilbert Modular Surfaces} \label{sec_special_algebraic_curves}

A plane \textbf{algebraic curve} over a field $k$ is an equation of the form $F(x,y)=0$ for some $F \in k[X,Y]$. A \textbf{point} on an algebraic curve is a pair $(x,y) \in k^2$ with $F(x,y)=0$. A \textbf{nonsingular} algebraic curve is simply an algebraic curve over $k$ which has no singular points over $k$. Here, we are interested in algebraic curves on Hilbert modular surfaces. For sure, the simplest algebraic curve on $X_D$ is the diagonal, i.e. the image of the composition of the map $z \mapsto (z,-z)$ with the quotient map $\pi: \HH \times \HH^- \to X_D$. Another class of examples are the \textbf{twisted diagonals} \index{twisted diagonal} which F. Hirzebruch and D. Zagier implicitly introduced in their paper \cite{HZ76}. These are the algebraic curves in $\HH^2$ defined by the map $z \mapsto (Mz,-M^\sigma z)$ for an arbitrary matrix $M \in \GL_2^+(K)$, i.e. a matrix of totally positive determinant, \label{glo_gl2+} where $M^\sigma$ \label{glo_Msigma} is the Galois conjugate of $M$. Their $\pi$-images are known as \textbf{modular curves} or \textbf{Hirzebruch-Zagier cycles}. Modular curves have been extensively treated in the literature due to their importance for the geometry and arithmetic of Hilbert modular surfaces. Very good references for an overview on modular curves are \cite{vdG88} and \cite{Bru08}. All modular curves have finite volume. There is a simple trick which makes modular curves rather good accessible: a twisted diagonal in $\HH^2$ is namely given by an equation
\begin{equation} \label{equ_twisted_diagonals} \hspace{2.4cm} \begin{pmatrix} z_2 & 1 \end{pmatrix} \begin{pmatrix} a \sqrt{D} & \lambda \\ -\lambda^\sigma & b \sqrt{D} \end{pmatrix} \begin{pmatrix} z_1 \\ 1 \end{pmatrix} = 0
\end{equation}
with $a,b \in \QQ$ and $\lambda \in K$. Because of its form such a matrix is called \text{skew-hermitian}. The appearing matrix is uniquely determined by $M$ up to scalar multiples in $K$ (see \cite{vdG88}, p.88 for details). If $a,b \in \ZZ$ and $\lambda \in \OD$ the skew-hermitian matrix is called \textbf{integral}. If $a,b,\lambda$ do not have a common divisor $m \in \NN$ the matrix is called \textbf{primitive}. Twisted diagonals can be classified by classifying the corresponding skew-hermitian matrices. For $U$ integral skew-hermitian let $F_U$ denote the image of the curve defined by (\ref{equ_twisted_diagonals}) in $X_D$. H.-G. Franke and W. Hausmann showed that the number of integral skew-hermitian matrices which have the same determinant but yield different curves in $X_D$ is finite (see \cite{Fra78} and \cite{Hau80}). For $N \in \NN$ it hence makes sense to look at the (finite volume) curve $F_N$ which is the union of all modular curves that are defined by an integral primitive skew-hermitian of determinant $N$. If we omit the primitivity condition in the definition we get the curve $T_N$. \label{glo_TN} Also the curves $F_N$ and $T_N$ are of great interest for the arithmetic and geometry of $X_D$ (see e.g. \cite{vdG88} and \cite{Bru08}) and are closely linked to modular forms. This link is given by the intersection numbers of the $T_N$ (see \cite{HZ76}). Also the volume of all these curves is explicitly known (see \cite{Hau80}, Satz~3.10 and Korollar~3.11 or \cite{vdG88}, Theorem~V.5.1). Furthermore, it is known that the stabilizers of the curves $F_N$ inside $\SL_2(\OD)$ are the unit groups of certain quaternion rings (see e.g. \cite{Fra78}, Theorem~2.3.8).\\[11pt] Another example for the importance of these curves is the following: let $\nu(U)$ denote the image of $\lambda$ in the ring $\OD/\mathcal{D}_D$ where $\mathcal{D}_D$, the \textbf{different}, is the principal ideal $(\sqrt{D})$. For $\nu \in \OD/\mathcal{D}_D$, let $T_N(\nu)$ denote the union of all $F_U$ where $U$ is an integral skew-hermitian matrix with $\det(U)=N$ and $\nu(U) = \pm \nu$. Note that $T_N(\nu) \neq \emptyset$ if and only if $\N(\nu)=-N \mod D$. Finally, let $P_D$ denote the \textbf{reducible locus} \label{glo_PD} \index{reducible locus} in $X_D$, i.e. the locus of all abelian varieties which are products of elliptic curves. 
\begin{thm} (\textbf{McMullen}, \cite{McM07}, Corollary~3.5) The reducible locus $P_D$ is given by
$$P_D = \bigcup_N \left\{T_N((e+\sqrt{D})/2): e^2 + 4N =D \right\}.$$
\end{thm}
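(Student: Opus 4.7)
The plan is to use the moduli interpretation of $X_D$ from Theorem~\ref{thm_Hilbert_modular_moduli}: a point $[z]\in X_D$ corresponds to a pair $(A_z,\rho)$ where $A_z$ is a principally polarized abelian surface and $\rho\colon\OD\hookrightarrow\End(A_z)$ is real multiplication. Reducibility of $A_z$ will be translated into an explicit vanishing condition on $(z_1,z_2)\in\HH\times\HH^-$, and the goal is to match this condition with the twisted-diagonal equation (\ref{equ_twisted_diagonals}) for an integral skew-hermitian $U$ of a very specific type.

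First I would argue that if $A_z$ is isogenous to a product $E_1\times E_2$ of elliptic curves, then $E_1$ and $E_2$ must themselves be isogenous: otherwise $\End(A_z)\otimes\QQ$ sits inside $\End(E_1)\otimes\QQ\times\End(E_2)\otimes\QQ$, a product of two fields of degree at most two over $\QQ$, into neither of which the real quadratic field $K=\QQ(\sqrt{D})$ embeds. Hence $A_z\sim E\times E$ and $\End(A_z)\otimes\QQ\supset M_2(\QQ)$. Writing $A_z=\CC^2/\Lambda_z$ for the canonical lattice associated to $(z_1,z_2)$, the existence of a one-dimensional abelian subvariety is equivalent to the existence of two $\CC$-linearly dependent elements $\lambda,\mu\in\Lambda_z$, i.e.\ the vanishing of a $2\times 2$ determinant. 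Expanding this determinant in the natural $\OD$-basis of $\Lambda_z$ yields an equation
$$\begin{pmatrix}z_2 & 1\end{pmatrix}U\begin{pmatrix}z_1 \\ 1\end{pmatrix}=0,\qquad U=\begin{pmatrix}a\sqrt{D} & \lambda \\ -\lambda^\sigma & b\sqrt{D}\end{pmatrix},$$
the skew-hermitian shape being forced by the symmetry of the $\OD$-action on the two factors of $\HH\times\HH^-$ via Galois-conjugate matrices; integrality of $\Lambda_z$ forces $a,b\in\ZZ$ and $\lambda\in\OD$, so $U$ is integral skew-hermitian and $z$ lies on some $F_U$.

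It remains to pin down the invariants $N=\det(U)=abD+\lambda\lambda^\sigma$ and $\nu=\lambda\bmod\mathcal{D}_D$. The endomorphism of $A_z$ that cuts out the one-dimensional subvariety has minimal polynomial $X^2-eX-N$ over $\ZZ$ with $e=\tr(\lambda)$, and since its eigenvalues must generate $K$ (not a strictly smaller subring), the discriminant $e^2+4N$ of that polynomial has to equal $D$; equivalently, writing $\nu=(e+\sqrt{D})/2$ one obtains the sharper relation $\lambda\lambda^\sigma=(e^2-D)/4=-N$. Conversely, given any integral skew-hermitian $U$ with $\det(U)=N$ and $\nu(U)=\pm(e+\sqrt{D})/2$ satisfying $e^2+4N=D$, the associated homomorphism of abelian varieties exhibits an elliptic subvariety of $A_z$ for every $z\in F_U$, so $F_U\subset P_D$.

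The main obstacle will be the clean extraction of the discriminant identity $e^2+4N=D$ (as opposed to $e^2+4N$ merely dividing $D$ or satisfying a weaker congruence): one must verify that the splitting endomorphism genuinely generates the full quadratic subfield $K$ inside $\End(A_z)\otimes\QQ$, using the properness of the real multiplication $\rho$ from Definition~\ref{def_real_multiplication} and the fact that the polarization restricted to the elliptic subvariety is principal. A secondary subtlety is the sign ambiguity $\pm\nu$ in the statement, which comes from interchanging the roles of $\lambda$ and $-\lambda^\sigma$ in $U$ and corresponds to choosing which of the two elliptic factors of $A_z$ to distinguish.
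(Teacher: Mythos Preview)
The paper does not supply a proof of this statement; it is quoted verbatim from \cite{McM07}, Corollary~3.5, and then used. So there is no argument in the present paper to compare your proposal against.

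Your overall architecture is reasonable and close in spirit to how such a result is established: pass to the moduli interpretation, translate the existence of an elliptic subvariety into a determinantal relation among lattice vectors, and recognise the outcome as an integral skew-hermitian equation. The observation that the two elliptic factors must be isogenous (else $K$ would embed into $\QQ\times\QQ$ or into an imaginary quadratic field) is correct.

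The substantive gap is in the extraction of the constraint $e^2+4N=D$ with $\nu(U)=\pm(e+\sqrt D)/2$. Two concrete problems. First, you invoke ``the endomorphism of $A_z$ that cuts out the one-dimensional subvariety'' and assign it the minimal polynomial $X^2-eX-N$ with $e=\tr(\lambda)$, but you never say what this endomorphism is. A skew-hermitian matrix $U$ is not itself an endomorphism of $A_z$; the passage from $U$ to an element of $\End(A_z)$ (and the computation of its trace and norm in terms of the entries of $U$) is precisely the content that is missing. Second, you conflate the entry $\lambda\in\OD$ of $U$ with its image $\nu=\nu(U)\in\OD/\mathcal D_D$. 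Your equation $\lambda\lambda^\sigma=-N$ would force $ab=0$ in $U$, since $\det U=abD+\lambda\lambda^\sigma$ in general; what one actually has is only $\textrm N(\nu)\equiv \pm N\bmod D$, and the point of the theorem is that reducibility of $A_z$ upgrades this congruence to the exact equality encoded by $e^2+4N=D$. Your appeal to properness of $\rho$ does not accomplish this: properness prevents $\OD$ from extending to a larger order inside $\End(A_z)$, which is a different statement from the discriminant identity you need for the auxiliary quadratic.
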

Note that if $D$ is prime, then $P_D$ can be written as the union of some $F_N$. In general, this is not possible (see \cite{McM07}, chapter 3). \\[11pt]For twisted diagonals both components of the universal covering map are given by Möbius transformations. An algebraic curve $C \to X_D$ is still \textit{rather special} if only (at least) one of the components of the universal covering map $\HH \to \HH^2$ is a Möbius transformation. Equivalently, one may ask that $C \to X_D$ is totally geodesic for the \textbf{Kobayashi metric} (see e.g. \cite{MV10}).\footnote{The reason is the following: a holomorphic map $f: \HH \to \HH$ is a Kobayashi geodesic if and only if it is a Möbius transformation. By the product property of the Kobayashi metric (see e.g. \cite{Kob70}, Proposition~IV.1.5) the claim then follows.}\index{Kobayashi metric} For a complex domain $W$, the Kobayashi metric $k_W(.,.)$ \label{glo_KW} is defined as follows (see \cite{Kob67}): let $\rho$ be the Poincaré metric on the unit disc $\mathbb{D}$. For all $x,y \in W$ we call a \textbf{chain} from $x$ to $y$ points $x_0,x_1,...,x_n \in \mathbb{D}$ together with holomorphic maps $f_i: \mathbb{D} \to W$ such that
$$f_1(x_0)=x , \ \ \ f_j(x_j)=f_{j+1}(x_j), \ \ j=1,...,n-1, \ \ f_n(x_n)=y.$$
Then 
$$k_W(x,y) := \inf \sum_{i=1}^n \rho(x_{i-1},x_i),$$
where the infimum is taken over all chains from $x$ to $y$. In general, the Kobayashi metric is only a pseudo metric although on $X_D$ the Kobayashi metric is known to be a metric. The main property of the Kobayashi metric is that it is distance-decreasing for all holomorphic maps. A very well written introduction to the Kobayashi metric can be found in \cite{Kra90}. A more profound reference is \cite{MV10}. An algebraic curve that is a Kobayashi geodesic is called a \textbf{Kobayashi curve}. \index{Kobayashi curve} Very few examples of Kobayashi curves on $X_D$ other than twisted diagonals are known so far. Nevertheless, it is known by the work of E. Viehweg and M. Möller that each Kobayashi curve is defined over a number field (\cite{MV10}, Corollary~6.2). In the next chapter we will see how Teichmüller curves yield some of those few examples. The main aim of these notes is to construct totally new examples of Kobayashi curves, i.e. also different from Teichmüller curves, and to analyze some of their properties.





\newpage 

\section{Teichmüller Curves} \label{cha_Teichmüller_curves}

Teichmüller curves (or at least the first non-arithmetic examples) were introduced in the groundbreaking paper \cite{Vee89} by W. A. Veech. His main motivation for studying Teichmüller curves came from their relation to billiard dynamics. In Section~\ref{sec_definition_tmcurves} we recall the definition of Teichmüller curves and some of their main properties. A well-known class of examples of Teichmüller curves stems from square-tiled surfaces. In his paper \cite{McM03}, C. McMullen found another class of examples of Teichmüller curves which are \textit{independent} (in a sense to be made more precise in this chapter) from square-tiled surfaces.\footnote{This class of examples was independently also found by K. Calta in \cite{Cal04}.} Moreover he showed that each of these examples lies on a unique Hilbert modular surface (Theorem~\ref{thm_mcm_embedding}). We recapitulate his ideas in Section~\ref{sec_mcmullen_examples1}. Teichmüller curves are generated by a flat surface $(X,\omega)$ which is unique only up to the $\SL_2(\RR)$-action on $\Omega\Mg$. Therefore, the Veech group of the generating surface is unique only up to conjugation. When one wants to consider a Teichmüller curve as a curve on the Hilbert modular surface one thus has to carefully choose an appropriate generating surface. We discuss this problem in Section~\ref{sec_fixing_Veech}.\\ Unfortunately, there is not a standard reference for Teichmüller curves yet. This is why we gather together important facts. Our main references are \cite{Bai07}, \cite{Kap11}, \cite{MB09}, \cite{McM03}, \cite{McM05} and \cite{Möl11a}.

\subsection{Definition and Main Properties} \label{sec_definition_tmcurves}
\begin{defi} A \textbf{Teichmüller curve} \index{Teichmüller curve} $C \to \Mg$ is an algebraic curve in $\Mg$ that is totally geodesic with respect to the Teichmüller metric. \end{defi}
A map $f: X \to Y$ is said to be \textbf{totally geodesic} if for each geodesic $x_t$ in $X$ the image $f(x_t)$ is a geodesic in $Y$. At first glance the definition of Teichmüller curves seems to be rather abstract. However it is well-known (see e.g. \cite{Möl11a}) that all Teichmüller curves arise as the projection of an $\SL_2(\RR)$-orbit of a flat surface $(X,\omega)$ or a half-translation surface $(X,q)$ to moduli space. On the contrary, the projection of the $\SL_2(\RR)$-orbit of a flat surface $(X,\omega)$ (respectively $(X,q)$) to $\Mg$ yields a Teichmüller curve if and only if $(X,\omega)$ (respectively $(X,q)$) is a Veech surface. Hence it stems from a dynamically optimal billiard table (see e.g. \cite{MT02}).\footnote{A billiard table is called dynamically optimal if each billiard trajectory which avoids the corners is either periodic or dense.} In this case we say that $(X,\omega)$ (respectively $(X,q)$) \textbf{generates} the Teichmüller curve. We will in the following restrict ourselves to the case of translation surfaces.\footnote{This is possible to due to the canonical double covering \index{canonical double cover} construction which will be described in Section~\ref{sec_no_twist_Lyapunov}; see e.g. \cite{BM10b} for more details on this fact.}.\\[11pt]Note that the action of the group of rotations $\SO_2(\RR) \subset \SL_2(\RR)$ may change the Abelian differential without touching the Riemann surface structure. We identify $\SO_2(\RR) \backslash \SL_2(\RR)$ with $\HH$ by $\SO_2(\RR) \cdot \rm{B} \mapsto -\overline{B^{-1}(i)}$ (compare \cite{HS07}, Chapter~2.3.2). Hence a Teichmüller curve stems from the unique map $\widetilde{f}: \SO_2(\RR) \backslash \SL_2(\RR) \cong \HH \to \Mg$ which makes the diagram 
$$
\begin{xy}
 \xymatrix{ \SL_2(\RR) \ar[r]^{F} \ar[d] & \Omega\Mg \ar[d]_{\pi} \\ \SO_2(\RR) \backslash \SL_2(\RR) \ar[r]^{\ \ \ \ \ \ \ \ \widetilde{f}} & \Mg\\
 	}
\end{xy}
$$
commute, where $F(A) = A \cdot (X,\omega)$.  More concretely if $z \in \HH$ then $\widetilde{f}(z)$ is defined as $\pi(A_z\cdot(X,\omega))$ where $A_z: \CC \to \CC$ is given by $\left( \begin{smallmatrix} 1 & \Real(z)\\ 0 & \Imag(z) \end{smallmatrix} \right)$ (see \cite{McM03}, Chapter~3). The stabilizer of the curve $\widetilde{f} : \HH \to \Mg$ coincides with the Veech group of $(X,\omega)$ and thus $\widetilde{f}$ factors through $\HH / \SL(X,\omega)$ and yields the Teichmüller curve $f: \HH / \SL(X,\omega) \to \Mg$. Note that the Veech group does not act by Möbius transformations on $\HH$ under this construction. So does only $\textrm{R} \hspace{1.5pt} \SL(X,\omega)\textrm{R}^{-1}$ where $\textrm{R} = \left( \begin{smallmatrix} -1 & 0 \\ 0 & 1 \end{smallmatrix} \right)$ (compare \cite{McM03}, Proposition~3.2.).\\[11pt]
The simplest examples of Teichmüller curves are generated by square-tiled surfaces. 
The Veech group of the torus $E:=\CC/(\ZZ \oplus i \ZZ)$ is $\SL_2(\ZZ)$ and thus the torus generates a Teichmüller curve in $\mathcal{M}_1$. In fact, every square-tiled surface generates a Teichmüller curve: recall that a \textbf{translation covering} $\pi: (X,\omega) \to (Y,\eta)$ is a covering $\pi: X \to Y$ of compact Riemann surfaces such that $\omega = \pi^* \eta$. 
\begin{thm} \label{thm_Gutkin_Judge2} (\textbf{Gutkin, Judge,} \cite{GJ00}) \index{Theorem of Gutkin-Judge} Let $\pi: (X,\omega) \to (Y,\eta)$ be a translation covering. If $Z(\eta)\neq \emptyset$ and $\pi$ is branched over the zeroes of $\eta$ or if the genus $g(X)=1$ and $\pi$ is branched over at most one point then the corresponding Veech groups $\SL(\textit{X},\omega)$ and $\SL(\textit{Y},\eta)$ are commensurable. \end{thm}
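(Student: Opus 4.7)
The key observation is that a translation covering preserves the affine structure exactly: since $\omega=\pi^{*}\eta$, any element of $\Aff^{+}(X,\omega)$ whose underlying homeomorphism descends through $\pi$ yields an element of $\Aff^{+}(Y,\eta)$ with the \emph{same} derivative, and conversely any element of $\Aff^{+}(Y,\eta)$ that lifts through $\pi$ yields an element of $\Aff^{+}(X,\omega)$ with the same derivative. Since the derivative map $D:\Aff^{+}(X,\omega)\to\SL_{2}(\RR)$ has finite kernel (the automorphism group of a flat surface is finite), the problem reduces to producing a common finite-index subgroup inside $\Aff^{+}(X,\omega)$ and $\Aff^{+}(Y,\eta)$.

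Let $B\subset Y$ be the branch locus of $\pi$, so that $\pi: X\smallsetminus\pi^{-1}(B)\to Y\smallsetminus B$ is an unramified cover classified by a finite-index subgroup $H\subset\pi_{1}(Y\smallsetminus B,y_{0})$. Under the first hypothesis $B\subset Z(\eta)$; since every element of $\Aff^{+}(Y,\eta)$ permutes the finite set $Z(\eta)$, a finite-index subgroup $A_{Y}\subset\Aff^{+}(Y,\eta)$ fixes $B$ pointwise. In the torus case $Z(\eta)=\emptyset$, but $B$ has at most one point, and the stabilizer of a single point has finite index in $\Aff^{+}(Y,\eta)$ because the translation subgroup acts transitively on $Y$ and has finite-index normalizer. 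Either way, elements of $A_{Y}$ act as based homeomorphisms of $Y\smallsetminus B$ and induce outer automorphisms of $\pi_{1}(Y\smallsetminus B,y_{0})$.

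I would then prove the two inclusions. For lifting: $A_{Y}$ acts on the \emph{finite} set of subgroups of $\pi_{1}(Y\smallsetminus B,y_{0})$ of index $[\pi_{1}:H]$, and the stabilizer $A_{Y}'$ of the conjugacy class of $H$ has finite index in $A_{Y}$; every element of $A_{Y}'$ lifts to a self-homeomorphism of $X$, which is automatically affine with the same derivative. Hence $D(A_{Y}')$ is a finite-index subgroup of $\SL(Y,\eta)$ contained in $\SL(X,\omega)$. For descent: every element of $\Aff^{+}(X,\omega)$ permutes the finite set $\pi^{-1}(B)$, so a finite-index subgroup $A_{X}$ fixes it pointwise; by the same orbit-finiteness argument applied to the subgroup $\pi_{*}\pi_{1}(X\smallsetminus\pi^{-1}(B))\subset\pi_{1}(Y\smallsetminus B)$, a further finite-index subgroup $A_{X}'$ preserves the fibration of $X\smallsetminus\pi^{-1}(B)$ by fibers of $\pi$. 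Such maps descend to affine diffeomorphisms of $(Y,\eta)$ with the same derivative, so $D(A_{X}')$ is a finite-index subgroup of $\SL(X,\omega)$ contained in $\SL(Y,\eta)$. Combining the two inclusions gives commensurability.

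The main obstacle is the descent direction: lifting is essentially tautological once $B$ is fixed, whereas descent requires an affine self-map of $X$ to respect the fibration $\pi$. This is controlled by the monodromy data, and the necessary finiteness input for both directions is the standard fact that the (outer) automorphism group of a finitely generated group acts with finite orbits on the set of subgroups of any fixed finite index. A secondary subtlety is to ensure that the lifted/descended maps remain in $\Aff^{+}$ rather than in a larger group of homeomorphisms; this follows immediately from the identity $\omega=\pi^{*}\eta$, which forces the local form of a lift or descent to be affine with the derivative of the original map.
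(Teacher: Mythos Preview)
The paper does not supply its own proof of this theorem; it is quoted from Gutkin--Judge \cite{GJ00} and used as a black box, so there is no in-paper argument to compare your proposal against.

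Assessed on its own terms, your lifting direction is sound, but the descent direction is circular as written. You say the orbit-finiteness argument applies to the subgroup $\pi_{*}\pi_{1}(X\smallsetminus\pi^{-1}(B))\subset\pi_{1}(Y\smallsetminus B)$, yet an element $\phi\in\Aff^{+}(X,\omega)$ acts on $\pi_{1}(X\smallsetminus\pi^{-1}(B))$, not a priori on $\pi_{1}(Y\smallsetminus B)$; extending that action to the larger group is exactly the descent you are trying to prove. A clean repair is to compare the two branched covers $\pi,\ \pi\circ\phi: X\to Y$: they have the same degree and the same branch locus $B$, and there are only finitely many isomorphism classes of degree-$d$ covers of $Y$ branched over $B$. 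Hence a finite-index subgroup of $\Aff^{+}(X,\omega)$ sends $\pi$ to an equivalent cover, and the equivalence furnishes the descended affine map of $(Y,\eta)$. A smaller slip: in the torus case the kernel of $D$ on $\Aff^{+}(Y,\eta)$ is the full translation group, hence infinite, so your parenthetical ``the automorphism group of a flat surface is finite'' fails there. What is true (and sufficient) is that the stabilizer of the single branch point already surjects onto $\SL(Y,\eta)$ under $D$, so restricting to it costs nothing at the level of Veech groups.
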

Hence square-tiled surfaces give a very big family of Teichmüller curves. Indeed for every $g$ and each connected component of every stratum of $\Omega\Mg$ Teichmüller curves generated by square-tiled surfaces are dense (see e.g. \cite{Möl11a}, Proposition~5.3). 
\paragraph{Primitive Teichmüller curves.} Square-tiled surfaces can be regarded as one big class of examples of Teichmüller curves. A Teichmüller curve in $\Mg$ is called \textbf{(geometrically) primitive} \index{Teichmüller curve!(geometrically) primitive} if it does not arise from a curve in $\mathcal{M}_h$ with $h<g$ via a branched covering construction. More precisely, we call a Teichmüller curve (geometrically) primitive if it is generated by a Veech surface $(X,\omega)$ and there does not exist a translation covering $\pi: (X,\omega) \to (Y,\eta)$ with $g(Y)<g(X)$, where $g(\cdot)$ is the genus. \label{glo_g(X)} All Teichmüller curves stemming from square-tiled surfaces (beside the torus) are hence non-primitive Teichmüller curves. Since every Teichmüller curve has a cusp (see \cite{Vee89} or e.g. \cite{MB09}, Chapter~1), it follows from Theorem~\ref{thm_Gutkin_Judge2} and Theorem~\ref{thm_nonarithmetic_commensurable} that all primitive Teichmüller curves in genus~$g \geq 2$ have non-arithmetic Veech groups. Moreover, every Teichmüller curve has a unique primitive representative in its commensurability class (see \cite{McM05}, Chapter~2). We call a Teichmüller curve \textbf{algebraically primitive} \index{Teichmüller curve!algebraically primitive} if it arises from a flat surface $(X,\omega)$ with $g(X)=[K:\QQ]$ where $K$ is the trace field of $\SL(X,\omega)$. The trace field of a Veech group is well-defined, i.e. does not depend on the explicit choice of the generating surface (\cite{McM03b}, Corollary~9.6). \index{Fuchsian group!trace field} Algebraically primitivity implies primitivity (see \cite{McM03}, Theorem 5.1). The converse does not hold (see Chapter~\ref{chapter_prym_modular} for infinitely many counter examples).
\paragraph{Computational aspects.} In general, it is an unsolved problem how to calculate the Veech group of a flat surface $(X,\omega)$. In her thesis, G. Weitze-Schmithüsen \cite{Sch05} describes an algorithm for finding the Veech group of every square-tiled surface. In \cite{McM03}, C. McMullen gives another algorithm for calculating the Veech group of Teichmüller curves generated by a flat surface $(X,\omega)$ which has some mirror symmetry (as the examples in Chapter~\ref{sec_mcmullen_examples1} have). Nevertheless, this algorithm works only in very few special cases: for instance the genus of the Teichmüller curve has to be equal to $0$. J. Bowman communicated to the author of these notes that he has almost found an algorithm for finding the Veech group of all surfaces that are known to be Veech surfaces (e.g. because they are known to generate a Teichmüller curve) using Iso-Delauney triangulations. Although his idea is very promising it is not yet proven to work in general. If the Veech group is known to be a subgroup of a countable group $\SL_2(S)$ and if its (co-)volume is known, then D. Zagier suggested to calculate the Veech group by brute force, i.e. by checking for each element $A \in \SL_2(S)$ if it lies in the Veech group until one has found a set of generators. Finally, in \cite{Muk12}, R. Mukamel has found an algorithm for computing generators of the Veech group of those Teichmüller curves which will be described next.

\subsection{Examples in Genus~$2$} \label{sec_mcmullen_examples1}

In \cite{Vee89}, W. A. Veech showed that every regular polygon is a Veech surface and hence gives rise to a Teichmüller curve. Many years later in \cite{McM03}, C. McMullen found a big class of  examples of primitive Teichmüller curves different from square-tiled surfaces using L-shaped polygons. These examples were independently also found by K. Calta in \cite{Cal04}. Let us recapitulate the construction: an \textbf{L-shaped billiard table} \index{L-shaped billiard table} $P$ is obtained by removing a small rectangle form the corner of a larger rectangle. It can be normalized by a linear transformation $A \in \GL_2^+(\RR)$ such that it is of the form $P(a,b)$ shown in the following figure:

\begin{center}
\psset{xunit=1cm,yunit=1cm,runit=1cm}
\begin{pspicture}(-0.5,-0.5)(4.5,3.5) 


\psline[linewidth=0.5pt,showpoints=false]{-}(0,0)(4,0)
\psline[linewidth=0.5pt,showpoints=false]{-}(4,0)(4,3)
\psline[linewidth=0.5pt,showpoints=false]{-}(4,3)(3,3)
\psline[linewidth=0.5pt,showpoints=false]{-}(3,3)(3,1)
\psline[linewidth=0.5pt,showpoints=false]{-}(3,1)(0,1)
\psline[linewidth=0.5pt,showpoints=false]{-}(0,1)(0,0)

\uput[l](0,0.5){1}
\uput[d](2,-0,05){b}
\uput[r](4.0,1.5){a}
\uput[u](3.5,3){1}
\end{pspicture}
\\ Figure 3.1. An L-shaped billiard table of the form $P(a,b)$.
\end{center}

The so-called \textbf{Zemljakov-Katok construction} or \textbf{unfolding construction)}, \index{Zemljakov-Katok construction} which was first described in \cite{ZK75}, associates to an L-shaped billiard table a point $(X,\omega) \in \Omega\Mg$: in general, let $P \subset \RR^2 \cong \CC$ be a compact polygon whose interior angles are rational multiples of $\pi$.  Let $G$ be the finite subgroup of the orthogonal group $O_2(\RR)$ \label{glo_O2} generated by the linear parts of the reflections in the sides of $P$. Then we define 
$$X = \left( \coprod_{g \in G} g \cdot P \right) / \sim$$
where $\sim$ is an equivalence relation defined by gluing edges. Glue each edge $E$ of $g \cdot P$ to the edge $r\cdot E$ of $r \cdot g \cdot P$ by a translation, where $r \in G$ is reflection through $E$ (compare \cite{McM03}). The holomorphic $1$-form $dz$ on $\CC$ is translation-invariant and hence descends to a natural $1$-form $\omega$ on $X$. The only possible zeroes of $\omega$ come from the vertices of $P$. 
\begin{center}
\psset{xunit=1.5cm,yunit=1.5cm,runit=1.5cm}
\begin{pspicture}(-0.5,-0.5)(6,2.2)


\psline[linewidth=0.5pt,showpoints=false]{-}(0,0.5)(1.5,0.5)
\psline[linewidth=0.5pt,showpoints=false]{-}(1.5,0.5)(1.5,1.5)
\psline[linewidth=0.5pt,showpoints=false]{-}(1.5,1.5)(1.0,1.5)
\psline[linewidth=0.5pt,showpoints=false]{-}(1.0,1.5)(1.0,1.0)
\psline[linewidth=0.5pt,showpoints=false]{-}(1.0,1.0)(0,1.0)
\psline[linewidth=0.5pt,showpoints=false]{-}(0,1)(0,0.5)


\psline[linewidth=0.5pt]{->}(2.0,1.0)(2.5,1.0)


\psline[linewidth=0.5pt]{-}(3.0,1.0)(6.0,1.0)
\psline[linewidth=0.5pt]{-}(4.5,0)(4.5,2.0)
\psline[linewidth=0.5pt]{-}(3.0,0.5)(3.0,1.5)
\psline[linewidth=0.5pt]{-}(6.0,0.5)(6.0,1.5)
\psline[linewidth=0.5pt]{-}(4.0,2.0)(5.0,2.0)
\psline[linewidth=0.5pt]{-}(4.0,0)(5.0,0)
\psline[linewidth=0.5pt]{-}(3.0,1.5)(4.0,1.5)
\psline[linewidth=0.5pt]{-}(5.0,1.5)(6.0,1.5)
\psline[linewidth=0.5pt]{-}(3.0,0.5)(4.0,0.5)
\psline[linewidth=0.5pt]{-}(5.0,0.5)(6.0,0.5)
\psline[linewidth=0.5pt]{-}(4.0,1.5)(4.0,2.0)
\psline[linewidth=0.5pt]{-}(5.0,1.5)(5.0,2.0)
\psline[linewidth=0.5pt]{-}(4.0,0.0)(4.0,0.5)
\psline[linewidth=0.5pt]{-}(5.0,0.0)(5.0,0.5)

\end{pspicture}
\\ Figure 3.2. Unfolding of an L-shaped billiard table (from \cite{McM03}).
\end{center}

Note that the associated flat surface $(X,\omega)$ of an L-shaped polygon is always in $\Omega\mathcal{M}_2(2)$.

\begin{thm} \label{thm_L_shaped} (\textbf{McMullen}, \cite{McM03}, Theorem~9.2) The Veech group of the L-shaped polygon $P(a,b)$ \label{glo_billiard} is a lattice if and only if $a$ and $b$ are rational or
$$a=x+z\sqrt{D} \ \ \ \textrm{and} \ \ \ b=y+z\sqrt{D}$$
for some $x,y,z \in \QQ$ with $x+y=1$ and $D \geq 0$ in $\ZZ$. In the latter case the trace field of $\SL(X,\omega)$ is $\QQ(\sqrt{D})$. In particular the Veech of group of $P(a,a)$ is a lattice if and only if $a$ is rational or $a=(1 \pm \sqrt{D})/2$ for some $D \in \QQ$. \end{thm}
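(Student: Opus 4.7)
The plan is to reduce the claim to the McMullen--Calta characterization of Veech surfaces in $\Omega\mathcal{M}_2(2)$ via real multiplication on the Jacobian. Via the Zemljakov--Katok unfolding, $P(a,b)$ yields a pair $(X,\omega) \in \Omega\mathcal{M}_2(2)$, and the L-shape decomposes $(X,\omega)$ into two horizontal cylinders (of heights $1$ and $a-1$ and circumferences $b$ and $1$) and two vertical cylinders (of widths $1$ and $b-1$ and heights $a$ and $1$). Affine Dehn twists along each pair of parallel cylinder cores yield parabolic candidates for $\SL(X,\omega)$; such a twist lies in the Veech group precisely when the moduli of the two parallel cylinders are commensurable, i.e.\ $b(a-1) \in \mathbb{Q}$ (horizontal) and $a(b-1) \in \mathbb{Q}$ (vertical).

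For the \emph{if}-direction with $a,b \in \mathbb{Q}$, a linear rescaling turns $(X,\omega)$ into a square-tiled surface, so by Theorem~\ref{thm_Gutkin_Judge2} the Veech group is commensurable to $\SL_2(\mathbb{Z})$ and is in particular a lattice. In the irrational case, writing $a = x + z\sqrt{D}$ and $b = y + z\sqrt{D}$ with $x+y=1$, a short calculation using $x+y-1=0$ cancels the $\sqrt{D}$-terms and gives $b(a-1) = y(x-1) + z^2 D \in \mathbb{Q}$ and analogously $a(b-1) \in \mathbb{Q}$, producing two transverse parabolics in $\SL(X,\omega)$ with trace field $\mathbb{Q}(\sqrt{D})$. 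To upgrade these to a lattice I would invoke McMullen's eigenform machinery: for non-square $D$, the eigenform locus $\Omega E_D \cap \Omega\mathcal{M}_2(2)$ is a finite union of closed $\SL_2(\mathbb{R})$-orbits, i.e.\ Teichm\"uller curves, so membership of $(X,\omega)$ in this locus already implies that $\SL(X,\omega)$ is a lattice. The remaining technical step is to exhibit on $\Jac(X)$ a self-adjoint endomorphism preserving $H_1(X,\mathbb{Z})$ that acts on $\omega$ as multiplication by $\sqrt{D}$; working in the symplectic basis coming from the cylinder cores and transverse arcs, the condition $x+y=1$ is precisely what guarantees that this endomorphism preserves the integer lattice.

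For the \emph{only if}-direction, suppose $\SL(X,\omega)$ is a lattice but $a,b$ are not both rational. By the Gutkin--Judge dichotomy together with Theorem~\ref{thm_nonarithmetic_commensurable}, $\SL(X,\omega)$ is non-arithmetic, so its trace field $K$ is a proper totally real extension of $\mathbb{Q}$; in $\Omega\mathcal{M}_2(2)$ this forces $K = \mathbb{Q}(\sqrt{D})$ for some non-square $D \in \mathbb{Z}_{>0}$. The McMullen--Calta dichotomy then places $(X,\omega) \in \Omega E_D$, so $\omega$ is an eigenform. Combined with the two rationality conditions $b(a-1), a(b-1) \in \mathbb{Q}$ forced by the parabolics, this yields two polynomial equations in $a,b$ over $\mathbb{Q}$; solving them inside $K$ forces $a = x + z\sqrt{D}$ and $b = y + z\sqrt{D}$ with $x+y=1$, which is the stated parametrization.

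The main obstacle in executing this plan is the explicit construction of the real-multiplication endomorphism of $\Jac(X)$ and the verification that it is self-adjoint with respect to the intersection pairing while preserving $H_1(X,\mathbb{Z})$; these reduce to linear-algebra computations in a symplectic basis adapted to the L-decomposition, conceptually straightforward but requiring careful bookkeeping of the edge identifications. The statement for $P(a,a)$ is then immediate: setting $b=a$ forces $x=y=1/2$, so $a = 1/2 + z\sqrt{D} = (1+\sqrt{4z^2 D})/2$, which after absorbing $4z^2$ into the radicand (and allowing both signs) has the form $(1 \pm \sqrt{D'})/2$ for some $D' \in \mathbb{Q}_{\geq 0}$.
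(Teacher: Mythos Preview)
The paper does not give its own proof of this statement: Theorem~\ref{thm_L_shaped} is quoted verbatim from McMullen's paper \cite{McM03} (Theorem~9.2) and is used as background input, with no argument supplied here. So there is no ``paper's own proof'' to compare your proposal against.

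That said, your outline is a faithful sketch of McMullen's original strategy: reduce to the eigenform locus $\Omega E_D \cap \Omega\mathcal{M}_2(2)$, show it consists of closed $\SL_2(\RR)$-orbits, and check membership via an explicit real-multiplication endomorphism on $\Jac(X)$. Your commensurability computations are correct---with $a=x+z\sqrt{D}$, $b=y+z\sqrt{D}$ and $x+y=1$ the $\sqrt{D}$-terms in $b(a-1)$ and $a(b-1)$ do cancel, and conversely the two rationality conditions force $a-b\in\QQ$ and then $x+y=1$. The one place where your write-up leans rather than argues is the phrase ``the McMullen--Calta dichotomy then places $(X,\omega)\in\Omega E_D$'': that dichotomy is essentially equivalent to the theorem you are proving, so invoking it in the only-if direction is circular unless you unpack it into the actual statement that a genus-two Veech surface with non-rational trace field has Jacobian with real multiplication by an order in that field. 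This is McMullen's Theorem~7.1 in \cite{McM03}, and once you cite it explicitly the circularity disappears. Your identification of the hard technical step---writing down the self-adjoint lattice-preserving endomorphism in a cylinder-adapted symplectic basis---is accurate.
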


For $D \equiv 1 \mod 4$, the polygons $P(a,a)$ with $a = (1 + \sqrt{D})/2$ yield an infinite collection of primitive Teichmüller curves in $\Omega\mathcal{M}_2(2)$. For $D \equiv 0 \mod 4$ we consider the polygons $P(\sqrt{D}/2,1+\sqrt{D}/2)$, which also yield an infinite collection of primitive Teichmüller curves. The \textbf{discriminant} of such a Teichmüller curve is defined as the discriminant of the quadratic order generated by $1$ and $a$, i.e. equal to $D$ (compare also \cite{Bai07}, Chapter~1). Let $\SL(L_D^1)$ \label{glo_SLLD1} from now on denote the Veech group of these polygons. To each of them one can associate a unique real quadratic order and a unique real quadratic number field, namely the trace field of the Veech group.\\[11pt]
One might ask if these are all primitive Teichmüller curves in $\mathcal{M}_2$. This is partially true. In \cite{McM05} and \cite{McM06b}, C. McMullen succeeded to completely classify Teichmüller curves in $\mathcal{M}_2$. Before we state this theorem correctly, recall the definition of the spin invariant \index{spin invariant} from Section~\ref{sec_strata}. Again we may restrict our attention to Abelian differentials with zeroes of even degree - for more details on the general case see \cite{BL04}, \cite{KZ03} or \cite{McM05}. The spin invariant of the described L-shaped billiard tables is always odd.
\begin{thm} \label{thm_classification_Teichmüller_genus_2} (\textbf{McMullen}, \cite{McM05}, Corollary~1.2, \cite{McM06b}, Theorem~6.1.) Every Teichmüller curve generated by a form $(X,\omega) \in \Omega \mathcal{M}_2(2)$ is up to isomorphism determined by the discriminant $D$ and, if $D \equiv 1 \mod 8$, by its spin invariant $\epsilon(X,\omega) \in \ZZ/2$. The only primitive Teichmüller curve in $\Omega\mathcal{M}_2(1,1)$ is generated by the regular decagon.\end{thm}
We denote the corresponding Teichmüller curves by $C_{L,D}^\epsilon$ with $\epsilon \in \left\{ 0 ,1 \right\}$ if we want to stress their origin. \label{glo_CLD} By abuse of notation, we will often also say that the Teichmüller curves of discriminant $D \equiv 5 \mod 8$ are of odd spin, because they are also generated by symmetric polygons and therefore behave quite similarly as the odd spin Teichmüller curves. 
\paragraph{Prototypes.} Indeed, an explicit construction of all Teichmüller curves in $\Omega\mathcal{M}_2(2)$ is given in \cite{McM05} using the so-called \textbf{prototypes}. We refer the reader who is interested in details to this paper. Here, we just describe a generating surface for the Teichmüller curve of even spin: it is given by the L-shaped polygon $P(\sqrt{D}/2+3/2,\sqrt{D}/2-1/2)$.\footnote{In the notation of the paper \cite{McM05} this surface corresponds to the prototype $(0,(D-1)/4,1,1)$.} The corresponding Veech group will be denoted by $\SL(L_D^0)$. \label{glo_SLLD0}Whenever we are not interested which of the Teichmüller curves we actually look at, we just write $\SL(L_D)$.\label{glo_SLLD}\\[11pt]
It is known that such a Teichmüller curve with discriminant $D$ lies on the unique Hilbert modular surface $X_D$. We postpone an even more detailed explanation of this fact until Chapter~\ref{chapter_prym_modular}, where we revisit the described examples in a more general setting. At this point, let us just present the following theorem and sketch the proof. We use here the second model of $X_D$, namely $\HH \times \HH^{-} / \SL_2(\OD)$. 

\begin{thm} (\textbf{McMullen}, \cite{McM03}) \label{thm_mcm_embedding} Let $f: C_{L,D}^\epsilon \to \mathcal{M}_2$ be one of the Teichmüller curves of discriminant $D$ generated by an L-shaped polygon. Then $C_{L,D}^\epsilon$ lies on the Hilbert modular surface \index{Hilbert modular surface} $X_D$. More precisely, we have the following commutative diagram: \label{glo_Phi}
 $$
\begin{xy}
 \xymatrix{
 	\mathbb{H} \ar@{^(->}[rr]^{\Phi(z):=(z,\varphi(z)) \hspace{15pt}} \ar[d]^{/\SL(L_D)}& &	\mathbb{H} \times \mathbb{H}^{-} \ar[d]^{/\SL_2(\OD)}	\ar[d] \\ 
 	C_{L,D}^\epsilon \ar@{^(->}[rr] \ar[d]_{f} & & X_D \ar[d]  \\
 	\mathcal{M}_2 \ar@{^(->}[rr]^{\Jac} & & \mathcal{A}_2
 	}
\end{xy}
$$
where $\SL(L_D)$ is the Veech group of the generating surface. Moreover $\varphi$ is holomorphic but not a Möbius transformation. \end{thm}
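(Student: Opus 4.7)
The plan is to establish the theorem in two conceptually separate steps: first exhibiting the factorization through $X_D$ at the level of moduli spaces, and then reading off the shape of the map on the universal covers.

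For the first step, I would show that for every $X$ in $C_{L,D}^\epsilon$ the Jacobian $\Jac(X)$ carries a proper action of $\mathcal{O}_D$ by self-adjoint endomorphisms, i.e.\ admits real multiplication by $\mathcal{O}_D$ in the sense of Definition~\ref{def_real_multiplication}. The key input is that the generating L-shaped surface $L_D^\epsilon$ is, by construction of Theorem~\ref{thm_L_shaped}, built from an order of discriminant $D$: the relations among the periods of $\omega$ force the form to be an eigenform for some $T\in\End(\Jac(X))\otimes\mathbb{Q}$ with $T^2\in\mathbb{Z}\cdot T+\mathbb{Z}$ and minimal polynomial of discriminant $D$. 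The Veech group acts on the homology of $X$ and commutes with this candidate endomorphism; since the $\SL_2(\mathbb{R})$-orbit is Zariski dense in the Hodge locus one sweeps out, the endomorphism extends $\SL_2(\mathbb{R})$-equivariantly over the whole orbit and descends to an honest element of $\End(\Jac(X))$. Properness of the embedding $\mathcal{O}_D\hookrightarrow\End(\Jac(X))$ is forced by discriminant reasons: any strict extension would lie in a smaller order, contradicting the primitivity of the Teichmüller curve established in Theorem~\ref{thm_classification_Teichm�ller_genus_2}. Once this is granted, the moduli interpretation Theorem~\ref{thm_Hilbert_modular_moduli} produces a canonical lift $C_{L,D}^\epsilon\to X_D$ of $\Jac\circ f$, which is the middle arrow of the diagram.

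For the second step, I would transport the universal cover description of $f$ recalled in Section~\ref{sec_definition_tmcurves} to $X_D$. The map $\widetilde f:\HH\to\mathcal{M}_2$ is defined via $z\mapsto A_z(X,\omega)$ with $A_z=\left(\begin{smallmatrix}1&\Real z\\0&\Imag z\end{smallmatrix}\right)$; after lifting to $\HH\times\HH^-$ through the moduli description of $X_D$, the first factor records (up to the standard identification $\SO_2(\RR)\backslash\SL_2(\RR)\cong\HH$) precisely this parameter $z$, while the second factor tracks the image of the dual eigenform under the Galois conjugate embedding of $\mathcal{O}_D$ into $\mathbb{R}$. This gives the desired form $\Phi(z)=(z,\varphi(z))$ with $\varphi:\HH\to\HH^-$ a well defined map. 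Its holomorphicity is inherited from the fact that $C_{L,D}^\epsilon\hookrightarrow X_D$ is an algebraic (hence holomorphic) immersion and from holomorphicity of the second projection $\HH\times\HH^-\to\HH^-$; the covering group $\SL(L_D)$ of $\HH\to C_{L,D}^\epsilon$ then acts on $\Phi$ as a subgroup of $\SL_2(\OD)$ by construction of the moduli-theoretic lift.

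It remains to see that $\varphi$ is not a Möbius transformation. The plan here is to derive a contradiction from arithmeticity of the resulting Fuchsian group. If $\varphi$ were Möbius, then $\Phi(\HH)$ would be (the lift of) a twisted diagonal in the sense of Section~\ref{sec_special_algebraic_curves}, and its stabilizer in $\SL_2(\OD)$ would be commensurable with the unit group of a quaternion order over $\ZZ$, hence arithmetic. The Veech group $\SL(L_D)$ sits inside this stabilizer with finite index, so it too would be arithmetic. But Theorem~\ref{thm_Gutkin_Judge2} together with Theorem~\ref{thm_nonarithmetic_commensurable} forces any arithmetic, non-cocompact Veech group to be commensurable with $\PSL_2(\ZZ)$ and, by the Gutkin--Judge characterization, to come from a square-tiled surface. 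Since $L_D^\epsilon$ is primitive and not square-tiled, this is impossible.

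The main obstacle is the first step: producing the real multiplication action itself. The existence of $T\in\End(\Jac(X))$ cannot be read off the L-shape directly, and one must argue that the flat coordinates carried by $\omega$ force an endomorphism of the transcendental lattice to descend algebraically. This is precisely McMullen's eigenform construction in \cite{McM03}, and I would rely on it in detail; everything else in the proof is essentially formal from Theorem~\ref{thm_Hilbert_modular_moduli} together with the discreteness/arithmeticity input from Section~\ref{sec_Fuchsian_groups}.
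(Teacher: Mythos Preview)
Your proposal is correct and follows essentially the same route as the paper's sketch: establish real multiplication by $\mathcal{O}_D$ on the Jacobians along the orbit (deferring the hard part to McMullen's eigenform construction), invoke Theorem~\ref{thm_Hilbert_modular_moduli} to lift $\Jac\circ f$ to $X_D$, and then read off the form $\Phi(z)=(z,\varphi(z))$ from the Kobayashi property. Your arithmeticity argument for why $\varphi$ cannot be M\"obius is a welcome addition that the paper's sketch leaves implicit; one small slip is that a strict extension of $\mathcal{O}_D$ would lie in a \emph{larger} order (of smaller discriminant), not a smaller one.
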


\begin{proof}[Sketch of the proof] We now explain the main ideas of the proof, following \cite{McM03}, in particular Chapters~7,8 and 10. By mapping each point $X$ of the Teichmüller curve to its Jacobian $\Jac(X)$ one gets an embedding of the Teichmüller curve into the space of principally polarized Abelian surfaces $\mathcal{A}_2$. All these Jacobians have real multiplication by $\OD$ since the trace field of $\SL(L_D)$ is not $\QQ$ (\cite{McM03}, Theorem~7.1). As the Hilbert modular surface parametrizes all principally polarized Abelian surfaces with real multiplication by $\OD$ (see Theorem~\ref{thm_Hilbert_modular_moduli}) one gets indeed an embedding of the Teichmüller curve into $X_D$. Since Teichmüller curves are Kobayashi curves, it is clear that the map $\Phi$ is of the form as described in the statement of the theorem. The map $\Phi$ can also be constructed explicitly as follows: Let $(X_t,\omega_t) = A_t (X,\omega)$ so that $X_t =f(t)$. Then $\Phi$ is of the form
$$\Phi(t) = \left( \frac{\int_{b_1} \eta_1(t)}{\int_{a_1} \eta_1(t)},  \frac{\int_{b_2} \eta_2(t)}{\int_{a_2} \eta_2(t)}, \right),$$
where $(a_i,b_i)$ is a symplectic basis for $H_1(X,\RR)$ adapted to the action of $\OD$ and $(\eta_1(t),\eta_2(t))$ is a dual eigenbasis of $H^0(X_t,\Omega(X_t))$ (see \cite{McM03}, Theorem~10.1 or Section~\ref{section_abelian_varieties}). After a change of the base point, $\Phi$ is then again of the form as mentioned in the theorem. \end{proof}

Note that choosing another surface $(Y,\eta)=\textrm{M}(X,\omega)$ with $\textrm{M} \in \SL_2(\RR)$ as generating surface of the Teichmüller curve yields a slightly different Veech group, namely $\textrm{M}\SL(L_D)M^{-1}$.
\paragraph{Euler characteristic.} In his thesis \cite{Bai07}, M. Bainbridge calculated the Euler characteristic of all of the Teichmüller curves in $\Omega\mathcal{M}_2(2)$. He showed that it is proportional to the Euler characteristic of the Hilbert modular surface on which it lies.

\begin{thm} (\textbf{Bainbridge}, \cite{Bai07}, Theorem~1.1) \label{thm_bain_euler} If $D$ is not a square, then the Euler characteristic of a Teichmüller curve $C_{L,D}^\epsilon \to \mathcal{M}_2$ of discriminant $D$ is given as
$$\chi(C_{L,D}^\epsilon) = -\frac{9}{2} \chi(X_D)$$
if $D \not \equiv 1 \mod 8$ and
$$\chi(C_{L,D}^\epsilon) = -\frac{9}{4} \chi(X_D)$$
if $D \equiv 1 \mod 8$.
\end{thm}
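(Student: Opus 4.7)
The plan is to realize $C_{L,D}^\epsilon$ as a component of an effective divisor on the Hilbert modular surface $X_D$, compute that divisor's class in rational cohomology, and then convert the resulting intersection number into an Euler characteristic via the Kobayashi-geodesic property.

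First, by Theorem~\ref{thm_mcm_embedding} the Teichm\"uller curve lifts to the graph $\Phi(z)=(z,\varphi(z))\subset \HH\times\HH^-$. Since the first coordinate is the identity, the Poincar\'e metric on $\HH$ pulls back to a complete hyperbolic metric of constant curvature $-1$ on $C_{L,D}^\epsilon$, and the Gauss-Bonnet theorem gives
\[
\chi(C_{L,D}^\epsilon)=-\deg\bigl(\Omega_1|_{C_{L,D}^\epsilon}\bigr),
\]
where $\Omega_i$ ($i=1,2$) denotes the logarithmic cotangent line bundle pulled back from the $i$-th $\HH$-factor and extended to a smooth toroidal compactification $\overline{X_D}$ in the sense of Mumford. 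The task is thus to evaluate $[\overline{C_{L,D}^\epsilon}]\cdot c_1(\Omega_1)$ on $\overline{X_D}$.

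Second, I would identify $C_{L,D}^\epsilon$ with a component of the \emph{Weierstrass divisor} $W_D\subset X_D$, defined as the locus of $(A,\rho)\in X_D$ for which the $\rho$-eigenform has a double zero (equivalently, whose Jacobian lies in $\Omega\mathcal{M}_2(2)$). By Theorem~\ref{thm_classification_Teichm�ller_genus_2}, one has $W_D=C_{L,D}^0\sqcup C_{L,D}^1$ when $D\equiv 1\pmod 8$ and $W_D=C_{L,D}^1$ otherwise; the two spin components in the split case are exchanged by the Galois involution $\sigma$ on $\OD$ acting on $X_D$ and therefore carry equal Euler characteristic. Hence both cases of the theorem reduce to computing $\chi(W_D)$.

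Third, I would present $W_D$ as the zero locus of a holomorphic Hilbert modular form $F$ of a suitable bi-weight on $X_D$. Writing the $\rho$-eigenform in terms of Jacobi theta series adapted to the real multiplication by $\OD$, the ``double zero'' condition is cut out by a theta-derivative of Gundlach-Runge type, yielding a class of the shape
\[
[\overline{W_D}]=a\,c_1(\Omega_1)+b\,c_1(\Omega_2)+E,
\]
where $a,b\in\QQ$ record the bi-weight of $F$ and $E$ is a boundary correction supported on the cusp resolutions of $\overline{X_D}$ and on the reducible locus $P_D$. Pairing with $c_1(\Omega_1)$ and using the Hirzebruch proportionality relations on $\overline{X_D}$, in particular $c_1(\Omega_1)\cdot c_1(\Omega_2)=\chi(X_D)$ (which follows from $c_2(\Omega^1_{X_D})=\chi(X_D)$ together with $\Omega^1_{X_D}=\Omega_1\oplus\Omega_2$ away from the orbifold locus) and the known Chern-number computations of Hirzebruch, the intersection simplifies to $\chi(W_D)=-\tfrac{9}{2}\chi(X_D)$.

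The main obstacle is the boundary correction $E$. Pinning it down requires a delicate local analysis of $F$ at each cusp of $\overline{X_D}$ (controlled by the continued fraction data encoding the type of the cusp) and along $P_D$, where the Torelli map degenerates and the eigenform itself splits; the intersection $W_D\cap P_D$ must then be expressed in terms of the Hirzebruch-Zagier decomposition $P_D=\bigcup T_N((e+\sqrt{D})/2)$ with $e^2+4N=D$ from Section~\ref{sec_special_algebraic_curves}. Showing that all these combinatorial contributions collapse under Hirzebruch proportionality to a clean rational multiple of $\chi(X_D)$ is the heart of the argument; once this is done, the statement for $C_{L,D}^\epsilon$ follows for $D\not\equiv 1\pmod 8$ directly, and in the $D\equiv 1\pmod 8$ case by halving, owing to the $\sigma$-symmetry between $C_{L,D}^0$ and $C_{L,D}^1$.
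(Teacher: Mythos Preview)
The paper does not contain a proof of this statement; Theorem~\ref{thm_bain_euler} is quoted from \cite{Bai07} as external input and then used as a black box in Chapter~\ref{chapter_calculations}. There is therefore no argument in the present paper to compare your proposal against.

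For what it is worth, your outline is close in spirit to the modular-form approach later carried out in \cite{MZ11} and \cite{M�l11c} (both cited here), where the Weierstrass locus is cut out by an explicit theta-derivative on $X_D$ and its class determined. Bainbridge's original proof in \cite{Bai07} is quite different: he works directly on the eigenform locus $\Omega E_D\subset\Omega\mathcal{M}_2$, analyses the closure of $W_D$ in the Deligne--Mumford boundary using McMullen's prototype enumeration, and extracts $\chi(W_D)$ from a Siegel--Veech-type count rather than from Chern-class calculus on a toroidal compactification of $X_D$.

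One genuine gap in your sketch: the assertion that for $D\equiv 1\pmod 8$ the two spin components are exchanged by the Galois involution $\sigma$ on $\OD$ is not correct. That involution sends $(A,\rho)$ to $(A,\rho\circ\sigma)$, which swaps the two eigenforms on $A$; it therefore exchanges the locus ``$\omega_1$ has a double zero'' with the locus ``$\omega_2$ has a double zero'', not the two spin components of the former. The spin invariant $\epsilon(X,\omega)$ is intrinsic to the flat structure and is unaffected by relabelling the real-multiplication embedding. That $\chi(C_{L,D}^0)=\chi(C_{L,D}^1)$ is nonetheless true, but in Bainbridge's argument it comes out of the prototype count, and in the modular-form approach from the symmetry of the bi-weight, not from an automorphism of $X_D$ swapping the components.
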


\paragraph{Elliptic Fixed points.} By the work of R. Mukamel \cite{Muk11}, also the number of elliptic fixed points of such Teichmüller curves is known. It is the weighted sum of class numbers of some imaginary quadratic number fields. Moreover for $D>8$ there do not arise any elliptic fixed points of order other than two. We only repeat his results for the case that $D$ is not a square.

\begin{thm} \label{thm_mukamel} (\textbf{Mukamel}, \cite{Muk11}, Theorem~5.5) For $D>8$, all elliptic fixed points of a Teichmüller curve $C_{L,D}^\epsilon$ of discriminant $D$ have order two. The number $e_2(C_{L,D}^\epsilon)$ of such points is given as 
\begin{center} \begin{tabular}{|c|c|}
\hline
$D \mod 16$ &  $e_2(C_{L,D}^\epsilon)$\\
\hline
$1,5,9$ or $13$ & $\frac{1}{4} h_{-4D}$\\
\hline
$0$ & $\frac{1}{2}(h_{-D} + 2 h_{-\frac{D}{4}})$\\
\hline
$4$ & $0$\\
\hline
$8$ & $\frac{1}{2}{h_{-D}}$\\
\hline
$12$ & $\frac{1}{2}(h_{-D} + 3 h_{-\frac{D}{4}})$\\
\hline
\end{tabular}
\end{center}
where $h_{-E}$ is the class number of the imaginary quadratic number field of discriminant $-E$ unless $E=3$ or $4$. In the latter case $h_{-E}$ is half of the class number of the imaginary quadratic number field of discriminant $-E$.
\end{thm}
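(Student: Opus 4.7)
The overall approach is to translate the counting problem into a problem about embeddings of imaginary quadratic orders, via the moduli interpretation of $C_{L,D}^\epsilon \hookrightarrow X_D \hookrightarrow \mathcal{A}_2$ provided by Theorem~\ref{thm_mcm_embedding}. Elliptic fixed points of $\SL(L_D)$ on $\HH$ are in bijection with $\SL(L_D)$-conjugacy classes of torsion elements, and these are what I will count.

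The first step is to restrict the possible orders. Any torsion $\gamma \in \SL(L_D) \subset \SL_2(\OD)$ has $\tr(\gamma) \in \OD \cap (-2,2)$, and the proposition on elliptic elements of Hilbert modular groups forces $\tr(\gamma) \in \{0, \pm 1, \pm\sqrt{2}, \pm\sqrt{3}, \pm(1 \pm \sqrt{5})/2\}$, giving orders $2,3,4,5,6$ in $\PSL$. The irrational traces only lie in $\OD$ for $D=8$, $12$, $5$ respectively; for $D>8$ with $D\neq 12$ only orders $2$ and $3$ remain, and for $D=12$ also order $6$. To rule out order $3$ (and hence order $6$) one uses the geometry of $\Omega\mathcal{M}_2(2)$: an order-$3$ affine automorphism would descend to a holomorphic order-$3$ automorphism of the genus-$2$ surface $X$ fixing the unique double zero of $\omega$, and combining this with the canonical hyperelliptic involution forces $X$ to be one of finitely many specific curves, none of which lies on an L-shaped Teichm\"uller curve of discriminant $D>8$.

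The second step is to count the order-$2$ points. A matrix $M \in \SL_2(\OD)$ with $\tr(M)=0$ satisfies $M^2=-I$, so $\OD[M]$ is an order in the quartic CM algebra $\OD \otimes_\ZZ \ZZ[i]$. Its image in the endomorphism ring of the associated principally polarized Abelian surface $A$ exhibits a proper embedding of an imaginary quadratic order $\mathcal{O}'\hookrightarrow \End(A)$, obtained as the subring fixed by the nontrivial Galois action of $K/\QQ$ (e.g.\ generated by $M\cdot\rho(\sqrt{D})$). Conversely, any such CM enhancement of a point of $C_{L,D}^\epsilon$ compatible with the polarization and with $\rho$ produces a trace-zero element of $\SL(L_D)$. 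The bijection reduces the problem to a class-number count via the Gauss correspondence between proper ideal classes of $\mathcal{O}'$ and principally polarized CM-points, with discriminant of $\mathcal{O}'$ controlled by $D \bmod 16$: for odd $D$ one gets $\mathcal{O}'$ of discriminant $-4D$; for $4\mid D$ one gets candidates of discriminant $-D$ or $-D/4$. The weights $\tfrac14$ and $\tfrac12$ in the statement come from dividing by the unit group $\{\pm 1, \pm i\}$ or $\{\pm 1\}$ acting on embeddings.

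The main technical obstacle is the case $D \equiv 4 \pmod{16}$, where the table asserts $e_2 = 0$. Here $\OD$ is non-maximal and both potential discriminants $-D$ and $-D/4$ could a priori contribute nonzero class numbers, so the vanishing is not formal: one must check that every candidate embedding $\mathcal{O}'\hookrightarrow\End(A)$ fails either the polarization compatibility or the compatibility with the proper real multiplication $\rho$. A secondary subtlety arises in the $D \equiv 1 \pmod 8$ case, where the identical formula $\tfrac14 h_{-4D}$ must hold for each of the two spin components $C_{L,D}^0$ and $C_{L,D}^1$ separately; this requires exhibiting a natural bijection, on the level of McMullen's prototype data, between the order-$2$ CM enhancements of odd-spin and even-spin surfaces.
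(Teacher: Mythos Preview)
The paper does not contain a proof of this theorem: it is stated with attribution to Mukamel \cite{Muk11}, Theorem~5.5, and no argument is given here. There is therefore no proof in the paper to compare your proposal against.

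That said, your outline is broadly in the spirit of Mukamel's actual argument (moduli interpretation, reduction to CM points, class-number count), but as written it is a sketch rather than a proof. Two places would need substantial work to become rigorous. First, your exclusion of order-$3$ elements is asserted rather than argued: saying that an order-$3$ automorphism ``forces $X$ to be one of finitely many specific curves, none of which lies on an L-shaped Teichm\"uller curve of discriminant $D>8$'' is exactly the content that needs to be established, not a step one can skip. Mukamel's treatment of this point goes through an explicit classification of genus-$2$ curves with extra automorphisms and their real-multiplication loci. Second, in the counting step you correctly name the mechanism (embeddings of imaginary quadratic orders compatible with the polarization and with $\rho$), but the passage from ``$\OD[M]$ with $M^2=-I$'' to the specific discriminants $-4D$, $-D$, $-D/4$ and the specific multiplicities $\tfrac14$, $\tfrac12$, $2$, $3$ in the table requires a careful case analysis of how the order $\ZZ[i]$ interacts with $\OD$ at the prime $2$; your proposal acknowledges this for $D\equiv 4\pmod{16}$ but does not carry it out anywhere. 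The spin-splitting issue for $D\equiv 1\pmod 8$ that you flag is genuine and is handled in \cite{Muk11} via prototypes, as you suggest.
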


\subsection{Fixing the Veech Group} \label{sec_fixing_Veech} \index{Veech group}
We now come to the problem of the correct choice of the Veech group. As we have explained on the preceding pages, it follows from the results in \cite{McM03} that there exists $\textrm{M} \in \SL_2(\RR)$ such that $\textrm{M}\SL(L_D)M^{-1}\subset \SL_2(\OD)$. Indeed, we may choose $\textrm{M}=\Id$ for all $D$. This is true for both, $\SL(L_D^1)$ and $\SL(L_D^0)$.
\paragraph{D $\equiv$ 1 mod 4, odd spin.} Let us first analyze the Veech groups of the Teichmüller curves generated by the symmetric L-shaped polygons. Recall our abuse of notation that we include the Teichmüller curves of discriminant $D \equiv 5 \mod 8$ in this case.
\begin{prop} \label{prop_fix_veech_1} For all discriminants $D \equiv 1 \mod 4$ the Veech group $\SL(L_D^1)$ is a subgroup of $\SL_2(\OD)$. \end{prop}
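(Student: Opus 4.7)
The plan is to combine the affine action of the Veech group on homology with the real-multiplication structure on $\Jac(X)$. First, since $a = (1+\sqrt{D})/2 \in \OD$ for $D \equiv 1 \mod 4$, the vertices of $P(a,a)$ all have coordinates in $\OD$, and hence the periods of the generating form $\omega = dz$ on $(X,\omega) = P(a,a)/\sim$ all lie in $\OD + i\OD \subset \CC$. By Theorem~\ref{thm_mcm_embedding}, $\omega$ is an eigenform for a choice of real multiplication by $\OD$ on $\Jac(X)$, and the second eigenform $\omega^\sigma$ (the Galois conjugate of $\omega$) has periods that likewise lie in $\OD + i\OD$.

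Next, every $A \in \SL(L_D^1)$ lifts to an affine diffeomorphism $\psi_A : X \to X$ inducing a symplectic automorphism $(\psi_A)_* \in \Sp(H_1(X,\ZZ))$. The combined period map $\gamma \mapsto (\int_\gamma \omega, \int_\gamma \omega^\sigma)$ realizes $H_1(X,\ZZ)$ as a rank-$4$ $\ZZ$-lattice $L$ in $K^2$; this lattice is stable under the diagonal action $\lambda \cdot (x,y) = (\lambda x, \lambda^\sigma y)$ of $\OD$, and hence is an $\OD$-submodule of $K^2$ of rank $2$. Because $\psi_A$ commutes with the real multiplication, the action of $(\psi_A)_*$ on $L$ in the eigenform splitting is given precisely by $A \oplus A^\sigma$. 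Since $(\psi_A)_*$ preserves the $\ZZ$-lattice $L$ and this action is $\OD$-equivariant, the matrix $A$ in the $\OD$-basis of $L$ must have entries in $\OD$, so $A \in \SL_2(\OD)$.

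As a concrete sanity check, one can exhibit parabolic generators directly from the cylinder decompositions of $P(a,a)$: the horizontal decomposition splits into two cylinders whose moduli $1/a$ and $a-1$ satisfy $a(a-1) = (D-1)/4 \in \ZZ$, so the common Dehn twist gives the horizontal parabolic $\left( \begin{smallmatrix} 1 & a \\ 0 & 1 \end{smallmatrix} \right) \in \SL_2(\OD)$, and the reflective symmetry of $P(a,a)$ across its diagonal produces the vertical parabolic $\left( \begin{smallmatrix} 1 & 0 \\ a & 1 \end{smallmatrix} \right) \in \SL_2(\OD)$.

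The main obstacle is the precise identification of the period lattice $L$ as an $\OD$-module and the verification that the representation $(\psi_A)_* \leftrightarrow A \oplus A^\sigma$ really is compatible with this $\OD$-structure; this is where McMullen's eigenform analysis for real multiplication does the essential work. Once the identification is in place, integrality of the entries of $A$ is forced by $(\psi_A)_*$ being a $\ZZ$-automorphism of $L$ together with $\OD$-equivariance.
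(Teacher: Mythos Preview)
Your argument is correct in spirit but takes a longer route than necessary, and the detour introduces exactly the gap you flag at the end. The paper's proof is entirely elementary and stops at the level of your first sentence: since $a=w=(1+\sqrt D)/2$, the periods of $\omega=dz$ on the L-shaped surface are $1,\,w,\,i,\,iw$, so their $\ZZ$-span inside $\CC\cong\RR^2$ is exactly $\OD\times\OD$. By definition an element $A$ of the Veech group is the linear part of an affine self-map of $(X,\omega)$, so $A$ acts on $\RR^2$ preserving this $\ZZ$-module of holonomy vectors; applying $A$ to $(1,0)$ and $(0,1)$ (both in $\OD^2$) shows each column of $A$ lies in $\OD^2$, and hence $A\in\SL_2(\OD)$. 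That is the whole argument.

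There is no need for the second eigenform $\omega^\sigma$, the $\OD$-module structure on $H_1(X,\ZZ)$, or any compatibility between the affine action and real multiplication. All of that is true and underlies McMullen's deeper results, but for this proposition it is overkill, and it is precisely the part of your argument where you (correctly) sense an obstacle. Your approach would be the natural one if the holonomy lattice in $\RR^2$ were \emph{not} already a full rank-$2$ $\OD$-module---then one would genuinely need the eigenform splitting to constrain $A$---but here the lattice is as large as it could be and the conclusion is immediate from the action on a single form. (Incidentally, the combined period map you write lands in $\CC^2$, not $K^2$; the object that lives in $K^2\cong\OD^2$ is just the real-and-imaginary-part holonomy of $\omega$ alone, which is what the paper uses.) Your sanity check with the explicit parabolics $T=\left(\begin{smallmatrix}1&w\\0&1\end{smallmatrix}\right)$ and $Z=T^t$ matches the paper exactly.
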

\begin{proof}
Let $D \equiv 1 \mod 4$. Integrating $\omega$ on the L-shaped polygon yields obviously that the periods of $(X,\omega)$ are $1$, $w$, $i$ and $iw$ (compare e.g. \cite{Sil06}). Regarding $\CC$ as $\RR^2$, the Veech group maps by definition the periods to $\ZZ$-linear combinations of the periods.  Hence the Veech group is contained in $\SL_2(\OD)$ since $\OD$ is generated by $1$ and $w$. \end{proof}
One can at least calculate parts of this Veech group.  Immediately one observes that for $D \equiv 1 \mod 4$ \label{glo_S} \label{glo_T}
$$T := \begin{pmatrix} 1 & w \\ 0 & 1 \end{pmatrix} \ \ \ \textrm{and} \ \ \ S:= \begin{pmatrix} 0 & -1 \\ 1 & 0 \end{pmatrix}$$
and hence also $Z:=-STS$ are in $\SL(L_D^1)$. \label{glo_Z} 
Since the parabolic elements of Veech groups correspond to periodic directions on a Veech surface, formulas depending only on $D$ can be given for some more parabolic elements of the Veech group. A calculation for the cylinder decomposition in direction $(1/2w,1)$ implies for $D>5$:
\begin{lem} \label{lem_second_parabolic} If $D>5$ then the following matrices lie in $\SL(L_D^1)$:
\begin{itemize}
\item If $D \equiv 5 \mod 8$: 
$$\begin{pmatrix} 1 - 2 w(w+1) & w^2(w+1) \\ -4 (w+1) & 1 + 2 w(w+1) \end{pmatrix}$$
\item If $D \equiv 1 \mod 16$: 
$$\begin{pmatrix} 1 - w(w+1) & \frac{1}{2}w^2(w+1) \\ -2(w+1) & 1 + w(w+1) \end{pmatrix}$$
\item If $D \equiv 9 \mod 16$:
$$\begin{pmatrix} 1 - \frac{1}{2} w(w+1) & \frac{1}{4}w^2(w+1) \\ -(w+1) & 1 + \frac{1}{2} w(w+1) \end{pmatrix}.$$
\end{itemize} \end{lem}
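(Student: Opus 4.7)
The plan is to realize each of the three listed matrices as the derivative of an affine self-diffeomorphism of $L_D^1$ that acts as a simultaneous Dehn twist on a cylinder decomposition of the surface. Recall (see e.g.\ \cite{Vee89}) that if a translation surface $(X,\omega)$ decomposes in direction $v=(v_1,v_2)$ into cylinders $C_1,\dots,C_k$ of moduli $m_i$ (height over circumference), and if there exist positive integers $n_i$ such that $n_i m_i = s$ is independent of $i$, then the affine map that performs $n_i$ Dehn twists simultaneously on each $C_i$ is well-defined on $X$ and lies in $\Aff^+(X,\omega)$. Its derivative in the standard frame is
\[
P_v(s)=\begin{pmatrix} 1-s v_1 v_2 & s v_1^2 \\ -s v_2^2 & 1+s v_1 v_2\end{pmatrix}.
\]
A direct substitution into this formula with $v=(w/2,1)$ and $s=4(w+1)$, $2(w+1)$, $(w+1)$ reproduces exactly the matrices of the three cases, so the entire task reduces to exhibiting the cylinder decomposition in the direction $v=(w/2,1)$ and computing its moduli.

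First, I would draw $P(w,w)$ (with $w=(1+\sqrt{D})/2$) together with its edge identifications and trace closed trajectories of slope $2/w$. By a short computation tracking how such a trajectory re-enters the polygon after crossing each identified side, one sees that the surface decomposes into two horizontal cylinders in this direction. Their circumferences and heights can be written in terms of $w$ alone, but the number of times one must concatenate translates of $v$ before closing up depends on how $w$ interacts with the integer $2$, that is, on the splitting behaviour of $2$ in $\OD$. This is exactly what causes the trichotomy: for $D \equiv 5 \bmod 8$ (where $2$ is inert) one needs four traversals to close up; for $D\equiv 1 \bmod 16$ two; and for $D\equiv 9 \bmod 16$ only one. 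Reading off the two moduli in each case and clearing denominators yields the common parameter $s=4(w+1)$, $2(w+1)$, or $(w+1)$ respectively.

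Next, I would verify that the entries of each resulting matrix really lie in $\OD$, not merely in $K$. This is clear in the first case since all entries are polynomials in $w$ with integer coefficients. In the second and third case the apparent denominators $2$ and $4$ disappear after applying Lemma~\ref{lem_properties_OD}(iv), i.e.\ $w^2=w+(D-1)/4$. For $D\equiv 1 \bmod 16$ the quantity $(D-1)/4$ is divisible by $4$, so $\tfrac12 w^2(w+1)\in\OD$; for $D\equiv 9\bmod 16$ one has $(D-1)/4\equiv 2\bmod 4$, and the same expansion shows $\tfrac14 w^2(w+1)\in\OD$. Determinant and trace are then trivially $1$ and $2$.

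The main obstacle is the combinatorial cylinder-decomposition analysis itself. While the underlying idea (close up a geodesic of slope $2/w$) is elementary, bookkeeping the identifications so as to see cleanly \emph{why} the congruence condition on $D$ mod $16$ determines how many elementary segments a closed trajectory must cross—and hence the exact rational modulus of each cylinder—is the delicate point. Once this is done correctly, substitution into the formula $P_v(s)$ finishes the proof, and the resulting affine map lies in $\Aff^+(L_D^1)$ by construction, so its derivative belongs to the Veech group $\SL(L_D^1)$.
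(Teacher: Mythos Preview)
Your approach is correct and is exactly the paper's: the lemma is justified there by the cylinder decomposition in direction $(\tfrac{1}{2}w,1)$, with the explicit cylinder data given immediately after the statement. One small clarification on the trichotomy: the cylinder decomposition itself depends only on $D\bmod 8$ (the paper records lengths $2,\,w{+}1$ and heights $(w{-}2)/w,\,1/w$ for $D\equiv 1\bmod 8$, and lengths $2,\,2(w{+}1)$ with the same heights for $D\equiv 5\bmod 8$); the further split between $D\equiv 1$ and $D\equiv 9\bmod 16$ is not about how the trajectory closes up or the splitting behaviour of $2$ (which splits in both subcases), but about whether the resulting ratio of moduli $(D-9)/16$ is an integer, which determines the minimal common Dehn-twist parameter~$s$.
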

\begin{center}
\includegraphics[height=4.6cm,width=4.6cm]{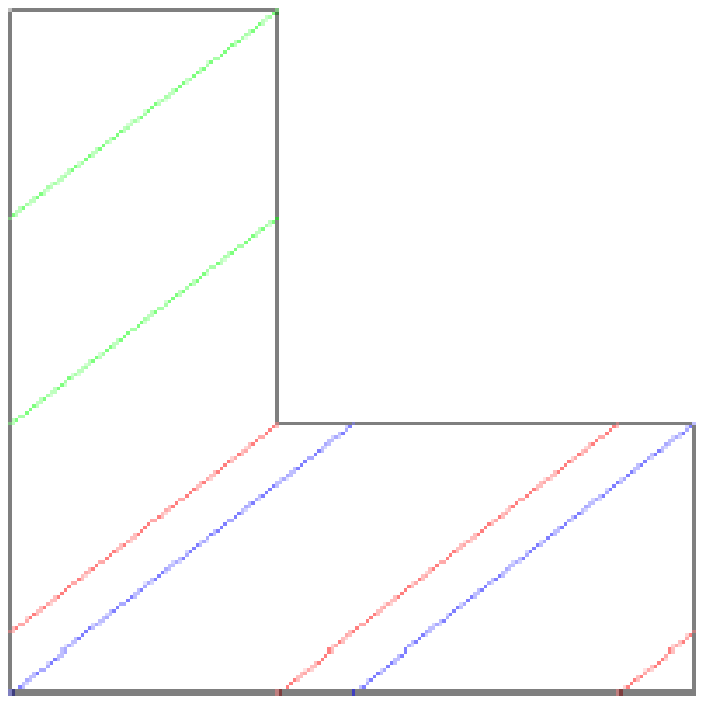} \hspace{2cm}
\includegraphics[height=4.5cm,width=4.5cm]{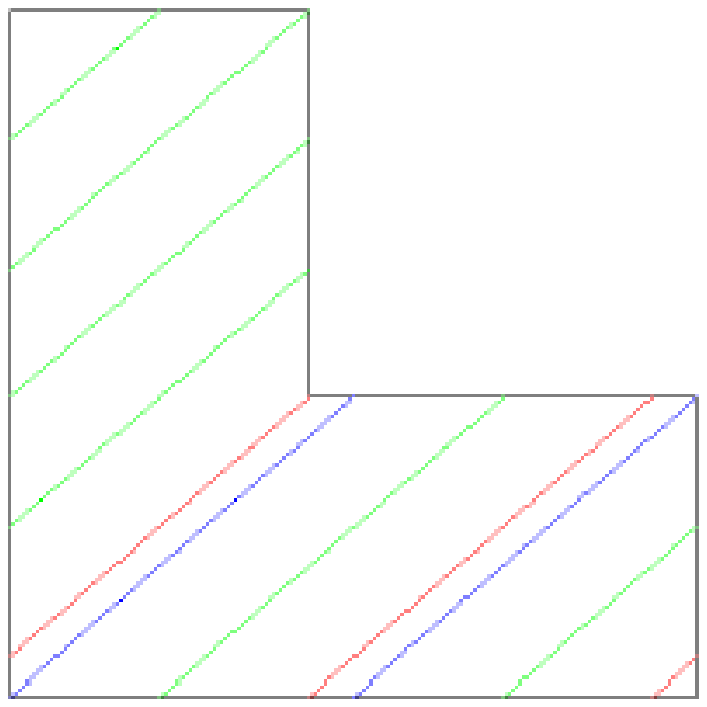}\\
Figure 3.3. Two typical cylinder decompositions in direction $(1/2w,1)$ in the cases $D \equiv 1 \mod 8$ and $D \equiv 5 \mod 8$.
\end{center}
Let us say a few more words about this cylinder decomposition: It can be calculated that in the case $D \equiv 1 \mod 8$ the lengths of the cylinders are $2$ and $w+1$ while the heights of the cylinders are $(w-2)/w$ and $1/w$. Thus the ratio of the moduli of the two cylinders is always $\frac{D-9}{16}$. If $D \equiv 5 \mod 8$ then the lengths of the cylinders are $2$ and $2(w+1)$ while their heights are again $(w-2)/w$ and $1/w$. Thus the ratio of the moduli of the two cylinders is always $\frac{D-9}{4}$.\footnote{In the notation of C. McMullen in \cite{McM05} the cusp $(w/2,1)$ thus corresponds to the prototype $(0,\frac{D-9}{8},2,-3)$ if $D \equiv 1 \mod 16$ and to the prototype $(0,\frac{D-9}{4},2,3)$ if $D \equiv 9 \mod 16$. If $D \equiv 5 \mod 8$ it corresponds to the prototype $(0,\frac{D-9}{4},1,-3)$.} From this data the matrices from Lemma~\ref{lem_second_parabolic} can be calculated (see e.g. \cite{McM03}).\\[11pt]
Additionally we want to describe the parabolic element in $\SL(L_D^1)$ which fixes the cusp $(1,1)$ a little more precisely. It is of the form
$$E = \begin{pmatrix} 1 - e & e \\ -e & 1 +e \end{pmatrix}$$
for some $e \in \OD$. Here the data of the cylinders are slightly more difficult to describe: let $\left\lfloor \cdot \right\rfloor$ be the floor function. Then the lengths of the cylinders are $w - 1+ \left\lfloor w \right\rfloor$ and $w + \left\lfloor w \right\rfloor$ and the heights of the cylinders are $ -w + 1 + \left\lfloor w \right\rfloor$ and $w - \left\lfloor w \right\rfloor$. Hence $e$ is the least common $\ZZ$-multiple of $\frac{w - 1+ \left\lfloor w \right\rfloor }{-w + 1 + \left\lfloor w \right\rfloor}$ and $\frac{w + \left\lfloor w \right\rfloor}{-w + \left\lfloor w \right\rfloor}$.\footnote{We cannot uniformly give a prototype corresponding to the cusp $(1,1)$ because even the ratio of the moduli varies in a rather complicated way, depending on $\left\lfloor w \right\rfloor$.} Note that there thus cannot exist a $n \in \NN$ with $n|e$. \begin{center}
\includegraphics[height=4.58cm,width=4.58cm]{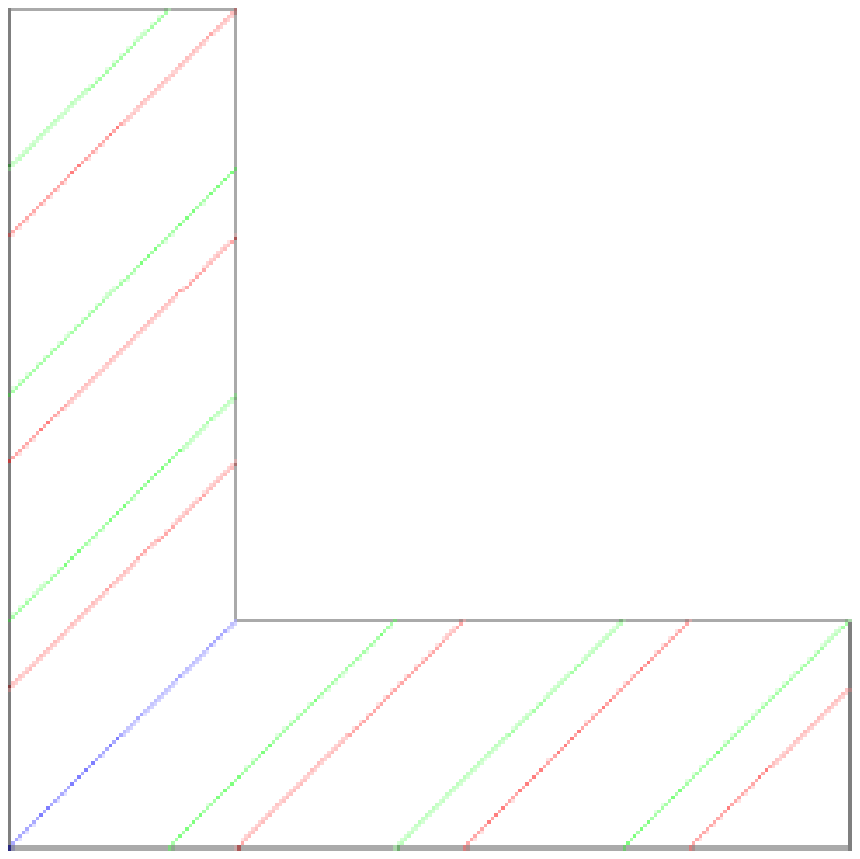}\\
Figure 3.4. A typical cylinder decomposition in direction $(1,1)$.
\end{center}
\paragraph{D $\equiv$ 1 mod 8, even spin.} If $D \equiv 1 \mod 8$, then we also have to analyze the Veech groups of the Teichmüller curves of even spin. We then again have:
\begin{prop} \label{prop_fix_veech_2} For all discriminants $D$ the Veech group $\SL(L_D^0)$ is a subgroup of $\SL_2(\OD)$. \end{prop}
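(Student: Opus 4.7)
The plan is to mimic the argument of Proposition~\ref{prop_fix_veech_1} verbatim, with the only new content being the explicit identification of the sides of the L-shape in terms of $w$. First I would rewrite the side lengths of $P(\sqrt{D}/2+3/2,\sqrt{D}/2-1/2)$ using the fact that for $D\equiv 1\bmod 8$ one has $w=(1+\sqrt{D})/2$, hence $\sqrt{D}=2w-1$. A direct substitution yields
$$a = \tfrac{\sqrt{D}}{2}+\tfrac{3}{2} = w+1, \qquad b = \tfrac{\sqrt{D}}{2}-\tfrac{1}{2} = w-1,$$
so in particular $a,b\in\OD$.

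Next I would integrate $\omega$ around the cylinder core curves of the horizontal and vertical cylinder decompositions of the L-shape. Regarding $\CC$ as $\RR^2$, the two horizontal cylinders contribute absolute periods $1$ and $b+1=w$, while the two vertical cylinders contribute $i$ and $i(a+1)=i(w+2)$. These four cycles form a symplectic basis of $H_1(X,\ZZ)\cong\ZZ^4$ (up to an obvious integral change of basis), so the absolute period lattice is precisely
$$\Lambda \;=\; \ZZ\langle 1,w\rangle + i\,\ZZ\langle 1,w+2\rangle \;=\; \OD \oplus i\OD,$$
using that $\ZZ\langle 1,w+2\rangle = \ZZ\langle 1,w\rangle$.

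Finally, as in the proof of Proposition~\ref{prop_fix_veech_1}, elements of $\SL(L_D^0)$ act $\RR$-linearly on $\CC\cong\RR^2$ and must preserve $\Lambda$. Evaluating any $A\in\SL(L_D^0)$ on $1$ and $i$ shows that all four entries of $A$ lie in $\OD$, giving $\SL(L_D^0)\subset\SL_2(\OD)$.

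The only step that requires a moment of care is verifying that the four cylinder core cycles genuinely generate $H_1(X,\ZZ)$ and not merely a finite-index sublattice; this is immediate from inspecting the cylinder decompositions, since one horizontal and one vertical cylinder core meet transversally in a single point and so form half of a symplectic basis, with the remaining two cores providing the complement. Everything else is a routine translation of the odd-spin calculation.
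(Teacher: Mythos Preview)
Your approach is identical to the paper's: compute the absolute periods of the generating surface and observe that they lie in $\OD$. However, your cylinder circumferences are off by one. In the polygon $P(a,b)$ as drawn in the paper, the bottom edge already has full length $b$ and the right edge full length $a$; the horizontal cylinders therefore have circumferences $1$ and $b$ (not $b+1$), and the vertical cylinders have circumferences $1$ and $a$ (not $a+1$). With $a=w+1$ and $b=w-1$ the actual periods are $1,\,w-1,\,i,\,i(w+1)$, which is precisely what the paper records. Your conclusion survives unchanged because $\ZZ\langle 1,w-1\rangle=\ZZ\langle 1,w+1\rangle=\OD$ just as well, but the periods you attribute to the core curves are not the correct ones. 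Apart from this slip, the argument and the paper's coincide.
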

\begin{proof} The periods of the generating surfaces are $1$, $w-1$, $i$ and $i(w+1)$. Since $\left\{1,w\right\}$ is a basis of $\OD$ the claim follows. \end{proof} 
It is again possible to calculate the Veech group $\SL(L_D^0)$ at least partially. More precisely, the matrices \label{glo_T2} \label{glo_Z2}
$$T := \begin{pmatrix} 1 & w-1 \\ 0 & 1 \end{pmatrix} \ \ \ \textrm{and} \ \ \ Z:= \begin{pmatrix} 1 & 0 \\ w+1 & 1 \end{pmatrix}$$
are in $\SL(L_D^0)$. The cylinder decomposition in direction $(1,1/2(w+1))$ yields:
\begin{lem} \label{lem_second_parabolic2} The following matrices lie in $\SL(L_D^0)$:
\begin{itemize}
\item If $D \equiv 1 \mod 16$: 
$$\begin{pmatrix} 1 - \frac{1}{2}w(w+1) & w \\ -\frac{1}{4}w(w+1)^2 & 1 + \frac{1}{2}w(w+1) \end{pmatrix}$$
\item If $D \equiv 9 \mod 16$:
$$\begin{pmatrix} 1 - w(w+1) & 2w \\ -\frac{1}{2}w(w+1)^2 & 1 + w(w+1) \end{pmatrix}$$
\end{itemize} \end{lem}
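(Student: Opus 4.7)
The plan is to follow the same cylinder-decomposition recipe already used for Lemma~\ref{lem_second_parabolic} and for the cusp $(1,1)$ described immediately after it, but now applied to the even-spin generating surface $(X,\omega)$ of $P(\sqrt{D}/2+3/2,\sqrt{D}/2-1/2)=P(w+1,w-1)$ in the direction $v=(1,1/(2(w+1)))$. The general scheme is: identify the decomposition of $(X,\omega)$ into cylinders with core curves parallel to $v$; read off their widths $\ell_i$ and heights $h_i$; check that the moduli $m_i=\ell_i/h_i$ are commensurable over $\QQ$; and use the classical fact (as in \cite{Vee89}) that if one takes the $n_i$-fold Dehn twist in the $i$-th cylinder with $n_i m_i$ constant, the resulting affine homeomorphism is globally well defined and its derivative is a parabolic element of $\SL(L_D^0)$ fixing $v$.

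First, I would place $P(w+1,w-1)$ with vertices at $(0,0)$, $(w-1,0)$, $(w-1,w+1)$, $(w-2,w+1)$, $(w-2,1)$, $(0,1)$, make the standard opposite-side identifications so that the periods of $\omega$ are $\{1,w-1,i,i(w+1)\}$ (as recorded in the proof of Proposition~\ref{prop_fix_veech_2}), and locate the single cone point. Then I would trace the straight lines of slope $1/(2(w+1))$ starting at that cone point and at the boundary points inherited from the polygon, until they return. Because the vertical period $w+1$ is the key parameter multiplying the slope denominator, the strip of slope $1/(2(w+1))$ closes up after a small integer number of vertical wraparounds; the exact combinatorics of the decomposition differs according to the parity of $w+1$ modulo $2$, which is governed by $D \bmod 16$, and this is what produces the two separate formulas in the statement. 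In each subcase one obtains a (short) list of saddle connections, and consequently explicit expressions for $\ell_i$ and $h_i$ as elements of $\OD$ and $\frac{1}{w+1}\OD$ respectively.

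Once the cylinder data are known, the parabolic is computed as follows. The lcm argument gives an affine map whose derivative, written in a basis adapted to $v$ and $v^\perp$, is the shear $\bigl(\begin{smallmatrix}1 & \mu\\ 0 & 1\end{smallmatrix}\bigr)$ where $\mu$ is the common value of the $n_i m_i$. Conjugating by the $\RR$-linear change of basis that sends $(1,0)$ to $v$ and writing $\mu$ in terms of $w$, one recovers the two explicit matrices displayed in the lemma; in particular the off-diagonal entry $-\tfrac{1}{4}w(w+1)^2$ (resp.\ $-\tfrac{1}{2}w(w+1)^2$) and the diagonal $1\pm\tfrac{1}{2}w(w+1)$ (resp.\ $1\pm w(w+1)$) come out of the modulus calculation in the respective cases $D\equiv 1\bmod 16$ and $D\equiv 9\bmod 16$. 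The main obstacle is the Step~2 bookkeeping: unlike the two-cylinder pictures of Figure~3.3, the decomposition for $P(w+1,w-1)$ in this direction can include more than two cylinders, and one has to verify case-by-case, using Lemma~\ref{lem_properties_OD}(ii) to simplify norms and products involving $w$, that the widths and heights are precisely those needed for the moduli to be rationally commensurable and to yield entries in $\OD$; otherwise, the argument is a direct and elementary computation entirely analogous to the one underlying Lemma~\ref{lem_second_parabolic}.
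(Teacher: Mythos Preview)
Your approach is essentially the same as the paper's: the lemma is obtained from the cylinder decomposition in direction $(1,\tfrac{1}{2(w+1)})$, and the parabolic is the multi-twist built from the cylinder moduli. However, two of your working assumptions about that decomposition are off. The paper records (and one can verify directly on the polygon $P(w+1,w-1)$) that the decomposition always consists of exactly \emph{two} cylinders, with circumferences $2$ and $w$ and heights $1$ and $(w-1)/2$; there is no case where more than two cylinders appear. Consequently the combinatorics of the decomposition does \emph{not} depend on $D \bmod 16$. The bifurcation into the two displayed matrices comes only at the last step: the ratio of the moduli equals $(D-1)/16$, which is an integer when $D\equiv 1 \bmod 16$ and a half-integer when $D\equiv 9 \bmod 16$, so in the second case one must double the twist to get a globally defined affine map. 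Once you correct these two points, your computation goes through exactly as the paper indicates.
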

\begin{center}
\includegraphics[height=5.3cm,width=3.2cm]{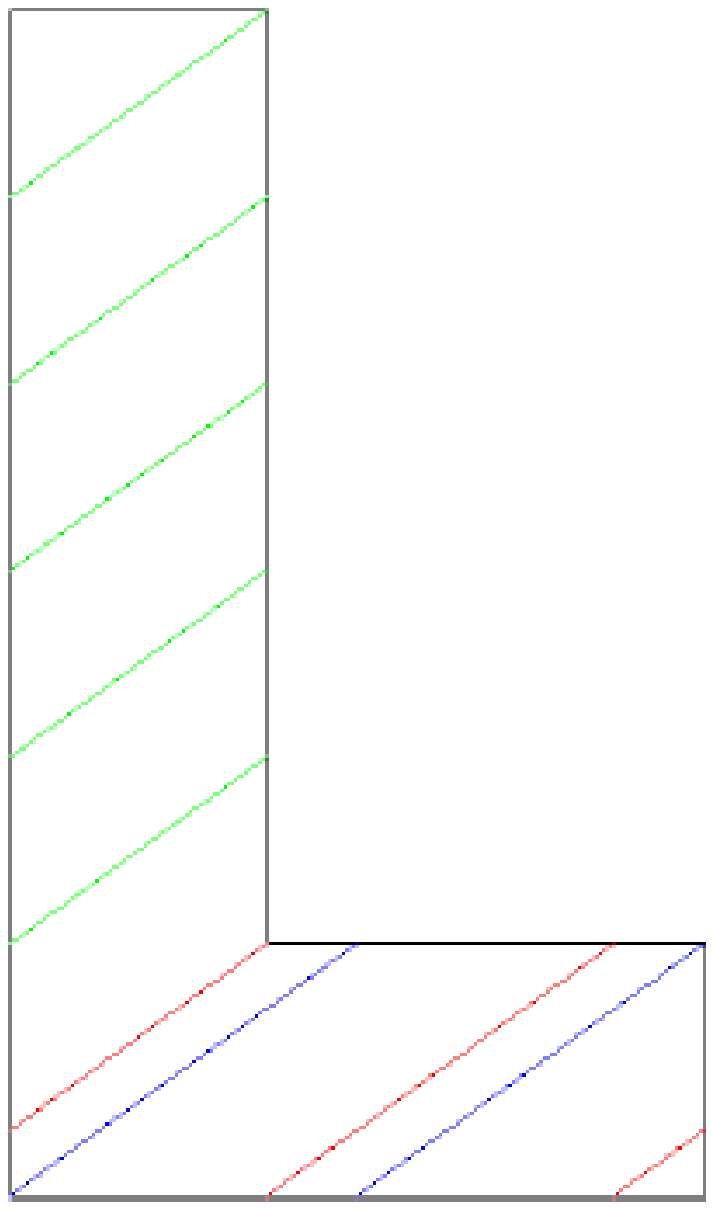}\\ 
Figure 3.5. A typical cylinder decomposition in direction $(1,1/2(w+1))$.
\end{center}
It can be checked that the lengths of the cylinders are $2$ and $w$ while the heights of the cylinders are $1$ and $(w-1)/2$. Thus the ratio of the moduli of the two cylinders is always $\frac{D-1}{16}$.\footnote{In the notation of \cite{McM05} the cusp $(1,1/2(w+1))$ thus corresponds to the prototype $(1,\frac{D-1}{8},2,-1)$ in both cases.}\\[11pt]
Moreover for $D > 17$ the cylinder decomposition in direction $(1/2(w-1),1)$ yields:
\begin{lem} \label{lem_third_parabolic2} The following matrices lie in $\SL(L_D^0)$:
\begin{itemize}
\item If $D \equiv 1 \mod 16$: 
$$\begin{pmatrix} 1 - (2+w)(w-1) & \frac{(2+w)(w-1)^2}{2} \\ -2(2+w) & 1 + (2+w)(w-1) \end{pmatrix}$$
\item If $D \equiv 9 \mod 16$:
$$\begin{pmatrix} 1 - \frac{(2+w)(w-1)}{2} & \frac{(2+w)(w-1)^2}{4} \\ (2+w) & 1 + \frac{(2+w)(w-1)}{2} \end{pmatrix}$$	
\end{itemize} \end{lem}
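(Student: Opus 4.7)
The strategy mirrors that of Lemma~\ref{lem_second_parabolic2}. Any parabolic element of $\SL_2(\RR)$ fixing the line $\RR \cdot (a,b)$ can be written as
\[
P_c \;=\; \begin{pmatrix} 1 - c a b & c a^2 \\ -c b^2 & 1 + c a b \end{pmatrix}, \qquad c \in \RR,
\]
and by conjugating with a rotation sending $(a,b)$ to the horizontal axis one checks that $P_c$ is conjugate to the pure shear $\left(\begin{smallmatrix} 1 & c(a^2+b^2) \\ 0 & 1 \end{smallmatrix}\right)$. Consequently $P_c$ is the derivative of an element of $\Aff^+(X,\omega)$ if and only if it acts as an integer power of a Dehn twist in every cylinder of the decomposition of $(X,\omega)$ in direction $(a,b)$; equivalently, $c(a^2+b^2)\,h_i/\ell_i \in \ZZ$ for every cylinder of length $\ell_i$ (measured along $(a,b)$) and height $h_i$ (perpendicular). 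For $(a,b) = ((w-1)/2,1)$ the matrix $P_c$ specializes to
\[
P_c \;=\; \begin{pmatrix} 1 - c(w-1)/2 & c(w-1)^2/4 \\ -c & 1 + c(w-1)/2 \end{pmatrix},
\]
which matches the matrices of the statement exactly for $c = 2(2+w)$ when $D \equiv 1 \bmod 16$, and for $c = 2+w$ when $D \equiv 9 \bmod 16$. Thus the proof reduces to exhibiting the cylinder decomposition of $L_D^0 = P(w+1,w-1)$ in direction $((w-1)/2,1)$ and checking that these are the minimal admissible values of $c$.

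To produce the decomposition I would trace the straight-line flow in direction $((w-1)/2,1)$ starting at each vertex of $L_D^0$, following the opposite-side identifications, until each trajectory closes up or returns to a vertex. This should exhibit the surface as a union of two cylinders bounded by the resulting saddle connections; the hypothesis $D > 17$ is needed precisely to guarantee that both cylinders have strictly positive height. A computationally equivalent alternative, closer in style to the proof of Lemma~\ref{lem_second_parabolic2}, is to pre-shear by $\left(\begin{smallmatrix} 1 & 0 \\ -2/(w-1) & 1 \end{smallmatrix}\right)$, which sends the direction to the vertical, and then to read off the vertical decomposition of the resulting (non-axis-parallel) polygon by cut-and-paste.

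Once the two pairs $(\ell_i, h_i)$ are known as $\OD$-linear combinations, I compute the moduli $m_i = h_i/\ell_i$ and the ratio $m_1/m_2$. I expect the latter to be a rational number whose reduced denominator is $16$ if $D \equiv 1 \bmod 16$ and $8$ if $D \equiv 9 \bmod 16$; this factor-of-two discrepancy between the two congruence classes, reflecting the fact that $(D-1)/16$ is a genuine integer only in the first case, is precisely what produces the factor-of-two discrepancy in $c$. Taking the smallest positive $c$ satisfying $c(a^2+b^2)\,h_i/\ell_i \in \ZZ$ for both $i$ then yields $c = 2(2+w)$ and $c = 2+w$ respectively, and substitution into $P_c$ reproduces the claimed matrices.

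The main obstacle is the combinatorial step: the direction $((w-1)/2,1)$ is not adapted to any edge of $L_D^0$, so several saddle connections must be followed by hand through the side-gluings, and the combinatorial shape of the decomposition changes with the residue class of $D \bmod 16$ (and would degenerate for $D \le 17$, which is why that hypothesis appears). Once this bookkeeping is settled, the remainder of the argument is a mechanical calculation substituting the moduli into the integrality condition above.
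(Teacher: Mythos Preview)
Your approach is exactly the paper's: the ``proof'' there consists of recording the cylinder decomposition in direction $\bigl(\tfrac{w-1}{2},1\bigr)$ and reading off the parabolic from the moduli. The explicit data the paper states (which resolves your ``main obstacle'') are lengths $2$ and $2+w$, heights $\tfrac{2}{w-1}$ and $\tfrac{w-3}{w-1}$, with modulus ratio $\tfrac{D-25}{16}$; note this ratio is an \emph{integer} when $D\equiv 9\bmod 16$ and a half-integer when $D\equiv 1\bmod 16$, not denominators $8$ and $16$ as you guessed, but the factor-of-two dichotomy you predicted is indeed what produces the two values of $c$.
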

\begin{center}
\includegraphics[height=5.31cm,width=3.2cm]{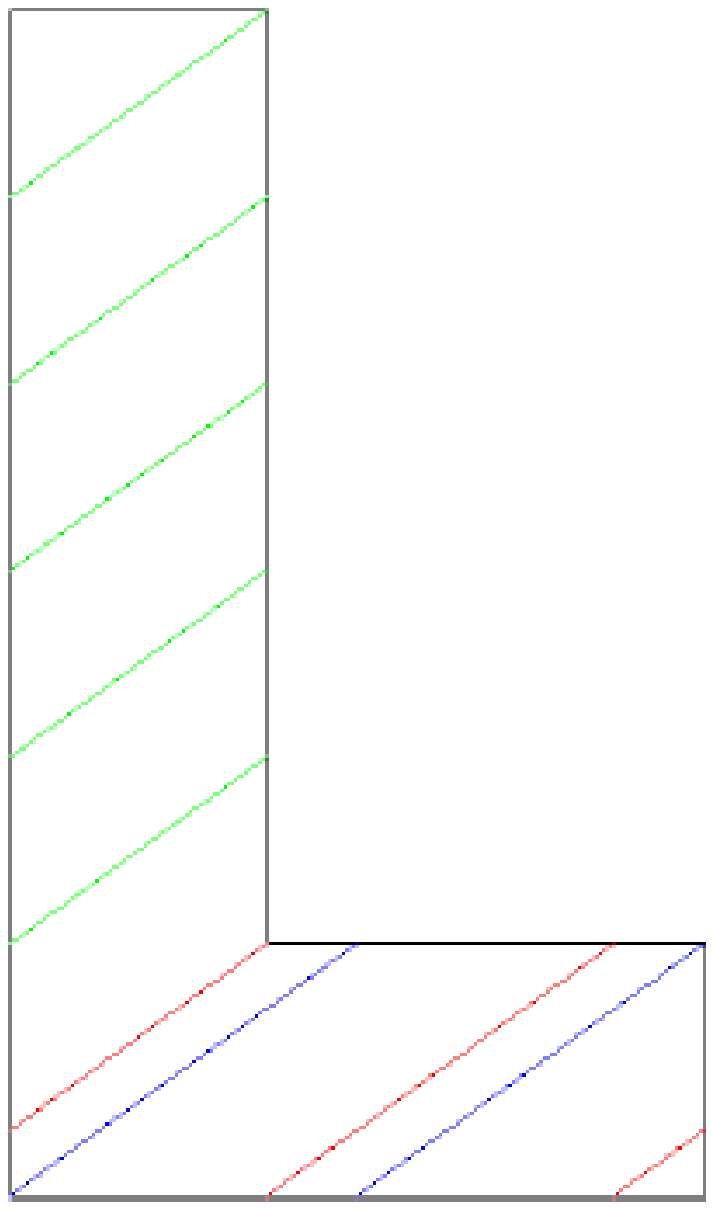}\\ 
Figure 3.6. A typical cylinder decomposition in direction $(1/2(w-1),1)$.
\end{center}
The lengths of the cylinders are $2$ and $2+w$ while the heights of the cylinders are $2/(w-1)$ and $(w-3)/(w-1)$. Thus the ratio of the moduli of the two cylinders is always $\frac{D-25}{16}$.\footnote{In the notation of \cite{McM05} the cusp $(1/2(w-1),1)$ thus corresponds to the prototype $(1,\frac{D-25}{8},2,-5)$ in both cases.}\\[11pt]
Finally we want to describe the parabolic element 
$$E = \begin{pmatrix} 1 - e & e \\ -e & 1 +e \end{pmatrix}$$
with $e \in \OD$, which fixes the cusp $(1,1)$, more precisely.
\begin{center}
\includegraphics[height=5.32cm,width=3.2cm]{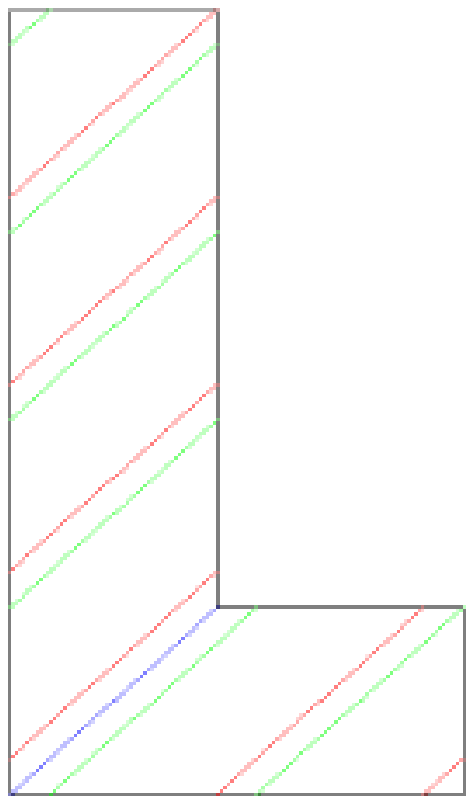}\\
Figure 3.7. A typical cylinder decomposition in direction $(1,1)$.
\end{center}
The lengths and the heights of the two cylinder are then exactly the same as in the case of $\SL(L_D^1)$. Hence $e$ is the least common $\ZZ$-multiple of $\frac{w - 1+ \left\lfloor w \right\rfloor }{-w + 1 + \left\lfloor w \right\rfloor}$ and $\frac{w + \left\lfloor w \right\rfloor}{-w + \left\lfloor w \right\rfloor}$.\footnote{We can again not uniformly give a prototype corresponding to the cusp $(1,1)$.} 

\paragraph{D $\equiv$ 0 mod 4.} Also when $D \equiv 0 \mod 4$ we have chosen the generating surface appropriately such that $\SL(L_D) \subset \SL_2(\OD)$.

\begin{prop} \label{prop_fix_veech_3} For all discriminants $D \equiv 0 \mod 4$ the Veech group $\SL(L_D)$ is a subgroup of $\SL_2(\OD)$. \end{prop}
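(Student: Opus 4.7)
The plan is to mirror the reasoning used in the proofs of Proposition~\ref{prop_fix_veech_1} and Proposition~\ref{prop_fix_veech_2}: identify the relative period lattice of the generating flat surface $(X,\omega)$, observe that it sits inside $\OD \oplus i\OD$, and conclude by the defining property of the Veech group. The Veech group $\SL(L_D)$ acts on the relative cohomology (with periods) by pullback, and since it preserves the $\ZZ$-module generated by the periods, any affine diffeomorphism is represented by a matrix whose entries express new periods as integer linear combinations of the old ones. Hence if the horizontal periods and the vertical periods both lie in $\OD=\ZZ+w\ZZ$, the resulting matrix has entries in $\OD$ and therefore lies in $\SL_2(\OD)$.

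First, I would make the generating surface explicit: for $D\equiv 0\bmod 4$ the prescribed L-shaped polygon is $P(a,1+a)$ with $a=\sqrt{D}/2=w$. Integrating $\omega=dz$ along the edges of this polygon, the horizontal sides contribute periods $1$ and $1+w$, while the vertical sides contribute $i$ and $iw$. Both the horizontal and the vertical period groups are thus exactly $\ZZ+w\ZZ=\OD$, so the full period lattice identifies with $\OD\oplus i\OD$.

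Next I would invoke the standard fact (as used verbatim in the proofs of Proposition~\ref{prop_fix_veech_1} and Proposition~\ref{prop_fix_veech_2}) that an element of $\SL(X,\omega)$ acts on the period lattice by $\ZZ$-linear automorphisms of $\RR^2\cong\CC$. Since the period lattice, viewed as a $\ZZ$-module, is $\OD\oplus i\OD$, the matrix entries of any such automorphism are forced to lie in $\OD$. Combined with determinant one, this gives the inclusion $\SL(L_D)\subset\SL_2(\OD)$.

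There is no real obstacle here; the only thing to be careful about is that the Veech group is considered after identifying $\SO_2(\RR)\backslash\SL_2(\RR)\cong\HH$ via the normalization fixed in Section~\ref{sec_fixing_Veech}, so that $\SL(L_D)$ is the Veech group of the specific L-shaped representative $P(w,1+w)$ rather than of an arbitrary $\SL_2(\RR)$-translate. Once that normalization is in force, the computation of periods is literally the calculation above and the conclusion is immediate.
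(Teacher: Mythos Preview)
Your argument is correct and essentially identical to the paper's: the paper simply states that for $D\equiv 0\bmod 4$ the periods of $(X,\omega)$ are $1$, $1+w$, $i$ and $iw$, and since $\{1,w\}$ is a basis of $\OD$ the claim follows. You have spelled out the same computation with a bit more detail.
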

\begin{proof} If $D \equiv 0 \mod 4$, then the periods of $(X,\omega)$ are $1, 1+w, i$ and $iw$. Since $\left\{1,\omega\right\}$  is a basis of $\OD$ the claim follows. \end{proof}

It can again be immediately seen that $\SL(L_D)$ contains the matrices \label{glo_T3} \label{glo_Z3}
$$T := \begin{pmatrix} 1 & w+1 \\ 0 & 1 \end{pmatrix} \ \ \ \textrm{and} \ \ \ Z:= \begin{pmatrix} 1 & 0 \\ w & 1 \end{pmatrix}$$
and the cylinder decomposition in direction $(1,w/2)$ yields
\begin{lem} \label{lem_second_parabolic_0mod4} For $D>16$ the following matrices lie in $\SL(L_D):$
\begin{itemize}
\item If $D \equiv 4 \mod 8$: 
$$\begin{pmatrix} 1 - 2 (2w+D/4) & 4(w+2) \\ -\frac{(2w+D/4)^2}{w+2} & 1 + 2 (2w+D/4) \end{pmatrix}$$
\item If $D \equiv 8 \mod 16$: $$\begin{pmatrix} 1 - (2w+D/4) & 2(w+2) \\ -\frac{(2w+D/4)^2}{2(w+2)} & 1 + (2w+D/4) \end{pmatrix}$$
\item If $D \equiv 0 \mod 16$: $$\begin{pmatrix} 1 - \frac{1}{2}(2w+D/4) & (w+2) \\ -\frac{(2w+D/4)^2}{4(w+2)} & 1 + \frac{1}{2}(2w+D/4) \end{pmatrix}$$\end{itemize}
\end{lem}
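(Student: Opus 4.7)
The plan is to mimic the proofs of Lemmas~\ref{lem_second_parabolic}, \ref{lem_second_parabolic2}, \ref{lem_third_parabolic2}, only now for the L-shaped polygon $P(w,1+w)$ with $w=\sqrt{D}/2$ that generates the Teichm\"uller curve $C_{L,D}$ when $D\equiv 0\bmod 4$. The direction $(1,w/2)$ is periodic on this Veech surface, so $(X,\omega)$ decomposes into a finite union of cylinders. Performing a simultaneous Dehn twist in each cylinder is an affine diffeomorphism of $(X,\omega)$ whose derivative is a parabolic element of $\SL(L_D)$ fixing the direction $(1,w/2)$. Every such parabolic element has the form
\[
P(t) \;=\; \begin{pmatrix} 1 - (w/2)t & t \\ -(D/16)\,t & 1 + (w/2)t \end{pmatrix},
\]
so the entire task is to compute the minimal admissible twist parameter $t$ in each congruence class of $D$ modulo $16$.

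First I would identify the cylinder decomposition combinatorially. Starting a geodesic of slope $w/2$ from each vertex of $P(w,1+w)$ and following it through the edge identifications of the L-shape, one sees that $(X,\omega)$ decomposes into two cylinders (as one expects in the stratum $\Omega\mathcal{M}_2(2)$ for a completely periodic non-hyperelliptic direction on these surfaces). Using $w^2=D/4$ repeatedly to close up saddle connections, I would read off explicit expressions for the circumferences $\ell_1,\ell_2$ and heights $h_1,h_2$ of the two cylinders as elements of $K=\QQ(\sqrt{D})$.

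Second, I would compute the minimal twist. A common Dehn twist that twists cylinder $i$ exactly $n_i\in\ZZ$ times corresponds to a parameter $t$ satisfying $t = n_i \ell_i/h_i$ for $i=1,2$; the minimal positive $t$ of this form is the smallest element of $\OD$ lying in both $(\ell_1/h_1)\ZZ$ and $(\ell_2/h_2)\ZZ$. A direct match between the template $P(t)$ and the matrices in the statement shows that the desired $t$ equals $4(w+2)$, $2(w+2)$, or $(w+2)$ in the three cases, and these values are indeed of the form $n_i\ell_i/h_i$ with integer $n_i$. So the point of the calculation is to verify the minimality: that no smaller positive $\OD$-multiple of $w+2$ can simultaneously twist each cylinder by an integral amount.

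Third, I would perform the case analysis on $D\bmod 16$. The $2$-adic valuations of $h_1,h_2$ depend on whether $D/4$ is odd, even but not divisible by $4$, or divisible by $4$: these three regimes correspond respectively to $D\equiv 4\bmod 8$, $D\equiv 8\bmod 16$, and $D\equiv 0\bmod 16$. In each regime the $\gcd$ of the cylinder heights drops by an additional factor of $2$, and consequently the minimal $t$ is divided by $2$ as one moves to the next case. This gives the prefactors $4,2,1$ in front of $(w+2)$. Substituting into $P(t)$ and using $(w/2)\cdot k(w+2) = k(2w + D/4)/2 \cdot 2$, one obtains the exact matrices in the statement, with the bottom-left entries rewritten via $(D/16)\cdot k(w+2) = (2w+D/4)^2 / (k'(w+2))$ for the appropriate $k'$, the identity that underlies the substitutions being $4(w+2)^2 = 16 + 16w + 4w^2 = 16 + 16w + D$.

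The main obstacle is the combinatorial bookkeeping of the cylinder decomposition, i.e.\ tracking geodesics of slope $w/2$ through the edge identifications of $P(w,1+w)$, and then extracting the congruence-dependent gcd of the cylinder heights. The hypothesis $D>16$ is needed here to ensure that the two cylinders have strictly positive heights (so that the decomposition is non-degenerate) and that the slope $w/2$ is genuinely inside the polygon. Once the heights $h_1,h_2$ are in hand, the rest is a substitution into the closed-form parabolic template $P(t)$ above.
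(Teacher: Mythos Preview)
Your approach is correct and essentially identical to the paper's: the paper computes the cylinder decomposition in direction $(1,w/2)$, records the circumferences and heights of the two cylinders (for $D\equiv 0\bmod 8$: lengths $w$ and $w+D/8$, heights $(w-2)/2$ and $1$; for $D\equiv 4\bmod 8$: lengths $w$ and $2w+D/4$, heights $(w-2)/2$ and $1/2$), and reads off the minimal common Dehn twist, exactly as you outline. Two small remarks: the phrase ``non-hyperelliptic direction'' is out of place in genus~$2$, and the minimal $t$ is characterised as the least common positive $\ZZ$-multiple of the inverse moduli $\ell_i/h_i$ (its membership in $\OD$ then follows), not a priori as the smallest element of $\OD$ with that property.
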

\begin{center}
\includegraphics[height=3.4cm,width=4.85cm]{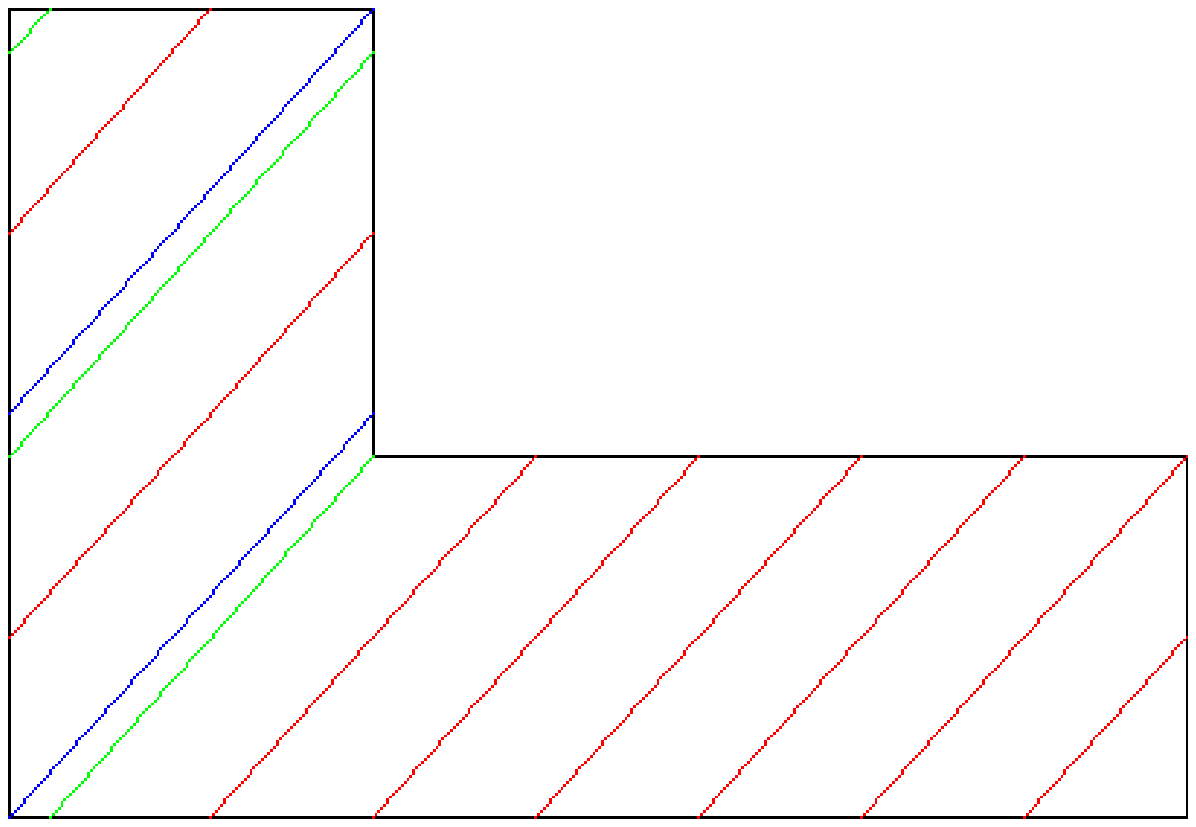} \hspace{2cm}
\includegraphics[height=3.5cm,width=4.95cm]{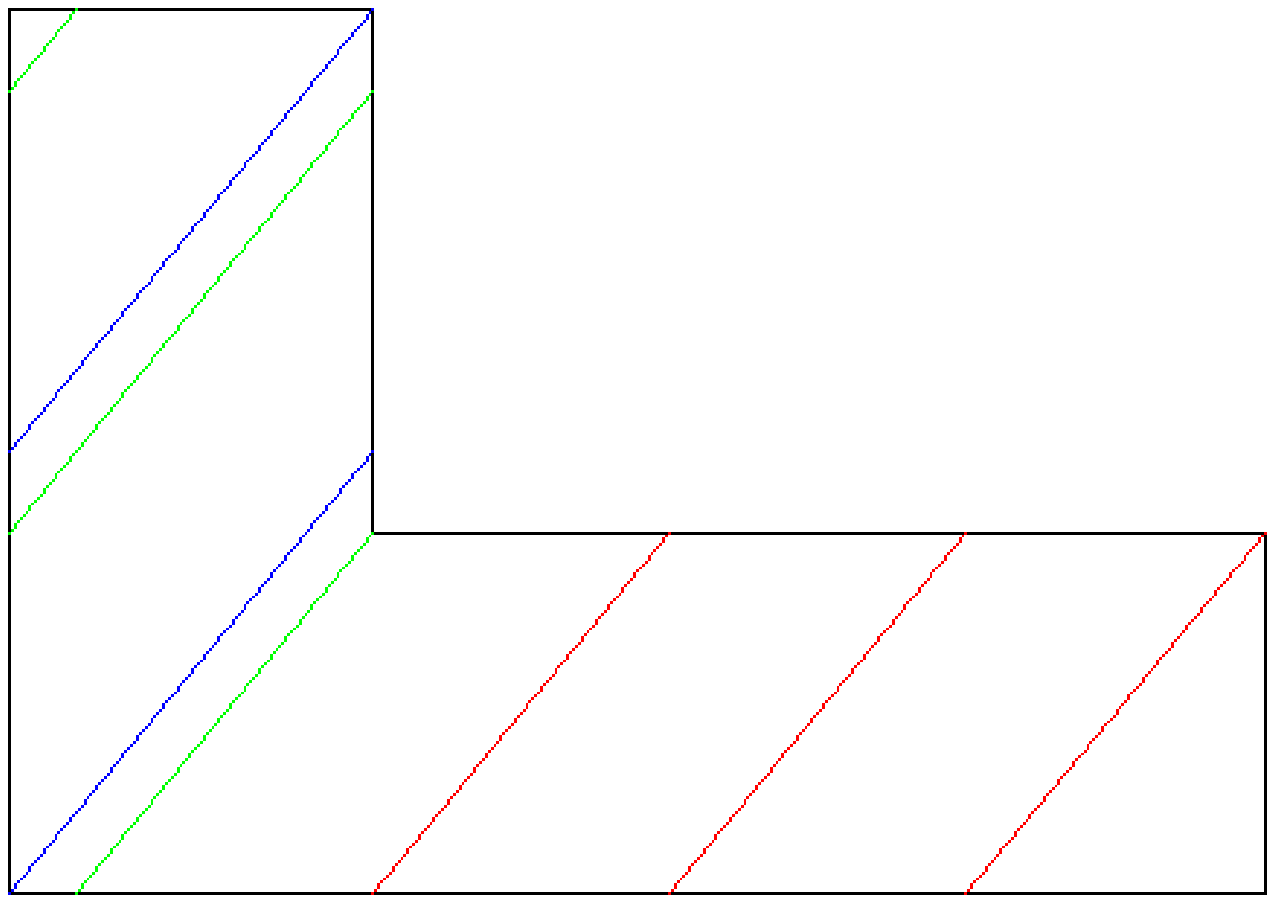}\\
Figure 3.8. Two typical cylinder decompositions in direction $(1,w/2)$ in the cases $D \equiv 4 \mod 8$ and $D \equiv 0 \mod 8$.
\end{center}
If $D \equiv 0 \mod 8$, then the lengths of the cylinders are $w$ and $w+D/8$ while the heights of the cylinders are $(w-2)/2$ and $1$. Thus the ratio of the moduli of the two cylinders is $D/16-1$. If $D \equiv 4 \mod 8$ then the lengths of the cylinders are $w$ and $2w+D/4$ while the heights of the cylinders are $(w-2)/2$ and $1/2$. Thus the ratio of the moduli of the two cylinders is $D/4-4$.\footnote{In the notation of \cite{McM05} the cusp $(1,w/2)$ therefore corresponds to the prototype $(1,\frac{D}{8}-2,2,-4)$ if $D \equiv 0 \mod 8$, and to the prototype $(1,\frac{D}{2}-8,2,-8)$ if $D \equiv 4 \mod 8$.}

\newpage
\section{Twisted Teichmüller Curves} \label{cha_twisted_Teichmüller_curves}

We have just seen that Teichmüller curves yield some of the very few known examples of Kobayashi curves on $X_D$ that are not twisted diagonals.\footnote{Other known Kobayashi curves will be described in Chapter~\ref{chapter_prym_modular}.} Using a Teichmüller curve $C_{L,D}^\epsilon$ a totally new class of examples of Kobayashi curves will be constructed in this chapter: these objects remind very much of twisted diagonals and will therefore be called twisted Teichmüller curves. They will be the main objects of these notes. We derive some of their basic properties here. Most importantly, we will see that twisted Teichmüller curves are indeed 	Kobayashi curves (Proposition~\ref{thm_twisted_finite_volume}). As concrete examples we will do some rather explicit calculations for Teichmüller curves twisted by diagonal matrices. Congruence subgroups will then naturally come into play. This rather big class of examples will be revisited in Chapter~\ref{chapter_calculations}, where we calculate the volume of almost all twisted Teichmüller curves. Aside from the basic properties mentioned in this chapter it requires much work to analyze twisted Teichmüller curves. On account of this, other properties of twisted Teichmüller curves will be derived later in distinct chapters. \\[11pt]
In the last chapter we saw that for the primitive Teichmüller curves in $\mathcal{M}_2$ 
the following diagram commutes
 $$
\begin{xy}
 \xymatrix{
 	\mathbb{H} \ar[rrr]^{\Phi(z)=(z,\varphi(z))} \ar[d]^{/\SL(L_D)} & & &	\mathbb{H} \times \mathbb{H}^{-} \ar[d]^{/\SL_2(\OD)}	\\ 
 	C \ar@{^(->}[rrr] & & & X_D
 	}
\end{xy}
$$
where from now on we write $C$ instead of $C_{L,D}^\epsilon$ to simplify notation. The group $\SL(L_D)=\Stab_{\SL_2(\RR)^2}(\Phi) \cap \SL_2(\OD)$ is the \textbf{stabilizer of the graph of the Teichmüller curve} \index{Teichmüller curve!graph} \index{stabilizer!of the graph of a Teichmüller curve} $\Phi(z)=(z,\varphi(z))$ inside $\SL_2(\OD)$.  As a shortcut, we will from now on always write $\Stab(\Phi)$ \label{glo_stabphi} whenever we mean the stabilizer inside $\SL_2(\RR)^2$ of the graph of $\Phi$. Moreover we saw that $\varphi$ is not a Möbius transformation and that $\SL(L_D)$ is the Veech group of the surface $(X,\omega)$ obtained from the L-shaped polygon, when we consider the Teichmüller curve as projection of the orbit $\SL_2(\RR)(X,\omega)$ to $\Mg$. \\[11pt]
We now mimic the construction of twisted diagonals (compare e.g. \cite{vdG88}): \index{twisted diagonal}Let us twist these Teichmüller curves on $X_D$ by a matrix $M \in \GL_2^+(K)$. In other words, let us consider the \textit{twisted} diagram

$$
\begin{xy}
 \xymatrix{ \mathbb{H} \ar[rrr]^{\Phi_M=(Mz,M^\sigma\varphi(z))} \ar[d]^{/\SL_M(L_D)} & & & \mathbb{H} \times \mathbb{H}^{-}              \ar[d]^{/\SL_2(\OD)} \\
             C_M \ar@{^(->}[rrr]& &  & X_D}
\end{xy}
$$
where \label{glo_C_M}$M^\sigma$ is as always the Galois conjugate of $M$ and where $\SL_M(L_D)=\Stab_{\SL_2(\RR)^2}(\Phi_M) \cap \SL_2(\OD)$ is the stabilizer of $\Phi_M=(Mz,M^\sigma\varphi(z))$ inside $\SL_2(\OD)$, i.e. the \textbf{stabilizer of the graph of the twisted Teichmüller curve}. \index{stabilizer!of a twisted Teichmüller curve} \label{glo_SLMLD} 

\begin{defi} We call $C_M \hookrightarrow X_D$ a \textbf{twisted Teichmüller curve}. \index{twisted Teichmüller curve} \end{defi}

Evidently, the stabilizer $\SL_M(L_D)$ is a group. It is natural to ask what $\SL_M(L_D)$ exactly looks like. A very first starting point for answering this very hard question is the next proposition.

\begin{rem} 
A matrix $N \in \GL_2^+(K)$ is in $\Stab(\Phi)$ if and only if $\varphi(Nz) = N^\sigma \varphi(z)$ for all $z \in \HH$. However, this is hardly of any use to compute the stabilizer.
\end{rem}

\begin{prop} \label{prop_stabilzer_twisted} Let $M \in \GL_2^+(K)$, then \index{stabilizer!of a twisted Teichmüller curve} $$\SL_M(L_D)= \Stab(\Phi)^{M^{-1}} \cap \SL_2(\OD).$$ 
In particular, we have $\textrm{M}  \SL(L_D) M^{-1} \cap \SL_2(\OD) < \SL_M(L_D)$. Equality holds if for all $A \in \SL_2(\RR)$ the condition $\varphi(At) = A^\sigma \varphi(t)$ holds if and only if $A \in \SL(L_D)$.  \end{prop}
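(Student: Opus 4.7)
The plan is to simply unwind the definition of the stabilizer of a graph and use the elementary identity $N \in A\Gamma A^{-1}$ iff $A^{-1}NA \in \Gamma$. I expect this to be a short, formal verification rather than a step requiring a clever idea.

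First I would write out what it means for a pair $(A,B) \in \SL_2(\RR)^2$ to stabilize the graph $\{(Mz, M^\sigma \varphi(z)) : z \in \HH\}$. Such a pair lies in $\Stab(\Phi_M)$ precisely when for every $z \in \HH$ there exists $z' \in \HH$ with $A \cdot Mz = Mz'$ and $B \cdot M^\sigma \varphi(z) = M^\sigma \varphi(z')$. The first equation forces $z' = M^{-1}AM \cdot z$; plugging into the second and multiplying by $(M^\sigma)^{-1}$ from the left gives
$$(M^\sigma)^{-1} B M^\sigma \cdot \varphi(z) = \varphi\bigl(M^{-1}AM \cdot z\bigr) \qquad \forall z \in \HH,$$
which is exactly the statement that $(M^{-1}AM,\, (M^\sigma)^{-1}BM^\sigma) \in \Stab(\Phi)$. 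Hence $\Stab(\Phi_M) = (M,M^\sigma)\,\Stab(\Phi)\,(M,M^\sigma)^{-1}$.

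Next I would intersect with $\SL_2(\OD)$, where we view $N \in \SL_2(\OD)$ inside $\SL_2(\RR)^2$ via $N \mapsto (N,N^\sigma)$ coming from the two real embeddings of $K$. For such $N$, the element $(M^{-1}NM,(M^\sigma)^{-1}N^\sigma M^\sigma)$ is nothing but the image of $M^{-1}NM \in \SL_2(K)$ under this diagonal embedding, since Galois conjugation commutes with matrix multiplication. Thus
$$\SL_M(L_D) = \Stab(\Phi_M)\cap\SL_2(\OD) = \{N \in \SL_2(\OD) : M^{-1}NM \in \Stab(\Phi)\} = M\,\Stab(\Phi)\,M^{-1} \cap \SL_2(\OD),$$
which (with the paper's convention $\Gamma^M = M^{-1}\Gamma M$) is the asserted identity $\Stab(\Phi)^{M^{-1}} \cap \SL_2(\OD)$.

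The inclusion $M\SL(L_D)M^{-1}\cap \SL_2(\OD) \subset \SL_M(L_D)$ then follows tautologically from $\SL(L_D) = \Stab(\Phi)\cap \SL_2(\OD) \subset \Stab(\Phi)$. For the equality assertion, suppose the stated uniqueness condition holds; i.e. any $A \in \SL_2(\RR)$ satisfying $\varphi(Az)=A^\sigma\varphi(z)$ already lies in $\SL(L_D)$. If $N \in \SL_M(L_D)$, then by the first part $M^{-1}NM \in \Stab(\Phi)$, and since $M \in \GL_2^+(K)$ and $N \in \SL_2(\OD)$, the matrix $M^{-1}NM$ lies in $\SL_2(K)$; in particular its two Galois embeddings satisfy the relation $\varphi(A z) = A^\sigma\varphi(z)$, so the hypothesis forces $M^{-1}NM \in \SL(L_D)$ and hence $N \in M\SL(L_D)M^{-1}$. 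The only subtle point (which I do not expect to be a real obstacle here but which motivates the more serious work in Chapter~\ref{cha_maximality}) is that a priori the conjugate $M^{-1}NM$ need not lie in $\SL_2(\OD)$, so the reduction genuinely uses that the stabilizer condition is imposed over all of $\SL_2(K)$ and not merely over $\SL_2(\OD)$; ruling out extra elements of $\Stab(\Phi)\cap \SL_2(K) \setminus \SL(L_D)$ is exactly the pseudo parabolic maximality issue tackled later via Margulis' commensurator theorem.
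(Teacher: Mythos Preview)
Your argument is correct and follows essentially the same route as the paper: both proofs unwind the graph-stabilizer condition on $\Phi_M$, solve the first coordinate for $z^* = M^{-1}NMz$, substitute into the second, and read off that $N \in \SL_M(L_D)$ iff $M^{-1}NM \in \Stab(\Phi)$. Your extra step of first identifying $\Stab(\Phi_M) = (M,M^\sigma)\Stab(\Phi)(M,M^\sigma)^{-1}$ in $\SL_2(\RR)^2$ before intersecting with the diagonally embedded $\SL_2(\OD)$ is a mild reorganization, not a different idea, and your closing remark about why the equality clause really needs control of $\Stab(\Phi)\cap\SL_2(K)$ (foreshadowing the pseudo parabolic maximality work) is apt.
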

\begin{proof} Let $N \in \SL_M(L_D) \subset \SL_2(\OD)$. Then for all $z \in \mathbb{H}$ we have
$$(N \cdot M z, N^\sigma \cdot M^\sigma \varphi(z)) = (Mz^*, M^\sigma \varphi(z^*))$$
for some $z^* \in \mathbb{H}$ depending on $N$. The first component yields:
$$N \cdot M z = M z^* \ \ \textrm{or equivalently} \ \ M^{-1} \cdot N \cdot M z= z^*$$
Inserting this in the second component gives
$$N^\sigma \cdot M^\sigma \varphi(z) = M^\sigma \varphi(M^{-1} \cdot N \cdot M z)$$ 
which is equivalent to
$$M^{\sigma^-1} \cdot N^\sigma \cdot M^\sigma \varphi(z) = \varphi(M^{-1} \cdot N \cdot Mz).$$ 
Thus $\SL_M(L_D) = M \cdot \Stab(\Phi) \cdot M^{-1} \cap \SL_2(\OD)$. Since $\varphi(At) = A^\sigma \varphi(t)$ for all $A \in \SL(L_D)$ we get the claimed inclusion.
\end{proof}

As $M \in \GL_2^+(K)$ and $kM$ act on $\mathbb{H} \times \mathbb{H}^-$ in the same way for totally positive $k \in K$, one can restrict to the case where $M \in \GL_2^+(K) \cap \Mat^{2x2}(\OD)$  by multiplying $M$ with the least common multiple of the denominators of its entries if necessary. Recall that $\GL_2^+(K) \cap \Mat^{2x2}(\OD) \neq \GL_2(\OD)$.  Depending on the question which shall be answered this point of view is sometimes very useful.\\[11pt]Before we proceed with our analysis of twisted Teichmüller curves, let us introduce a bunch of extra notation. The logic behind the notation is such that a matrix $(M)$ in brackets corresponds to a level-covering of the involved curves (compare Lemma~\ref{lem_covering_congruence}) and a matrix $M$ as subscript corresponds to a twist - as for $C_M$. First we denote by $\SL_2(\OD,M)$ the group $\SL_2(\OD) \cap \SL_2(\OD)^{M^{-1}}$ and by $X_D(M):=\HH \times \HH^-/\SL_2(\mathcal{O_D},M)$, \label{glo_XDM} a finite cover of $X_D$. By $\SL(L_D,M)$ \label{glo_SL(L_D,M)} we denote the group $\SL(L_D) \cap \SL_2(\OD)^{M^{-1}}$ which yields a cover of the Teichmüller curve $C$ that is therefore denoted by $C(M)$. \label{glo_C(M)} Finally we set $\SL_M(L_D,M):= \SL_M(L_D) \cap M\SL_2(\OD)M^{-1}$ \label{glo_SL_M(L_D,M)} and let $C_M(M)$ \label{glo_C_M(M)} denote the corresponding cover of the twisted Teichmüller curve.  \\[11pt]
Let us clarify the relation of all these curves in a diagram, where all arrows going down indicate (finite degree) coverings
$$
\begin{xy}
 \xymatrix{ & \HH \ar[d]^{\SL_M(L_D,M)}& \HH \ar[d]^{\SL(L_D,M)}&  & \HH \times \HH^- \ar[dl]_{\SL_2(\OD,M)} \ar[dddl]^{ \SL_2(\OD)}\\
 & C_M(M) \ar@/_1pc/@{^(->}[rr] \ar[dd] & C(M) \ar[dd] 	\ar@{^(->}[r] & X_D(M) \ar[dd] &\\
 & & & &  & \\
 & C_M \ar@/_1pc/@{^(->}[rr]& C \ar@{^(->}	[r]& X_D &
 	}
\end{xy}.
$$ 
In order to facilitate many expressions which include conjugation, we want to introduce even some more notation. For the convenience of the reader we collect the most important notation in the following table.\\[11pt]
\fbox{ \begin{minipage}{12.25cm}
\begin{align*}
\SL_2(\OD,M) = & \ \SL_2(\OD) \cap \SL_2(\OD)^{M^{-1}}\\
\SL(L_D,M) = & \ \SL(L_D) \cap \SL_2(\OD)^{M^{-1}}\\
\SL_M(L_D) = & \ \Stab(\Phi)^{M^{-1}} \cap \SL_2(\OD)\\
\SL^M(L_D) = & \ \SL_M(L_D)^M = \Stab(\Phi) \cap \SL_2(\OD)^M\\
\SL_M(L_D,M) = & \ \SL_M(L_D) \cap \SL_2(\OD)^{M^{-1}}\\
\SL^M(L_D,M) = & \ \SL_M(L_D,M)^M = \SL(L_D) \cap \SL_2(\OD)^{M}\\
\end{align*} 
\end{minipage}}\\[11pt]
Finally we set $C^M(M):= \HH / \SL^M(L_D,M)$.  \label{glo_SLMLD2}Note that this curve has the same geometric properties and in particular the same volume as $C_M(M)$ since the involved groups are conjugated. We are now able to state the main result of this chapter.
\begin{prop} \label{thm_twisted_finite_volume} Every twisted Teichmüller curve is a Kobayashi curve. \index{Kobayashi curve} \end{prop}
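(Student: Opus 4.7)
The plan is to verify the two defining properties of a Kobayashi curve for $C_M$: namely, that it is totally geodesic for the Kobayashi metric on $X_D$, and that it is an algebraic curve (i.e., has finite volume and closed image).

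For the geodesic property, I would use the folklore characterization mentioned in the introduction: a curve $C\to X_D$ is a Kobayashi geodesic iff at least one component of its universal cover $\HH\to\HH\times\HH^-$ is a M\"obius transformation. Concretely, since the Kobayashi metric on $\HH\times\HH^-$ is the maximum of the Poincar\'e metrics on the two factors, any holomorphic map $\HH\to\HH\times\HH^-$ is automatically distance-decreasing, and if one coordinate is an isometry of the Poincar\'e metric then the map is forced to be an isometry for the Kobayashi metric. In our case the first coordinate of $\Phi_M=(Mz,M^\sigma\varphi(z))$ is $z\mapsto Mz$, which is a genuine M\"obius transformation because $M\in\GL_2^+(K)\subset\GL_2^+(\RR)$. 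This immediately gives that $\Phi_M$ is a Kobayashi geodesic on the universal cover, and hence its image in $X_D$ is locally totally geodesic.

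For finite volume and algebraicity, I would show that $\SL_M(L_D)$ is a lattice in $\SL_2(\RR)$ acting through the first factor $z\mapsto Mz$. By Proposition~\ref{prop_stabilzer_twisted} we already have the inclusion
$$M\,\SL(L_D)\,M^{-1}\cap\SL_2(\OD)\;\subset\;\SL_M(L_D).$$
Conjugating by $M^{-1}$ identifies the left-hand side with $\SL(L_D)\cap M^{-1}\SL_2(\OD)M$. Now exactly as in Proposition~\ref{prop_finite_index_GL2K}, if $d\in\OD$ is a common denominator for the entries of $M$ and $M^{-1}$, then $\Gamma^D(d)\subset M^{-1}\SL_2(\OD)M$, so $\SL(L_D)\cap M^{-1}\SL_2(\OD)M$ contains $\SL(L_D)\cap\Gamma^D(d)$. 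Since $\SL(L_D)$ is a lattice (being the Veech group of a Veech surface, Theorem~\ref{thm_L_shaped}) and $\Gamma^D(d)$ has finite index in $\SL_2(\OD)$, the intersection $\SL(L_D)\cap\Gamma^D(d)$ has finite index in $\SL(L_D)$ and is therefore itself a lattice. Conjugating back, $M\,\SL(L_D)\,M^{-1}\cap\SL_2(\OD)$ is a lattice (acting on $\HH$ via the $z\mapsto Mz$ identification), and hence the potentially larger group $\SL_M(L_D)$ is likewise a lattice.

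Finally, I would conclude that $C_M=\HH/\SL_M(L_D)$ has finite hyperbolic volume, and since its image in $X_D$ is the projection of a Kobayashi-geodesic disk stabilized by a lattice, standard results (e.g. those recalled from \cite{MV10}) yield that this image is a closed algebraic subvariety of $X_D$. Combined with the geodesic property established above, $C_M$ is therefore a Kobayashi curve. The only nontrivial step is the lattice property of $\SL_M(L_D)$; everything else is a direct invocation of the definitions together with the product structure of the Kobayashi metric.
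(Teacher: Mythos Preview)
Your approach is essentially the same as the paper's: show that $\SL_M(L_D)$ is a lattice by comparing it to a conjugate of the Veech group via the congruence-subgroup argument of Proposition~\ref{prop_finite_index_GL2K}, and then invoke the fact that one component of $\Phi_M$ is a M\"obius transformation. The paper's proof is terser but identical in substance; your discussion of the Kobayashi-metric product structure and of algebraicity is more explicit than what the paper writes, but not logically different.

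There is one small gap worth closing. You show that $M\SL(L_D)M^{-1}\cap\SL_2(\OD)$ (this is exactly the group the paper calls $\SL_M(L_D,M)$) is a lattice, and then assert that ``the potentially larger group $\SL_M(L_D)$ is likewise a lattice.'' Containing a lattice is not enough by itself: a priori $\SL_M(L_D)$ sits inside $\SL_2(\OD)$, which is \emph{not} discrete in $\SL_2(\RR)$, so you must check either discreteness or finite index. The paper handles this by observing
\[
[\SL_M(L_D):\SL_M(L_D,M)] \;\le\; [\SL_2(\OD):\SL_2(\OD)\cap M\SL_2(\OD)M^{-1}] \;<\;\infty,
\]
again via Proposition~\ref{prop_finite_index_GL2K}. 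Adding this one line makes your argument complete.
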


\begin{proof}
$C_M$ is an algebraic curve if and only if $\SL_M(L_D)$ is a lattice. Note that this is a purely group theoretic property. It follows from  Proposition~\ref{prop_finite_index_GL2K} that we have $[\SL_M(L_D):\SL_M(L_D,M)] < \infty$ and also that $[\SL(L_D)^M:\SL_M(L_D,M)] < \infty$. Hence $\SL_M(L_D)$ is a lattice. By definition one of the components in the universal covering is given by a Möbius transformation and therefore $C_M$ is a Kobayashi curve. 
\end{proof}
More generally it is, of course, true that all twists of Kobayashi curves are again Kobayashi curves, but as we are only concerned about twisted Teichmüller curves in these notes, we restrict to this case.\\[11pt] It is in general a very hard task to calculate the groups $\SL_M(L_D)$ or the quotients $C_M=\HH/\SL_M(L_D)$. Let us recall the three main reasons for this:
\begin{itemize}
\item[(i)] The theorem of E. Gutkin and C. Judge (Theorem~\ref{thm_Gutkin_Judge2}) \index{Theorem of Gutkin-Judge} implies that the Veech groups $\SL(L_D)$ are all non-arithmetic Fuchsian groups. In particular, this makes it hard to decide whether a matrix in $\SL_2(\OD)$ lies in the Veech group or not. Indeed, we will prove in Chapter~\ref{chapter_calculations} that these Veech groups are somehow the opposite of being arithmetic.
\item[(ii)] It is still unknown how to calculate the Veech group for a given flat surface $(X,\omega)$. Although this problem is solved in some special cases, in our case the Veech group can only (or at least) be calculated partially (see Section~\ref{sec_fixing_Veech}).
\item[(iii)] The Taylor expansion of $\varphi$ is known by a theorem of M. Möller and D. Zagier in \cite{MZ11}. Even with this knowledge, it is not easy to decide whether there exist elements of $\SL_2(\RR)$ which are not in the Veech group but lie in $\Stab(\Phi)$ or even if there exist any such element in $\SL_2(K) \smallsetminus \SL_2(\OD)$.
\end{itemize} 
In Chapter~\ref{sec_stab_comm} we will show that the group $\SL_M(L_D)$ is contained in a unique (finitely maximal) Fuchsian group, namely the commensurator of $\SL(L_D)$. Unfortunately this fact alone does in most cases not help very much since calculating $\Comm_{\SL_2(\RR)}(\SL(L_D))$ remains a very difficult task. In Chapter~\ref{sec_stab_comm}, we will therefore also work out a way to circumvent this problem. Furthermore we will see that the group $\SL^M(L_D,M)$ equals the a priori bigger group $\SL^M(L_D)$ in most cases. Note that this is really surprising when one only considers the definition. This is also a justification why we analyze these groups $\SL^M(L_D,M)$ in more detail in Chapter~\ref{chapter_calculations}.\\[11pt]
The expressions $(Mz,M^\sigma\varphi(z))$ and $(NMz,N^\sigma M^\sigma\varphi(z))$ define the same curve inside the Hilbert modular surface $X_D$ if $M \in \GL_2^+(K)$ and $N \in \SL_2(\OD)$. Thus one  is indeed only interested in analyzing twisted Teichmüller curves for a system of representatives of $\SL_2(\OD) \backslash \GL_2^+(K)$. If the class number $h_D$ is equal to $1$ we are able to give a nice set of matrices that contains a system of representatives. Otherwise the proof of a similar statement would involve some weird arithmetic which we will avoid here.
\begin{prop} \label{prop_matrix_decomposition} If $h_D=1$ then for all $M \in GL_2(K)$ there exists a matrix $N \in \SL_2(\OD)$ and an upper triangular matrix $L \in \GL_2(K)$ with $M=NL$. \end{prop}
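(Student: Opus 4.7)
The plan is to produce $N$ by finding a suitable bottom row, and then solving Bézout in the principal ideal domain $\OD$ to complete it to a matrix of determinant $1$. Writing $M = \bigl(\begin{smallmatrix} a & b \\ c & d \end{smallmatrix}\bigr)$, I want to choose $N \in \SL_2(\OD)$ so that $N^{-1} M$ is upper triangular. If $N = \bigl(\begin{smallmatrix} p & q \\ r & s \end{smallmatrix}\bigr)$ with $ps - qr = 1$, then
\[
N^{-1} M \;=\; \begin{pmatrix} s & -q \\ -r & p \end{pmatrix} \begin{pmatrix} a & b \\ c & d \end{pmatrix},
\]
and the condition that the lower-left entry vanishes is $pc = ra$, i.e.\ the bottom row $(r,p)$ should represent the ratio $c/a \in K$.

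First I would deal with the degenerate case $a=0$: since $M \in \GL_2(K)$ forces $c \neq 0$, the matrix $N = \bigl(\begin{smallmatrix} 0 & -1 \\ 1 & 0 \end{smallmatrix}\bigr) \in \SL_2(\OD)$ already does the job, producing $L = N^{-1} M = \bigl(\begin{smallmatrix} c & d \\ 0 & -b \end{smallmatrix}\bigr)$ upper triangular.

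For the main case $a \neq 0$, the key step uses the hypothesis $h_D = 1$: by the equivalences recalled in Section~\ref{sec_quadratic_nf}, $\OD$ is a principal ideal domain. Hence I can clear denominators to write $c/a = \alpha/\beta$ with $\alpha,\beta \in \OD$, and then divide out $\gcd(\alpha,\beta)$ to arrive at a representation $c/a = r/p$ with $r,p \in \OD$ coprime (taking $r=0$, $p=1$ if $c=0$). Coprimality in the PID $\OD$ yields, via Bézout, elements $q,s \in \OD$ with $ps - qr = 1$. The resulting matrix $N = \bigl(\begin{smallmatrix} p & q \\ r & s \end{smallmatrix}\bigr)$ lies in $\SL_2(\OD)$ and satisfies $ra = pc$ by construction, so the $(2,1)$-entry of $N^{-1}M$ vanishes, giving $L := N^{-1}M \in \GL_2(K)$ upper triangular.

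There is no real obstacle here; the statement is essentially the Iwasawa-type decomposition $\SL_2(\OD) \cdot B(K) = \GL_2(K)$ (with $B$ the Borel of upper triangular matrices), and the only place where the hypothesis is used is to guarantee that $c/a \in K$ admits a coprime representation in $\OD$ and that coprimality implies Bézout. Both fail in general when $h_D > 1$, which is precisely why the statement is formulated under the class number one assumption; in that more general setting one would need to work with $\SL(\OD \oplus \mathfrak{a})$ for each ideal class $\mathfrak{a}$, corresponding to the fact that $X_D$ has $h_D$ cusps.
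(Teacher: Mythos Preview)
Your proof is correct and follows essentially the same route as the paper: reduce the first column of $M$ to a coprime pair in $\OD$ (using $h_D=1$), apply B\'ezout to complete it to an element of $\SL_2(\OD)$, and read off $L=N^{-1}M$. The phrase ``bottom row $(r,p)$'' is a slip---it is the first column $(p,r)^t$ of $N$ (equivalently the bottom row of $N^{-1}$) that must be proportional to $(a,c)^t$---but your derived condition $pc=ra$ and the subsequent construction are correct.
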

\begin{proof} The claim follows immediately from the fact that $X_D$ has only one cusp. Nevertheless, we want to find the matrices $N,L$ explicitly. Let $M$ be of the form 
$$\begin{pmatrix} \frac{a}{x} & \frac{b}{x} \\ \frac{c}{x} & \frac{d}{x} \end{pmatrix}$$
with $a,b,c,d,x \in \OD$. We want to find $e,f,g,h \in \OD$ with $eh-fg=1$ and $k,l,m \in K$ such that 
$$\begin{pmatrix} e & f \\ g & h \end{pmatrix} \begin{pmatrix} k & l\\ 0 & m \end{pmatrix} = \begin{pmatrix} \frac{a}{x} & \frac{b}{x} \\ \frac{c}{x} & \frac{d}{x} \end{pmatrix}$$
By choosing $k$ appropriately, i.e. $k=(a,c)$ we may without loss of generality assume that $(a,c)=1$. Since $h_D=1$ we can can find a $f \in \OD$ such that $h:=\frac{cf+1}{a} \in \OD$. Then we choose $e=a$, $g=c$, $k=\frac{1}{x}$, $l=-\frac{f(ad-bc)-b}{ax}$ and $m=\frac{ad-bc}{x}$ and get the claim. 
\end{proof}

The proposition rests on the fact that the number of cusps of a Hilbert modular surface is equal to the class number of the number field (see e.g. \cite{vdG88}, Proposition~1.1). We may thus from now on always assume that $M$ is an upper triangular matrix if $h_D=1$ .

\begin{rem} \begin{itemize}
\item[(i)] Neither the matrix $N$ nor the matrix $L$ is unique. A different choice of $N$ only changes the parametrization of the twisted Teichmüller curve.
\item[(ii)] The upper triangular matrices are not a system of representatives of $\SL_2(\OD) \backslash \GL_2(K)$ since some of them are equivalent modulo $\SL_2(\OD)$.
\item[(iii)] The matrix $L$ cannot always be chosen as a diagonal matrix.
\end{itemize}
\end{rem}







It is a particularly interesting case to consider twisted Teichmüller curves for a diagonal matrix $M$. One reason why they are so interesting is that the calculation of the volume of Teichmüller curves twisted by upper triangle matrices can be reduced to the case of diagonal matrices (see Chapter~\ref{chapter_calculations}). This is why we give Teichmüller curves twisted by diagonal matrices a special name.

\begin{defi} If $M$ is diagonal, we call $C_M$ a \textbf{diagonal twisted Teichmüller curve}. If $M$ is of the form $\left( \begin{smallmatrix} m & 0 \\ 0 & 1 \end{smallmatrix} \right)$ with $m \in \OD$ we call $C_M$ a \textbf{simple twisted Teichmüller curve}. \end{defi}

Note that every diagonal matrix can be normalized to the form $\left( \begin{smallmatrix} m & 0 \\ 0 & n \end{smallmatrix} \right)$ with $m,n \in \OD$ and $(m,n)=1$ if $h_D=1$. Let us finish this discussion by quickly showing how simple respectively diagonal twisted Teichmüller curves are related to congruence subgroups.

\begin{lem} \label{lem_covering_congruence} For $M= \left( \begin{smallmatrix} m & 0 \\ 0 & 1 \end{smallmatrix} \right)$ with $m \in \OD$ we have $$\SL^M(L_D,M)= \SL(L_D) \cap \Gamma^D_0(m).$$ \end{lem}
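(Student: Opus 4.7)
The plan is to prove the equality by a direct matrix computation, unwinding the definition of $\SL^M(L_D,M)$ and comparing the resulting congruence condition with the definition of $\Gamma_0^D(m)$.

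First I would recall that by definition $\SL^M(L_D,M) = \SL(L_D) \cap \SL_2(\OD)^M$, where $\SL_2(\OD)^M = M^{-1}\SL_2(\OD)M$. Since $\SL(L_D)$ is already contained in $\SL_2(\OD)$ (by Proposition~\ref{prop_fix_veech_1}, Proposition~\ref{prop_fix_veech_2}, Proposition~\ref{prop_fix_veech_3}), the statement reduces to showing that for any $X=\begin{pmatrix}\alpha&\beta\\\gamma&\delta\end{pmatrix}\in\SL(L_D)$, the element $X$ lies in $\SL_2(\OD)^M$ if and only if $m\mid\gamma$.

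For the computation, I would write $M^{-1}=\begin{pmatrix}m^{-1}&0\\0&1\end{pmatrix}$ and observe that $X\in \SL_2(\OD)^M$ precisely when $MXM^{-1}\in\SL_2(\OD)$. A direct multiplication gives
$$MXM^{-1}=\begin{pmatrix}\alpha & m\beta\\ m^{-1}\gamma & \delta\end{pmatrix}.$$
The determinant is preserved, so this matrix is automatically of determinant~$1$; the only non-trivial requirement is that all four entries lie in $\OD$. Since $\alpha,\delta\in\OD$ and $m\beta\in\OD$ are automatic from $X\in\SL_2(\OD)$, the condition $MXM^{-1}\in\SL_2(\OD)$ collapses to $m^{-1}\gamma\in\OD$, i.e.\ $m\mid\gamma$ in $\OD$.

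By the definition of $\Gamma^D_0(m)$ in Definition~\ref{glo_GammaD0}, this last condition is exactly what it means for $X$ to lie in $\Gamma^D_0(m)$. Thus $\SL(L_D)\cap\SL_2(\OD)^M = \SL(L_D)\cap\Gamma^D_0(m)$, which is the claim. There is no real obstacle here since the argument is a one-line matrix computation; the only point that might need a sentence of justification is that $MXM^{-1}$ automatically has determinant one and hence membership in $\SL_2(\OD)$ is purely an integrality condition on the $(2,1)$-entry.
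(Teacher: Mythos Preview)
Your proof is correct and follows essentially the same approach as the paper. The paper phrases it as establishing the identity $M^{-1}\SL_2(\OD)M\cap\SL_2(\OD)=\Gamma^D_0(m)$ via two inclusions (computing $M^{-1}AM$ for $A\in\SL_2(\OD)$ and then exhibiting a preimage for each $Y\in\Gamma^D_0(m)$), whereas you compute $MXM^{-1}$ directly and read off the single integrality condition $m\mid\gamma$; these are the same one-line conjugation check viewed from opposite sides.
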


\begin{proof} It suffices to show $M^{-1} \SL_2(\OD) M \cap \SL_2(\OD)= \Gamma^D_0(m)$. We have $M^{-1}  \left( \begin{smallmatrix} a & b \\ c & d \end{smallmatrix} \right) M = \left( \begin{smallmatrix} a & b/m \\ c m & d \end{smallmatrix} \right)$. Hence, $M^{-1}  \SL_2(\OD) M \cap \SL_2(\OD) \subset \Gamma^D_0(m)$. We now show the other inclusion: so let $Y=\left( \begin{smallmatrix} \tilde{a} & \tilde{b} \\ \tilde{c} & \tilde{d} \end{smallmatrix} \in \Gamma^D_0(m) \right)$. Then $\tilde{c} = c \cdot m$ with $c \in \OD$. Choosing the matrix $X$ as $\left( \begin{smallmatrix} \tilde{a} & \tilde{b} m  \\ c & \tilde{d} \end{smallmatrix} \right) \in \SL_2(\OD),$  we get $M^{-1} X M = Y$.\end{proof}


Moreover the degree of the covering $C_M(M) \to C_M$ is bounded by the degree of the covering $\pi: X_D(m) \to X_D$, where $X_D(m) := \HH \times \HH^{-} / \Gamma^D_0(m)$.

\begin{lem} 
For $M= \left( \begin{smallmatrix} m & 0 \\ 0 & 1 \end{smallmatrix} \right)$ with $m \in \OD$ the following inequality holds:
$$[\SL_M(L_D):\SL_M(L_D,M)] \leq [\SL_2(\OD): \Gamma^D_0(m)].$$
\end{lem}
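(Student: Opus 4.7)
The plan is to reduce the index on the left-hand side to the index of a congruence subgroup in $\SL_2(\OD)$ and then invoke the general fact about indices of intersections. First I would unpack the notation to rewrite $\SL_M(L_D,M)$ as an intersection inside $\SL_2(\OD)$. By definition $\SL_M(L_D,M) = \SL_M(L_D) \cap \SL_2(\OD)^{M^{-1}} = \SL_M(L_D) \cap M\SL_2(\OD)M^{-1}$, and since $\SL_M(L_D) \subset \SL_2(\OD)$ we may write this as
$$\SL_M(L_D,M) = \SL_M(L_D) \cap \bigl(\SL_2(\OD) \cap M\SL_2(\OD)M^{-1}\bigr).$$

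Next I would identify the intersection in the bracket with a congruence subgroup of $\SL_2(\OD)$. A matrix $\left( \begin{smallmatrix} a & b \\ c & d \end{smallmatrix} \right) \in \SL_2(\OD)$ lies in $M\SL_2(\OD)M^{-1}$ iff $M^{-1}\left( \begin{smallmatrix} a & b \\ c & d \end{smallmatrix} \right) M = \left( \begin{smallmatrix} a & b/m \\ cm & d \end{smallmatrix} \right)$ is again integral, i.e. iff $m \mid b$. Hence $\SL_2(\OD) \cap M\SL_2(\OD)M^{-1} = \Gamma^{D,0}(m)$, so that $\SL_M(L_D,M) = \SL_M(L_D) \cap \Gamma^{D,0}(m)$.

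The conclusion is now a routine application of the elementary group-theoretic inequality: for any subgroups $H, K$ of a group $G$, the natural map $H/(H \cap K) \hookrightarrow G/K$ sending $h(H\cap K) \mapsto hK$ is well-defined and injective, so $[H : H\cap K] \leq [G : K]$. Applying this with $G = \SL_2(\OD)$, $H = \SL_M(L_D)$ and $K = \Gamma^{D,0}(m)$ gives
$$[\SL_M(L_D) : \SL_M(L_D,M)] \leq [\SL_2(\OD) : \Gamma^{D,0}(m)].$$
Finally, Proposition~\ref{prop_index_congruence} yields $[\SL_2(\OD) : \Gamma^{D,0}(m)] = [\SL_2(\OD) : \Gamma^D_0(m)]$, which finishes the argument.

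There is really no serious obstacle here; the only thing to keep straight is the conjugation convention $\Gamma^M = M^{-1}\Gamma M$ and the corresponding computation that singles out $\Gamma^{D,0}(m)$ (rather than $\Gamma^D_0(m)$) on the nose, together with the fact that these two Hecke congruence subgroups have the same index in $\SL_2(\OD)$.
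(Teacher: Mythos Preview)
Your argument is correct and essentially the same as the paper's: both rest on the elementary inequality $[H:H\cap K]\le [G:K]$ together with the identification of $\SL_2(\OD)\cap M^{\pm 1}\SL_2(\OD)M^{\mp 1}$ with a Hecke congruence subgroup. The only cosmetic difference is that the paper first conjugates by $M$ to the $\SL^M$ side and lands directly on $\Gamma^D_0(m)$ via Lemma~\ref{lem_covering_congruence}, whereas you stay on the $\SL_M$ side, obtain $\Gamma^{D,0}(m)$, and then invoke Proposition~\ref{prop_index_congruence} for the equality of indices.
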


\begin{proof} We have
$$\left[\SL_M(L_D):\SL_M(L_D,M)\right] =  \left[\SL^M(L_D):\SL^M(L_D,M)\right] \leq$$
$$\left[ \SL_2(\OD): \SL_2(\OD) \cap M^{-1} \SL_2(\OD) M \right] =$$ $$\left[ \SL_2(\OD): \Gamma^D_0(m) \right].$$

\end{proof}

An analogous calculation as above yields the corresponding result for the case of diagonal twisted Teichmüller curves.

\begin{cor} \label{cor_diagonal_twisted_congruence} For $M= \left( \begin{smallmatrix} m & 0 \\ 0 & n \end{smallmatrix} \right)$ with $m,n \in \OD$ and $(m,n)=1$ we have $$\SL^M(L_D,M)= \SL(L_D) \cap \Gamma^D(m,n)$$ and
$$[\SL_M(L_D):\SL_M(L_D,M)] \leq [\SL_2(\OD): \Gamma^D(m,n)].$$
\end{cor}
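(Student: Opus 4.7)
My plan is to follow verbatim the template of Lemma~\ref{lem_covering_congruence}, the only difference being that the conjugation now has a nontrivial factor in both off-diagonal entries. The key auxiliary computation is the following: for $M=\left(\begin{smallmatrix} m & 0 \\ 0 & n \end{smallmatrix}\right)$ and $X=\left(\begin{smallmatrix} a & b \\ c & d \end{smallmatrix}\right)\in\SL_2(\OD)$, a direct multiplication gives
$$M^{-1} X M \;=\; \begin{pmatrix} a & bn/m \\ cm/n & d \end{pmatrix}.$$
Because the (2,1)-entry is divisible by $m$ and the (1,2)-entry is divisible by $n$ as soon as they lie in $\OD$, the matrix $M^{-1}XM$ automatically sits in $\Gamma^D_0(m)\cap\Gamma^{D,0}(n)=\Gamma^D(m,n)$ whenever $M^{-1}XM\in\SL_2(\OD)$. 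This establishes the inclusion $M^{-1}\SL_2(\OD)M\cap\SL_2(\OD)\subset\Gamma^D(m,n)$.

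For the reverse inclusion I would take $Y=\left(\begin{smallmatrix} \tilde a & \tilde b \\ \tilde c & \tilde d \end{smallmatrix}\right)\in\Gamma^D(m,n)$ and exhibit an $X\in\SL_2(\OD)$ with $M^{-1}XM=Y$; the natural choice is $X=MYM^{-1}=\left(\begin{smallmatrix} \tilde a & m\tilde b/n \\ n\tilde c/m & \tilde d \end{smallmatrix}\right)$. Its entries lie in $\OD$ precisely because $n\mid\tilde b$ and $m\mid\tilde c$, which is the definition of $\Gamma^D(m,n)$, and here the coprimality hypothesis $(m,n)=1$ is used to pass from $n\mid m\tilde b$ to $n\mid\tilde b$ (and analogously for $\tilde c$). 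Its determinant is $\tilde a\tilde d-\tilde b\tilde c=1$, so $X\in\SL_2(\OD)$. Intersecting the resulting equality $M^{-1}\SL_2(\OD)M\cap\SL_2(\OD)=\Gamma^D(m,n)$ with the Veech group $\SL(L_D)$ yields the first claim $\SL^M(L_D,M)=\SL(L_D)\cap\Gamma^D(m,n)$.

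For the index estimate I would argue exactly as in the simple case: conjugation by $M$ is a group isomorphism, so
$$[\SL_M(L_D):\SL_M(L_D,M)]=[\SL^M(L_D):\SL^M(L_D,M)],$$
and the right-hand side is bounded by $[\SL_2(\OD):\SL_2(\OD)\cap M^{-1}\SL_2(\OD)M]=[\SL_2(\OD):\Gamma^D(m,n)]$, which is finite by Proposition~\ref{prop_index_congruence}. The coprimality assumption $(m,n)=1$ is the only nontrivial point in the whole argument; everything else is a routine transcription of the proof of Lemma~\ref{lem_covering_congruence}, and I anticipate no obstacle beyond correctly tracking which divisibility follows from which conjugation entry.
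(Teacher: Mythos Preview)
Your proposal is correct and matches the paper's approach exactly: the paper simply states that ``an analogous calculation as above yields the corresponding result,'' referring to Lemma~\ref{lem_covering_congruence} and the subsequent index lemma, and you have spelled out precisely that analogous calculation.

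One minor correction: you locate the use of the coprimality hypothesis $(m,n)=1$ in the reverse inclusion, but in fact it is needed only in the \emph{forward} inclusion. Given $Y\in\Gamma^D(m,n)$ you already have $n\mid\tilde b$ and $m\mid\tilde c$ by definition, so $m\tilde b/n$ and $n\tilde c/m$ lie in $\OD$ without any hypothesis on $(m,n)$. Conversely, from $cm/n\in\OD$ you only get $n\mid cm$, and it is here that $(m,n)=1$ is required to conclude $n\mid c$ and hence that the $(2,1)$-entry $cm/n=(c/n)m$ is divisible by $m$; similarly for the $(1,2)$-entry. This does not affect the validity of your argument, only the commentary.
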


\paragraph{A possible generalization.} In these notes we consider twisted Teichmüller curves only for matrices $M \in \GL_2^+(K)$. By dividing each of the entries of the matrix $M$ by the root of the determinant one can also interpret the pair of matrices $(M,M^\sigma)$ as a pair in $\SL_2(\RR)^2$. Hence, there is some scope for generalization of the term twisted Teichmüller curve: One could as well take an arbitrary pair of matrices $(M,M') \in \SL_2(\RR)^2$ and define a twisted Teichmüller curve as the projection of the orbit $(Mz,M'\varphi(z))$ to $X_D$. Of course, this still yields a well-defined curve in $X_D$. Such curves are much harder to treat than the twisted Teichmüller curves which we introduced because of the following main reason: a pair $(M,M')$ lies in the stabilizer of the graph of a Teichmüller curve if and only if $\varphi(Mz)=M'\varphi(z)$. Thus one would need to completely understand the behavior of $\varphi$ under Möbius transformation in order to solve the problem of calculating the stabilizer of the graph of a Teichmüller curve in $\SL_2(\RR)^2$. This seems to be far from being reachable. However, this problem is somehow the starting point when one wants to understand these generalized twisted Teichmüller curves. Even for twisted diagonals people restricted to matrices in $\GL_2^+(K)$ (compare e.g. \cite{Fra78}, \cite{Hau80} or \cite{vdG88}). One reason for this is that for twisted diagonals with an arbitrary pair of matrices $(M,M') \in \SL_2(\RR)^2$ the stabilizer does not need to be a lattice any more. \index{twisted diagonal} 

\newpage

\section{Stabilizer and Maximality} \label{cha_maximality}

In the last chapter we claimed that it is in general very hard to describe the groups $\SL_M(L_D)$ because we do not know the stabilizer of the graph of the Teichmüller curve $\Stab(\Phi)$. However, we could calculate $\SL_M(L_D)$ more easily if we knew that the Veech group $\SL(L_D)$ had certain nice properties. The strongest of these properties is (finite) maximality. This notion is closely linked to the commensurator.\\[11pt]In Section~\ref{sec_max_comm} we show that the Veech groups $\SL(L_D^1)$ for discriminant $D$ equal to $5,13$ and $17$ are indeed (finitely) maximal. We are able to do this because the generators of these groups are explicitly known. Since this is not true for large $D$, we have to introduce the weaker notion of pseudo parabolic maximal Fuchsian groups (see Definition~\ref{def_pseudo_parabolic}) in Section~\ref{sec_pseudo} which suffices for our purposes. And we show (Theorem~\ref{thm_pseudo_parabolic}) that if $D \equiv 5 \mod 8$ then the group $\SL(L_D)$ is pseudo parabolic maximal. This property allows us to deduce a lot of information about the commensurator and the stabilizer of the graph of the Teichmüller curve. In Section~\ref{sec_stab_comm} we discuss the relation between the stabilizer and the commensurator in general: we will show that $\Stab(\Phi)$ is always contained in the commensurator of $\SL(L_D)$ (Corollary~\ref{cor_stabcap}). We will see that we can state stronger and more precise statements for pseudo parabolic maximal $\SL(L_D)$. In particular, we will get that if the discriminant $D \equiv 1 \mod 4$ is fundamental then the degree of the covering $\pi: C_M(M) \to C_M$ is always equal to $1$ for all $M \in \GL_2^+(K)$. The reason for this is the following: being pseudo parabolic maximal suffices to imply $\Stab(\Phi) \cap \SL_2(K) = \SL(L_D)$ (Theorem~\ref{thm_parabolic_maximal_implies_stabilizer}). If $D \not \equiv 5 \mod 8$ then a lack of information on the Veech groups prevents us from being able to prove that the groups $\SL(L_D)$ are pseudo parabolic maximal. Therefore, the techniques are more involved, but the result that the degree of the cover $\pi: C_M(M) \to C_M$ is equal to $1$ still holds for most $M \in \GL_2^+(K)$ if the discriminant is fundamental (Theorem~\ref{thm_cov_degree_10}).

\subsection{Maximal Fuchsian Groups and the Commensurator} \label{sec_max_comm}

A Fuchsian group $\Gamma$ is called \textbf{(finitely) maximal} \index{Fuchsian group!(finitely) maximal} if there does not exist a Fuchsian group $\Gamma'$ properly containing $\Gamma$ with finite index (compare \cite{Sin72}).  If $\Gamma$ is a cofinite Fuchsian group and $\Gamma'$ is a Fuchsian group containing $\Gamma$ then $\left[\Gamma':\Gamma\right] < \infty$ is automatically, because $\mu(\Gamma)>0$.\footnote{Note that the volume of a fundamental domain of a Fuchsian group is bounded by $\pi/21$ (or by $\pi/3$ if the Fuchsian group has a cusp).} For cofinite Fuchsian groups one could therefore have left away the \textit{with finite index} condition in the definition. On the other hand if $\Gamma'$ is any subgroup of $\PSL_2(\mathbb{R})$ which contains the Fuchsian group $\Gamma$ of finite index, then $\Gamma'$ is itself a Fuchsian group. Therefore, we will from now on only speak of maximal Fuchsian groups when we mean (finitely) maximal Fuchsian groups.\\[11pt] 
The notion of maximality is closely linked to the notion of commensurability: if $\Gamma$ is a non-arithmetic Fuchsian group of finite volume, then by Margulis' theorem the commensurator is the unique maximal Fuchsian group containing $\Gamma$ (Theorem~\ref{thm_Margulis}).

\begin{lem} \label{lem_non-arithmetic_commensurator} Let $\Gamma$ be any non-arithmetic Fuchsian group of finite volume. Then $\Comm_{SL_2(\mathbb{R})} (\Gamma)$ is the unique maximal Fuchsian group containing $\Gamma$. \end{lem}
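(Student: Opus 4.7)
The plan is to deduce everything from Margulis' commensurator theorem (Theorem~\ref{thm_Margulis}). First, since $\Gamma$ is non-arithmetic and of finite covolume, that theorem directly yields that $\Comm_{\SL_2(\RR)}(\Gamma)$ is itself a Fuchsian group in which $\Gamma$ sits with finite index. In particular $\Comm_{\SL_2(\RR)}(\Gamma)$ is one Fuchsian group containing $\Gamma$ with finite index, so any claim of ``uniqueness of the maximal one'' reduces to showing that every Fuchsian group containing $\Gamma$ is already contained in $\Comm_{\SL_2(\RR)}(\Gamma)$.

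For the second step, I would take an arbitrary Fuchsian group $\Gamma'$ with $\Gamma \subset \Gamma'$. Because $\Gamma$ has finite covolume and covolume is multiplicative under finite-index inclusions, $[\Gamma':\Gamma]$ is automatically finite (this is the observation made in the paragraph before the lemma). Now for any $g \in \Gamma'$, conjugation gives $g\Gamma g^{-1}\subset\Gamma'$, and since $[\Gamma':\Gamma]$ and $[\Gamma':g\Gamma g^{-1}]$ are both finite, the intersection $\Gamma\cap g\Gamma g^{-1}$ has finite index in both $\Gamma$ and $g\Gamma g^{-1}$. Hence $g\in\Comm_{\SL_2(\RR)}(\Gamma)$, and therefore $\Gamma'\subset\Comm_{\SL_2(\RR)}(\Gamma)$.

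Combining the two steps: $\Comm_{\SL_2(\RR)}(\Gamma)$ is a Fuchsian group containing $\Gamma$ with finite index, and it contains every other Fuchsian overgroup of $\Gamma$. Therefore it is maximal (no strictly larger Fuchsian group can contain it without violating maximality of its own containment) and unique with that property, which is exactly the statement of the lemma.

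The only potentially subtle point is quoting Margulis correctly for non-arithmetic $\Gamma$; once that is in hand the rest is a routine commensurability argument, so I do not expect a genuine obstacle here. The main conceptual content is entirely packed into Theorem~\ref{thm_Margulis}.
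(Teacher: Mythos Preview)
Your proof is correct and follows essentially the same approach as the paper: both use Margulis' theorem to conclude that $\Comm_{\SL_2(\RR)}(\Gamma)$ is Fuchsian containing $\Gamma$ with finite index, and both show that any Fuchsian overgroup $\Gamma'\supset\Gamma$ lies in the commensurator by noting $[\Gamma':\Gamma]<\infty$ and bounding the indices $[\Gamma:\Gamma\cap g\Gamma g^{-1}]$ and $[g\Gamma g^{-1}:\Gamma\cap g\Gamma g^{-1}]$ by $[\Gamma':\Gamma]$. The only cosmetic difference is the order of the two steps.
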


\begin{proof} Let $\Gamma'$ be a maximal Fuchsian group containing $\Gamma$. Thus $\left[\Gamma':\Gamma\right] < \infty$. Hence we have for all $U \in \Gamma'$: 
\begin{equation*}
\infty > \left[\Gamma':\Gamma\right] \geq \left[U \Gamma U^{-1} : \Gamma \cap U \Gamma U^{-1}\right]
\end{equation*}
and 
\begin{equation*}
\infty > \left[\Gamma':\Gamma\right] \geq \left[U^{-1} \Gamma U : \Gamma \cap U^{-1} \Gamma U\right]= \left[\Gamma: \Gamma \cap U \Gamma U^{-1}\right].
\end{equation*}
and so $U \in \Comm_{SL_2(\mathbb{R})} (\Gamma)$. By Margulis' Theorem (Theorem~\ref{thm_Margulis}) the commensurator $\Comm_{\SL_2(\mathbb{R})} (\Gamma)$ is a group containing $\Gamma$ of finite index. Thus the commensurator is also Fuchsian.
\end{proof} 

In this section we mainly want to prove the following theorem:

\begin{thm} \label{thm_maximality} For $D \in \left\{5,13,17 \right\}$ we have:
\begin{itemize}
\item[(i)] $\Comm_{\SL_2(\mathbb{R})} (\SL(L_{D}^1)) = \SL(L_{D}^1)$.
\item[(ii)] $\SL(L_{D}^1)$ is maximal.
\end{itemize} \index{commensurator}
\end{thm}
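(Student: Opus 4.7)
The plan is to reduce the theorem to a single statement about maximality of $\SL(L_D^1)$, and then pin down this maximality by computing the signature and invoking Singerman's classification of non-maximal Fuchsian group signatures in \cite{Sin72}.

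First I would observe that $\SL(L_D^1)$ is non-arithmetic for each $D \in \{5,13,17\}$. Indeed, the generating L-shaped surface is not square-tiled since the trace field is the non-trivial extension $\QQ(\sqrt{D})$, so by the theorem of Gutkin and Judge (Theorem~\ref{thm_Gutkin_Judge2}) combined with Theorem~\ref{thm_nonarithmetic_commensurable}, the Veech group cannot be commensurable with $\PSL_2(\ZZ)$. Lemma~\ref{lem_non-arithmetic_commensurator} then implies that $\Comm_{\SL_2(\RR)}(\SL(L_D^1))$ is the unique maximal Fuchsian group containing $\SL(L_D^1)$, so part (ii) forces part (i). It therefore suffices to prove maximality.

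To prove (ii), I would determine the signature $(g;m_1,\dots,m_r;s)$ of $\SL(L_D^1)$ explicitly. The Euler characteristic is delivered by Bainbridge's formula (Theorem~\ref{thm_bain_euler}) together with Siegel's formula $\chi(X_D) = 2\zeta_K(-1)$. The elliptic data $(r; m_1,\dots,m_r)$ is delivered by Mukamel's formula (Theorem~\ref{thm_mukamel}), supplemented for the exceptional small-$D$ orders by the bound on elliptic traces cited in Section~\ref{sec_hilbert_modular_surfaces}. The number $s$ of cusps can be read off from the explicit prototype/cylinder data recorded in Section~\ref{sec_fixing_Veech}, using McMullen's classification of cusps by prototypes in \cite{McM05}. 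Once $\chi$, the $m_i$ and $s$ are known, the Riemann-Hurwitz formula (Theorem~\ref{thm_Riemann_Hurwitz}) determines $g$, and hence the full signature. For $D=5$ this recovers Veech's well-known Hecke triangle signature $(0;2,5;1)$; for $D=13$ and $D=17$ one obtains signatures with slightly larger invariants.

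With the signatures in hand, I would compare against Singerman's finite list of signatures $\tau$ admitting a proper finite-index Fuchsian supergroup of signature $\sigma$. If the signature of $\SL(L_D^1)$ does not occur as such a $\tau$, then any Fuchsian group with that signature — in particular $\SL(L_D^1)$ — is finitely maximal. For $D=5$, the triangle signature $(0;2,5;1)$ is standardly absent from Singerman's non-maximal list. For $D=13,17$ this is a direct case check against the finitely many entries of the list. Maximality then combines with the preceding paragraph to yield $\Comm_{\SL_2(\RR)}(\SL(L_D^1))=\SL(L_D^1)$.

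The main obstacle is the bookkeeping in the signature computation for $D=13$ and $D=17$: counting cusps requires identifying all inequivalent periodic directions on the L-shaped generator, and one must verify that no two cusp prototypes are $\SL(L_D^1)$-equivalent. A subsidiary, but purely mechanical, obstacle is the case-by-case comparison with Singerman's list. Both tasks are finite, but they depend crucially on having complete information on the Veech groups — which is precisely why the theorem is restricted to the small discriminants $D\in\{5,13,17\}$ where the generators (and hence the cusp count) are known explicitly; for larger $D$ this direct approach breaks down and motivates the weaker notion of pseudo parabolic maximal groups introduced in Section~\ref{sec_pseudo}.
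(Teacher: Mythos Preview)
Your reduction of (i) to (ii) via non-arithmeticity and Lemma~\ref{lem_non-arithmetic_commensurator} is exactly right and matches the paper. The Singerman approach also works for $D=5$: the signature there is $(0;2,5;1)$, which is a rigid (zero-dimensional Teichm\"uller space) triangle-type signature, so absence from Singerman's list really does force maximality. This is essentially Leutbecher's argument, as the paper notes.

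The gap is for $D=13$ and $D=17$. The signature of $\SL(L_{13}^1)$ and $\SL(L_{17}^1)$ is $(0;2;3)$ --- genus~$0$, one order-$2$ elliptic point, three cusps --- and this signature has a one-dimensional Teichm\"uller space. Singerman's list in \cite{Sin72} characterises those signatures $\tau$ for which \emph{every} Fuchsian group of signature $\tau$ embeds in a strictly larger one; equivalently, the signatures for which the Teichm\"uller space of a candidate supergroup signature has the same dimension and surjects. When $\tau$ is not on the list and has positive-dimensional moduli, you only learn that the \emph{generic} group of signature $\tau$ is maximal. A specific group sitting at a special point of its Teichm\"uller space may still extend, so your inference ``not on Singerman's list $\Rightarrow$ $\SL(L_D^1)$ is maximal'' is invalid for $D\in\{13,17\}$.

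The paper closes this gap by first proving \emph{parabolic maximality} directly: it uses explicit fundamental-domain geometry and double cosets (Lemma~\ref{lem_double_cosets}, Lemma~\ref{lem_key}), the root lemma for parabolic elements (Lemma~\ref{lem_roots}), and Beardon's discreteness criterion (Theorem~\ref{prop_criterion_discretness}) to show that no Fuchsian supergroup can contain an extra parabolic element. Only then does Riemann--Hurwitz pin down the possible supergroup signatures as $(0;3,6;1)$ or $(0;4,4;1)$, and these are finally excluded not by Singerman but by a trace-field argument --- the candidate supergroups have trace field incompatible with $\QQ(\sqrt{D})$. Your proposal would need both the parabolic-maximality step and this trace-field step to go through; the signature computation alone does not suffice.
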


Applied to our case the last lemma shows that $\Comm_{\SL_2(\mathbb{R})} (\SL(L_{D}^1))$ is the unique maximal Fuchsian group containing $\SL(L_{D}^1)$. This means that $\SL(L_{D}^1)$ is  maximal if and only if $\Comm_{\SL_2(\mathbb{R})} (\SL(L_{D}^1)) = \SL(L_{D}^1)$. Therefore, the two statements of Theorem~\ref{thm_maximality} are equivalent. For $D=5$, namely the Hecke triangle group $\Delta(2,5,\infty)$,\footnote{For the definition of the term \textit{triangle group} see e.g. \cite{Bea89}, Chapter~10.6.} this theorem is due to A. Leutbecher in \cite{Leu67}. We will follow his methods closely. To be more precise, we will use geometric arguments similar to the ones given in \cite{Leu67} combined with the Riemann-Hurwitz formula and some other general statements about Fuchsian groups. All the necessary arguments for proving Theorem~\ref{thm_maximality} will be given in detail for $D=13$. As $D=17$ is very similar, we will then only give a brief sketch of the proof.


\begin{rem} Note that $$\Comm_{\SL_2(\mathbb{R})} (\SL(L_{D}^1)) \subset \SL_2(K)$$ since every element in the commensurator permutes the cusps. \end{rem}
So let us consider the case $D=13$. The first step in the proof will be to show that there does not exist any parabolic element $U \in \SL_2(K)$ such that $1<\left[\left\langle \SL(L_{13}^1),U \right\rangle:\SL(L_{13}^1)\right] < \infty$. We will give three different proofs of this fact. The first one is very geometric and makes use of the explicit knowledge of the fundamental domain of $\SL(L_{13}^1)$, that is known by McMullen's algorithm (compare \cite{McM03}). As a first step in this geometric proof, it is necessary to explicitly give a list of elements in $\SL(L_{13}^1)$ with certain properties.

\begin{lem} \label{lem_double_cosets} \begin{itemize}
\item[(i)] The set of matrices of the form $\left( \begin{smallmatrix} a & b \\ 1 & d \end{smallmatrix} \right)$ inside $\SL(L_{13}^1)$ is given by the double coset
$$\begin{pmatrix} 1 & w \ZZ \\ 0 & 1 \end{pmatrix} \begin{pmatrix} 0 &  -1 \\ 1 & 0 \end{pmatrix} \begin{pmatrix} 1 & w \ZZ \\ 0 & 1 \end{pmatrix}.$$
\item[(ii)] The set of matrices of the form $\left( \begin{smallmatrix} a & b \\ w & d \end{smallmatrix} \right)$ inside $\SL(L_{13}^1)$ is given by the union of the double cosets
$$\begin{pmatrix} 1 & w \ZZ \\ 0 & 1 \end{pmatrix} \begin{pmatrix} 1 & 0 \\ w & 1 \end{pmatrix} \begin{pmatrix} 1 & w \ZZ \\ 0 & 1 \end{pmatrix}$$
$$\begin{pmatrix} 1 & w \ZZ \\ 0 & 1 \end{pmatrix} \begin{pmatrix} -1 & 0 \\ w & -1 \end{pmatrix} \begin{pmatrix} 1 & w \ZZ \\ 0 & 1 \end{pmatrix}$$
\end{itemize}
\end{lem}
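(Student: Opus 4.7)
\textbf{The plan} is to combine the theory of isometric circles with the explicit fundamental domain $\mathcal{F}$ for $\SL(L_{13}^1)$ produced by McMullen's algorithm in \cite{McM03}. Given $\gamma = \left(\begin{smallmatrix} a & b \\ c & d \end{smallmatrix}\right)$ with $c \neq 0$, its isometric circle has center $-d/c \in \RR$ and radius $1/|c|$. Right-multiplication by $T^j$ sends $d \mapsto d + jwc$, translating the center by $-jw$, while left-multiplication by $T^i$ preserves the isometric circle and translates the image $\gamma(\HH)$ horizontally by $iw$. Consequently, each $\langle T\rangle$-$\langle T\rangle$ double coset in $\SL(L_{13}^1)$ is represented by a matrix whose isometric circle has center in a single fundamental period $[0,w)$ of $T$; once the bottom row $(c,d)$ is fixed, the top row is then pinned down up to a further left-$\langle T\rangle$-action by the determinant relation $ad-bc=1$.

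\textbf{Next,} I would enumerate the admissible isometric circles of radius at least $1/w$ by reading them off the lower boundary of $\mathcal{F}$. The group $\SL(L_{13}^1)$ is generated by $T$, $S$, $Z$, and the additional parabolic element from Lemma~\ref{lem_second_parabolic}, whose bottom-left entry $-4(w+1)$ has absolute value much larger than $w$ and thus contributes only isometric circles of radius strictly less than $1/w$. Consequently, the boundary of $\mathcal{F}$ inside the strip $\{0 \le \Real(z) < w\}$ is formed by arcs of exactly three types: the isometric circle of $S$ of radius $1$ centered at $0$, which accounts for the unique orbit with $c=1$ and yields part~(i); and the two isometric circles of radius $1/w$ coming from $Z$ and from $STS = \left(\begin{smallmatrix} -1 & 0 \\ w & -1 \end{smallmatrix}\right)$, which yield the two $c=w$ orbits of part~(ii). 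Note that although $STS = -Z^{-1}$ descends to the same element as $Z^{-1}$ in $\PSL_2(\RR)$, in $\SL_2(\OD)$ the two are distinct and contribute disjoint double cosets. Letting $(i,j)$ range over $\ZZ^2$ then exhausts the elements of $\SL(L_{13}^1)$ of the claimed bottom-left entry.

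\textbf{The main obstacle} is establishing exhaustiveness, i.e.\ ruling out any further matrices in $\SL(L_{13}^1)$ with $c=1$ or $c=w$ beyond those listed. By the first paragraph, this amounts to verifying that no additional isometric circle of radius at least $1/w$ appears on $\partial\mathcal{F}$. This is a finite check, carried out using the side-pairing relations produced by McMullen's algorithm applied to $L_{13}^1$: one confirms that the only generators of $\SL(L_{13}^1)$ whose isometric circles have radius at least $1/w$ are precisely the three matrices $S$, $Z$, and $STS$ identified above, while every other generator (including the parabolic of Lemma~\ref{lem_second_parabolic}) has strictly larger $|c|$ and hence a strictly smaller isometric circle that does not alter the classification.
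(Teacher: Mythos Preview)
Your approach breaks down in part~(ii). You assert that the boundary of the Ford domain $\mathcal{F}$ is made up of the isometric circles of $S$, $Z$, and $STS$, but this is false: the isometric circle of $Z=\left(\begin{smallmatrix}1&0\\ w&1\end{smallmatrix}\right)$ has center $-1/w$ and radius $1/w$, and since $2/w<1$ it lies \emph{entirely inside} the unit circle of $S$. Hence $Z$ is not a side-pairing element of $\mathcal{F}$ at all. The small arcs of $\partial\mathcal{F}$ joining $\pm 1$ to $\pm w/2$ are paired by the parabolic generators $B,C$ whose lower-left entries have absolute value $|4+4w|\gg w$. So reading off the side-pairings of $\mathcal{F}$ gives you no information whatsoever about group elements with $c=w$: every one of their isometric circles is hidden below the unit circle. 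More generally, ``the only generators with $|c|\le w$ are $S,Z,STS$'' is not the same statement as ``the only group elements with $|c|\le w$ are in the listed double cosets''; products of generators can have small $|c|$ without appearing on $\partial\mathcal{F}$.

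The paper gets around this by passing to the translated domain $S\mathcal{F}$, on whose boundary the radius-$1/w$ circles \emph{are} visible, and then arguing directly with orbit equivalences: for any $V=\left(\begin{smallmatrix}a&b\\ c&d\end{smallmatrix}\right)\in\SL(L_{13}^1)$ one has $V(-d/c+i/|c|)=a/c+i/|c|$, so both points lie at height $1/|c|$. For $|c|=1$ these points sit in the region $\{\Imag z\ge 1\}$, where the tessellation forces $\Gamma$-equivalent points to differ by $w\ZZ$; this gives (i), and here your sketch is essentially correct. For $|c|=w$ the paper locates the finitely many boundary points of $S\mathcal{F}$ at height $1/w$ and checks which pairs can be $\Gamma$-equivalent, forcing $a=\pm 1$ and hence $V\in\langle T\rangle\, Z^{\pm 1}\langle T\rangle$. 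That last step is exactly the exhaustiveness argument you flagged as the main obstacle, and it genuinely requires more than inspecting the side-pairings of $\mathcal{F}$.
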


\begin{proof} A fundamental domain $\mathcal{F}$ of $\SL(L_{13}^1)$ is:
\begin{center}
\psset{xunit=1cm,yunit=1cm,runit=1cm}
\begin{pspicture}(-0.5,-0.5)(8,4) 
\psline[linewidth=0.5pt]{->}(0,0)(8,0)
\psline[linewidth=0.5pt]{->}(4,-0.1)(4,4)
\psline[linewidth=0.5pt]{-}(6.3,0)(6.3,4)
\psline[linewidth=0.5pt]{-}(1.7,0)(1.7,4)
\psarc[linewidth=0.5pt](4,0){2.0cm}{0}{180}
\psarc[linewidth=0.5pt](6.15,0){0.15cm}{0}{180}
\psarc[linewidth=0.5pt](1.85,0){0.15cm}{0}{180}
\uput[dl](1.7,0){$-\frac{w}{2}$}
\uput[d](2,0){$-1$}
\uput[d](6,0){$1$}
\uput[dr](6.3,0){$\frac{w}{2}$}
\uput[u](3.8,2){$i$}
\uput[u](3.4,3){$\mathcal{F}$}
\pscircle[linewidth=1.0pt](4,2){0.05cm}
\end{pspicture}\\
Figure 5.1. Fundamental domain $\mathcal{F}$ of $\SL(L_{13}^1)$. \index{Fuchsian group!fundamental domain}
\end{center}

Its vertices are $-w/2,-1,1,w/2$ and $\infty$. If we look at $S \mathcal{F}$ this yields another fundamental of $\SL(L_{13}^1)$:

\begin{center}
\psset{xunit=1cm,yunit=1cm,runit=1cm}
\begin{pspicture}(-0.5,-0.5)(8,4) 
\psline[linewidth=0.5pt]{->}(0,0)(8,0)
\psline[linewidth=0.5pt]{->}(4,-0.1)(4,4)
\psline[linewidth=0.3pt,linestyle=dotted](1.5,0.87)(6.5,0.87)
\psarc[linewidth=0.5pt](4,0){2.0cm}{0}{180}
\psarc[linewidth=0.5pt](5.87,0){0.13cm}{0}{180}
\psarc[linewidth=0.5pt](4.87,0){0.87cm}{0}{180}
\psarc[linewidth=0.5pt](2.13,0){0.13cm}{0}{180}
\psarc[linewidth=0.5pt](3.13,0){0.87cm}{0}{180}
\uput[dl](5.95,0){$\frac{2}{w}$}
\uput[dl](2.05,0){$-1$}
\uput[dr](5.95,0){$1$}
\uput[dr](1.95,0){$-\frac{2}{w}$}
\uput[d](3.8,0){$0$}
\uput[u](3.8,2){$i$}
\uput[u](3.4,1.1){$S\mathcal{F}$}
\uput[r](6.5,0.87){$\frac{1}{w}$}
\pscircle[linewidth=1.0pt](4,2){0.05cm}
\pscircle[linewidth=1.0pt](3.13,0.87){0.05cm}
\pscircle[linewidth=1.0pt](4.87,0.87){0.05cm}

\end{pspicture}\\
Figure 5.2. Fundamental domain $S\mathcal{F}$ of $\SL(L_{13}^1)$. 
\end{center}

The radius of the two isometric circles which adjoin $0$ and $2/w$ (respectively $0$ and $-2/w$) is exactly $1/w$. The isometric circles which adjoin $1$ and $2/w$ (respectively $-1$ and $-2/w$) have radius strictly smaller than $1/w$. Those isometric circles and the one which adjoins $-1$ and $1$ are the boundary of $S\mathcal{F}$. All the elliptic fixed point of $\SL(L_{13}^1)$ are exactly $\SL(L_{13}^1)i$. Two points $z_1$ and $z_2$ in the domain $\Imag(z) \geq 1$ are equivalent with respect to $\SL(L_{13}^1)$ if and only if $z_1-z_2 \in w \ZZ$. To see this, one just looks at the tessellation of $\mathbb{H}$ by the copies of the fundamental domain $\mathcal{F}$. The only elliptic fixed points in the domain $\Imag(z) > \frac{1}{w}$ are the points $i + w \ZZ$. Let $V=\left( \begin{smallmatrix} a & b \\ c & d \end{smallmatrix} \right) \in \SL(L_{13}^1)$ with $c \neq 0$. Then 
\begin{eqnarray} \label{eqn_trans}
  V \left(\frac{-d}{c}+\frac{i}{|c|} \right) = \frac{a}{c}+\frac{i}{|c|}
\end{eqnarray}
So we have $|c|\geq 1$. If $|c|=1$ then $\frac{a}{c} \in w \ZZ$ and $\frac{d}{c} \in w \ZZ$ hence $a \in w \ZZ$ and $d \in w \ZZ$. Then $b$ is determined, because the determinant is equal to 1. All those matrices are exactly given by the double coset in $(i)$.
Now suppose that $c=w$, i.e. we are searching for all matrices $V= \left( \begin{smallmatrix} a & b \\ w & d \end{smallmatrix} \right) \in \SL(L_{13}^1)$. At first we now look at the two points $z_1= 1/3w - 1/3 + \frac{i}{w}$ and $z_2= 1/3w-1/3 + \frac{i}{w}$ which lie on $ \partial S \mathcal{F}$ and have imaginary part $\frac{1}{w}$. Then $z_1$ and $z_2$ can  not be mapped to any of the two other points lying on $\partial S\mathcal{F}$ under the action of $\SL(L_{13}^1)$ since $z_1$ and $z_2$ are the midpoints of the isometric circles containing them. Thus all points in $z \in \mathbb{H}$ with $\Imag(z) = \frac{1}{w}$ which might be equivalent to $z_1$ or $z_2$ under the action of $\SL(L_{13}^1)$ are given by $z_1 + w\ZZ$ and $z_2 + w\ZZ$. So suppose that $z_1$ is mapped to $z_2$. By (\ref{eqn_trans}) we then have $a = \pm 1$ and so $V = \left( \begin{smallmatrix} \pm 1 & 0 \\ w & \pm 1 \end{smallmatrix} \right)$. Thus $V$ lies in one of the double cosets given in $(ii)$. Moreover, by looking at (\ref{eqn_trans}) we immediately see that there cannot exist other matrices inside $\SL(L_{13}^1)$ with $c = w$.
\end{proof}

We use this to prove the following key lemma.

\begin{lem} \label{lem_key} If $U \in SL_2(K)$ stabilizes the cusp $\infty$ then 
$$U \in \SL(L_{13}^1) \ \textrm{if and only if} \ U \in \Comm_{SL_2(\mathbb{R})} (\SL(L_{13}^1)).$$ \end{lem}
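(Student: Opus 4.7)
The forward direction is trivial, since $\SL(L_{13}^1)\subset \Comm_{\SL_2(\RR)}(\SL(L_{13}^1))$ tautologically. For the reverse direction, write $U=\begin{pmatrix} a & b \\ 0 & a^{-1}\end{pmatrix}$ with $a,b\in K$ (using that $U$ fixes $\infty$); the goal is to show $a=\pm 1$ and $b\in w\ZZ$, which by the known stabilizer of $\infty$ inside $\SL(L_{13}^1)$ (generated by $\pm\Id$ and $T$) is equivalent to $U\in\SL(L_{13}^1)$. The whole strategy is to exploit that $\SL(L_{13}^1)\cap U^{-1}\SL(L_{13}^1)U$ has finite index in $\SL(L_{13}^1)$, so for every element $R\in\SL(L_{13}^1)$ of infinite order some power $R^k$ lies in the intersection and then $UR^kU^{-1}\in\SL(L_{13}^1)$.

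First I would constrain $a$. Applying the recipe to $T=\begin{pmatrix} 1 & w \\ 0 & 1\end{pmatrix}$ gives $UT^nU^{-1}=\begin{pmatrix} 1 & na^2w \\ 0 & 1\end{pmatrix}\in\SL(L_{13}^1)$ for some $n\geq 1$. Since the parabolics of $\SL(L_{13}^1)$ fixing $\infty$ are precisely $\pm T^k$, this forces $na^2\in\ZZ$ and hence $a^2\in\QQ$. Next I would take a hyperbolic element $R\in\SL(L_{13}^1)$ with $c_R=1$, which exists by Lemma~\ref{lem_double_cosets}(i) (for instance $R=TST=\begin{pmatrix} w & w^2-1 \\ 1 & w\end{pmatrix}$), and compute that $UR^kU^{-1}$ has lower-left entry $a^{-2}c_{R^k}$; as this must lie in $\OD$ and one can choose $R$ and $k$ so that $c_{R^k}$ is a unit modulo the denominator of $a^{-2}\in\QQ$, we obtain $a^{-2}\in\OD\cap\QQ=\ZZ$. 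Running the same argument with $U^{-1}\in\Comm_{\SL_2(\RR)}(\SL(L_{13}^1))$ gives $a^2\in\ZZ$, hence $a^2=\pm 1$, and since $a\in\RR$ we conclude $a=\pm 1$.

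Having $a=\pm 1$, it remains to constrain $b$. Apply the commensurator condition now to $Z=\begin{pmatrix} 1 & 0 \\ -w & 1\end{pmatrix}\in\SL(L_{13}^1)$, which is parabolic with fixed point $0$. For some $n\geq 1$, $Z^n$ lies in the intersection, and a direct computation yields
\[
UZ^nU^{-1}=\begin{pmatrix} 1-nwb & nwb^2 \\ -nw & 1+nwb\end{pmatrix}\in \SL(L_{13}^1).
\]
Since this matrix is parabolic (conjugate to $Z^n$) and has lower-left entry a $\ZZ$-multiple of $w$, I would compare it against the classification in Lemma~\ref{lem_double_cosets}(ii): the parabolics of $\SL(L_{13}^1)$ in the conjugacy class of $\pm Z^k$ are obtained from $\pm Z^k$ by conjugation with powers of $T$, and an inspection of their entries shows that $b\in w\ZZ$ is forced. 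Combined with $a=\pm 1$, this gives $U=\pm T^m$ for some $m\in\ZZ$, completing the proof.

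The main obstacle is the last step: extracting $b\in w\ZZ$ from membership of $UZ^nU^{-1}$ in $\SL(L_{13}^1)$. Lemma~\ref{lem_double_cosets}(ii) describes only matrices with $c=w$, not $c=-nw$ for general $n$, so some additional bookkeeping is needed—either by iterating the double-coset decomposition to describe all parabolics fixing $0$, or by a geometric argument using that $U$ must permute the cusps of $\SL(L_{13}^1)$ (as any element of the commensurator does). This is precisely the place where the fine information from Lemmas~\ref{lem_double_cosets} and the explicit fundamental domain drawn in Figure~5.1 becomes indispensable.
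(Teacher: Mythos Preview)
Your proposal has the right shape but two real gaps, one of which you acknowledge.

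\textbf{The step constraining $a$.} From $UT^nU^{-1}\in\SL(L_{13}^1)$ you correctly get $na^2\in\ZZ$, hence $a^2\in\QQ$. But the passage to $a^{-2}\in\ZZ$ via a hyperbolic $R$ is not justified: commensurability only guarantees that \emph{some} power $R^k$ satisfies $UR^kU^{-1}\in\SL(L_{13}^1)$, and you have no control over which $k$. For that particular $k$, the entry $c_{R^k}$ may well share prime factors with the numerator $p$ of $a^{-2}=q/p$, so $a^{-2}c_{R^k}\in\OD$ does not force $a^{-2}\in\ZZ$. Your claim that ``one can choose $R$ and $k$ so that $c_{R^k}$ is a unit modulo the denominator'' is exactly the missing ingredient, and it is not obvious.

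\textbf{The step constraining $b$.} You identify this gap yourself: Lemma~\ref{lem_double_cosets}(ii) describes only matrices with lower-left entry $w$, whereas $UZ^nU^{-1}$ has entry $-nw$, so a direct comparison does not work.

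The paper's proof avoids both difficulties by reversing your order of attack. It first treats the case where $U$ is a pure translation $z\mapsto z+\alpha$. The key idea you gesture at but do not use is that elements of the commensurator permute cusps: since $0\sim\infty$ in $\SL(L_{13}^1)$, the point $\alpha=U\cdot 0$ must also be a cusp equivalent to $\infty$, so there exists $V=\left(\begin{smallmatrix}a&b\\c&d\end{smallmatrix}\right)\in\SL(L_{13}^1)$ with $a/c=\alpha$. Conjugating the stabilizer of $\alpha$ back to $0$ and comparing with $\left(\begin{smallmatrix}1&0\\w\ZZ&1\end{smallmatrix}\right)$ forces $c^2\in\QQ$; combined with $c\in\OD^*$ (when $\alpha\in\OD$) this gives $c=1$, and then Lemma~\ref{lem_double_cosets}(i) pins down $\alpha\in w\ZZ$. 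A refinement of the same argument, now invoking Lemma~\ref{lem_double_cosets}(ii), handles $\alpha=w/n$ and shows $n=1$. Only \emph{after} the translation case is completely settled does the paper treat general $W(z)=az+b$: since $WTW^{-1}(z)=z+aw$ and $W^{-1}TW(z)=z+a^{-1}w$ are translations in the commensurator, the already-proved case gives $a,a^{-1}\in\ZZ$, hence $a=\pm1$, and then $W$ is itself a translation. This order makes the $a$-step trivial and concentrates all the work in the translation case, where the cusp-permutation argument plus Lemma~\ref{lem_double_cosets} gives exactly the leverage you were missing.
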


\begin{proof} Let $z \mapsto z + \alpha \in \Comm_{SL_2(\mathbb{R})} (\SL(L_{13}^1))$, i.e. the map stabilizes $\infty$. Evidently $\alpha \in \mathbb{Q}(\sqrt{13})$. \\[11pt] At first we consider the case $\alpha \in \mathcal{O}_{13}$. As $S \in \SL(L_{13}^1)$ we know that $0$ is a representative of the cusp $\left[\infty\right]$. Since any element in $\Comm_{SL_2(\mathbb{R})} (\SL(L_{13}^1))$ permutes the cusps of $\SL(L_{13}^1)$, we also know that $\alpha$ is a cusp which is equivalent to $\infty$ under $\SL(L_{13}^1)$. Therefore, there exists a matrix $V = \left( \begin{smallmatrix} a & b \\ c & d \end{smallmatrix} \right)$ in $\SL(L_{13}^1)$ with $c>0$ and $\frac{a}{c}=\alpha$. The entries of $V$ lie in $\mathcal{O}_{13}$. For this reason $c$ must be a unit. Thus
$$V \begin{pmatrix} 1 & w \\ 0 & 1 \end{pmatrix} V^{-1} = \begin{pmatrix} * & * \\ -wc^2 & * \end{pmatrix}$$
is the generator of the stabilizer of $\alpha$ in $\SL(L_{13}^1)$. According to the remarks about the commensurator in Section~\ref{sec_Fuchsian_groups} the group generated by
$$\begin{pmatrix} 1 & -\alpha \\ 0 & 1 \end{pmatrix} V \begin{pmatrix} 1 & w \\ 0 & 1 \end{pmatrix} V^{-1} \begin{pmatrix} 1 & \alpha \\ 0 & 1 \end{pmatrix} = \begin{pmatrix} 1 & 0 \\ -w c^2 & 1 \end{pmatrix}$$
is commensurable to the group $\left( \begin{smallmatrix} 1 & 0 \\ w \ZZ & 1 \end{smallmatrix} \right)$. Therefore, $c^2$ is rational (and a unit) and hence $c=1$ and $a=\alpha$. According to Lemma~\ref{lem_double_cosets} (i), then $\alpha \in w\ZZ$.\\[11pt]
For every map $z \mapsto z + \beta \in \Comm_{SL_2(\mathbb{R})} (\SL(L_{13}^1))$ we hence have $\beta = r w$ with $r \in \mathbb{Q}$. To show that $r$ lies in fact in $\ZZ$ we only have to show that for every map $z \mapsto z + \frac{w}{n} \in \Comm_{SL_2(\mathbb{R})} (\SL(L_{13}^1))$ it must be the case that $n=1$.\\
So suppose that $z \mapsto z + \frac{w}{n} \in \Comm_{SL_2(\mathbb{R})} (\SL(L_{13}^1))$. Then there is a matrix $V= \left( \begin{smallmatrix} a & b \\ c & d \end{smallmatrix} \right)\in \SL(L_{13}^1)$ such that $c>0$ and $\frac{a}{c}=\frac{w}{n}$, i.e. $a= \rho w, \ c=\rho n $ with $\rho \in \mathbb{Q}(\sqrt{13})$ and $\rho >0$. Then $ad-bc=1$ implies that $\rho^{-1} \in \mathcal{O}_{13}$. Furthermore the group generated by
$$\begin{pmatrix} 1 & -w/n \\ 0 & 1 \end{pmatrix} V \begin{pmatrix} 1 & w \\ 0 & 1 \end{pmatrix} V^{-1} \begin{pmatrix} 1 & w/n \\ 0 & 1 \end{pmatrix} = \begin{pmatrix} 1 & 0 \\ -w c^2 & 1 \end{pmatrix}$$
is commensurable with $\left( \begin{smallmatrix} 1 & 0 \\ w\ZZ & 1 \end{smallmatrix} \right)$, i.e. $\rho^2 \in \mathbb{Q}$. As $\rho^{-2}$ is a rational, algebraic integer which divides $w^2$ and as $w$ is not divisible by any element in $\ZZ_{>1}$, $\rho=1$ holds. Thus we have $c=n$ and $a=w$. Moreover we know that $S \cdot V = \left( \begin{smallmatrix} -n & -d \\ w & b \end{smallmatrix} \right)\in \SL(L_{13}^1)$ and therefore by Lemma~\ref{lem_double_cosets} $(ii)$ $n=1$.\\[11pt]
Finally we look at an arbitrary map $W(z)=az+b \in \Comm_{SL_2(\mathbb{R})} (\SL(L_{13}^1))$ with $a>0$. Hence $WTW^{-1}$ and $W^{-1}TW$ lie in $\Comm_{SL_2(\mathbb{R})} (\SL(L_{13}^1))$. Then $WTW^{-1}(z)=z+aw$ and $W^{-1}TW(z)=z+a^{-1}w$ yield $a=1$ and $b \in w \ZZ$. This proves the lemma.
\end{proof}

For the other two cusps of $\mathbb{H}/ \SL(L_{13}^1)$, namely $1$ and $w/2$, we are not able to exploit the geometry of the fundamental domain in a similar way because we do not know how to formulate an analogue of Lemma~\ref{lem_double_cosets}. Therefore, we have to use a different approach. 

\begin{lem} \label{lem_roots} \textbf{(Root lemma for parabolic elements)} \index{root lemma} Let $\Gamma$ be a Fuchsian group with $\mu(\mathbb{H} / \Gamma) < \infty$ and let $\gamma_1,...,\gamma_r$ be a system of representatives of the primitive parabolic elements of $\Gamma$ up to conjugacy. If $\Gamma'$ is a Fuchsian group containing $\Gamma$, then for every parabolic element $\gamma' \in \Gamma'$ there is a parabolic element $\gamma \in \Gamma$, such that
\begin{itemize}
\item[(i)] $\gamma$ is conjugated to one of the $\gamma_i$ and 
\item[(ii)] $\gamma'^n=\gamma^k$ for some $k,n \in \ZZ$ with $(k,n)=1$. 
\end{itemize}
\end{lem}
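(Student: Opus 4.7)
The plan is to exploit the inclusion $\Gamma \subset \Gamma'$, which must be of finite index since $\mu(\HH/\Gamma) < \infty$ forces $\mu(\HH/\Gamma') < \infty$ as well, and then transfer the parabolic element $\gamma'$ into $\Gamma$ by taking a sufficiently high power.

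First I would show that the (unique) fixed point $p \in \partial\HH$ of $\gamma'$ is actually a cusp of $\Gamma$. The group $\Gamma'$ acts on the finite set of left cosets $\Gamma'/\Gamma$ by left multiplication, so $\gamma'$ acts as a permutation of finite order $N_0$, and $\gamma'^{N_0}\cdot \Gamma = \Gamma$ gives $\gamma'^{N_0}\in\Gamma$. Since any positive power of a parabolic element is parabolic with the same fixed point, $p$ is indeed a cusp of $\Gamma$. Consequently the stabilizer $\Gamma_p$ is infinite cyclic modulo $\pm\Id$, generated by a primitive parabolic element $\gamma\in\Gamma$ which by definition is $\Gamma$-conjugate to one of the $\gamma_i$. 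This establishes~(i).

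For~(ii) I would compare the two parabolic stabilizers $\Gamma_p \subset \Gamma'_p$. Both are infinite cyclic modulo $\pm\Id$ (in $\Gamma'_p$ this uses that $p$ is a cusp of $\Gamma'$ as well, which follows from the existence of $\gamma'$), and $\Gamma_p$ has finite index in $\Gamma'_p$. Writing $\Gamma'_p = \langle\pm\gamma''\rangle$ with $\gamma = \pm\gamma''^{m}$ and $\gamma' = \gamma''^{l}$ for suitable nonzero $m, l\in\ZZ$, I would then take $n>0$ to be the smallest integer with $\gamma'^n \in \Gamma_p$. This minimal $n$ is precisely $n = m/\gcd(l,m)$, and the resulting $k = l/\gcd(l,m)$ satisfies $\gamma'^n = \gamma^k$ with $(n,k)=1$ automatically.

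The argument is more bookkeeping than deep: the only genuine structural input is the finite index $[\Gamma':\Gamma]<\infty$, after which the coset action produces the required power in $\Gamma$ and elementary properties of cyclic subgroups of $\ZZ$ give the coprimality. The main subtlety to keep track of is the $\pm\Id$-ambiguity when one insists on working in $\SL_2(\RR)$ rather than $\PSL_2(\RR)$, but since we only need to match $\gamma'^n$ with $\gamma^k$ up to the scalar inherent in the choice of lifts, this causes no real difficulty.
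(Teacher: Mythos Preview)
Your proof is correct and follows essentially the same approach as the paper: use $[\Gamma':\Gamma]<\infty$ to force some power of $\gamma'$ into $\Gamma$, then exploit that parabolic stabilizers are infinite cyclic to obtain the coprimality of $n$ and $k$. The paper's proof is terser---it simply takes the minimal $n$ with $\gamma'^n\in\Gamma$ and asserts $(k,n)=1$ ``since $n$ is minimal''---whereas you spell out the comparison of the cyclic groups $\Gamma_p\subset\Gamma'_p$ and the explicit formulas $n=m/\gcd(l,m)$, $k=l/\gcd(l,m)$, which is a welcome elaboration of the same idea.
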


\begin{proof} Let $\gamma'$ be a parabolic element in $\Gamma' \backslash \Gamma$. Since $\mu(\mathbb{H} / \Gamma) < \infty$ and since $\Gamma'$ is a Fuchsian group we have $\left[\Gamma':\Gamma\right] < \infty$. Therefore, there exists a minimal $n \in \mathbb{N}$ with  $\gamma'^n \in \Gamma$. As $\gamma'^n$ is also parabolic it is conjugated to the $k$-th, $k \in \ZZ$, power of one of the $\gamma_i$ inside $\Gamma$, i.e.
$$ \gamma'^n = M^{-1} \gamma_i^k M$$
with $M \in \Gamma$ and $(k,n)=1$ since $n$ is minimal. 
\end{proof}

An immediate consequence of this lemma is:

\begin{rem} \label{rem_root} Let $\Gamma$ be a Fuchsian group with $\mu(\mathbb{H} / \Gamma) < \infty$ and let $\gamma_1,...,\gamma_r$ be a system of representatives of the primitive parabolic elements of $\Gamma$ up to conjugacy. If $\Gamma'$ is a Fuchsian group containing $\Gamma$ and at least one parabolic element not in $\Gamma$ then $\Gamma'$ also contains a $n$-th root of one of the $\gamma_i$, i.e. there is a matrix $\gamma' \in \Gamma'$ with $\gamma' \notin \Gamma$ but $\gamma'^n \in \Gamma$ for some $n \in \NN$. \end{rem} 

For $\SL(L_{13}^1)$ the root lemma allows us to move away from the explicit geometric arguments and to give a second proof of Lemma~\ref{lem_key}. In fact we prove two much more general versions of Lemma~\ref{lem_key}.

\begin{cor} \label{cor_cusp_stab<49} Let $D\equiv 1 \mod 4$ and and let $U$ stabilize the cusp at $\infty$ then
$$U \in \SL(L_D^1) \ \textrm{if and only if} \ U \in \Comm_{SL_2(\mathbb{R})} (\SL(L_D^1)).$$
\end{cor}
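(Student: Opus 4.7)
The forward direction $\SL(L_D^1) \subset \Comm_{\SL_2(\RR)}(\SL(L_D^1))$ is immediate. For the converse, let $U$ stabilise $\infty$ and lie in the commensurator. Since $\SL(L_D^1)$ is non-arithmetic by Theorems~\ref{thm_Gutkin_Judge2} and \ref{thm_nonarithmetic_commensurable}, Margulis' commensurator theorem (Theorem~\ref{thm_Margulis}) together with Lemma~\ref{lem_non-arithmetic_commensurator} makes $\Comm_{\SL_2(\RR)}(\SL(L_D^1))$ itself a cofinite Fuchsian group containing $\SL(L_D^1)$ with finite index, and $\infty$ remains a cusp. A Fuchsian group cannot have a hyperbolic element fixing a cusp, so $U$ is parabolic (or $\pm\Id$); the remark preceding this subsection gives $\Comm \subset \SL_2(K)$, so up to sign $U = \left(\begin{smallmatrix}1 & \alpha \\ 0 & 1\end{smallmatrix}\right)$ with $\alpha \in K$.

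The plan is now to apply the Root Lemma (Lemma~\ref{lem_roots}) to the pair $\SL(L_D^1) \subset \Comm$. It produces coprime integers $n \geq 1$ and $k$ together with a primitive parabolic $\gamma \in \SL(L_D^1)$ satisfying $U^n = \gamma^k$. Since $U^n$ has $\infty$ as its unique fixed point and $\infty$ constitutes its own $\SL(L_D^1)$-cusp orbit, $\gamma$ must be conjugate to $T$ by a matrix in $\Stab_{\SL(L_D^1)}(\infty)$, i.e.\ by some $\left(\begin{smallmatrix}u & v\\ 0 & u^{-1}\end{smallmatrix}\right)$ with $u \in \OD^*$ and $v \in \OD$. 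A short computation shows that any such conjugation sends $T$ to $\left(\begin{smallmatrix}1 & u^{-2}w\\ 0 & 1\end{smallmatrix}\right)$, so comparing upper-right entries in $U^n = \gamma^k$ yields
\[
\alpha \;=\; \frac{k\,u^{-2}}{n}\,w.
\]

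The crucial remaining task is to force $n = 1$ and $u = \pm 1$, which would give $\alpha \in w\ZZ$ and hence $U \in \SL(L_D^1)$. My approach is to feed a second parabolic generator through the same machinery: the element $Z := STS^{-1} = \left(\begin{smallmatrix}1 & 0\\ -w & 1\end{smallmatrix}\right) \in \SL(L_D^1)$ fixes the cusp $0$, so $UZU^{-1} = \left(\begin{smallmatrix}1-\alpha w & \alpha^2 w\\ -w & 1+\alpha w\end{smallmatrix}\right)$ lies in $\Comm$ and is parabolic at $\alpha$. Another application of the Root Lemma identifies a power $(UZU^{-1})^m$ with a primitive parabolic of $\SL(L_D^1)$ conjugated by some $N \in \SL(L_D^1) \subset \SL_2(\OD)$ sending $\alpha$ to a cusp representative. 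Spelling this equation out and expanding in the $\OD$-basis $\{1,w\}$ via $w^2 = w + (D-1)/4$ produces the integrality relation $\alpha \in \OD$. But $\alpha = (ku^{-2}/n)\,w$ with $u^{-2} \in \OD^*$ and $(k,n)=1$ combined with $\alpha \in \OD$ forces $n \mid w$ in $\OD$; writing $w = 0 + 1\cdot w$ in the basis $\{1,w\}$ immediately gives $1/n \in \ZZ$, hence $n = 1$. A parallel norm computation on $u^{-2} = a + bw \in \OD^*$, using $N(u^{-2}) = \pm 1$, rules out $u \ne \pm 1$, and the proof is complete.

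The main obstacle in the plan is the extraction of the sharp relation $\alpha \in \OD$ from the Root Lemma applied to $UZU^{-1}$: this requires sufficiently fine control over which lower-left entries $c$ appear in matrices $N \in \SL(L_D^1)$ sending a cusp representative to $\alpha$. In the special case $D=13$ treated in Lemma~\ref{lem_key}, that control is supplied geometrically by the double coset description of Lemma~\ref{lem_double_cosets}; the corollary requires replacing this ad hoc input by a uniform argument valid for every fundamental discriminant $D \equiv 1 \mod 4$, and this is where the real work of the proof lies.
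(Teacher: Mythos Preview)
Your proposal is incomplete, and you say so yourself in the final paragraph: the step you flag as ``where the real work lies''---extracting $\alpha \in \OD$ from a second application of the Root Lemma to $UZU^{-1}$---is not carried out, and the route you sketch would indeed need precisely the fine control over lower rows that Lemma~\ref{lem_double_cosets} supplies only for $D=13$. There is also a small slip earlier: the stabiliser of the cusp $\infty$ in the \emph{lattice} $\SL(L_D^1)$ is cyclic parabolic, namely $\pm\langle T\rangle$, not the full upper-triangular group $\bigl\{\left(\begin{smallmatrix}u & v\\0 & u^{-1}\end{smallmatrix}\right): u \in \OD^*\bigr\}$ (that is the stabiliser in $\SL_2(\OD)$). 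Hence $\gamma = T^{\pm 1}$ outright, no unit $u$ enters, and one may as well take $U$ to be the primitive parabolic at $\infty$ in the commensurator, so $a = w/n$ for some $n \in \NN$.

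The missing step has a short uniform solution that bypasses your $UZU^{-1}$ mechanism entirely, and this is what the paper does. Since $S \in \SL(L_D^1)$, the product $US = \left(\begin{smallmatrix}a & -1\\1 & 0\end{smallmatrix}\right)$ lies in the commensurator, and finite index forces $(US)^k \in \SL(L_D^1) \subset \SL_2(\OD)$ for some $k$. A straightforward induction (the recursion $p_{k+1} = a\,p_k - p_{k-1}$) shows the $(1,1)$-entry of $(US)^k$ is a monic polynomial $p_k(a)$ of degree $k$ with coefficients in $\ZZ$. Thus $p_k(w/n) \in \OD$; clearing denominators yields $n \mid w^k$ in $\OD$. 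But Lemma~\ref{lem_properties_OD}(v) writes $w^k = c_k + d_k w$ with $\gcd(c_k,d_k)=1$, and Lemma~\ref{lem_properties_OD}(i) then forces $n=1$. No fundamental-domain geometry, no second parabolic, and no unit bookkeeping is needed.
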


\begin{proof} Let $U\in \Comm_{SL_2(\mathbb{R})} (\SL(L_D^1))$ stabilize the cusp at infinity. Without loss of generality we may then assume that $U = \left( \begin{smallmatrix} 1 & a \\ 0 & 1 \end{smallmatrix} \right)$ and that $U \notin \SL(L_D^1)$. By the root lemma we then have that $a=w/n$ with $n \in \mathbb{N}$.\footnote{The claim follows immediately for all $D<49$ and $n>2$: we then have $w/n<2$ which implies (since $S \in \SL(L_D^1)$, since the commensurator is Fuchsian and since $\mathbb{Q}(\sqrt{D}) \subset \mathbb{Q}(\zeta_D)$, where $\zeta_D$ is the primitive $D$th root of unity) that $w/n=\cos(2 \pi /D)$.  However for all $D<49$ and $n>2$ we have $w/n<\cos(2 \pi /D)$. This is a contradiction.} One proves by induction that $(US)^k = \left( \begin{smallmatrix} p_k(a) & q_k(a) \\ r_k(a) & s_k(a) \end{smallmatrix} \right)$ with $p_k,q_k,r_k,s_k$ polynomials in $a$ with $p_k(a) = a^k + b_{k-1} a^{k-1} + ... + b_0$ and $\deg(q_k) , \deg(r_k), \deg(s_k) \leq k-1$. Now suppose that there exists $U \notin \SL(L_D^1)$ such that $\left[\left< \SL(L_D^1), U \right>: \SL(L_D^1)\right] < \infty$. Then there exists a $k \in \NN$ such that $(US)^k \in \SL(L_D^1) \subset \SL_2(\OD)$. In particular $p_k(\frac{w}{n}) \in \OD$ which implies that $n | w^k$. Finally, this contradicts Lemma~\ref{lem_properties_OD}~(v).
\end{proof}

\begin{cor} If $\Gamma$ is a non-arithmetic cofinite Fuchsian-group containing $S$ such that the assertion of Lemma~\ref{lem_double_cosets} (ii) holds, then for all $U \in \SL_2(K)$ which stabilize $\infty$
$$U \in \Gamma \ \textrm{if and only if} \ U \in \Comm_{SL_2(\mathbb{R})} (\Gamma)$$
holds.
\end{cor}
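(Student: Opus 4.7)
The plan is to adapt the first proof of Lemma~\ref{lem_key} to this more general setting, combined with the Chebyshev-polynomial computation from the proof of Corollary~\ref{cor_cusp_stab<49}. Any $U \in \SL_2(K)$ fixing $\infty$ is upper-triangular, so I write $U = \begin{pmatrix} \lambda & \alpha \\ 0 & \lambda^{-1} \end{pmatrix}$; one implication is trivial. For the converse, since $\Gamma$ is non-arithmetic and cofinite, Margulis' theorem (Theorem~\ref{thm_Margulis}) guarantees that $\Comm_{\SL_2(\RR)}(\Gamma)$ is itself Fuchsian and contains $\Gamma$ of finite index, so every $U \in \Comm(\Gamma)$ has a power in $\Gamma$. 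I first handle the parabolic case $\lambda = \pm 1$, and then reduce the hyperbolic case to it.

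In the parabolic case I would take $U = \pm \begin{pmatrix} 1 & \alpha \\ 0 & 1 \end{pmatrix} \in \Comm(\Gamma)$ and choose $n \in \NN$ with $U^n \in \Gamma$. The element $Z := \begin{pmatrix} 1 & 0 \\ w & 1 \end{pmatrix}$ is a representative of one of the double cosets in the assertion of Lemma~\ref{lem_double_cosets}(ii), hence lies in $\Gamma$; consequently $V := Z U^n \in \Gamma$ has lower-left entry $w$ and top-left entry $1$. By hypothesis $V$ lies in one of the two listed double cosets, and a direct expansion shows that the top-left entries of elements in these cosets have the form $\pm 1 + kw^2$ with $k \in \ZZ$. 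The sign $+1$ forces $k=0$ and gives $n\alpha \in w\ZZ$, while $-1+kw^2=1$ has no solution in $\OD$ by Lemma~\ref{lem_properties_OD}~(iv). So $\alpha \in w\QQ$; I write $\alpha = mw/n$ with $\gcd(m,n)=1$ and $n \geq 1$.

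If $n>1$ I would then invoke the computation of Corollary~\ref{cor_cusp_stab<49}: for some $j$ the element $(US)^j \in \Comm(\Gamma)$ lies in $\Gamma \subset \SL_2(\OD)$, and its top-left entry is a polynomial $p_j(\alpha) = \alpha^j + \text{(lower order, integer coefficients)}$. Expanding $w^j = c_j + d_j w$ with $\gcd(c_j,d_j)=1$ (Lemma~\ref{lem_properties_OD}(v)), the condition $p_j(mw/n) \in \OD$ forces $n \mid w^j$, which together with $\gcd(c_j,d_j)=1$ gives $n=1$, a contradiction. Hence $\alpha = mw \in w\ZZ$ and $U = T^m \in \Gamma$. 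For $\lambda \neq \pm 1$, conjugating $T \in \Gamma$ by $U^{\pm 1}$ yields $UTU^{-1} = \begin{pmatrix} 1 & \lambda^2 w \\ 0 & 1 \end{pmatrix}$ and $U^{-1}TU = \begin{pmatrix} 1 & \lambda^{-2} w \\ 0 & 1 \end{pmatrix}$ inside $\Comm(\Gamma)$; by the parabolic case both lie in $\Gamma$, so $\lambda^{\pm 2} \in \ZZ$ and hence $\lambda^2 = 1$. The parabolic case applied to $U$ itself then finishes.

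The hard part will be the first step of the parabolic argument, where without the analog of Lemma~\ref{lem_double_cosets}(i) one must nevertheless identify the cusp stabilizer of $\infty$ in $\Gamma$. The key trick is the left-multiplication by $Z$, which shifts a matrix with lower-left entry $0$ into one with lower-left entry $w$ so that Lemma~\ref{lem_double_cosets}(ii) becomes applicable, together with the arithmetic identity for $w^2$ in $\OD$ that excludes the second (parasitic) double coset. Once this is in place, the remaining steps are essentially bookkeeping adaptations of the proofs of Lemma~\ref{lem_key} and Corollary~\ref{cor_cusp_stab<49}.
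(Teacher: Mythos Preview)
Your proof is correct but takes a genuinely different route from the paper's.

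After reducing to the parabolic case, the paper follows the cusp-mapping argument of Lemma~\ref{lem_key}: since $U$ sends the cusp $0$ to $w/n$, there exists $V\in\Gamma$ with $V(\infty)=w/n$; the commensurability trick with $U^{-1}VTV^{-1}U$ then forces $V=\left(\begin{smallmatrix} w & * \\ n & *\end{smallmatrix}\right)$, so that $SV\in\Gamma$ has lower-left entry $w$ and top-left entry $-n$. A single application of Lemma~\ref{lem_double_cosets}~(ii) then gives $-n\in\{\pm 1+kw^2:k\in\ZZ\}$, which for $n>1$ is impossible. Thus the paper extracts the integer $n$ itself as a matrix entry that the double-coset hypothesis can test directly.

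Your approach instead uses Lemma~\ref{lem_double_cosets}~(ii) only for the preliminary step (via the $ZU^n$ trick, which neatly circumvents the absence of part~(i)) to obtain $\alpha\in w\QQ$, and then switches to the Chebyshev-polynomial argument of Corollary~\ref{cor_cusp_stab<49} to kill the denominator. This is longer but has the virtue of being fully explicit about why one may assume $U=\left(\begin{smallmatrix}1 & w/n\\0&1\end{smallmatrix}\right)$ in the first place --- a reduction the paper's terse proof elides. You also spell out the hyperbolic case, which the paper leaves implicit.

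One small remark: your polynomial step uses $\Gamma\subset\SL_2(\OD)$ to conclude $p_j(mw/n)\in\OD$. This is not literally part of the hypothesis of the corollary (and the paper's own argument needs it too, to get $\rho^{-1}\in\OD$), but it is clearly intended from context.
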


\begin{proof} We may again assume $U= \left( \begin{smallmatrix} 1 & w/n \\ 0 & 1 \end{smallmatrix} \right) \in \Comm_{SL_2(\mathbb{R})} (\Gamma) \smallsetminus \Gamma$ with $n>2$. As in the proof of Lemma~\ref{lem_key} there then exists a matrix $V = \left( \begin{smallmatrix} -n & w \\ -d & b \end{smallmatrix} \right) \in \Gamma$. This contradicts the assertion of Lemma~\ref{lem_double_cosets} $(ii)$.
\end{proof}

We now want to show that a Fuchsian group that contains $\SL(L_{13}^1)$ can never contain any  additional parabolic elements. So suppose that there exists a Fuchsian group $\Gamma$ which contains $\SL(L_{13}^1)$ and at least one parabolic element $U \notin \SL(L_{13}^1)$. As we know by the root Lemma~\ref{lem_roots} the only possible candidates for the parabolic elements $U$ such that $\left\langle \SL(L_{13}^1),U\right\rangle$ might still be Fuchsian are the roots of the (non-conjugated) parabolic elements in $\SL(L_{13}^1)$. However there are just $3$ (non-conjugated) parabolic elements inside $\SL(L_{13}^1)$ and thus we know that only parabolic roots of $$B= \begin{pmatrix} -5-4w & 6+5w \\ -4-4w & 7+4w \end{pmatrix} \quad \text{or} \quad C=\begin{pmatrix} -4-4w & 5+4w \\-5-4w & 6+4w \end{pmatrix}$$ may occur. We can exclude almost all of the candidates with the help of the following two theorems.

\begin{thm} (\cite{Kat92}, Theorem~2.4.8) A non-elementary subgroup $\Gamma$ of $\PSL_2(\mathbb{R})$ is discrete if and only if, for each $V$ and $W$ in $\Gamma$, the group $\left\langle V,W \right\rangle$ is discrete. \end{thm}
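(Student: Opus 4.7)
The forward direction is trivial: any subgroup of a discrete group is itself discrete, so every two-generator subgroup $\langle V,W\rangle$ of a discrete $\Gamma$ is automatically discrete. The real content lies in the converse, and the plan is to proceed by contradiction via J{\o}rgensen's inequality. Suppose $\Gamma$ is non-elementary, that every $\langle V,W\rangle \subset \Gamma$ is discrete, yet $\Gamma$ itself is not discrete. Then one can extract a sequence $\{T_n\}\subset \Gamma\smallsetminus\{\Id\}$ with $T_n\to \Id$ in $\PSL_2(\RR)$.

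The first key step is to find a \emph{fixed} $W\in \Gamma$ for which $\langle T_n,W\rangle$ is non-elementary along a subsequence. Since $\Gamma$ is non-elementary, it contains two hyperbolic elements $W_1,W_2$ with disjoint fixed point sets in $\partial\HH$ (a standard consequence of non-elementarity). For each $n$, if $T_n$ commuted with both $W_1$ and $W_2$, it would have to fix all four boundary points, which is impossible for a non-trivial M{\"o}bius transformation. Hence for every $n$ at least one of $\langle T_n,W_1\rangle$, $\langle T_n,W_2\rangle$ contains two elements with no common fixed point and is therefore non-elementary. By the pigeonhole principle, one of $W_1,W_2$, say $W$, works for infinitely many $n$, and we pass to that subsequence.

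The second step invokes J{\o}rgensen's inequality, which states that any discrete non-elementary two-generator subgroup $\langle V,W\rangle\subset \PSL_2(\RR)$ satisfies
$$|\tr^2(V)-4| + |\tr(VWV^{-1}W^{-1})-2| \geq 1.$$
By our standing assumption, each $\langle T_n,W\rangle$ is discrete, and by construction it is non-elementary, so the inequality applies with $V=T_n$. But $T_n\to\Id$ forces $\tr^2(T_n)\to 4$ and $T_nWT_n^{-1}W^{-1}\to \Id$, hence $\tr(T_nWT_n^{-1}W^{-1})\to 2$; the left-hand side of J{\o}rgensen's inequality therefore tends to $0$, contradicting the lower bound $1$.

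The main obstacle is J{\o}rgensen's inequality itself, which is a non-trivial fact about the arithmetic of $2\times 2$ matrices acting on $\HH$; I would simply cite it (it is proved in Katok's book prior to Theorem~2.4.8). The only other delicate point is the selection of $W$: one must argue that non-elementarity of $\Gamma$ survives after passing to the pair $(T_n,W)$ for some fixed $W$, which is handled cleanly by the pigeonhole argument using two hyperbolic elements of $\Gamma$ with disjoint fixed-point sets.
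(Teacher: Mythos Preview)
The paper does not prove this theorem at all; it is quoted verbatim from Katok's book with only the citation \cite{Kat92}, Theorem~2.4.8, and is used as a black box in the argument that follows. So there is no ``paper's own proof'' to compare against.

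Your argument is the standard one (and essentially the one in Katok): reduce to J{\o}rgensen's inequality applied to $\langle T_n,W\rangle$ for a well-chosen hyperbolic $W$. It is correct, with one small imprecision worth tightening. You write ``if $T_n$ commuted with both $W_1$ and $W_2$'', but what you actually need to rule out is that $\langle T_n,W_i\rangle$ is \emph{elementary}, which is weaker than $T_n$ and $W_i$ commuting. The correct statement is: if $\langle T_n,W_i\rangle$ is elementary and $W_i$ is hyperbolic, then $T_n$ must preserve the two-point fixed set of $W_i$ (the only finite $W_i$-orbit in $\overline{\HH}$). If this held for both $i=1,2$, then $T_n$ would permute four distinct boundary points; since $T_n\to\Id$, for large $n$ it cannot swap any pair (an order-two elliptic has trace $0$), so it would fix all four and hence equal $\Id$, a contradiction. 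With this adjustment your pigeonhole step goes through and the rest is fine.
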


A Fuchsian group $\Gamma$ is called \textbf{elementary} if there exists a finite $\Gamma$-orbit in $\overline{\HH}$. Equivalently the commutator of any two elements of infinite order has trace $2$ (see \cite{Ros86}). Note that a Fuchsian group containing parabolic as well as hyperbolic elements is never elementary. From this fact it can be observed that all the two generator Fuchsian which we consider in this chapter are indeed non-elementary. We now only need a criterion to find out whether $\left\langle V,W \right\rangle$ is discrete.

\begin{thm} \label{prop_criterion_discretness} (\cite{Bea83}, Theorem~11.4.2) Let $V,W \in \PSL_2(\mathbb{R})$ such that $V$ is parabolic and $G=\left\langle V,W \right\rangle$ is a non-elementary Fuchsian group. Then $tr[V,W] \geq 3$. If $3 \leq tr[V,W] < 6$ then $tr[V,W]=4+2 \cos (2 \pi / q)$ with $q \in \mathbb{N}$. \end{thm}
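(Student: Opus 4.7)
The plan is to reduce to a normal form for the parabolic element, compute the commutator trace directly, use Shimizu's lemma as the discreteness obstruction that yields $\tr[V,W] \geq 3$, and then pin down the admissible values in the range $[3,6)$ by forcing the existence of an elliptic element of finite order in $\langle V,W \rangle$.

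First I would conjugate in $\PSL_2(\RR)$ so that $V$ takes the standard parabolic form $V_0 := \left( \begin{smallmatrix} 1 & 1 \\ 0 & 1 \end{smallmatrix} \right)$; this is harmless since any parabolic of $\PSL_2(\RR)$ is conjugate to $V_0$ and all relevant quantities (the trace of $[V,W]$, and the Fuchsian and non-elementary properties of $\langle V,W \rangle$) are conjugation invariant. Writing $W = \left( \begin{smallmatrix} a & b \\ c & d \end{smallmatrix} \right)$ with $ad-bc=1$, a short direct computation gives
\[
\tr[V,W] \;=\; 2+c^{2}.
\]
In particular $\tr[V,W]=2$ would force $c=0$, in which case $W$ also fixes $\infty$ and $\langle V,W \rangle$ is elementary, contradicting the hypothesis; hence $c \neq 0$.

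For the inequality $\tr[V,W] \geq 3$ I would invoke Shimizu's lemma: whenever a discrete subgroup of $\PSL_2(\RR)$ contains the parabolic $V_0$ together with an element $W$ whose lower-left entry $c$ is non-zero, one has $|c| \geq 1$. (The standard proof iterates $V_0^{\,n}WV_0^{-n}W^{-1}$ and observes that discreteness prevents the resulting entries from accumulating at $0$.) Combined with $\tr[V,W]=2+c^2$ this at once gives $\tr[V,W] \geq 3$.

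The classification for $3 \leq \tr[V,W] < 6$ is the main obstacle. In this range $C:=[V,W]$ is hyperbolic with trace $2+c^{2}$ and $1 \leq |c| < 2$. Here I would study the pair of parabolic elements $V$ and $WVW^{-1}$, whose fixed points $\infty$ and $W(\infty)$ are distinct, by comparing their isometric circles. If $|c|$ fails to equal $2\cos(\pi/q)$ for every integer $q \geq 3$, then an iterative word construction on $V$, $WVW^{-1}$ and their conjugates should produce a sequence of parabolic elements with translation length tending to $0$, contradicting discreteness; the equality $|c|=2\cos(\pi/q)$ instead forces a rotation of order $q$ to lie in $\langle V, W \rangle$. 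Inserting $c^{2} = (2\cos(\pi/q))^{2} = 2 + 2\cos(2\pi/q)$ into the trace formula yields the claimed value $\tr[V,W] = 4 + 2\cos(2\pi/q)$. The delicate technical point is identifying precisely which elliptic element of order $q$ emerges; this is a Poincar\'e-polygon analysis reminiscent of the defining presentations of the Hecke triangle groups $\Delta(2,q,\infty)$, of which $\SL(L_{5}^{1})$ is the prototypical example appearing earlier in this paper.
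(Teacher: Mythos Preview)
The paper does not prove this statement at all: it is quoted verbatim as Theorem~11.4.2 of Beardon \cite{Bea83} and used as a black box to rule out higher roots of the parabolic generators $B$ and $C$. So there is no in-paper proof to compare against; your proposal is an independent attempt at Beardon's theorem.

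On its own merits, your first half is correct and is essentially the standard argument. After conjugating $V$ to $\left(\begin{smallmatrix}1&1\\0&1\end{smallmatrix}\right)$ and writing $W=\left(\begin{smallmatrix}a&b\\c&d\end{smallmatrix}\right)$, the identity $\tr[V,W]=2+c^{2}$ is exactly right, the non-elementary hypothesis forces $c\neq 0$, and Shimizu's lemma (equivalently, J{\o}rgensen's inequality specialized to a parabolic generator) gives $|c|\geq 1$, hence $\tr[V,W]\geq 3$.

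The second half, however, is only a sketch with a genuine gap. You assert that an ``iterative word construction'' on $V$ and $WVW^{-1}$ would force $|c|=2\cos(\pi/q)$ via a contradiction with discreteness, but you do not produce the word or the contradiction. In Beardon's actual proof this is not a vague iteration: one analyzes the product $VW$ (or an equivalent element) directly. When $1\leq|c|<2$ the element $VW$ (after a further normalization) has trace of absolute value $|c|<2$ and is therefore elliptic; discreteness of $\langle V,W\rangle$ then forces this elliptic element to have finite order $q$, i.e.\ $|c|=2\cos(\pi/q)$ for some integer $q\geq 3$. Plugging $c^{2}=2+2\cos(2\pi/q)$ back into $\tr[V,W]=2+c^{2}$ gives the stated value $4+2\cos(2\pi/q)$. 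Your reference to Hecke triangle groups is the right intuition, but the concrete step---exhibiting the specific elliptic element and reading off its trace---is what is missing and what makes the argument work.
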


By this we are able to exclude all roots of $B$ and $C$ of degree $\geq 3$ by an easy calculation. First define for $n \geq 3$, $V=V_n$ as $$V_n:= \begin{pmatrix} 1 - \frac{6+4w}{n} & \frac{(6+5w)}{n} \\ \frac{-4-4w}{n} & 1 + \frac{6+4w}{n} \end{pmatrix}$$ (the roots of B) and set $W=T \cdot S$. Since $n \geq 3$ it follows that $\tr[V,W]= 2 + \frac{7+5\textit{w}}{n^2} < 6$ and none of the values is equal to $4+2\cos(2 \pi /q)$ for a $q \in \mathbb{N}$. Hence, the subgroups generated by $V_n$ and $W$ cannot be Fuchsian for $n \geq 3$.  Secondly define for $n \geq 3$, $V=\widetilde{V_n}$ as $$\widetilde{V_n}:= \begin{pmatrix} 1 - \frac{5+4w}{n} & \frac{5+4w}{n} \\ - \frac{5+4w}{n} & 1 + \frac{5+4w}{n} \end{pmatrix}$$ (the roots of C) and set $W=T \cdot S$. Then $\tr[V,W]= 2 + \frac{7+5\textit{w}}{n^2}$ and we repeat the same argument as above to exclude the matrices $\widetilde{V_n}$.  It now only remains to check the case $n=2$. This will for both $B$ and $C$ yield a contradiction to the key Lemma~\ref{lem_key}. 

\begin{itemize}
\item[(i)]Suppose that $B_2:=B^{1/2} \in \Comm_{\SL_2(\mathbb{R})} (\SL(L_{13}^1))$, then $$S \cdot B_2^{-1} \cdot (C^{-1} \cdot S^2)^4 = \begin{pmatrix} -2-2w & 2+2w \\ 0 & 1-1/2w \end{pmatrix} \in \Comm_{\SL_2(\mathbb{R})} (\SL(L_{13}^1)).$$ This contradicts Lemma~\ref{lem_key}.
\item[(ii)] Suppose that $C_2:=C^{1/2} \in \Comm_{\SL_2(\mathbb{R})} (\SL(L_{13}^1))$, then $$(S \cdot T)^2 \cdot (S \cdot C \cdot T)^6 \cdot S \cdot C_2 =$$ $$\begin{pmatrix} -7/2-3/2w & -5/2-11/2w \\ 0 & 8+6w \end{pmatrix} \in \Comm_{\SL_2(\mathbb{R})} (\SL(L_{13}^1)).$$ This again contradicts Lemma~\ref{lem_key}.
\end{itemize}

Collecting everything we see that we have established:

\begin{prop} Let $\Gamma$ be any Fuchsian group containing $\SL(L_{13}^1)$. If $U \in \Gamma$ is parabolic then $U \in \SL(L_{13}^1)$.
\end{prop}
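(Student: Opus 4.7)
The plan is to suppose for contradiction that there exists a parabolic $U \in \Gamma \smallsetminus \SL(L_{13}^1)$ and to combine the preceding technical lemmas into a single case analysis. First I would apply the root Lemma~\ref{lem_roots} to produce an $n \in \NN$ and a primitive parabolic generator $\gamma_i$ of $\SL(L_{13}^1)$ such that $U^n$ is $\SL(L_{13}^1)$-conjugate to $\gamma_i^k$ with $(k,n)=1$. After replacing $U$ by an appropriate $\SL(L_{13}^1)$-conjugate I may assume that $U$ itself is a genuine $n$-th root of one of the three primitive parabolic elements $T$, $B$, $C$ attached to the three cusps $\infty$, $1$, $w/2$ of $\SL(L_{13}^1)$.

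The argument then splits according to which cusp is fixed by $U$. When $U$ fixes $\infty$ the contradiction is immediate: since the Veech group $\SL(L_{13}^1)$ is non-arithmetic by Theorem~\ref{thm_Gutkin_Judge2}, Lemma~\ref{lem_non-arithmetic_commensurator} places $\Gamma$ inside $\Comm_{\SL_2(\RR)}(\SL(L_{13}^1))$, and Corollary~\ref{cor_cusp_stab<49} (equivalently Lemma~\ref{lem_key}) then forces $U \in \SL(L_{13}^1)$. When $U$ fixes the cusp $1$ or the cusp $w/2$, I would exploit the discreteness criterion Theorem~\ref{prop_criterion_discretness}: pairing the putative root $V_n$ of $B$ (respectively $\widetilde{V_n}$ of $C$) with $W = TS \in \SL(L_{13}^1)$ yields $\tr[V,W] = 2 + (7+5w)/n^2$. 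For every $n \geq 3$ this value lies strictly between $3$ and $6$ but coincides with none of the admissible numbers $4+2\cos(2\pi/q)$, so $\langle V, W\rangle$ cannot be discrete and \emph{a fortiori} $\Gamma$ cannot be Fuchsian.

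This reduces the entire problem to the exceptional case $n=2$, which is the real obstacle since the discreteness criterion no longer excludes it. Here I would follow the direct computational strategy already worked out in the preceding paragraphs of the excerpt: assuming $B^{1/2} \in \Gamma$ (respectively $C^{1/2} \in \Gamma$), I multiply it by suitable words in $S$, $T$, $B$, $C$ to produce the explicit upper-triangular elements
\[
S \cdot B_2^{-1} \cdot (C^{-1}S^2)^4 \quad \text{and} \quad (ST)^2 \cdot (SCT)^6 \cdot S \cdot C_2
\]
of $\Gamma$. Both stabilize $\infty$ but have translation parts that are not elements of $w\ZZ$, contradicting Lemma~\ref{lem_key}. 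Combining the three cases proves the proposition.

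The main difficulty of the proof is unquestionably the $n=2$ case, since it is the only one in which no uniform structural argument survives and one is forced to construct the contradicting words by hand. The feasibility of this step relies entirely on the explicit knowledge of the fundamental domain and of enough relations in $\SL(L_{13}^1)$ obtained in Section~\ref{sec_fixing_Veech}; without that explicit control there would be no systematic way to rule out an order-two parabolic root.
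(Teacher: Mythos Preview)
Your proposal is correct and follows essentially the same approach as the paper: the proposition is stated there as the summary of the preceding discussion, which combines the root lemma, Lemma~\ref{lem_key}/Corollary~\ref{cor_cusp_stab<49} for the cusp $\infty$, the discreteness criterion of Theorem~\ref{prop_criterion_discretness} to eliminate $n\ge 3$ for the other two cusps, and the explicit word computations for $n=2$. One tiny imprecision: for $n\ge 5$ the commutator trace $2+(7+5w)/n^2$ actually drops below $3$ rather than lying strictly between $3$ and $6$, but this still contradicts Theorem~\ref{prop_criterion_discretness} directly, so the argument goes through unchanged.
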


\begin{defi} A Fuchsian group $\Gamma$ is called \textbf{parabolic maximal} \index{Fuchsian group!parabolic maximal} if there does not exist a Fuchsian group $\Gamma'$ containing $\Gamma$ and a parabolic element $U \in \Gamma' \smallsetminus \Gamma$. \end{defi}
Using the Riemann-Hurwitz formula (Theorem~\ref{thm_Riemann_Hurwitz}) \index{Riemann-Hurwitz formula} we can now deduce Theorem~\ref{thm_maximality}.


\begin{proof}[Proof of Theorem~\ref{thm_maximality} for $D=13$] Recall that the signature of $\SL(L_{13}^1)$ is $(0;2;3)$. Now we suppose that there exists a maximal Fuchsian group $\Gamma$ of signature $(g;m_1,...,m_r;s)$ containing $\SL(L_{13}^1)$ such that $\left[\Gamma : \SL(L_{13}^1)\right] = k \geq 3$. Since $\SL(L_{13}^1)$ has exactly three cusps and $\Gamma$ cannot have additional parabolic elements, the degree of the covering is exactly equal to $3$. 
Therefore, we must have that $s=1$. For the Euler characteristic of $\SL(L_{13}^1)$ we have $\mathbb{H}/\SL(L_{13}^1) =3/2$ and thus by the Riemann-Hurwitz formula:
\begin{eqnarray*}
\frac{3}{2} & = & 3 \left( 2g-2+1+\sum_{j=1}^r \left( 1 - \frac{1}{m_j} \right)\right)\\
\frac{1}{2} & = & 2g-1+\sum_{j=1}^r \left( 1 - \frac{1}{m_j} \right).
\end{eqnarray*}
In particular
\begin{eqnarray} \label{equ_rh_fin_max}
\frac{3}{2} & \geq & 2g + \sum_{j=1}^r \left( 1 - \frac{1}{m_j} \right).
\end{eqnarray}
Hence $g=0$. Since the degree of the covering is $3$, there can be at most $2$ elliptic elements in $\Gamma$. If inequality~(\ref{equ_rh_fin_max}) is fulfilled then $\Gamma$ must have signature $(0;3,6,\infty;1)$ or $(0;4,4,\infty;1)$. This is a contradiction - these groups do not have an appropriate trace field \index{Fuchsian group!trace field}.
\end{proof}

We now show that the assertion also holds for $\SL(L_D^1)$ if $D=17$ instead of $D=13$. 

\begin{proof}[Proof of Theorem~\ref{thm_maximality} for $D=17$] Since the groups $\SL(L_{13}^1)$ and $\SL(L_{17}^1)$ have the same signature, we only need to check if $\SL(L_{17}^1)$ is parabolic maximal. Recall that the generators of $\SL(L_{17}^1)$ are given by
$$T=\begin{pmatrix} 1 & w \\ 0 & 1 \end{pmatrix}, \quad B:=\begin{pmatrix} -3-2w & 4 + 3w \\ -2-2w & 5 + 2w \end{pmatrix},$$
$$S=\begin{pmatrix} 0 & -1 \\ 1 & 0 \end{pmatrix}, \quad C:=\begin{pmatrix} -2-2w & 3+ 2w\\ -3-2w & 4+2w \end{pmatrix}.$$
By Corollary~\ref{cor_cusp_stab<49} and Lemma~\ref{lem_roots} we thus only have to check if the roots of $B,C$ might lie in the commensurator. If we again define $V_n$ as the $n$-th root of $B$ (respectively $C$) and set $W=T \cdot S$ we get that $\tr[V_n,W] < 6$ for $n\geq 3$. Then we only have to check the square roots of the matrices $B$ and $C$.
\begin{itemize}
\item[(i)]Suppose that $B_2:=B^{1/2} \in \Comm_{\SL_2(\mathbb{R})} (\SL(L_{17}^1))$, then $$B_2^{-1}~\cdot~C~\cdot~B_2^{-1}~= \begin{pmatrix} 1 & 1/4w \\ 0 & 1 \end{pmatrix} \in \Comm_{\SL_2(\mathbb{R})} (\SL(L_{17}^1)).$$ This contradicts Corollary~\ref{cor_cusp_stab<49}.
\item[(ii)] Suppose that $C_2:=C^{1/2} \in \Comm_{\SL_2(\mathbb{R})} (\SL(L_{17}^1))$, then $$F~\cdot~C_2~\cdot~A~\cdot~C_2^{-1}~= \begin{pmatrix} 2-3/4w & 17+53/4w \\ 0 & 5+3w  \end{pmatrix} \in \Comm_{\SL_2(\mathbb{R})} (\SL(L_{17}^1))$$ where $$F=T^2 \cdot S \cdot C \cdot S \cdot A^{-1} \cdot S^2 \cdot C \cdot S^2 \cdot C^2 \cdot A \cdot S^{-1} \cdot C \cdot A \cdot S.$$  This again contradicts Corollary~\ref{cor_cusp_stab<49}.
\end{itemize}
\end{proof}
\begin{cor} \label{cor_max_conjugated} For $D \in \left\{ 5, 13, 17 \right\}$ every group $\Gamma \subset \SL_2(\mathbb{R})$ that is commensurable to $\SL(L_D^1)$ is conjugated to a subgroup of $\SL(L_D^1)$.
\end{cor}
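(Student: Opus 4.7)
The plan is to deduce the corollary directly from Theorem~\ref{thm_maximality}, which states that $\Comm_{\SL_2(\RR)}(\SL(L_D^1)) = \SL(L_D^1)$ for $D \in \{5,13,17\}$, combined with a purely formal commensurability argument. By the definition of commensurability, if $\Gamma \subset \SL_2(\RR)$ is commensurable to $\SL(L_D^1)$ then there exists $g \in \SL_2(\RR)$ such that $\Gamma' := g\Gamma g^{-1}$ is directly commensurable to $\SL(L_D^1)$. The goal then reduces to showing $\Gamma' \subset \SL(L_D^1)$.

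To achieve this, I would set $H := \Gamma' \cap \SL(L_D^1)$, which by direct commensurability has finite index in both $\Gamma'$ and $\SL(L_D^1)$, and then pick an arbitrary $h \in \Gamma'$. The key observation is that $hHh^{-1}$ is a subgroup of $\Gamma'$ of the same finite index as $H$, so that $hHh^{-1} \cap H$ is an intersection of two finite-index subgroups of $\Gamma'$ and hence itself has finite index in $H$. In particular this intersection has finite index in $\SL(L_D^1)$ as well as in $h\SL(L_D^1)h^{-1}$, and since it is contained in $\SL(L_D^1) \cap h\SL(L_D^1)h^{-1}$, the latter intersection has finite index in both factors. By definition this means $h \in \Comm_{\SL_2(\RR)}(\SL(L_D^1))$.

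Invoking Theorem~\ref{thm_maximality} then gives $h \in \SL(L_D^1)$; since $h$ was arbitrary, $\Gamma' = g\Gamma g^{-1} \subset \SL(L_D^1)$, which is the claim. There is no genuine obstacle here: the entire difficulty of the corollary is already absorbed into the proof of Theorem~\ref{thm_maximality}, and the remaining step is a standard commensurability manipulation relying only on the finite-index property of $H$ inside $\Gamma'$ and inside $\SL(L_D^1)$.
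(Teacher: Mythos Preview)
Your proof is correct and follows essentially the same route the paper has in mind: the corollary is stated without proof in the paper because it is an immediate consequence of Theorem~\ref{thm_maximality} together with the elementary commensurator facts recorded in the Remark of Section~\ref{sec_Fuchsian_groups} (in particular items (iv) and (vii), that $A \subset \Comm_G(A)$ and that directly commensurable groups share the same commensurator). Your argument simply unpacks item~(vii) by hand via the finite-index manipulation with $H = \Gamma' \cap \SL(L_D^1)$, which is exactly the standard proof of that remark; there is no substantive difference in approach.
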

Also note that we get a third proof of Lemma~\ref{lem_key} with the help of the root Lemma~\ref{lem_roots} and Theorem~\ref{prop_criterion_discretness}.

\subsection{Pseudo Parabolic Maximal Groups} \label{sec_pseudo}

Also the property of parabolic maximality seems to be hard to prove if the group $\SL(L_D)$ is not explicitly given. For Fuchsian groups having entries in $\OD$ it is possible to define a weaker notion which is still good enough for our purposes. 
\begin{defi} \label{def_pseudo_parabolic}
\begin{itemize}
\item[(i)]  We call a Fuchsian group $\Gamma \subset \SL_2(\OD)$ \textbf{pseudo parabolic maximal} \index{Fuchsian group!pseudo parabolic maximal} if there does not exist a Fuchsian group $\Gamma'$ containing $\Gamma$ with finite index and also containing a parabolic element in $\SL_2(K) \smallsetminus \SL_2(\OD)$.
\item[(ii)] We call a Fuchsian group $\Gamma \subset \SL_2(\OD)$ \textbf{$\mathfrak{n}$-pseudo parabolic maximal} for a (proper) ideal $\mathfrak{n} \subset \OD$ if there does not exist a Fuchsian group $\Gamma'$ containing $\Gamma$ with finite index and also containing a parabolic element in $\SL_2(K) \smallsetminus (\SL_2(K) \cap \Mat^{2x2}(\mathfrak{n}^{-1}))$. 
\end{itemize}
\end{defi}
Note that the stabilizer cannot distinguish pseudo parabolic maximal groups from parabolic maximal groups since $\Stab(\Phi) \cap \SL_2(\OD) = \SL(L_D)$, but this property is much easier to check. This is why pseudo parabolic maximality is a good property to consider.\\[11pt]
Before going through the rest of this section, we recommend the reader to repeat the arithmetic properties of $\OD$ which we described in Section~\ref{sec_quadratic_nf}.\\[11pt]
In this section we want to prove that all the groups $\SL(L_D^i)$ are almost pseudo parabolic maximal, i.e. there exists at most an ideal $\mathfrak{n}_i \subset \OD$ of norm of absolute value $2$, that depends only on whether $D$ is even or odd and on the spin of the Teichmüller curve, such that $\SL(L_D^i)$ is $\mathfrak{n}_i$-pseudo parabolic maximal.\footnote{Recall our abuse of notation of the term spin, compare Section~\ref{sec_definition_tmcurves}.} Unfortunately, it is not convenient to treat here the even and the odd spin case simultaneously if $D \equiv 1 \mod 4$ for the following reason: the lower left entry of the matrix $Z$ is $w$ in the odd spin case and $w+1$ in the even spin case. While $(w)$ has only prime ideal divisors which are divisors of split prime numbers (Lemma~\ref{lem_properties_OD}), we have $\N(w+1) = \frac{D-9}{4}$ and so if $3|D$ then $(w+1)$ has also a prime ideal divisor which is a divisor of a ramified prime number, namely $3$. This will force us to treat some special cases for even spin Teichmüller curves.
\paragraph{D $\equiv$ 1 mod 4, odd spin.} Let us begin with the Teichmüller curves generated by the symmetric L-shaped polygons. We want to prove the following theorem.

\begin{thm} \label{cor_1_mod_8} For all $D \equiv 1 \mod 4$ the group $\SL(L_D^1)$ is $(2)$-pseudo parabolic maximal.
\end{thm}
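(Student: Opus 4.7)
To establish $(2)$-pseudo parabolic maximality, I need to show that every parabolic element in any finite-index Fuchsian supergroup $\Gamma' \supset \SL(L_D^1)$ has entries in $\tfrac{1}{2}\OD$. Since $\SL(L_D^1)$ is a non-arithmetic lattice, Margulis' theorem (Theorem~\ref{thm_Margulis}) embeds $\Gamma'$ into $\Comm_{\SL_2(\RR)}(\SL(L_D^1))$, so it suffices to analyze parabolic elements of the commensurator. The Root Lemma (Lemma~\ref{lem_roots}) supplies, for any parabolic $U$ in such a $\Gamma'$, coprime positive integers $n, k$ and a conjugator $V \in \SL(L_D^1) \subset \SL_2(\OD)$ with $(VUV^{-1})^n = \gamma_i^k$ for some primitive parabolic generator $\gamma_i$ of $\SL(L_D^1)$. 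Because $V, V^{-1}$ have entries in $\OD$, the matrix $VUV^{-1}$ lies in $\Mat^{2\times 2}(\tfrac{1}{2}\OD)$ iff $U$ does, so after conjugation we may assume $U$ fixes the same cusp as $\gamma_i$ and replace $\Gamma'$ by $V\Gamma'V^{-1}$, which is again a finite-index Fuchsian supergroup of $\SL(L_D^1)$.

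I then argue cusp by cusp, using the explicit parabolic generators recorded in Section~\ref{sec_fixing_Veech}. At the cusps $\infty$ and $0$ (with primitive parabolic generators $T$ and $-STS$), the element $U$ takes the standard form $\left(\begin{smallmatrix} 1 & kw/n \\ 0 & 1 \end{smallmatrix}\right)$ or its lower-triangular analogue, and Corollary~\ref{cor_cusp_stab<49} forces $U \in \SL(L_D^1) \subset \SL_2(\OD)$ outright, hence $n = 1$. For the cusp $(1,1)$ the primitive parabolic is $E = \left(\begin{smallmatrix} 1-e & e \\ -e & 1+e \end{smallmatrix}\right)$ with $e = a + bw$ satisfying $\gcd(a,b) = 1$ (the indivisibility property of $e$ by rational integers noted in Section~\ref{sec_fixing_Veech}), so a root of $E^k$ fixing $1$ is $U = \left(\begin{smallmatrix} 1-ke/n & ke/n \\ -ke/n & 1+ke/n \end{smallmatrix}\right)$, and membership in $\Mat^{2\times 2}(\tfrac{1}{2}\OD)$ is equivalent to $n \mid 2$. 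To exclude the possibility $n \nmid 2$, pair $U$ with $T$: the Fricke identity yields $\tr[U,T] = 2 + (kew/n)^2$, and since $\langle U, T\rangle \subset \Gamma'$ is non-elementary and discrete (the fixed points $1$ and $\infty$ are distinct), Theorem~\ref{prop_criterion_discretness} forces this trace either to be $\geq 6$ or to equal $4\cos^2(\pi/q) + 2$ for some $q \in \NN$; combined with $kew/n \in K = \QQ(\sqrt{D})$ and the arithmetic of $\OD$ collected in Lemma~\ref{lem_properties_OD}, the only candidate values of $2\cos(\pi/q)$ in $K$ for $D \equiv 1 \mod 4$ (generically $\{0, \pm 1\}$) together with the explicit form of $e$ and $w$ eliminate the remaining cases and deliver $n \mid 2$. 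The other cusps are handled by the same method with the primitive parabolics of Lemma~\ref{lem_second_parabolic}.

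The main obstacle is precisely this last step: Corollary~\ref{cor_cusp_stab<49} is tailored to the cusp at $\infty$, and since the cusp $(1,1)$ is not $\SL(L_D^1)$-equivalent to $\infty$, one cannot pull the analysis back to $\infty$ through $\SL(L_D^1)$ alone, nor can one conjugate by an element of $\GL_2(K)$ sending $1$ to $\infty$ without moving $\SL(L_D^1)$ off itself. The bound on $n$ must therefore be extracted intrinsically from the discreteness of $\Gamma'$ together with the arithmetic of the parameter $e$, and the crucial input—namely that no integer $m > 1$ divides $e$, which stems from the geometry of the horizontal cylinder decomposition at cusp $(1,1)$ analyzed in Section~\ref{sec_fixing_Veech}—is precisely what produces the denominator $2$ in the statement, as opposed to the full parabolic maximality that will be proved for $D \equiv 5 \mod 8$ in Theorem~\ref{thm_pseudo_parabolic}.
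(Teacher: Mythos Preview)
Your proposal has a genuine gap: the cusp-by-cusp strategy cannot be carried out for general $D$. The Root Lemma reduces the problem to $n$-th roots of a primitive parabolic $\gamma_i$ at each cusp class, but the number of cusp classes of $\SL(L_D^1)$ grows without bound as $D$ increases, while Section~\ref{sec_fixing_Veech} and Lemma~\ref{lem_second_parabolic} supply explicit parabolics only at the cusps $\infty$, $1$, and $w/2$ (note that $0$ is $\SL(L_D^1)$-equivalent to $\infty$ via $S$, so your list is shorter than it looks). For the remaining cusps you have no explicit $\gamma_i$ to analyze. Moreover, even at the cusp $1$ your commutator-trace argument is incomplete: Theorem~\ref{prop_criterion_discretness} constrains $\tr[U,T] = 2 + (ew/n)^2$ only in the window $[3,6)$; whenever $n \leq |ew|/2$ the trace exceeds $6$ and the theorem is silent. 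Since $|ew|$ grows with $D$, every fixed odd prime $n$ escapes this test for all large $D$, and the restriction on which values $2\cos(\pi/q)$ can take in $K$ never enters.

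The paper avoids both difficulties by never fixing a cusp. It takes an \emph{arbitrary} parabolic $M = \left(\begin{smallmatrix} 1-a & b \\ c & 1+a \end{smallmatrix}\right) \in \SL(L_D^1)$ and its putative $n$-th root $M_n \in \Gamma'$; finite index forces some power $(M_n X)^k$ to land in $\SL(L_D^1) \subset \SL_2(\OD)$ for each fixed $X \in \{T, Z, S, E\}$. Reducing the lower-left entry---a polynomial in $1/n$ with $\OD$-coefficients computed in Lemma~\ref{lem_leading_coefficient_1}---modulo $n$ yields successively $n \mid wc$ and $n \mid wb$ (Proposition~\ref{prop_pseudo_parabolic_hk>1}, forcing $n$ to be split), then $\mathfrak p^\sigma \mid (c-b)$ (Proposition~\ref{prop_pseudo_parabolic2}), and finally, pairing with $E$ and using that no rational integer $>1$ divides its parameter $e$, a contradiction unless $n=2$. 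The only hypothesis on $M$ is that $a,b,c \in \OD$, so the argument is uniform over all (unknown) cusps.
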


If we have more information on the discriminant we can strengthen the result.

\begin{thm} \label{thm_pseudo_parabolic} \begin{itemize}
\item[(i)] For all $D \equiv 5 \mod 8$ the group $\SL(L_D)$ is pseudo parabolic maximal.
\item[(ii)] For all $D \equiv 1 \mod 8$ the group $\SL(L_D^1)$ is $\mathfrak{p}_2$-pseudo parabolic maximal, where $\mathfrak{p}_2$ \label{glo_P2} is the (unique) common prime ideal divisor of $(2)$ and $(w)$.
\end{itemize}
\end{thm}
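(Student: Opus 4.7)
The proof has four stages: a Margulis reduction, the Root Lemma, the infinite-cusp case via Corollary~\ref{cor_cusp_stab<49}, and an arithmetic analysis at the remaining cusps that separates (i) from (ii).

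First I would reduce the problem to the commensurator. By Gutkin--Judge (Theorem~\ref{thm_Gutkin_Judge2}) the Veech group $\SL(L_D^1)$ is non-arithmetic, so Lemma~\ref{lem_non-arithmetic_commensurator} together with Margulis' theorem (Theorem~\ref{thm_Margulis}) place any finite-index Fuchsian supergroup $\Gamma'$ inside the (itself Fuchsian) commensurator $\Comm_{\SL_2(\RR)}(\SL(L_D^1))$. It therefore suffices to prove that every parabolic element of the commensurator lies in $\SL_2(\OD)$ for (i), and in $\SL_2(K) \cap \Mat^{2\times 2}(\mathfrak{p}_2^{-1})$ for (ii). Next I would apply the Root Lemma (Lemma~\ref{lem_roots}): any parabolic $V$ in the commensurator fixes some cusp $c$ of $\SL(L_D^1)$, and if $P_c \in \SL(L_D^1)$ generates the stabilizer of $c$, then the (cyclic) stabilizer of $c$ in the commensurator is generated by some $V$ with $V^m = P_c$. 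Cayley--Hamilton gives $(P_c - I)^2 = 0$, so setting $N_c = P_c - I$ the identity $(I + N_c/m)^m = I + N_c = P_c$ forces
$$ V \;=\; I + \tfrac{1}{m}\, N_c $$
for a positive integer $m$. The theorem reduces to the arithmetic statement that $m = 1$ in case (i), and that $m$ introduces at most a single factor of $\pi_2$ into the denominators in case (ii).

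For $c = \infty$ one has $P_c = T$ and $N_c = w E_{12}$, and the bound $m = 1$ is exactly Corollary~\ref{cor_cusp_stab<49}: expanding $(VS)^k$ gives an upper-left entry $p_k(w/m)$ whose leading term is $(w/m)^k$, and the coprimality $\gcd(c_k, d_k) = 1$ in $w^k = c_k + d_k w$ (Lemma~\ref{lem_properties_OD}(v)) forces $m = 1$. This simultaneously handles every cusp in the $\SL(L_D^1)$-orbit of $\infty$, notably $0 = S\infty$, by conjugating by the relevant element of $\SL(L_D^1) \subset \SL_2(\OD)$, a conjugation that preserves the property of having entries in $\OD$.

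The main obstacle is treating the remaining cusps $c$ of $\SL(L_D^1)$. For each of these I would use the explicit primitive parabolic $P_c$ listed in Lemma~\ref{lem_second_parabolic} and run an analogous polynomial argument: finiteness of $[\Gamma':\SL(L_D^1)]$ forces some word $(V W_1) (V W_2) \cdots (V W_k)$ with $W_i \in \SL(L_D^1)$ to lie in $\SL(L_D^1) \subset \SL_2(\OD)$; expanding the entries as polynomials in $1/m$ and reading off separately the $\QQ$- and the $w$-components produces two integer divisibility conditions whose common solution set is $(m) \mid \eta$ for an element $\eta \in \OD$ computable from the entries of $P_c$. Lemma~\ref{lem_properties_OD} then finishes both cases. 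For $D \equiv 5 \mod 8$ the rational prime $2$ is inert in $\OD$, and by part (iii) the prime divisors of $w$ divide only split or ramified \emph{odd} primes, so no rational integer $m > 1$ can divide $\eta$, giving (i). For $D \equiv 1 \mod 8$ the prime $2$ splits as $\pi_2\pi_2^\sigma$ with exactly one factor, $\pi_2$, dividing $w$; the same computation now permits precisely one factor of $\pi_2$ as a denominator but still forbids $\pi_2^\sigma$ and every other prime, yielding the $\mathfrak{p}_2$-pseudo parabolic maximality claimed in (ii).
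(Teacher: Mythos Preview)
Your reduction via Margulis and the Root Lemma is correct (though the paper works directly with an arbitrary Fuchsian supergroup $\Gamma'$ and never invokes the commensurator), and your treatment of the cusp $\infty$ via Corollary~\ref{cor_cusp_stab<49} matches the paper. The gap is in the final stage.

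You propose to handle each remaining cusp $c$ using ``the explicit primitive parabolic $P_c$ listed in Lemma~\ref{lem_second_parabolic}.'' But Lemma~\ref{lem_second_parabolic} supplies the primitive parabolic for \emph{one} cusp direction only, namely $(w/2,1)$; Section~\ref{sec_fixing_Veech} adds the cusp $1$ via the matrix $E$. For general $D$ the Veech group $\SL(L_D^1)$ has many more cusps than these, and no uniform formula for $P_c$ at an arbitrary cusp is available (this is precisely the ``lack of information on the Veech group'' the paper repeatedly refers to). Your arithmetic endgame, that the divisibility conditions collapse to $(m)\mid\eta$ for an $\eta$ ``computable from the entries of $P_c$,'' therefore cannot be executed: you do not know those entries.

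The paper avoids this by reversing the roles of the known and unknown matrices. Rather than fixing a known test element and varying the cusp, it takes a \emph{generic} parabolic
\[
M = \begin{pmatrix} 1-a & b \\ c & 1+a \end{pmatrix} \in \SL(L_D^1), \qquad M_n = I + \tfrac{1}{n}(M-I),
\]
with $a,b,c\in\OD$ completely unspecified, and multiplies $M_n$ against the \emph{known} elements $T$, $Z=T^t$, $S$, $E$. The leading coefficients of $(M_nT)^k$, $(M_nS)^k$, $(M_nE)^k$ as polynomials in $1/n$ (Lemmas~\ref{lem_leading_coefficient_1} and~\ref{lem_leading_coefficient_2}) yield divisibility constraints on $n$ involving $a,b,c$: Proposition~\ref{prop_pseudo_parabolic_hk>1} forces $n\mid wa$, $n\mid wb$, $n\mid wc$ (so $n$ must be split and divide $\N(w)$), Proposition~\ref{prop_pseudo_parabolic2} forces $\mathfrak{p}^\sigma\mid(c-b)$, and the proof of Theorem~\ref{cor_1_mod_8} uses $(M_nE)^k$ together with the fact that no integer $n>1$ divides $e$ to reach a contradiction unless $n\mid 2$. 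The argument works uniformly over all cusps precisely because it never needs to know which cusp $M$ fixes. Theorem~\ref{thm_pseudo_parabolic} is then a one-line refinement: for $D\equiv 5\bmod 8$ the prime $2$ is inert and cannot satisfy $n\mid wc$, while for $D\equiv 1\bmod 8$ the factor $\mathfrak{p}_2^\sigma$ does not divide $w$, leaving only $\mathfrak{p}_2$ as a possible denominator.
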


Both of these facts will be important later on.\\[11pt]Fix a discriminant $D \equiv 1 \mod 4$ and assume that $\SL(L_D^1)$ is not pseudo parabolic maximal and $\Gamma'$ is a corresponding Fuchsian group containing $\SL(L_D^1)$. We have already seen that $\left[\Gamma'~:~\SL(L_D^1)\right]~<~\infty$ implies that only roots of parabolic elements in $\SL(L_D^1)$ might lie in $\Gamma'$. Let 
$$M = \begin{pmatrix} 1 - a & b \\ c & 1 + a \end{pmatrix}$$
be an arbitrary parabolic element in $\SL(L_D^1)$. We may without loss of generality assume that $b \neq 0$ and $c \neq 0$ by Corollary~\ref{cor_cusp_stab<49} and hence $c = \frac{-a^2}{b}$. The n-th root of $M$ is of the form 
$$M_n := \begin{pmatrix} 1 - \frac{a}{n} & \frac{b}{n} \\ \frac{c}{n} & 1 + \frac{a}{n} \end{pmatrix}.$$ As we want to analyze how much $\SL(L_D^1)$ differs from being pseudo parabolic maximal we are just interested in those roots not lying in $\SL_2(\OD)$. Since $n | b$ and $n|c$ yield $n|a$ we may in the following always assume that $n \nmid c$ or $n \nmid b$. Furthermore we may obviously assume that $n$ is a prime number in $\NN$ and that $\Gamma'$ is the group $\left< \SL(L_D^1),M_n \right>$. Given $M_n$ we now construct (for almost all $n$) a matrix $L \in \Gamma'$ such that $L^k \notin \SL(L_D^1)$ for all $k \in \NN$. This contradicts the assumption  $\left[\Gamma'~:\SL(L_D^1)\right]~<~\infty.$

\begin{lem} \label{lem_leading_coefficient_1} Let 
$$N = \begin{pmatrix} 1 - e & f \\ g & 1 + e \end{pmatrix}$$
be any parabolic element in $\SL(L_D^1)$. Then
$$(M_nN)^k = \begin{pmatrix} p_k(\frac{1}{n}) &  * \\ q_k(\frac{1}{n}) & * \end{pmatrix}$$
where $p_k(\frac{1}{n}), q_k(\frac{1}{n})$ are polynomials in $\frac{1}{n}$ of degree $k$ with leading coefficient
$$(-1)^k ((e-1)a + gb)(-fc+2ea+gb)^{k-1} \quad \it{for} \ \it{p_k} $$
and
$$(-1)^k (-(e-1)c + ga)(fc+2ea+gb)^{k-1} \quad \it{for} \ \it{q_k}.$$
\end{lem}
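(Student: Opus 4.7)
The proof will proceed by exploiting the structure of matrix powers coming from the Cayley--Hamilton theorem. Since $M_n$ and $N$ are both parabolic elements of $\SL_2(\mathbb{R})$, each has determinant $1$, hence $\det(M_nN) = 1$. Writing $t_n := \tr(M_nN)$, Cayley--Hamilton yields $(M_nN)^2 = t_n \cdot (M_nN) - I$. Defining the sequence of polynomials $\alpha_0 = 0$, $\alpha_1 = 1$, $\alpha_{j+1}(t) = t\,\alpha_j(t) - \alpha_{j-1}(t)$ (a Chebyshev-like recursion), an immediate induction on $k$ establishes the identity
$$(M_nN)^k \;=\; \alpha_k(t_n)\cdot(M_nN) \;-\; \alpha_{k-1}(t_n)\cdot I.$$
A further induction shows that $\alpha_k(t)$ is a polynomial of degree $k-1$ in $t$ with leading coefficient $1$.

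The next step is a direct computation of the first column of $M_nN$ and of its trace. Multiplying the matrices explicitly one finds
$$(M_nN)_{11} = (1-e) + \tfrac{(e-1)a + bg}{n},\quad (M_nN)_{21} = g + \tfrac{-(e-1)c + ag}{n},$$
and the corresponding expression for $(M_nN)_{22}$, which together give
$$t_n \;=\; 2 + \frac{2ea + bg + cf}{n}.$$
Substituting $t_n$ into the polynomial $\alpha_k$ shows that, viewed as a polynomial in the variable $s := 1/n$, $\alpha_k(t_n)$ has degree exactly $k-1$ in $s$ (the contributions of the lower-degree coefficients of $\alpha_k$ in $t$ produce only lower powers of $s$), and its leading coefficient is the appropriate power of the coefficient of $s$ in $t_n$.

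Combining these ingredients, the first column of $(M_nN)^k$ equals
$$\alpha_k(t_n)\cdot\begin{pmatrix} (M_nN)_{11} \\ (M_nN)_{21}\end{pmatrix} \;-\; \alpha_{k-1}(t_n)\cdot\begin{pmatrix}1\\0\end{pmatrix}.$$
Since $(M_nN)_{11}$ and $(M_nN)_{21}$ are polynomials of degree $1$ in $s$ whose linear coefficients are precisely $(e-1)a + bg$ and $-(e-1)c + ag$, and since $\alpha_{k-1}(t_n)$ has degree only $k-2$ in $s$, the top-degree term $s^k$ can arise only from the product of the leading terms of $\alpha_k(t_n)$ and of the linear parts of the column entries. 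Extracting these contributions yields the claimed leading coefficients of $p_k$ and $q_k$ up to sign conventions and a possible overall factor depending on how the Chebyshev polynomials are normalized.

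The only real difficulty is the bookkeeping: one must carefully track the leading coefficients through the Chebyshev-like recursion and confirm that no unexpected cancellations occur at the top degree, and that the contribution of the $-\alpha_{k-1}(t_n)\cdot I$ term indeed lives in strictly lower degree. Since both $\alpha_k$ and the entries of $M_nN$ are explicit polynomials of small degree in $s$, this amounts to a routine (though slightly tedious) verification rather than a genuine obstacle.
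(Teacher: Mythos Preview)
Your approach is correct and gives a clean, structured way to the result. The paper's own proof is a single line (``This follows by induction on $k$ or by \cite{MR03}, Lemma~3.1.3''), so your Cayley--Hamilton/Chebyshev argument is in fact more explicit than what the paper offers. The direct induction the paper has in mind would compute $(M_nN)^{k+1} = (M_nN)^k \cdot (M_nN)$ and track the top-degree coefficient in $1/n$ by hand; your method packages this more elegantly by reducing to the known leading behaviour of the Chebyshev polynomials $\alpha_k$.

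Your hedge ``up to sign conventions'' is worth commenting on. A direct computation of $(M_nN)_{11}$ and $(M_nN)_{21}$ for $k=1$ gives leading coefficients $(e-1)a+bg$ and $-(e-1)c+ga$, without the global $(-1)^k$ factor stated in the lemma; moreover the trace computation gives $t_n = 2 + (2ea+bg+cf)/n$, so that both $p_k$ and $q_k$ should carry the same factor $(2ea+bg+cf)^{k-1}$, whereas the stated formula has $-fc$ in one and $+fc$ in the other. These appear to be typos in the statement. They are harmless for every application in the paper (only divisibility of the leading coefficient by certain primes is ever used, so signs are irrelevant), but you should not worry that your calculation fails to reproduce them exactly.
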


\begin{proof} This follows by induction on $k$ or by \cite{MR03}, Lemma~3.1.3. \end{proof}

Applying Lemma~\ref{lem_leading_coefficient_1} to $T$ already gives a rather restrictive condition on $a,b$ and $c$.

\begin{prop} \label{prop_pseudo_parabolic_hk>1} Let $n$ be an arbitrary prime number in $\NN$. If $n \nmid wc$ or $n \nmid wb$ or $n \nmid wa$, then
$$\left[\left<\SL(L_D^1),M_n\right>:\SL(L_D^1)\right] = \infty.$$ 
\end{prop}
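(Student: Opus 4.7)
\textit{Proof plan.} I will establish the contrapositive: assuming $[\langle\SL(L_D^1),M_n\rangle:\SL(L_D^1)] < \infty$, I aim to deduce $n \mid wa$, $n \mid wb$ and $n \mid wc$ simultaneously. Finite index forces the sequence of cosets $(M_n N)^j\SL(L_D^1)$, $j \geq 0$, to be eventually periodic for any $N \in \SL(L_D^1)$, so in particular some positive power $(M_n N)^k$ lies in $\SL(L_D^1) \subseteq \SL_2(\OD)$. The plan is to test this against the two known parabolic generators of $\SL(L_D^1)$, namely $T = \bigl(\begin{smallmatrix}1&w\\0&1\end{smallmatrix}\bigr)$ and $Z=-STS$.

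First I apply Lemma~\ref{lem_leading_coefficient_1} with $N = T$ (so $e=0$, $f=w$, $g=0$). Up to sign, the leading coefficients of the degree-$k$ polynomials $p_k$ and $q_k$ in $1/n$ become $a(wc)^{k-1}$ and $c^k w^{k-1}$ respectively. The condition $p_k(1/n),q_k(1/n)\in\OD$ rewrites as $n^k p_k(1/n), n^k q_k(1/n) \in n^k\OD$; reducing these polynomial identities modulo $n$ kills every term except the constant one in $n$ (i.e.\ the leading coefficient in $1/n$), giving
\begin{equation*}
n \,\bigm|\, a(wc)^{k-1} \qquad\text{and}\qquad n \,\bigm|\, c^{k} w^{k-1} \qquad \text{in } \OD.
\end{equation*}
Repeating the argument with $N = Z$ (so $e=f=0$, $g=\pm w$) produces the analogous divisibilities $n\mid(wb-a)(wb)^{k'-1}$ and $n\mid(wa+c)(wb)^{k'-1}$ for some $k'$.

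The third step is to extract the individual divisibilities from these product conditions. Decomposing $(n) = \prod\mathfrak{p}_i^{e_i}$ in $\OD$ and working prime-by-prime, any prime ideal factor $\mathfrak{p}$ of $(n)$ that divides $c^k w^{k-1}$ must divide either $c$ or $w$; matching the multiplicities $e_i$ through both divisibilities on the $T$-side yields $n\mid wc$, and the parallel analysis on the $Z$-side yields $n\mid wb$. Finally, the determinant relation $\det M = 1$ reads $a^2 + bc = 0$, so $(wa)^2 = -w^2 bc = -(wb)(wc) \in n^2\OD$, and Lemma~\ref{lem_fund_discriminant_quadratic}(i) promotes this to $n \mid wa$, completing the deduction.

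The hard part will be the ramified case of the ideal-theoretic extraction. If $(n)=\mathfrak{p}^2$ then $\mathfrak{p}^2 \mid c^k w^{k-1}$ does \emph{not} immediately force $\mathfrak{p}^2 \mid wc$: already $v_\mathfrak{p}(c)=1$, $v_\mathfrak{p}(w)=0$ and $k\geq 2$ satisfies the hypothesis while giving only $v_\mathfrak{p}(wc)=1$. Handling this cleanly will require sharpening the modular reduction — looking at the sub-leading coefficient in the polynomial $n^k p_k(1/n)$ modulo $n^2$ rather than only the leading coefficient modulo $n$, and feeding it together with the divisibility on the $Z$-side through the syzygy $a^2=-bc$. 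I expect this to force the case distinctions in $D \bmod 8$ that are visible in the sharper Theorem~\ref{thm_pseudo_parabolic}, and to leave exactly the residual ambiguity encoded by the factor $w$ appearing throughout the statement $n\mid wa$, $n\mid wb$, $n\mid wc$.
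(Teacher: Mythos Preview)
Your overall strategy—extract $n\mid w^{k-1}c^k$ from $(M_nT)^k$ via Lemma~\ref{lem_leading_coefficient_1}, handle $b$ symmetrically, then recover $n\mid wa$ from $a^2=-bc$—matches the paper's. One minor point: the first-column divisibilities you read off $(M_nZ)^{k'}$ entangle $a$ and $c$ and do not give $n\mid wb$ by a ``parallel analysis''; the paper instead uses that transposition swaps $T\leftrightarrow Z=T^t$ and the roles of $b,c$ in $M_n$, so the identical argument yields $n\mid w^{k-1}b^k$ directly.

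The real gap is your ramified case $(n)=\mathfrak{p}^2$. Your speculative plan (sub-leading coefficients modulo $n^2$, with expected $D\bmod 8$ case distinctions) is aimed in the wrong direction: this proposition holds uniformly for $D\equiv 1\bmod 4$, and no sub-leading terms are needed. The key input you are missing is Lemma~\ref{lem_properties_OD}(iii): every prime ideal divisor of $w$ lies over a \emph{split} rational prime, so a ramified $\mathfrak{p}$ automatically satisfies $\mathfrak{p}\nmid w$. Hence $\mathfrak{p}^2\mid w^{k-1}c^k$ already forces $\mathfrak{p}\mid c$; symmetrically $\mathfrak{p}\mid b$, and then $a^2=-bc$ gives $\mathfrak{p}\mid a$. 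If nonetheless $n\nmid wc$ (equivalently $\mathfrak{p}^2\nmid c$), the fractional ideal of the lower-left entry $c/n$ of $M_n$ is $\mathfrak{c}\mathfrak{p}^{-1}$ with $\mathfrak{p}\nmid\mathfrak{c}$; rerunning the same leading-coefficient argument one $\mathfrak{p}$-level finer forces $\mathfrak{p}\mid w^2\mathfrak{c}^2$, which contradicts $\mathfrak{p}\nmid w$ and $\mathfrak{p}\nmid\mathfrak{c}$.
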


\begin{proof} First assume that $n \nmid wc$. We use the notation from Lemma~\ref{lem_leading_coefficient_1} which we apply to $(M_nT)^k$. The leading coefficient of $q_k$ is in this case $(-1)^k w^{k-1} c^k$. We assume that $\left[\left<\SL(L_D^1),M_n\right>:\SL(L_D^1)\right] < \infty.$ Then there exists $k \in \NN$ with $n|w^{k-1} c^{k}$. This and Lemma~\ref{lem_properties_OD} imply that $n|w^2c^2$ if $n$ ramifies or otherwise that $n|wc$. So $n=\mathfrak{p}^2$ must be ramified and therefore by Lemma~\ref{lem_properties_OD}~(v) we have $n|c^2$. We may thus without of loss of generality assume that $\mathfrak{p}|c$ and $\mathfrak{p}|b$ (otherwise the statement would be true by interchanging the roles of $c$ and $b$) and hence $\mathfrak{p}|a$. Then the (fractional) ideal generated by the lower left entry of $M_n$ is of the form $\mathfrak{c}\mathfrak{p}^{-1}$ for some proper ideal $\mathfrak{c}$ and $\mathfrak{p} \nmid \mathfrak{c}$. Looking at $(M_nT)^k$ once again one then notices that the necessary condition for the existence of some $k_0 \in \NN$ such that $(M_nT)^k$, namely $\mathfrak{p}|(w)^2\mathfrak{c}^2$, contradicts Lemma~\ref{lem_properties_OD}~(iii) and therefore cannot be fulfilled.\\
If $n \nmid wb$ then one again uses the same argument with $T$ replaced by $Z=T^t$. If $n\nmid wa$ the claim follows since $a^2 = -bc$ and hence $n \nmid wb$ or $n \nmid wc$.
\end{proof}

From this it follows immediately that $n$ must be a split prime number, i.e. $(n)=\mathfrak{p}\mathfrak{p}^\sigma$, a product of two conjugated prime ideals in $\OD$, if the index is finite. Moreover $n | \N(w) = \frac{D-1}{4}$. Another infinite series of matrices in $\Gamma'$ can also be explicitly calculated.

\begin{lem} \label{lem_leading_coefficient_2} We have $$(M_nS)^k = \begin{pmatrix} p_k(\frac{1}{n}) & * \\ q_k(\frac{1}{n}) & * \end{pmatrix}$$
where $p_k(\frac{1}{n})$ and $q_k(\frac{1}{n})$ are polynomials in $\frac{1}{n}$ of degree $k$ with leading coefficients $(-1)^k c (c-b)^{k-1}$ for $p_k$ and $(-1)^k a (c-b)^{k-1}$ for $q_k$.
\end{lem}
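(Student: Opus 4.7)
The plan is to mimic the proof of Lemma~\ref{lem_leading_coefficient_1}, replacing $N$ by $S$ and proceeding by induction on $k$. The base case $k=1$ is a direct matrix product: since $S=\left(\begin{smallmatrix}0&-1\\1&0\end{smallmatrix}\right)$, the first column of $M_nS$ is $(b/n,\,1+a/n)^T$, which already exhibits the claimed polynomial shape in $1/n$. For the inductive step, I would write $(M_nS)^{k+1}=(M_nS)(M_nS)^{k}$, expand the first column using the inductive description, and track only the top power of $1/n$. The essential algebraic input is the parabolicity relation $a^2+bc=0$ forced by $\det M=1$; it is precisely this identity that makes the mixed quadratic combinations in $a,b,c$ collapse into a clean factor $(c-b)^{k-1}$.

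A slightly cleaner route, which I would actually prefer, bypasses induction and uses Cayley--Hamilton for $M_nS$ itself. One has $\det(M_nS)=\det(M_n)\det(S)=1$ and $\tr(M_nS)=b/n-c/n=(b-c)/n$, so
$$(M_nS)^{k+1}\;=\;\tfrac{b-c}{n}\,(M_nS)^{k}\;-\;(M_nS)^{k-1}.$$
Writing $(M_nS)^k=\alpha_k(M_nS)+\beta_k\Id$ reduces the matrix identity to the two scalar recursions $\alpha_{k+1}=\tfrac{b-c}{n}\alpha_k-\alpha_{k-1}$ and $\beta_{k+1}=\tfrac{b-c}{n}\beta_k-\beta_{k-1}$ with initial data $(\alpha_0,\alpha_1)=(0,1)$ and $(\beta_0,\beta_1)=(1,0)$. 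One reads off directly that $\alpha_k$ is a polynomial in $1/n$ of degree $k-1$ with leading coefficient $(b-c)^{k-1}$, while $\beta_k$ has degree $k-2$. The first column of $(M_nS)^k$ is then $\alpha_k(b/n,\,1+a/n)^T+\beta_k(1,0)^T$, and the top powers of $1/n$ in $p_k$ and $q_k$ come purely from the contributions $\alpha_k\cdot b/n$ and $\alpha_k\cdot a/n$, producing leading coefficients proportional to $(c-b)^{k-1}$ as stated.

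I do not expect a genuine obstacle: the computation is entirely formal and parallels Lemma~\ref{lem_leading_coefficient_1} verbatim. The only mildly subtle point is confirming that the degree of $p_k$ and $q_k$ is really $k$ and not smaller, which requires $c\neq b$; this is harmless under the standing assumptions, since $c=b$ combined with $a^2=-bc=-b^2$ and $a,b\in\OD\subset\RR$ would force $a=b=c=0$, contradicting the assumption that $M$ is a nontrivial parabolic element with $b,c\neq 0$.
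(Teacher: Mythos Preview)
Your induction approach is exactly the paper's proof, which reads in full: ``Also follows by induction on $k$.'' Your Cayley--Hamilton alternative is a genuinely cleaner route: writing $(M_nS)^k=\alpha_k(M_nS)+\beta_k\Id$ converts the four-entry matrix induction into a single scalar Chebyshev-type recursion for $\alpha_k$, from which the degree and leading term can be read off directly rather than tracked through a product at each step. This buys transparency at no cost, since $\tr(M_nS)=(b-c)/n$ and $\det(M_nS)=1$ are immediate. One small remark: your computation in fact produces leading coefficients $b(b-c)^{k-1}$ for $p_k$ and $a(b-c)^{k-1}$ for $q_k$, which differ from the stated $(-1)^kc(c-b)^{k-1}$ and $(-1)^ka(c-b)^{k-1}$ by a sign (and by $b\leftrightarrow c$ in $p_k$); these look like harmless typos in the lemma's statement, since only the common factor $(c-b)^{k-1}$ is actually used downstream in Proposition~\ref{prop_pseudo_parabolic2}.
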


\begin{proof} Also follows by induction on $k$. \end{proof}

Similarly as above this gives some additional restrictions on the entries of $M_n$.

\begin{prop} \label{prop_pseudo_parabolic2} If $n=\mathfrak{p}\mathfrak{p}^{\sigma}$ is a split prime number with $\mathfrak{p} \nmid(c-b)$ and $\mathfrak{p}^\sigma\nmid(c-b)$ then
$$\left[\left<\SL(L_D^1),M_n\right>:\SL(L_D^1)\right] = \infty.$$ 
\end{prop}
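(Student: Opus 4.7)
My plan is to run the same finite-index argument as in Proposition~\ref{prop_pseudo_parabolic_hk>1}, but with the parabolic element $T$ replaced by $S \in \SL(L_D^1)$, and then to combine the resulting divisibility conditions with those already established in Proposition~\ref{prop_pseudo_parabolic_hk>1}. Assume for contradiction that $[\langle \SL(L_D^1), M_n\rangle : \SL(L_D^1)] < \infty$. Then by the coset-pigeonhole argument there exists $k \in \NN$ with $(M_n S)^k \in \SL(L_D^1) \subset \SL_2(\OD)$. By Lemma~\ref{lem_leading_coefficient_2}, the entries of the first column of $(M_n S)^k$ are polynomials in $1/n$ of degree $k$ whose coefficients of $1/n^k$ are $(-1)^k c(c-b)^{k-1}$ (upper) and $(-1)^k a(c-b)^{k-1}$ (lower); both are nonzero, since $c \ne b$ (otherwise $a^2 = -bc = -b^2 < 0$, which is impossible in $\RR$). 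Exactly as in the proof of Proposition~\ref{prop_pseudo_parabolic_hk>1}, clearing denominators and reducing modulo $\mathfrak{p}$ forces $\mathfrak{p}$ to divide each of these leading coefficients. The hypothesis $\mathfrak{p} \nmid (c-b)$ then yields $\mathfrak{p} \mid a$ and $\mathfrak{p} \mid c$, and the same reasoning with $\mathfrak{p}^\sigma$ gives $n \mid a$ and $n \mid c$.

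Next, I would invoke Proposition~\ref{prop_pseudo_parabolic_hk>1} itself under the same finite-index hypothesis to conclude in particular that $n \mid wb$. For each $\mathfrak{q} \in \{\mathfrak{p}, \mathfrak{p}^\sigma\}$ this means $\mathfrak{q} \mid w$ or $\mathfrak{q} \mid b$. The branch $\mathfrak{q} \mid b$ is incompatible with the hypothesis, for combined with $\mathfrak{q} \mid c$ from the previous paragraph it would give $\mathfrak{q} \mid (c-b)$, a direct contradiction. Hence we must have both $\mathfrak{p} \mid w$ and $\mathfrak{p}^\sigma \mid w$ simultaneously, which, since $\mathfrak{p}$ and $\mathfrak{p}^\sigma$ are coprime prime ideals, forces $n \mid w$ in $\OD$.

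The final step, and what I regard as the actual obstacle, is to rule out this residual case $n \mid w$; without it, applying Lemma~\ref{lem_leading_coefficient_2} to $(M_n S)^k$ is by itself not sharp enough, which is precisely why the auxiliary divisibility coming from Proposition~\ref{prop_pseudo_parabolic_hk>1} has to be brought in. Here the arithmetic of $\OD$ closes the argument: by Lemma~\ref{lem_properties_OD}(i), expanding $w = 0 + 1 \cdot w$ in the basis $(1,w)$, the relation $n \mid w$ is equivalent to $n \mid 0$ and $n \mid 1$, which is absurd for a rational prime $n > 1$. This contradiction completes the proof.
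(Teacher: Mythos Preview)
Your proof is correct, and its first paragraph is exactly the paper's argument: apply Lemma~\ref{lem_leading_coefficient_2} to $(M_nS)^k$, use $\mathfrak{p},\mathfrak{p}^\sigma\nmid(c-b)$ and primality to conclude $n\mid c$ (and $n\mid a$). The difference is that the paper stops right there. In the paragraph preceding Lemma~\ref{lem_leading_coefficient_1} the standing hypothesis ``we may assume $n\nmid c$ or $n\nmid b$'' was set up (since $n\mid b$ and $n\mid c$ would force $M_n\in\SL_2(\OD)$), and because $S\in\SL(L_D^1)$ conjugates $M_n$ to the root with parameters $(-a,-c,-b)$ while leaving $c-b$ and the group $\langle\SL(L_D^1),M_n\rangle$ unchanged, one may in fact assume $n\nmid c$. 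Your conclusion $n\mid c$ is then already the desired contradiction.

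Your second and third paragraphs---pulling in $n\mid wb$ from Proposition~\ref{prop_pseudo_parabolic_hk>1} and then excluding $n\mid w$ via Lemma~\ref{lem_properties_OD}(i)---are valid but superfluous. They do buy you one thing: a proof that does not invoke the ambient WLOG reduction at all, closing the argument purely from the hypothesis $\mathfrak{p},\mathfrak{p}^\sigma\nmid(c-b)$. That is a legitimate alternative, just longer than what the paper needs.
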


\begin{proof} Since we may assume that $n \nmid c$ the claim follows immediately from Lemma~\ref{lem_leading_coefficient_2} by the same arguments as in Proposition~\ref{prop_pseudo_parabolic_hk>1}. \end{proof}

Without loss of generality we may then assume $\mathfrak{p}^{\sigma} | (c-b)$ and therefore in particular $\mathfrak{p}^{\sigma}| a^2 + b^2$ since $c=-\frac{a^2}{b}$.\\[11pt]
The desired theorem can now be derived. Recall that the parabolic element that fixes the cusp $1$ is of the form
$$E = \begin{pmatrix} 1 - e & e\\ -e & 1+e \end{pmatrix}.$$
where $e \in \OD$. Moreover there cannot exist a prime number with $n \in \ZZ$ and $n|e$ (see Section~\ref{sec_fixing_Veech}). This allows to prove the main result of this section. 

\begin{proof}[Proof of Theorem~\ref{cor_1_mod_8}] We have already seen $(n)=\mathfrak{p}\mathfrak{p}^{\sigma}$ has to be a split prime number with $(n)|wa$, $(n)|wb$, $(n)|wc$ and $\mathfrak{p}^{\sigma}|a^2 + b^2$. We may furthermore assume that $\mathfrak{p} | b$ but $(n) \nmid b$. We now prove that $\SL(L_D)$ is $(2)$-pseudo parabolic maximal. Assume that the claim is not true. We apply Lemma~\ref{lem_leading_coefficient_1} to $M_nE$. If the index is finite, then $\mathfrak{p}^{\sigma}|a$ or $\mathfrak{p}^{\sigma}|(a-b)$. Since $\mathfrak{p}^\sigma | a^2 + b^2$ and $\mathfrak{p}^\sigma \nmid 2$ it thus follows in any case that $\mathfrak{p}^\sigma |b$. This is a contradiction.  \end{proof}

\begin{proof}[Proof of Theorem~\ref{thm_pseudo_parabolic}] If $D \equiv 1 \mod 8$ then $n|wa$, $n|wb$ and $n|wc$ imply that $\SL(L_D)$ is indeed $\mathfrak{p}_2$-pseudo parabolic maximal since $\mathfrak{p}_2^\sigma \nmid w$. If $D \equiv 5 \mod 8$ we also see that the claim is an immediate consequence since $2$ is an inert prime number if $D \equiv 5 \mod 8$.  \end{proof}




For the Veech groups which are explicitly known by C. McMullen's algorithm it now follows that they are indeed parabolic maximal since none of the parabolic elements in $\SL(L_D^1)$ has a root in $\SL_2(\OD)$ in these cases.

\begin{cor} For $D \in \left\{ 5,13,21,29 \right\}$ we have that $\SL(L_D^1)$ is parabolic maximal.  \end{cor}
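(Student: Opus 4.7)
The plan is to upgrade the pseudo parabolic maximality already established in Theorem~\ref{thm_pseudo_parabolic}~(i) to full parabolic maximality, by ruling out parabolic roots inside $\SL_2(\OD)$ one cusp at a time. Since each of $5,13,21,29$ is congruent to $5 \mod 8$, Theorem~\ref{thm_pseudo_parabolic}~(i) applies and gives that $\SL(L_D^1)$ is pseudo parabolic maximal. So if a Fuchsian group $\Gamma' \supsetneq \SL(L_D^1)$ of finite index contained a parabolic element $U \notin \SL(L_D^1)$, it would automatically satisfy $U \in \SL_2(\OD)$. Applying Lemma~\ref{lem_roots} and using that the stabilizer of a parabolic fixed point in $\SL_2(\RR)$ is abelian, I may replace $U$ inside $\Gamma'$ by a proper $n$-th matrix root ($n \geq 2$) of one of the primitive parabolic generators $\gamma_i$ of $\SL(L_D^1)$. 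Hence it suffices to show that no $\gamma_i$ possesses a proper $n$-th root in $\SL_2(\OD)$.

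For each of the four discriminants the Veech group is explicitly known via McMullen's algorithm, so I would enumerate the finitely many primitive parabolic generators (one per cusp) and apply the following elementary criterion: writing $\gamma_i = \bigl( \begin{smallmatrix} 1-a & b \\ c & 1+a \end{smallmatrix} \bigr)$, an $n$-th matrix root lies in $\SL_2(\OD)$ precisely when the integer $n$ divides $a$, $b$ and $c$ inside $\OD$, i.e.\ when $n$ divides the $\OD$-gcd of the off-diagonal entries. For the cusp at $\infty$ the generator $T = \bigl( \begin{smallmatrix} 1 & w \\ 0 & 1 \end{smallmatrix} \bigr)$ has $\OD$-gcd equal to $w$, and $\N(w) = (1-D)/4 \in \{-1,-3,-5,-7\}$ is either a unit or a rational prime, so $n^2 \mid \N(w)$ is impossible for $n \geq 2$; the conjugate generator $Z$ is handled identically. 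For the cusp $(1,1)$ the generator $E$ was shown in Section~\ref{sec_fixing_Veech} to have $e \in \OD$ admitting no rational integer divisor greater than $1$. For $D \in \{13,21,29\}$ the third primitive parabolic is the matrix from Lemma~\ref{lem_second_parabolic} in the case $D \equiv 5 \mod 8$; using that $2$ is inert in $\OD$ and that $w$ is coprime to $2$, its $\OD$-gcd reduces (up to units) to $w+1$, and $\N(w+1) = (9-D)/4 \in \{-1,-3,-5\}$ again blocks any integer root. For $D=5$ there is only one cusp, so the argument terminates after $T$.

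The main obstacle is not analytic but organizational: I have to be sure that the list of primitive parabolic generators extracted from the explicit fundamental domains for these four small discriminants is genuinely complete, since the deduction collapses if even a single cusp is overlooked. Once that enumeration is fixed, every remaining step is a short norm computation, made transparent by the fact that $\N(w)$ and $\N(w+1)$ are in each case either a unit or an odd rational prime, so there is simply no room for a square integer factor to appear.
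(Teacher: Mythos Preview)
Your approach is essentially the same as the paper's: use that $D\equiv 5\bmod 8$ together with Theorem~\ref{thm_pseudo_parabolic}(i) to force any extra parabolic into $\SL_2(\OD)$, then invoke the root lemma and verify by direct inspection of the explicitly known generators that none of the primitive parabolics admits an integer $n$-th root in $\SL_2(\OD)$. The paper's proof is in fact just the one-line remark that ``none of the parabolic elements in $\SL(L_D^1)$ has a root in $\SL_2(\OD)$ in these cases,'' so your more detailed norm computations for $T$, $E$, and the Lemma~\ref{lem_second_parabolic} matrix flesh out exactly what the paper leaves implicit; your caveat about needing the complete cusp list from McMullen's algorithm for $D=21,29$ is well placed, since the paper simply takes that explicit data as given.
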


\begin{cor} In the other cases where $\SL(L_D^1)$ is explicitly known, i.e. $D \in \left\{ 17,33,41 \right\}$, the group is also parabolic maximal. \end{cor}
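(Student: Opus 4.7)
The plan is to split by residue class of $D$ modulo $8$: all of $17$, $33$, $41$ are $\equiv 1 \mod 8$, so Theorem~\ref{thm_pseudo_parabolic}(ii) is available, but $D=17$ admits a much stronger statement.

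For $D = 17$ the claim is immediate, because Theorem~\ref{thm_maximality} asserts that $\SL(L_{17}^1)$ is even (finitely) maximal. In particular, no Fuchsian overgroup of finite index exists at all, so certainly none can contain an extra parabolic element, which is the content of parabolic maximality.

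For $D \in \{33,41\}$ the strategy is to combine Theorem~\ref{thm_pseudo_parabolic}(ii) with a direct check on the primitive parabolic elements. Theorem~\ref{thm_pseudo_parabolic}(ii) already tells us that $\SL(L_D^1)$ is $\mathfrak{p}_2$-pseudo parabolic maximal, so any hypothetical new parabolic $P$ in a finite-index Fuchsian extension must lie in $\SL_2(K) \cap \Mat^{2\times 2}(\mathfrak{p}_2^{-1})$. The root lemma (Lemma~\ref{lem_roots}) refines this further: such a $P$ forces some root $M_n = M^{1/n}$ of a primitive parabolic $M \in \SL(L_D^1)$, with $n \in \NN$, to lie in $\Mat^{2 \times 2}(\mathfrak{p}_2^{-1})$. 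Retracing the arithmetic of Propositions~\ref{prop_pseudo_parabolic_hk>1} and \ref{prop_pseudo_parabolic2} for $D \equiv 1 \mod 8$, the only surviving possibility is $n = 2$ together with the condition that either $\mathfrak{p}_2$ or $\mathfrak{p}_2^\sigma$ divides all three of the entries $a,b,c$ of $M = \left(\begin{smallmatrix} 1-a & b \\ c & 1+a \end{smallmatrix}\right)$ (the case of a genuine integer root, $2 \mid (a,b,c)$, being a specialisation of both).

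It then remains to certify that, for $D = 33$ and $D = 41$, no primitive parabolic in $\SL(L_D^1)$ meets this divisibility condition. Since McMullen's algorithm produces explicit generators of $\SL(L_D^1)$ in these two cases, one selects a representative of each of the finitely many conjugacy classes of primitive parabolics (one per cusp, as for $D=13,17$ in Section~\ref{sec_fixing_Veech}) and reduces the entries modulo $\mathfrak{p}_2$ and modulo $\mathfrak{p}_2^\sigma$ in $\OD/(2)$. The main obstacle of the proof is precisely this bookkeeping: one must correctly list a representative for every cusp class (not merely the three cusps at $\infty$, $1$ and $w/2$ computed in the examples of Section~\ref{sec_fixing_Veech}) and carry out the divisibility tests. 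Once every test fails, no admissible $\mathfrak{p}_2$-root exists, hence no new parabolic can be adjoined to $\SL(L_D^1)$ in a finite-index Fuchsian overgroup, and parabolic maximality follows.
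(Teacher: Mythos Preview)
Your structure is sound and your handling of $D=17$ is actually cleaner than the paper's: invoking Theorem~\ref{thm_maximality} (maximality) settles that case immediately, whereas the paper treats $17,33,41$ uniformly. For $D\in\{33,41\}$ your route is related to but genuinely different from the paper's. The paper argues in two computational steps: first, none of the primitive parabolics of $\SL(L_D^1)$ has a root already in $\SL_2(\OD)$; second, the remaining candidate second roots (those landing in $\Mat^{2\times 2}(\mathfrak{p}_2^{-1})$) are excluded by the membership algorithm of Appendix~\ref{sec_check_elements}, in the same spirit as the explicit contradictions produced for $D=13,17$ in the proof of Theorem~\ref{thm_maximality}.

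Your proposal replaces the second step by a pure divisibility test on the primitive parabolics. That is a legitimate sufficient criterion, but it is only a shortcut: if for some primitive parabolic one \emph{does} have $\mathfrak{p}_2^\sigma\mid(a,b,c)$, then the square root lies in $\Mat^{2\times 2}(\mathfrak{p}_2^{-1})$ and your test is inconclusive, whereas the paper's algorithmic check still applies and can rule it out. You do not verify that the divisibility tests actually fail for all cusps of $\SL(L_{33}^1)$ and $\SL(L_{41}^1)$, so as written your argument has a potential gap precisely where the paper's method is more robust. One small correction: from $\mathfrak{p}_2$-pseudo parabolic maximality the surviving second-root condition is $\mathfrak{p}_2^\sigma\mid(a,b,c)$ only (since $\tfrac{a}{2}\in\mathfrak{p}_2^{-1}$ iff $a\in 2\mathfrak{p}_2^{-1}=\mathfrak{p}_2^\sigma$); the ``$\mathfrak{p}_2$ only'' case is already excluded by $\mathfrak{p}_2$-ppm, so your ``$\mathfrak{p}_2$ or $\mathfrak{p}_2^\sigma$'' is harmless but redundant.
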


\begin{proof} None of the parabolic elements in $\SL(L_D^1)$ has a root in $\SL_2(\OD)$ in these cases. It can also be checked that second roots may not appear (e.g. by applying the algorithm from Appendix~\ref{sec_check_elements}). \end{proof}

\paragraph{D $\equiv$ 1 mod 8, even spin.} We now want to show the corresponding result for the Veech groups of even spin Teichmüller curves.
\begin{thm} \label{thm_pseudo_parabolic_even} For all $D \equiv 1 \mod 8$ the group $\SL(L_D^0)$ is $\mathfrak{p}_2^\sigma$-pseudo parabolic maximal. \end{thm}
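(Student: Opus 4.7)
The argument parallels the proof of Theorem~\ref{thm_pseudo_parabolic}(ii). The role of $T$ and $Z$ is now played by the even-spin parabolic generators
$$T' = \begin{pmatrix} 1 & w-1 \\ 0 & 1 \end{pmatrix} \quad \text{and} \quad Z' = \begin{pmatrix} 1 & 0 \\ w+1 & 1 \end{pmatrix}$$
of $\SL(L_D^0)$ from Section~\ref{sec_fixing_Veech}. The decisive arithmetic fact is that $w-1 = -w^\sigma$, so every prime-ideal divisor of $(w-1)$ is the Galois conjugate of a prime-ideal divisor of $(w)$; in particular $\mathfrak{p}_2^\sigma \mid (w-1)$ but $\mathfrak{p}_2 \nmid (w-1)$. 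Moreover $\mathfrak{p}_2^\sigma \mid (w+1)$, since $(w+1)-(w-1)=2$ and $\mathfrak{p}_2^\sigma$ divides both $(2)$ and $(w-1)$.

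First I would establish the even-spin analog of Corollary~\ref{cor_cusp_stab<49}: any parabolic element of a finite-index Fuchsian enlargement $\Gamma'\supset\SL(L_D^0)$ fixing $\infty$ must already lie in $\SL(L_D^0)$ up to factors of $\mathfrak{p}_2^\sigma$. The inductive argument on $(US)^k$ in Corollary~\ref{cor_cusp_stab<49} carries over verbatim once $w$ is replaced by $w-1 = -w^\sigma$, and Lemma~\ref{lem_properties_OD}(v) applied to $w^\sigma$ excludes all denominator primes except $\mathfrak{p}_2^\sigma$. By the root Lemma~\ref{lem_roots} together with Remark~\ref{rem_root}, it then remains to rule out enlargements obtained by adjoining a prime-order root $M_n$ of a parabolic $M = \begin{pmatrix}1-a & b \\ c & 1+a\end{pmatrix}\in\SL(L_D^0)$ with $a^2=-bc$ and $b,c\neq 0$, where $(n)$ has some prime-ideal divisor different from $\mathfrak{p}_2^\sigma$.

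Next, I would apply Lemma~\ref{lem_leading_coefficient_1} twice. With $N = T'$ (so $e=0$, $f=w-1$, $g=0$) the leading coefficient of the lower-left polynomial $q_k$ of $(M_nT')^k$ is, up to sign, $(w-1)^{k-1}c^k$; running the argument of Proposition~\ref{prop_pseudo_parabolic_hk>1} with $w$ replaced by $w-1=-w^\sigma$ and appealing to Lemma~\ref{lem_properties_OD}(iii)--(v) via $\sigma$-conjugation forces $n\mid (w-1)c$ and shows $n$ must be split. A parallel application with $N = Z'$ (so $e=0$, $f=0$, $g=w+1$), using the analogous leading coefficient in the appropriate entry of $(M_nZ')^k$, yields $n\mid (w+1)b$. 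The degenerate alternative $n\mid b$ and $n\mid c$ is excluded exactly as before, because it would force $n\mid a$ and contradict the reduction of the previous paragraph. Consequently, every prime-ideal divisor $\mathfrak{p}$ of $(n)$ must divide both $(w-1)$ and $(w+1)$, hence also their difference $2=(w+1)-(w-1)$. From $(2)=\mathfrak{p}_2\mathfrak{p}_2^\sigma$ and $\mathfrak{p}_2\nmid(w-1)$ one concludes $\mathfrak{p}=\mathfrak{p}_2^\sigma$, contradicting the choice of $n$.

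\textbf{Main obstacle.} The principal difficulty is that $S = \begin{pmatrix}0 & -1\\ 1 & 0\end{pmatrix}$ is not an element of $\SL(L_D^0)$, so the symmetry between the cusps at $\infty$ and $0$ exploited in the odd-spin proof is broken, and the constraint $\mathfrak{p}^\sigma\mid(c-b)$ coming from Proposition~\ref{prop_pseudo_parabolic2} (via $M_nS$ and Lemma~\ref{lem_leading_coefficient_2}) is not directly available. My substitute is to invoke Lemma~\ref{lem_leading_coefficient_1} twice --- once with $T'$ to constrain $c$, once with $Z'$ to constrain $b$ --- and then to exploit the fact that the coprime-looking ideals $(w-1)$ and $(w+1)$ share no prime-ideal divisor outside $\mathfrak{p}_2^\sigma$. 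The ramified and inert subcases, together with the explicit handling of small $k$, must be treated using Lemma~\ref{lem_properties_OD} exactly as in the proofs of Proposition~\ref{prop_pseudo_parabolic_hk>1} and Theorem~\ref{cor_1_mod_8}; this is expected to be tedious but not substantively different from the odd-spin situation.
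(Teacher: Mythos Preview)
Your argument has a genuine gap at the step ``Consequently, every prime-ideal divisor $\mathfrak{p}$ of $(n)$ must divide both $(w-1)$ and $(w+1)$''. This does not follow from the two constraints $n\mid(w-1)c$ and $n\mid(w+1)b$ together with $n\nmid b$ or $n\nmid c$. Concretely, take any odd split prime $p$ with $p\mid \N(w-1)=(D-1)/4$; since $(D-1)/4-(D-9)/4=2$, such a $p$ never divides $\N(w+1)$. Write $(p)=\mathfrak{p}\mathfrak{p}^\sigma$ with $\mathfrak{p}\mid(w-1)$, so that $\mathfrak{p}^\sigma\mid w$ and $(p,w+1)=1$. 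Then $p\mid(w+1)b$ forces $p\mid b$, while $p\mid(w-1)c$ only forces $\mathfrak{p}^\sigma\mid c$; neither prime factor of $(p)$ divides $(w+1)$, and $\mathfrak{p}\nmid c$ is perfectly compatible with $a^2=-bc$ (e.g.\ $v_{\mathfrak{p}}(a)=1$, $v_{\mathfrak{p}}(b)=2$, $v_{\mathfrak{p}}(c)=0$). So your divisor-of-the-difference trick never gets off the ground. The paper closes precisely this gap by bringing in a \emph{third} parabolic, the element $E=\left(\begin{smallmatrix}1-e & e\\ -e & 1+e\end{smallmatrix}\right)$ fixing the cusp $1$: the leading coefficients of the lower-left entries of $(M_nE)^k$ and $(EM_n)^k$ yield $\mathfrak{p}^\sigma\mid((e\pm1)c+ea)(c+2a-b)$, and a short case split (using that $n\nmid e$) forces $\mathfrak{p}^\sigma\mid a,b,c$, the desired contradiction.

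Two smaller points. First, the $(US)^k$ argument of Corollary~\ref{cor_cusp_stab<49} cannot ``carry over verbatim'': that proof uses $S$, and $S\notin\SL(L_D^0)$. The paper bypasses any cusp-at-$\infty$ analogue by observing directly that $((w-1),(w+1))=\mathfrak{p}_2^\sigma$, so it suffices to establish $(2)$-pseudo parabolic maximality. Second, you do not treat the case $3\mid D$, in which $(w+1)$ acquires the ramified factor $\mathfrak{p}_3$ and Proposition~\ref{prop_pseudo_parabolic2_hk>1} leaves open the possibility $n=3$ with $3\mid c$; the paper handles this separately via the parabolic of Lemma~\ref{lem_third_parabolic2}.
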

Recall that the matrices $T= \left( \begin{smallmatrix} 1 & w-1 \\ 0 & 1 \end{smallmatrix} \right)$ and $Z= \left( \begin{smallmatrix} 1 & 0 \\ w+1 & 1 \end{smallmatrix} \right)$ lie in $\SL(L_D^0)$. We assume that $\Gamma'$ is a Fuchsian group containing $\SL(L_D^0)$ with finite index and some additional parabolic element. We use the same notation as in the case of odd spin. The following is then the analogue of Proposition~\ref{prop_pseudo_parabolic_hk>1}.

\begin{prop} \label{prop_pseudo_parabolic2_hk>1} Let $n$ be an arbitrary prime number in $\NN$. If $3 \nmid D$ and if $n \nmid (w-1)c$ or $n \nmid (w+1)b$, then
$$\left[\left<\SL(L_D^0),M_n\right>:\SL(L_D^0)\right] = \infty.$$ 
If $3 | D$ then
$$\left[\left<\SL(L_D^0),M_n\right>:\SL(L_D^0)\right] < \infty$$ 
implies that either $n | (w-1)c$ and $n |(w+1)b$ or that $n=3$ and $3|c$.
\end{prop}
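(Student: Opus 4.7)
The plan is to parallel the proof of Proposition~\ref{prop_pseudo_parabolic_hk>1}, replacing the odd-spin generators $T = \left( \begin{smallmatrix} 1 & w \\ 0 & 1 \end{smallmatrix} \right)$ and $Z = \left( \begin{smallmatrix} 1 & 0 \\ w & 1 \end{smallmatrix} \right)$ of $\SL(L_D^1)$ by their even-spin counterparts $T = \left( \begin{smallmatrix} 1 & w-1 \\ 0 & 1 \end{smallmatrix} \right)$ and $Z = \left( \begin{smallmatrix} 1 & 0 \\ w+1 & 1 \end{smallmatrix} \right)$ of $\SL(L_D^0)$ recorded in Section~\ref{sec_fixing_Veech}. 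Assume $\left[\langle \SL(L_D^0), M_n \rangle : \SL(L_D^0) \right] < \infty$, so that some positive powers of $M_n T$ and of $Z M_n$ must return to $\SL(L_D^0) \subset \SL_2(\OD)$.

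First I would apply Lemma~\ref{lem_leading_coefficient_1} to the product $M_n T$ with $(e, f, g) = (0, w-1, 0)$. The leading coefficient of the lower-left entry $q_k(1/n)$, viewed as a polynomial of degree $k$ in $1/n$, equals $(-1)^k (w-1)^{k-1} c^k$; the standard congruence argument for $\OD$-valued polynomials in $1/n$ then forces $n \mid (w-1)^{k-1} c^k$. To obtain the analogous information from $Z$, I pass to the transpose: a direct check shows that $M_n^t$ is itself the $n$-th root of a parabolic matrix (with the roles of $b$ and $c$ interchanged), while $Z^t = \left( \begin{smallmatrix} 1 & w+1 \\ 0 & 1 \end{smallmatrix} \right)$ is of the upper-triangular form required by the lemma. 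Applying it to $(M_n^t Z^t)^{k'} = ((Z M_n)^{k'})^t$ shows that the upper-right entry of $(Z M_n)^{k'}$ has leading coefficient $(-1)^{k'} (w+1)^{k'-1} b^{k'}$, hence $n \mid (w+1)^{k'-1} b^{k'}$.

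The rest is a prime-by-prime analysis of these two divisibilities, supported by three arithmetic observations: by Lemma~\ref{lem_properties_OD}(iii) together with $w-1 = -w^\sigma$, the ideal $(w-1)$ has no inert and no ramified prime divisor; a direct expansion in the basis $\{1, w\}$ shows that no inert prime divides $(w+1)$; and for an odd ramified prime $n \mid D$ the residue of $w+1$ modulo $\mathfrak{p}_n$ equals $3/2 \in \OD/\mathfrak{p}_n \cong \mathbb{F}_n$, which vanishes exactly when $n = 3$. For inert and split $n$ both divisibilities immediately upgrade to $n \mid (w-1)c$ and $n \mid (w+1)b$, just as in the odd-spin case. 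For ramified $n \neq 3$ neither $\mathfrak{p}_n \mid (w-1)$ nor $\mathfrak{p}_n \mid (w+1)$, so the two analyses are symmetric and the same sub-leading coefficient bookkeeping used in the ramified case of Proposition~\ref{prop_pseudo_parabolic_hk>1} yields both divisibilities again.

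The exceptional clause for $3 \mid D$ emerges precisely at $n = 3$: the $T$-estimate still gives $\mathfrak{p}_3^2 \mid c^k$ and, after boosting via the same sub-leading-coefficient argument, $3 \mid c$; however the $Z$-estimate degenerates because $\mathfrak{p}_3 \mid (w+1)$, so no parallel divisibility on $b$ is available. This is the content of the second alternative in the statement. The main obstacle throughout is the ramified case: the naive leading-coefficient inequality delivers only $v_\mathfrak{p}(c) \geq \lceil 2/k \rceil$, and lifting this to $n \mid (w-1)c$ for $k \geq 2$ demands the same careful inspection of lower-order coefficients of $q_k(1/n)$ that underpins the odd-spin ramified analysis. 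It is precisely this pushing argument that cannot be carried out on the $b$-side when $n = 3$ and $3 \mid D$, forcing the bifurcation in the conclusion.
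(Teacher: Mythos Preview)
Your approach matches the paper's almost exactly: both apply Lemma~\ref{lem_leading_coefficient_1} to $(M_nT)^k$ and to a $Z$-analogue, extract the leading coefficients $(w-1)^{k-1}c^k$ and $(w+1)^{k-1}b^k$, and then run the ramified-prime bootstrapping from Proposition~\ref{prop_pseudo_parabolic_hk>1}. Your transpose trick for the $Z$-side is equivalent to the paper's direct use of $(M_nZ)^k$ (the two products are $\SL_2(\OD)$-conjugate via $Z$), and your three arithmetic observations about the prime divisors of $w-1$ and $w+1$ are precisely what the paper uses.

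There is one small gap in the exceptional case $n=3$ with $3\mid D$. You write that ``the $T$-estimate still gives $\mathfrak{p}_3^2\mid c^k$ and, after boosting via the same sub-leading-coefficient argument, $3\mid c$.'' But that boosting step from Proposition~\ref{prop_pseudo_parabolic_hk>1} requires $\mathfrak{p}\mid b$ as input --- it is what allows one to rewrite $M_n$ with effective denominator $\mathfrak{p}$ rather than $n=\mathfrak{p}^2$ --- and you have just (correctly) observed that the $Z$-side degenerates here and gives no information on $b$. So the argument you invoke is not available in general. The fix is a one-line case split: if $\mathfrak{p}_3\mid b$ then the boosting does run and in fact both divisibilities hold (the first alternative); if $\mathfrak{p}_3\nmid b$ then the parabolic relation $a^2=-bc$ forces $v_{\mathfrak{p}_3}(c)=2v_{\mathfrak{p}_3}(a)$ to be even, and since $v_{\mathfrak{p}_3}(c)\ge 1$ already, this gives $v_{\mathfrak{p}_3}(c)\ge 2$, i.e.\ $3\mid c$. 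The paper is admittedly terse at this spot as well, but it does not claim that the sub-leading argument alone suffices.
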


\begin{proof} We proceed similar as in the proof of Proposition~\ref{prop_pseudo_parabolic_hk>1}. We first consider the case that $3 \nmid D$. We again use the notation from Lemma~\ref{lem_leading_coefficient_1} which we apply to $(M_nT)^k$. The leading coefficient of $q_k$ is in this case $(-1)^k (w-1)^{k-1} c^k$. If $\left[\left<\SL(L_D^1),M_n\right>:\SL(L_D^1)\right] < \infty$, then there exists $k \in \NN$ with $n|(w-1)^{k-1} c^{k}$. This and Lemma~\ref{lem_properties_OD} imply that $n|(w-1)^2c^2$ if $n$ ramifies or otherwise that $n|(w-1)c$. So we assume that $(n)=\mathfrak{p}^2$ ramifies and therefore by Lemma~\ref{lem_properties_OD} we have $n|c^2$ and hence $\mathfrak{p}|c$. We now look at the sequence of matrices $(M_nZ)^k$. Note that $\N(w+1) = \frac{D-9}{4}$ implies that $(w+1)$ has no prime ideal divisors which also divide $D$. The assumption that the index is finite therefore yields that we must have $n|b^2$ and hence $\mathfrak{p}|b$ and $\mathfrak{p}|a$. Then the (fractional) ideal generated by the lower left entry of $M_n$ is of the form $\mathfrak{c}\mathfrak{p}^{-1}$ for some proper ideal $\mathfrak{c}$ and $\mathfrak{p} \nmid \mathfrak{c}$. Looking at $(M_nT)^k$ once again one then notices that $\mathfrak{p}|(w-1)^2\mathfrak{c}^2$ contradicts Lemma~\ref{lem_properties_OD}~(iii) since $(w)^\sigma=(w-1)$.\\ 
If $3\nmid D$, then interchanging the roles of $T$ and $Z$ yields $n|(w+1)b$.\\[11pt]
If $3|D$ then $(w+1)$ has a common prime ideal divisor with the ramified prime number $3$. We denote this ideal by $\mathfrak{p}_3$. If $n \neq 3$ then the preceding proof of course still works. But if $n=3$, then the above only yields that either $n | (w-1)c$ and $n |(w+1)b$ or that $n=3$ and $3|c$. \end{proof}

We then consider the parabolic element in $\SL(L_D^0)$ which fixes the cusp 1, i.e.
$$E=\begin{pmatrix} 1 - e & e \\ -e & 1 +e \end{pmatrix}.$$
with $e \in \OD$. It is known that there does not exist $n \in \ZZ$ with $n|e$ (see Section~\ref{sec_fixing_Veech}). This matrix is again the key ingredient for the proof of Theorem~\ref{thm_pseudo_parabolic_even}.
\begin{proof}[Proof of Theorem~\ref{thm_pseudo_parabolic_even}] Since $((w+1),(w-1))=\mathfrak{p}_2^\sigma$ it suffices to prove that $\SL(L_D^0)$ is $(2)$-parabolic maximal. If $D=17$ then we know all the (parabolic) generators of $\SL(L_{17}^0)$ by Lemma~\ref{lem_second_parabolic2} and it can be checked that the group is indeed parabolic maximal. So we may assume that $D>17$. We know that $\N(w-1)=\frac{D-1}{4}$ and $\N(w+1)=\frac{D-9}{4}$.\\[11pt]Let us first assume that $3 \nmid D$. Then it follows from Proposition~\ref{prop_pseudo_parabolic2_hk>1} that $(n)=\mathfrak{p}\mathfrak{p}^\sigma$ is a split number. Lemma~\ref{lem_leading_coefficient_1} implies that the leading coefficient of the polynomial $q_k$ in $(M_nE)^k$ is
$$(-1)^k((e-1)c+ea)e^{k-1}(c+2a-b)^{k-1}$$
and that the leading coefficient of the polynomial $\widetilde{q_k}$ in $(EM_n)^k$ is
$$(-1)^k((e+1)c+ea)e^{k-1}(c+2a-b)^{k-1}.$$
If $\left[\left<\SL(L_D^0),M_n\right>:\SL(L_D^0)\right] < \infty$ then, since $\mathfrak{p}$ and $\mathfrak{p}^\sigma$ do not divide $e$ at the same time, we may assume that
$$\mathfrak{p}^\sigma | ((e-1)c+ea)(c+2a-b)$$
and
$$\mathfrak{p}^\sigma | ((e+1)c+ea)(c+2a-b).$$
Suppose that $\mathfrak{p}^\sigma \nmid (c+2a-b)$. Then summing up both relations yields $\mathfrak{p}^\sigma |(2c)$ and since $n \neq 2$ hence $\mathfrak{p}^\sigma|(c)$ and thus $\mathfrak{p}^\sigma | (a)$. From the leading coefficients of the upper right entry of $(EM_n)^k$ it then follows that $\mathfrak{p}^\sigma | (b)$. Since $n$ does neither divide $b$ nor $c$, it then follows from $(n)|(w-1)(c)$ and $(n)|(w+1)(b)$ that $\mathfrak{p}|((w-1),(w+1)) = \mathfrak{p}_2^\sigma$. So $n=2$ which is a contradiction.\\ 
Therefore, we may assume that $\mathfrak{p}^\sigma | (c+2a-b)$. As $c = -\frac{a^2}{b}$ we hence have that $\mathfrak{p}^\sigma|(a-b)$ and thus $\mathfrak{p}^\sigma|(w+1)(a)$ since $\mathfrak{p}^\sigma|(w+1)(b)$. Similarly $b = -\frac{a^2}{c}$ implies that $\mathfrak{p}^\sigma|(a+c)$ and therefore $\mathfrak{p}^\sigma|(w-1)(a)$. Then either $\mathfrak{p}^\sigma|((w-1),(w+1))$, i.e. $n=2$, or $\mathfrak{p}^\sigma|(a)$ and therefore also $\mathfrak{p}^\sigma|(b)$ and $\mathfrak{p}^\sigma|(c)$. In both cases we derived a contradiction.\\[11pt]
Now let us assume that $3|D$. Then $(w+1)$ has a prime ideal divisor $\mathfrak{p}_3$ which also divides the ramified prime number $3$. This might cause trouble if $n=3$. If $\left[\left<\SL(L_D^0),M_3\right>:\SL(L_D^0)\right] < \infty$, then we have to distinguish two different subcases.\\[11pt]
\textit{Case (1)} If we have $(3)|(w+1)(b)$ and $(3)|(w-1)(c)$ then $3|(c)$, $\mathfrak{p}_3 | (b)$ and, since $a^2 = -bc$, also $(3)|(a)$. Hence 
$$M_3 = \begin{pmatrix} 1 - \widetilde{a} & \frac{b}{3} \\ \widetilde{c} & 1 + \widetilde{a} \end{pmatrix}$$
with $\widetilde{a},\widetilde{c}$ in $\OD$ and $(b)=\mathfrak{p}_3\mathfrak{b}$ for some proper ideal $\mathfrak{b}$ in $\OD$ and $\mathfrak{p}_3 \nmid \mathfrak{b}$. Then we look at the matrix $L$ from Lemma~\ref{lem_third_parabolic2} and consider $(M_3L)^k$. The upper right entry is a polynomial in $\frac{1}{3}$ and the (fractional) ideal generated by the leading term is $((e+1)g^{k-1}\mathfrak{b}^k)\mathfrak{p}_3^{-k}$, where $e=(2+w)(w-1)$ and $g=(2+w)$ or $e=\frac{(2+w)(w-1)}{2}$ and $g=2(2+w)$ (depending on $D$). Note that $\mathfrak{p}_3 \nmid (e+1)$ and $\mathfrak{p}_3 \nmid 2(2+w)$. Thus there exits a $k \in \NN$ with $(M_3L)^k$ only if $\mathfrak{p}_3|\mathfrak{b}$. This is a contradiction.\\[11pt]
\textit{Case (2)} If we have $(3)|(c)$ but $\mathfrak{p}_3 \nmid b$ then, since $a^2 = -bc$, also $\mathfrak{p}_3|(a)$. Hence 
$$M_3 = \begin{pmatrix} 1 - \frac{a}{3} & \frac{b}{3} \\ \widetilde{c} & 1 + \frac{a}{3} \end{pmatrix}$$
with $\widetilde{c} \in \OD$ and $(a)=\mathfrak{p}_3\mathfrak{a}$ for some proper ideal $\mathfrak{a}$ in $\OD$. Then we again look at the matrix $L$ from Lemma~\ref{lem_third_parabolic2} and consider $(M_3L)^k$. The upper right entry is again a polynomial in $\frac{1}{3}$ and the (fractional) ideal generated by the leading term is $((e+1)g^{k-1}b^k)\mathfrak{p}_3^{-2k}$, where again $g=(2+w)$ or $g=2(2+w)$ (depending on $D$).  Thus there exits a $k \in \NN$ with $(M_3L)^k$ only if $\mathfrak{p}_3|(b)$. This is again a contradiction.
\end{proof}

\begin{cor} The Fuchsian group $\SL(L_{17}^0)$ is maximal. \end{cor}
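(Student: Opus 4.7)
The plan is to proceed in two main stages, closely mirroring the strategy used for $\SL(L_{13}^1)$ and $\SL(L_{17}^1)$ in Theorem~\ref{thm_maximality}, but with the adjustments needed to deal with the additional arithmetic of the even spin case over $\QQ(\sqrt{17})$.

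First I would upgrade the $\mathfrak{p}_2^\sigma$-pseudo parabolic maximality of Theorem~\ref{thm_pseudo_parabolic_even} to honest parabolic maximality for the specific value $D=17$. Since $D=17$ is small, the Veech group $\SL(L_{17}^0)$ is explicitly known: the parabolic elements $T, Z$ from Section~\ref{sec_fixing_Veech} together with the matrix from Lemma~\ref{lem_second_parabolic2} generate enough parabolic data to enumerate (up to conjugacy in $\SL(L_{17}^0)$) a complete system of parabolic generators. Following the root Lemma~\ref{lem_roots}, any extra parabolic element of a hypothetical Fuchsian overgroup $\Gamma$ of finite index must be a root of one of these. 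The pseudo parabolic maximality already rules out all roots whose entries avoid the prime ideal $\mathfrak{p}_2^\sigma$ in their denominators, so only finitely many square root candidates over $\frac{1}{2}\OD$ remain. For each such $V$ I would combine it with an appropriate word $W$ in $T, Z$ and $S$ and invoke the discreteness criterion Theorem~\ref{prop_criterion_discretness}: either $\tr[V,W]$ violates the allowed values $\{3\} \cup \{4+2\cos(2\pi/q)\}$, or a short word in $V$ and $\SL(L_{17}^0)$ produces a translation $z \mapsto z + \alpha$ with $\alpha \notin \OD \cdot w$, which contradicts the analogue of Lemma~\ref{lem_key} / Corollary~\ref{cor_cusp_stab<49} for even spin.

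Once parabolic maximality is established, I would rule out non-parabolic extensions via a Riemann-Hurwitz argument, as in the proof of Theorem~\ref{thm_maximality} for $D=13$. By Lemma~\ref{lem_non-arithmetic_commensurator} and the non-arithmeticity of $\SL(L_{17}^0)$ (Theorem~\ref{thm_Gutkin_Judge2}), a maximal Fuchsian overgroup $\Gamma$ is the commensurator, and the signature data $(g_0;m_1,\ldots,m_r;s_0)$ of $\SL(L_{17}^0)$ is computable from Bainbridge's Euler characteristic formula (Theorem~\ref{thm_bain_euler}) together with Mukamel's count of elliptic fixed points (Theorem~\ref{thm_mukamel}); the number of cusps can be read off by analyzing $\SL_2(\OD)$-classes of the known parabolic fixed points. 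Since there are no new parabolic elements, every cusp of $\Gamma$ is unramified over in $\HH/\SL(L_{17}^0)$, forcing $k \cdot s = s_0$ where $k = [\Gamma : \SL(L_{17}^0)]$, so only finitely many values of $k$ and a finite list of candidate signatures $(g;m_1',\ldots,m_{r'}';s)$ satisfy Riemann-Hurwitz. Each surviving signature can then be eliminated by noting that its trace field cannot be $\QQ(\sqrt{17})$ (the trace field of $\SL(L_{17}^0)$, which any overgroup must contain since $[\Gamma : \SL(L_{17}^0)] < \infty$).

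The main obstacle I expect is the finite check in the first paragraph: ruling out the square roots of the parabolic generators over $\frac{1}{2}\OD$ requires explicit matrix manipulation rather than a clean structural argument, and the even spin case contains more parabolic conjugacy classes than the odd spin case at $D=13,17$, so the bookkeeping is heavier. The trace-field exclusion in the last step should be routine once the list of candidate signatures is explicitly in hand.
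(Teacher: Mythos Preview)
Your approach is correct but far more elaborate than the paper's. The paper's proof is a single sentence: $\SL(L_{17}^0)$ and $\SL(L_{17}^1)$ have the same signature. The point is that parabolic maximality of $\SL(L_{17}^0)$ was already checked inside the proof of Theorem~\ref{thm_pseudo_parabolic_even} (where the paper notes that for $D=17$ all parabolic generators are explicitly known and the group can be verified directly to be parabolic maximal), and the Riemann--Hurwitz and trace-field exclusion step in the proof of Theorem~\ref{thm_maximality} depends only on the signature and the trace field of the group in question. Since $\SL(L_{17}^0)$ has the same signature $(0;2;3)$ and the same trace field $\QQ(\sqrt{17})$ as $\SL(L_{17}^1)$, that entire second stage carries over verbatim and nothing needs to be redone.

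Your plan to compute the signature indirectly via Bainbridge's and Mukamel's formulas and then run the Riemann--Hurwitz exclusion from scratch would work, but it is the long way around; recognizing the signature coincidence with $\SL(L_{17}^1)$ is the intended shortcut. One small inaccuracy: your remark that the even spin case has more parabolic conjugacy classes than the odd spin case at $D=17$ is wrong---the signatures agree, so both groups have exactly three cusps, and the finite check in your first stage is no heavier than in the odd spin case.
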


\begin{proof} $\SL(L_{17}^0)$ and $\SL(L_{17}^1)$ have the same signature. \end{proof}

\paragraph{D $\equiv$ 0 mod 4.} Recall that the prime number $2$ ramifies if $D \equiv 0 \mod 4$ and thus $(2) = \widetilde{\mathfrak{p}_2}^2$ for some prime ideal $\widetilde{\mathfrak{p}_2}$ of norm of absolute value $2$. The following analogue of Theorem~\ref{thm_pseudo_parabolic} will be proven next.

\begin{thm} For all $D \equiv 0 \mod 4$ the group $\SL(L_D)$ is $\widetilde{\mathfrak{p}_2}$-pseudo parabo-lic maximal, where $\widetilde{\mathfrak{p}_2}$ is the (unique) prime ideal divisor of $(2)$.
\end{thm}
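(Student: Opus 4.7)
The plan is to mimic the strategy used for Theorem~\ref{thm_pseudo_parabolic_even} and Theorem~\ref{thm_pseudo_parabolic}, adapted to the arithmetic of $\OD$ when $D \equiv 0 \mod 4$. Suppose for contradiction that $\SL(L_D)$ is not $\widetilde{\mathfrak{p}_2}$-pseudo parabolic maximal; then there exists a Fuchsian group $\Gamma'$ containing $\SL(L_D)$ with finite index and a parabolic element lying in $\SL_2(K)$ but not in $\SL_2(K) \cap \Mat^{2\times 2}(\widetilde{\mathfrak{p}_2}^{-1})$. By the root lemma (Lemma~\ref{lem_roots}) together with an argument in the spirit of Corollary~\ref{cor_cusp_stab<49}, we may assume this element is an $n$-th root $M_n = \left(\begin{smallmatrix} 1-a/n & b/n \\ c/n & 1+a/n \end{smallmatrix}\right)$ of a parabolic $M \in \SL(L_D)$ for some prime $n$ in $\NN$, where the denominators in $M_n$ are worse than $\widetilde{\mathfrak{p}_2}^{-1}$. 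Since $\widetilde{\mathfrak{p}_2}^2 = (2)$, this covers the two cases: $n$ is odd, or $n=2$ together with the condition that $\widetilde{\mathfrak{p}_2}$ fails to divide at least one of $a,b,c$.

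The first main step is to apply Lemma~\ref{lem_leading_coefficient_1} to the two products $(M_n T)^k$ and $(M_n Z)^k$, where $T = \left(\begin{smallmatrix} 1 & w+1 \\ 0 & 1 \end{smallmatrix}\right)$ and $Z = \left(\begin{smallmatrix} 1 & 0 \\ w & 1 \end{smallmatrix}\right)$ are the parabolic generators of $\SL(L_D)$ recorded in Section~\ref{sec_fixing_Veech}. Finite index in $\Gamma'$ forces $(M_n T)^{k_0}$ and $(M_n Z)^{k_1}$ to lie in $\SL_2(\OD)$ for some $k_0,k_1 \in \NN$, which in turn forces the leading coefficients of the relevant polynomials in $1/n$ to be divisible by appropriate powers of $n$. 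Concretely, the lower-left entry of $(M_n T)^k$ has leading coefficient $(-1)^k(w+1)^{k-1}c^k$, and a symmetric computation (either by applying the lemma to $(M_n Z)^k$ and examining the upper-right entry, or by exploiting the transposed role of $Z$) yields a parallel constraint involving $w$ and $b$. Combining these constraints with Lemma~\ref{lem_properties_OD}(i) and (vi) — recall that when $D \equiv 0 \mod 4$ every prime ideal divisor of $(w)$ divides a ramified rational prime, and in particular $\widetilde{\mathfrak{p}_2}$ is the only candidate ramified divisor of $(w)$ contributing to $(2)$ — shows that the only way to avoid a contradiction is that $n$ is ramified and equals $\widetilde{\mathfrak{p}_2}$, forcing in addition $\widetilde{\mathfrak{p}_2} \mid a, b, c$.

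The second main step removes the remaining possibility $n=2$ with $\widetilde{\mathfrak{p}_2}$ failing to divide at least one of $a,b,c$. Here the argument branches according to $D \bmod 16$, paralleling the branching in Theorem~\ref{thm_pseudo_parabolic_even} according to whether $3 \mid D$, because the parabolic element $L$ from Lemma~\ref{lem_second_parabolic_0mod4} fixing the cusp $(1,w/2)$ has entries whose valuations at $\widetilde{\mathfrak{p}_2}$ depend on $D \bmod 16$. We apply Lemma~\ref{lem_leading_coefficient_1} to $(M_2 L)^k$ and track the $\widetilde{\mathfrak{p}_2}$-valuation of the leading coefficient of the off-diagonal entries. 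The key arithmetic input is that $\N(w+2) = \left\lvert 4-D/4\right\rvert$ and that $\widetilde{\mathfrak{p}_2}$ cannot divide the full coefficient $e+1$ appearing in the entries of $L$ (exactly as in the analogous part of the proof of Theorem~\ref{thm_pseudo_parabolic_even}). Since one of $a,b,c$ carries $\widetilde{\mathfrak{p}_2}$-valuation $0$ by assumption, this produces an obstruction forcing $\widetilde{\mathfrak{p}_2}$ to divide a unit, which is impossible.

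The main obstacle, as in the previous cases, lies in the $n = 2$ step of the second paragraph: the prime~$2$ is ramified, so the standard argument via leading coefficients loses a factor and becomes insensitive to denominators of size $\widetilde{\mathfrak{p}_2}$. Handling this requires case-by-case analysis on $D \bmod 16$ using the explicit parabolic $L$, and one must verify that in each case the additional arithmetic input ($\widetilde{\mathfrak{p}_2}$ being coprime to certain specific expressions built from $w$ and $w+2$) produces a contradiction. This bookkeeping is the technical heart of the proof, while the odd-prime cases are essentially the same manipulations already carried out in Theorems~\ref{thm_pseudo_parabolic} and~\ref{thm_pseudo_parabolic_even}, adapted to the generators $T$ and $Z$ of Section~\ref{sec_fixing_Veech}.
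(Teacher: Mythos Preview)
Your overall scaffolding (root lemma, reducing to $M_n$, using the leading-coefficient Lemma~\ref{lem_leading_coefficient_1} against the known parabolics) is right, but the case analysis is inverted relative to what the arithmetic actually forces, and your first step does not close the cases you claim it does.

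The paper splits according to $n$ versus $D$: (1) $n\nmid D$, (2) $n\mid D$ with $n\neq 2$, (3) $n=2$. Case~(3) is the \emph{easy} one, dispatched in a single line from $2\nmid w+1$ and $2\nmid w$ (Lemma~\ref{lem_properties_OD}(i)), so no branching on $D\bmod 16$ and no appeal to $L$ is needed there; your ``second main step'' with its $\widetilde{\mathfrak{p}_2}$-valuation bookkeeping is aimed at a difficulty that does not exist. Conversely, your first step overreaches: from $(M_nT)^k$ and $(M_nZ)^k$ alone you only obtain $n\mid (w+1)c$ and $n\mid wb$, which rules out inert $n$ but does \emph{not} eliminate split primes. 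It is precisely in the split case ($n\nmid D$) that the paper brings in the parabolic $L$ of Lemma~\ref{lem_second_parabolic_0mod4} and uses both $(M_nL)^k$ and $(LM_n)^k$ to reach $\mathfrak{p}\mid (w+2)$, contradicting $\mathfrak{p}\mid (w+1)$. You have placed the use of $L$ in the wrong case. You also omit the odd ramified case $n\mid D$, $n\neq 2$, which the paper handles separately with a refined look at $(M_nZ)^k$.

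There is also an internal inconsistency: at the end of your first step you assert that one is ``forced'' to $\widetilde{\mathfrak{p}_2}\mid a,b,c$, which already means $M_n\in\Mat^{2\times 2}(\widetilde{\mathfrak{p}_2}^{-1})$ and would finish the proof; yet your second step then reopens the assumption that $\widetilde{\mathfrak{p}_2}$ fails to divide one of $a,b,c$. (Minor: ``$n$ equals $\widetilde{\mathfrak{p}_2}$'' is a type mismatch; and $\widetilde{\mathfrak{p}_2}\mid w$ holds only for $D\equiv 0\bmod 8$, not in general.) Finally, note the paper checks small $D<16$ directly since $L$ is only available for $D>16$.
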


The proof is very similar to the case $D \equiv 1 \mod 4$. Therefore, we only sketch parts of the arguments.

\begin{proof} The theorem can be checked directly if $D < 16$. So let $D>16$ from now on. As usual, we look at a prime number $n \in \ZZ$ and 
$$M_n = \begin{pmatrix} 1 - \frac{a}{n} & \frac{b}{n} \\ \frac{c}{n} & 1 +\frac{a}{n} \end{pmatrix},$$ 
a root of some parabolic element in $\SL(L_D)$. We assume that $M_n \notin \SL_2(K) \cap \Mat^{2x2}(\widetilde{\mathfrak{p}_2}^{-1})$ and that
$\Gamma:=\left\langle \SL(L_D), M_n \right\rangle$ contains $\SL(L_D)$ with finite index. We now have to consider three different cases:\\[11pt]
\textit{Case (1) $n \nmid D$:} The condition $(M_nT)^{k_0} \in \SL_2(K) \cap \Mat^{2x2}(\widetilde{\mathfrak{p}_2}^{-1})$ for some $k_0\in \ZZ$ implies that $n|(w+1)c$ and by considering $(M_nZ)^k$ we get that $n|wb$. Hence $n$ cannot be inert and $n|b$. More precisely, $(n)=\mathfrak{p}\mathfrak{p}^\sigma$ must be split. The relations $n|(w+1)c$ and $n|b$ imply that we may without loss of generality assume that $\mathfrak{p}^\sigma|b, \mathfrak{p}^\sigma|c, \mathfrak{p}^\sigma|a$ and that $\mathfrak{p}|a$. Hence $M_n$ is of the form $$\begin{pmatrix} 1 - \widetilde{a} & \widetilde{b} \\ c/n & 1 + \widetilde{a}  \end{pmatrix}$$ 
with $\widetilde{a}, \widetilde{c} \in \OD$ and $\mathfrak{p}\nmid c$ and $\mathfrak{p}|(w+1)$. Let $L$ be the matrix from Lemma~\ref{lem_second_parabolic_0mod4}. Then there must exist some $k_1 \in \ZZ$ such that $(M_nL)^{k_1}\in \SL_2(K) \cap \Mat^{2x2}(\widetilde{\mathfrak{p}_2}^{-1})$ and so $\mathfrak{p}|(w+2)c(1-c)$. There also exists $k_2 \in \ZZ$ such that $(LM_n)^{k_2}\in \SL_2(K) \cap \Mat^{2x2}(\widetilde{\mathfrak{p}_2}^{-1})$ and hence $\mathfrak{p}|(w+2)c(1+c)$. Thus $\mathfrak{p}|(w+2)$ which contradicts $\mathfrak{p}|(w+1)$.\\[11pt]
\textit{Case (2) $n|D$ and $n \neq 2$:} Then $(n)=\mathfrak{p}^2$ ramifies. From the sequence $(M_nT)^k$ we see that $n|c$ and hence $\mathfrak{p}|a$. It happens that $(w)=\mathfrak{p} \mathfrak{g}$ for some ideal $\mathfrak{g}$ with $\mathfrak{p} \nmid \mathfrak{g}$ because otherwise we would have $n|w$. By recalling that $c=-\frac{a^2}{b}$ and then analyzing the upper right entry of $(M_nZ)^k$ we get that $\mathfrak{p}|\mathfrak{g}b$ and so $\mathfrak{p}|b$. Thus $M_n$ is of the form 
$$\begin{pmatrix} 1 - \widetilde{a} & b/n \\ \widetilde{c} & 1 + \widetilde{a}  \end{pmatrix}$$ 
with $\widetilde{a},\widetilde{c} \in \OD$ and $\mathfrak{p}| b$ but $\mathfrak{p}^2 \nmid b$. Looking at $(M_nZ)^k$ again, we see that $\mathfrak{p} | \mathfrak{g}b$. That is again a contradiction.\\[11pt]
\textit{Case (3) $n=2$:} The claim immediately follows since $2 \nmid w+1$ and $2 \nmid w$.
\end{proof}

\subsection{Stabilizer and Commensurator} \label{sec_stab_comm}

Recall that we use the abbreviation $C$ for the Teichmüller curve $C_{L,D}^\epsilon$. In this section we want to prove the following theorem:

\begin{thm} \label{thm_cov_degree_10} Let $M \in \GL_2^+(K) \cap \Mat^{2x2}(\OD)$ and $D$ be a fundamental discriminant and $C$ be a Teichmüller curve of discriminant $D$. Suppose that
\begin{itemize}
\item[(i)] $D \equiv 1 \mod 8$, $C$ has odd spin and $\det(M)$ is not divisible by $\mathfrak{p}_2$ or
\item[(ii)] $D \equiv 1 \mod 8$, $C$ has even spin and $\det(M)$ is not divisible by $\mathfrak{p}_2^\sigma$ or
\item[(iii)] $D \equiv 9 \mod 16$ and $C$ has odd spin or 
\item[(iv)] $D \equiv 5 \mod 8$ or
\item[(v)] $D \equiv 0 \mod 4$ and $\det(M)$ is not divisible by $\widetilde{\mathfrak{p}_2}$
\end{itemize}
then 
\begin{itemize}
\item[(i)] the degree of the covering $\pi: C_M(M) \to C_M$ is equal to $1$ and
\item[(ii)] we have $\SL_M(L_D)=M\SL(L_D)M^{-1}\cap \SL_2(\OD)$. \index{stabilizer!of a twisted Teichmüller curve}
\end{itemize}
\end{thm}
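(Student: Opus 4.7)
My plan is to reduce, via Proposition~\ref{prop_stabilzer_twisted}, to controlling $\Stab(\Phi)\cap\SL_2(K)$ at the primes dividing $\det(M)$. That proposition yields
$$\SL_M(L_D)=M\,\Stab(\Phi)\,M^{-1}\cap\SL_2(\OD),$$
together with the inclusion $M\SL(L_D)M^{-1}\cap\SL_2(\OD)\subseteq\SL_M(L_D)$; only the reverse inclusion needs work. Given $N\in\SL_M(L_D)$, I would write $N=MAM^{-1}$ with $A\in\Stab(\Phi)$. Since $M\in\Mat^{2x2}(\OD)$ and $M^{-1}=\det(M)^{-1}M^{*}$ with $M^{*}$ the adjugate, the entries of $A=M^{-1}NM$ lie in $\det(M)^{-1}\OD$, so the denominator ideal of $A$ divides $(\det M)$. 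The task then becomes to prove $A\in\SL_2(\OD)$: indeed $A\in\Stab(\Phi)\cap\SL_2(\OD)=\SL(L_D)$ by definition, which gives (i); and (ii) follows formally because $\SL_M(L_D,M)=\SL_M(L_D)\cap M\SL_2(\OD)M^{-1}$ and (i) forces $\SL_M(L_D)\subseteq M\SL_2(\OD)M^{-1}$, hence $\SL_M(L_D,M)=\SL_M(L_D)$.

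To set up the reduction, I would invoke the result (to be proved as Corollary~\ref{cor_stabcap} in Section~\ref{sec_stab_comm}) that $\Stab(\Phi)\subseteq\Comm_{\SL_2(\RR)}(\SL(L_D))$. By Theorem~\ref{thm_Gutkin_Judge2} and Theorem~\ref{thm_nonarithmetic_commensurable} the Veech group $\SL(L_D)$ is non-arithmetic, so Margulis' Theorem~\ref{thm_Margulis} makes $\Comm_{\SL_2(\RR)}(\SL(L_D))$ a Fuchsian group containing $\SL(L_D)$ with finite index. Consequently $\Gamma':=\langle\SL(L_D),A\rangle$ is Fuchsian, is a finite-index overgroup of $\SL(L_D)$, and is a candidate to which each of the pseudo parabolic maximality theorems applies.

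The main step is then to produce, from any $A\notin\SL_2(\OD)$, a parabolic element of $\Gamma'$ violating the relevant maximality. Concretely I would conjugate each parabolic generator $P$ of $\SL(L_D)$ (as enumerated in Section~\ref{sec_fixing_Veech}) by $A$: the conjugate $APA^{-1}\in\Gamma'$ is parabolic with entries in $\det(M)^{-2}\OD$. In cases~(i), (ii) and~(v) the hypothesis that $\det(M)$ is not divisible by the relevant bad ideal $\mathfrak{n}\in\{\mathfrak{p}_2,\mathfrak{p}_2^{\sigma},\widetilde{\mathfrak{p}_2}\}$ forces the denominator ideal of $APA^{-1}$ to be coprime to $\mathfrak{n}$; in case~(iv) the ideal $\mathfrak{n}=(1)$ is trivial by Theorem~\ref{thm_pseudo_parabolic}(i); in case~(iii) I would additionally exploit that for $D\equiv 9\bmod 16$ the group $\SL(L_D^{1})$ carries the extra parabolic of Lemma~\ref{lem_second_parabolic} whose combined denominator analysis refines Theorem~\ref{thm_pseudo_parabolic}(ii) to outright pseudo parabolic maximality. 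In every case, applying the appropriate $\mathfrak{n}$-pseudo parabolic maximality (Theorems~\ref{cor_1_mod_8}, \ref{thm_pseudo_parabolic}, \ref{thm_pseudo_parabolic_even} and the $D\equiv 0\bmod 4$ analogue proved in Section~\ref{sec_pseudo}) to $\Gamma'$ forces every such $APA^{-1}$ to lie in $\SL_2(\OD)$.

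The delicate final step is passing from $APA^{-1}\in\SL_2(\OD)$ for every parabolic generator $P$ of $\SL(L_D)$ to $A\in\SL_2(\OD)$ itself, and this is where I expect the main obstacle: a single conjugate being integral can result from accidental cancellation of denominators in $A$. To close the gap I would let $A$ act on the cusps of $\SL(L_D)$ and then combine the integrality constraints coming from several parabolic generators at distinct cusps. Since the generators exhibited in Section~\ref{sec_fixing_Veech} have mutually distinct fixed points and together determine the cuspidal structure completely, the simultaneous integrality of all the conjugates $APA^{-1}$ should pin down both the column data of $A$ at each cusp and its induced permutation of cusps, forcing the entries of $A$ into $\OD$. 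The required bookkeeping is of the same flavour as the root-lemma calculations in the proof of Proposition~\ref{prop_pseudo_parabolic2_hk>1} and its analogues in Section~\ref{sec_pseudo}, but transposed from parabolic roots to a non-parabolic $A$.
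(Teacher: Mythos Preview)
Your overall architecture is sound and uses the same ingredients as the paper: Proposition~\ref{prop_stabilzer_twisted} to reduce to $A=M^{-1}NM\in\Stab(\Phi)\cap\SL_2(K)$, Corollary~\ref{cor_stabcap}/Margulis to place $A$ in a finite-index Fuchsian overgroup, and the $\mathfrak{n}$-pseudo parabolic maximality theorems of Section~\ref{sec_pseudo}. The core computation you will need in your ``delicate final step'' is also the paper's core computation. But the paper organises it more efficiently, and your treatment of case~(iii) misidentifies the mechanism.

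\medskip
\textbf{Packaging.} The paper does \emph{not} first upgrade the conjugates $APA^{-1}$ to $\SL_2(\OD)$ and then try to pull $A$ back into $\SL_2(\OD)$. Instead it proves, once and for all, that $\Stab(\Phi)\cap\SL_2(K)\subset\Mat^{2\times 2}(\mathfrak{n}^{-1})$ (Theorem~\ref{thm_parabolic_maximal_implies_stabilizer} for $\mathfrak{n}=(1)$, Propositions~\ref{cor_cov_degree_10} and~\ref{prop_cov_degree_D04} for $\mathfrak{n}=\mathfrak{p}_2^{\sigma}$ and $\widetilde{\mathfrak{p}_2}$), and only \emph{then} uses that the denominator of $A$ divides $\det(M)$, which is coprime to $\mathfrak{n}$. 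Your route applies coprimality twice and gains nothing: the intermediate conclusion $APA^{-1}\in\SL_2(\OD)$ is stronger than $APA^{-1}\in\Mat^{2\times 2}(\mathfrak{n}^{-1})$, but the entry-by-entry analysis of $XTX^{-1}$, $X^{-1}TX$, $XZX^{-1}$, $X^{-1}ZX$ and $XLX^{-1}$ still only forces the denominator $e$ of $X$ to divide $\mathfrak{n}$, after which you must invoke coprimality again. So your step~6 is exactly the paper's hard step in disguise, and the ``permutation of cusps'' framing is not how the argument actually runs: it is direct arithmetic on the explicit entries of those conjugates, using that the off-diagonal entries of $T,Z,L$ from Section~\ref{sec_fixing_Veech} have controlled prime factorisation.

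\medskip
\textbf{Case~(iii).} Your plan here is off. The paper does \emph{not} prove that $\SL(L_D^1)$ is outright pseudo parabolic maximal when $D\equiv 9\bmod 16$; it remains only $\mathfrak{p}_2$-ppm (Theorem~\ref{thm_pseudo_parabolic}(ii)). What makes case~(iii) work without a condition on $\det(M)$ is a different observation (proof of Theorem~\ref{thm_cov_degree_1}): since $2$ is split (hence not ramified) for $D\equiv 1\bmod 8$, Lemma~\ref{lem_fund_discriminant_quadratic}(ii) lets one run the Theorem~\ref{thm_parabolic_maximal_implies_stabilizer} argument even when the conjugates are only known to lie in $\Mat^{2\times 2}((2)^{-1})$; the lower-left entry $-(w+1)$ of the Lemma~\ref{lem_second_parabolic} matrix, being coprime to $2$, supplies the missing constraint. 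That yields $\Stab(\Phi)\cap\SL_2(K)=\SL(L_D^1)$ directly, with no hypothesis on $\det(M)$.
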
 

Our task is now to better understand the stabilizer of the graph of the Teichmüller curve $\Stab(\Phi)$. \index{stabilizer!of the graph of a Teichmüller curve}The commensurator will help us a lot do this because it is the unique maximal Fuchsian group which contains the Veech group.  Let us start with the following statement:

\begin{lem} \label{lem_commensurable_implies_subgroup} Let $\Gamma$ be a non-arithmetic, cofinite Fuchsian group. If $H$ is a finite index subgroup of $\Gamma$ then
$$\Comm_{\SL_2(\mathbb{R})} (H) = \Comm_{\SL_2(\mathbb{R})} (\Gamma).$$
If $\Gamma$ is maximal then 
$$\Comm_{\SL_2(\mathbb{R})} (H) = \Gamma.$$ \end{lem}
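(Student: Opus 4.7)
\medskip

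The plan is to reduce the lemma to two facts already in place: remark~(vii) from the commensurator discussion in Section~\ref{sec_Fuchsian_groups}, which says that $\Comm_G(A)=\Comm_G(B)$ whenever $A$ and $B$ are commensurable in $G$, and Lemma~\ref{lem_non-arithmetic_commensurator}, which identifies the commensurator of a non-arithmetic cofinite Fuchsian group with its unique maximal Fuchsian overgroup.

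First I would verify that $H$ and $\Gamma$ are commensurable in $\SL_2(\RR)$. Since $H$ is by hypothesis a finite-index subgroup of $\Gamma$, we have $H \cap \Gamma = H$, which has finite index in both $H$ (trivially) and $\Gamma$ (by assumption). Taking $g = e$ in the definition of commensurability then shows $H$ and $\Gamma$ are directly commensurable, hence commensurable. Applying remark~(vii) immediately gives
\[
\Comm_{\SL_2(\RR)}(H) \;=\; \Comm_{\SL_2(\RR)}(\Gamma),
\]
which is the first assertion. This step is essentially bookkeeping and I do not expect any obstacle.

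For the second statement I would invoke Lemma~\ref{lem_non-arithmetic_commensurator}: because $\Gamma$ is non-arithmetic and cofinite, its commensurator $\Comm_{\SL_2(\RR)}(\Gamma)$ is the unique maximal Fuchsian group containing $\Gamma$. By hypothesis $\Gamma$ itself is maximal, so $\Gamma$ must coincide with this commensurator, i.e.\ $\Comm_{\SL_2(\RR)}(\Gamma) = \Gamma$. Combining this identity with the equality of commensurators from the first part yields $\Comm_{\SL_2(\RR)}(H) = \Gamma$.

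The only subtlety worth double-checking is that Lemma~\ref{lem_non-arithmetic_commensurator} applies as stated to $\Gamma$ (not to $H$); this is automatic because the lemma's hypotheses are on $\Gamma$ itself, and we never need to pass the non-arithmeticity or cofiniteness property down to $H$. Thus both assertions follow without further work; the argument is entirely a combination of two results already proved, and no genuinely hard step is anticipated.
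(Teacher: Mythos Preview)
Your proposal is correct. Both assertions follow as you describe, and the second part matches the paper's implicit use of Lemma~\ref{lem_non-arithmetic_commensurator} applied to $\Gamma$.

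For the first equality, however, you take a slightly different route than the paper. You invoke remark~(vii) directly: since $H$ and $\Gamma$ are directly commensurable, their commensurators coincide as a general fact about commensurators, requiring no special hypothesis on $\Gamma$. The paper instead argues that $H$ inherits non-arithmeticity and cofiniteness from $\Gamma$, applies Lemma~\ref{lem_non-arithmetic_commensurator} to $H$ to identify $\Comm_{\SL_2(\RR)}(H)$ as the \emph{unique} maximal Fuchsian group containing $H$, and then observes that $\Comm_{\SL_2(\RR)}(\Gamma)$ is also a maximal group containing $H$, forcing equality. Your argument is more elementary and makes it transparent that the non-arithmetic hypothesis is only needed for the second assertion; the paper's version leans on Margulis' theorem (via Lemma~\ref{lem_non-arithmetic_commensurator}) already for the first equality, which is unnecessary but not wrong.
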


\begin{proof} The group $H$ is non-arithmetic because it is a finite index subgroup of a non-arithmetic group. The commensurator $\Comm_{\SL_2(\mathbb{R})} (H)$ is the unique maximal group containing $H$ by Lemma~\ref{lem_non-arithmetic_commensurator}. However, $\Comm_{\SL_2(\mathbb{R})} (\Gamma)$ is a  maximal group containing $H$.
\end{proof}

This proposition allows us to show that the conjugated stabilizer groups of twisted Teichmüller curves are always contained in the commensurator.

\begin{cor} \label{cor_stabcap} For all $M \in \GL_2(K)$ 
$$\Stab(\Phi) \cap M^{-1} \SL_2(\OD) M \leq \Comm_{\SL_2(\RR)}( \SL(L_D)) \cap M^{-1} \SL_2(\OD) M$$
holds. 
If $\SL(L_D)$ is maximal then 
$$\Stab(\Phi) \cap M^{-1} \SL_2(\OD) M = \SL(L_D) \cap M^{-1} \SL_2(\OD) M.$$
\end{cor}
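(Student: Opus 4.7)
The plan is to sandwich $H := \Stab(\Phi) \cap M^{-1}\SL_2(\OD)M$ between two commensurable copies of $G := \SL(L_D) \cap M^{-1}\SL_2(\OD)M$, then conclude via the definition of the commensurator. The starting point is the elementary inclusion $G \subseteq H$ (immediate from $\SL(L_D) \subseteq \Stab(\Phi)$, which is built into Theorem~\ref{thm_mcm_embedding}) together with $[\SL(L_D):G] < \infty$, a direct consequence of Proposition~\ref{prop_finite_index_GL2K}; the task is to upgrade these to a genuine commensurability between $H$ and $\SL(L_D)$.

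The crucial intermediate step is to show $[H:G] < \infty$. Here I would observe that $G = H \cap \SL_2(\OD)$, and inject the coset space $H/G$ into the finite coset space $M^{-1}\SL_2(\OD)M / N$, with $N := M^{-1}\SL_2(\OD)M \cap \SL_2(\OD)$, through the natural inclusion $H \hookrightarrow M^{-1}\SL_2(\OD)M$. Finiteness of the target is again Proposition~\ref{prop_finite_index_GL2K}.

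Given any $h \in H$, both $G$ and $hGh^{-1}$ now sit inside $H$ as finite index subgroups, so $G \cap hGh^{-1}$ is of finite index in each. Transferring this across the finite index inclusions $G \subseteq \SL(L_D)$ and $hGh^{-1} \subseteq h\SL(L_D)h^{-1}$, one sees that $\SL(L_D) \cap h\SL(L_D)h^{-1}$ has finite index in both $\SL(L_D)$ and $h\SL(L_D)h^{-1}$, which is exactly the condition $h \in \Comm_{\SL_2(\RR)}(\SL(L_D))$. Since $H \subseteq M^{-1}\SL_2(\OD)M$ by construction, this gives the first assertion.

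For the second assertion, assume $\SL(L_D)$ is maximal. Because $\SL(L_D)$ is a non-arithmetic cofinite Fuchsian group (by Theorem~\ref{thm_Gutkin_Judge2}), Lemma~\ref{lem_non-arithmetic_commensurator} identifies $\Comm_{\SL_2(\RR)}(\SL(L_D))$ as the unique maximal Fuchsian group containing $\SL(L_D)$, hence as $\SL(L_D)$ itself under the maximality assumption. The first assertion then collapses to $H \subseteq \SL(L_D) \cap M^{-1}\SL_2(\OD)M = G$, and the reverse inclusion is automatic. The step where care is most needed is the bound $[H:G] < \infty$: one cannot simply invoke discreteness of $H$ inside a single copy of $\SL_2(\RR)$, since $M^{-1}\SL_2(\OD)M$ is not discrete there, and must instead argue through the coset decomposition described above.
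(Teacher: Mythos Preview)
Your proof is correct and follows essentially the same skeleton as the paper's: both arguments hinge on the two finite-index facts $[\SL(L_D):G]<\infty$ and $[H:G]<\infty$ (in your notation), with $G = H\cap\SL_2(\OD)$ identified via $\Stab(\Phi)\cap\SL_2(\OD)=\SL(L_D)$, and then deduce $H\subseteq\Comm_{\SL_2(\RR)}(\SL(L_D))$.

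The only real difference is in how the last deduction is made. The paper routes it through Lemma~\ref{lem_commensurable_implies_subgroup}: since $G$ has finite index in $\SL(L_D)$, their commensurators coincide, and since $H$ is a Fuchsian group containing $G$ with finite index, $H$ lies in that common commensurator by Lemma~\ref{lem_non-arithmetic_commensurator}. You instead verify the commensurator condition for each $h\in H$ by hand, using that $G$ and $hGh^{-1}$ both sit inside $H$ with finite index. Your version is marginally more elementary in that it never needs to assert that $H$ itself is Fuchsian (a fact the paper takes from Proposition~\ref{thm_twisted_finite_volume}); the paper's version is shorter because the work has been packaged into the preceding lemma.
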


\begin{proof} The group $G:=\Stab(\Phi) \cap M \SL_2(\OD) M^{-1}$ is a Fuchsian group with finite index subgroup $H:=\Stab(\Phi) \cap M \SL_2(\OD) M^{-1} \cap \SL_2(\OD)$. Indeed, $H$ is a finite index subgroup of $\Comm_{\SL_2(\RR)}( \SL(L_D))$ too, and therefore the Fuchsian group $G$ is contained in the unique maximal Fuchsian group $\Comm_{\SL_2(\RR)}( \SL(L_D))$. This yields the claim. \end{proof}

\paragraph{Maximal groups.} Before coming to $\mathfrak{n}$-pseudo parabolic maximal groups, we will at first look at the maximal case. An immediate consequence of the preceding results is the following theorem, which we have already mentioned in the introduction of this chapter.

\begin{thm} \label{thm_degree_of_covering_CnM} If $\SL(L_D)$ is maximal then the degree of the covering $\pi: C_M(M) \to C_M$ is equal to $1$ for all $M \in \GL_2(K)$. In other words $$\SL_M(L_D)=\textrm{M} \SL(L_D)M^{-1} \cap \SL_2(\OD).$$
\end{thm}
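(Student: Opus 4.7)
The plan is to deduce the theorem as a direct formal consequence of Proposition~\ref{prop_stabilzer_twisted} and Corollary~\ref{cor_stabcap}, with the maximality hypothesis entering only at the very last step. First I would recall that Proposition~\ref{prop_stabilzer_twisted} identifies
$$\SL_M(L_D) = M\,\Stab(\Phi)\,M^{-1}\cap \SL_2(\OD).$$
Conjugating both sides by $M^{-1}$ rewrites this as
$$M^{-1}\SL_M(L_D)M = \Stab(\Phi)\cap M^{-1}\SL_2(\OD)M,$$
which reduces the problem to understanding how $\Stab(\Phi)$ meets the conjugate $M^{-1}\SL_2(\OD)M$. This is precisely the situation treated by Corollary~\ref{cor_stabcap}.

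Next I would invoke the hypothesis that $\SL(L_D)$ is maximal. By the second statement of Corollary~\ref{cor_stabcap}, this assumption yields the equality
$$\Stab(\Phi)\cap M^{-1}\SL_2(\OD)M = \SL(L_D)\cap M^{-1}\SL_2(\OD)M,$$
i.e.\ no element of $\Stab(\Phi)$ outside $\SL(L_D)$ can land in the conjugate $M^{-1}\SL_2(\OD)M$. Conjugating back by $M$ and intersecting with $\SL_2(\OD)$ gives at once
$$\SL_M(L_D) = M\SL(L_D)M^{-1}\cap \SL_2(\OD),$$
which is assertion (ii) of the theorem.

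For assertion (i), I would observe that the degree of the covering $\pi\colon C_M(M)\to C_M$ equals the index $[\SL_M(L_D):\SL_M(L_D,M)]$, where by definition $\SL_M(L_D,M)=\SL_M(L_D)\cap M\SL_2(\OD)M^{-1}$. The equality just established exhibits $\SL_M(L_D)$ as a subgroup of $M\SL(L_D)M^{-1}$, which is itself contained in $M\SL_2(\OD)M^{-1}$ because $\SL(L_D)\subset\SL_2(\OD)$; hence $\SL_M(L_D,M)=\SL_M(L_D)$ and the degree is $1$.

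There is essentially no obstacle in the proof itself: the content is entirely packaged into Corollary~\ref{cor_stabcap}, whose proof in turn rests on Lemma~\ref{lem_non-arithmetic_commensurator} (the uniqueness of a maximal Fuchsian overgroup of a non-arithmetic lattice, via Margulis). The real obstacle, which motivates the rest of the chapter, is verifying the maximality hypothesis; this is why the weaker notion of $\mathfrak{n}$-pseudo parabolic maximality is subsequently introduced and used to obtain the analogous conclusion of Theorem~\ref{thm_cov_degree_10} under arithmetic restrictions on $\det(M)$.
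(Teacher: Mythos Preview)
Your proof is correct and follows exactly the route the paper indicates: the theorem is stated there as ``an immediate consequence of the preceding results,'' namely Proposition~\ref{prop_stabilzer_twisted} and Corollary~\ref{cor_stabcap}, and you have simply spelled out that immediate deduction. The closing remark about the real obstacle being the verification of maximality (and the subsequent passage to pseudo parabolic maximality) is also an accurate reading of the paper's strategy.
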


It is now natural to ask which set of matrices we actually can exclude from lying in the stabilizer if $\SL(L_D)$ is maximal. 

\begin{defi} We define $\SL_2^{tr}(K) := \left\{x \in \SL_2(K) | \tr(x) \in \OD \right\}$. \label{glo_SL2tr} \end{defi}

\begin{rem} Note that $\SL_2^{tr}(K)$ is not a group. \end{rem}

Let us explain why this is the right object to look at.

\begin{prop} \label{prop_conj_set_of_sl} We have $$\bigcup_{M \in \GL_2(K)} \rm{M} \SL_2(\OD) M^{-1} = \SL_2^{tr}(K).$$ \end{prop}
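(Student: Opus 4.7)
The plan is to prove both inclusions separately; the first is immediate and the second rests on the fact that any non-scalar $2\times 2$ matrix is conjugate over $K$ to the companion matrix of its characteristic polynomial.

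For the inclusion ``$\subseteq$'', I would simply observe that conjugation preserves the trace and determinant. If $X = M A M^{-1}$ with $A \in \SL_2(\OD)$ and $M \in \GL_2(K)$, then $X \in \SL_2(K)$ and $\tr(X) = \tr(A) \in \OD$, so $X \in \SL_2^{tr}(K)$.

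For the inclusion ``$\supseteq$'', fix $X \in \SL_2^{tr}(K)$ and set $t := \tr(X) \in \OD$. The key idea is to exhibit a concrete matrix in $\SL_2(\OD)$ with trace $t$, namely the companion matrix
$$A_t := \begin{pmatrix} 0 & -1 \\ 1 & t \end{pmatrix} \in \SL_2(\OD).$$
Both $X$ and $A_t$ have characteristic polynomial $\lambda^2 - t\lambda + 1$. If $X$ is not a scalar matrix, then its minimal polynomial coincides with its characteristic polynomial, so $X$ is similar over $K$ to its rational canonical form, which is exactly $A_t$. Hence there exists $M \in \GL_2(K)$ with $X = M A_t M^{-1}$, giving $X \in M \SL_2(\OD) M^{-1}$. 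If instead $X$ is scalar, then $X = \pm \Id \in \SL_2(\OD)$, and one may take $M = \Id$.

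The main ``obstacle'' is really just the linear-algebra fact above, and there is essentially no obstacle at all: the whole statement reduces to the observation that companion matrices live in $\SL_2(\OD)$ as soon as the trace does. Note that the proposition does \emph{not} claim anything about conjugation by elements of $\SL_2(K)$ or by elements with integral entries; the freedom to choose $M \in \GL_2(K)$ arbitrarily is exactly what makes the statement clean.
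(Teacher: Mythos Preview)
Your proof is correct and follows essentially the same approach as the paper's: both reduce to the rational canonical form. The only cosmetic differences are that the paper uses the target matrix $B=\left(\begin{smallmatrix} t-1 & 1 \\ t-2 & 1 \end{smallmatrix}\right)$ instead of the companion matrix, and splits the argument into the cases ``distinct eigenvalues'' versus ``parabolic'' rather than your cleaner ``non-scalar'' versus ``scalar''; your handling of the degenerate case is in fact tidier.
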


\begin{proof} The union $\bigcup_{M \in \GL_2(K)} \textrm{M} \SL_2(\OD) M^{-1}$ is a subset of $\SL_2^{tr}(K)$ as the trace is preserved under conjugation. On the other hand by the theory of the rational canonical form two matrices $A$ and $B$ in $\SL_2(K)$ such that neither of them has a double eigenvalue and such that they have the same characteristic polynomial are conjugated in $\GL_2(K)$. Note that for all matrices $A \in \SL_2^{tr}(K)$ there exists a matrix $B=\left( \begin{smallmatrix} \tr(A)-1 & 1 \\ \tr(A)-2 & 1 \end{smallmatrix} \right) \in \SL_2(\OD)$ with the same characteristic polynomial as $A$. Finally, any parabolic matrix in $\SL_2(K)$ is in $\GL_2(K)$ conjugated to a matrix of the form $\left( \begin{smallmatrix} 1 & x \\ 0 & 1 \end{smallmatrix} \right)$ with $x \in K$ 
\end{proof}

From this we can deduce:

\begin{cor} We always have that
$$\Stab(\Phi) \cap \SL_2^{tr}(K) \leq \Comm_{\SL_2(\RR)}( \SL(L_D))$$
If $\SL(L_D)$ is maximal then even
$$\Stab(\Phi) \cap \SL_2^{tr}(K) = \SL(L_D)$$
holds. 
 \end{cor}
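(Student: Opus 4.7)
The plan is to deduce both statements directly from Proposition~\ref{prop_conj_set_of_sl} together with Corollary~\ref{cor_stabcap}. Proposition~\ref{prop_conj_set_of_sl} says that $\SL_2^{tr}(K)$ is exactly the union over all $M \in \GL_2(K)$ of the conjugates $M \SL_2(\OD) M^{-1}$. The idea is to use this to pick, for any element $A \in \Stab(\Phi) \cap \SL_2^{tr}(K)$, a convenient conjugate in which we may apply Corollary~\ref{cor_stabcap}.

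For the first (unconditional) inclusion, I take $A \in \Stab(\Phi) \cap \SL_2^{tr}(K)$ and use Proposition~\ref{prop_conj_set_of_sl} to find $M \in \GL_2(K)$ with $A \in M \SL_2(\OD) M^{-1}$. Then $A \in \Stab(\Phi) \cap M \SL_2(\OD) M^{-1}$, and applying the first part of Corollary~\ref{cor_stabcap} (with $M$ replaced by $M^{-1}$) gives $A \in \Comm_{\SL_2(\RR)}(\SL(L_D))$. Since $A$ was arbitrary, this yields $\Stab(\Phi) \cap \SL_2^{tr}(K) \leq \Comm_{\SL_2(\RR)}(\SL(L_D))$.

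For the equality under the maximality hypothesis, the inclusion $\SL(L_D) \subseteq \Stab(\Phi) \cap \SL_2^{tr}(K)$ is immediate, since $\SL(L_D)$ stabilizes $\Phi$ by definition and is contained in $\SL_2(\OD) \subseteq \SL_2^{tr}(K)$. For the reverse inclusion I run the same argument as above: given $A \in \Stab(\Phi) \cap \SL_2^{tr}(K)$, pick $M \in \GL_2(K)$ with $A \in M \SL_2(\OD) M^{-1}$, and now invoke the second (sharper) assertion of Corollary~\ref{cor_stabcap}, which under maximality of $\SL(L_D)$ gives $\Stab(\Phi) \cap M \SL_2(\OD) M^{-1} = \SL(L_D) \cap M \SL_2(\OD) M^{-1}$. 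Hence $A \in \SL(L_D)$, completing the equality.

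There is essentially no obstacle here: everything is a bookkeeping exercise once Proposition~\ref{prop_conj_set_of_sl} and Corollary~\ref{cor_stabcap} are in hand. The only point worth double-checking is the direction of conjugation between the statement of Corollary~\ref{cor_stabcap} (which is phrased with $M^{-1} \SL_2(\OD) M$) and Proposition~\ref{prop_conj_set_of_sl} (phrased with $M \SL_2(\OD) M^{-1}$); this is harmless since the union in the proposition ranges over all of $\GL_2(K)$ and is therefore invariant under $M \mapsto M^{-1}$.
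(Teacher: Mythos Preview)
Your proof is correct and follows essentially the same approach as the paper: the corollary is stated immediately after Proposition~\ref{prop_conj_set_of_sl} with the words ``From this we can deduce,'' and the intended argument is precisely the combination of Proposition~\ref{prop_conj_set_of_sl} with Corollary~\ref{cor_stabcap} that you spell out. Your remark about the harmless $M \leftrightarrow M^{-1}$ swap is the only bookkeeping point, and you handle it correctly.
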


Since $\SL_2^{tr}(K)$ is not all of $\SL_2(K)$, there still remain some matrices which might additionally lie in the stabilizer. We can also exclude all the other possible candidates from lying in the stabilizer of the graph of the Teichmüller curve. Before we do this we want to switch to the more general setting of $\mathfrak{n}$-pseudo-parabolic maximal groups. We will analyze three different cases separately, namely the odd spin Teichmüller curves with discriminant $D \equiv 1 \mod 4$, the even spin Teichmüller curves with discriminant $D \equiv 1 \mod 8$ and the Teichmüller curves with discriminant $D \equiv 0 \mod 4$.
\paragraph{D $\equiv$ 1 mod 4, odd spin.} So far everything in this section has been true for all Teichmüller curves, in particular for $D \equiv 1 \mod 8$ for both, the odd and the even spin Teichmüller curves. From now on we have to distinguish the two cases. As usual, we start with the family of Teichmüller curves which includes the odd spin case. Recall that the matrices $T,Z=T^t$ and $S$ always lie in $\SL(L_D^1)$.

\begin{thm} \label{thm_parabolic_maximal_implies_stabilizer} For all fundamental discriminants $D \equiv 1 \mod 4$ where $\SL(L_D^1)$ is pseudo parabolic maximal \index{Fuchsian group!pseudo parabolic maximal}
$\Stab(\Phi) \cap \SL_2(K) = \SL(L_D^1)$ holds.
In this case the degree of the covering $\pi: C_M(M) \to C_M$ is equal to $1$ for all $M$.
\end{thm}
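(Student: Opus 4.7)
My plan is to take an arbitrary $A \in \Stab(\Phi) \cap \SL_2(K)$ and show $A \in \SL_2(\OD)$; the first assertion then follows from $\Stab(\Phi) \cap \SL_2(\OD) = \SL(L_D^1)$, which is the very definition of the Veech group. The key mechanism will be that for every parabolic $\gamma \in \SL(L_D^1)$ the conjugate $A\gamma A^{-1}$ is parabolic and still lies in $\Stab(\Phi) \cap \SL_2(K)$; since its trace is $\pm 2 \in \OD$, Proposition~\ref{prop_conj_set_of_sl} places it in $\SL_2^{tr}(K) = \bigcup_M M \SL_2(\OD) M^{-1}$, and Corollary~\ref{cor_stabcap} then places it in $\Comm_{\SL_2(\RR)}(\SL(L_D^1))$. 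Hence $\langle \SL(L_D^1), A\gamma A^{-1}\rangle$ is a finite-index Fuchsian extension of $\SL(L_D^1)$, and pseudo parabolic maximality forbids it from containing a parabolic element of $\SL_2(K) \smallsetminus \SL_2(\OD)$. Therefore $A\gamma A^{-1} \in \SL_2(\OD) \cap \Stab(\Phi) = \SL(L_D^1)$ for every parabolic $\gamma \in \SL(L_D^1)$.

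Next, writing $A = \left(\begin{smallmatrix} a & b \\ c & d \end{smallmatrix}\right)$ and applying the above to the three standard parabolic generators $T = \left(\begin{smallmatrix} 1 & w \\ 0 & 1 \end{smallmatrix}\right)$, $Z = STS^{-1}$ (fixing $0$), and $E = \left(\begin{smallmatrix} 1-e & e \\ -e & 1+e \end{smallmatrix}\right)$ (fixing the cusp $1$, as described in Section~\ref{sec_fixing_Veech}), a direct matrix computation yields the integrality requirements
\[
w a^2,\ w c^2,\ w ac,\ w b^2,\ w d^2,\ w bd \in \OD
\]
and
\[
e(a+b)^2,\ e(c+d)^2,\ e(a+b)(c+d) \in \OD.
\]
I will then translate these into denominator-ideal constraints and run a prime-by-prime analysis in the spirit of Section~\ref{sec_pseudo}. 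By Lemma~\ref{lem_properties_OD} every prime divisor of $(w)$ lies over a split rational prime; since $D \equiv 5 \mod 8$ the rational prime $2$ is inert and, by Section~\ref{sec_fixing_Veech}, no rational integer divides $e$, so the bad primes of $(w)$ and $(e)$ are disjoint. Combining the constraints from $T$, $Z$ and $E$ at each prime $\mathfrak{p}$ will force the $\mathfrak{p}$-adic valuation of each entry of $A$ to be non-negative, placing $A$ in $\SL_2(\OD)$.

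The second assertion reduces cleanly to the first. Proposition~\ref{prop_stabilzer_twisted} gives $\SL_M(L_D) = \Stab(\Phi)^{M^{-1}} \cap \SL_2(\OD)$, and since $M \in \GL_2^+(K)$ preserves $\SL_2(K)$ under conjugation, the first part yields $\Stab(\Phi)^{M^{-1}} \cap \SL_2(K) = M^{-1} \SL(L_D^1) M$. Therefore
\[
\SL_M(L_D) \subset M^{-1} \SL(L_D^1) M \subset M^{-1} \SL_2(\OD) M,
\]
which forces $\SL_M(L_D) = \SL_M(L_D, M)$ and hence $\deg(\pi : C_M(M) \to C_M) = 1$.

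The hard part will be the arithmetic in the second paragraph: the single condition $w a^2 \in \OD$ only says the square of the denominator ideal of $a$ divides $(w)$, which is not enough when $(w)$ is not squarefree. I expect to have to marshal the conditions coming from $T$ together with those from $Z$ (involving $b, d$) and from $E$ (involving $a+b$ and $c+d$), exploiting the coprimality of the bad primes of $(w)$ and $(e)$ that $D \equiv 5 \mod 8$ guarantees, in exactly the same bookkeeping style as the proof of Theorem~\ref{thm_pseudo_parabolic}. Pushing this case analysis through at every prime is the technical heart of the argument.
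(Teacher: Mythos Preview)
Your overall strategy is exactly the paper's: use that $A\gamma A^{-1}$ is a parabolic element of $\Stab(\Phi)\cap\SL_2(K)$, land it in $\Comm_{\SL_2(\RR)}(\SL(L_D^1))$, and then invoke pseudo parabolic maximality to force $A\gamma A^{-1}\in\SL_2(\OD)$. The deduction of the second assertion from the first is also fine (modulo the conjugation convention: with $\Gamma^{M^{-1}}=M\Gamma M^{-1}$ you should get $M\SL(L_D^1)M^{-1}$, not $M^{-1}\SL(L_D^1)M$).

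The genuine gap is your choice of the third parabolic. The constraints you extract from $T$, $Z$, and $E$ are \emph{not} enough at a prime $\mathfrak p$ with $\mathfrak p^2\mid w$. Locally at such a $\mathfrak p$ (with uniformizer $\pi$, and $\mathfrak p\nmid 2$), the matrix
\[
A=\begin{pmatrix}\pi^{-1}&-\pi^{-1}\\ \pi^{-1}&-\pi^{-1}+\pi\end{pmatrix},\qquad \det A=1,
\]
satisfies $wa^2,wb^2,wc^2,wd^2,wac,wbd\in\OD_{\mathfrak p}$ (each has $v_{\mathfrak p}\ge 0$), and $a+b=0$, $c+d=\pi$, so $e(a+b)^2,e(c+d)^2,e(a+b)(c+d)\in\OD_{\mathfrak p}$ regardless of $v_{\mathfrak p}(e)$. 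Thus $T,Z,E$ impose no obstruction, yet $A\notin\SL_2(\OD_{\mathfrak p})$. Your claim that ``the bad primes of $(w)$ and $(e)$ are disjoint'' is both unjustified (``no rational integer divides $e$'' only prevents $\mathfrak p$ and $\mathfrak p^\sigma$ from \emph{both} dividing $e$) and, as this example shows, irrelevant even if it were true. Also note the theorem is stated for all $D\equiv 1\bmod 4$, not only $D\equiv 5\bmod 8$.

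What the paper does instead is conjugate by the parabolic $L$ of Lemma~\ref{lem_second_parabolic}, whose cusp is $w/2$. One computes $(ALA^{-1})_{1,2}=(w+1)(wa-2b)^2$ (up to a harmless power of $2$ depending on $D\bmod 16$). At a prime $\mathfrak p$ with $\mathfrak p^2\mid w$ one has $\mathfrak p\nmid(w+1)$; since $v_{\mathfrak p}(wa)\ge v_{\mathfrak p}(w)+v_{\mathfrak p}(a)\ge 1$ while $v_{\mathfrak p}(2b)=v_{\mathfrak p}(b)$, the term $wa-2b$ isolates $b$ modulo $\mathfrak p$, and integrality of $(w+1)(wa-2b)^2$ forces $v_{\mathfrak p}(b)\ge 0$. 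A short case split (whether the common denominator divides one of $a,b,c,d$ or none, using $S$ to permute the entries) then finishes. The point is that the cusp $w/2$ ``sees'' the primes of $w$ in a way the cusp $1$ does not; replace $E$ by $L$ and your outline goes through.
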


\begin{proof} Let us first assume that there does not exist a prime ideal $\mathfrak{p}$ in $\OD$ with $\mathfrak{p}|2$ and $\mathfrak{p}^2|w$.\\[11pt]
Now suppose there exists a matrix $$X:= \begin{pmatrix} a' & b' \\ c' & d' \end{pmatrix} \in \Stab(\Phi) \cap \SL_2(K) \smallsetminus \SL(L_D^1).$$ Since $\Stab(\Phi) \cap \SL_2(\OD)=\SL(L_D^1)$ we know that $X \notin \SL_2(\OD)$. Then we define $e$ to be the common denominator of $a',b',c',d'$ in $K$. We can write $X$ as
$$X= \begin{pmatrix} \frac{a}{e} & \frac{b}{e} \\ \frac{c}{e} & \frac{d}{e} \end{pmatrix}$$
with $a,b,c,d,e \in \OD$. We now compute
$$XTX^{-1} = \begin{pmatrix} 1 - w \frac{ac}{e^2} & w \frac{a^2}{e^2} \\ -w\frac{c^2}{e^2} & 1 + w \frac{ac}{e^2} \end{pmatrix}$$
which is parabolic and therefore in $\SL(L_D^1)$ and in particular $XTX^{-1} \in \SL_2(\OD)$. \\[11pt]
So let us now assume that $(w)$ has (finitely many) prime (ideal) divisors $\mathfrak{q}_1,...,\mathfrak{q}_n$ that divide $(w)$ with order at least $2$. We assume that $e$ is not a unit. Then it follows from Lemma~\ref{lem_fund_discriminant_quadratic} that $(e) = \prod_{i \in J} \mathfrak{q}_i$ with $J \subset \left\{1,...,n\right\}$. Working in an appropriate localization we may without loss of generality assume that $J=\left\{1\right\}$, i.e. $(e)=\mathfrak{q}_1=:\mathfrak{q}$. Evidently $\mathfrak{q}$ does divide at most three of $a,b,c,d$. Let $L$ be the parabolic element in $\SL(L_D^1)$ which stems from Lemma~\ref{lem_second_parabolic}. Then $L$ is of the form $L= \left( \begin{smallmatrix} 1+v & z \\ r & 1-v \end{smallmatrix} \right)$ and
$$(XLX^{-1})_{1,2} = \frac{1}{e^2} \left( z a^2 - 2v ab - r b \right).$$
Therefore, $\frac{1}{e^2} \left( z a^2 - 2v ab - r b \right) \in \OD$.\\[11pt]
a) Let us first assume that $e$ divides at least one of the $a,b,c,d$. Since $S \in \SL(L_D^1)$ we may (by multiplying $M$ with $S$ and taking inverses) assume that $e \nmid b$ and $e|a$ and hence $a=ea'$ with $(a',e)=1$ (if higher order divisors occur, then we do the same argument for the appearing power). Therefore, we must have that $za'^2 - 2v a' \frac{b}{e} - r \frac{b^2}{e^2} \in \OD$ or equivalently $2v a' \frac{b}{e} - r \frac{b^2}{e^2} \in \OD$. From this it follows that $\mathfrak{q}|rb^2$. Inserting the expression for $r$ from Lemma~\ref{lem_second_parabolic} we see that $\mathfrak{q}|4b^2$. We thus have $\mathfrak{q}|b$ since $\mathfrak{q} \nmid 2$. This is a contradiction.\\[11pt]
b) So we may assume that $e$ divides none of $a,b,c,d$. Since $r=-\frac{v^2}{z}$, $$(XLX^{-1})_{1,2} = \frac{1}{e^2}\frac{v^2}{z} (a\frac{z}{v}-b)^2.$$ On the other hand $\frac{z}{v}$ is equal to $\frac{1}{2}w$.  Thus we have that $\frac{1}{4e^2}\frac{v^2}{z}(aw-2b)^2 \in \OD$. Inserting again the expression for $r$ we get that $\mathfrak{q} | aw-2b$ and so - since $\mathfrak{q}|w$ - also that $\mathfrak{q}|2b$. Since $\mathfrak{q} \nmid 2$ and $\mathfrak{q} \nmid b$, this is not possible. \\[11pt]
Finally we remark that $(w,2)=1$ is not at all a restriction since if there exists a prime ideal $\mathfrak{p}$ with $\mathfrak{p}^2|w$ and $\mathfrak{p} | 2$ then $D \equiv 1 \mod 16$ but then $r = 2(w+1)$ and therefore $\mathfrak{p}^2|aw-2b$ which yields again a contradiction.
\end{proof}
An especially interesting consequence of this is that the information about how much the stabilizer in $\SL_2(K)$ might be bigger than $\SL(L_D^1)$, is somehow only hidden in the additional parabolic elements of the commensurator. We can again deduce a statement about the commensurator of pseudo parabolic maximal groups $\SL(L_D^1)$:

\begin{cor} For all fundamental discriminants $D \equiv 1 \mod 4$ such that $\SL(L_D^1)$ is pseudo parabolic maximal \index{commensurator}
$$\Comm_{\SL_2(\RR)} (\SL(L_D^1)) \subset \SL_2(\OD).$$ 
\end{cor}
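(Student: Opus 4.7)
The plan is to recycle the arithmetic argument from the proof of Theorem~\ref{thm_parabolic_maximal_implies_stabilizer}, after observing that the commensurator plays the role of $\Stab(\Phi)$ equally well. Let $M\in\Comm_{\SL_2(\RR)}(\SL(L_D^1))$; by the remark following Theorem~\ref{thm_maximality}, $M\in\SL_2(K)$. Assume for contradiction that $M\notin\SL_2(\OD)$ and write $M=\bigl(\begin{smallmatrix}a/e & b/e\\ c/e & d/e\end{smallmatrix}\bigr)$ with $a,b,c,d,e\in\OD$ and $e$ a non-unit common denominator.

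The key reduction is to show that every parabolic element of the commensurator already lies in $\SL_2(\OD)$. By the theorem of Gutkin-Judge (Theorem~\ref{thm_Gutkin_Judge2}) the Veech group $\SL(L_D^1)$ is non-arithmetic, so Margulis' theorem (Theorem~\ref{thm_Margulis}) makes $\Comm_{\SL_2(\RR)}(\SL(L_D^1))$ a Fuchsian group containing $\SL(L_D^1)$ as a finite-index subgroup. The pseudo parabolic maximality of $\SL(L_D^1)$ (Definition~\ref{def_pseudo_parabolic}) then forbids any parabolic element of this supergroup from lying in $\SL_2(K)\smallsetminus\SL_2(\OD)$. Applying this to the conjugates $MTM^{-1}$, $MZM^{-1}$, and $MLM^{-1}$, where $L$ is the parabolic element produced in Lemma~\ref{lem_second_parabolic}, I deduce that all three conjugates lie in $\SL_2(\OD)$---but these are precisely the relations used to control the entries of $X$ in the proof of Theorem~\ref{thm_parabolic_maximal_implies_stabilizer}.

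It then remains to reproduce that computation verbatim with $M$ in place of $X$: first use $MTM^{-1}\in\SL_2(\OD)$ together with Lemma~\ref{lem_fund_discriminant_quadratic} to localize at a prime $\mathfrak{q}\mid(e)$ satisfying $\mathfrak{q}^2\mid(w)$ and to reduce to $(e)=\mathfrak{q}$, then split into the subcases ``$\mathfrak{q}$ divides at least one of $a,b,c,d$'' and ``$\mathfrak{q}$ divides none of them'', and invoke $MLM^{-1}\in\SL_2(\OD)$ in each to force $\mathfrak{q}\mid 2b$ (or the analogous relation); the hypothesis $\mathfrak{q}\nmid 2$ then gives the desired contradiction. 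The borderline case $\mathfrak{p}\mid 2$ and $\mathfrak{p}^2\mid w$, i.e.\ $D\equiv 1\bmod 16$, is dealt with exactly as at the end of the proof of Theorem~\ref{thm_parabolic_maximal_implies_stabilizer}, using that then $r=2(w+1)$. Essentially no new idea is required beyond Margulis: the one conceptual point, and the only minor obstacle, is recognizing that the proof of Theorem~\ref{thm_parabolic_maximal_implies_stabilizer} never truly uses that $X$ stabilizes $\Phi$ beyond the single consequence that conjugates of parabolic elements of $\SL(L_D^1)$ by $X$ stay parabolic and in $\SL_2(\OD)$, and pseudo parabolic maximality supplies exactly this consequence for the commensurator.
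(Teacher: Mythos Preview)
Your proposal is correct and matches the paper's intended argument: the corollary is stated without proof precisely because the computation in the proof of Theorem~\ref{thm_parabolic_maximal_implies_stabilizer} transfers verbatim once one observes (via Margulis) that the commensurator is a Fuchsian supergroup, so pseudo parabolic maximality forces the parabolic conjugates $XTX^{-1}$, $XZX^{-1}$, $XLX^{-1}$ into $\SL_2(\OD)$. You have identified exactly the right point---that the theorem's proof uses nothing about $\Stab(\Phi)$ beyond this consequence.
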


Now we can prove the main theorem of this section. The theorem tells us how the stabilizer of an arbitrary twisted Teichmüller curve exactly looks like and thus enables us to calculate the volume of most twisted Teichmüller curves in Chapter~\ref{chapter_calculations}. It is valid for all fundamental discriminants $D \equiv 1 \mod 4$ with $D \not \equiv 1 \mod 16$ and not only for those for which we could prove pseudo parabolic maximality. 

\begin{thm} \label{thm_cov_degree_1} For all fundamental discriminants $D \equiv 1 \mod 4$ with $D \not \equiv 1 \mod 16$ the degree of the covering $\pi: C_M(M) \to C_M$ is equal to $1$ for all $M$. In other words  $\SL_M(L_D^1)=\rm{M}\SL(L_D^1)M^{-1} \cap \SL_2(\OD)$. \end{thm}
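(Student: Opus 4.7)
The argument naturally splits into the two cases $D\equiv 5\pmod 8$ and $D\equiv 9\pmod{16}$ covered by the hypothesis. The first case is essentially immediate: Theorem~\ref{thm_pseudo_parabolic}(i) asserts that $\SL(L_D^1)$ is pseudo parabolic maximal, so Theorem~\ref{thm_parabolic_maximal_implies_stabilizer} applies and yields $\Stab(\Phi)\cap\SL_2(K)=\SL(L_D^1)$. For any $B\in\SL_M(L_D^1)$ the conjugate $N:=M^{-1}BM$ lies in $\Stab(\Phi)\cap\SL_2(K)$ and hence in $\SL(L_D^1)$, which forces $B\in M\SL(L_D^1)M^{-1}$; both assertions of the theorem follow in this case.

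For $D\equiv 9\pmod{16}$ only $\mathfrak{p}_2$-pseudo parabolic maximality is guaranteed (Theorem~\ref{thm_pseudo_parabolic}(ii)), so Theorem~\ref{thm_parabolic_maximal_implies_stabilizer} cannot be invoked directly. I would proceed by contradiction: suppose $B\in\SL_M(L_D^1)\setminus M\SL(L_D^1)M^{-1}$, and set $N:=M^{-1}BM$, so $N\in\Stab(\Phi)\cap\SL_2(K)\setminus\SL(L_D^1)$. Writing $N=\frac{1}{e}\bigl(\begin{smallmatrix}a&b\\c&d\end{smallmatrix}\bigr)$ with $(a,b,c,d,e)=1$, the element $e$ cannot be a unit. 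For each of the parabolic generators $T$, $Z=T^t$ and the element $L$ from Lemma~\ref{lem_second_parabolic}, the conjugate $NPN^{-1}$ lies in $\Stab(\Phi)\cap\SL_2^{tr}(K)\subseteq\Comm_{\SL_2(\RR)}(\SL(L_D^1))$ by Corollary~\ref{cor_stabcap}, and $\mathfrak{p}_2$-pseudo parabolic maximality forces its denominator to divide $\mathfrak{p}_2$. The arithmetic of $w$, of $w+1$ and of split primes used in Theorem~\ref{thm_parabolic_maximal_implies_stabilizer} then applies verbatim to every prime divisor $\mathfrak{q}$ of $e$ with $\mathfrak{q}\neq\mathfrak{p}_2$.

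The main obstacle is the remaining sub-case where the only prime divisor of $e$ is $\mathfrak{p}_2$ itself. Here the constraint extracted from $NTN^{-1}$ becomes vacuous because $\mathfrak{p}_2\mid w$, and $NLN^{-1}$ only forces $\mathfrak{p}_2^2\mid(w+1)(aw-2b)^2$; a residue field computation modulo $\mathfrak{p}_2^2$, using that both $w$ and $2$ generate the one-dimensional $\mathbb{F}_2$-vector space $\mathfrak{p}_2/\mathfrak{p}_2^2$, reduces this to $\mathfrak{p}_2\mid a-b$, which alone does not contradict $(a,b,c,d,e)=1$. To close the argument I would combine this congruence with its symmetric analogue coming from $NZN^{-1}$ and with the conjugate $NEN^{-1}$ of the parabolic element $E$ fixing the cusp $1$ described in Section~\ref{sec_fixing_Veech}; the fact that the translation length $e_0\in\OD$ of $E$ has no rational integer factor, together with $\mathfrak{p}_2^2\nmid w$, should yield the desired contradiction. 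The hypothesis $D\not\equiv 1\pmod{16}$ enters precisely at this step, because for $D\equiv 1\pmod{16}$ one has $\mathfrak{p}_2^2\mid w$ and the residue field argument collapses.
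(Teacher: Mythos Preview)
Your $D\equiv 5\pmod 8$ case coincides with the paper's. For $D\equiv 9\pmod{16}$ the paper takes a different and more direct line than you do. It does not invoke $\mathfrak p_2$-pseudo parabolic maximality and a residue computation in $\mathfrak p_2/\mathfrak p_2^2$; instead it uses $(2)$-pseudo parabolic maximality via Theorem~\ref{cor_1_mod_8} (so conjugates $XPX^{-1}$ of parabolic elements only lie in $\tfrac{1}{2}\OD$, not necessarily in $\OD$) together with Lemma~\ref{lem_fund_discriminant_quadratic}(ii) applied with the unramified prime $p=2$. The relevant arithmetic input is that for $D\equiv 9\pmod{16}$ the lower-left entry of $L$ in Lemma~\ref{lem_second_parabolic} is $-(w{+}1)$ and $2\nmid(w{+}1)$ in $\OD$; the paper asserts that these two facts suffice to re-run the proof of Theorem~\ref{thm_parabolic_maximal_implies_stabilizer} with ``$\in\tfrac{1}{2}\OD$'' in place of ``$\in\OD$'' throughout. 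In particular the paper's mechanism for excluding $D\equiv 1\pmod{16}$ is not the collapse of your residue-field identity $[w]=[2]$, but rather that the lower-left entry of $L$ then becomes $-2(w{+}1)$ and the extra factor of $2$ defeats the application of Lemma~\ref{lem_fund_discriminant_quadratic}(ii).

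There is a genuine gap in your proposal that you correctly identify but do not close. Once the denominator $e$ is reduced to the single prime $\mathfrak p_2$, the constraints you extract from $T$, $Z$ and $L$ yield only $a\equiv b\equiv c\equiv d\pmod{\mathfrak p_2}$, which is compatible with all of $a,b,c,d$ being $\mathfrak p_2$-units and gives no contradiction. Your suggestion to finish with the parabolic $E$ fixing the cusp $1$ is plausible, but you do not carry it out, so the proof as written is incomplete. You should either execute the $E$-argument in full, or recast the $D\equiv 9\pmod{16}$ case around Lemma~\ref{lem_fund_discriminant_quadratic}(ii) and the specific shape of $L$ in that range, as the paper does.
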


\begin{proof} 
If $D \equiv 5 \mod 8$ this follows since $\SL(L_D^1)$ is then pseudo parabolic maximal.\\[11pt] 
If $D \equiv 9 \mod 16$ then by Theorem~\ref{cor_1_mod_8} there might only exist second roots of parabolic elements inside the commensurator. Since $2$ is not a ramified prime number in this case, Lemma~\ref{lem_fund_discriminant_quadratic}~(ii) suffices to derive an analogue statement as in Theorem~\ref{thm_parabolic_maximal_implies_stabilizer} since the lower left entry of the matrix from Lemma~\ref{lem_second_parabolic} is $w+1$ and thus not divisible by $2$. 
\end{proof}
If $D \equiv 1 \mod 16$ then the situation might be a little worse. The proof of the corollary is just the same as for Theorem~\ref{thm_cov_degree_1}. 

\begin{cor} \label{cor_cov_degree_1} Let $M \in \GL_2^+(K) \cap \Mat^{2x2}(\OD)$. Then for all fundamental discriminants $D \equiv 1 \mod 16$ the degree of the covering $\pi: C_M(M) \to C_M$ is equal to $1$ if $\det(M)$ is not divisible by $\mathfrak{p}_2$. \end{cor}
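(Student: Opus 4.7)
The plan is to adapt the proof of Theorem~\ref{thm_parabolic_maximal_implies_stabilizer}, using the hypothesis $\mathfrak{p}_2\nmid\det(M)$ to compensate for the fact that, by Theorem~\ref{thm_pseudo_parabolic}(ii), $\SL(L_D^1)$ is only known to be $\mathfrak{p}_2$-pseudo parabolic maximal when $D\equiv 1\bmod 16$. The guiding idea is that conjugation by $M$ confines the denominators of the relevant elements to primes different from $\mathfrak{p}_2$, and on that part the weaker $\mathfrak{p}_2$-pseudo parabolic maximality becomes as strong as ordinary pseudo parabolic maximality.

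Given $X\in\SL_M(L_D^1)\subseteq\SL_2(\OD)$, I would set $Y:=M^{-1}XM\in\Stab(\Phi)\cap\SL_2(K)$; showing $Y\in\SL_2(\OD)$ is equivalent to the degree of $\pi$ being one. Since $M,X\in\Mat^{2\times 2}(\OD)$, the entries of $Y=\det(M)^{-1}\operatorname{adj}(M)XM$ lie in $\det(M)^{-1}\OD$, so the denominator ideal of $Y$ divides $(\det M)$ and hence is coprime to $\mathfrak{p}_2$. Next I would establish $\Stab(\Phi)\cap\SL_2(K)\subseteq\Comm_{\SL_2(\RR)}(\SL(L_D^1))$ by observing that $Y\SL(L_D^1)Y^{-1}\cap\SL(L_D^1)=\SL(L_D^1)\cap Y\SL_2(\OD)Y^{-1}$ has finite index in $\SL(L_D^1)$ thanks to Proposition~\ref{prop_finite_index_GL2K}. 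Combined with Margulis' theorem (Theorem~\ref{thm_Margulis}) and the $\mathfrak{p}_2$-pseudo parabolic maximality of $\SL(L_D^1)$, every parabolic element of $\Stab(\Phi)\cap\SL_2(K)$ must have entries in $\mathfrak{p}_2^{-1}\OD$. Applied to the parabolic conjugates $YTY^{-1}$, $YZY^{-1}$ and $YLY^{-1}$ (with $L$ the second parabolic generator from Lemma~\ref{lem_second_parabolic}), whose denominators a priori divide the square of that of $Y$, the coprimality with $\mathfrak{p}_2$ upgrades these three matrices to elements of $\SL_2(\OD)$.

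The remainder follows the proof of Theorem~\ref{thm_parabolic_maximal_implies_stabilizer} nearly verbatim. From $YTY^{-1},YZY^{-1}\in\SL_2(\OD)$, together with the local normalisation that the numerator matrix of $Y$ has content one, every prime $\mathfrak{q}$ dividing the denominator of $Y$ satisfies $\mathfrak{q}^2\mid w$. By Lemma~\ref{lem_properties_OD}(iii) such $\mathfrak{q}$ divides a split prime, and since $w+w^\sigma=1$ forces $(w,w^\sigma)=1$ with $\mathfrak{p}_2^\sigma\mid w^\sigma$, one also excludes $\mathfrak{q}=\mathfrak{p}_2^\sigma$; together with $\mathfrak{q}\neq\mathfrak{p}_2$ this yields $\mathfrak{q}\nmid 2$. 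The case analysis via $YLY^{-1}\in\SL_2(\OD)$ then terminates as in Theorem~\ref{thm_parabolic_maximal_implies_stabilizer}: the lower-left entry of $L$ is $-2(w+1)$, and $\mathfrak{q}\mid w$ gives $\mathfrak{q}\nmid w+1$, so together with $\mathfrak{q}\nmid 2$ and $\mathfrak{q}\nmid b$ (obtained after the symmetry via $S$) a contradiction emerges. The main obstacle is conceptual rather than computational: one has to recognise that the one place in the proof of Theorem~\ref{thm_parabolic_maximal_implies_stabilizer} that is sensitive to $D\bmod 16$ is the case $\mathfrak{p}_2\mid e$, and that the hypothesis $\mathfrak{p}_2\nmid\det(M)$ is precisely the device that avoids this case.
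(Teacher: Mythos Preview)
Your proposal is correct and follows essentially the same route the paper intends. The paper's own proof is extremely terse—it simply says the argument ``is just the same as for Theorem~\ref{thm_cov_degree_1}''—and you have correctly unpacked what this means: the key observation is that $Y=M^{-1}XM$ has denominator dividing $\det(M)$, hence coprime to $\mathfrak{p}_2$, so the only obstruction in the proof of Theorem~\ref{thm_parabolic_maximal_implies_stabilizer} (namely the possible case $\mathfrak{q}=\mathfrak{p}_2$, where the lower-left entry $-2(w+1)$ of $L$ is divisible by $\mathfrak{p}_2$) is ruled out by hypothesis. Your intermediate step of using coprimality together with $\mathfrak{p}_2$-pseudo parabolic maximality to place $YTY^{-1},YZY^{-1},YLY^{-1}$ directly in $\SL_2(\OD)$ is a clean way to organise this; the paper would phrase the same content as the general containment $\Stab(\Phi)\cap\SL_2(K)\subseteq\Mat^{2\times 2}(\mathfrak{p}_2^{-1})$ (cf.\ the sentence following the corollary) followed by intersection with $\det(M)^{-1}\Mat^{2\times 2}(\OD)$.
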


This also implies that the stabilizer is only known to be contained in $\SL_2(K) \cap \Mat^{2x2}(\mathfrak{p}_2^{-1})$ if $D \equiv 1 \mod 16$.
\paragraph{D $\equiv$ 1 mod 8, even spin.} In Theorem~\ref{thm_pseudo_parabolic_even} we have shown that the Veech groups $\SL(L_D^0)$ are $\mathfrak{p}_2^\sigma$-pseudo parabolic maximal. Therefore, it is not surprising that the following analogue of Corollary~\ref{cor_cov_degree_1} holds. 
\begin{prop} \label{cor_cov_degree_10} Let $M \in \GL_2^+(K) \cap \Mat^{2x2}(\OD)$. Then for all fundamental discriminants $D \equiv 1 \mod 8$ the degree of the covering $\pi: C_M(M) \to C_M$ is equal to $1$ if $\det(M)$ is not divisible by $\mathfrak{p}_2^\sigma$. \end{prop}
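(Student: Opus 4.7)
The plan is to mirror the chain of implications in Theorem~\ref{thm_parabolic_maximal_implies_stabilizer}, Theorem~\ref{thm_cov_degree_1}, and Corollary~\ref{cor_cov_degree_1}, with $\SL(L_D^1)$ replaced throughout by $\SL(L_D^0)$ and with Theorem~\ref{thm_pseudo_parabolic_even} taking over the role of Theorem~\ref{thm_pseudo_parabolic}. The central step is the even-spin analogue of Theorem~\ref{thm_parabolic_maximal_implies_stabilizer}, namely
$$\Stab(\Phi)\cap \SL_2(K)\;\subseteq\;\SL_2(K)\cap \Mat^{2x2}((\mathfrak{p}_2^\sigma)^{-1}).$$
To prove this, I would suppose toward a contradiction that some $X=(1/e)Y\in \Stab(\Phi)\cap \SL_2(K)$, with $Y\in \Mat^{2x2}(\OD)$ primitive, has ideal denominator $(e)$ not dividing $\mathfrak{p}_2^\sigma$. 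The conjugates $XTX^{-1}$, $XZX^{-1}$ and $XLX^{-1}$, with $T$ and $Z$ the parabolic generators of $\SL(L_D^0)$ (whose relevant entries are $w-1$ and $w+1$) and $L$ the auxiliary parabolic of Lemma~\ref{lem_second_parabolic2}, are parabolic elements of $\Stab(\Phi)$. Each one, together with $\SL(L_D^0)$, generates a finite-index Fuchsian overgroup (by non-arithmeticity of $\SL(L_D^0)$ and Margulis' Commensurator Theorem), so by Theorem~\ref{thm_pseudo_parabolic_even} each conjugate has its entries in $(\mathfrak{p}_2^\sigma)^{-1}\OD$.

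Two arithmetic facts then close the loop. First, $(w-1)=(w^\sigma)$ shares only the prime $\mathfrak{p}_2^\sigma$ with $(2)$, since by convention $\mathfrak{p}_2\mid w$. Second, for $D\equiv 1\bmod 8$ the element $w+1\equiv 1\bmod \mathfrak{p}_2$, so $(w+1)$ also shares only $\mathfrak{p}_2^\sigma$ with $(2)$. Feeding these into the explicit formulas for the three conjugates and repeating the ideal-theoretic case analysis of Theorem~\ref{thm_parabolic_maximal_implies_stabilizer} forces $(e)\mid \mathfrak{p}_2^\sigma$, a contradiction. With Step~1 established, the covering-degree statement is handled by a short localization argument. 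The covering degree equals $[\Stab(\Phi)\cap M^{-1}\SL_2(\OD)M:\SL(L_D^0)\cap M^{-1}\SL_2(\OD)M]$, so it suffices to show that every $X\in \Stab(\Phi)\cap M^{-1}\SL_2(\OD)M$ already lies in $\SL_2(\OD)$. By Step~1, the only prime at which $X$ could have a denominator is $\mathfrak{p}_2^\sigma$; but localizing at $\mathfrak{p}_2^\sigma$, the hypothesis $\mathfrak{p}_2^\sigma\nmid \det M$ makes $M$ a unit in $\GL_2((\OD)_{\mathfrak{p}_2^\sigma})$, so $M^{-1}\SL_2((\OD)_{\mathfrak{p}_2^\sigma})M=\SL_2((\OD)_{\mathfrak{p}_2^\sigma})$ and $X$ acquires no denominator at $\mathfrak{p}_2^\sigma$ either. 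Hence $X\in \SL_2(\OD)$ and therefore $X\in \SL(L_D^0)$.

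The real difficulty lies in Step~1. In the odd-spin case the relation $2\nmid w$ collapses the computation quickly, whereas here the entries $w-1$ and $w+1$ of $T$ and $Z$ each carry a factor above $2$, the auxiliary parabolic $L$ of Lemma~\ref{lem_second_parabolic2} takes different shapes for $D\equiv 1$ versus $D\equiv 9\bmod 16$, and when $3\mid D$ the prime $\mathfrak{p}_3\mid(w+1)$ must be neutralized with the parabolic of Lemma~\ref{lem_third_parabolic2}, exactly as in Cases~(1) and~(2) of the proof of Theorem~\ref{thm_pseudo_parabolic_even}. The small discriminant $D=17$ falls outside this pattern and is handled separately: $\SL(L_{17}^0)$ is maximal, and Theorem~\ref{thm_degree_of_covering_CnM} delivers the result at once.
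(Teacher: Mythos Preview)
Your approach is essentially the paper's: prove that $\Stab(\Phi)\cap\SL_2(K)\subseteq\Mat^{2\times 2}((\mathfrak{p}_2^\sigma)^{-1})$ by conjugating the known parabolic elements of $\SL(L_D^0)$ and invoking $\mathfrak{p}_2^\sigma$-pseudo parabolic maximality (Theorem~\ref{thm_pseudo_parabolic_even}), then deduce the covering statement from the hypothesis $\mathfrak{p}_2^\sigma\nmid\det(M)$. Your localization argument for Step~2 is in fact cleaner than what the paper writes; the paper leaves that implication implicit, following the pattern of Corollary~\ref{cor_cov_degree_1}.

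One correction to your case analysis in Step~1: the paper uses \emph{both} auxiliary parabolics in general, not just when $3\mid D$. After conjugating by $T$, $T^{-1}$, $Z$, $Z^{-1}$ one obtains $e\mid a$ and $e\mid d$, and then the dichotomy is $e\mid(w+1)$ (with $e\mid c$) versus $e\mid(w-1)$ (with $e\mid b$). In the first case the matrix of Lemma~\ref{lem_third_parabolic2} forces $e\mid b$; in the second the matrix of Lemma~\ref{lem_second_parabolic2} forces $e\mid c$. The role of $3\mid D$ that you remember from Theorem~\ref{thm_pseudo_parabolic_even} does not recur here: once $\mathfrak{p}_2^\sigma$-ppm is established, the ramified prime $\mathfrak{p}_3$ plays no further part in this argument.
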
 
\begin{proof} We do the proof here only for the case $D \equiv 1 \mod 16$ because $D \equiv 9 \mod 16$ works in the same way. We prove that the commensurator of $\SL(L_D^0)$ must be contained in $\Mat^{2x2}(\mathfrak{p}_2^{-1})$. Suppose there exists a matrix 
$$X:= \begin{pmatrix} \frac{a}{e} & \frac{b}{e} \\ \frac{c}{e} & \frac{d}{e} \end{pmatrix} \in \Stab(\Phi) \cap \SL_2(K) \smallsetminus \Mat^{2x2}(\mathfrak{p}_2^{-1})$$ 
with $a,b,c,d,e \in \OD$. Since $\SL(L_D^0)$ is $\mathfrak{p}_2^\sigma$-pseudo parabolic maximal, it follows from the entries of $XTX^{-1}$, $X^{-1}TX$, $XZX^{-1}$ and $X^{-1}ZX$  that $e|a$ and $e|d$. We may without loss of generality assume that $(e)$ is a prime ideal. Then either $e|c$ and $e|(w+1)$ or $e|b$ and $e|(w-1)$ holds. In the first case we consider the matrix $L$ from Lemma~\ref{lem_third_parabolic2} to get  $$(XLX^{-1})_{1,2}= u + 2(2+w)(w-1) \frac{ab}{e^2} + 2(w+2) \frac{b^2}{e^2}$$ for some $u \in \OD$.   It follows that $e|b$ because $e|a$ and $e|(w+1)$. This is a contradiction. In the second case we consider the matrix $\widetilde{L}$ from Lemma~\ref{lem_second_parabolic2} to get  $$(X\widetilde{L}X^{-1})_{2,1}= u +  (2w + \frac{D-1}{4}) \frac{cd}{e^2} + w \frac{c^2}{e^2}$$ for some $u \in \OD$. Since $e|d$ and $e|(w-1)$ it follows that $e|c$ which is again a contradiction.
\end{proof}
This finishes the proof of Theorem~\ref{thm_cov_degree_10}~(iv).
\paragraph{D $\equiv$ 0 mod 4.} We will again only present a rather short proof, because everything works very similar as for odd discriminants.
\begin{prop} \label{prop_cov_degree_D04} Let $M \in \GL_2^+(K) \cap \Mat^{2x2}(\OD)$. Then for all fundamental discriminants $D \equiv 0 \mod 4$ the degree of the covering $\pi: C_M(M) \to C_M$ is equal to $1$ if $\det(M)$ is not divisible by $\widetilde{\mathfrak{p}_2}$. \end{prop}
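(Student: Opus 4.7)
The plan is to follow the template of Proposition~\ref{cor_cov_degree_10}, exploiting the fact, proved in the preceding subsection, that $\SL(L_D)$ is $\widetilde{\mathfrak{p}_2}$-pseudo parabolic maximal for every fundamental $D \equiv 0 \mod 4$. The essential step is to establish the purely group-theoretic inclusion
$$ \Stab(\Phi) \cap \SL_2(K) \; \subset \; \Mat^{2\times 2}(\widetilde{\mathfrak{p}_2}^{-1}). $$
Suppose for contradiction that some $X = \left( \begin{smallmatrix} a/e & b/e \\ c/e & d/e \end{smallmatrix}\right) \in \Stab(\Phi) \cap \SL_2(K)$ with $a,b,c,d,e \in \OD$ fails this inclusion. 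After localizing we may assume that $(e)$ is a prime ideal of $\OD$ distinct from $\widetilde{\mathfrak{p}_2}$ and (since otherwise $X$ would be integral) that $(e)$ does not divide all four of $a,b,c,d$ simultaneously.

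Feeding $T = \left(\begin{smallmatrix} 1 & w+1 \\ 0 & 1 \end{smallmatrix}\right)$ and $Z = \left(\begin{smallmatrix} 1 & 0 \\ w & 1 \end{smallmatrix}\right)$ through the $\widetilde{\mathfrak{p}_2}$-pseudo parabolic maximality, applied to $X^{\pm 1} T X^{\mp 1}$ and $X^{\pm 1} Z X^{\mp 1}$, and reading off the off-diagonal entries (whose numerators are proportional to $a^2(w+1)$, $c^2(w+1)$, $d^2(w+1)$, $a^2 w$, $b^2 w$, $d^2 w$) narrows the situation to one of two cases: either $(e) \mid (w+1)$ together with $(e) \mid a,b,d$ and $(e) \nmid c$, or $(e) \mid (w)$ together with $(e) \mid a,c,d$ and $(e) \nmid b$. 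At this point I would bring in the additional parabolic element $L$ of Lemma~\ref{lem_second_parabolic_0mod4}, whose entries involve the quantities $w+2$ and $2w + D/4$; computing a suitable entry of $XLX^{-1}$ (or of $X^{-1}LX$) and discarding the summands that are automatically $(e)^2$-divisible by virtue of the already-established divisibilities of $a,b,c,d$, what remains is a term that, up to units and after cancellation, forces $(e) \mid (w+2)$. But $(w+2) - (w+1) = 1$ shows that $(w+2)$ and $(w+1)$ are coprime, while $(w+2) - (w) = 2 \in \widetilde{\mathfrak{p}_2}^2$ shows that the only prime ideal common to $(w+2)$ and $(w)$ is $\widetilde{\mathfrak{p}_2}$ itself, which has been excluded; the resulting contradiction gives the desired inclusion.

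The degree statement now follows by routine denominator bookkeeping. For $X \in \SL_M(L_D)$, Proposition~\ref{prop_stabilzer_twisted} gives $Y := M^{-1}XM \in \Stab(\Phi) \cap \SL_2(K) \subset \Mat^{2\times 2}(\widetilde{\mathfrak{p}_2}^{-1})$, while $M^{-1} = \det(M)^{-1} M^*$ with $M^*, X, M \in \Mat^{2\times 2}(\OD)$ shows that the denominator ideal of each entry of $Y$ divides $(\det(M))$. As $\widetilde{\mathfrak{p}_2} \nmid \det(M)$ by hypothesis, the two a priori bounds on the denominator ideal are coprime, hence $Y \in \SL_2(\OD)$ and thus $Y \in \SL(L_D)$. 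This yields $\SL_M(L_D) = M \SL(L_D) M^{-1} \cap \SL_2(\OD) = \SL_M(L_D,M)$ and consequently $\deg(C_M(M) \to C_M) = 1$.

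The main obstacle will be the bookkeeping in the middle step: Lemma~\ref{lem_second_parabolic_0mod4} supplies three slightly different presentations of $L$ according to whether $D \equiv 4 \mod 8$, $D \equiv 8 \mod 16$, or $D \equiv 0 \mod 16$, and the various powers of $2$ in the numerator and the denominator of the lower-left entry of $L$ shift the $\widetilde{\mathfrak{p}_2}$-adic valuations of the relevant expressions. One has to check in each subcase that these powers of $2$ do not conspire to swallow up the $\widetilde{\mathfrak{p}_2}$-contribution that drives the contradiction; a uniform treatment via localization at $(e)$, or a short parallel case analysis mimicking the proof of Theorem~\ref{thm_pseudo_parabolic} in the case $D \equiv 0 \mod 4$, should handle all three congruence classes.
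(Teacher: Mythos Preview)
Your overall strategy coincides with the paper's: prove the inclusion $\Stab(\Phi)\cap\SL_2(K)\subset\Mat^{2\times 2}(\widetilde{\mathfrak p}_2^{-1})$, and then do the denominator bookkeeping you describe. The bookkeeping paragraph and the handling of your first case (the one with $(e)\mid a,b,d$ and $(e)\nmid c$) are correct and match the paper's argument using the lower-left entry of $XLX^{-1}$.

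The gap is in your second case, $(e)\mid a,c,d$, $(e)\nmid b$, $(e)\mid w$. Here your plan of reading off a constraint from $L$ does not work. The only entry of $XLX^{-1}$ (or $X^{-1}LX$) that sees the non-integral coordinate $B=b/e$ quadratically is the $(1,2)$-entry $A^2z-B^2r+2ABv$, and its dangerous piece is $-b^2r/e^2$. But the lower-left entry of $L$ factors as $r=-w^2(w+2)\cdot 2^{-k}$ (in all three congruence subcases), so already $(e)\mid w$ forces $(e)^2\mid w^2\mid r$, and the whole expression is integral at $(e)$; likewise $(e)\mid w\mid v$ kills the mixed term. No entry of $XLX^{-1}$ or $X^{-1}LX$ produces the relation $(e)\mid(w+2)$ you claim, so this case cannot be closed with $L$.

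The paper does not try to argue with $L$ here at all. Instead it observes that your second case is in fact empty: from $(e)^2\mid b^2w$ and $(e)\nmid b$ you actually get $(e)^2\mid w$, and for a fundamental discriminant $D\equiv 0\bmod 4$ one has $(w)^2=(D/4)$ with $D/4$ squarefree, so $(w)=\prod_{p\mid D/4}\mathfrak p_p$ is squarefree as an ideal and $(e)^2\mid w$ is impossible. This is precisely the point where ``fundamental'' enters. Once you insert this one-line observation, only your first case remains, and your argument (which is the paper's) goes through.
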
 
\begin{proof}
We prove that the commensurator of $\SL(L_D)$ must be contained in $\Mat^{2x2}(\widetilde{\mathfrak{p}_2}^{-1})$. Suppose there exists a matrix 
$$X:= \begin{pmatrix} \frac{a}{e} & \frac{b}{e} \\ \frac{c}{e} & \frac{d}{e} \end{pmatrix} \in \Stab(\Phi) \cap \SL_2(K) \smallsetminus \Mat^{2x2}(\widetilde{\mathfrak{p}_2}^{-1})$$ 
with $a,b,c,d,e \in \OD$. Since $\SL(L_D)$ is $\widetilde{\mathfrak{p}_2}$-pseudo parabolic maximal, it follows from the entries of $XTX^{-1}$, $X^{-1}TX$, $XZX^{-1}$ and $X^{-1}ZX$  that $e|a$ and $e|d$. We may without loss of generality assume that $(e)$ is a prime ideal (otherwise we localize). We see from the preceding list of matrices that either $e|b$ or $e^2|w$. Since $D$ is a fundamental discriminant the latter is not possible and so we have $e|b$. Moreover we see that either $e^2|(w+1)$ or $e|c$. Suppose that the former holds. Then $X$ is of the form
$$ \begin{pmatrix} \widetilde{a} & \widetilde{b} \\ \frac{c}{e} & \widetilde{d} \end{pmatrix}$$
with $\widetilde{a},\widetilde{b},\widetilde{d} \in \OD$. Then we take the matrix $L$ from Lemma~\ref{lem_second_parabolic_0mod4} and look at the lower left entry of $XLX^{-1}$ and see that $e|4(w+2)c^2$ and so $e|4c^2$. Since $(e) \neq \widetilde{\mathfrak{p}_2}$ we have $e|c$. This is a contradiction.
\end{proof}
This completes the proof of Theorem~\ref{thm_cov_degree_10}. \\[11pt]
As we have seen being (pseudo) parabolic maximal is a very useful property of $\SL(L_D)$. We shortly describe a criterion how to decide whether $\SL(L_D)$ is parabolic maximal.\\[11pt] 
Let $\Gamma$ be a Fuchsian group containing $\SL(L_D)$. From the parabolic root Lemma~\ref{lem_roots} one immediately knows that there are only finitely many possible additional parabolic elements in $\Gamma$ (up to conjugation). For each candidate $\gamma$ we can construct (parts of) the Dirichlet fundamental domain of $\left\langle \SL(L_D),\gamma\right\rangle$ in the usual way (see e.g. \cite{Kat92}, Chapter~3.2.). If the area of the partially constructed fundamental domain gets smaller than $\frac{\pi}{3}$ then one knows that $\left\langle \SL(L_D),\gamma\right\rangle$ cannot be Fuchsian since the area of a Fuchsian group with a cusp is bounded from below by $\frac{\pi}{3}$. We emphasize that this stopping criterion works if and only if $\SL(L_D)$ is parabolic maximal. If after \textit{very long time} (whatever this really means) the area of the constructed fundamental domains gets \textit{close} (whatever this really means) to a divisor of the area of $\HH / \SL(L_D)$ then one might tend to assume that $\gamma \in \Comm_{\SL_2(\RR)}(\SL(L_D))$ and might try to prove that $\left< \SL(L_D), \gamma \right>$ contains $\SL(L_D)$ with finite index. It would be an interesting task for future research to find a good algorithm which decides if a Fuchsian group $\Gamma \subset \SL_2(\OD)$ is parabolic maximal.

\newpage
\section{Calculations for Twisted Teichmüller Curves} \label{chapter_calculations}

So far we have treated more or less abstract properties of twisted Teichmüller curves. This chapter will be more explicit: some of the main geometric properties of twisted Teichmüller curves will be derived. First and foremost the volume of these objects will be calculated. Note that unlike in the case of twisted diagonals the classification of skew Hermitian forms cannot be used for this purpose because the function $\varphi$ is involved when dealing with twisted Teichmüller curves (compare \cite{Fra78}, \cite{Hau80}). Instead one can make use of Theorem~\ref{thm_cov_degree_10} and calculate certain group indexes.\\[11pt]In this chapter we assume as \textbf{general condition} that $D$ is a fundamental discriminant.\\[11pt] 
In Section~\ref{sec_volume_simple} we calculate the volume of diagonal twisted Teichmüller curves (Theorem~\ref{thm_summarize_euler_calculations}). A table with some numerical data for the volume of diagonal twisted Teichmüller curves in the cases $D=13$ and $D=17$, that of course supports the results, can be found in Appendix~\ref{appendix_tables}. We will generally assume $h_D=1$ starting with Section~\ref{sec_upper_triangular_twists}. In  Section~\ref{sec_upper_triangular_twists} the volume of Teichmüller curves twisted by upper triangular matrices will be calculated. Surprisingly enough, the calculation can then be reduced to the case of diagonal twisted Teichmüller curves (Theorem~\ref{thm_summarize_euler_calculations_triangular} and Theorem~\ref{thm_summarize_euler_II}). Since the class number is assumed to be equal to $1$, twists by upper triangular matrices indeed give all twisted Teichmüller curves (because the number of cusps of $X_D$ is equal to the class number).\\[11pt] 
In many cases, the volume of twisted Teichmüller curves is an invariant which is good enough for deciding when two twisted Teichmüller curves agree. In Section~\ref{sec_classification_twisted} we will assume that the narrow class number $h_D^+=1$. Note that this implies that $D=8$ or $D \equiv 1 \mod 4$ is a prime by what we have said in Section~\ref{sec_quadratic_nf}. In particular, we will then see that an analogue result as the theorem by H.-G. Franke and W. Hausmann does hold for twisted Teichmüller curves: after normalizing the involved matrices appropriately there are only finitely many twisted Teichmüller curves of a given determinant. Moreover we will get that there is exactly one twisted Teichmüller curve if the determinant of the twisting matrix is prime (Theorem~\ref{thm_prime_classification}).  \\[11pt] Finally in Section~\ref{sec_further_calculations}, we give an outlook how other interesting geometric properties of twisted Teichmüller curves like the number of elliptic fixed points, the number of cusps and the genus can be calculated. These calculations are far from being complete, but should rather be regarded as a rough guideline how things can be done in principle.

\subsection{The Volume of Diagonal Twisted Teichmüller Curves} \label{sec_volume_simple}

\index{twisted Teichmüller curve!volume|(} Recall the following fact: if $\Gamma' \subset \Gamma \subset \PSL_2(\RR)$ are two Fuchsian groups, then the Euler characteristics of the surfaces $\HH /\Gamma$ and $\HH / \Gamma'$ and the index of the subgroup $[\Gamma: \Gamma']$ are closely related via the well-known formula $\chi(\HH /\Gamma')=$ $[\Gamma: \Gamma']\chi(\HH /\Gamma)$ since our definition of Euler characteristic takes orbifold points into account (see Theorem~\ref{thm_Riemann_Hurwitz}). We have seen in the last chapter that the degree of the covering $C_M(M) \to C_M$ is $1$ in most cases. When we want to calculate the volume of diagonal twisted Teichmüller curves, it therefore suffices to calculate the indexes of the groups $\SL^M(L_D,M)$ in $\SL(L_D)$. Recall that $\Gamma^D(m,n)$ is defined as $\Gamma^D_0(m) \cap \Gamma^{D,0}(n)$. This gives us the task to calculate the indexes $[\SL(L_D):(\SL(L_D) \cap \Gamma^D(m,n))]$ for $m,n \in \OD$ with $(m,n)=1$ (see Corollary~\ref{cor_diagonal_twisted_congruence}). \index{congruence subgroup} For fundamental discriminants $D \equiv 1 \mod 4$ we know that this calculation yields the volume of the twisted Teichmüller curve  if $(n,2)=1$ and $(m,2)=1$ (compare Theorem~\ref{thm_cov_degree_10}).\\[11pt] 
As we want to treat the cases $D \equiv 1 \mod 4$ with even or odd spin and $D \equiv 0 \mod 4$ in this section simultaneously, we introduce the following notation: let \label{glo_etas} $\eta^+$ denote the upper right entry of the matrix $T$ and $\eta^-$ denote the lower left entry of the matrix $Z$. Finally let $\eta^*$ be the product of $\eta^+$ and $\eta^-$. In this section we will show that for all elements $m,n \in \OD$ with $(m,n)=1$ and $(n,\eta^*)=1$ and $(m,\eta^*)=1$ the index $[\SL_2(\OD):\Gamma^D(m,n)]$ equals the index $[\SL(L_D):(\SL(L_D) \cap \Gamma^D(m,n))]$. The surrounding arithmetic of $\SL_2(\OD)$ determines so to speak the arithmetic of the twisted Teichmüller curves. Another interpretation of this fact is that the Veech groups \label{Veech group} of Teichmüller curves are the opposite of being arithmetic.\\[11pt] To prove this, we have to distinguish between the different types of splitting behavior of prime numbers \index{real quadratic number field!types of prime numbers} over $\OD$. Before going through the proofs of this section we recommend the reader to recall the number theoretic and arithmetic results from Section~\ref{sec_quadratic_nf} and Section~\ref{sec_congruence_subgroups}.\\[11pt]
The aim of this whole section is to prove the following theorem.
\begin{thm} \label{thm_summarize_euler_calculations} Let $m,n \in \OD$ be arbitrary elements with $(m,n)=1$ and
\begin{itemize}
\item[(i)] if $(m,\eta^*)=1$ and $(n,\eta^*)=1$ or, 
\item[(ii)] if $D \equiv 1 \mod 4$ but $D \not \equiv 1 \mod 16$, the spin of the Teichmüller curve is odd, and $m$ and $n$ are arbitrary or
\item[(iii)] if $D \equiv 1 \mod 16$, the spin of the Teichmüller curve is odd, and $m,n \in \OD$ are arbitrary with $(m,\mathfrak{p}_2)=1$ and $(n,\mathfrak{p}_2)=1$
\end{itemize}
then the degree of the covering $\widetilde{\pi}: C^M(M) \to C$ equals the degree of the covering $\pi: X_D(M) \to X_D$. In other words
$$\left[ \SL(L_D^1) : (\SL(L_D^1) \cap \Gamma^D(m,n))  \right] = \left[ \SL_2(\OD) : \Gamma^D(m,n) \right].$$
If the degree of the covering $\pi : C_M(M) \to C_M$ is equal to 1, then the volume of the Teichmüller twisted by $M= \left( \begin{smallmatrix} m & 0 \\ 0 & n \end{smallmatrix} \right)$ in all these cases is
$$-9\pi \left[ \SL_2(\OD) : \Gamma^D(m,n) \right] \chi(X_D).$$
\end{thm}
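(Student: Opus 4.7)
The plan is to reduce the stated index equality to a local surjectivity statement and then attack it via the explicit generators of the Veech group. Since $M=\diag(m,n)$ with $(m,n)=1$, Corollary~\ref{cor_diagonal_twisted_congruence} identifies $\SL^M(L_D^1,M) = \SL(L_D^1)\cap \Gamma^D(m,n)$, and a direct matrix computation shows $\SL_2(\OD,M) = \Gamma^D(m,n)$. Hence the two indices agree if and only if $\SL(L_D^1)\cdot \Gamma^D(m,n) = \SL_2(\OD)$, i.e., the composite reduction $\SL(L_D^1)\hookrightarrow \SL_2(\OD)\twoheadrightarrow \SL_2(\OD)/\Gamma^D(m,n)$ is surjective. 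Using $(m,n)=1$, prime-power decomposition, and the standard identification of $\SL_2(\OD)/\Gamma^D_0(\mathfrak{p}^a)$ with $\mathbb{P}^1(\OD/\mathfrak{p}^a)$ (and the symmetric statement for $\Gamma^{D,0}$), the Chinese Remainder Theorem reduces the problem to the following local claim: for each prime power $\mathfrak{p}^a$ dividing $m$ or $n$, the image of $\SL(L_D^1)$ in $\SL_2(\OD/\mathfrak{p}^a)$ acts transitively on $\mathbb{P}^1(\OD/\mathfrak{p}^a)$.

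The core step is the mod-$\mathfrak{p}$ case for a prime $\mathfrak{p}\nmid \eta^*=\eta^+\eta^-$. The generators $T,Z,S$ from Section~\ref{sec_fixing_Veech} already provide $T^kS\cdot\infty = k\eta^+$, which sweeps out the entire $\ZZ$-span of $\eta^+$ modulo $\mathfrak{p}$; combined with $S$, this covers all of $\mathbb{P}^1(\OD/\mathfrak{p})$ whenever $\OD/\mathfrak{p}\cong \mathbb{F}_p$, i.e., in the split and ramified cases. In the inert case $\OD/\mathfrak{p}\cong \mathbb{F}_{p^2}$, the set $\ZZ\cdot \eta^+$ is only an $\mathbb{F}_p$-line inside the two-dimensional $\mathbb{F}_p$-space $\OD/\mathfrak{p}$, and I would additionally invoke the parabolic element $E$ fixing the cusp $1$ and the secondary parabolic of Lemma~\ref{lem_second_parabolic} to produce matrices whose images realize translations outside $\mathbb{F}_p\cdot\eta^+$. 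The hypothesis $(m,\eta^*)=(n,\eta^*)=1$ (and its refinements in (ii), (iii)) is exactly what guarantees that these additional parabolic generators remain non-degenerate modulo $\mathfrak{p}$ and span the missing $\mathbb{F}_p$-direction. The passage from $\mathfrak{p}$ to $\mathfrak{p}^a$ is then a routine induction on $a$: the successive quotients $\Gamma^D(\mathfrak{p}^i)/\Gamma^D(\mathfrak{p}^{i+1})$ are elementary abelian, and the unipotent elements $T,Z$ in $\SL(L_D^1)$, combined with transitivity modulo $\mathfrak{p}$, surject onto these quotients by an elementary commutator argument.

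The main obstacle is precisely the inert prime case in the base step: because the Veech groups $\SL(L_D^1)$ are non-arithmetic by Theorem~\ref{thm_Gutkin_Judge2}, there is no strong approximation theorem available, and the transitivity on $\mathbb{P}^1(\mathbb{F}_{p^2})$ must be verified by hand using the particular structure of the parabolic generators described in Section~\ref{sec_fixing_Veech}. This is also the structural reason why coprimality of $m,n$ with $\eta^*$ cannot be dropped in (i). Once the index equality is established, the volume formula follows by combining it with Theorem~\ref{thm_cov_degree_10} (which gives $\deg(C_M(M)\to C_M)=1$), Bainbridge's Theorem~\ref{thm_bain_euler}, and $\mu = 2\pi\chi$, yielding $\mu(C_M) = -9\pi[\SL_2(\OD):\Gamma^D(m,n)]\chi(X_D)$. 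The refined cases (ii) and (iii), which concern $D\equiv 1\bmod 16$ and the prime $\mathfrak{p}_2$, are handled by appealing to Theorem~\ref{thm_cov_degree_1} and Corollary~\ref{cor_cov_degree_1}, which supply the necessary control over the commensurator at $\mathfrak{p}_2$.
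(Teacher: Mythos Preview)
Your reformulation---surjectivity of $\SL(L_D^1)\to \SL_2(\OD)/\Gamma^D(m,n)$, equivalently transitivity on $\mathbb{P}^1(\OD/mn)$---is the right language, and it is exactly what the paper is doing when it exhibits explicit coset representatives. But the Chinese Remainder step as you state it is a genuine gap. Transitivity of the image of $\SL(L_D^1)$ on each $\mathbb{P}^1(\OD/\mathfrak{p}^a)$ separately does \emph{not} imply transitivity on the product $\prod_{\mathfrak{p}}\mathbb{P}^1(\OD/\mathfrak{p}^a)$: the image of $\SL(L_D^1)$ in $\prod_{\mathfrak{p}}\SL_2(\OD/\mathfrak{p}^a)$ could a priori be a graph-like subgroup that surjects onto each factor yet fails to act transitively on the product. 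This is precisely why every one of the paper's propositions (Propositions~\ref{prop_splittingI}, \ref{prop_splitting_primes}, \ref{prop_primepower_inert_eq}, \ref{prop_inert_prime_eq1}, \ref{prop_ramifiedII}, \ref{prop_ramifiedI}) carries a universal quantifier ``for all $m\in\OD$ with $(m,\mathfrak{p})=1$'': one must show that the congruence subgroup $\SL(L_D)\cap\Gamma^D_0(m)$ (not the full Veech group) still acts transitively on $\mathbb{P}^1(\OD/\mathfrak{p})$. The paper achieves this by constructing coset representatives as powers of $T$ and $Z$ scaled by $M=\N(m)$, e.g.\ $Z^{Mi}$, which lie in $\Gamma^D_0(m)$ by construction while remaining pairwise inequivalent mod $\mathfrak{p}$. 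Your argument needs this relative statement, and once you add it the ``CRT reduction'' becomes the same inductive build-up that the paper performs. For the same reason, the lift from $\mathfrak{p}$ to $\mathfrak{p}^a$ is not just a commutator trick on $\Gamma^D(\mathfrak{p}^i)/\Gamma^D(\mathfrak{p}^{i+1})$; in the inert case the fibre has size $p^2$ and the paper uses words like $W^lZ^{Mp^kj}$ with $W=ZT^{Mp^k}Z^{-1}$ (Proposition~\ref{prop_primepower_inert_eq}) to produce the full set of representatives.

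There is also a confusion in your last paragraph. Theorem~\ref{thm_cov_degree_1} and Corollary~\ref{cor_cov_degree_1} concern the degree of $C_M(M)\to C_M$, i.e.\ the second assertion of the theorem, and have nothing to do with establishing the index equality in cases (ii) and (iii). Those cases allow primes $\mathfrak{p}\mid\eta^*$ (with $\mathfrak{p}\ne\mathfrak{p}_2$ when $D\equiv 1\bmod 16$), and the paper handles them in Section~\ref{subsec_nonrelative} by bringing in the extra parabolic $L$ of Lemma~\ref{lem_second_parabolic} together with $S$---the point being that in the odd-spin case $S\in\SL(L_D^1)$, so one has an additional elliptic element to manufacture coset representatives (Proposition~\ref{prop_k=0_1}). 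This is exactly where the even-spin analogue breaks down and why (ii),(iii) are stated only for odd spin.
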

Note that the formula for the volume of twisted Teichmüller immediately follows from the first part of the result by Bainbridge's formula (Theorem~\ref{thm_bain_euler}) and the relation between group indexes and volumes which we stated at the very beginning of this chapter. We stress the fact that this is the best result we can achieve in general. For discriminant $D=17$ we have $w = (w+2) \cdot (w+2) \cdot (2w-5)$. Note that $2w-5$ is a unit in $\mathcal{O}_{17}$ and that $w+2=\frac{5+\sqrt{17}}{2}$ is $\pi_2$, the (unique) common prime divisor of $2$ and $w$. A calculation yields that
$$ \left[ \SL(L_{17}^1):(\SL(L_{17}^1) \cap \Gamma^D_{0}(\pi_2) ) \right] = \frac{2}{3} \left[ \SL_2(\OD):\Gamma^D_0(\pi_2) \right]$$ 
holds. Accordingly for $\pi_2^\sigma$ we have
$$ \left[ \SL(L_{17}^0):(\SL(L_{17}^0) \cap \Gamma^D_{0}(\pi_2^\sigma) ) \right] = \frac{2}{3} \left[ \SL_2(\OD):\Gamma^D_0(\pi_2^\sigma) \right].$$ 
This means that the theorem is in general false for $(n,\eta^*)\neq 1$ or $(m,\eta^*)\neq 1$. We will revisit this phenomenon in Section~\ref{subsec_nonrelative}. In particular we will get there that the index 
$$[(\SL(L_D) \cap \Gamma^D((m)\mathfrak{p}^k,(n))):(\SL(L_D^1)\cap \Gamma^D((m)\mathfrak{p}^{k+1},(n)))]$$ is always either $\N(p)$ or $\N(p)+1$ for all $k \in \mathbb{N} \cup \left\{ 0 \right\}$ and all $m,n \in \OD$ with $(m,n)=1$ and all prime ideals $\mathfrak{p}$ if $(\mathfrak{p},n)=1$ and $D \equiv 1 \mod 4$.\\[11pt]
Furthermore, we see that the volume of twisted Teichmüller curves behaves very different than the volume of twisted diagonals. \index{twisted diagonal} For example for $h_D=1$ and $M=\left( \begin{smallmatrix} \pi & 0 \\ 0 & 1 \end{smallmatrix} \right)$, where $\pi$ is an inert prime number or a divisor of a ramified prime number, the stabilizer of the twisted diagonal is always $\SL_2(\ZZ)$ and hence does not depend on $\pi$. This follows immediately from the definition of twisted diagonals. On the other hand if $\pi$ is a divisor of a split prime number, then the volume of the twisted diagonal is up to a constant, which does only depend on $D$, equal to $\N(\pi)+1$. This is a consequence of the work by H.-G. Franke and W. Hausmann (\cite{Fra78}, Theorem~2.4.6 and \cite{Hau80}, Satz 3.10).\\[11pt]
The proof of Theorem~\ref{thm_summarize_euler_calculations} will step through four different cases. The first three cases concern the different types of prime ideals. The fourth concerns those prime ideals $\mathfrak{p}$ with $(\mathfrak{p},\eta^*) \neq 1$. This strange condition will naturally arise. Before we start, recall once again that the matrices $$T=\begin{pmatrix} 1 & \eta^+ \\ 0 & 1 \end{pmatrix}, \quad Z=\begin{pmatrix} 1 & 0 \\ \eta^- & 1 \end{pmatrix}$$ 
are always elements of $\SL(L_D)$. Furthermore note that since $\SL(L_D) \subset \SL_2(\OD)$ we have for all $m,n \in \OD$
$$[\SL(L_D):(\SL(L_D) \cap \Gamma^D(m,n))] \leq [\SL_2(\OD ): \Gamma^D(m,n)].$$
In the sequel this inequality will be used constantly.\\[11pt] Before we start with the proofs let us fix some notation. Let
$$(m)= \prod \mathfrak{q}_i^{e_i} \prod \mathfrak{r}_i^{f_i} \prod \mathfrak{s}_i^{g_i}$$ 
be the unique factorization of $(m)$ into prime ideals, where $\mathfrak{q}_i$ are inert prime ideals, $\mathfrak{r}_i$ are prime ideal factors of split prime numbers and $\mathfrak{s}_i$ are prime ideal factors of ramified prime numbers and $e_i,f_i,g_i \in \mathbb{N}$. Since the norm of $m$ plays a special role, we use capital letter and set $M:=\N(m)$.\\[11pt]
As the index of $\SL(L_D) \cap \Gamma^D(m,n)$ in $\SL(L_D)$ does not depend on the ordering of the prime ideal divisors of $(m)$ which we choose, we may at first divide out the prime divisors of split prime numbers, then the prime divisors of inert prime numbers and finally the prime divisors of ramified prime numbers. Moreover we may always assume that we consider the prime divisor $\mathfrak{p}$ of $(m)$ which has the highest order in $(m)$ of all prime ideal divisor $\mathfrak{p}_i$ of the given type.\\[11pt]
Before we start with the approach described above, let us make an observation which will significantly facilitate things.
\begin{lem} \label{lem_facilitate}
Let $m,n \in \OD$ with $(m,n)=1$. If $\Gamma^D_0(mn) \cap \SL(L_D)$ has the maximal possible index in $\SL(L_D)$ then also $\Gamma^D(m,n) \cap \SL(L_D)$ has the maximal possible index.
\end{lem}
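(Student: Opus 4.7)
The plan is to rephrase both maximality statements as transitivity statements for the left action of $\SL(L_D)$ on appropriate coset spaces, and then produce a single $\SL(L_D)$-equivariant bijection between those coset spaces. First I would use the Chinese Remainder Theorem (valid since $(m,n)=1$) together with the surjectivity of the reduction map $\SL_2(\OD) \to \SL_2(\OD/(mn))$ (recalled just after Lemma~\ref{lem_exact_cong_sub}) to choose an element $s \in \SL_2(\OD)$ satisfying $s \equiv 1 \pmod{m}$ and $s \equiv S \pmod{n}$. Since conjugation by $S$ swaps the upper and lower Borel subgroups of $\SL_2$, this choice yields the conjugation identity
$$s^{-1}\,\Gamma^D_0(mn)\,s \;=\; \Gamma^D(m,n),$$
which I would verify by reducing modulo $m$ and modulo $n$ separately.

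The key step is then to assemble this into an $\SL(L_D)$-equivariant bijection of coset spaces. Rewritten as $\Gamma^D_0(mn)\,s^{-1} = s^{-1}\,\Gamma^D(m,n)$, the same identity shows that
$$\psi : \SL_2(\OD)/\Gamma^D_0(mn) \,\longrightarrow\, \SL_2(\OD)/\Gamma^D(m,n), \qquad g\,\Gamma^D_0(mn) \,\longmapsto\, gs^{-1}\,\Gamma^D(m,n),$$
is a well-defined bijection (with inverse sending $g'\,\Gamma^D(m,n)$ to $g's\,\Gamma^D_0(mn)$) that commutes with left multiplication by any element of $\SL_2(\OD)$, and in particular by any element of $\SL(L_D)$. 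Next, for any subgroup $\Gamma \leq \SL_2(\OD)$, the standard orbit--stabilizer translation identifies $[\SL(L_D):\SL(L_D)\cap\Gamma]$ with the size of the $\SL(L_D)$-orbit of the trivial coset in $\SL_2(\OD)/\Gamma$, and shows that this index equals the maximum $[\SL_2(\OD):\Gamma]$ precisely when $\SL(L_D)$ acts transitively on $\SL_2(\OD)/\Gamma$. The hypothesis thus states that $\SL(L_D)$ acts transitively on $\SL_2(\OD)/\Gamma^D_0(mn)$, and transporting this transitivity across the $\SL(L_D)$-equivariant bijection $\psi$ (using that a group action possessing a full-size orbit is automatically transitive) yields transitivity on $\SL_2(\OD)/\Gamma^D(m,n)$, which is exactly the desired conclusion.

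I do not expect this argument to present any real obstacle. The two points that demand a little care are the verification of the conjugation identity $s^{-1}\Gamma^D_0(mn)s=\Gamma^D(m,n)$, which is an immediate computation using $S^{-1}BS=B^{-}$ for the standard Borel of $\SL_2$, and the well-definedness of $\psi$, which is automatic once that identity is in hand. It is worth noting that the argument is purely structural and does not actually invoke $S\in\SL(L_D)$: the element $s$ lives in $\SL_2(\OD)$ and serves only as a conjugating scalar in the ambient group.
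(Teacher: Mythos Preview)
Your argument is correct and takes a genuinely different route from the paper's. The paper argues via the geometry of twisted Teichm\"uller curves: it interprets the two indices as the volumes of the twists $C_N$ and $C_M$ for $N=\left(\begin{smallmatrix} mn & 0 \\ 0 & 1\end{smallmatrix}\right)$ and $M=\left(\begin{smallmatrix} m & 0 \\ 0 & n\end{smallmatrix}\right)$, and then uses the maximality hypothesis together with the bijection $\SL_2(\OD)/\Gamma^D_0(mn)\cong\mathbb{P}^1(\OD/mn\OD)$ to locate a matrix $X\in\SL(L_D)$ whose bottom row hits the point that is $(0:1)\bmod m$ and $(1:0)\bmod n$; this $X$ satisfies $NXM^{-1}\in\SL_2(\OD)$ and hence witnesses $C_M=C_N$, forcing the indices to agree. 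Your approach stays entirely inside group theory: you conjugate $\Gamma^D_0(mn)$ to $\Gamma^D(m,n)$ by an element of the ambient $\SL_2(\OD)$ and transport transitivity across the resulting left-equivariant bijection of coset spaces. This is more elementary---no twisted curves, no stabilizer machinery---and makes transparent that the statement holds with $\SL(L_D)$ replaced by \emph{any} subgroup of $\SL_2(\OD)$. The paper's route, by contrast, produces exactly the matrix $X$ that gets recycled later in the classification arguments of Section~\ref{sec_classification_twisted}.

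One small bookkeeping slip: from $s^{-1}\Gamma^D_0(mn)s=\Gamma^D(m,n)$ the purely algebraic rewriting is $\Gamma^D_0(mn)\,s=s\,\Gamma^D(m,n)$, not $\Gamma^D_0(mn)\,s^{-1}=s^{-1}\,\Gamma^D(m,n)$, so the map $\psi$ as you wrote it actually needs the companion identity $s\,\Gamma^D_0(mn)\,s^{-1}=\Gamma^D(m,n)$. This happens to hold too, since $S^2=-\Id$ is central and hence conjugation by $S$ and by $S^{-1}$ impose the same congruence conditions modulo $n$; but in a clean write-up either use $gs$ instead of $gs^{-1}$ in the definition of $\psi$, or record both conjugation identities.
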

\begin{proof}
As both subgroups describe the volume of a certain twisted Teichmüller curve, it suffices to show that the matrix $M= \left( \begin{smallmatrix} m & 0 \\ 0 & n \end{smallmatrix} \right)$ defines the same twisted Teichmüller curve as the matrix $N = \left( \begin{smallmatrix} mn & 0 \\ 0 & 1 \end{smallmatrix} \right)$. For this we show that there exists a matrix $X = \left( \begin{smallmatrix} a & b \\ c & d \end{smallmatrix} \right) \in \SL(L_D)$ with
$$NXM^{-1} = \left( \begin{smallmatrix} na & mb \\ c/m & d/n \end{smallmatrix} \right) \in \SL_2(\OD)$$
(for details we refer the reader to Chapter~\ref{sec_classification_twisted}). Recall the well-known isomorphism (see e.g. \cite{Kil08}, Section~2.4)
$$\SL_2(\OD) / \Gamma^D_0(nm) \cong \mathbb{P}^1(\OD/nm\OD)$$
where $\mathbb{P}^1(\cdot)$ denotes projective space. The isomorphism is given by mapping a coset representative $\left( \begin{smallmatrix} a & b \\ c & d \end{smallmatrix} \right)$ to $(c:d) \in \mathbb{P}^1(\OD/nm\OD)$. Consider the coset representatives $A_1,...,A_k$ of $\SL(L_D) / (\SL(L_D) \cap \Gamma^D_0(nm))$. Since the index of $\SL(L_D) \cap \Gamma^D_0(nm)$ in $\SL(L_D)$ equals the index of $\Gamma^D_0(nm)$ in $\SL_2(\OD)$ (Theorem~\ref{thm_summarize_euler_calculations}) we also have
$$\SL(L_D) / (\SL(L_D) \cap \Gamma^D_0(nm)) \cong \mathbb{P}^1(\OD/nm\OD).$$
Thus there exists an $A_i = \left( \begin{smallmatrix} e & f \\ g & h \end{smallmatrix} \right)$ with $m|g$ and $n|h$ and hence $NA_iM^{-1} \in \SL_2(\OD)$.
\end{proof}
This means that we may restrict to the case of $\Gamma^D_0(m)$ in the following. In the next three subsections we will always proceed very similarly. In each case we show first for a prime ideal $\mathfrak{p} \in \OD$ and arbitrary $m \in \OD$ with $(m,\mathfrak{p})=1$ that $$[(\SL(L_D) \cap \Gamma^D_0((m)\mathfrak{p}^k)):(\SL(L_D^1)\cap \Gamma^D_0((m)\mathfrak{p}^{k+1}))]=\N(\mathfrak{p})$$ for all $k \in \mathbb{N}$
by giving an explicit list of coset representatives. Afterwards we prove that $$[(\SL(L_D) \cap \Gamma^D_0((m))):(\SL(L_D^1) \cap \Gamma^D_0((m)\mathfrak{p}))] = \N(\mathfrak{p})+1$$ holds. This suffices for the first part of Theorem~\ref{thm_summarize_euler_calculations}. Recall that the quotient $\Gamma^D_0((m)\mathfrak{p}^k) / \Gamma^D_0((m)\mathfrak{p}^{k+1})$ is a cyclic group of order $\N(\mathfrak{p})$ while the quotient $\Gamma^D_0((m)) / \Gamma^D_0((m)\mathfrak{p})$ is isomorphic to $\mathbb{P}^1(\OD/\mathfrak{p})$ (compare e.g. \cite{Kil08}, Section~2.4). This fact also explains the indexes above.\\[11pt] The proofs are quite similar for all three types of splitting behavior of prime numbers although the matrices involved in the proofs differ significantly. As an abbreviation we will from now on write $\Gamma^D_0(m\mathfrak{p}^k)$ when we mean $\Gamma^D_0((m)\mathfrak{p}^k)$. For clearness reasons we will more generally leave away brackets indicating principal ideals in this chapter.\\[11pt]
Before we go through the different types of primes, let us clarify what distinguishes the different cases from our point of view: If $p= \mathfrak{p}\mathfrak{p}^\sigma$ is a split prime number and $m \in \OD$ is arbitrary with $(m,p)=1$, then either $\mathfrak{p}$ and $\mathfrak{p}^\sigma$ both divide $k \cdot m$ for $k\in \ZZ$ or none of them does divide $k \cdot m$. Since we can only give general formulas for parabolic matrices in $\SL(L_D)$ and consider their powers (and products), this makes this case particullary difficult. The difficulty about the case where $p$ is an inert prime number, is that we cannot break down the calculation of $[\SL(L_D) \cap \Gamma^D_0(p^{k}): \SL(L_D) \cap \Gamma^D(p^{k+1})]$ into two sub-steps then and therefore have to find a list of $p^2$ (or $p^2 + 1$ if $k=0$) coset representatives and not only of $p$ coset representatives as in the first case. Finally, the case where $p= \mathfrak{p}^2$ is a ramified prime number is somewhere in the middle of the first two cases and therefore combines difficulties and ways the circumvent these difficulties from both. For example, for $k$ odd and $m \in \OD$ with $(m,p) = 1$ either $\mathfrak{p}^k$ and $\mathfrak{p}^{k+1}$ both divide $k \cdot m$ for $k\in \ZZ$ or none of them does. This makes it again hard to break down the calculation of $[\SL(L_D) \cap \Gamma^D_0(p^{2k}) : \SL(L_D) \cap \Gamma^D_0(p^{2(k+1)})]$ into two sub-cases, which would be easier to treat, because we would have to find only fewer coset representatives.

\subsubsection{Divisors of Split Prime Numbers} \label{subsec_splitting}

We begin with the case which is probably the most difficult one namely, prime ideals that are divisors of split prime numbers. Until the rest of this subsection $p \in \ZZ$ always denotes a split prime number, i.e. $\left( \frac{D}{p} \right)=+1 $, with $p=\mathfrak{p}\mathfrak{p}^\sigma$.

\begin{prop} \label{prop_splittingI} Let $p \in \ZZ$ with $\left( \frac{D}{p}\right)=+1$, i.e. $p=\mathfrak{p}\mathfrak{p}^\sigma$ and let $(\mathfrak{p},\eta^*)$ $=1$. Moreover let $m \in \OD$ be an arbitrary element with $(m,\mathfrak{p})=1$. Then for all $k \in \mathbb{N}$ 
$$\left[ (\SL(L_D) \cap \Gamma^D_0(m\mathfrak{p}^k)) :  (\SL(L_D) \cap \Gamma^D_0(m\mathfrak{p}^{k+1})) \right] = p $$	
holds.
\end{prop}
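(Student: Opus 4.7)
The inequality $\leq p$ is immediate from
\[
  [\Gamma^D_0(m\mathfrak{p}^k) : \Gamma^D_0(m\mathfrak{p}^{k+1})] = \N(\mathfrak{p}) = p,
\]
which is Proposition~\ref{prop_index_congruence}. So my task is to exhibit $p$ elements of $\SL(L_D) \cap \Gamma^D_0(m\mathfrak{p}^k)$ that lie in pairwise distinct cosets modulo $\Gamma^D_0(m\mathfrak{p}^{k+1})$.

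First I would parametrize the quotient $\Gamma^D_0(m\mathfrak{p}^k)/\Gamma^D_0(m\mathfrak{p}^{k+1})$. For any $A=\left(\begin{smallmatrix} a & b\\ c & d\end{smallmatrix}\right)$ in the bigger group, the relation $ad-bc=1$ together with $c\in (m)\mathfrak{p}^k$ and $(m,\mathfrak{p})=1$ forces $a$ to be a unit modulo $(m)\mathfrak{p}^{k+1}$. A direct computation of $A^{-1}A'$ then shows that two such matrices lie in the same coset iff $c/a\equiv c'/a'\pmod{(m)\mathfrak{p}^{k+1}}$, so the cosets are in bijection with the $\OD$-module
\[
  (m)\mathfrak{p}^k/(m)\mathfrak{p}^{k+1} \;\cong\; \OD/\mathfrak{p} \;\cong\; \mathbb{F}_p,
\]
the last identification using that $\mathfrak{p}$ has residue degree one because $p$ splits.

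To realize all residues I would take powers of the parabolic generator $Z=\left(\begin{smallmatrix} 1 & 0\\ \eta^- & 1\end{smallmatrix}\right)\in\SL(L_D)$. Note $Z^n\in\Gamma^D_0(m\mathfrak{p}^k)$ is equivalent to $n\eta^-\in (m)\mathfrak{p}^k$. Let $N\ZZ:=\ZZ\cap\bigl((m)\mathfrak{p}^k(\eta^-)^{-1}\bigr)$ be the set of admissible $n$. Using the hypotheses $(\mathfrak{p},\eta^-)=1$ and $(m,\mathfrak{p})=1$ (so that the $\mathfrak{p}$-local condition decouples from the conditions at the other primes dividing $m\eta^-$), one checks $\nu_\mathfrak{p}(N)=k$ exactly, hence $\nu_\mathfrak{p}(N\eta^-)=k$ as well. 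Consequently the image of $N\eta^-$ in $(m)\mathfrak{p}^k/(m)\mathfrak{p}^{k+1}\cong\mathbb{F}_p$ is nonzero, and since $\ZZ$ surjects onto $\mathbb{F}_p$ (again using residue degree one), the $p$ matrices $Z^{0},Z^{N},Z^{2N},\dots,Z^{(p-1)N}$ represent all $p$ cosets.

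The main obstacle I anticipate is carrying out the $\mathfrak{p}$-valuation bookkeeping for $N$ correctly when $m$ and $\eta^-$ have common prime divisors distinct from $\mathfrak{p}$; here the coprimality assumption $(\mathfrak{p},\eta^*)=1$ is essential, because without it the parabolic $Z$ would vanish in $\mathfrak{p}$ and more intricate words in $T,Z,S$ would be needed to hit the nontrivial residues. A secondary subtlety is that the coset-parametrization map $A\mapsto c/a$ is not a group homomorphism, only a bijection of coset spaces, so one cannot simply say ``take $p$ powers of one matrix''; one must verify directly, via the explicit shape of $Z^{jN}$, that the lower-left entries $jN\eta^-$ give $p$ distinct classes in $\mathbb{F}_p$.
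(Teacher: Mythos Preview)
Your approach---exhibiting $p$ powers of the parabolic $Z$ as coset representatives---is exactly the paper's strategy, and your coset parametrization via $c/a\bmod (m)\mathfrak{p}^{k+1}$ is fine. But there is a genuine gap in the valuation bookkeeping, and it is not the one you anticipated.

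The hypothesis is only $(m,\mathfrak{p})=1$, not $(m,p)=1$. So $\mathfrak{p}^\sigma$ may well divide $m$; write $f=\nu_{\mathfrak{p}^\sigma}(m)$ and $l=\nu_{\mathfrak{p}^\sigma}(\eta^-)$. Since your exponent $N$ is a \emph{rational} integer, its valuations at $\mathfrak{p}$ and $\mathfrak{p}^\sigma$ coincide: $\nu_\mathfrak{p}(N)=\nu_{\mathfrak{p}^\sigma}(N)=v_p(N)$. The condition $N\eta^-\in (m)\mathfrak{p}^k$ therefore forces
\[
  v_p(N)\;\ge\;\max\bigl(k,\;f-l\bigr),
\]
and one checks that equality holds for the generator $N$. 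Hence $\nu_\mathfrak{p}(N)=k$ exactly only when $f-l\le k$. If $f-l>k$ then $\nu_\mathfrak{p}(N\eta^-)>k$, so $Z^{jN}\in\Gamma^D_0(m\mathfrak{p}^{k+1})$ for \emph{every} $j$, and all your representatives collapse to the identity coset. The decoupling you asserted (``the $\mathfrak{p}$-local condition decouples from the conditions at the other primes dividing $m\eta^-$'') is precisely what fails: $\mathfrak{p}$ and $\mathfrak{p}^\sigma$ are linked through $N\in\ZZ$.

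The paper handles this by a case split. When $\mathfrak{p}^\sigma\nmid m$ (its Case~1) it uses $Z^{\N(m)p^ki}$, which is your argument with $N=\N(m)p^k$. When $\mathfrak{p}^\sigma\mid m$ and $f-l>k$ (Case~2a), it reorders the inductive step: rather than passing from $\mathfrak{p}^k$ to $\mathfrak{p}^{k+1}$, it first peels off a factor of $\mathfrak{p}^\sigma$ from $m$, where the analogous power of $Z$ does work. When $f-l\le k$ (Case~2b), the exponent $\N(m(\mathfrak{p}^\sigma)^{-f})\cdot p^k$ suffices. Your outline is easily repaired along these lines, but as written it only covers the case $f-l\le k$.
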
 

\begin{proof} The aim is to find $p$ matrices in $\Gamma^D_0(m\mathfrak{p}^k)$ which are inequivalent modulo $\Gamma^D_0(m\mathfrak{p}^{k+1})$. We will now describe the simplest set of matrices which we found. We have to distinguish two cases.\\[11pt]
\textit{1. Case: $\mathfrak{p}$ is not conjugated to any of the $\mathfrak{r}_i$}\\
Then $Z^{Mp^ki}$, $i=1,...,p$ are obviously in $\Gamma^D_0(m\mathfrak{p}^k)$ but inequivalent modulo $\Gamma^D_0(m\mathfrak{p}^{k+1})$. \\[11pt]
\textit{2. Case: $\mathfrak{p}$ is conjugated to a certain $\mathfrak{r}_h$}\\
We set $\mathfrak{m}'=(m)\mathfrak{r}_h^{-f_h}$ and $M' := \N(\mathfrak{m}')$. Let us furthermore suppose that $\mathfrak{r}_h^l | \eta^-$ but $\mathfrak{r}_h^{l+1}\nmid \eta^-$ for some $l \in \ZZ_{\geq 0}$. We now have to distinguish two subcases.\\
\textit{Case (a) $f_h-l>k$}\\
This means in particular that $\mathfrak{r}_h$ has a higher order in $m$ than $k$. We therefore want to divide out powers of $\mathfrak{r}_h$ first. This means that we have to show 
$$ \left[ (\SL(L_D) \cap \Gamma^D_0(\mathfrak{m}'\mathfrak{r}_h^{f_h-1}\mathfrak{p}^k)) :  (\SL(L_D) \cap  \Gamma^D_0(\mathfrak{m}'\mathfrak{r}_h^{f_h}\mathfrak{p}^k)) \right] = p.$$
We set $u:=f_h-l-1$ and $v:=p^u$. The matrices $Z^{M'vi}$, $1 \leq i \leq p$ lie in $\Gamma^D_0(\mathfrak{m}'\mathfrak{r}_h^{f_h-1}\mathfrak{p}^k))$ since $\mathfrak{m}'\mathfrak{r}_h^{f_h-1}\mathfrak{p}^k| M'v\eta^- \cdot i$ but the matrices are incongruent modulo $\Gamma^D_0(\mathfrak{m}'\mathfrak{r}_h^{f_h}\mathfrak{p}^k)$ since $\mathfrak{m}'\mathfrak{r}_h^{f_h}\mathfrak{p}^k| M'v\eta^- \cdot i$ implies $\mathfrak{p}|i$ by the definition of $v$.
This means that we may restrict to the case:\\ \textit{Case (b)} $k \geq f_h - l$.\\
We then set $v = p^k$ and look at the matrices $Z^{M'vi}$, $1 \leq i \leq p$. These matrices are all in $\Gamma^D(m\mathfrak{p}^k,n)$ but are not equivalent modulo $\Gamma^D_0(m\mathfrak{p}^{k+1})$ by definition of $v$.

%
%
%
%
\end{proof}

We now come to the second claim. The proof will also make use of Proposition~\ref{prop_splittingI}.

\begin{prop} \label{prop_splitting_primes} Let $p \in \ZZ$ with $\left( \frac{D}{p}\right)=+1$, i.e. $p=\mathfrak{p}\mathfrak{p}^\sigma$ and let $(\mathfrak{p},\eta^*)$ $=1$. Then for all $m\in \OD$ with $(m,\mathfrak{p})=1$ 
$$\left[ (\SL(L_D) \cap \Gamma^D_0(m)) :  (\SL(L_D) \cap \Gamma^D_0(m\mathfrak{p}) \right] = p+1$$	
holds.
\end{prop}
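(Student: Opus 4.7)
The plan is as follows. First observe that Proposition~\ref{prop_index_congruence} together with multiplicativity of the index in towers gives
\[
\left[\Gamma^D_0(m):\Gamma^D_0(m\mathfrak{p})\right]=\N(\mathfrak{p})+1=p+1,
\]
so at once $\left[\SL(L_D)\cap\Gamma^D_0(m):\SL(L_D)\cap\Gamma^D_0(m\mathfrak{p})\right]\leq p+1$. It therefore suffices to exhibit $p+1$ elements of $\SL(L_D)\cap\Gamma^D_0(m)$ that are pairwise inequivalent modulo $\Gamma^D_0(m\mathfrak{p})$. Since two matrices in $\Gamma^D_0(m)$ are equivalent modulo $\Gamma^D_0(m\mathfrak{p})$ precisely when their bottom rows agree in $\mathbb{P}^1(\OD/\mathfrak{p})\cong\mathbb{P}^1(\mathbb{F}_p)$, the task is to realize each of the $p$ affine points $(x:1)$, $x\in\mathbb{F}_p$, together with the point at infinity $(1:0)$, by the bottom row of an element of $\SL(L_D)\cap\Gamma^D_0(m)$.

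For the $p$ affine cosets I reuse the construction from Proposition~\ref{prop_splittingI}. In the generic subcase $\mathfrak{p}^\sigma\nmid m$, the powers $Z^{Mi}$, $i=0,1,\dots,p-1$, lie in $\SL(L_D)\cap\Gamma^D_0(m)$ (because $m\mid M$) and their bottom rows are $(iM\eta^{-},1)$; the hypotheses $(\mathfrak{p},m)=1$ and $(\mathfrak{p},\eta^*)=1$ force $(\mathfrak{p},M\eta^{-})=1$, so these $p$ bottom rows realize all $p$ affine points of $\mathbb{P}^1(\mathbb{F}_p)$. In the exceptional subcase $\mathfrak{p}^\sigma\mid m$ one has $\mathfrak{p}\mid M$, the matrices $Z^{Mi}$ collapse to a single coset, and I would proceed exactly as in Case~2 of the proof of Proposition~\ref{prop_splittingI}: factor $m=\widetilde m\,(\mathfrak{p}^\sigma)^a$ with $(\widetilde m,\mathfrak{p}^\sigma)=1$, replace $M$ by $M':=\N(\widetilde m)$ and step through the intermediate subgroups $\Gamma^D_0(\widetilde m\,(\mathfrak{p}^\sigma)^j)$, invoking the already-proved Proposition~\ref{prop_splittingI} at each stage so that the required $p$ affine representatives survive modulo $\mathfrak{p}$.

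For the point at infinity $(1:0)$ I want a matrix in $\SL(L_D)\cap\Gamma^D_0(m)$ whose bottom-right entry lies in $\mathfrak{p}$ and whose bottom-left entry does not. The natural candidate is
\[
Z^M T^\gamma=\begin{pmatrix} 1 & \gamma\eta^{+}\\ M\eta^{-} & M\gamma\eta^{+}\eta^{-}+1\end{pmatrix},
\]
which already lies in $\Gamma^D_0(m)$. In the generic subcase $\mathfrak{p}^\sigma\nmid m$, the element $M\eta^{+}\eta^{-}$ is a unit modulo $\mathfrak{p}$, so I can pick $\gamma\in\mathbb{Z}$ with $M\gamma\eta^{+}\eta^{-}\equiv -1\pmod{\mathfrak{p}}$; the resulting matrix then represents the coset $(1:0)$. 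Combined with the $p$ representatives of the previous paragraph, this produces $p+1$ pairwise inequivalent cosets, matching the upper bound and giving the claimed equality.

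The main obstacle is the exceptional subcase $\mathfrak{p}^\sigma\mid m$: there $\mathfrak{p}\mid M$, so $Z^M T^\gamma$ can no longer hit the coset $(1:0)$, and one cannot construct the missing representative from $T$ and $Z$ alone. I would resolve this by replacing $Z^M$ in the above product by a parabolic generator of $\SL(L_D)$ whose bottom-left entry involves a factor coprime to $\mathfrak{p}^\sigma$---for instance the additional parabolic elements recorded in Lemma~\ref{lem_second_parabolic}, \ref{lem_second_parabolic2} and \ref{lem_second_parabolic_0mod4}, whose lower-left entries are rational integer multiples of $w+1$, $w-1$ or $w$, none of which is divisible by $\mathfrak{p}^\sigma$ under the hypothesis $(\mathfrak{p},\eta^*)=1$. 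This is precisely where the assumption $(\mathfrak{p},\eta^*)=1$ enters essentially: it guarantees that the bottom-left entries of these extra parabolic generators remain non-zero modulo $\mathfrak{p}$, so that after multiplying with a suitable power of $T$ one still lands in $\Gamma^D_0(m)$ and realizes the coset $(1:0)$.
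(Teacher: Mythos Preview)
Your treatment of the case $\mathfrak{p}^\sigma\nmid m$ is correct and essentially coincides with the paper's Lemma~\ref{lem_splitting_primes_last}: the paper uses $Z^{Mi}$ for $i=1,\dots,p$ together with $Z^{Mk}T$ for a $k$ chosen so that $\mathfrak{p}\mid kM\eta^-\eta^++1$, which is just your $Z^MT^\gamma$ with the two exponents interchanged.

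The genuine gap is in the case $\mathfrak{p}^\sigma\mid m$. Your assertion that ``one cannot construct the missing representative from $T$ and $Z$ alone'' is wrong, and this is precisely the content of the paper's Lemma~\ref{lem_splitting_primes_first}. After reducing (via Proposition~\ref{prop_splittingI}) to $\mathfrak{p}^\sigma\parallel m$, write $m=m'\mathfrak{p}^\sigma$ with $(m',p)=1$ and $M=\N(m')$. The paper takes
\[
W \;=\; Z\,T^{k}\,Z^{M}\,T^{-k}\,Z^{-1},
\]
with $k\in\{1,\dots,p-1\}$ chosen so that $\mathfrak{p}^\sigma\mid k\eta^-\eta^++1$; one then checks $\mathfrak{p}\nmid k\eta^-\eta^++1$. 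A direct computation gives $(WT^{i})_{2,1}=M\eta^-(k\eta^-\eta^++1)^2$, which is divisible by $m'\mathfrak{p}^\sigma$ (from $m'\mid M$ and $\mathfrak{p}^\sigma\mid k\eta^-\eta^++1$) but not by $\mathfrak{p}$. Thus $WT^{i}$ for $i=1,\dots,p$ together with $\Id$ furnish all $p+1$ cosets, using only $T$ and $Z$. The conjugation by $T^k$ is what manufactures the extra factor $(k\eta^-\eta^++1)^2$ carrying exactly the $\mathfrak{p}^\sigma$-divisibility you were missing.

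Your two proposed workarounds do not close the gap. The reference to Case~2 of Proposition~\ref{prop_splittingI} concerns the step $\mathfrak{p}^k\to\mathfrak{p}^{k+1}$ with $k\ge 1$, where the quotient has order $p$, and it gives no mechanism for producing $p$ \emph{affine} representatives at the $k=0$ step. And replacing $Z^M$ by a power of an auxiliary parabolic $L$ from Lemmas~\ref{lem_second_parabolic}--\ref{lem_second_parabolic_0mod4} runs into the same obstruction you identified for $Z$: to land in $\Gamma^D_0(m)$ you must raise $L$ to a power divisible by the part of $m$ coprime to the bottom-left entry of $L$, and you have no control over whether that exponent is then divisible by $\mathfrak{p}$; the hypothesis $(\mathfrak{p},\eta^*)=1$ says nothing about the relation between $\mathfrak{p}$ (or $\mathfrak{p}^\sigma$) and the bottom-left entry of $L$.
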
 

The main problem about divisors of split prime numbers is that it is still possible that $\mathfrak{p}^{\sigma}|m$. This will make it seriously harder to prove the result. Therefore, we have to split the proof into two steps. Each of these steps will be dealt with in a separate lemma.

\begin{lem} \label{lem_splitting_primes_first} Let $p \in \ZZ$ with $\left( \frac{D}{p} \right)=+1$, i.e. $p=\mathfrak{p}\mathfrak{p}^\sigma$ and let $(p,\eta^*)=1$. Furthermore let $m \in \OD$ with $(m,p)=1$. Then
$$\left[ (\SL(L_D) \cap \Gamma^D_0(m\mathfrak{p}^{\sigma})) :  (\SL(L_D) \cap \Gamma^D_0(m\mathfrak{p}^{\sigma}\mathfrak{p})) \right] = p+1$$	
holds.
\end{lem}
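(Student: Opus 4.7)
The strategy is to use the standard isomorphism
\[
\Gamma^D_0(m\mathfrak{p}^\sigma)/\Gamma^D_0(m\mathfrak{p}^\sigma\mathfrak{p})\;\cong\;\mathbb{P}^1(\OD/\mathfrak{p})\;=\;\mathbb{P}^1(\mathbb{F}_p),
\]
a group of order $p+1$, obtained via the Chinese remainder theorem by sending a matrix to the class of its lower row modulo $\mathfrak{p}$. Since the map
$\bigl(\SL(L_D)\cap\Gamma^D_0(m\mathfrak{p}^\sigma)\bigr)/\bigl(\SL(L_D)\cap\Gamma^D_0(m\mathfrak{p}^\sigma\mathfrak{p})\bigr)\hookrightarrow \mathbb{P}^1(\mathbb{F}_p)$
is injective, the index in question is at most $p+1$, and the task reduces to producing $p+1$ pairwise inequivalent coset representatives inside $\SL(L_D)\cap\Gamma^D_0(m\mathfrak{p}^\sigma)$. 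The identity covers the class $(0:1)$, and I need $p$ further matrices whose lower-left entries are divisible by $m\mathfrak{p}^\sigma$ but are $\mathfrak{p}$-units, and whose lower rows modulo $\mathfrak{p}$ realise all the remaining classes.

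A first observation to make is that the matrices used in Case~2 of the proof of Proposition~\ref{prop_splittingI}, namely the powers $Z^{Mp\cdot i}$, are \emph{not} good enough here: they all land in the class $(0:1)$. The reason is that $\mathfrak{p}^\sigma\cap\ZZ=p\ZZ$, so the smallest positive rational integer $k_0$ with $m\mathfrak{p}^\sigma\mid k_0$ in $\OD$ is necessarily divisible by $p$, and consequently every product of powers of $T$ and $Z$ lying in $\Gamma^D_0(m\mathfrak{p}^\sigma)$ has its lower-left entry in $\mathfrak{p}\OD$. Going beyond the abelian subgroup $\langle T,Z\rangle$ is therefore unavoidable.

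The plan is to build the remaining $p$ representatives using, in addition, the parabolic element $E$ of Section~\ref{sec_fixing_Veech} stabilising the cusp $(1,1)$, together with the parabolic matrix $L$ from Lemma~\ref{lem_second_parabolic} and, in the odd-spin case $D\equiv 1\bmod 4$, the involution $S\in\SL(L_D^1)$. Words of the form $Z^{a}\,V\,Z^{b}$ with $V\in\{E,L,S\}$ (or modest products thereof), and with integers $a,b$ adjusted so that divisibility by $m$ and by $\mathfrak{p}^\sigma$ is enforced, have lower-left entries of the shape $\alpha+\beta$, where $\alpha\in m\mathfrak{p}^\sigma\OD$ and $\beta$ is a controlled element of $\OD$ whose residue class modulo $\mathfrak{p}$ can be prescribed. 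Using the coprimality assumptions $(\eta^\ast,p)=(m,p)=1$ together with the arithmetic facts collected in Lemma~\ref{lem_properties_OD} (in particular that $\eta^-$ and $\eta^+$ are $\mathfrak{p}$-units), one can arrange that these $p$ lower-left entries run through the $p$ nonzero residue classes in $\OD/\mathfrak{p}$, while a single analogous construction yields the class $(1:0)$.

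The main obstacle is precisely this simultaneous control of divisibilities modulo $m\mathfrak{p}^\sigma$ and modulo $\mathfrak{p}$: the difficulty of the lemma lies in the jump of the relevant quotient from size $p$ (in Proposition~\ref{prop_splittingI}, where $\mathfrak{p}$ is already in the level) to size $p+1$ (here, where $\mathfrak{p}$ is newly introduced while $\mathfrak{p}^\sigma$ sits in the level). Once the $p+1$ representatives are in place, the upper bound $p+1$ is attained and the lemma follows.
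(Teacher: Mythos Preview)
Your central claim --- that every word in $T$ and $Z$ lying in $\Gamma^D_0(m\mathfrak{p}^\sigma)$ necessarily has its lower-left entry in $\mathfrak{p}\OD$ --- is false, and the ``consequently'' connecting it to the observation about rational integers $k_0$ is a non sequitur. The argument about $k_0$ only concerns \emph{pure powers} $Z^{k_0}$; it says nothing about general words, whose lower-left entries are genuine elements of $\OD$ rather than integers. (Incidentally, $\langle T,Z\rangle$ is not abelian.) The paper in fact produces the required representatives using \emph{only} $T$ and $Z$: setting $W:=ZT^{k}Z^{M}T^{-k}Z^{-1}$ with $k\in\{1,\dots,p\}$ chosen so that $\mathfrak{p}^\sigma\mid k\eta^-\eta^++1$, one computes $W_{2,1}=M\eta^-(k\eta^-\eta^++1)^2$. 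This lies in $m\mathfrak{p}^\sigma$ by the choice of $k$, but an easy argument (if also $\mathfrak{p}\mid k\eta^-\eta^++1$ then $p\mid k\eta^*+1$, forcing $p\mid k$, contradiction) shows $\mathfrak{p}\nmid W_{2,1}$. Then $\mathrm{Id}$ together with $WT^{i}$ for $i=1,\dots,p$ give $p+1$ inequivalent representatives, the inequivalence among the $WT^{i}$ coming from a direct computation of $(WT^{x}W^{-1})_{2,1}$.

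Your proposed alternative via $E$, $L$, $S$ is only a sketch, and it has a structural problem: $S$ is known to lie in $\SL(L_D^1)$ for $D\equiv 1\bmod 4$, but not in $\SL(L_D^0)$ or in the $D\equiv 0\bmod 4$ case, whereas the lemma is stated uniformly for $\SL(L_D)$. The paper's approach avoids this precisely by staying inside $\langle T,Z\rangle$, which is available in all three cases.
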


\begin{proof} We try to find a matrix $W$ which is a word in $T$ and $Z$ such that the matrices
\begin{eqnarray*}
(I) & WT^{i}, & i=1,...,p \\
(II) & \Id &
\end{eqnarray*}
are all in $\Gamma^D_0(m\mathfrak{p}^\sigma)$ but are incongruent modulo $\Gamma^D_0(m\mathfrak{p}^{\sigma}\mathfrak{p})$ or in other words such that $WT^{i-j}W^{-1} \in \Gamma^D_0(\mathfrak{p}^{\sigma}\mathfrak{p})$ implies $i=j$. The simplest $W$ that we found is $W:=ZT^{k}Z^{M}T^{-k}Z^{-1}$ where $k$ is chosen as follows: let $k \in \left\{ 1,...,p \right\}$, such that $\mathfrak{p}^{\sigma}|k\eta^-\eta^++1$. This is always possible since $\mathfrak{p}^{\sigma} \nmid \eta^*$ and $1,..,p$ are incongruent modulo $\mathfrak{p}^{\sigma}$. Furthermore we know that $k \neq p$ because otherwise it would follow that $\mathfrak{p}^{\sigma}|1$. Now suppose that $\mathfrak{p}|k\eta^-\eta^++1$. Since $(\mathfrak{p},\mathfrak{p}^{\sigma})=1$ we would then have $\mathfrak{p}\mathfrak{p}^\sigma|k\eta^-\eta^++1$ which is exactly $p|k\eta^-\eta^++1$ or $p|k(w+f)+1$ for some $f \in \ZZ$ and therefore $p|k$ since $p \in \NN$.\footnote{If $D \equiv 1 \mod 4$ then $f$ is $\frac{D-1}{4}$ for $\SL(L_D^1)$ and it is $\frac{D-5}{4}$ for $\SL(L_D^0)$. If $D \equiv 0 \mod 4$ then $f$ is $D/4$.} This is a contradiction. Hence $\mathfrak{p}\nmid k\eta^-\eta^++1$. For $i \in \left\{ 1,...,p \right\}$ we have
$$(WT^{Ni})_{2,1} = M\eta^-(k\eta^-\eta^++1)^2.$$
Since
$$ m\mathfrak{p}^{\sigma}| M\eta^-(k\eta^-\eta^++1)^2.$$
all the matrices lie in $\Gamma^D_0(m\mathfrak{p}^{\sigma})$ but none of the matrices in $(I)$ is equivalent to the identity modulo $\Gamma^D_0(m\mathfrak{p}^\sigma \mathfrak{p})$. We now show that the matrices in $(I)$ are pairwise incongruent. So suppose $$ WT^{j-i}W^{-1} \in \Gamma^D_0(m\mathfrak{p}\mathfrak{p}^{\sigma}).$$ Set $x:=j-i$. Then
$$ (WT^{Nx}W^{-1})_{2,1}= -M\eta^{-2}\eta^{+}(Mk\eta^-\eta^++1)^4 \cdot x.$$ 
As $\mathfrak{p} \nmid M, \mathfrak{p} \nmid \eta^*$ and $\mathfrak{p} \nmid (k\eta^-\eta^++1)$ we have that $\mathfrak{p}|x$ and hence $i=j$.
\end{proof}


We proceed to the second lemma.

\begin{lem} \label{lem_splitting_primes_last} Let $p \in \ZZ$ with $\left( \frac{D}{p} \right)=+1$, i.e. $p=\mathfrak{p}\mathfrak{p}^\sigma$ and let $(p,\eta^*)=1$. Furthermore let $m \in \OD$ with $(m,p)=1$. Then
$$\left[ (\SL(L_D) \cap \Gamma^D_0(m)) :  (\SL(L_D) \cap \Gamma^D_0(m\mathfrak{p})) \right] = p+1$$	
holds.
\end{lem}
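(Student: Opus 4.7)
The plan is to exhibit $p+1$ elements of $\SL(L_D) \cap \Gamma^D_0(m)$ whose images in the quotient $\Gamma^D_0(m)/\Gamma^D_0(m\mathfrak{p})$ are pairwise distinct. Since the index $[\Gamma^D_0(m):\Gamma^D_0(m\mathfrak{p})]$ equals $\N(\mathfrak{p})+1 = p+1$ (Proposition~\ref{prop_index_congruence}) and the index of the intersection with $\SL(L_D)$ is bounded above by this number, producing the required $p+1$ inequivalent elements will force equality. The natural bookkeeping tool is the bijection $\SL_2(\OD)/\Gamma^D_0(\mathfrak{p}) \cong \mathbb{P}^1(\OD/\mathfrak{p})$ sending a matrix to the projective class of its lower row: distinctness already in $\mathbb{P}^1(\OD/\mathfrak{p})$ implies distinctness modulo $\Gamma^D_0(m\mathfrak{p})$.

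First I would take the $p$ matrices $Z^{Mi}$ for $i = 0, 1, \ldots, p-1$. Since $m$ divides $M = \N(m)$, the lower-left entry $iM\eta^-$ is divisible by $m$, so these matrices all lie in $\SL(L_D) \cap \Gamma^D_0(m)$. The hypothesis $(m,p) = 1$ in $\OD$ means that neither $\mathfrak{p}$ nor $\mathfrak{p}^\sigma$ divides $(m)$, hence $\mathfrak{p} \nmid M$; combined with $(\mathfrak{p},\eta^*) = 1$, this shows that $M\eta^-$ is a unit modulo $\mathfrak{p}$. Consequently the images $(iM\eta^- : 1) \in \mathbb{P}^1(\OD/\mathfrak{p})$ are $p$ distinct points, exhausting the affine line $\{(x:1)\}$.

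For the $(p+1)$-th representative I would choose $k \in \ZZ$ with $Mk\eta^+\eta^- \equiv -1 \pmod{\mathfrak{p}}$, which is possible because $M\eta^+\eta^-$ is a unit modulo $\mathfrak{p}$. Setting $W := Z^M T^k \in \SL(L_D)$, a direct computation gives
\[
W \;=\; \begin{pmatrix} 1 & k\eta^+ \\ M\eta^- & Mk\eta^-\eta^+ + 1 \end{pmatrix}.
\]
The lower-left entry $M\eta^-$ is divisible by $m$, so $W \in \SL(L_D) \cap \Gamma^D_0(m)$; but $\mathfrak{p} \nmid M\eta^-$, so the lower-left entry is a unit mod $\mathfrak{p}$ while, by the choice of $k$, the lower-right entry is congruent to $0$ modulo $\mathfrak{p}$. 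Thus $W$ maps to $(1:0)$ in $\mathbb{P}^1(\OD/\mathfrak{p})$, the unique point missing from the previous list. The $p+1$ chosen matrices are therefore pairwise inequivalent modulo $\Gamma^D_0(m\mathfrak{p})$, and the index equals $p+1$.

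The main technical subtlety is ensuring that the $(p+1)$-th representative can be written as a word in the explicitly known elements $T, Z$ of $\SL(L_D)$, since the generators of the full Veech group are not available for large $D$. Thanks to the hypothesis $(m,p) = 1$, which guarantees $\mathfrak{p} \nmid M$, the short word $W = Z^M T^k$ already has the required shape; this is precisely what breaks down in the companion Lemma~\ref{lem_splitting_primes_first}, where $\mathfrak{p}^\sigma$ divides the level forcing $\mathfrak{p} \mid M$, and a more elaborate word involving an auxiliary parameter has to be constructed.
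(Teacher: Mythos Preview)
Your proof is correct and follows essentially the same approach as the paper: both use the $p$ powers $Z^{Mi}$ to cover the affine points and a single extra word in $Z^M$ and $T$ (the paper takes $Z^{Mk}T$, you take $Z^{M}T^{k}$) with $k$ chosen so that $Mk\eta^{-}\eta^{+}\equiv -1 \pmod{\mathfrak{p}}$ to hit the point at infinity. The only difference is organizational: you phrase the inequivalence via the bijection with $\mathbb{P}^1(\OD/\mathfrak{p})$, whereas the paper computes $(Z^{Mk}TZ^{-Mi})_{2,1}$ directly; the projective-line bookkeeping is a bit cleaner but the content is identical.
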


\begin{proof} We want to find a $k \in \NN$ such that the matrices
\begin{eqnarray*}
(I) & Z^{Mi}, & i=1,...,p \\
(II) & Z^{Mk}T&
\end{eqnarray*}
lie in $\Gamma^D_0(m)$ and are pairwise incongruent modulo $\Gamma^D_0(m\mathfrak{p})$. Indeed, we choose $k$ as follows: let $k \in \left\{1,...,p\right\}$ such that $\mathfrak{p}|kM\eta^-\eta^++1$. This is always possible since $\mathfrak{p} \nmid \eta^*$ and $\mathfrak{p} \nmid M$. Furthermore we know that $k \neq p$ because otherwise it would follow that $\mathfrak{p}|1$. By definition, it is clear that all the matrices in $(I)$ and $(II)$ lie in $\Gamma^D_0(m)$ and that the matrices in $(I)$ are pairwise incongruent modulo $\Gamma^D_0(m\mathfrak{p})$. Finally, we calculate
$$(Z^{Mk}TZ^{-Mi})_{2,1} = M\eta^- ( -(kM\eta^-\eta^++1)i+k).$$
Now suppose that $\mathfrak{p}|M\eta^- ( -(kM\eta^-\eta^++1)i+k)$. In other words this means
$\mathfrak{p}|( -(kM\eta^-\eta^++1)i+k)$ which yet implies $\mathfrak{p}|k$. This is a contradiction
\end{proof}

\begin{proof}[Proof of Proposition~\ref{prop_splitting_primes}] By Proposition~\ref{prop_splittingI} (and analogous arguments as in the proof of it) we may assume that prime ideals stemming from split prime numbers divide $m$ only of order one. Then the claim follows from Lemma~\ref{lem_splitting_primes_first} and Lemma~\ref{lem_splitting_primes_last}. \end{proof}

\subsubsection{Inert Prime Numbers} \label{sec_inert_volume}


The second case concerns inert prime numbers $p \in \ZZ$. The condition $(p,\eta^*)=1$ is then automatically fulfilled (compare Lemma~\ref{lem_properties_OD}). The difficulty about this case is that there exist $0 < i < \N(p)=p^2$ with $p|i$. We start with controlling the effect of higher powers of $p$. 

\begin{prop} \label{prop_primepower_inert_eq} Let $p \in \ZZ$ be an inert prime number, i.e. $\left( \frac{D}{p} \right)=-1$. Moreover let $m \in \OD$ be an arbitrary element with $(m,p)=1$. Then for all $k \in \mathbb{N}$ 

$$\left[ (\SL(L_D) \cap \Gamma^D_0(mp^k)) :  (\SL(L_D) \cap \Gamma^D_0(mp^{k+1})) \right] = \N(p) = p^2$$	
holds.

\end{prop}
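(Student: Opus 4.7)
The plan is to mimic the strategy of Proposition~\ref{prop_splittingI}, exhibiting $\N((p))=p^{2}$ explicit elements of $\SL(L_D)\cap\Gamma^D_0(mp^k)$ that are pairwise incongruent modulo $\Gamma^D_0(mp^{k+1})$. Since Proposition~\ref{prop_index_congruence} gives the matching upper bound $[\Gamma^D_0(mp^k):\Gamma^D_0(mp^{k+1})]=p^{2}$, this forces equality. By Lemma~\ref{lem_facilitate} (or its obvious variant) it suffices to work with $\Gamma^D_0$ throughout. The genuinely new feature compared with the split case is that the quotient $\Gamma^D_0(mp^k)/\Gamma^D_0(mp^{k+1})$ is naturally isomorphic to the additive group $\OD/(p)\cong\mathbb{F}_{p^{2}}$, which is two-dimensional over $\mathbb{F}_p$, so a single cyclic family of $Z$-powers can no longer suffice.

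First I would use the $p$ matrices $Z^{Mp^k i}$, $i=0,1,\dots,p-1$, whose reduced lower-left entries are $m^{\sigma}\eta^{-}\cdot i\bmod(p)$ and hence span an $\mathbb{F}_p$-line in $\OD/(p)$: note that $m^{\sigma}\eta^{-}$ is a unit modulo $(p)$, since $p\nmid m$ and Lemma~\ref{lem_properties_OD} ensures that every prime divisor of $\eta^{-}$ is split or ramified, so $p$ inert gives $p\nmid\eta^{-}$. To cover the complementary $\mathbb{F}_p$-line I would introduce the parabolic conjugate $W:=ZT^{mp^k}Z^{-1}\in\SL(L_D)\cap\Gamma^D_0(mp^k)$. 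A direct computation gives
\[
W=\begin{pmatrix}1-mp^{k}\eta^{+}\eta^{-} & mp^{k}\eta^{+}\\[2pt] -mp^{k}\eta^{+}(\eta^{-})^{2} & 1+mp^{k}\eta^{+}\eta^{-}\end{pmatrix},
\]
and since $W$ is parabolic with trace $2$, its $j$-th power is $I+jN$ with $N=W-I$. The proposed $p^{2}$ coset representatives are then
\[
A_{i,j}\;:=\;Z^{Mp^{k}i}\cdot W^{j},\qquad 0\le i,j<p.
\]
A short product calculation shows that the lower-left entry of $A_{i,j}$, divided by $mp^{k}$ and reduced modulo $(p)$, equals $m^{\sigma}\eta^{-}\cdot i-\eta^{+}(\eta^{-})^{2}\cdot j$; the cross-term is a multiple of $p^{2k}$ and hence dies modulo $(p)$ for $k\ge 1$.

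The main obstacle is to verify that the linear map $(i,j)\mapsto m^{\sigma}\eta^{-}\,i-\eta^{+}(\eta^{-})^{2}\,j$ is injective on $(\mathbb{F}_p)^{2}$. Dividing by the unit $m^{\sigma}\eta^{-}$, this is equivalent to the assertion that $\eta^{+}\eta^{-}\notin\mathbb{F}_p$ inside $\OD/(p)$. In the odd-spin case $D\equiv 1\bmod 4$ one has $\eta^{+}\eta^{-}=w^{2}=w+(D-1)/4$, whose coefficient of $w$ is $1\not\equiv 0\pmod p$, giving the independence at once. The even-spin case $D\equiv 1\bmod 8$ (with $\eta^{+}=w-1$, $\eta^{-}=w+1$) and the case $D\equiv 0\bmod 4$ (with $\eta^{+}=w+1$, $\eta^{-}=w$) reduce to the same type of computation of $\eta^{+}\eta^{-}$ modulo $(p)$, again yielding a nonzero $w$-coefficient. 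The only subcase that might look suspicious is $p=2$ inert, which forces $D\equiv 5\bmod 8$; but then $(D-1)/4$ is odd, so $w^{2}\equiv w+1\pmod{(2)}$, still outside $\mathbb{F}_{2}$. Should this direct construction degenerate in some edge discriminant, I would replace $W$ by an appropriate power of the parabolic element of Lemma~\ref{lem_second_parabolic}, whose lower-left entry $-4(w+1)$ carries a nonzero rational part and thereby supplies a second independent direction in $\OD/(p)$ for every odd inert $p$.
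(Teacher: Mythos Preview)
Your strategy is exactly the paper's: exhibit $p^{2}$ words in $Z$ and a conjugate $W=ZT^{\bullet}Z^{-1}$, and reduce distinctness to the fact that $\eta^{+}\eta^{-}=w+f$ has nonzero $w$-coefficient, hence lies outside $\mathbb{F}_p\subset\OD/(p)$. The paper uses the representatives $W^{l}Z^{Mp^{k}j}$ rather than your $Z^{Mp^{k}i}W^{j}$, but the computation of the $(2,1)$-entry modulo $mp^{k+1}$ is the same and leads to the same ``imaginary part'' argument.

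There is, however, a real slip in your definition of $W$. You set $W:=ZT^{mp^{k}}Z^{-1}$, but $m$ is an element of $\OD$, not of $\ZZ$, so $T^{mp^{k}}$ is not a well-defined power of $T$; the matrix you display is then not in $\SL(L_D)$ (the primitive parabolic fixing $\infty$ is $T$, so only integer powers lie in the Veech group). The paper avoids this by taking $W:=ZT^{Mp^{k}}Z^{-1}$ with $M=\N(m)\in\ZZ$; since $p$ is inert and $(m,p)=1$, one has $p\nmid M$, which is all that is needed. With this correction your lower-left entry (divided by $mp^{k}$) becomes $\pm m^{\sigma}\eta^{-}\bigl(i-j\,\eta^{+}\eta^{-}\bigr)\pmod{(p)}$ rather than $m^{\sigma}\eta^{-}i-\eta^{+}(\eta^{-})^{2}j$; the extra factor $m^{\sigma}$ is a unit mod $(p)$ and the injectivity criterion is unchanged. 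Your fallback to Lemma~\ref{lem_second_parabolic} is unnecessary once this is fixed, since the $w$-coefficient of $\eta^{+}\eta^{-}$ is $1$ in every case.
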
 

\begin{proof} 
We want to find matrix $W$ which is a word in $T$ and $Z$ such that the matrices
$$W^lZ^{Mp^k j}, \quad j=1,...,p \quad l=1,...,p$$
lie $\Gamma^D_0(mp^k)$ but are pairwise incongruent modulo $\Gamma^D_0(mp^{k+1})$. As $p$ is an inert prime number, it does not divide $M$, and hence
$$W:= ZT^{Mp^k}Z^{-1}$$ 
might be a good a choice. 
$$W^hZ^{Mp^k j} \equiv W^lZ^{Mp^k i}$$
$$\text{is equivalent to} \ \ W^hZ^{Mp^k (j-i)} W^{-l} \in \Gamma^D_0(mp^{k+1}).$$
Setting $x:=j-i$, we thus need to check when
\begin{eqnarray*} (W^hZ^{p^k dx} W^{-l})_{2,1} & = & \underbrace{p^{3k}M^3\eta^{-3}\eta^{+2}\cdot hlx}_{v_1}\\ & + & \underbrace{p^{2k}M^2\eta^{-2}\eta^+\cdot (h+l)x}_{v_2}\\ & + & \underbrace{p^kM\eta^-(x+\eta^-\eta^+\cdot(l-h))}_{v_3} 
\end{eqnarray*}
is divisible by $mp^{k+1}$. We already know that $m | v_1+v_2+v_3$. It suffices to check when $p^{k+1}|v_1 + v_2 + v_3$ holds. Obviously $p^{k+1}$ always divides $v_1 + v_2$. So we are just interested in which cases we have $p^{k+1} | p^kM\eta^-(x+\eta^-\eta^+\cdot (l-h))$ or equivalently $p | M\eta^-(x\eta^-\eta^+\cdot(l-h))$. As $p \nmid \eta^-$ and $p \nmid M$ this is equivalent to $p| (x+\eta^-\eta^+\cdot(l-h))$. We have $\eta^-\eta^+ = w + f$, with $f \in \ZZ$. Thus we ask when $p|(w+f)\cdot(l-h)+x$. We now just look at the \textit{imaginary} part of the right hand side (i.e. the part which does not lie in $\ZZ$). This gives us that $l=h$. Thus $p|x$. This yields $x=0$ or in other words $i=j$.
\end{proof}

We now come to the second claim.

\begin{prop} \label{prop_inert_prime_eq1}Let $p \in \ZZ$ be an inert prime number, i.e. $\left( \frac{D}{p} \right)=-1$. Then for all $m \in \OD$ with $(m,p)=1$ 
$$\left[ (\SL(L_D) \cap \Gamma^D_0(m)) :  (\SL(L_D) \cap  \Gamma^D(mp)) \right] = \N(p)+1=p^2+1$$	
holds.
\end{prop}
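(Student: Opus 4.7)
The plan is to exhibit $\N(p)+1 = p^2+1$ matrices in $\SL(L_D)\cap\Gamma^D_0(m)$ that are pairwise inequivalent modulo $\Gamma^D_0(mp)$. By Proposition~\ref{prop_index_congruence} combined with the Chinese Remainder Theorem (using $(m,p)=1$), the ambient index $[\Gamma^D_0(m):\Gamma^D_0(mp)]$ equals $\N(p)+1$, so any such list is automatically complete and the claim follows. By Lemma~\ref{lem_facilitate}, it suffices to treat the case of $\Gamma^D_0$ rather than $\Gamma^D(\cdot,\cdot)$.

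First, I would reuse the computations from the proof of Proposition~\ref{prop_primepower_inert_eq} specialized to $k=0$: the $p^2$ matrices $W^l Z^{Mj}$ for $l,j\in\{1,\ldots,p\}$ with $W:=ZT^M Z^{-1}$ all lie in $\SL(L_D)\cap\Gamma^D_0(m)$ since $m\mid M$, and an essentially identical analysis shows that they occupy $p^2$ pairwise distinct cosets of $\Gamma^D_0(mp)$. The engine of the argument is the arithmetic fact $\eta^-\eta^+=w+f$ for some $f\in\ZZ$: an integral linear combination involving this quantity can only be divisible by $p$ if its coefficients of $1$ and of $w$ vanish separately modulo $p$, which collapses the divisibility condition (after dividing out the invertible factor $M\eta^-$) to $l\equiv h$ and $x\equiv 0 \pmod p$, i.e.\ to $l=h$ and $i=j$.

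Second, via the bijection $\Gamma^D_0(m)/\Gamma^D_0(mp)\cong\mathbb{P}^1(\OD/p)$ given by bottom-row reduction, I would identify which projective point the $p^2$ representatives miss. The lower-right entry $(W^lZ^{Mj})_{2,2}=1+Ml(w+f)$ never vanishes modulo $p$ for $l\in\{1,\ldots,p\}$ because its $w$-coefficient $Ml$ is coprime to $p$, so the $p^2$ matrices realize exactly the $p^2$ affine points $(t:1)\in\mathbb{P}^1(\OD/p)$ and the unique missing coset sits over $(1:0)$.

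The main obstacle is thus to produce one further element $X\in\SL(L_D)\cap\Gamma^D_0(m)$ whose lower-right entry is divisible by $p$. By the very same $w$-coefficient argument, no word in $T$ and $Z$ alone will do: one has to bring in a generator outside $\langle T,Z\rangle$. In the odd-spin case $D\equiv 1\pmod 4$ I would use the parabolic $E=\left(\begin{smallmatrix}1-e & e\\ -e & 1+e\end{smallmatrix}\right)$ fixing the cusp $1$; the lower-right entry of $E^k T^a$ equals $1+ke(1-a\eta^+)$, which viewed as a pair of $\mathbb{F}_p$-linear equations in $(k,a)\in\mathbb{F}_p^2$ via the basis $\{1,w\}$ of $\OD/p$ is solvable once $e\notin\mathbb{F}_p$ modulo $p$, and $k$ can then be lifted within its residue class modulo $p$ by multiples of $p$ so that also $m\mid ke$ using $(m,p)=1$. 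For the even-spin and $D\equiv 0\pmod 4$ cases one replaces $E$ by the analogous extra parabolic generators from Lemmas~\ref{lem_second_parabolic2}, \ref{lem_third_parabolic2} and \ref{lem_second_parabolic_0mod4}. Conceptually, such an $X$ must exist because Dickson's classification forces the image of $\SL(L_D)$ in $\PSL_2(\OD/p)\cong\PSL_2(\mathbb{F}_{p^2})$ to be the full group (it is infinite and contains two distinct parabolic subgroups, namely $\rho(T)$ and $\rho(Z)$), and the full group acts transitively on $\mathbb{P}^1(\mathbb{F}_{p^2})$.
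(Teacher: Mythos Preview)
Your first step --- specializing Proposition~\ref{prop_primepower_inert_eq} to $k=0$ --- does not go through. For $k\geq 1$ the terms $v_1,v_2$ in the paper's expansion of $(W^hZ^{Mp^kx}W^{-l})_{2,1}$ carry factors $p^{3k}$ and $p^{2k}$ and are automatically divisible by $p^{k+1}$, so only $v_3$ matters and the $w$-coefficient argument applies cleanly. For $k=0$ all three terms survive. The relevant expression factors as
\[
(W^hZ^{Mx}W^{-l})_{2,1}=M\eta^-\bigl[x(M\eta^*h+1)(M\eta^*l+1)+\eta^*(l-h)\bigr],
\]
and requiring this to vanish in $\mathbb{F}_{p^2}$ reduces (after a short Frobenius computation, assuming $l\neq h$) to the single scalar condition $hl\equiv (M^2\,\textrm{N}(\eta^*))^{-1}\pmod p$, which for most $p$ has solutions with $h\neq l$ in $\{1,\dots,p\}$. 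Concretely, for $D=5$, $m=1$, $p=7$ one finds $\textrm{N}(\eta^*)=1$, and the pair $(h,l)=(2,4)$ together with $x=-1$ gives $(W^2Z^{-1}W^{-4})_{2,1}=-7w(4w+3)$, so $W^2Z^{M}$ and $W^4Z^{2M}$ represent the same coset of $\Gamma^D_0(7)$. Your second step then also breaks down: the matrices $W^lZ^{Mj}$ do all land in the affine part of $\mathbb{P}^1(\OD/p)$, but with repetitions, so they do not realize all $p^2$ affine points.

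The paper avoids this by using a different list of $p^2+1$ representatives, namely $Z^{-M}T^iZ^MT^j$ (for $i=1,\dots,p-1$, $j=1,\dots,p$), $Z^MT^i$ (for $i=1,\dots,p$), and $\Id$. The inequivalence argument there hinges on the $w$-coefficient of $\eta^{*2}$ rather than of $\eta^*$, and for the finitely many primes $p$ dividing that coefficient the paper falls back on an auxiliary lemma that treats $m=1$ directly (using $S$ in the odd-spin case or a further parabolic $L$ otherwise) and then bootstraps to general $m$ via Proposition~\ref{prop_primepower_inert_eq}. Your Dickson remark at the end is in the right spirit and could in principle be turned into a clean uniform proof, but as written it is only heuristic --- ``infinite'' is meaningless for the image in a finite group, and one must argue that the image of $\SL(L_D)\cap\Gamma^D_0(m)$, not just $\SL(L_D)$, generates $\PSL_2(\mathbb{F}_{p^2})$.
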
 

\begin{proof} We claim that the matrices
\begin{eqnarray*}
(I) & Z^{-M}T^{i}Z^{M}T^{j}, & i=1,...,p-1, \quad j=1,...,p \\
(II) & Z^{M}T^{i} & i=1,..,p \\
(III) & \Id &
\end{eqnarray*}
lie in $\Gamma^D_0(n)$ and are pairwise incongruent modulo $ \Gamma^D_0(mp)$. The matrices in $(II)$ lie in $\Gamma^D_0(m)$ but are not congruent to the identity modulo $\Gamma^D_0(mp)$. Furthermore 
$$(Z^MT^{i}Z^{-M})_{2,1} = -M^2\eta^{-2}\eta^+ \cdot i$$
implies that the matrices in $(II)$ are pairwise incongruent modulo $\Gamma^D_0(mp)$. Moreover
all the matrices in $(I)$ lie in $\Gamma^D_0(m)$ but none of them is congruent to the identity modulo $\Gamma^D_0(mp)$. Next we show that the matrices in $(I)$ are indeed pairwise incongruent modulo $\Gamma^D_0(mp)$:  
$$Z^{-M}T^{i}Z^MT^{k} \equiv Z^{-M}T^{j}Z^MT^{l}$$
$$\textrm{is equivalent to} \ \ Z^{-M}T^{i}Z^MT^{x}Z^{-M}T^{-j}Z^{M} \in \Gamma^D_0(mp)$$
where $x:=k-l$. However
$$(Z^{-M}T^{i}Z^MT^{x}Z^{-M}T^{-j}Z^{M})_{2,1} = v_{i,j,x}$$  where $v_{i,j,x} = M^2\eta^{-2}\eta^+(-M^2\eta^{-2}\eta^{+2}\cdot ijx +(j-i)).$ So let us suppose that $p|M^2\eta^{-2}\eta^+(-M^2\eta^{-2}\eta^{+2} \cdot ijx +(j-i))$ which holds if and only if we have $p | -M^2\eta^{-2}\eta^{+2}\cdot ijx +(j-i)$. We now have to distinguish three cases:\\[11pt]
\textit{Case (1) $D \equiv 1 \mod 4$, odd spin:} If we consider $\SL(L_D^1)$ for $D \equiv 1 \mod 4$ then we assume that $p \nmid \frac{D+1}{2}$. Then $\eta^{-2}\eta^{+2} = \frac{D+1}{2} w + \frac{D-1}{4} + \frac{(D-1)^2}{16}$ and we look at the \textit{imaginary} part of the right hand side. As $p \nmid \frac{D+1}{2}$ this yields $p|ijx$. Now we look at the \textit{real} part (i.e. the part which lies in $\ZZ$) and get $j=i<p$. Therefore, $x=0$ yields $k=l$. It remains to check whether it is possible that a matrix in $(I)$ is congruent to a matrix in $(II)$. 
$$(Z^{-M}T^{i}Z^MT^{x}Z^{-M})_{2,1} = M\eta^-(M^2\eta^{-2}\eta^{+2}\cdot ix-1).$$
However $p|M\eta^-(M^2\eta^{-2}\eta^{+2}\cdot ix -1)$ implies $p|(M^2\eta^{-2}\eta^{+2}\cdot ix -1)$. As above, this yields $ix=0$ and thus $p|-1$, which is a contradiction.\\
If $p | \frac{D+1}{2}$, then we first prove the following lemma:
\begin{lem} \label{lem_inert_prime_eq2} Let $p \in \ZZ$ be an inert prime number, i.e. $\left( \frac{D}{p} \right)=-1$. Then
$$\left[ \SL(L_D^1)  :  (\SL(L_D^1) \cap \Gamma^D_0(p) \right] = \N(p)+1=p^2+1$$	
holds.
\end{lem} 

\begin{proof} As $m=1$ we are allowed to use the elliptic element $S \in \SL(L_D^1)$. This will help a lot. It is enough to find a list of $\N(p)+1=p^2+1$ matrices in $\SL(L_D^1)$ which are inequivalent modulo $\Gamma^D_0(p)$. We claim that
\begin{eqnarray*}
(I) & Z^iT^j, &i=1,...,p-1, \quad j=1,...,p \\
(II) & ST^j,  &j=1,...,p  \ \\
(III) &\Id 
\end{eqnarray*}
is such a list. Note that
$$(Z^iT^j)_{2,1} = w\cdot i$$
and
$$(ST^j)_{1,2}= 1$$
which means that none of the matrices in $(I)$ and $(II)$ is equivalent to the identity matrix.\\[11pt] So let us assume $Z^l T^{j-i}Z^k \in \Gamma^D_0(p)$. We set $x:=j-i$ and  calculate
$$(Z^lT^xZ^{-k})_{2,1} = -w^3 \cdot klx + w\cdot (l-k) $$
So by assumption $p | -w^3 \cdot klx + w\cdot (l-k)$. Since $(w,p)=1$ this is equivalent to $p | - w^2\cdot klx + (l-k) $. Furthermore $w^2=w+f$, where $f=\frac{D-1}{4}$ depends only on the discriminant, and so $p | (l-k) -(w+f)\cdot klx$. Since both $l$ and $k$ are smaller than $p$, this implies $x=0$ and therefore $l=k$.\\[11pt]
Next we show that all the matrices in $(II)$ are incongruent modulo $\Gamma^D_0(p)$. So suppose $ST^j \equiv ST^i$. Then
$$(ST^{j-i}S)_{2,1} = w \cdot (j-i)$$
implies that $j=i$. \\[11pt]
Finally it has to be shown that the matrices in $(I)$ and $(II)$ are incongruent modulo $\Gamma^D_0(p)$. Again we set $x:=j-i$ and calculate
$$(ST^{x}Z^{-k})_{2,1} = -w^2 \cdot kx+1$$
But, as above, $p|-w^2kx+1$, cannot hold.
\end{proof}
Now suppose that the index $[\SL(L_D^1) : \SL(L_D^1) \cap \Gamma^D(m)]$ is $k$. So we know by the lemma that the index $[\SL(L_D^1) : \SL(L_D^1) \cap \Gamma^D(mp)]$ is $k(\N(p)+1)/l$ with $l \in \NN$. On the other hand, it follows from Proposition~\ref{prop_primepower_inert_eq} that the index is at least $k\N(p)$. This yields $l=1$ and thus the claim.\\[11pt]
\textit{Case (2) $D \equiv 1 \mod 8$, even spin:} If we consider $\SL(L_D^0)$ then $\eta^{-2}\eta^{+2} = \frac{D-3}{2} w + \frac{D-1}{2} + \frac{(D-5)^2}{16}$ and if we assume that $p \nmid \frac{D-3}{2}$, then the claim immediately follows. If we look at $p$ with $p | \frac{D-3}{2}$ then it suffices to prove the following lemma:
\begin{lem} \label{lem_inert_prime_eq3} Let $p \in \ZZ$ be an inert prime number, i.e. $\left( \frac{D}{p} \right)=-1$, with $p|\frac{D-3}{2}$. Then

$$\left[ \SL(L_D^0)  :  (\SL(L_D^0) \cap \Gamma^D_0(p) \right] = \N(p)+1 = p^2+1$$	
holds.
\end{lem} 

\begin{proof} It is enough to find a list of $\N(p)+1=p^2+1$ matrices in $\SL(L_D^0)$ which are inequivalent modulo $\Gamma^D_0(p)$. We claim that
\begin{eqnarray*}
(I) & L^{-1}T^iLT^j, &i=1,...,p-1, \quad j=1,...,p \\
(II) & LT^i,  &j=1,...,p  \ \\
(III) &\Id, 
\end{eqnarray*}
where $L$ is the matrix from Lemma~\ref{lem_second_parabolic2}, is such a list. We assume that $D \equiv 1 \mod 16$. The other case works in the same way. Note that
$$(L^{-1}T^iLT^j)_{2,1} = -4(w+2)^2(w-1)\cdot i$$
and
$$(LT^i)_{2,1} = -2(w+2)$$
which means that none of the matrices in $(I)$ and $(II)$ is equivalent to the identity matrix since $p \neq 2$.\\[11pt] Since   
$$(LT^iL^{-1})_{2,1} = -4(w+2)^2(w-1)\cdot i$$
the matrices in $(II)$ are indeed pairwise incongruent modulo $\Gamma^D_0(p)$. Furthermore,
$$(L^{-1}T^{i}LT^{x}L^{-1}T^{-j}L)_{2,1} = v_{i,j,x}$$ where $v_{i,j,x} = 4(w-1)(w+2)^2 (4(-w^4-2w^3+3w^2+4w-4)\cdot ijx + (j-i)).$ Since $-w^4-2w^3+3w^2+4w-4 = (-D+5)w + \frac{D-1}{4}\cdot\frac{D+7}{4}-4$ and since $p \neq 2$, this implies that $x=0$ if the matrix is in $\Gamma^D_0(p)$ and thus also $i=j$. Therefore, all the matrices in $(I)$ are inequivalent modulo $\Gamma^D_0(p)$.
\\[11pt]
Finally we have
$$(L^{-1}T^iLT^jL^{-1})_{2,1} = 2(w+2) ( (-w^4-2w^3+3w^2+4w-4)\cdot ij +1).$$
As above, this implies that the matrices in $(I)$ and $(II)$ are not equivalent modulo $\Gamma^D_0(p)$.
\end{proof}
\textit{Case (3) $D \equiv 0 \mod 4$:} For $D \equiv 0 \mod 4$ we have $\eta^{-2}\eta^{+2} =  \frac{D}{2} w + \left(\frac{D}{4}\right)^2 + 1.$ On the other hand $p$ does not divide $\frac{D}{2}$ because $p$ is an inert prime number and so the claim follows. 
\end{proof}

\subsubsection{Divisors of Ramified Prime Numbers}

The last type of prime numbers leads us to ramified prime numbers $p \in \ZZ$ or in other words $p|D$. This case is treated here. What also makes ramified prime numbers a little difficult is the fact that if $p=\mathfrak{p}^2$ then $\mathfrak{p}^2|\N(\mathfrak{p})$. 

\begin{prop} \label{prop_ramifiedII} Let $p \in \ZZ$ be a ramified prime number, i.e. $p|D$ and $p=\mathfrak{p}^2$, with $(\mathfrak{p},\eta^*)=1$. Moreover let $m \in \OD$ be an arbitrary element with $(m,\mathfrak{p})=1$. Then for all $k \in \mathbb{N}$ 
$$\left[ (\SL(L_D) \cap \Gamma^D_0(m\mathfrak{p}^k)) :  (\SL(L_D) \cap \cap \Gamma^D_0(m\mathfrak{p}^{k+1})) \right] = \N(p).$$
holds.

\end{prop}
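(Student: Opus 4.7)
The upper bound $[\SL(L_D) \cap \Gamma^D_0(m\mathfrak{p}^k) : \SL(L_D) \cap \Gamma^D_0(m\mathfrak{p}^{k+1})] \leq \N(\mathfrak{p}) = p$ is immediate from Proposition~\ref{prop_index_congruence} together with the inclusion $\SL(L_D) \subset \SL_2(\OD)$. For the lower bound we mirror the strategy of Propositions~\ref{prop_splittingI} and~\ref{prop_primepower_inert_eq}, but exploit the following observation: the quotient $\Gamma^D_0(m\mathfrak{p}^k)/\Gamma^D_0(m\mathfrak{p}^{k+1}) \cong \mathfrak{p}^k/\mathfrak{p}^{k+1} \cong \OD/\mathfrak{p}$ is cyclic of prime order $p$. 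Hence it suffices to exhibit a single matrix $W_k \in \SL(L_D) \cap \Gamma^D_0(m\mathfrak{p}^k)$ that does not lie in $\Gamma^D_0(m\mathfrak{p}^{k+1})$: its powers $W_k^0, W_k^1, \dots, W_k^{p-1}$ then provide the required $p$ pairwise inequivalent coset representatives.

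For even $k = 2j$ this is easy: take $W_k = Z^{Mp^j}$, which lies in $\SL(L_D)$ and has lower-left entry $Mp^j \eta^-$. Using $(\mathfrak{p}, M) = (\mathfrak{p}, \eta^-) = 1$ (the former because $(\mathfrak{p},m)=1$ and $\mathfrak{p} = \mathfrak{p}^\sigma$), we compute $v_\mathfrak{p}(Mp^j\eta^-) = v_\mathfrak{p}(p^j) = 2j = k$, so $W_k$ lies in $m\mathfrak{p}^k$ but not in $m\mathfrak{p}^{k+1}$.

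The case $k = 2j+1$ odd is the main obstacle. Because $\mathfrak{p} \cap \ZZ = p\ZZ$ and $p = \mathfrak{p}^2$, the $\mathfrak{p}$-adic valuation of any rational integer is automatically even, so no integer power of $Z$ (nor any product built from $Z$ alone) can realize odd $v_\mathfrak{p}$ in the lower-left entry. Instead, we require a parabolic element $X \in \SL(L_D)$ whose lower-left entry $\xi := X_{2,1}$ satisfies $v_\mathfrak{p}(\xi) = 1$ exactly. Granting such an $X$, we set $W_k = X^{Mp^j}$. Since $X$ is parabolic with $\tr(X) = 2$, Cayley--Hamilton gives $(X - \Id)^2 = 0$ and thus $(X^n)_{2,1} = n\,\xi$ for every $n \in \ZZ$. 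Consequently $(W_k)_{2,1} = Mp^j\xi$ has $v_\mathfrak{p}$ equal to $2j+1 = k$ and is divisible by $m$, so $W_k$ lies in $\Gamma^D_0(m\mathfrak{p}^k) \setminus \Gamma^D_0(m\mathfrak{p}^{k+1})$, as needed.

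The technical heart of the argument is therefore the construction of $X$. The plan is a case analysis using the explicit parabolic generators recorded in Section~\ref{sec_fixing_Veech}. Under the hypothesis $(\mathfrak{p}, \eta^*) = 1$, the ramified prime $p$ is sharply constrained: for $D \equiv 0 \mod 4$ one is forced into $p = 2$ (with $d = D/4$ odd), and direct computation with the matrix $L$ of Lemma~\ref{lem_second_parabolic_0mod4} shows $v_{\widetilde{\mathfrak{p}_2}}(L_{2,1}) = 1$; for $D \equiv 1 \mod 4$ with odd-spin Teichm\"uller curve one uses the matrix of Lemma~\ref{lem_second_parabolic}, whose lower-left entry is a rational integer multiple of $w+1$, and one checks that $v_\mathfrak{p}(w+1) = 1$ precisely in the cases permitted by the hypothesis (e.g.\ $p = 5$ dividing $D$). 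In the remaining cases not covered by a single generator in the lemmas, one obtains a suitable $X$ as an explicit word in $T$, $Z$, and the parabolics from Section~\ref{sec_fixing_Veech}: the hypothesis $(\mathfrak{p}, \eta^*) = 1$ guarantees that the images of $T$ and $Z$ in $\SL_2(\OD/\mathfrak{p}^{k+1})$ generate enough of the unipotent radical to produce a matrix of lower-left $\mathfrak{p}$-valuation exactly $1$.
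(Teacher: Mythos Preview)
Your strategy for even $k$ matches the paper's, but the odd case has a genuine gap. First, a minor point: the coset space $\Gamma^D_0(m\mathfrak{p}^k)/\Gamma^D_0(m\mathfrak{p}^{k+1})$ is \emph{not} a group, because the smaller subgroup is not normal (conjugating an upper-triangular matrix by $\left(\begin{smallmatrix}1&0\\u&1\end{smallmatrix}\right)$ with $u\in\mathfrak p^k$ produces a lower-left entry $u(a-a^{-1})+O(u^2)$, which need not lie in $\mathfrak p^{k+1}$). So the sentence ``its powers $W_k^0,\dots,W_k^{p-1}$ then provide the required $p$ pairwise inequivalent coset representatives'' does not follow from cyclicity; it follows only because your $W_k$ happens to be parabolic and you later invoke $(W_k^i)_{2,1}=i\cdot(W_k)_{2,1}$. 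This is a fixable sloppiness.

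The serious problem is the existence of the parabolic $X\in\SL(L_D)$ with $v_{\mathfrak p}(X_{2,1})=1$. Your case analysis does not cover the generic ramified prime. For $D\equiv 1\bmod 4$ with odd spin, the lower-left entry of the matrix of Lemma~\ref{lem_second_parabolic} is an integer multiple of $w+1=(3+\sqrt D)/2$; since $\sqrt D\in\mathfrak p$, one has $w+1\equiv 3/2\bmod\mathfrak p$, which is a unit unless $p=3$. So for any other ramified $p$ (take $D=65$, $p=13$) you have no candidate, and your final sentence about $T$ and $Z$ generating ``enough of the unipotent radical'' produces matrices with the right lower-left valuation but gives no reason they are parabolic --- and without parabolicity the Cayley--Hamilton step fails. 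In fact every integer power of $T$ or $Z$, and every $\SL(L_D)$-conjugate of such a power, has lower-left entry of \emph{even} $\mathfrak p$-valuation (integers have even valuation at a ramified prime), so it is not at all clear such an $X$ exists.

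The paper sidesteps this entirely. For odd $k>1$ it jumps two levels at once: with $d=p^{(k-1)/2}$ and $W=ZT^{Md}Z^{-1}$, the $p^2$ products $W^{l}Z^{Mdj}$ ($1\le l,j\le p$) lie in $\Gamma^D_0(m\mathfrak p^{k-1})$ and are shown pairwise inequivalent modulo $\Gamma^D_0(m\mathfrak p^{k+1})$, giving index $p^2$ from level $k-1$ to $k+1$; combined with the already-proved even case at level $k-1$ this yields index $p$ from $k$ to $k+1$. The case $k=1$ is handled separately by reusing the coset representatives from the inert case (Proposition~\ref{prop_inert_prime_eq1}) to get index $\ge p^2+1$ from level $0$ to $2$, and then combining with Proposition~\ref{prop_ramifiedI}. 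No parabolic with odd $\mathfrak p$-valuation is ever needed.
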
 

\begin{proof} Note that $\mathfrak{p} \nmid m$ yields $\mathfrak{p} \nmid M$. We now have to distinguish three cases for $k$:\\[11pt]
\textit{1. Case: k is even}\\
Choose $d:= p^{k/2} \in \mathbb{N}$. Then $Z^{Mdi}$, $i=1,...,p$ are matrices in $\Gamma^D_0(m\mathfrak{p}^k)$ which are inequivalent modulo $\Gamma^D_0(m\mathfrak{p}^{k+1})$.\\[11pt]
\textit{2. Case: k is odd and $k>1$}\\
We first consider the index
$$\left[(\SL(L_D) \cap \Gamma^D_0(m\mathfrak{p}^{k-1}	)) : (\SL(L_D) \cap \Gamma^D_0(m\mathfrak{p}^{k+1})) \right].$$
We claim that 
$$W^l Z^{Mdj}, \quad j=1,...,p, \quad l=1,...,p$$
is a list of $p^2$ matrices in $\Gamma^D_0(m\mathfrak{p}^{k-1})$ which are moreover inequivalent modulo $\Gamma^D_0(m\mathfrak{p}^{k+1})$, if we choose $d:= p^{(k-1)/2}$ and $W:=ZT^{Md}Z^{-1}$. All the matrices lie in $\Gamma^D_0(m\mathfrak{p}^{k-1})$. So assume $W^h Z^{Md(j-i)} W^{-l} \in \Gamma^D_0(\mathfrak{p}^{k+1})$. Set $x:=j-i$. Then
\begin{eqnarray*} (W^h Z^{Mdx} W^{-l})_{2,1} & = & \underbrace{M^3d^3\eta^{-3}\eta^{+2} \cdot hlx+ M^2d^2\eta^{-2}\eta^+ \cdot (l+h)x}_{v_1}\\ & + & \underbrace{Md\eta^-(x+\eta^-\eta^+\cdot(l-h))}_{v_2}.\end{eqnarray*}
Note that always $m\mathfrak{p}^{k+1}|v_1$. So we are just interested in knowing in which cases $m\mathfrak{p}^{k+1} | Md\eta^-(x+\eta^-\eta^+(l-h))$. This expression is equivalent to $p | \eta^-(x+(l-h)N\eta^-\eta^+)$. As $\mathfrak{p} \nmid \eta^-$ this means $p| (x+N\eta^-\eta^+\cdot (l-h))$. We have $\eta^-\eta^+ = w + f$ with $f \in \ZZ$. Thus we ask when $p|N(w+f)\cdot(l-h)+x$. We now just look at the \textit{imaginary} part of the right side. This gives us that $l=h$ and hence $p|x$. This yields $x=0$ or in other words $i=j$. Thus we have
\begin{eqnarray*}
p^2 & = & \left[(\SL(L_D) \cap \Gamma^D_0(m\mathfrak{p}^{k-1})) : (\SL(L_D) \cap \Gamma^D_0(m\mathfrak{p}^{k+1})) \right]\\
& = & \underbrace{\left[(\SL(L_D) \cap \Gamma^D_0(m\mathfrak{p}^{k-1})) :(\SL(L_D) \cap \Gamma^D_0(m\mathfrak{p}^{k})) \right]}_{=p} \cdot\\
&  & \underbrace{\left[(\SL(L_D) \cap \Gamma^D_0(m\mathfrak{p}^{k})) : (\SL(L_D) \cap \Gamma^D_0(m\mathfrak{p}^{k+1})) \right]}_{=:y}
\end{eqnarray*}
which implies $y=p$.\\[11pt] 
\textit{3. Case: $k=1$}\\
Since $\mathfrak{p}$ is the divisor of a ramified prime number we know that $p|D$. Since $(\mathfrak{p},\eta^*)=1$ we may consider the matrices from the proof of Proposition~\ref{prop_inert_prime_eq1}. All these matrices then lie in $\Gamma^D_0(m)$ but are inequivalent modulo $\Gamma^D_0(mp)$ and therefore
$$\left[(\SL(L_D) \cap \Gamma^D_0(m)) : (\SL(L_D) \cap \Gamma^D_0(mp)) \right] \geq p^2+1.$$
On the other hand 
\begin{align*} 
\left[(\SL(L_D) \cap \Gamma^D_0(m)) : (\SL(L_D) \cap \Gamma^D_0(mp)) \right] & = &\\
\underbrace{\left[(\SL(L_D) \cap \Gamma^D_0(m\mathfrak{p}))) : (\SL(L_D) \cap \Gamma^D_0(mp)) \right]}_{=:y} & \cdot (p+1) & \end{align*}
by Proposition~\ref{prop_ramifiedI}. This means that $y(p+1) \geq p^2+1$ and hence $y=p$ since $y \leq p$.
\end{proof}

The other proposition requires only a much shorter proof.

\begin{prop} \label{prop_ramifiedI} Let $p \in \ZZ$ be a ramified prime number, i.e. $p|D$ and $p=\mathfrak{p}^2$, with $(\mathfrak{p},\eta^*)=1$. Then for all $m \in \OD$ with $(m,\mathfrak{p})=1$ 
$$\left[ (\SL(L_D) \cap \Gamma^D_0(m)) :  (\SL(L_D) \cap \Gamma^D_0(m\mathfrak{p}) \right] = p+1$$	holds.
\end{prop}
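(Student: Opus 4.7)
The plan is to mimic the split-prime argument of Lemma~\ref{lem_splitting_primes_last} essentially verbatim, since the only arithmetic inputs used there are $\N(\mathfrak{p}) = p$, $\mathfrak{p} \cap \ZZ = p\ZZ$, $\OD/\mathfrak{p} \cong \mathbb{F}_p$, together with the coprimality hypotheses $(\mathfrak{p}, \eta^*) = 1$ and $(m, \mathfrak{p}) = 1$ — and all of these hold equally well when $\mathfrak{p}^2 = (p)$ is ramified. Since $\SL(L_D) \subset \SL_2(\OD)$, Proposition~\ref{prop_index_congruence} gives the upper bound $\left[\SL(L_D) \cap \Gamma^D_0(m) : \SL(L_D) \cap \Gamma^D_0(m\mathfrak{p})\right] \leq \N(\mathfrak{p}) + 1 = p + 1$, so it suffices to exhibit $p+1$ matrices in $\SL(L_D) \cap \Gamma^D_0(m)$ that are pairwise inequivalent modulo $\Gamma^D_0(m\mathfrak{p})$.

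The candidates are
$$ (\mathrm{I})\quad Z^{Mi},\ i = 1,\ldots,p, \qquad (\mathrm{II})\quad Z^{Mk}\,T, $$
for a suitably chosen $k \in \{1, \ldots, p-1\}$, where as before $M = \N(m)$. All of these lie in $\SL(L_D) \cap \Gamma^D_0(m)$ since $m \mid M$. The key choice is $k$: one picks it so that $\mathfrak{p} \mid kM\eta^-\eta^+ + 1$. Such a $k$ exists and is unique modulo $\mathfrak{p}$ because $M\eta^-\eta^+$ is a unit in $\OD/\mathfrak{p} \cong \mathbb{F}_p$ by the hypotheses $(\mathfrak{p},\eta^*) = 1$ and $(\mathfrak{p},m) = 1$; moreover $k \neq p$, because $\mathfrak{p} \mid 1$ is absurd.

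For pairwise inequivalence one checks lower-left entries only, since $\Gamma^D_0(\cdot)$ is defined by a congruence on that entry. Within (I), if $Z^{M(i-j)} \in \Gamma^D_0(m\mathfrak{p})$ then $\mathfrak{p} \mid M\eta^-(i-j)$, hence $\mathfrak{p} \mid (i-j)$, and since $\mathfrak{p} \cap \ZZ = p\ZZ$ with $|i-j| < p$, this forces $i = j$. Between (II) and (I), the lower-left entry of $Z^{Mk}T\,Z^{-Mi}$ is
$$ M\eta^-\bigl(-(kM\eta^-\eta^+ + 1)\,i + k\bigr), $$
and if $\mathfrak{p}$ divides this, then $\mathfrak{p} \mid -(kM\eta^-\eta^+ + 1)i + k$; applying the defining property of $k$ reduces this to $\mathfrak{p} \mid k$, which contradicts $1 \leq k \leq p-1$.

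The main obstacle — or rather the only place one has to be careful — is verifying that $M\eta^-\eta^+$ is indeed a unit modulo the ramified ideal $\mathfrak{p}$; this is where the blanket hypothesis $(\mathfrak{p},\eta^*) = 1$ is essential, and it is exactly what rules out the problematic cases (for instance, an odd $p \mid D$ when $D \equiv 0 \bmod 4$, where one would otherwise have $\mathfrak{p} \mid w = \eta^-$). Given this hypothesis the computation is mechanical, and no new generator of $\SL(L_D)$ beyond $T$ and $Z$ is required, in contrast to Proposition~\ref{prop_inert_prime_eq1} where a separate argument using $S$ or the matrix from Lemma~\ref{lem_second_parabolic2} was needed to handle the degenerate subcase $p \mid \frac{D+1}{2}$ (respectively $p \mid \frac{D-3}{2}$).
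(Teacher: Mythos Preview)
Your proof is correct. It recycles verbatim the argument of Lemma~\ref{lem_splitting_primes_last}, which, as you note, only uses that $\OD/\mathfrak{p}\cong\mathbb{F}_p$ and $\mathfrak{p}\nmid M\eta^-\eta^+$; both hold equally well for a ramified $\mathfrak{p}$ under the standing hypotheses.

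The paper takes a slightly different route. Instead of $Z^{Mi}$ and $Z^{Mk}T$, it uses the coset representatives $Z^{M}T^{i}$ for $i=1,\ldots,p$ (pairwise inequivalent by the same computation as in Proposition~\ref{prop_inert_prime_eq1}) and then seeks the $(p{+}1)$-st representative among the matrices $W_k:=Z^{M}T^{k}Z^{M}$. Rather than choosing $k$ explicitly, the paper argues by contradiction: if for every $k$ some $i_k$ made $W_kT^{-i_k}Z^{-M}\in\Gamma^D_0(m\mathfrak{p})$, then the expression $M\eta^-\eta^+ k+2$ would be a unit in $\OD/\mathfrak{p}$ for every $k\in\{1,\ldots,p\}$, which is impossible since $M\eta^-\eta^+$ is a unit so this linear function in $k$ must hit zero. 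Your approach is more direct---an explicit constructive choice of $k$ rather than a pigeonhole argument---and has the conceptual advantage of making transparent that the split and ramified cases with $\N(\mathfrak{p})=p$ admit a single uniform proof; the paper's choice of a different word in $T,Z$ buys nothing here beyond variety.
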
 

\begin{proof} The matrices $$Z^MT^{i}, \quad i=1,...,p$$ all lie in $\Gamma^D_0(m)$. The same calculation as in Proposition~\ref{prop_inert_prime_eq1} yields that these matrices are pairwise incongruent modulo $\Gamma^D_0(m\mathfrak{p})$. So we need to find one more matrix $W$ which is not equivalent to all the $Z^MT^{i}$. Note that $W_k:=Z^MT^{k}Z^M \in \Gamma^D_0(m)$ for all $k \in \ZZ$. We claim that there exists an $k \in \left\{1,...,p \right\}$ with 
$$W_kT^{-Ni}Z^{-M} \notin \Gamma^D_0(m\mathfrak{p})$$ 
for all $i=1,...,p$. One calculates
$$(W_kT^{-i}Z^{-M})_{2,1} = M\eta^-(M^2\eta^{-2}\eta^{+2}\cdot ik+2M\eta^-\eta^+\cdot i+1).$$
Now suppose that for all $k$ there exits an $i_k$ such that $$\mathfrak{p}|M^2\eta^{-2}\eta^{+2}\cdot ki_k+2M\eta^-\eta^+\cdot i_k+1$$ or equivalently $$\mathfrak{p}|i_kM\eta^-\eta^+(M\eta^-\eta^+\cdot k+2)+1.$$
This would imply that $M\eta^-\eta^+\cdot k+2$ is a unit \textrm{mod} $\mathfrak{p}$ for all $k$. This contradicts $\mathfrak{p}\nmid M\eta^-\eta^+$.
\end{proof}



This also finishes the proof of Theorem~\ref{thm_summarize_euler_calculations}~(i).

\subsubsection{Prime Ideals with $(\mathfrak{p},\eta^*) \neq 1$} \label{subsec_nonrelative}

We have only treated the case $(\mathfrak{p},\eta^*)=1$ so far. If  this condition is violated the situation cannot be controlled by only using the matrices $T$ and $Z$. Therefore, we have to go through three different cases, namely odd and even spin Teichmüller curves if $D \equiv 1 \mod 4$ and $D \equiv 0 \mod 4$  
\paragraph{D $\equiv$ 1 mod 4, odd spin.} We look at first at the odd spin Teichmüller curves.\footnote{Recall our abuse of notation of the term spin.} L7et us revisit the example from the beginning of the chapter: for discriminant $D=17$ and $\pi_2=w+2$
$$ \left[ \SL(L_{17}^1):(\SL(L_{17}^1) \cap \Gamma^D_{0}(\pi_2) ) \right] = \frac{2}{3} \left[ \SL_2(\OD):\Gamma^D_0(\pi_2) \right] $$ 
holds. Thus for arbitrary powers of $\pi_2$ we have the inequality
$$\frac{\left[ \SL(L_{17}^1):( \SL(L_{17}^1) \cap \Gamma^D_{0}(\pi_2^n)) \right|}{\left[ 	\SL_2(\mathcal{O}_{17}):\Gamma^D_0(\pi_2^n) \right]} \leq \frac{2}{3}$$
since $\Gamma^D_0(\pi_2^n) = \Gamma^D_0(\pi_2) \cap \Gamma^D_0(\pi_2^n)$.

\begin{prop} For discriminant $D=17$ we have for all $n \in \mathbb{N}$:
\begin{eqnarray} \label{leq23} \left[ \SL(L_{17}^1):(\SL(L_{17}^1) \cap \Gamma^D_{0}(\pi_2^n) ) \right] = \frac{2}{3} \left[ \SL_2(\OD):\Gamma^D_0(\pi_2^n) \right] .\end{eqnarray} 
\end{prop}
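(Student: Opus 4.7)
My plan would be to proceed by induction on $n$, reducing the proposition to a statement about step-indices. The upper bound ratio $\leq 2/3$ is already established in the paragraph preceding the proposition (it follows from the monotonicity of the ratio in $n$). For the reverse inequality, the idea is to use multiplicativity along the chain $\SL_2(\OD) \supset \Gamma^D_0(\pi_2) \supset \Gamma^D_0(\pi_2^2) \supset \cdots$ Since $[\Gamma^D_0(\pi_2^n) : \Gamma^D_0(\pi_2^{n+1})] = \N(\pi_2) = 2$ for every $n \geq 1$ (a standard computation, compare Proposition~\ref{prop_index_congruence}), the proposition reduces to showing
$$[(\SL(L_{17}^1) \cap \Gamma^D_0(\pi_2^n)) : (\SL(L_{17}^1) \cap \Gamma^D_0(\pi_2^{n+1}))] \;=\; 2 \qquad \text{for all } n \geq 1.$$
Here the upper bound $\leq 2$ is automatic from the inclusion, so the real task is to construct, for each $n \geq 1$, an explicit matrix $M_n \in \SL(L_{17}^1) \cap \Gamma^D_0(\pi_2^n)$ whose lower-left entry has $\pi_2$-adic valuation \emph{exactly} $n$; such an $M_n$ is then inequivalent to $\Id$ modulo $\Gamma^D_0(\pi_2^{n+1})$.

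The key arithmetic input for the generic step is that in $\mathcal{O}_{17}$ the ideal $(w)$ factors as $\pi_2^2$ exactly (no higher power). This follows from $|N(w)|=4$ together with the observation that $w \notin (2)=\pi_2\pi_2^\sigma$, since $w/2 \notin \OD$. Combined with $v_{\pi_2}(2)=1$, we obtain $v_{\pi_2}(kw)=v_{\pi_2}(k)+2$ for any $k\in\ZZ$. Consequently, for every $n \geq 2$ the matrix
$$M_n \;:=\; Z^{2^{n-2}} \;=\; \begin{pmatrix} 1 & 0 \\ 2^{n-2}w & 1 \end{pmatrix} \in \SL(L_{17}^1)$$
has lower-left entry with $\pi_2$-valuation equal to $(n-2)+2=n$, which is what we need.

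The case $n=1$, where the formula $Z^{2^{n-2}}$ breaks down, has to be handled separately using the explicit knowledge of the Veech group $\SL(L_{17}^1)$: take $M_1$ to be the parabolic generator $B=\left(\begin{smallmatrix} -3-2w & 4+3w \\ -2-2w & 5+2w \end{smallmatrix}\right)$ from the proof of Theorem~\ref{thm_maximality} for $D=17$. Its lower-left entry is $-2(1+w)$, and since $|N(1+w)|=2$ and $1+w\equiv 1 \pmod{\pi_2}$ (as $w\equiv 0$ modulo $\pi_2$), we get $v_{\pi_2}(B_{2,1})=v_{\pi_2}(2)+v_{\pi_2}(1+w)=1$, precisely as required. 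The base case $n=1$ of the proposition itself is the numerical identity displayed just before its statement. Feeding the two constructions into the induction produces the desired lower bound, which combined with the upper bound $\leq 2/3$ gives equality for every $n$.

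The main obstacle, as the above makes visible, is the single step $n=1 \to n=2$: unlike the uniform case $n\geq 2$, one cannot use a power of $Z$ because $v_{\pi_2}(w)$ already jumps past $1$ due to $(\pi_2,\eta^\ast)\neq 1$. This is also the obstruction that prevented the index calculation of Section~\ref{subsec_splitting}--\ref{subsec_nonrelative} from being carried out uniformly in this case. The argument therefore genuinely depends on exhibiting a Veech-group element whose lower-left entry has $\pi_2$-valuation exactly one, which is why the specific knowledge of the generators of $\SL(L_{17}^1)$ is indispensable at this point; once this one step is in place, the rest of the induction is completely mechanical.
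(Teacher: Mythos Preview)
Your proof is correct, but it takes a genuinely different route from the paper's. The paper does not argue by induction on the step-indices; instead it writes down in one stroke a full list of $2^n$ coset representatives of $\SL(L_{17}^1)/(\SL(L_{17}^1)\cap\Gamma^D_0(\pi_2^n))$, namely $L,\dots,L^{2^{n-1}}$ and $LS,\dots,L^{2^{n-1}}S$, where $L$ is the extra parabolic element from Lemma~\ref{lem_second_parabolic} (whose lower-left entry $-2(w+1)$ has $\pi_2$-valuation exactly~$1$). So the paper leans on $L$ throughout, while you use powers of the standard parabolic $Z$ for every step $n\ge 2$ and invoke a single extra generator ($B$, whose lower-left entry $-2(1+w)$ has the same $\pi_2$-valuation~$1$ as that of $L$) only for the one anomalous step $n=1\to 2$. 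Your approach isolates very cleanly \emph{where} the obstruction $(\pi_2,w)\neq 1$ actually bites---namely at a single transition---whereas the paper's direct list has the advantage of producing explicit coset representatives at every level simultaneously. Either way, one needs an element of $\SL(L_{17}^1)$ beyond $T,Z,S$, and $B$ and $L$ play interchangeable roles here.
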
 
A direct calculation shows that the matrices $Z,T,S$ do not suffice to find all coset representatives of $\SL(L_{17}^1)/(\SL(L_{17}^1) \cap \Gamma^D_{0}(\pi_2^n) )$. In order to prove the proposition we therefore need to make use of the second parabolic generator of $\SL(L_{17}^1)$ from Lemma~\ref{lem_second_parabolic}, that we from now on denote by $L$.

\begin{proof} The index $\left[ \SL_2(\OD):\Gamma^D_0(\pi_2^n) \right]$ is $2^{n-1} \cdot 3$. Moreover we already know that ($\ref{leq23}$) holds with $\leq$ instead of $=$. Thus we only have to give a list with $2^{n}$ elements in $\SL(L_D^1)$ which are incongruent modulo $\Gamma^D_0(\pi_2^n)$. It can be easily checked that  
$$L,...,L^{2^{n-1}}$$
$$LS,...L^{2^{n-1}}S$$
is such a list.
\end{proof}

There is a general pattern which explains the factor $\frac{2}{3}$. Recall that if $\mathfrak{p}$ is a prime ideal in $\OD$ with $(\mathfrak{p},w)=\mathfrak{p}$, then $\mathfrak{p}$ divides a split prime number $p \in \ZZ$. 

\begin{prop} Let $\mathfrak{p}$ be a prime ideal in $\OD$ with $\mathfrak{p}|w$ and $m \in \OD$ with $(m,\mathfrak{p})=1$. Then for all $k \in \NN$
$$\left[ (\SL(L_D^1) \cap \Gamma^D_0(m\mathfrak{p}^k)) :  (\SL(L_D^1) \cap \Gamma^D_0(m\mathfrak{p}^{k+1}) \right] = \N(\mathfrak{p})$$
holds. 
\end{prop}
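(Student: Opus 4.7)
The strategy is to mimic Proposition~\ref{prop_splittingI}, replacing the parabolic matrix $Z$ (whose lower-left entry $\eta^- = w$ now lies in $\mathfrak{p}$ and is therefore useless for distinguishing cosets) by the second parabolic generator $L \in \SL(L_D^1)$ from Lemma~\ref{lem_second_parabolic}. Write $L = \left(\begin{smallmatrix} 1+v & z \\ r & 1-v \end{smallmatrix}\right)$ so that $r = -c(w+1)$ with $c \in \{1,2,4\}$. The point is that $\mathfrak{p} \nmid r$ in the generic case: since $w(w-1) = \tfrac{D-1}{4} \in \ZZ$, $\mathfrak{p} \mid w$ forces $\mathfrak{p} \nmid w \pm 1$, and by Lemma~\ref{lem_properties_OD}~(iii), $\mathfrak{p}$ lies over a split rational prime $p$, so its norm is the odd prime $p$ in the generic case and in particular $\mathfrak{p} \nmid c$.

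\textbf{Main step.} Since $L - I$ is nilpotent (from $v^2 + rz = 0$) we have $L^a = I + a(L - I)$ for every $a \in \ZZ$, so the lower-left entry of $L^a$ is exactly $a r$. Choose $a \in \ZZ_{>0}$ with $v_\mathfrak{p}(a) = k$ and $v_\mathfrak{q}(a r) \geq v_\mathfrak{q}(m)$ for every prime ideal $\mathfrak{q} \mid m$. Then the family
\[
L^{a j}, \qquad j = 1, 2, \ldots, p,
\]
lies in $\SL(L_D^1) \cap \Gamma^D_0(m\mathfrak{p}^k)$. For the pairwise inequivalence modulo $\Gamma^D_0(m\mathfrak{p}^{k+1})$, the element $L^{a(j_1 - j_2)}$ has lower-left entry $a(j_1 - j_2) r$; since $|j_1 - j_2| < p = \N(\mathfrak{p})$ the integer $j_1 - j_2$ is coprime to $\mathfrak{p}$, so $v_\mathfrak{p}(a(j_1 - j_2) r) = k$ and the element is not in $\Gamma^D_0(m\mathfrak{p}^{k+1})$. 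Combined with the automatic upper bound $[\Gamma^D_0(m\mathfrak{p}^k) : \Gamma^D_0(m\mathfrak{p}^{k+1})] = \N(\mathfrak{p})$ this gives the desired equality.

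\textbf{Main obstacle.} The genuinely delicate point is the existence of $a$ with the prescribed valuations at $\mathfrak{p}$ and $\mathfrak{p}^\sigma$ simultaneously. For $a \in \ZZ$ one always has $v_\mathfrak{p}(a) = v_{\mathfrak{p}^\sigma}(a) = v_p(a)$, so if $v_{\mathfrak{p}^\sigma}(m) > k + v_{\mathfrak{p}^\sigma}(r)$, no pure integer multiple of $p^k$ meets all the divisibility requirements and pure powers of $L$ no longer suffice. In this situation, following the pattern of the auxiliary matrix $W$ in the proof of Lemma~\ref{lem_splitting_primes_first}, one replaces $L^a$ by a conjugate $U L^a U^{-1}$ with $U$ an appropriate word in $T$ and $Z$: such a conjugation replaces the lower-left entry by a polynomial expression in the entries of $U$, $r$, $v$, $z$ whose $\mathfrak{p}^\sigma$-adic valuation can be tuned independently of its $\mathfrak{p}$-adic valuation, allowing one to decouple the two valuations. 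An additional complication arises in the exceptional case $D \equiv 1 \mod 16$ with $\mathfrak{p} = \mathfrak{p}_2$, where $\mathfrak{p} \mid c$ and thus $\mathfrak{p} \mid r$, so $L$ itself already lies in $\Gamma^D_0(\mathfrak{p})$; here one works with the product of $L$ and a parabolic element coming from a further cylinder decomposition (as in Lemma~\ref{lem_third_parabolic2}) whose lower-left entry has $\mathfrak{p}$-adic valuation zero, and reruns the main step with this replacement playing the role of $L$.
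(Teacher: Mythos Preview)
Your main step is exactly the paper's idea: replace $Z$ by the parabolic $L$ from Lemma~\ref{lem_second_parabolic} and exhibit $\N(\mathfrak{p})$ powers of $L$ as coset representatives. Where your argument and the paper's diverge is in how the ``main obstacle'' is handled, and here your proposal has a genuine gap.

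You correctly isolate the problem: with $a\in\ZZ$ one has $v_{\mathfrak{p}}(a)=v_{\mathfrak{p}^\sigma}(a)$, so if $v_{\mathfrak{p}^\sigma}(m)>k+v_{\mathfrak{p}^\sigma}(r)$ no integer power of $L$ lands in $\Gamma^D_0(m\mathfrak{p}^k)$ with the right $\mathfrak{p}$-valuation. But your proposed remedy---replace $L^a$ by $UL^aU^{-1}$ for some unspecified word $U$ in $T,Z$---is only a sketch; you neither exhibit $U$ nor verify that the lower-left entry has the claimed independence of $\mathfrak{p}$- and $\mathfrak{p}^\sigma$-valuations. The second hand-wave, for $D\equiv 1\bmod 16$ and $\mathfrak{p}=\mathfrak{p}_2$, is worse: you invoke ``a further cylinder decomposition (as in Lemma~\ref{lem_third_parabolic2})'', but that lemma concerns $\SL(L_D^0)$, the even-spin group, and no analogous parabolic is produced for $\SL(L_D^1)$ in the paper.

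The paper dissolves both obstacles in one line: by the results of the preceding subsections (which handle all primes with $(\mathfrak{p},\eta^*)=1$), one may reduce to the case $(m,w)=m$, i.e.\ every prime divisor of $m$ already divides $w$. Since $\mathfrak{p}\mid w$ and no rational integer $>1$ divides $w$ (Lemma~\ref{lem_properties_OD}), one cannot also have $\mathfrak{p}^\sigma\mid w$; hence $\mathfrak{p}^\sigma\nmid m$ automatically, and $\mathfrak{p}\nmid M=\N(m)$. The choice $a=M\,\N(\mathfrak{p})^{j}$ with $j=k$ (or $j=k-1$ when $D\equiv 1\bmod 16$ and $\mathfrak{p}\mid 2$, to compensate for $v_{\mathfrak{p}}(r)=1$) then works directly---no conjugation, no extra parabolic.
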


\begin{proof}  By what we have shown in the last sections we may without loss of generality assume that $(m,w)=m$.  We have that $\mathfrak{p} \nmid M$. Recall that $2$ is an inert prime number if $D \equiv 5 \mod 8$. Let $j=(k-1)$, if $\mathfrak{p} | 2$ and $D \equiv 1 \mod 16$, and $j=k$ otherwise. Then the matrices $L^{M\N(\mathfrak{p})^ji}$ for $i=1,...,\N(\mathfrak{p})$ all lie in $\Gamma^D_0(m\mathfrak{p}^k)$ but are incongruent modulo $\Gamma^D_0(m\mathfrak{p}^{k+1})$. 
\end{proof}

Let us come to the case $k=0$. 
\begin{prop} \label{prop_k=0_1} Let $\mathfrak{p}$ be a prime ideal in $\OD$ with $\mathfrak{p}|w$ and $\mathfrak{p} \neq \mathfrak{p}_2$ if $D \equiv 1 \mod 16$ and $m \in \OD$ with $(m,\mathfrak{p})=1$, then
$$\left[ (\SL(L_D^1) \cap \Gamma^D_0(m)) :  (\SL(L_D^1) \cap \Gamma^D_0(m\mathfrak{p}) \right] = \N(\mathfrak{p})+1.$$
\end{prop}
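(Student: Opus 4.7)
The inclusion
\[
\bigl(\SL(L_D^1)\cap\Gamma^D_0(m)\bigr)\big/\bigl(\SL(L_D^1)\cap\Gamma^D_0(m\mathfrak{p})\bigr)\hookrightarrow \Gamma^D_0(m)/\Gamma^D_0(m\mathfrak{p})\cong\mathbb{P}^1(\OD/\mathfrak{p})
\]
gives the upper bound $\N(\mathfrak{p})+1$ by Proposition~\ref{prop_index_congruence}. The proof would proceed by exhibiting $\N(\mathfrak{p})+1$ elements of $\SL(L_D^1)\cap\Gamma^D_0(m)$ whose images in $\mathbb{P}^1(\OD/\mathfrak{p})$ are all distinct. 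What makes the proposition nontrivial, and what distinguishes it from Proposition~\ref{prop_splitting_primes}, is that $\mathfrak{p}\mid w$ forces both $T$ and $Z$ to reduce to the identity modulo $\mathfrak{p}$, so they are useless here. The remedy is the second parabolic generator $L$ of Lemma~\ref{lem_second_parabolic}, whose lower-left entry $\ell$ has the shape $-c(w+1)$ with $c\in\{1,2,4\}$.

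First I would verify that $\mathfrak{p}\nmid\ell$ under the hypotheses of the proposition. Since $1=(w+1)-w$, the assumption $\mathfrak{p}\mid w$ rules out $\mathfrak{p}\mid w+1$; the factor $c$ causes no trouble because $\mathfrak{p}$ is automatically a divisor of a split prime by Lemma~\ref{lem_properties_OD}~(iii), which for $D\equiv 5\pmod 8$ is odd (as $2$ is inert), whereas for $D\equiv 9\pmod{16}$ we have $c=1$ and for $D\equiv 1\pmod{16}$ the excluded case $\mathfrak{p}=\mathfrak{p}_2$ is precisely the one that would force $\mathfrak{p}\mid 2$. Next, exploit $L^k=I+k(L-I)$: the bottom row of $L^k$ equals $(k\ell,\,1+ka')$, and since $a'$ is a multiple of $w$ we have $a'\equiv 0\pmod{\mathfrak{p}}$, so modulo $\mathfrak{p}$ the bottom row is $(k\ell:1)$. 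As $k$ runs over $\N(\mathfrak{p})$ residue classes in $\OD/\mathfrak{p}$, this sweeps out every affine point of $\mathbb{P}^1(\OD/\mathfrak{p})$.

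To carry the matrices $L^k$ into $\Gamma^D_0(m)$ I would write $(m)=\mathfrak{m}'(\mathfrak{p}^\sigma)^f$ with $\mathfrak{p}^\sigma\nmid\mathfrak{m}'$, set $M':=\N(\mathfrak{m}')$, and replace $L^k$ by $L^{M'v\cdot k}$ where $v$ is the smallest power of $\N(\mathfrak{p})$ for which $\mathfrak{m}'(\mathfrak{p}^\sigma)^f\mid M'v\ell$. A direct computation of the lower-left entry of $L^{M'v(i-j)}$ in the style of Lemma~\ref{lem_splitting_primes_last} then shows that these $\N(\mathfrak{p})$ matrices stay pairwise inequivalent modulo $\Gamma^D_0(m\mathfrak{p})$. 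It only remains to produce a representative of the point at infinity $(1:0)\in\mathbb{P}^1(\OD/\mathfrak{p})$, i.e.\ a matrix in $\SL(L_D^1)\cap\Gamma^D_0(m)$ whose lower-right entry is divisible by $\mathfrak{p}$. For $m=1$ this is trivial (take $S$, or better $L^k S$ for an appropriate $k$). For general $m$, one constructs such an element as a word in $L$ and in a second parabolic conjugate (for instance $TLT^{-1}$ or $L^{M'v}Z L^{-M'v}$), so that its lower-left entry becomes divisible by $m$ while $d\equiv 0\pmod{\mathfrak{p}}$; inequivalence with the $L^{M'v k}$ then follows from yet another entry computation analogous to the one in Lemma~\ref{lem_splitting_primes_last}.

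\textbf{The main obstacle} is producing this final (point-at-infinity) representative when $\mathfrak{p}^\sigma\mid m$ with high multiplicity: one has to simultaneously arrange the lower-right entry to vanish modulo $\mathfrak{p}$, the lower-left entry to be divisible by $m$, and the result to remain inside $\SL(L_D^1)$. The two-subcase split already encountered in Proposition~\ref{prop_splittingI} is the natural analogue, and I expect the argument to mirror it step by step, with the hypothesis $\mathfrak{p}\neq\mathfrak{p}_2$ for $D\equiv 1\pmod{16}$ playing exactly the role that prevents the collapse of the index to a strict divisor of $\N(\mathfrak{p})+1$, as illustrated by the $D=17$ computation at the beginning of Section~\ref{subsec_nonrelative}.
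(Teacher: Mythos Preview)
Your plan is sound in its first half --- using $L$ in place of $T,Z$ because $\mathfrak{p}\mid w$ kills the latter, and covering the affine part of $\mathbb{P}^1(\OD/\mathfrak{p})$ with powers of $L$ --- and this matches the paper. But the paper handles the ``point at infinity'' by a completely different mechanism, which dissolves the obstacle you flag.

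The paper first reduces to $m$ being a \emph{squarefree product of primes dividing $w$}: factors coprime to $\eta^*=w^2$ are removed using the already-proven Propositions~\ref{prop_splittingI}--\ref{prop_ramifiedI}, and higher powers of primes dividing $w$ are removed using the proposition immediately preceding this one. After this reduction $\mathfrak{p}^\sigma\nmid m$ automatically (since $\mathfrak{p}^\sigma\nmid w$ for fundamental $D$), so your worry about high powers of $\mathfrak{p}^\sigma$ evaporates. The paper then \emph{inducts} on the number of prime factors of $m\mathfrak{p}$. The base case $m=1$ is exactly your easy case: $L^i$ and $S$ give $\N(\mathfrak{p})+1$ representatives. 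For the induction step the paper does \emph{not} construct the last representative at all. Instead, writing $P=\prod_j(\N(\mathfrak{p}_j)+1)$ over all $k+1$ prime factors, the induction hypothesis (applied with each prime playing the role of ``last added'') shows that $P/(\N(\mathfrak{p}_j)+1)$ divides the total index $I=[\SL(L_D^1):\SL(L_D^1)\cap\Gamma^D_0(m\mathfrak{p})]$ for every $j$; the matrices $L^{Mi}$ give $I\geq P\cdot\N(\mathfrak{p})/(\N(\mathfrak{p})+1)$; combining with $I\leq P$ forces $I=P$ and hence the desired step index equals $\N(\mathfrak{p})+1$.

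So your direct-construction route is not wrong, but it is harder than necessary; the induction-and-divisibility argument buys you the missing representative for free.
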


\begin{proof} By what we have proven so far, we may without loss of generality assume that $(m)=\mathfrak{p}_1 \cdots \mathfrak{p}_k$ where all the $\mathfrak{p}_i$ are distinct prime ideals which divide $(w)$. We prove the claim by induction on the number $k+1$ of prime ideals divisors of $m \mathfrak{p}$. First assume that $k+1=1$, i.e. $m=1$. Then the matrices $L^i$ and $S$ all lie in $\Gamma^D_0(m)$ but are incongruent modulo $\Gamma^D_0(m\mathfrak{p})$. We proceed with the induction step. Since $\mathfrak{p} \nmid M$ the matrices $L^{Mi}$ with $1 \leq i \leq \N(\mathfrak{p})$ are all in $\Gamma^D_0(m)$ but incongruent modulo $\Gamma^D_0(m\mathfrak{p})$. By the induction hypothesis we have that $\N(\mathfrak{p})+1$ as well as $\N(\mathfrak{p}_i)+1$ for all $i$ divide the index. The induction hypothesis also implies that the index is at least $\N(\mathfrak{p})(\N(\mathfrak{p}_1)+1)\cdots(\N(\mathfrak{p}_k)+1)$. Thus the claim follows.
\end{proof}




This also finishes the proof of Theorem~\ref{thm_summarize_euler_calculations}~(ii) and Theorem~\ref{thm_summarize_euler_calculations}~(iii). Finally, let us consider the case $D \equiv 1 \mod 16$ and $\mathfrak{p}=\mathfrak{p}_2$:

\begin{prop} Let $D \equiv 1 \mod 16$ and $m \in \OD$ with $(m,\mathfrak{p}_2)=1$, then
$$\left[ (\SL(L_D^1) \cap \Gamma^D_0(m)) :  (\SL(L_D^1) \cap \Gamma^D_0(m\mathfrak{p}_2) \right] \geq \N(\mathfrak{p}_2)=2.$$
\end{prop}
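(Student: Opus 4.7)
The plan is to establish the lower bound by induction on the number of prime ideal divisors of $m$: at each stage I will produce a single element of $\SL(L_D^1)\cap\Gamma^D_0(m)$ which does not lie in $\Gamma^D_0(\mathfrak{p}_2)$. Since $\Gamma^D_0(m\mathfrak{p}_2)\subseteq\Gamma^D_0(\mathfrak{p}_2)$, such an element together with the identity matrix furnishes two inequivalent cosets modulo $\SL(L_D^1)\cap\Gamma^D_0(m\mathfrak{p}_2)$, which yields the claimed bound. The base case $m=1$ is handled by $S\in\SL(L_D^1)$: its lower-left entry is $1$, which does not lie in $\mathfrak{p}_2$.

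For the inductive step I will factor $m=m'\mathfrak{q}$ with a single prime ideal $\mathfrak{q}\neq\mathfrak{p}_2$ removed, apply the hypothesis to $m'$ to obtain $A\in\SL(L_D^1)\cap\Gamma^D_0(m')\smallsetminus\Gamma^D_0(\mathfrak{p}_2)$, and write $A=BA'$ with $A'\in\SL(L_D^1)\cap\Gamma^D_0(m)$ and $B$ a coset representative for the quotient $(\SL(L_D^1)\cap\Gamma^D_0(m'))/(\SL(L_D^1)\cap\Gamma^D_0(m))$. The key observation is that for $D\equiv 1\mod 16$ each of the parabolic generators $T$, $Z$ and $L$ lies inside $\Gamma^D_0(\mathfrak{p}_2)$: the matrices $T$ and $Z$ because $w\in\mathfrak{p}_2$ by the defining property of $\mathfrak{p}_2$, and $L$ because its lower-left entry $-2(w+1)$ is divisible by $\mathfrak{p}_2$ as $\mathfrak{p}_2\mid 2$. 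Since the explicit coset representatives produced in Propositions~\ref{prop_splittingI}, \ref{prop_splitting_primes}, \ref{prop_primepower_inert_eq}, \ref{prop_inert_prime_eq1}, \ref{prop_ramifiedII}, \ref{prop_ramifiedI}, and~\ref{prop_k=0_1} are words in $T$, $Z$ and $L$ alone---the matrix $S$ enters those proofs only through their base cases $m'=1$, which are subsumed by my own base case---I obtain $B\in\Gamma^D_0(\mathfrak{p}_2)$. Because $\Gamma^D_0(\mathfrak{p}_2)$ is a subgroup containing $B$ but not $A$, the element $A'=B^{-1}A$ also escapes $\Gamma^D_0(\mathfrak{p}_2)$, completing the induction.

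The main obstacle I anticipate is the case-by-case verification that a complete system of representatives for the quotient appearing in the inductive step can indeed be chosen inside $\Gamma^D_0(\mathfrak{p}_2)$. The most delicate instance is the inert-prime case $p\mid(D+1)/2$ treated via Lemma~\ref{lem_inert_prime_eq2}, where the index equality for general $m'$ was deduced by an indirect divisibility count rather than by direct verification of pairwise incongruence of explicit candidates. Nevertheless the candidate list $\{Z^{-M}T^{i}Z^{M}T^{j},\,Z^{M}T^{i},\,\Id\}$ from the proof of Proposition~\ref{prop_inert_prime_eq1} has the correct cardinality $\N(p)+1$, equal to the index, and its members are visibly words in $T$ and $Z$ hence in $\Gamma^D_0(\mathfrak{p}_2)$; one must check by hand that these candidates represent distinct cosets of $\SL(L_D^1)\cap\Gamma^D_0(m)$ even in this special case, so that the subgroup argument of the previous paragraph applies uniformly across every splitting type of $\mathfrak{q}$.
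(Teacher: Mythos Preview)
Your approach is a more explicit version of the paper's one-line reduction to $m=1$, and the central observation that $T,Z,L\in\Gamma^D_0(\mathfrak{p}_2)$ (because $\mathfrak{p}_2\mid w$ and $\mathfrak{p}_2\mid 2$) is exactly the right idea. The paper does not spell out this observation, so your write-up is a genuine clarification of what ``the same arguments as in Proposition~\ref{prop_k=0_1}'' is supposed to mean.

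There is, however, a real gap where you flag one. The reason the paper switches to the indirect Lemma~\ref{lem_inert_prime_eq2} when $p\mid(D+1)/2$ is precisely that the pairwise-incongruence computation for the list $\{Z^{-M}T^{i}Z^{M}T^{j},\,Z^{M}T^{i},\,\Id\}$ breaks down: the relevant factor $\eta^{-2}\eta^{+2}=w^4$ has $w$-coefficient $(D+1)/2$, which vanishes modulo $p$, so the ``imaginary part'' trick no longer forces the candidates apart. You should therefore not expect your hand-check to succeed. The same issue arises for the primes $\mathfrak{q}\mid w$ treated in Proposition~\ref{prop_k=0_1}, where only the $\N(\mathfrak{q})$ matrices $L^{M'i}$ are given explicitly for general $m'$ and the $(\N(\mathfrak{q})+1)$-st coset is obtained indirectly. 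A repair that works uniformly: the calculation of Proposition~\ref{prop_primepower_inert_eq} goes through verbatim for $k=0$ (there the factor is $\eta^{-}\eta^{+}=w^2$, whose $w$-coefficient is $1$), so the $p^2$ words $W^{l}Z^{M'j}$ with $W=ZT^{M'}Z^{-1}$ represent $p^2$ of the $p^2+1$ cosets and all lie in $\Gamma^D_0(\mathfrak{p}_2)$. Given your inductively produced $A\notin\Gamma^D_0(\mathfrak{p}_2)$, the $p^2$ translates $W^{l}Z^{M'j}A$ occupy $p^2$ distinct cosets, so at least one of them lies in a coset with a $\langle T,Z\rangle$-representative $B$; then $B^{-1}W^{l}Z^{M'j}A\in\SL(L_D^1)\cap\Gamma^D_0(m)$ still avoids $\Gamma^D_0(\mathfrak{p}_2)$. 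The analogous trick with $L^{M'i}$ handles the primes of Proposition~\ref{prop_k=0_1}.
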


\begin{proof} By the same arguments as in the proof of Proposition~\ref{prop_k=0_1} one may restrict to the case $m=1$ and $S$ and $T$ are matrices which are incongruent \textrm{mod} $\Gamma^D_0(\mathfrak{p}_2)$. So the index is at least $2$. \end{proof}

For $\SL(L_D^1)$, all cases that may occur have been treated now.

\paragraph{D $\equiv$ 1 mod 8, even spin.} What makes the situation for even spin a lot more complicated is that we cannot in general write down any elliptic element in $\SL(L_D^0)$ explicitly. It is easy to see by considering the matrices $L^i$ and $\widetilde{L}^i$ from Lemma~\ref{lem_second_parabolic2} and Lemma~\ref{lem_third_parabolic2} that for all prime ideals $\mathfrak{p}$ with $(\mathfrak{p},\eta^*)=\mathfrak{p}$, all $k \in \ZZ_{\geq 0}$  and all $m,n \in \OD$ with $(m,n)=1$ we have
$$\left[ (\SL(L_D^0) \cap \Gamma^D_0(m\mathfrak{p}^{k})):( \SL(L_D^0) \cap \Gamma^D_0(m\mathfrak{p}^{k+1})) \right] \geq \N(\mathfrak{p}).$$
The difficulty is to calculate the index $\left[ \SL(L_D^0):( \SL(L_D^0) \cap \Gamma^D_{0}(\mathfrak{p})) \right]$. If $D=17$ and $\pi_2^\sigma|\eta^*$, which is then the unique prime divisor of $\eta^*$, we know that $\left[ \SL(L_D^0):( \SL(L_D^0) \cap \Gamma^D_{0}(\pi_2^\sigma)) \right] = 2$ and hence the index cannot be maximal in general if $\mathfrak{p}|\eta^*$. On the other hand, for all proper prime ideals $\mathfrak{p} \subset \OD$ with $\mathfrak{p}|\eta^*$ and $\mathfrak{p} \neq \mathfrak{p}_2^\sigma$ we conjecture that the index is still maximal. However, the lack of knowledge about the Veech group, prevents us from proving an analogue of Theorem~\ref{thm_summarize_euler_calculations}~(ii): we explicitly calculated that for small discriminants and arbitrary ideals $\mathfrak{p}$ with $\mathfrak{p}|\eta^-$ the index of $\Gamma \cap \Gamma^D_0(\mathfrak{p})$ in the subgroup $\Gamma$ of $\SL(L_D^0)$ generated by $T,Z,L$ and $\widetilde{L}$ is only $\N(\mathfrak{p})$. 

\paragraph{D $\equiv$ 0 mod 4.} Also if $D \equiv 0 \mod 4$ the matrix $L$ from Lemma~\ref{lem_second_parabolic_0mod4} yields that for all prime ideals $\mathfrak{p}$ with $(\mathfrak{p},\eta^*)=\mathfrak{p}$, all $k \in \ZZ_{\geq 0}$  and all $m \in \OD$ with we have
$$\left[ (\SL(L_D^0) \cap \Gamma^D_0(m\mathfrak{p}^{k})):( \SL(L_D^0) \cap \Gamma^D_0(m\mathfrak{p}^{k+1})) \right] \geq \N(\mathfrak{p}).$$
However, the question how to precisely calculate the volume of a diagonal twisted Teichmüller curve in general remains unsolved by the same reasons as in the even spin case.

\subsection{The Volume of Upper Triangular Twists} \label{sec_upper_triangular_twists}

For the rest of the chapter we want to assume as a \textbf{general condition} that the (wide) class number $h_D$ is equal to $1$. In this case we now also calculate the volume of Teichmüller curves twisted by upper triangular matrices. Since the class number is assumed to be equal to $1$ twists by upper triangular matrices yield indeed all twisted Teichmüller curves because the number of cusps of $X_D$ is equal to the class number (compare Proposition~\ref{prop_matrix_decomposition}). This information is therefore very useful since it enables us to calculate the volume of any twisted Teichmüller curve if $h_D=1$. Surprisingly enough, we can reduce the calculation for upper triangular matrices to the case of diagonal twisted Teichmüller curves. So let us consider a Teichmüller curve twisted by the matrix
$$M=\begin{pmatrix} m & x \\ 0 & n \end{pmatrix}$$
with $m,n,x \in K$. By multiplying with the common denominator we may without loss of generality assume that $m,n,x \in \OD$ and $(m,n,x)=1$. Recall that the stabilizer $\SL_M(L_D)$ of the twisted Teichmüller curve \index{stabilizer!of a twisted Teichmüller curve} is then given by $\SL_M(L_D)=\Stab(\Phi)^{M^{-1}} \cap \SL_2(\OD)$ (Proposition \ref{prop_stabilzer_twisted}). We will again consider the conjugated group $\Stab(\Phi) \cap M^{-1}\SL_2(\OD)M$ which is known to be equal to $\SL^M(L_D,M)=\SL(L_D) \cap M^{-1}\SL_2(\OD)M$ in most cases (see Theorem~\ref{thm_cov_degree_10}). Analogously as for diagonal twisted Teichmüller curves, we have to distinguish the case where both $m$ and $n$ are relatively prime to $\eta^*$ and the case where at least one of them has a common prime divisor with $\eta^*$. While we can always give precise formulas in the first case, there again occur different phenomena in the second case.

\subsubsection{The Relatively Prime Case}

In this section the aim is to prove the following theorem:

\begin{thm} \label{thm_summarize_euler_calculations_triangular} Let $m,n,x \in \OD$ be arbitrary elements with $(m,n,x)=1$ and let $M$ be as above. If $h_D=1$ and $(m,\eta^*)=1$ and $(n,\eta^*)=1$ then the degree of the covering $\widetilde{\pi}: C^M(M) \to C$ equals the degree of the covering $\pi: X_D(M) \to X_D$. In other words
$$\left[ \SL(L_D) : (\SL(L_D) \cap M^{-1} \SL_2(\OD) M ) \right] = \left[ \SL_2(\OD) :  \Gamma^D_0(nm)   \right].$$
If the degree of the covering $\pi : C_M(M) \to C_M$ is equal to 1, then the volume of the Teichmüller twisted by $M= \left( \begin{smallmatrix} m & x \\ 0 & n \end{smallmatrix} \right)$ is
$$-9\pi \left[ \SL_2(\OD) : \Gamma^D_0(nm) \right] \chi(X_D).$$
\end{thm}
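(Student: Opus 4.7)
The plan is to reduce the upper triangular case to the diagonal case $N = \operatorname{diag}(mn, 1)$, which is already covered by Theorem~\ref{thm_summarize_euler_calculations} (via Lemma~\ref{lem_facilitate}). First I will show that the matrix $M = \left(\begin{smallmatrix} m & x \\ 0 & n \end{smallmatrix}\right)$ with $(m,n,x)=1$ and the diagonal matrix $N = \left(\begin{smallmatrix} mn & 0 \\ 0 & 1 \end{smallmatrix}\right)$ define the very same twisted Teichm\"uller curve. Following the template of Lemma~\ref{lem_facilitate}, this amounts to exhibiting a single element $X = \left(\begin{smallmatrix} a & b \\ c & d \end{smallmatrix}\right) \in \SL(L_D)$ such that $NXM^{-1} \in \SL_2(\OD)$. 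A direct computation of $NXM^{-1}$ shows that this condition is equivalent to $m \mid c$ together with the congruence $n \mid d - c'x$, where $c' := c/m$; the use of $(m,n,x)=1$ here is exactly what ensures that the two divisibility conditions produced by the entries in columns one and two collapse to $m \mid c$.

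Second, I construct such an $X$ using the surjectivity of a natural reduction map. Because $(m,\eta^*)=1$ and $(n,\eta^*)=1$, we have $(mn,\eta^*)=1$, so Theorem~\ref{thm_summarize_euler_calculations} together with Lemma~\ref{lem_facilitate} yields
$$[\SL(L_D) : \SL(L_D) \cap \Gamma^D_0(mn)] = [\SL_2(\OD) : \Gamma^D_0(mn)] = \#\mathbb{P}^1(\OD/mn\OD).$$
Combined with the classical injection $\SL(L_D)/(\SL(L_D) \cap \Gamma^D_0(mn)) \hookrightarrow \SL_2(\OD)/\Gamma^D_0(mn) \cong \mathbb{P}^1(\OD/mn\OD)$ (realized by the bottom row $(c:d)$), the equality of cardinalities forces this injection to be a bijection. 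Hence every projective class $(c:d) \in \mathbb{P}^1(\OD/mn\OD)$ is realized by some $X \in \SL(L_D)$; choosing the one prescribed by $c \equiv 0 \pmod m$ and $d \equiv c' x \pmod n$ completes the construction of the required $X$.

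Third, I translate the identity $C_M = C_N$ into the index equality claimed in the theorem. The element $X$ above satisfies $\gamma M = NX$ for some $\gamma \in \SL_2(\OD)$, so
$$M^{-1}\SL_2(\OD)M = X^{-1}N^{-1}\gamma^{-1}\SL_2(\OD)\gamma N X = X^{-1}N^{-1}\SL_2(\OD)NX.$$
Intersecting with $\SL(L_D)$ and using $X \in \SL(L_D)$ gives
$$\SL(L_D) \cap M^{-1}\SL_2(\OD)M = X^{-1}\bigl(\SL(L_D) \cap N^{-1}\SL_2(\OD)N\bigr)X,$$
so the two subgroups have the same index in $\SL(L_D)$. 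This index equals $[\SL_2(\OD) : \Gamma^D_0(mn)]$ by Theorem~\ref{thm_summarize_euler_calculations} applied directly to $N = \operatorname{diag}(mn,1)$. The volume formula is then immediate from Bainbridge's identity (Theorem~\ref{thm_bain_euler}) and the relation $\chi(C_M) = \deg(\pi) \cdot \chi(C_M(M))$ together with the assumed triviality of $\deg(\pi : C_M(M) \to C_M)$.

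The main obstacle is the surjectivity step: one needs the hypothesis $(m,\eta^*)=(n,\eta^*)=1$ to legitimately apply Theorem~\ref{thm_summarize_euler_calculations} to the level $mn$, and one needs to verify that the prescribed projective class with $c \equiv 0 \pmod m$ and $d \equiv c' x \pmod n$ is indeed well-defined as an element of $\mathbb{P}^1(\OD/mn\OD)$ even when $m$ and $n$ share prime ideal divisors. Once the existence of $X$ is secured, everything else is formal bookkeeping.
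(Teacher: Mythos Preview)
Your argument is correct and it is genuinely different from the paper's route. The paper does not pass directly from $M$ to $N=\operatorname{diag}(mn,1)$ in one stroke; instead it first proves, prime by prime and case by case (split, inert, ramified; separately for each congruence class of $D$ and each spin), that replacing $\left(\begin{smallmatrix}m\pi^i & x\\ 0 & \pi^{j-1}\end{smallmatrix}\right)$ by $\left(\begin{smallmatrix}m\pi^i & x\\ 0 & \pi^{j}\end{smallmatrix}\right)$ multiplies the index by exactly $\N(\pi)$ (Lemma~\ref{prop_triangular_twisted_2}, Proposition~\ref{prop_triangular_2}, and their even-spin and $D\equiv 0\bmod 4$ analogues). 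These lemmas are established by writing down explicit coset representatives as words in $T,Z,L,E$. Only then does Proposition~\ref{prop_triangular_twisted_1} use a sandwich inequality to shuffle primes from the lower-right entry to the upper-left, eventually reaching $\left(\begin{smallmatrix}mn & x\\ 0 & 1\end{smallmatrix}\right)$, which is disposed of by left multiplication by $\left(\begin{smallmatrix}1 & -x\\ 0 & 1\end{smallmatrix}\right)\in\SL_2(\OD)$.

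Your approach is cleaner: you recognise that once Theorem~\ref{thm_summarize_euler_calculations} gives the maximal index for $\Gamma^D_0(mn)$, the map $\SL(L_D)\to\mathbb{P}^1(\OD/mn\OD)$ is automatically surjective, and then a \emph{single} well-chosen class produces $X\in\SL(L_D)$ with $NXM^{-1}\in\SL_2(\OD)$, whence $\SL^M(L_D,M)$ and $\SL^N(L_D,N)$ are conjugate inside $\SL(L_D)$ by $X$. This is exactly the mechanism of Lemma~\ref{lem_facilitate}, now upgraded from the special case $M=\operatorname{diag}(m,n)$ with $(m,n)=1$ to arbitrary upper triangular $M$ with $(m,n,x)=1$. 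What you gain is uniformity: no separate treatment of prime types, no dependence on the auxiliary parabolic $L$ from Lemmas~\ref{lem_second_parabolic}--\ref{lem_second_parabolic_0mod4}. What the paper's approach buys is explicitness: it actually exhibits the coset representatives rather than invoking an abstract surjection.

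On the obstacle you flag: the required class does exist in $\mathbb{P}^1(\OD/mn\OD)$ without assuming $(m,n)=1$. Take $c=m$ and $d=x+kn$; then $c'=1$ and $d-c'x=kn$ is divisible by $n$, so it remains only to arrange $(m,x+kn)=1$. For each prime $\mathfrak p\mid m$ either $\mathfrak p\mid n$, in which case $(m,n,x)=1$ forces $\mathfrak p\nmid x$ and hence $\mathfrak p\nmid x+kn$ automatically, or $\mathfrak p\nmid n$, in which case $k$ can be chosen modulo $\mathfrak p$ so that $\mathfrak p\nmid x+kn$. The Chinese remainder theorem (available since $h_D=1$) then furnishes a global $k$. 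So your outline goes through as stated.
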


The theorem tells us that the volume of the Teichmüller curve can be only preserved if the determinant of $M$ is a unit. On the other hand the volume does not \textit{detect} the upper right entry of the matrix.
\paragraph{D $\equiv$ 1 mod 4, odd spin.} We start again with the $\SL(L_D^1)$ case. Let us from now assume that $(m,w)=1$ and $(n,w)=1$. By Theorem~\ref{thm_cov_degree_10} we then have that $\Stab(\Phi) \cap M^{-1}\SL_2(\OD)M = \SL(L_D^1) \cap M^{-1}\SL_2(\OD)M = \SL^M(L_D^1,M)$. In order to find out the volume of the twisted Teichmüller curve, it is therefore necessary to calculate $[\SL(L_D^1): \SL^M(L_D^1,M)]$.\\
Although the following lemma follows immediately from the definition, it plays an important role for the calculation of the involved indexes. 
\begin{lem} For all $m,n,p,x \in \OD$ with $(m,x,n,p)=1$ and
$$M = \begin{pmatrix} m & x \\ 0 & np \end{pmatrix} \quad \text{and} \quad N= \begin{pmatrix} m & x \\ 0 & n \end{pmatrix}$$
we have $\SL^M(L_D,M) \subset \SL^N(L_D,N)$.
\end{lem}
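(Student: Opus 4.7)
The plan is to exploit the factorisation
\[
M \;=\; P\,N, \qquad P \;=\; \begin{pmatrix} 1 & 0 \\ 0 & p \end{pmatrix},
\]
which is immediate from multiplying out the right hand side. Given $A\in\SL^M(L_D,M)=\SL(L_D)\cap M^{-1}\SL_2(\OD)M$, the matrix $B:=MAM^{-1}$ lies in $\SL_2(\OD)$. Because $NAN^{-1}=(NM^{-1})B(MN^{-1})=P^{-1}BP$, writing $B=\bigl(\begin{smallmatrix}\alpha&\beta\\\gamma&\delta\end{smallmatrix}\bigr)$ one computes
\[
NAN^{-1} \;=\; \begin{pmatrix} \alpha & p\beta \\ \gamma/p & \delta \end{pmatrix}.
\]
Since $\alpha,\delta\in\OD$ and $p\beta\in\OD$ automatically, the task reduces to showing $\gamma/p\in\OD$, i.e.\ $p\mid\gamma$ in $\OD$.

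Next I would pin down $\gamma$ explicitly. Writing $A=\bigl(\begin{smallmatrix}a&b\\c&d\end{smallmatrix}\bigr)\in\SL_2(\OD)$ and carrying out the conjugation $MAM^{-1}$ gives $\gamma=cnp/m$, so $p\mid\gamma$ in $\OD$ is equivalent to $m\mid cn$. From $B\in\SL_2(\OD)$ two integrality conditions are already in hand, namely $m\mid xc$ (from $B_{11}=a+xc/m\in\OD$) and $m\mid cnp$ (from $\gamma\in\OD$). So the whole lemma is reduced to the purely arithmetic assertion that $m\mid xc$ and $m\mid cnp$, together with $(m,x,n,p)=1$, force $m\mid cn$.

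The arithmetic step is where I expect the actual work to sit, and it is the main obstacle. Localise at an arbitrary prime ideal $\mathfrak{p}\subset\OD$ dividing $m$; let $k=v_{\mathfrak{p}}(m)$. The coprimality hypothesis guarantees $\mathfrak{p}$ fails to divide at least one of $x,n,p$, and one checks the claim $v_{\mathfrak{p}}(cn)\ge k$ case by case: if $\mathfrak{p}\nmid x$ then $m\mid xc$ directly yields $\mathfrak{p}^{k}\mid c$; if $\mathfrak{p}\nmid n$ and $\mathfrak{p}\nmid p$ then $m\mid cnp$ gives $\mathfrak{p}^{k}\mid c$; and if $\mathfrak{p}\mid x$, $\mathfrak{p}\mid n$ then $\mathfrak{p}\nmid p$, and again $m\mid cnp$ supplies the required bound. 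Passing from the local statement to the global divisibility $m\mid cn$ by unique factorisation of ideals in $\OD$ then finishes the argument.

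Finally, one must keep track of the (one) delicate case in which $\mathfrak{p}\mid x$, $\mathfrak{p}\mid p$ and $\mathfrak{p}\nmid n$: here $m\mid cnp$ alone only yields $v_{\mathfrak{p}}(c)\ge k-v_{\mathfrak{p}}(p)$, so one has to combine it with $m\mid xc$ and use $\mathfrak{p}\nmid n$ to lift the bound. This is the step where the hypothesis is really being used, and it is why the author points out that the lemma is a direct but not entirely trivial consequence of the definitions; once the local computation is done, the inclusion $\SL^M(L_D,M)\subset\SL^N(L_D,N)$ follows by reassembling $NAN^{-1}=P^{-1}BP$ with every entry in $\OD$.
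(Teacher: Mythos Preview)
Your reduction is exactly right: with $M=PN$ one has $NAN^{-1}=P^{-1}BP$, and everything comes down to showing $m\mid nc$ from $m\mid xc$ and $m\mid npc$. The paper treats this as immediate from the definition, and your approach is the natural way to unpack that.

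The problem is your ``delicate case.'' You write that when $\mathfrak{p}\mid x$, $\mathfrak{p}\mid p$, $\mathfrak{p}\nmid n$ one can ``combine with $m\mid xc$ and use $\mathfrak{p}\nmid n$ to lift the bound,'' but this does not work. From $m\mid xc$ you only get $v_{\mathfrak{p}}(c)\ge k-v_{\mathfrak{p}}(x)$, and from $m\mid npc$ only $v_{\mathfrak{p}}(c)\ge k-v_{\mathfrak{p}}(p)$; neither forces $v_{\mathfrak{p}}(c)\ge k$. Concretely, take a prime element $\pi$, set $m=\pi^{2}$, $x=\pi$, $n=1$, $p=\pi$, $c=\pi$: then $(m,x,n,p)=1$ (since $n=1$), $m\mid xc$ and $m\mid npc$ both hold, yet $m\nmid nc=\pi$. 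So the arithmetic implication you are trying to prove is simply false under the literal hypothesis $\gcd(m,x,n,p)=1$.

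What resolves this is reading the hypothesis as the normalization condition $(m,x,np)=1$ on the matrix $M$, which is the standing convention in the surrounding section (every twist matrix is taken with coprime entries). Under that reading, for each prime $\mathfrak{p}\mid m$ one has either $\mathfrak{p}\nmid x$ or $\mathfrak{p}\nmid np$, and in either case the relevant divisibility yields $\mathfrak{p}^{k}\mid c$ directly; your delicate case never arises. This is why the paper can say the lemma follows immediately from the definition. So your proof is correct once you strengthen the hypothesis to $(m,x,np)=1$; as written, the final paragraph claims to close a gap that in fact cannot be closed.
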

This lemma implies that we can proceed step by step and do similar calculations as in the diagonal twist case. For $i,j \in \NN$ and an arbitrary prime element $\pi \in \OD$ with $(\pi,w)=1$ we set $P:= \left( \begin{smallmatrix} \pi^i & x \\ 0 & \pi^j \end{smallmatrix} \right)$ and $Q:= \left( \begin{smallmatrix} \pi^i & x \\ 0 & \pi^{j-1} \end{smallmatrix} \right)$. Then we show the following fundamental lemma about the indexes:
\begin{lem} \label{prop_triangular_twisted_2} For all $x \in \OD$ and all prime elements $\pi \in \OD$ with $(\pi,w)=1$ and $(\pi,x)=1$ and all $i,j \in \NN$
\begin{eqnarray*} \left[\SL(L_D^1) : \SL^P(L_D^1,P) \right] = \N(\pi) \left[\SL(L_D^1) : \SL^Q(L_D^1,Q)\right]\end{eqnarray*}
holds.
\end{lem}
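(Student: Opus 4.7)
The inclusion $\SL^P(L_D^1,P)\subseteq \SL^Q(L_D^1,Q)$ supplied by the preceding lemma reduces the claim, via the tower law, to establishing
$$[\SL^Q(L_D^1,Q):\SL^P(L_D^1,P)]=\N(\pi).$$
The technical heart of the argument is the elementary identity
$$PQ^{-1}=\begin{pmatrix}1&0\\0&\pi\end{pmatrix},$$
which one verifies by a direct matrix computation: because $P$ and $Q$ share the same upper-right entry~$x$, the off-diagonal term in $PQ^{-1}$ cancels exactly. Setting $R:=PQ^{-1}=\diag(1,\pi)$, for $A\in\SL^Q(L_D^1,Q)$ with $B:=QAQ^{-1}\in\SL_2(\OD)$, the rewriting $PAP^{-1}=RBR^{-1}$ shows that $A\in\SL^P(L_D^1,P)$ is equivalent to the single congruence $\pi\mid B_{12}$.

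Therefore the index to be computed coincides with the cardinality of the image of the map
$$\psi\colon \SL^Q(L_D^1,Q)\longrightarrow \OD/\pi\OD,\qquad A\longmapsto (QAQ^{-1})_{12}\bmod\pi,$$
and the upper bound $[\SL^Q(L_D^1,Q):\SL^P(L_D^1,P)]\leq \N(\pi)$ is automatic. The lower bound amounts to the surjectivity of $\psi$. I would produce, for each class $r\in\OD/\pi\OD$, an explicit element $W_r\in\SL^Q(L_D^1,Q)$ whose image under~$\psi$ equals~$r$. The hypotheses $(\pi,w)=1$ and $(\pi,x)=1$ guarantee that $\eta^+$, $\eta^-$ and $x$ are all units modulo~$\pi$, so conjugation by $Q$ is an invertible affine transformation on the polynomial expressions for the entries of any word in the parabolic generators $T$ and $Z$ of $\SL(L_D^1)$. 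One can therefore pull back the coset systems used in Propositions~\ref{prop_splittingI}, \ref{prop_primepower_inert_eq} and~\ref{prop_ramifiedII}, case-splitting according to the splitting type of~$\pi$ (split, inert, ramified) and verifying in each case that the resulting $(1,2)$-entries cover $\OD/\pi\OD$.

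The main obstacle is the inert case, where $\N(\pi)=p^2$ residues must be realised. A one-parameter family such as $\{QT^{k}Q^{-1}:k\in\ZZ\}$ only produces $p$ residues, since $k$ ranges over~$\ZZ$ and $k\mapsto k\eta^{+}\bmod\pi$ lands in the prime subfield $\mathbb{F}_p\subset\mathbb{F}_{p^2}$. The remedy is to use the genuine two-parameter words of the form $Z^{a}T^{b}Z^{-a}T^{c}$ that already appear in the proof of Proposition~\ref{prop_primepower_inert_eq}, and to check that after conjugation by $Q$ the $(1,2)$-entry is a nondegenerate quadratic polynomial in $(a,b,c)$ whose values modulo~$\pi$ fill out all of $\OD/\pi\OD$; the required nondegeneracy is the statement $\pi\nmid x\eta^{+}\eta^{-}$, which is precisely our hypothesis. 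The split and ramified cases are handled analogously with the corresponding one-parameter coset systems. No idea beyond a careful transport of the arguments of Section~\ref{sec_volume_simple} through the clean conjugation $R=\diag(1,\pi)$ is required.
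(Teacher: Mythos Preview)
Your reduction via $R=PQ^{-1}=\diag(1,\pi)$ is clean and correct, and the equivalence ``$A\in\SL^P(L_D^1,P)\iff \pi\mid (QAQ^{-1})_{12}$'' is exactly right. The gap is in the next sentence. The map $\psi\colon A\mapsto (QAQ^{-1})_{12}\bmod\pi$ is \emph{not} a group homomorphism, and its fibres are \emph{not} left cosets of $\SL^P(L_D^1,P)$: two elements $A_1,A_2$ lie in the same coset iff $(B_2^{-1}B_1)_{12}\equiv 0$, i.e.\ $(B_1)_{22}(B_2)_{12}\equiv(B_2)_{22}(B_1)_{12}\bmod\pi$, which is the condition $((B_1)_{12}:(B_1)_{22})=((B_2)_{12}:(B_2)_{22})$ in $\mathbb{P}^1(\OD/\pi)$, not $\psi(A_1)=\psi(A_2)$. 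Consequently the index is the size of the image of the $\mathbb{P}^1$-valued map, and the ambient bound you get ``for free'' is $[\SL_2(\OD):\Gamma^{D,0}(\pi)]=\N(\pi)+1$, not $\N(\pi)$. Surjectivity of $\psi$ onto $\OD/\pi$ thus gives only the lower bound $\ge\N(\pi)$, and you still owe an argument ruling out the coset at $(1{:}0)$, which is not automatic.

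The paper does not try to package the upper bound this way. It case-splits on the type of $\pi$ and produces $\N(\pi)$ explicit words in the known parabolics of $\SL(L_D^1)$ that lie in $\SL^Q(L_D^1,Q)$ and are pairwise inequivalent modulo $\SL^P(L_D^1,P)$: powers of $T$ or $Z$ in the split case (depending on whether $j>i$ or $i\ge j$), products $Z^{\pi k}L^{\pi z}$ with $L$ the parabolic of Lemma~\ref{lem_second_parabolic} in the inert case (with a separate treatment for $\pi=2$ using the parabolic $E$ fixing the cusp $1$), and a sandwiching argument in the ramified case. The matching upper bound is then obtained from the ambient inequality $[\SL(L_D^1):\SL^M(L_D^1,M)]\le[\SL_2(\OD):\SL_2(\OD,M)]$ applied to $M=P$, together with the already-established maximality for $Q$ (or, in the ramified case, via the explicit sandwich). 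Your $R$-trick could in principle be pushed through, but you would need to control the full $\mathbb{P}^1$-image, not just $\psi$.
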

\begin{proof}
As usual the different types of prime numbers (split, inert, ramified) require different arguments. Recall that $Z=T^t$ for $\SL(L_D^1)$.\\[11pt]
\textit{(i)} If $\pi$ is a divisor of a split prime number and $j>i$ then $QT^{\N(\pi^{j-i-1})}Q^{-1} \in \SL_2(\OD)$, but $PT^{\N(\pi^{j-i-1})}P^{-1} \notin \SL_2(\OD)$. So the matrices $T^{{\N(\pi^{j-i-1})}k}$ for $1 \leq k \leq \N(\pi)$ are elements in the bigger group which are pairwise incongruent modulo the smaller group. If $i \geq j$ then $QZ^{\N(\pi^{j+i-1})}Q^{-1} \in \SL_2(\OD)$ and  $PZ^{\N(\pi^{j+i-1})}P^{-1} \notin \SL_2(\OD)$. \\[11pt]
\textit{(ii)} If $\pi$ is an inert prime number, we get by an argument of the same type as in the case of split prime numbers that we only have to compare the indexes for the case $i=j=1$. Let $L$ be the second parabolic matrix in the Veech group from Lemma~\ref{lem_second_parabolic}. We only do the case $D \equiv 5 \mod 8$ here. The other cases work exactly in the same way. If $\pi \neq 2$ we claim that the matrices $Z^{\pi k}L^{\pi z}$ with $1 \leq k \leq \pi$ and $1 \leq z \leq \pi$ are matrices in the bigger group which are pairwise incongruent modulo the smaller group. Indeed, $QZ^{\pi k}L^{\pi z}Q^{-1} \in \SL_2(\OD)$ and it requires a long and tedious calculation to check that
$$(PZ^{\pi k}L^{\pi z}Z^{-\pi l}P^{-1})_{1,2} = \frac{x^2}{\pi}(4(w+1)z+w(k-l)) +v$$
for some $v \in \OD$. Since $(\pi,x)=1$ we must therefore have $\pi|4z$ and $\pi|(k-l)$. This yields the claim. If $\pi=2$ then a similar calculation yields $$\left[\SL^Q(L_D^1,Q) : \SL^P(L_D^1,P)\right] = 4$$
since the matrices $Z^{2k},E^{2z}$ for $1 \leq k \leq 2$ and $1 \leq z \leq 2$ all lie in $\SL^Q(L_D^1,Q)$ but are incongruent modulo $\SL^P(L_D^1,P)$ where $E$ is the parabolic element fixing the cusp $1$ (compare Section~\ref{sec_fixing_Veech}).\\[11pt]
\textit{(iii)} If $\pi$ is a divisor of a ramified prime number we may again restrict to the case $i=j=1$. Let $U:=\left( \begin{smallmatrix} \pi^2 & x \\ 0 & 1 \end{smallmatrix} \right) = \left( \begin{smallmatrix} 1 & x \\ 0 & 1 \end{smallmatrix} \right) \left( \begin{smallmatrix} \pi^2 & 0 \\ 0 & 1 \end{smallmatrix} \right)$ and $V:=\left( \begin{smallmatrix} \pi^2 & x \\ 0 & \pi \end{smallmatrix} \right)$. It can be checked that $UZ^{\N(\pi)i} U^{-1} \in \SL_2(\OD)$ but $V Z^{\N(\pi)i} V^{-1} \notin \SL_2(\OD)$ for $1 \leq i < \N(\pi)$ and therefore:
\begin{eqnarray*}
(\N(\pi)+1)\N(\pi)^2 & = & \N(\pi) \left[\SL(L_D^1): \SL^U(L_D^1,U) \right]\\
& = & \left[\SL(L_D^1): \SL^V(L_D^1,V)\right] \\
& \leq & \N(\pi) \left[\SL(L_D^1): \SL^P(L_D^1,P)\right]\\
& \leq & \N(\pi) \N(\pi) (\N(\pi)+1)
\end{eqnarray*}
\end{proof}
By passing to appropriate powers of $T,Z$ and $L$ one can - similarly as in Section~\ref{sec_volume_simple} - deduce the more general result: Let $m,n,x \in \OD$ with $(m,n,x)=1$ and $\pi \in \OD$ be a prime element with $(\pi,w)=1$ and set $P:=\left( \begin{smallmatrix} m\pi^i & x \\ 0 & n\pi^j \end{smallmatrix} \right)$ and $Q:= \left( \begin{smallmatrix} m\pi^i & x \\ 0 & n\pi^{j-1} \end{smallmatrix} \right)$.
\begin{prop} \label{prop_triangular_2} Let $P$ and $Q$ be defined as above. Then
\begin{eqnarray*} \left[\SL(L_D^1) : \SL^P(L_D^1,P)\right] = \N(\pi) \left[\SL(L_D^1) : \SL^Q(L_D^1,Q)\right].\end{eqnarray*}
\end{prop}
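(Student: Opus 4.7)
The plan is to mimic the three-case analysis of Lemma~\ref{prop_triangular_twisted_2}, with the factors $m$ and $n$ absorbed by raising the distinguished parabolic generators $T$, $Z$, and the second parabolic element $L$ from Lemma~\ref{lem_second_parabolic} to appropriate integral multiples of their arguments. By the lemma preceding Lemma~\ref{prop_triangular_twisted_2} one has the inclusion $\SL^P(L_D^1,P) \subset \SL^Q(L_D^1,Q)$, so it suffices to establish
\[
\bigl[\,\SL^Q(L_D^1,Q) : \SL^P(L_D^1,P)\,\bigr] = \N(\pi),
\]
and the proposition will follow by multiplicativity of indexes. The upper bound $\leq \N(\pi)$ is immediate from a direct computation of $PAP^{-1}$ and $QAQ^{-1}$: writing $A = \left(\begin{smallmatrix} a & b \\ c & d \end{smallmatrix}\right) \in \SL(L_D^1)$, the only entry where denominators can appear after conjugation is the upper right one, and the denominator of $(QAQ^{-1})_{1,2}$ differs from that of $(PAP^{-1})_{1,2}$ by a single factor of $\pi$.

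For the lower bound I would construct, in each of the three possible splitting types of $\pi$, an explicit system of $\N(\pi)$ coset representatives in $\SL(L_D^1)$. In the split case, if $j>i$ one takes the matrices $T^{\N(n)\cdot \N(\pi)^{j-i-1}\cdot k}$ for $k = 1,\dots,\N(\pi)$, while if $i\geq j$ one uses the corresponding powers of $Z$; the hypothesis $(\pi,w)=1$ together with $(m,n,x)=1$ ensures that the telescoping argument from Lemma~\ref{prop_triangular_twisted_2}~(i) still separates these $\N(\pi)$ elements modulo $P^{-1}\SL_2(\OD)P$. In the inert case one combines appropriate powers of $Z$ and $L$ as in part (ii) of the proof of Lemma~\ref{prop_triangular_twisted_2}, multiplied by $\N(m)$ or $\N(n)$ so that the conjugation by $Q$ lands in $\SL_2(\OD)$; the calculation involving the entry $(PZ^{\pi k}L^{\pi z}Z^{-\pi l}P^{-1})_{1,2}$ generalizes verbatim once one replaces the prefactor $x^2/\pi$ by $m\cdot x^2/(n\pi)$ and uses $(\pi,w)=1$ to conclude $\pi|k-l$ and $\pi|z$. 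In the ramified case one argues by the same telescoping sandwich between $U$ and $V$ as in part (iii), with both matrices pre-multiplied by $\left(\begin{smallmatrix} m & x \\ 0 & n \end{smallmatrix}\right)$.

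The main obstacle I anticipate is the bookkeeping of powers in the inert case: the second parabolic element $L$ has a rather intricate lower left entry depending on $w$, so one must carefully arrange that the chosen exponents of $L$ and $Z$ simultaneously clear the denominators coming from $Q$ without clearing the extra $\pi$ coming from $P$. This hinges on the coprimality assumption $(\pi,w)=1$, which prevents the exponent $\pi$ appearing in the conjugated entries from being absorbed into $\eta^+$ or $\eta^-$, and on $(m,n,x)=1$, which guarantees that any common $\pi$-factor in $m,n,x$ can be normalized away before applying the telescoping argument. Once these coprimality conditions are exploited exactly as in Lemma~\ref{prop_triangular_twisted_2}, the three-case analysis goes through and yields the claimed equality.
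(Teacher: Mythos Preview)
Your proposal is correct and follows essentially the same approach as the paper, which itself gives only the one-line remark ``By passing to appropriate powers of $T,Z$ and $L$ one can --- similarly as in Section~\ref{sec_volume_simple} --- deduce the more general result.'' Your three-case plan with exponents scaled by norms of $m$ and $n$ is precisely this. One small wording issue: your upper-bound justification should say that the upper right entry is the only place where passing from $QAQ^{-1}$ to $PAP^{-1}$ can \emph{introduce} a new denominator, not that it is the only entry with denominators at all; the $(1,1)$, $(2,1)$, $(2,2)$ entries do carry denominators, but they are identical (or better by a factor $\pi$) for $P$ and $Q$.
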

It is now possible to show that the volume of the Teichmüller curve twisted by $M$ only depends on the determinant of the matrix. 

\begin{prop} \label{prop_triangular_twisted_1} If $M = \left( \begin{smallmatrix} m & x \\ 0 & n \end{smallmatrix} \right)$ with $m,n,x \in \OD$ with $(m,n,x)=1$, then 
$$\left[ \SL(L_D^1) : \SL^M(L_D^1,M) \right] = \left[ \SL_2(\OD) :  \Gamma^D_0(nm)   \right].$$
\end{prop}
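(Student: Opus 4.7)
My plan is to induct on the number of prime ideal factors (counted with multiplicity) of the determinant $\det(M) = mn$. As a base case, when $mn$ is a unit of $\OD$ both $m$ and $n$ are units and $M \in \GL_2(\OD)$; then $M^{-1}\SL_2(\OD)M = \SL_2(\OD)$, which forces $\SL^M(L_D^1,M) = \SL(L_D^1)$ and gives index $1$ on both sides of the claimed equality. By Theorem~\ref{thm_cov_degree_10}, the hypotheses $(m,\eta^*)=(n,\eta^*)=1$ guarantee that throughout the induction one may freely identify $[\SL(L_D^1):\SL_M(L_D^1)]$ with $[\SL(L_D^1):\SL^M(L_D^1,M)]$, so it suffices to track the latter quantity.

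For the inductive step, fix a prime ideal $\pi$ dividing $(mn)$. I would first handle the ``higher multiplicity'' case $\pi^2 \mid (mn)$. Writing $M = P$ in the notation of Proposition~\ref{prop_triangular_2} with the exponent of $\pi$ in $\det Q$ still positive, removal of one factor of $\pi$ from either $m$ or $n$ produces a matrix $Q$ with $\det(Q) = \det(M)/\pi$ still divisible by $\pi$. Proposition~\ref{prop_triangular_2} then gives
\[ [\SL(L_D^1):\SL^M(L_D^1,M)] = \N(\pi)\,[\SL(L_D^1):\SL^Q(L_D^1,Q)], \]
while Proposition~\ref{prop_index_congruence} yields the matching identity $[\SL_2(\OD):\Gamma^D_0(\det M)] = \N(\pi)\,[\SL_2(\OD):\Gamma^D_0(\det Q)]$, since $\pi$ is already among the prime divisors of $\det Q$. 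The inductive hypothesis applied to $Q$ therefore closes this case.

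The remaining, more delicate case is when $\pi$ appears in $\det(M)$ with multiplicity exactly one; here the right-hand side picks up a factor of $\N(\pi)+1$, whereas the multiplicativity of Proposition~\ref{prop_triangular_2} alone would give only $\N(\pi)$. My plan here is to exploit the invariance of $\SL^M(L_D^1,M)$ under replacing $M$ by $AM$ with $A \in \SL_2(\OD)$: by a suitable left multiplication the off-diagonal entry $x$ can be reduced modulo $n$, bringing $M$ to a form whose conjugation condition $MAM^{-1} \in \SL_2(\OD)$ decouples, modulo $\pi$, into precisely the diagonal congruence condition already analyzed in Theorem~\ref{thm_summarize_euler_calculations}. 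Equivalently, one redoes the explicit coset-representative arguments of Section~\ref{sec_volume_simple} (using the generators $T, Z, S$ together with the parabolic element of Lemma~\ref{lem_second_parabolic}) for the subgroup $\SL^M(L_D^1,M)$; the additional entry $x$ modifies the linear conditions defining the coset classes only by a harmless affine shift that preserves their pairwise inequivalence. The main obstacle will be verifying, analogously to Propositions~\ref{prop_splitting_primes}, \ref{prop_inert_prime_eq1} and~\ref{prop_ramifiedI}, that exactly $\N(\pi)+1$ inequivalent classes arise even once the twist parameter $x$ is present; this reduces to explicit divisibility checks in $\OD$ carried out separately according to the splitting behaviour of $\pi$, and is where the presence of $x$ could in principle collapse or duplicate representatives if one is not careful.
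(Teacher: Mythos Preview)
Your inductive scheme is a reasonable strategy, but it leaves precisely the hard step undone and misses the paper's main simplification.

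The paper does \emph{not} remove primes from $\det(M)$ at all. Its trick is to \emph{move} a prime $\pi\mid n$ over to $m$: set $U=\left(\begin{smallmatrix} m\pi & x\\ 0 & n\end{smallmatrix}\right)$ and $V=\left(\begin{smallmatrix} m\pi & x\\ 0 & n/\pi\end{smallmatrix}\right)$. Proposition~\ref{prop_triangular_2} gives $[\SL(L_D^1):\SL^U]=\N(\pi)\,[\SL(L_D^1):\SL^V]$, while a trivial bound gives $[\SL(L_D^1):\SL^M]\ge [\SL(L_D^1):\SL^U]/\N(\pi)$. Since $\det M=\det V$, the two target indices $[\SL_2(\OD):\Gamma^D_0(\cdot)]$ coincide; hence the index for $M$ is maximal iff that for $V$ is. Iterating drives $n$ down to $1$, and then left multiplication by $\left(\begin{smallmatrix} 1 & -x\\ 0 & 1\end{smallmatrix}\right)\in\SL_2(\OD)$ kills $x$ outright, reducing to the diagonal case of Theorem~\ref{thm_summarize_euler_calculations}. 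No new coset computations are needed; the $\N(\pi)+1$ factor is inherited wholesale from the diagonal result.

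By contrast, your multiplicity-one step proposes to ``redo the explicit coset-representative arguments of Section~\ref{sec_volume_simple} \dots\ with the twist parameter $x$ present'' and flags this as ``the main obstacle.'' That is exactly the work the paper's reduction eliminates, and you have not actually carried it out. A second, smaller issue: your multiplicity-$\ge 2$ step invokes Proposition~\ref{prop_triangular_2} to strip $\pi$ ``from either $m$ or $n$,'' but as stated that proposition requires $i,j\in\NN$, i.e.\ $\pi$ divides both diagonal entries; the condition $\pi^2\mid mn$ alone does not guarantee this.
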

\begin{proof} We first assume that $(n,x)=1$. Then let $\pi$ be an arbitrary prime divisor of $n$ and set $U=\left( \begin{smallmatrix} m\pi & x\\ 0 & n \end{smallmatrix} \right)$. Hence
$$\left[ \SL(L_D^1) :  \SL^M(L_D^1,M)  \right] \geq
\frac{1}{\N(\pi)} \left[ \SL(L_D^1) : \SL^U(L_D^1,U) \right].$$
By Proposition~\ref{prop_triangular_2} for $V=\left( \begin{smallmatrix} m\pi & x\\ 0 & n/\pi \end{smallmatrix} \right) $ the latter expression is equal to
$$\left[ \SL(L_D^1) : \SL^V(L_D^1,V) \right]$$
and hence the index $\left[ \SL(L_D^1) : \SL^M(L_D^1,M)  \right]$ is maximal if and only if the index $\left[ \SL(L_D^1) : \SL^V(L_D^1,V) \right]$ is maximal. We may therefore assume that $M$ is of the form $\left( \begin{smallmatrix} mn & x \\ 0 & 1 \end{smallmatrix} \right)$.
Since
$$\begin{pmatrix} 1 & - x \\ 0 & 1 \end{pmatrix} \begin{pmatrix} mn & x \\ 0 & 1 \end{pmatrix} = \begin{pmatrix} mn & 0 \\ 0 & 1 \end{pmatrix}$$
the claim thus follows.\\[11pt]
If there exist common prime divisors of $x$ and $n$, then we can repeat the preceding arguments until the lower right entry divides $x$. Obviously the claim then also follows since $n|x$.
\end{proof}

This completes the proof of Theorem~\ref{thm_summarize_euler_calculations_triangular} for $D \equiv 5 \mod 8$ and for $D \equiv 1 \mod 8$ for the odd spin Teichmüller curves.
\paragraph{D $\equiv$ 1 mod 8, even spin.} It is clear that it suffices to prove an analogue of Lemma~\ref{prop_triangular_twisted_2} in order to show  Theorem~\ref{thm_summarize_euler_calculations_triangular} in the even spin case. Let again be $P:=\left( \begin{smallmatrix} \pi^i & x \\ 0 & \pi^j \end{smallmatrix} \right)$ and $Q:=\left( \begin{smallmatrix} \pi^i & x \\ 0 & \pi^{j-1} \end{smallmatrix} \right)$ two matrices where $x \in \OD$ and $\pi \in \OD$ is a prime element with $(\pi,x)=1$ and $(\pi,\eta^*)=1$. 
\begin{lem} \label{prop_triangular_twisted_3} For all $x \in \OD$ and all prime elements $\pi \in \OD$ with $(\pi,\eta^*)=1$ and $(\pi,x)=1$ and all $i,j \in \NN$
\begin{eqnarray*} \left[\SL(L_D^0) : \SL^P(L_D^0,P)\right] = \N(\pi) \left[\SL(L_D^0) : \SL^Q(L_D^0,Q)\right]\end{eqnarray*}
holds.
\end{lem}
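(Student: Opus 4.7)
The plan is to mirror the structure of the proof of Lemma~\ref{prop_triangular_twisted_2}, splitting into the three cases according to whether $\pi$ is a divisor of a split prime, an inert prime, or a divisor of a ramified prime, and in each case exhibiting an explicit list of $\N(\pi)$ representatives of $\SL^Q(L_D^0,Q)/\SL^P(L_D^0,P)$ drawn from the parabolic (and, in the base case, possibly elliptic) generators of $\SL(L_D^0)$ described in Section~\ref{sec_fixing_Veech}. As a first reduction I would argue, exactly as in the odd spin proof, that by considering appropriate powers $T^{\N(\pi^k)}$ and $Z^{\N(\pi^k)}$ one reduces the split and ramified cases to checking the base index $i=j=1$, so that the work concentrates on a fixed size list of representatives.

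For $\pi$ split, the hypothesis $(\pi,\eta^*)=1$ means $\pi\nmid \eta^+=w-1$ and $\pi\nmid\eta^-=w+1$, and also $(\pi,x)=1$. The matrices $T^{\N(\pi^{j-i-1})k}$ (for $j>i$) or $Z^{\N(\pi^{j+i-1})k}$ (for $i\geq j$) with $1\leq k\leq \N(\pi)$ then lie in $\SL^Q(L_D^0,Q)$ while a straightforward computation of the upper right or lower left entry of $Q(\cdot)Q^{-1}$ versus $P(\cdot)P^{-1}$ shows they are pairwise incongruent modulo $\SL^P(L_D^0,P)$, precisely as in part (i) of Lemma~\ref{prop_triangular_twisted_2}.

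For $\pi$ inert, one needs $\N(\pi)=\pi^2$ representatives, so $T$ and $Z$ alone cannot suffice. Here I would use the second parabolic generator $L$ of $\SL(L_D^0)$ from Lemma~\ref{lem_second_parabolic2} together with $Z$, analogously to the odd spin case where $L$ was the parabolic element from Lemma~\ref{lem_second_parabolic}. The candidate representatives are $Z^{\pi k} L^{\pi z}$ for $1\leq k,z \leq \pi$, and the key computation is to expand the upper right entry of $P Z^{\pi k} L^{\pi z} Z^{-\pi l} P^{-1}$; since $(\pi,x)=1$ and $(\pi,\eta^*)=1$, the coefficient of $x^2/\pi$ in that entry will be a $\ZZ$-linear combination of $w$ and $1$ with coefficients involving $\eta^-$, $\eta^+$, and the lower left entry of $L$, forcing $\pi\mid z$ and $\pi\mid(k-l)$ after separating ``real'' and ``imaginary'' parts in the basis $(1,w)$. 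For ramified $\pi$, I would combine the split and inert arguments precisely as in part (iii) of Lemma~\ref{prop_triangular_twisted_2}, using the sandwich estimate $\N(\pi)[\SL(L_D^0):\SL^U(L_D^0,U)] \leq [\SL(L_D^0):\SL^V(L_D^0,V)]$ with $U,V$ analogous upper triangular matrices, to force equality.

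The main obstacle will be the inert case: the parabolic generator $L$ in the even spin setting has considerably more complicated entries than in the odd spin case (compare Lemmas~\ref{lem_second_parabolic} and \ref{lem_second_parabolic2}), so verifying that the explicit list $Z^{\pi k} L^{\pi z}$ gives $\pi^2$ pairwise incongruent representatives requires a careful bookkeeping of the coefficient of $w$ in the conjugated entries, using that $w(w\pm 1)$ expands in the basis $(1,w)$ with both coordinates coprime to $\pi$ whenever $(\pi,\eta^*)=1$. Once this computation is in place, the rest of the proof is a direct transcription of Section~\ref{sec_volume_simple} and Lemma~\ref{prop_triangular_twisted_2}, so the conclusion $[\SL(L_D^0):\SL^P(L_D^0,P)] = \N(\pi)[\SL(L_D^0):\SL^Q(L_D^0,Q)]$ follows as claimed.
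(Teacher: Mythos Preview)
Your proposal is correct and follows essentially the same approach as the paper: the split and ramified cases are declared verbatim identical to Lemma~\ref{prop_triangular_twisted_2} with the appropriate $T$ and $Z$, and for the inert case (reduced to $i=j=1$) the paper uses exactly your list $Z^{\pi k}L^{\pi z}$ with $L$ from Lemma~\ref{lem_second_parabolic2}, computing the $(1,2)$ entry of $PZ^{\pi k}L^{\pi z}Z^{-\pi l}P^{-1}$ and separating its real and imaginary parts in the basis $(1,w)$. The only refinement the paper adds is the explicit form $\frac{x^2}{\pi}\bigl((l-k)(w+1)+z(\frac{D+15}{16}w+\frac{3(D-1)}{16})\bigr)+h$ (for $D\equiv 1\bmod 16$), from which $\pi\mid z\frac{D-9}{8}$ and hence $\pi\mid z$ follow because $\pi$ inert forces $\pi\nmid\frac{D-9}{8}$ via $\N(w+1)=\frac{D-9}{4}$.
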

\begin{proof} If $\pi$ is a divisor of a split or a ramified prime number $p \in \ZZ$ then the proof is verbatim the same as in Lemma~\ref{prop_triangular_twisted_2} if we use the appropriate matrices $T$ and $Z$. So we just have do the case, where $\pi$ is an inert prime number and $i=j=1$. We do here only the case $D \equiv 1 \mod 16$. We consider the matrices $Z^{\pi k}L^{\pi z}$ with $1 \leq k \leq \pi$ and $1 \leq z \leq \pi$ where $L$ stems again from Lemma~\ref{lem_second_parabolic2}. Then $QZ^{\pi k}L^{\pi z}Q^{-1} \in \SL_2(\OD)$ and therefore all the matrices lie in the bigger group. It remains to be shown that the matrices are not equivalent modulo the smaller group. We have
$$(PZ^{\pi k}L^{\pi z}Z^{-\pi l}P^{-1})_{1,2} = \nu_{1,2}$$
where $\nu_{1,2} = \frac{x^2}{\pi} \left((l-k)(w+1) + z \left( \frac{D+15}{16}w + \frac{3(D-1)}{16}\right) \right)+ h $ with $h \in \OD$. So assume that this entry lies in $\OD$. Then since $\pi$ is inert and $(\pi,x)=1$ it follows that $\pi$ divides the \textit{imaginary} part of the expression as well as the \textit{real} part and hence $\pi|z\frac{D-9}{8}$. However $\pi$ cannot divide $\frac{D-9}{8}$ since $\pi$ is inert and $\N(w+1)=\frac{D-9}{4}$. Therefore, $\pi|z$ and thus $\pi|(l-k)$.
\end{proof}
\paragraph{D $\equiv$ 0 mod 4.} We again show an analogue of Lemma~\ref{prop_triangular_twisted_2}. So let $P:=\left( \begin{smallmatrix} \pi^i & x \\ 0 & \pi^j \end{smallmatrix} \right)$ and $Q:=\left( \begin{smallmatrix} \pi^i & x \\ 0 & \pi^{j-1} \end{smallmatrix} \right)$ be as usual two matrices where $x \in \OD$ and $\pi \in \OD$ is a prime element with $(\pi,x)=1$ and $(\pi,\eta^*)=1$.
\begin{lem} \label{prop_triangular_twisted_4} For all $x \in \OD$ and all prime elements $\pi \in \OD$ with $(\pi,\eta^*)=1$ and $(\pi,x)=1$ and all $i,j \in \NN$
\begin{eqnarray*} \left[\SL(L_D^0) : \SL^P(L_D^0,P)\right] = \N(\pi) \left[\SL(L_D^0) : \SL^Q(L_D^0,Q)\right]\end{eqnarray*}
holds.
\end{lem}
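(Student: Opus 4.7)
The plan is to mirror, virtually step by step, the proofs of Lemma~\ref{prop_triangular_twisted_2} and Lemma~\ref{prop_triangular_twisted_3}, with the appropriate matrices $T$, $Z$, and $L$ from Section~\ref{sec_fixing_Veech} (namely Lemma~\ref{lem_second_parabolic_0mod4}) substituted in for the case $D\equiv 0\bmod 4$. As in those proofs, the three cases to treat are: $\pi$ a divisor of a split prime, $\pi$ a divisor of a ramified prime, and $\pi$ inert. In each case the inequality $\N(\pi)[\SL(L_D^0):\SL^Q(L_D^0,Q)]\le [\SL(L_D^0):\SL^P(L_D^0,P)]$ is automatic from the obvious inclusion, so the real work is producing $\N(\pi)$ coset representatives inside $\SL^Q(L_D^0,Q)$ which remain inequivalent modulo $\SL^P(L_D^0,P)$.

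For $\pi$ dividing a split or ramified prime, I would first reduce to $i=j=1$ exactly as in the proof of Lemma~\ref{prop_triangular_twisted_2}, by observing that replacing $T$ or $Z$ with a suitable power gives parabolic matrices in the conjugate of $\SL_2(\OD)$ by $Q$ but not in the conjugate by $P$. The family of coset representatives is then $\{T^{\N(\pi)^{j-i-1}k}\}_{1\le k\le \N(\pi)}$ in the case $j>i$ and $\{Z^{\N(\pi)^{i+j-1}k}\}_{1\le k\le \N(\pi)}$ otherwise; the condition $(\pi,\eta^*)=(\pi,w(w+1))=1$ ensures that the resulting entries in the lower-left or upper-right position are not swallowed by a spurious $\pi$-factor coming from $\eta^+$ or $\eta^-$.

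The main obstacle, as in the earlier lemmas, lies in the inert case, which is what I would handle last. After reducing to $i=j=1$, I would consider the family $Z^{\pi k}L^{\pi z}$ with $1\le k\le \pi$ and $1\le z\le \pi$, where $L$ is the second parabolic generator from Lemma~\ref{lem_second_parabolic_0mod4}. That these $\pi^2=\N(\pi)$ matrices lie in $\SL^Q(L_D^0,Q)$ is immediate. To prove inequivalence modulo $\SL^P(L_D^0,P)$, one computes
\[
\bigl(P Z^{\pi k} L^{\pi z} Z^{-\pi l} P^{-1}\bigr)_{1,2} \;=\; \tfrac{x^{2}}{\pi}\bigl(\alpha(l-k)\,w + \beta\,z\bigr) \,+\, h
\]
for some $h\in\OD$, where $\alpha$ is a rational integer coming from $Z$ and $\beta$ is the explicit element of $\OD$ determined by the shape of $L$ in the respective sub-cases $D\equiv 4\bmod 8$, $D\equiv 8\bmod 16$, and $D\equiv 0\bmod 16$. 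Since $(\pi,x)=1$ and $\pi$ is inert, the element in parentheses must itself be divisible by $\pi$, and inertness forces $\pi$ to divide both its $w$-coefficient and its rational coefficient separately. The numerical subtlety is to check that, in each of the three sub-cases for $L$, the element $\beta$ satisfies $(\pi,\beta)=1$: this is exactly where the hypothesis $(\pi,\eta^*)=1$ is used, since each $\beta$ is an explicit rational multiple of $w+2$ (or a closely related element) whose prime divisors either divide $w(w+1)$ or equal $2$, and $2$ is ramified so inertness excludes it. Once this is verified, divisibility gives $\pi\mid z$ and $\pi\mid(l-k)$, proving the required independence.

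With the inert case settled, the three cases combine as in Lemmas~\ref{prop_triangular_twisted_2} and~\ref{prop_triangular_twisted_3} to give the claimed equality, and by passing to powers of $T$, $Z$ and $L$ (exactly as in Proposition~\ref{prop_triangular_2}) one obtains the version of the statement with arbitrary coprime entries $m,n,x$, completing the proof of Theorem~\ref{thm_summarize_euler_calculations_triangular} in the case $D\equiv 0\bmod 4$.
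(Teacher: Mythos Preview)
Your proposal is correct and follows essentially the same route as the paper: reduce the split and ramified cases verbatim to Lemma~\ref{prop_triangular_twisted_2}, and for the inert case with $i=j=1$ use the family $Z^{\pi k}L^{\pi z}$ and compute the $(1,2)$-entry of $PZ^{\pi k}L^{\pi z}Z^{-\pi l}P^{-1}$. The paper only spells out the sub-case $D\equiv 4\bmod 8$, obtaining $\frac{x^2}{\pi}\bigl((l-k)w - z(D/2+wD/4)\bigr)+h$, and then concludes by separating real and $w$-parts; your formulation with general $\alpha,\beta$ is the natural umbrella for all three sub-cases. One small simplification: your justification that $(\pi,\beta)=1$ via prime divisors of $w+2$ and $\eta^*$ is slightly roundabout---the paper's argument is just that $\pi$ inert forces $\pi\nmid D$ (all divisors of $D$ being ramified) and $\pi\neq 2$, so $\pi$ cannot divide the integer coefficients $D/2$ and $D/4$ appearing in $\beta$.
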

\begin{proof} If $\pi$ is a divisor of a split or a ramified prime number $p \in \ZZ$ then the proof is again verbatim the same as in Lemma~\ref{prop_triangular_twisted_2}. So we just have do the case, where $\pi$ is an inert prime number and $i=j=1$. We do here only the case $D \equiv 4 \mod 8$. We consider the matrices $Z^{\pi k}L^{\pi z}$ with $1 \leq k \leq \pi$ and $1 \leq z \leq \pi$ where $L$ stems from Lemma~\ref{lem_second_parabolic_0mod4}. Then $QZ^{\pi k}L^{\pi z}Q^{-1} \in \SL_2(\OD)$ and therefore all the matrices lie in the bigger group. It remains to be shown that the matrices are not equivalent modulo the smaller group. We have
$$(PZ^{\pi k}L^{\pi z}Z^{-\pi l}P^{-1})_{1,2} = \frac{x^2}{\pi} \left((l-k)w - z (D/2+w D/4) \right)+ h $$
with $h \in \OD$. So assume that this entry lies in $\OD$. Then since $\pi$ is inert and $(\pi,x)=1$ it follows that $\pi$ divides the \textit{imaginary} part of the expression as well as the \textit{real} part. Since $\pi$ is inert, we hence have $\pi|z$ and therefore $\pi|(l-k)$.
\end{proof}

\subsubsection{The Non-relatively Prime Case} \label{subsec_non_prime}
If $m$ or $n$ is not relatively prime to $\eta^*$ the index of $\SL(L_D) \cap M^{-1}\SL_2(\OD)M$ in $\SL(L_D)$ is not always maximal. In fact, there are examples where the index is maximal, i.e. equal to the index of $\Gamma^D_0(mn)$ in $\SL_2(\OD)$, and examples where it is not maximal. It has been already described in Section~\ref{subsec_nonrelative} that the first happens for instance in the case $D=13$ and $M=\left( \begin{smallmatrix} w & 0 \\ 0 & 1 \end{smallmatrix} \right)$ (compare Theorem~\ref{thm_summarize_euler_calculations}). We will even see that the index is maximal for all twisted Teichmüller curves if $D = 13$. An example for the latter phenomenon to happen is $D=17$, odd spin and $M=\left( \begin{smallmatrix} 1 & 0 \\ w+1 & w+2 \end{smallmatrix} \right)$. Then the volume of the Teichmüller curve twisted by $M$ is equal to the volume of the original Teichmüller curve.\\[11pt]If the discriminant $D$ is $5 \mod 8$, then let $i,j \in \NN$, $x \in \OD$ and let $\pi \in \OD$ be an arbitrary prime element with $(\pi,w)=\pi$ and set $P:= \left( \begin{smallmatrix} \pi^i & x \\ 0 & \pi^j \end{smallmatrix} \right)$ and $Q:=\left( \begin{smallmatrix} \pi^i & x \\ 0 & \pi^j \end{smallmatrix} \right)$. Then we can prove the following lemma: \begin{lem} If $D \equiv 5 \mod 8$ then for all $x \in \OD$ and all prime elements $\pi \in \OD$ with $(\pi,w)=\pi$ and $(\pi,x)=1$ and all $i,j \in \NN$
\begin{eqnarray*} \left[\SL(L_D^1) : \SL^P(L_D^1,P) \right] = \N(p) \left[\SL(L_D^1) : \SL^Q(L_D^1,Q)\right]\end{eqnarray*}
holds.
\end{lem}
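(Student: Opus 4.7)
The plan is to imitate the proof of Lemma~\ref{prop_triangular_twisted_2} but to replace the parabolic matrices $T$ and $Z$ — which are useless here since their off-diagonal entry $w$ is now divisible by $\pi$ — by the second parabolic generator $L$ from Lemma~\ref{lem_second_parabolic}. First I would observe that because $D \equiv 5 \mod 8$ the prime $2$ is inert, so Lemma~\ref{lem_properties_OD}~(iii) combined with $\pi \mid w$ forces $\pi$ to divide a split odd prime. In particular $\pi \nmid 2$, $\pi \nmid (w+1)$ and $\pi \nmid x$, so the lower-left entry $-4(w+1)$ of $L$ is a unit modulo $\pi$ — this is precisely what allows $L$ to play the role of $T$ from the split-prime case of Lemma~\ref{prop_triangular_twisted_2}.

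The second step is a direct computation of $PAP^{-1}$ and $QAQ^{-1}$ for an arbitrary $A = \left(\begin{smallmatrix} a & b \\ c & d \end{smallmatrix}\right) \in \SL_2(\OD)$, giving the characterization that $A \in \SL^P(L_D^1,P)$ if and only if $A \in \SL(L_D^1)$, $\pi^i \mid c$, and $\pi^j$ divides
\[
f(A) := x(d-a) + \pi^i b - (c/\pi^i)\,x^2,
\]
with the corresponding criterion for $\SL^Q(L_D^1,Q)$ being identical except that $\pi^j$ is replaced by $\pi^{j-1}$. The inclusion $\SL^P(L_D^1,P) \subset \SL^Q(L_D^1,Q)$ is then immediate, and the fibers of the assignment $A \mapsto f(A)/\pi^{j-1} \bmod \pi$ on $\SL^Q(L_D^1,Q)$ coincide with the cosets of $\SL^P(L_D^1,P)$; since the target $\OD/\pi\OD$ has $\N(\pi)$ elements this already yields the upper bound $[\SL^Q(L_D^1,Q):\SL^P(L_D^1,P)] \leq \N(\pi)$.

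The matching lower bound will come from an explicit family of $\N(\pi)$ representatives. Writing $L = I + N$ with $N = \left(\begin{smallmatrix} -2w(w+1) & w^2(w+1) \\ -4(w+1) & 2w(w+1) \end{smallmatrix}\right)$, parabolicity gives $N^2 = 0$ and hence $L^k = I + kN$ for every $k \in \OD$. Plugging $k = \pi^{i+j-1}m$ into $f(L^k)$ produces three summands; the two involving factors of $w$ or $w^2$ acquire extra $\pi$-adic valuation and are divisible by $\pi^j$ automatically, leaving only the term $4(w+1)\pi^{j-1} m x^2$ whose $\pi$-adic valuation is exactly $j - 1 + v_\pi(m)$. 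Consequently $L^{\pi^{i+j-1}m} \in \SL^Q(L_D^1,Q)$ for every $m$, while $L^{\pi^{i+j-1}m} \in \SL^P(L_D^1,P)$ precisely when $\pi \mid m$, so letting $m$ run through a system of residues of $\OD/\pi\OD$ produces $\N(\pi)$ matrices that are pairwise inequivalent modulo $\SL^P(L_D^1,P)$.

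Combining the two bounds gives $[\SL^Q(L_D^1,Q):\SL^P(L_D^1,P)] = \N(\pi)$, and multiplying by $[\SL(L_D^1):\SL^Q(L_D^1,Q)]$ yields the claimed identity. The main technical obstacle will be the bookkeeping in the degenerate case $j = 1$, where the second congruence defining $\SL^Q(L_D^1,Q)$ becomes vacuous and $Q$ has lower-right entry $1$; there the condition $\pi^i \mid c$ still propagates from the $(2,1)$-entry of $QAQ^{-1}$, and the argument goes through with only cosmetic modifications. The only arithmetic input beyond the general setup is the valuation bound $v_\pi(w) \geq 1$, which is the standing hypothesis $\pi \mid w$.
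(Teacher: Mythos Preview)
Your strategy is the same as the paper's---replace $T,Z$ by the second parabolic generator $L$---but the lower-bound family has a real gap. You take $k=\pi^{i+j-1}m$ and use $L^k=I+kN$. That matrix identity is fine in $\SL_2(\OD)$, but the coset representatives must lie in $\SL^Q(L_D^1,Q)=\SL(L_D^1)\cap Q^{-1}\SL_2(\OD)Q$, and in particular in the Veech group $\SL(L_D^1)$. The parabolic stabiliser of the cusp fixed by $L$ inside the Veech group is the cyclic group $\{L^k:k\in\ZZ\}$; since $\pi\mid w$ forces $\pi\notin\ZZ$, your exponent $\pi^{i+j-1}m$ is not an integer (for $i+j>1$ and $m\neq 0$), so $I+\pi^{i+j-1}mN$ is \emph{not} in $\SL(L_D^1)$ and hence not in $\SL^Q(L_D^1,Q)$. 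Your $\N(\pi)$ representatives are therefore not admissible.

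The repair is exactly what the paper does: use \emph{integer} powers. After reducing to $i=j=1$ (as in Lemma~\ref{prop_triangular_twisted_2}), take $L^{\N(\pi)\ell}$ for $\ell=1,\dots,\N(\pi)$. Then $QL^{\N(\pi)\ell}Q^{-1}\in\SL_2(\OD)$ always, whereas the $(1,2)$-entry of $PL^{\N(\pi)\ell}P^{-1}$ contributes the term $4\pi^\sigma\ell x^2(w+1)/\pi$, which lies in $\OD$ iff $\pi\mid\ell$. Your own valuation computation applies verbatim once you replace $\pi^{i+j-1}$ by $\N(\pi)^{i+j-1}$ (the extra $(\pi^\sigma)^{i+j-1}$ factor is a $\pi$-unit and changes nothing). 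A smaller point: your ``fibres of $A\mapsto f(A)/\pi^{j-1}\bmod\pi$'' do not literally coincide with the $\SL^P$-cosets, because $f$ is not additive; the upper bound is simply the trivial inequality $[\SL(L_D^1)\cap\Gamma_Q:\SL(L_D^1)\cap\Gamma_P]\le[\Gamma_Q:\Gamma_P]=\N(\pi)$.
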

\begin{proof} As always we may restrict to the case $i=j=1$. Note that $\pi \nmid2$ since $D \equiv 5 \mod 8$ and since $\pi$ is a divisor of a split prime number. Let $L$ again be the matrix from Lemma~\ref{lem_second_parabolic}. Then $QL^{\N(\pi)}Q^{-1} \in \SL_2(\OD)$, but $PL^{\N(\pi)}P^{-1} \notin \SL_2(\OD)$. Therefore, the matrices $L^{\N(\pi)i}$ for $1 \leq i \leq \N(\pi)$ are elements in the bigger group which are incongruent modulo the smaller group. \end{proof}

Applying this lemma, Theorem~\ref{thm_summarize_euler_calculations_triangular}, Theorem~\ref{thm_cov_degree_10}~(iii) and Proposition~\ref{thm_summarize_euler_calculations}~(ii) yields:

\begin{thm} \label{thm_summarize_euler_II} Let $D \equiv 5 \mod 8$ be a fundamental discriminant and let $m,n,x \in \OD$ be arbitrary elements with $(m,n,x)=1$ and let $$M=\begin{pmatrix} m & x \\ 0 & n \end{pmatrix}.$$
If $h_D=1$ then the degree of the covering $\widetilde{\pi}: C^M(M) \to C$ equals the degree of the covering $\pi: X_D(M) \to X_D$. In other words \index{twisted Teichmüller curve!volume|)}
$$\left[ \SL(L_D) : (\SL(L_D) \cap M^{-1} \SL_2(\OD) M ) \right] = \left[ \SL_2(\OD) :  \Gamma^D_0(nm) \right].$$
The volume of the Teichmüller twisted by $M$ is then
$$-9\pi \left[ \SL_2(\OD) : \Gamma^D_0(nm) \right] \chi(X_D).$$
\end{thm}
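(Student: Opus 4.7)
The plan is to assemble the statement from the pieces already developed in this chapter rather than to carry out a fresh computation. First I would invoke Theorem~\ref{thm_cov_degree_10}~(iv): since $D \equiv 5 \mod 8$ and the discriminant is fundamental, the covering $\pi:C_M(M)\to C_M$ has degree one, and moreover $\SL_M(L_D)=M\SL(L_D)M^{-1}\cap\SL_2(\OD)$. Conjugating by $M$ therefore identifies $\SL_M(L_D)$ with $\SL^M(L_D,M)=\SL(L_D)\cap M^{-1}\SL_2(\OD)M$, so that the indices $[\SL_M(L_D,M):\Id]$ and hence the Euler characteristics on both sides of the covering $C^M(M)\to C$ agree. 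This reduces the theorem to the purely group-theoretic statement
\[
[\SL(L_D):\SL^M(L_D,M)] \;=\; [\SL_2(\OD):\Gamma^D_0(mn)].
\]

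Second, I would establish the upper bound $\leq$ from the obvious inclusion $\SL^M(L_D,M)\subset\SL(L_D)\cap\Gamma^D_0(mn)$ (using $\SL(L_D)\subset\SL_2(\OD)$), and then produce enough coset representatives inside $\SL(L_D)$ to match the count $[\SL_2(\OD):\Gamma^D_0(mn)]$. The triangular-to-diagonal reduction of Proposition~\ref{prop_triangular_twisted_1} lets me replace $M$ by the diagonal matrix with the same determinant, so it suffices to treat the case $x=0$. Factoring $(m)(n)$ into its prime ideal divisors and working prime by prime (as in Lemma~\ref{prop_triangular_twisted_2}), the problem splits into the three splitting types from Section~\ref{sec_volume_simple}: split, inert, and ramified prime ideals.

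Third, for prime ideals $\mathfrak{p}$ with $(\mathfrak{p},\eta^*)=1$, the indices are maximal by Theorem~\ref{thm_summarize_euler_calculations}~(i), via Propositions~\ref{prop_splittingI}--\ref{prop_ramifiedI} (whose coset representatives are constructed out of powers of $T$ and $Z$). For the complementary case---prime ideal divisors of $w$---I would appeal to Proposition~\ref{thm_summarize_euler_calculations}~(ii)/Proposition~\ref{prop_k=0_1}, which crucially uses the second parabolic generator $L$ from Lemma~\ref{lem_second_parabolic}: since $D\equiv 5\mod 8$, the prime $2$ is inert and does not interfere, and the $\N(\mathfrak{p})$ matrices $L^{M\N(\mathfrak{p})^{k}i}$ together with $S$ provide the missing coset representatives. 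This is exactly the step phrased as a lemma in the paragraph preceding the theorem.

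The main obstacle, and the essential reason why the result is cleaner for $D\equiv 5\mod 8$ than for the other congruence classes in Theorem~\ref{thm_summarize_euler_calculations_triangular}, is the treatment of prime ideals dividing $w$: the usual generators $T,Z$ alone cannot produce $\N(\mathfrak{p})$ inequivalent coset representatives there. The hard work is to verify that $L$ from Lemma~\ref{lem_second_parabolic} does the job; once this verification is in place, the different prime-by-prime contributions multiply together by an induction on the number of prime divisors of $mn$, exactly as in the proof of Proposition~\ref{prop_k=0_1}, giving the claimed equality of indices. The volume formula then follows at once from Theorem~\ref{thm_bain_euler} combined with $\mu(C_M)=2\pi\chi(C_M)$.
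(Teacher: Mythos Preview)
Your assembly is correct and mirrors the paper's own one-line derivation, which cites exactly the preceding lemma (handling upper-triangular twists by primes dividing $w$ via the parabolic $L$ of Lemma~\ref{lem_second_parabolic}), Theorem~\ref{thm_summarize_euler_calculations_triangular}, Theorem~\ref{thm_cov_degree_10}, and Theorem~\ref{thm_summarize_euler_calculations}~(ii), followed by Bainbridge's formula. One small correction: the inclusion $\SL^M(L_D,M)\subset\SL(L_D)\cap\Gamma^D_0(mn)$ is not literally valid for general $x$; the upper bound you want comes instead from $\SL(L_D)\subset\SL_2(\OD)$ together with the fact that $[\SL_2(\OD):\SL_2(\OD,M)]=[\SL_2(\OD):\Gamma^D_0(mn)]$, which is the meaning of ``the degree of $X_D(M)\to X_D$'' in the statement.
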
 

\subsection{Classification of Twisted Teichmüller Curves} \label{sec_classification_twisted}
\index{twisted Teichmüller curve!classification|(}
In Section~\ref{sec_special_algebraic_curves} we have mentioned the theorem of H.-G. Franke and W. Hausmann which states that there are only finitely many different twisted diagonals if the determinant of the associated primitive skew-hermitian matrix is fixed. In this section, we will see that a corresponding result does also hold for twisted Teichmüller curves. To be more precise, we will almost completely classify twisted Teichmüller curves for $h_D^+=1$ and $D \equiv 5 \mod 8$ a fundamental discriminant. Most importantly, we may by Proposition \ref{prop_matrix_decomposition} then restrict to upper triangular matrices in this case and  make use of Theorem~\ref{thm_summarize_euler_II}.
\begin{rem} So far we have only considered the stabilizer of the graph of the Teichmüller curve, \index{stabilizer!of the graph of a Teichmüller curve} i.e. the stabilizer of the universal covering map of the Teichmüller curve. From this the \textbf{stabilizer of the Teichmüller curve} $C$, by which mean we mean its stabilizer in $X_D$, has to be carefully distinguished. If $D$ is a fundamental discriminant then a matrix $M \in \GL_2^+(K)$ and its corresponding map $\psi_M: (z_1,z_2) \mapsto (Mz_1,M^\sigma z_2)$ define an automorphism of $X_D$ if and only if $M \in \SL_2(\OD)$ (see Section~\ref{sec_hilbert_modular_surfaces}). The stabilizer of the Teichmüller curve is then rather boring.\end{rem} 
It is yet another different question, if for a matrix $M \in \GL_2^+(K)$ the twisted Teichmüller curve $C_M$ agrees with the original Teichmüller curve $C$. In some sense this is also a stabilizer and it will therefore be called the \textbf{twisting stabilizer} \index{stabilizer!twisting stabilizer}of the Teichmüller curve.\\[11pt] Still one might have in mind different such twisting stabilizers, i.e. with respect to different set of matrices. Essentially, one might think of three different such sets. The first one is the twisting stabilizer in $\SL_2(\RR)^2$. We denote this twisting stabilizer by $\St_{\SL_2(\RR)^2}(C)$. \label{glo_St} We have discussed in Chapter~\ref{cha_twisted_Teichmüller_curves} that $\St_{\SL_2(\RR)^2}(C)$ is far from being accessible. By dividing matrices in $\GL_2^+(K)$ by the root of their determinant we can also interpret these matrices as matrices in $\SL_2(\RR)^2$ and ask which of them stabilizes the Teichmüller curve by a twist. We denote the corresponding group by $\St_{\GL_2^+(K)}(C)$. Note that neither $\St_{\SL_2(\RR)^2}(C)$ nor $\St_{\GL_2^+(K)}(C)$ has to be a group ex ante since the twisting stabilizer is not defined by an action. Finally all matrices in $\SL_2(\OD)$ by definition stabilize the Teichmüller curve and these matrices can also be interpreted as a subset $\St_{\SL_2(\OD)}(C) \subset \St_{\GL_2^+(K)}(C)$. \\[11pt]
Since the matrices $M\in\GL_2^+(K)$ and $kM$ with $k \in K$ are identified when they are considered in $\SL_2(\RR)$ every matrix $M \in \GL_2^+(K)$ can, by multiplying each entry of $M$ with the common denominator of its entries, be uniquely written as $$M =\begin{pmatrix} a & b \\ c & d \end{pmatrix}$$ with $a,b,c,d \in \OD$ and $(a,b,c,d)=1$ if $h_D=1$ without changing the twisted Teichmüller curve. Let $q=(a,c)$. Recall from Proposition~\ref{prop_matrix_decomposition} that $M$ then yields the same twisted Teichmüller curve as the upper triangular matrix $\left( \begin{smallmatrix} k & l\\ 0 & m \end{smallmatrix} \right)$ where $k = q$, $m=(ad-bc)/q$ and $l=-\frac{f(ad-bc)/q-b}{a/q}$ and where $f$ is chosen such that $\frac{fc/q+1}{a/q} \in \OD$. \\[11pt]
If $h_D=1$ and $D \equiv 5 \mod 8$ is a fundamental discriminant we were able to calculate the volume of all twisted Teichmüller curves in Theorem~\ref{thm_summarize_euler_II}. This invariant of the twisted Teichmüller curve enables us to calculate $\St_{\GL_2^+(K)}(C)$ whenever the fundamental unit $\epsilon$ of $\OD$ has a negative norm. 

\begin{thm} \label{thm_calculation_of_st} If $D \equiv 5 \mod 8$ is a fundamental discriminant with narrow class number $h_D^+=1$ then $$\St_{\GL_2^+(K)}(C)=\SL_2(\OD)$$
holds.\footnote{Recall from Section~\ref{sec_quadratic_nf} that $h_D^+=1$ implies in particular that $\OD$ has a fundamental unit of negative norm.}
\end{thm}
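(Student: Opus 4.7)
The inclusion $\SL_2(\OD) \subseteq \St_{\GL_2^+(K)}(C)$ is essentially tautological. Since $D$ is a fundamental discriminant, any $N \in \SL_2(\OD)$ descends to the identity on $X_D$ (see the discussion of automorphisms of $X_D$ in Section~\ref{sec_hilbert_modular_surfaces}), so it trivially preserves $C$ as a subset of $X_D$.

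For the reverse inclusion, I would proceed as follows. Take $M \in \GL_2^+(K)$ with $C_M = C$. Since $h_D^+ = 1$ forces $h_D = 1$, Proposition~\ref{prop_matrix_decomposition} lets me replace $M$, without changing the twisted Teichm\"uller curve, by a normalized upper triangular matrix $\left( \begin{smallmatrix} m & x \\ 0 & n \end{smallmatrix} \right)$ with $m,n,x \in \OD$ and $(m,n,x) = 1$. The natural parameterizations $\HH/\SL_M(L_D) \to C_M$ and $\HH/\SL(L_D) \to C$ are both normalizations of the same image curve in $X_D$, so they must agree as Riemann surfaces. In particular their orbifold Euler characteristics coincide, and Theorem~\ref{thm_summarize_euler_II} then forces
\[
\left[ \SL_2(\OD) : \Gamma^D_0(mn) \right] = 1.
\]
By Proposition~\ref{prop_index_congruence}, this is only possible if $mn$ is a unit in $\OD$.

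The remaining step is arithmetic. Since $D \equiv 5 \bmod 8$ is a fundamental discriminant with $h_D^+ = 1$, the classification recalled in Section~\ref{sec_quadratic_nf} shows that $D$ is a prime $\equiv 1 \bmod 4$, and in particular the fundamental unit $\epsilon$ of $\OD$ has negative norm. Hence every totally positive unit is of the form $\epsilon^{2k}$. Writing $mn = \lambda^2$ with $\lambda = \epsilon^k \in \OD^*$, the rescaled matrix $\lambda^{-1}M$ still has entries in $\OD$ (because $\lambda$ is a unit) and determinant $1$, so lies in $\SL_2(\OD)$. Since $M$ and $\lambda^{-1}M$ represent the same element of $\SL_2(\RR)^2$ after dividing by the square root of the determinant, this shows the class of $M$ in $\St_{\GL_2^+(K)}(C)$ comes from $\SL_2(\OD)$, completing the proof.

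The main obstacle I expect is the first of these steps: carefully justifying that $C_M = C$ as analytic subsets of $X_D$ really implies equality of the orbifold Euler characteristics of the two Teichm\"uller-type parameterizations. A priori the image Teichm\"uller curve could be singular in $X_D$, and one needs to invoke the fact that both $\HH/\SL_M(L_D)$ and $\HH/\SL(L_D)$ are normalizations of the image in order to conclude $\chi(C_M) = \chi(C)$ and legitimately apply Theorem~\ref{thm_summarize_euler_II}. Once this is in place, the rest is a short computation combining the index formula with the negative-norm fundamental unit.
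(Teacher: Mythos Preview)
Your proposal is correct, and for upper triangular $M$ it coincides with the paper's argument: Theorem~\ref{thm_summarize_euler_II} forces $mn$ to be a unit, and the negative-norm fundamental unit then lets you rescale into $\SL_2(\OD)$. Your worry about the Euler-characteristic comparison is not a real obstacle: since $\SL_M(L_D)$ and $\SL(L_D)$ are by definition the full stabilizers of the respective graphs inside $\SL_2(\OD)$, the maps $\HH/\SL_M(L_D)\to X_D$ and $\HH/\SL(L_D)\to X_D$ are generically injective onto their images, hence both sources are normalizations of the same image curve and have the same orbifold Euler characteristic. The paper uses this implicitly when it says ``$M$ preserves the volume.''

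Where your argument genuinely differs from the paper is in the treatment of a general (non-upper-triangular) $M$. You reduce immediately to the upper triangular case via Proposition~\ref{prop_matrix_decomposition} and then argue with the class modulo scalars; this is entirely sufficient for the statement as formulated. The paper instead keeps the original normalized matrix $M=\left(\begin{smallmatrix} a & b\\ c & d\end{smallmatrix}\right)$ with $(a,b,c,d)=1$, passes to the associated upper triangular $\widetilde{M}$ from Proposition~\ref{prop_matrix_decomposition}, and then does an explicit divisibility chase (tracking $q=(a,c)$ through the formulas for the entries of $\widetilde{M}$) to conclude that $q=1$ and $ad-bc=1$, i.e.\ that the normalized $M$ itself lies in $\SL_2(\OD)$, not merely its scalar class. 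Your route is shorter and perfectly adequate; the paper's route yields the slightly sharper conclusion that the gcd-$1$ representative is literally in $\SL_2(\OD)$, at the cost of a longer computation.
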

\begin{proof} If $M \in \GL_2^+(K)$ is an upper triangular matrix then we may assume that $M$ is of the form $\left( \begin{smallmatrix} m & x \\ 0 & n \end{smallmatrix} \right)$ with $m,n,x \in \OD$ and $(m,n,x)=1$. By Theorem~\ref{thm_summarize_euler_II}, $M$ preserves the volume if and only if $m \in \OD^*$ and $n \in \OD^*$. Since the fundamental unit $\epsilon$ has negative norm we thus have $m = \pm \epsilon^{2g}$ and $n = \pm \epsilon^{2g'}$ for $g,g' \in \ZZ$. By dividing each entry of $M$ by an appropriate power of $\epsilon$ we see that $M$ is indeed a matrix in $\SL_2(\OD)$.\\[11pt]
If $M$ is not an upper triangular matrix, then $$M =\begin{pmatrix} a & b \\ c & d \end{pmatrix}$$ with $a,b,c,d \in \OD$ and $(a,b,c,d)=1$. Let $q=(a,c)$ or in other words
$$M =\begin{pmatrix} q\widetilde{a} & b \\ q\widetilde{c} & d \end{pmatrix}$$ where $\widetilde{a} = a/q \in \OD$ and $\widetilde{c} = c/q \in \OD$. From Proposition~\ref{prop_matrix_decomposition}, it then follows that $M$ yields the same twisted Teichmüller curve as the upper triangular matrix $\widetilde{M} = \left( \begin{smallmatrix} k & l\\ 0 & m \end{smallmatrix} \right)$ where $k = q$, $m=(\widetilde{a}d-b\widetilde{c})$ and $l=-\frac{f(\widetilde{a}d-b\widetilde{c})-b}{\widetilde{a}}$ where $f$ is chosen such that $\frac{\widetilde{c}f+1}{\widetilde{a}} \in \OD$. Suppose that $C_M$ is equal to $C$. Theorem~\ref{thm_summarize_euler_II} then implies that all the entries of $\widetilde{M}$ are divisible by $q$ and moreover that $q=(\widetilde{a}d-b\widetilde{c})$. Since $q|l$ it follows that $q|b$, i.e. $b = \widetilde{b}q$ with $\widetilde{b} \in \OD$. Then $(a,b,c,d)=1$ and $q=(\widetilde{a}d-b\widetilde{c})$ imply that $q|\widetilde{a}$, i.e. $\widetilde{a} = q \hat{a}$ for some $\hat{a} \in \OD$. Then $$l=-\frac{f(\widetilde{a}d-b\widetilde{c})-b}{\widetilde{a}} = -\frac{fq-\widetilde{b}q}{\hat{a}q} = - \frac{f-\widetilde{b}}{\hat{a}}.$$ Therefore, $\widetilde{M}$ defines the same twisted Teichmüller curve as
$$\begin{pmatrix} 1 & -\frac{f-\widetilde{b}}{q\hat{a}} \\ 0 & 1 \end{pmatrix}$$ and hence in particular $\widetilde{b}=uq\hat{a}+f$ for some $u \in \OD$ since $C_M=C$. Then 
$$d = \frac{\widetilde{b}\widetilde{c}+1}{\hat{a}} =  \frac{(uq\hat{a}+f)\widetilde{c}+1}{\hat{a}} = \widetilde{c}uq + \frac{\widetilde{c}f+1}{\hat{a}}$$
and hence $q|d$ since $\frac{\widetilde{c}f+1}{\widetilde{a}} = \frac{\widetilde{c}f+1}{q\hat{a}} \in \OD$. Therefore, $q=1$ and $ad-bc=1$, i.e. $M \in \SL_2(\OD)$. \end{proof}
From this result the classification of twisted Teichmüller curves in the case $h_D^+=1$ and $D \equiv 5 \mod 8$ fundamental discriminant can be derived. Recall that two twisted Teichmüller curves $C_M$ and $C_N$ are the same curve in $X_D$ if and only if there exists a $J \in \SL_2(\OD)$ such that 
$$\left\{ (Mz,M^\sigma\varphi(z)) \mid z \in \HH \right\} = \left\{ (JNz,J^\sigma N^\sigma \varphi(z)) \mid z \in \HH \right\}.$$
We only consider Teichmüller curves twisted by upper triangular matrices here since $h_D^+=1$; by normalizing the matrix, we only have to check when two matrices $M= \left( \begin{smallmatrix} m & x \\ 0 & n \end{smallmatrix} \right)$ and $N= \left( \begin{smallmatrix} a & b \\ 0 & c \end{smallmatrix} \right)$ with $a,b,c,m,n,x \in \OD$ and $(m,n,x)=1$ and $(a,b,c)=1$ define the same curve in $X_D$. If the associated Möbius transformations of $M$ and $N$ differ by a matrix in $\SL_2(\OD)$ then the twisted Teichmüller curves agree. On the other hand we have:

\begin{thm} \label{thm_classificiation1} \index{twisted Teichmüller curve!classification|)}  Suppose $D \equiv 5 \mod 8$ is a fundamental discriminant with narrow class number $h_D^+=1$. If $M= \left( \begin{smallmatrix} m & x \\ 0 & n \end{smallmatrix} \right)$ and $N = \left( \begin{smallmatrix} a & b \\ 0 & c \end{smallmatrix} \right)$ with $a,b,c,m,n,x \in \OD$ and $(a,b,c)=1$ and $(m,n,x)=1$ define the same twisted Teichmüller curve then $$\det(M) = \det(N).$$
\end{thm}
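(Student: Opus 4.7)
My plan is to translate the equality $C_M=C_N$ into a matrix identity in $\GL_2^+(K)$, pin down the "extra" factor using the pseudo parabolic maximality available in this range of $D$, and then use the gcd-1 normalization of the entries to force that factor to be a unit.

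First I would unpack the hypothesis geometrically. If $C_M=C_N$ as subsets of $X_D$, then the two parameterized lifts $\Phi_M$ and $\Phi_N$ trace the same algebraic curve in $\HH\times\HH^-$ up to the diagonal $\SL_2(\OD)$-action, so there exist $J\in\SL_2(\OD)$ and a biholomorphism $g\colon\HH\to\HH$ with $Mz=JN\,g(z)$ and $M^\sigma\varphi(z)=J^\sigma N^\sigma\varphi(g(z))$. The first equation identifies $g$ with the M\"obius transformation given by $L:=N^{-1}J^{-1}M\in\GL_2^+(K)$, and the second then reads $\varphi(Lz)=L^\sigma\varphi(z)$; equivalently, the pair $(L,L^\sigma)$ lies in $\Stab(\Phi)$. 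Note that $\det(L)=mn/(ac)$.

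Next I would use pseudo parabolic maximality to control $L$. For $D\equiv5\bmod 8$, Theorem~\ref{thm_pseudo_parabolic} gives that $\SL(L_D)$ is pseudo parabolic maximal, so by Theorem~\ref{thm_parabolic_maximal_implies_stabilizer} and its corollary we have $\Stab(\Phi)\cap\SL_2(K)=\SL(L_D)$ and $\Comm_{\SL_2(\RR)}(\SL(L_D))\subset\SL_2(\OD)$. Via the first real embedding, the M\"obius transformation defined by $L$ is an element of $\PSL_2(\RR)$; lifting it to $\SL_2(\RR)$ means dividing by $\sqrt{\det L}\in\RR_{>0}$. By Corollary~\ref{cor_stabcap}, $\Stab(\Phi)\subset\Comm_{\SL_2(\RR)}(\SL(L_D))\subset\SL_2(\OD)$, so $L/\sqrt{\det L}$ has entries in $\OD$. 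Since the entries of $L$ itself are in $K$, this forces $\mu:=\sqrt{\det L}\in K^\times$, and the quotient $S:=L/\mu$ lies in $\SL_2(\OD)\cap\Stab(\Phi)=\SL(L_D)$. Thus $L=\mu S$ with $S\in\SL(L_D)$ and $\det(L)=\mu^2$.

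Substituting back, $M=JNL=\mu\cdot JNS$. Here $J,S\in\SL_2(\OD)$ and $N\in\Mat^{2\times 2}(\OD)$ has gcd ideal of entries equal to $(1)$ by hypothesis, and left/right multiplication by $\SL_2(\OD)$ preserves the $\OD$-ideal spanned by the entries of a matrix. Hence the entries of $JNS$ generate the unit ideal, and because the entries of $M=\mu\cdot JNS$ also generate the unit ideal, we conclude $(\mu)=(1)$, i.e.\ $\mu\in\OD^\times$. Therefore $\det(M)=\mu^2\det(N)$, and replacing $N$ by $\mu N$ (an upper triangular matrix in $\GL_2^+(K)\cap\Mat^{2\times 2}(\OD)$ with gcd-$1$ entries, still defining the curve $C_N$ because $\mu$ acts trivially on $\HH\times\HH^-$ as a scalar) we achieve $\det(M)=\det(N)$.

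The main obstacle is the second step, where $L$ is a priori only in $\GL_2^+(K)$ rather than $\SL_2(K)$, so Theorem~\ref{thm_parabolic_maximal_implies_stabilizer} does not apply verbatim; the argument has to go through the commensurator, exploiting the strong inclusion $\Comm_{\SL_2(\RR)}(\SL(L_D))\subset\SL_2(\OD)$ which ultimately rests on Theorem~\ref{thm_pseudo_parabolic} and is specific to the congruence condition $D\equiv5\bmod 8$ together with the assumption $h_D^+=1$. Once that step is secured, the rest is a book-keeping argument with gcd ideals of entries.
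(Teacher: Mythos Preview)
Your approach is essentially correct and takes a genuinely different route from the paper's. The paper proves that $L=N^{-1}J^{-1}M$ lies in the twisting stabilizer and then invokes Theorem~\ref{thm_calculation_of_st}, whose proof rests on the full volume formula of Theorem~\ref{thm_summarize_euler_II}; it then finishes with an explicit entrywise analysis of $kL$ to force $p=q=1$. You instead use only the Chapter~\ref{cha_maximality} input (pseudo parabolic maximality via Theorem~\ref{thm_pseudo_parabolic} and the consequence $\Comm_{\SL_2(\RR)}(\SL(L_D))\subset\SL_2(\OD)$), so you bypass the volume computations entirely, and your gcd-of-entries bookkeeping replaces the paper's matrix computation. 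This is a cleaner and more conceptual argument.

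Two refinements. First, your citation of Corollary~\ref{cor_stabcap} for ``$\Stab(\Phi)\subset\Comm$'' is imprecise: that corollary only treats intersections with conjugates of $\SL_2(\OD)$, and $\bar L=L/\sqrt{\det L}$ is not known a priori to lie in $\SL_2(K)$. What actually works is the direct argument: since $L\in\GL_2^+(K)$ one has $[L\SL(L_D)L^{-1}:L\SL(L_D)L^{-1}\cap\SL_2(\OD)]<\infty$ by Proposition~\ref{prop_finite_index_GL2K}, and since $\varphi(Lz)=L^\sigma\varphi(z)$ one has $L\SL(L_D)L^{-1}\cap\SL_2(\OD)\subset\Stab(\Phi)\cap\SL_2(\OD)=\SL(L_D)$; together these give that $\bar L\in\Comm_{\SL_2(\RR)}(\SL(L_D))$. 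Second, your final ``replace $N$ by $\mu N$'' does not prove literal equality for the \emph{given} $M,N$; you obtain $\det(M)=\mu^2\det(N)$ with $\mu\in\OD^\times$, i.e.\ the ideal equality $(\det M)=(\det N)$. The paper's own argument yields the same (its ``$q=1$, $p=1$'' really shows $p,q$ have no prime divisors, hence are units), so this is the intended reading of the statement.
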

\begin{proof}
If $M$ and $N$ define the same twisted Teichmüller curve then there exists an $J \in \SL_2(\OD)$ such that
$$(Mz,M^{\sigma}\varphi(z))=(JNz^*,J^{\sigma}N^{\sigma}\varphi(z^*)).$$
for some $z^* \in \mathbb{H}$ depending on $z$. The first component yields:
$$ N^{-1} \cdot J^{-1} \cdot M z= z^*$$
Inserting this in the second component gives
$$M^\sigma \varphi(z) = J^\sigma \cdot N^\sigma \varphi(N^{-1} \cdot J^{-1} \cdot M z)$$ 
which is equivalent to
$$N^{\sigma^{-1}} \cdot J^{\sigma^{-1}} \cdot M^\sigma \cdot \varphi(z) = \varphi(N^{-1} \cdot J^{-1} \cdot M z).$$ 
Therefore, $N^{-1}J^{-1}M$ lies in the stabilizer of the graph of the Teichmüller curve and in particular also in the twisting stabilizer. Note that $$\det(N^{-1}J^{-1}M) = \frac{nm}{ac}.$$ 
Thus it follows from Theorem~\ref{thm_calculation_of_st} that for $k = \frac{\sqrt{ca}}{\sqrt{nm}}$ the matrix $kN^{-1}J^{-1}M$ has to be a properly normalized matrix in $\GL_2^+(K)$, i.e. in $\Mat^{2x2}(\OD)$ and its entries have no common divisor. Since $N^{-1}J^{-1}M \in \GL_2^+(K)$ also $k$ must then lie in $K$, i.e. $k=\frac{p}{q}$ with $p,q \in \OD$ and $(p,q)=1$. Let 
$$J^{-1} = \begin{pmatrix} e & f \\ g & h \end{pmatrix}.$$
Then
$$Z:=kN^{-1}J^{-1}M = \frac{1}{c} \frac{p}{q}  \begin{pmatrix} \frac{1}{a} (m (-gb+ec)) & \frac{1}{a} (-b(gx+hn)+ c(ex+fn))\\ gm & gx+hn \end{pmatrix}.$$
Let $q_1$ be a prime divisor of $q$. From the lower left entry of $Z$ it follows that $q_1|g$ oder $q_1|m$. Assume that $q_1|g$. Then $Z_{2,2}$ implies that $q_1|n$ since $q_1 \nmid h$ because $J \in \SL_2(\OD)$. Considering $Z_{1,1}$ we then have $q_1|m$ since $q_1|e$ would again contradict $J \in \SL_2(\OD)$.\\
We hence always have $q_1|m$. From $Z_{2,2}$ it then follows that $q_1 | gx+hn$ and therefore from $Z_{1,2}$ we get $q_1 | ex+fn$. Then $q_1 | (eh-fg)x$ which implies $q_1|x$ since $\det(J^{-1})=1$ and $q_1|(eh-fg)n$ which means $q_1|n$. Since $(m,n,x)=1$ hence $q_1=1$. This implies that $q=1$. Interchanging the roles of $N$ and $M$ we get $p=1$. This means that $mn=ac$ or in other words  $\det(M) = \det(N)$.
\end{proof}

We can also state a counterpart to the theorem by H.-G. Franke and W. Hausmann. If $M=\left( \begin{smallmatrix} m & x \\ 0 & n \end{smallmatrix} \right)$ with $m,n \in \OD$ fixed and $x \in K$ arbitrary the volume of the twisted Teichmüller curves changes with varying $x$. However, after normalizing the involved matrices appropriately such that $m,n,x \in \OD$ and $(m,n,x)=1$ there are only finitely many different twisted Teichmüller curves of a given determinant. The number of different twisted Teichmüller curves of the same determinant can be easily bounded very roughly.
\begin{prop} If $k \in \OD$ and $k = \prod_i \pi_i^{e_i}$ is a decomposition of $k$ into prime elements with $e_i \in \NN$. Let $f= \sum_i e_i$ then the number of different twisted Teichmüller curves of determinant $k$ is at most $2^f \N(k)$ if $h_D^+=1$.
\end{prop}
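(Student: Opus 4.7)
The plan is to bound the number of equivalence classes of normalized upper-triangular twist matrices of determinant $k$, invoking the classification already established. Since $h_D=1$, Proposition~\ref{prop_matrix_decomposition} allows every twisted Teichm\"uller curve to be represented by a matrix $M = \left(\begin{smallmatrix} m & x \\ 0 & n \end{smallmatrix}\right)$ with $m,n,x \in \OD$ and $(m,n,x)=1$, and by Theorem~\ref{thm_classificiation1} two such normalized representatives with different determinants yield different curves. Fix $k \in \OD$; it then suffices to count ordered pairs $(m,n)$ with $mn \sim k$ (up to units) together with the allowed values of $x$.

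First I would count the factorizations of $k$. Since $h_D = 1$, the ring $\OD$ is a unique factorization domain, so writing $k = \prod_i \pi_i^{e_i}$, the number of ordered factorizations $k = mn$ up to units in each factor equals $\prod_i (e_i + 1)$. Using $e_i + 1 \leq 2^{e_i}$ for $e_i \geq 1$, this is bounded by $\prod_i 2^{e_i} = 2^f$. The hypothesis $h_D^+ = 1$ guarantees that unit ambiguities in the diagonal entries do not produce new twisted Teichm\"uller curves: totally positive units can be absorbed by scaling in $K^+$ (which does not affect the action on $\HH \times \HH^-$), and a unit of negative norm can be moved between $m$ and $n$ by a diagonal matrix in $\SL_2(\OD)$ acting on the left.

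Next, for each fixed pair $(m,n)$, I would reduce the choice of $x$ modulo $n\OD$. Left multiplication by $\left(\begin{smallmatrix} 1 & t \\ 0 & 1 \end{smallmatrix}\right) \in \SL_2(\OD)$ with $t \in \OD$ sends $M$ to $\left(\begin{smallmatrix} m & x+tn \\ 0 & n \end{smallmatrix}\right)$, which represents the same curve on $X_D$. Therefore $x$ is determined only modulo $n\OD$, giving at most $|\OD/n\OD| = \N(n)$ values; imposing $(m,n,x)=1$ can only decrease this count. Combining both counts, the total number of twisted Teichm\"uller curves of determinant $k$ is bounded by
$$\sum_{\substack{(m,n) \\ mn \sim k}} \N(n) \;\leq\; 2^f \cdot \max_{n \mid k} \N(n) \;\leq\; 2^f\, \N(k),$$
where the last inequality uses $n \mid k \Rightarrow \N(n) \leq \N(k)$.

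The only delicate point, and the main thing to check carefully, is that the unit and scaling identifications are correctly treated: one must verify that replacing $M$ by $uM$ for a unit $u \in \OD^*$ or by $M$ with a unit multiple absorbed differently into $m$ and $n$ truly produces the same twisted curve under the normalization $(m,n,x)=1$. This relies on $h_D^+=1$ and is what makes the bound $2^f$ (rather than something larger) legitimate; everything else is bookkeeping of divisors in a UFD together with the trivial reduction of $x$ modulo $n$ by an elementary row operation in $\SL_2(\OD)$.
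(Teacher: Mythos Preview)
Your proof is correct and follows essentially the same approach as the paper: normalize to an upper triangular matrix $M = \left(\begin{smallmatrix} m & x \\ 0 & n \end{smallmatrix}\right)$ with $(m,n,x)=1$, bound the number of diagonals by $2^f$ via the prime factorization of $k$, and reduce $x$ modulo $n$ via left multiplication by $\left(\begin{smallmatrix} 1 & t \\ 0 & 1 \end{smallmatrix}\right) \in \SL_2(\OD)$ to get at most $\N(n) \leq \N(k)$ choices. Your invocation of Theorem~\ref{thm_classificiation1} is unnecessary for the bound (it concerns curves of \emph{different} determinants), and your careful treatment of units is more explicit than the paper's, which simply asserts the $2^f$ count for the diagonal without discussing unit ambiguities; both are harmless refinements of the same short argument.
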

\begin{proof} 
Since the class number is equal to one, we may assume that $M= \left( \begin{smallmatrix} m & x \\ 0 & n \end{smallmatrix} \right)$ is an upper triangular matrix with $m,n,x \in \OD$ and $(m,n,x)=1$. Since $mn=k$ each of the $e_i$ factors $\pi_i$ must either divide $m$ or $n$. This gives altogether $2^f$ possibilities for the diagonal. Since $M$ and $JM$ define the same twisted Teichmüller curves if $J= \left( \begin{smallmatrix} 1 & y \\ 0 & 1 \end{smallmatrix} \right) \in \SL_2(\OD)$ there are at most $\N(k)$ different possibilities for the choice of $x$ if $m$ and $n$ are fixed.
\end{proof}

We have strong numerical evidence that the following conjecture holds. It is based on computer experiments for many different determinants $k \in \OD$ including all types of splitting behavior of the prime divisors of $k$.

\begin{conje} Suppose $D \equiv 5 \mod 8$ is a fundamental discriminant with $h_D^+=1$. All matrices $M \in \GL_2^+(K) \cap \Mat^{2x2}(\OD)$ of determinant $n \in \OD$ with relative prime entries define the same twisted Teichmüller curve, i.e. there is exactly one twisted Teichmüller curve of determinant $n$.\end{conje}

We can prove this conjecture in two instances. Indeed, equality of the determinant is also a sufficient criterion for the twisted Teichmüller curves to coincide whenever the determinant is prime in $\OD$. 

\begin{thm} \label{thm_prime_classification}  Suppose $D \equiv 5 \mod 8$ is a fundamental discriminant, $h_D^+=1$. Let $\pi \in \OD$ be a prime element. Then there is exactly one twisted Teichmüller curve of determinant $\pi$. \end{thm}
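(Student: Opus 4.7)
The plan is as follows. Since $h_D^+=1$ forces $h_D=1$, Proposition~\ref{prop_matrix_decomposition} allows me to restrict attention to upper triangular matrices $M=\left(\begin{smallmatrix} m & x \\ 0 & n \end{smallmatrix}\right)$ with $m,n,x\in\OD$, $(m,n,x)=1$, and $\det(M)=\pi$. Because $\pi$ is a prime element, one of $m,n$ must be a unit and the other must equal $\pi$ up to units. Absorbing units, every normalized representative is $\SL_2(\OD)$-equivalent (via left multiplication by unipotent matrices in $\SL_2(\OD)$) to either $M_\pi:=\left(\begin{smallmatrix}\pi & 0 \\ 0 & 1\end{smallmatrix}\right)$ or to some $M_j:=\left(\begin{smallmatrix}1 & j \\ 0 & \pi\end{smallmatrix}\right)$ with $j$ varying over a system of representatives for $\OD/\pi\OD$. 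This gives at most $\N(\pi)+1$ candidate twisted Teichm\"uller curves, matching $|\mathbb{P}^1(\OD/\pi\OD)|$.

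First I would reduce $M_\pi$ to $M_0$. Since $S\in\SL(L_D)$ is self-conjugate ($S=S^\sigma$) and satisfies $\varphi(Sz)=S\varphi(z)$, the identity $SM_\pi=M_0 S$ (as a direct check shows, both equal $\left(\begin{smallmatrix}0 & -\pi \\ 1 & 0\end{smallmatrix}\right)$) implies after reparametrizing by $S$ that $C_{M_\pi}=C_{M_0}$. So it only remains to prove $C_{M_j}=C_{M_0}$ for every $j\in\OD$.

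The key tool is that $\SL(L_D)$ acts transitively on $\mathbb{P}^1(\OD/\pi\OD)$. Indeed, Theorem~\ref{thm_summarize_euler_II} applied to $\left(\begin{smallmatrix}\pi & 0\\ 0 & 1\end{smallmatrix}\right)$ gives $[\SL(L_D):\SL(L_D)\cap\Gamma_0^D(\pi)]=\N(\pi)+1$, and conjugating by $S\in\SL(L_D)$ transfers this to the same index for $\Gamma^{D,0}(\pi)$, the stabilizer in $\SL_2(\OD)$ of $[0:1]\in\mathbb{P}^1(\OD/\pi\OD)$. Since the orbit size equals $|\mathbb{P}^1(\OD/\pi\OD)|$, transitivity follows. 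In particular, for any $j\in\OD$, there is $U=\left(\begin{smallmatrix}a & b \\ c & d\end{smallmatrix}\right)\in\SL(L_D)$ with $U\cdot[0:1]=[-j:1]$, i.e.\ with $b+dj\equiv 0\pmod{\pi}$. A direct computation then shows
$$
J\;:=\;M_0\,U^{-1}\,M_j^{-1}\;=\;\begin{pmatrix} d & -(dj+b)/\pi \\ -\pi c & jc+a \end{pmatrix}
$$
lies in $\SL_2(\OD)$, so $M_j=J^{-1}M_0\,U^{-1}$. Since $U^{-1}\in\SL(L_D)$, the relation $\varphi(U^{-1}z)=(U^{-1})^\sigma\varphi(z)$ lets me rewrite $\Phi_{M_j}$ as $J^{-1}\cdot\Phi_{M_0}$ composed with the reparametrization $z\mapsto U^{-1}z$, so $C_{M_j}=C_{M_0}$ in $X_D$.

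The main technical point is the reduction from the abstract transitivity statement to the explicit cocycle computation above; the rest is bookkeeping. Combined with the $M_\pi\leftrightarrow M_0$ reduction, this shows that all $\N(\pi)+1$ normalized representatives define one and the same twisted Teichm\"uller curve, proving the theorem.
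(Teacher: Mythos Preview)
Your proof is correct and takes a cleaner route than the paper. Both arguments begin identically: reduce to the list $M_\pi$, $M_j$ ($j$ mod $\pi$), and use $S\in\SL(L_D)$ to identify $C_{M_\pi}$ with $C_{M_0}$ (minor typo: $SM_\pi=M_0S$ equals $\left(\begin{smallmatrix}0 & -1\\ \pi & 0\end{smallmatrix}\right)$, not $\left(\begin{smallmatrix}0 & -\pi\\ 1 & 0\end{smallmatrix}\right)$, but the identity you need holds). The divergence is in the last step. The paper finds, for each $x$, an explicit word $W_x$ in the known parabolic generators $T,S,L$ of $\SL(L_D)$ with $N_xW_xM^{-1}\in\SL_2(\OD)$, and this forces a case split according to whether $\pi$ is split, inert, or ramified (and whether $\pi\mid w$). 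You instead invoke Theorem~\ref{thm_summarize_euler_II} to get $[\SL(L_D):\SL(L_D)\cap\Gamma_0^D(\pi)]=\N(\pi)+1=|\mathbb{P}^1(\OD/\pi)|$, read this as transitivity of $\SL(L_D)$ on $\mathbb{P}^1(\OD/\pi)$, and pull the required $U$ out of that. Your approach packages all the case analysis into Theorem~\ref{thm_summarize_euler_II} (where it was already done) and makes the mechanism transparent: the existence of the Veech-group element is exactly the transitivity statement. The paper's explicit construction has the small advantage of being self-contained and naming the element concretely, but your argument is shorter and makes clear why the prime-determinant case works uniformly. Incidentally, the paper does use essentially your transitivity idea elsewhere (Lemma~\ref{lem_facilitate}), just not for this theorem.
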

\begin{proof} Anyway, all twisted Teichmüller curves are given by the matrices
$$M:=\begin{pmatrix} \pi & 0 \\ 0 & 1 \end{pmatrix} \quad \text{and} \quad N_x:=\begin{pmatrix} 1 & x\\ 0 & \pi \end{pmatrix}, \ x \in \OD, \ \pi \nmid x.$$
Let us say a few words why these are indeed all possible matrices: since we may multiply any twist-matrix $W$ from the left by a matrix $V \in \SL_2(\OD)$ without changing the twisted Teichmüller curve, $M$ yields indeed the only twisted Teichmüller curve with upper left entry $\pi$. Also note that we can multiply a twist-matrix $W$ from the right by an element $V$ in the Veech group without changing the twisted Teichmüller curve. Hence the matrices $M$ and $N_0$ define the same twisted Teichmüller curve because $M=-SN_{0}S$ and $S \in \SL(L_D)$. This means that we may assume $\pi \nmid x$.\\[11pt]
We now show that all the above matrices define the same twisted Teichmüller curve. This is by definition equivalent to the existence of some matrix $W_x \in \SL(L_D)$ for all $x \in \OD$ such that 
$N_xW_xM^{-1} \in \SL_2(\OD)$. As usual, we now have to distinguish the different types of prime numbers.\\[11pt]
\textit{(i)} If $\pi$ is a divisor of a split prime number $p \in \ZZ$ and $\pi \nmid w$ then
$$N_xT^lSM^{-1} = \begin{pmatrix} \frac{lw+x}{\pi} & -1 \\ 1 & 0 \end{pmatrix}.$$
Since $\pi \nmid w$ there thus exists $l \in \ZZ$ with $N_xT^lSM^{-1} \in \SL_2(\OD)$. If $\pi |w$ then $\pi \nmid 2$ since $D \equiv 5 \mod 8$. We consider again the matrix $L$ from Lemma~\ref{lem_second_parabolic}. Then
$$(ML^kN_x^{-1})_{1,1} = \frac{1}{\pi} - \frac{4kx}{\pi} - \frac{2wk}{\pi} (1+w+2x)$$
and all the other entries of $ML^kN_x^{-1}$ automatically lie in $\OD$ for all $k$. There exists a $k_0 \in \ZZ$ such that $\pi|(1-4k_ox)$ since $\pi \nmid x$ and $\pi \nmid 2$. Then $ML^kN_x^{-1} \in \SL_2(\OD)$ as $\pi|w$.\\[11pt]
\textit{(ii)} If $\pi$ is an inert prime number, then the matrices 
$$P:= \begin{pmatrix} 1 & u + iw \\ 0 & \pi \end{pmatrix} \quad \text{and} \quad Q:= \begin{pmatrix} 1 & u +jw \\ 0 & \pi \end{pmatrix}$$
with $u,i,j \in \ZZ$ define the same twisted Teichmüller curve since $PT^{j-i}Q^{-1} = \Id$. So we only have to consider those $N_x$ with $x\in \ZZ$ and $\pi \nmid x$. We now show that for all $x \in \ZZ$ the matrices $N_x$ and $N_0$ define the same twisted Teichmüller curve. Then
$$(N_xT^kST^lN_0^{-1})_{1,2} = -\frac{1}{\pi} + \frac{l}{\pi}(xw+kw^2)$$
and all the other entries always are in $\OD$. Note that $xw+k_0w^2 = w(x+k_0) + \frac{D-1}{4}k_0 $. We choose $k_0 \in \ZZ$ such that $\pi|(x+k_0)$ and thus in particular $\pi \nmid k_0$. Then
$$(N_xT^{k_0}ST^lN_0^{-1})_{1,2} = -\frac{1}{\pi} + \frac{lk_0(D-1)}{4\pi} + iw$$
with $i \in \ZZ$. As $\pi$ is an inert prime number $\pi \nmid \frac{D-1}{4}$. Therefore, there exists $l_0 \in \ZZ$ such that $\pi| - 1 + l_0k_0 \frac{D-1}{4}$ and then $(N_xT^{k_0}ST^{l_0}N_0^{-1}) \in \SL_2(\OD).$ \\[11pt]
\textit{(iii)} If $\pi$ is a divisor of a ramified prime number $p \in \ZZ$ then $\pi \nmid w$ and the proof is verbatim the same as in the split prime case.
\end{proof}

Furthermore, we are able to show the corresponding result for diagonal twists.

\begin{prop} Suppose $D \equiv 5 \mod 8$ is a fundamental discriminant with $h_D^+=1$. Then all diagonal matrices $M = \left( \begin{smallmatrix} m & 0 \\ 0 & n \end{smallmatrix} \right)$ with $m,n \in \OD$ and $(m,n)=1$ which have the same determinant define the same twisted Teichmüller curve. \end{prop}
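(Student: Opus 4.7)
The plan is to apply Lemma~\ref{lem_facilitate} to reduce the comparison of $C_M$ and $C_{M'}$ to a comparison with the common normal form $\diag(k,1)$, where $k=mn=m'n'$. Recall that Lemma~\ref{lem_facilitate} asserts that, whenever $\SL(L_D)\cap\Gamma^D_0(k)$ attains the maximal possible index $[\SL_2(\OD):\Gamma^D_0(k)]$ in $\SL(L_D)$, the matrices $\diag(m,n)$ and $\diag(k,1)$ define the same twisted Teichm\"uller curve for every coprime factorization $k=mn$. The proof of the lemma is explicit: under the index equality one has $\SL(L_D)/(\SL(L_D)\cap\Gamma^D_0(k))\cong\mathbb{P}^1(\OD/k\OD)$, so one can choose a coset representative $A\in\SL(L_D)$ whose lower row realizes the projective point $(0:1)$ modulo the required congruences, and then $\diag(k,1)\,A\,\diag(m,n)^{-1}\in\SL_2(\OD)$.

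In our setting the maximal-index hypothesis is precisely what Theorem~\ref{thm_summarize_euler_II} supplies: for $D\equiv 5\pmod 8$ a fundamental discriminant with $h_D^+=1$ (which forces $h_D=1$ by the discussion in Section~\ref{sec_quadratic_nf}), the required index equality holds for every element of $\OD$. Applying Lemma~\ref{lem_facilitate} first to the pair $(m,n)$ and then to $(m',n')$ therefore gives
\[
C_M \;=\; C_{\diag(k,1)} \;=\; C_{M'},
\]
which is the desired statement.

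The only place where there is anything to verify is that the hypothesis of Lemma~\ref{lem_facilitate} genuinely applies, so the proposition is essentially a direct corollary of the index computations carried out earlier in this chapter. No new explicit construction of Veech group elements is required beyond those already packaged in the proof of Theorem~\ref{thm_summarize_euler_II}: the matrix $\diag(k,1)$ serves as a common target for every coprime diagonal factorization of $k$, collapsing all such factorizations into a single twisted Teichm\"uller curve. The main conceptual work has been done in Theorem~\ref{thm_summarize_euler_II} itself; once the volumes of all diagonal twists are pinned to the same value by means of the group-theoretic cover $\SL(L_D)/(\SL(L_D)\cap\Gamma^D_0(k))\twoheadrightarrow\mathbb{P}^1(\OD/k\OD)$, the coincidence of the curves themselves is automatic from the normal-form argument above.
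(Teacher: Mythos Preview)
Your proof is correct and follows exactly the paper's approach: the paper's proof consists of the single line ``This is the assertion of Lemma~\ref{lem_facilitate},'' and you have simply spelled out in detail how that lemma (more precisely, the step in its proof showing $C_{\diag(m,n)}=C_{\diag(mn,1)}$) applies and why its maximal-index hypothesis is supplied by Theorem~\ref{thm_summarize_euler_II}. The only minor wording issue is that the statement of Lemma~\ref{lem_facilitate} is phrased in terms of indices, while the curve coincidence you cite is established inside its proof---but the logical content is identical.
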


\begin{proof} This is the assertion of Lemma~\ref{lem_facilitate}.
 \end{proof}

\subsection{An Outlook on Further Calculations} \label{sec_further_calculations}

In Section~\ref{sec_upper_triangular_twists} we have been able to precisely calculate the volume of almost any diagonal twisted Teichmüller curve. Naturally one is also interested in knowing the number of elliptic fixed points, the number of cusps and the genus of the surface $\HH / (\SL(L_D) \cap M^{-1} \SL_2(\OD) M)$. In this section we present some ideas how these quantities can be calculated if the narrow class number $h_D^+=1$ and if the spin of the Teichmüller curve is odd. This section is far from solving any of these problems completely, but should rather be regarded as a rough guideline how practical calculations work. In general, it requires a new idea to give formulas for any of these quantities. This is why we confine ourselves to simple twisted Teichmüller curves where we can at least make some statements, i.e. $M= \left( \begin{smallmatrix} m & 0 \\ 0 & 1 \end{smallmatrix} \right)$.
\paragraph{Elliptic fixed points.} \index{twisted Teichmüller curve!elliptic fixed points} Let us describe the number of elliptic fixed points of simple twisted Teichmüller curves. By R. Mukamel's result (Theorem~\ref{thm_mukamel}) for $D \neq 5$ the elliptic elements inside $\SL(L_D^1)$ have order two. We even restrict to the case that $S$ is the only elliptic element in $\SL(L_D^1)$ up to conjugation. It might be possible to use the method described here also in a more general situation. We want to prove the following theorem:

\begin{thm} \label{thm_number_of_elliptic} For all $m \in \OD$ with $(m,w)=1$ there are elliptic elements of order 2 if and only if the following two conditions are satisfied:
\begin{itemize}
\item[(i)] For all prime divisors $\pi \notin \ZZ$ of $m$ with $\textrm{N}(\pi)$ odd we have $\textrm{N}(\pi) \equiv 1 \mod 4$.
\item[(ii)] For all prime divisors $\pi$ of $m$ with $(\pi,2)\neq 1$ we have $\pi^2 \nmid m$.
\end{itemize}
In this case there are exactly $2^k$ elliptic elements of order $2$ where $k$ is the number of prime divisors of $m$ that do not divide $2$. If $(m,w)\neq 1$ the number of elliptic elements of order 2 is bounded by $2^k$.
\end{thm}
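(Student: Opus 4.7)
The plan is to reduce the count to an arithmetic one inside $\mathbb{P}^1(\OD/m\OD)$. Set $G := \SL(L_D^1)$ and $H := G \cap M^{-1}\SL_2(\OD)M$, which by Lemma~\ref{lem_covering_congruence} equals $G \cap \Gamma^D_0(m)$. Under the hypothesis that $S$ represents the unique $G$-conjugacy class of elliptic elements, every order-$2$ element of $H$ has the form $gSg^{-1}$ for some $g \in G$, and a standard orbit argument identifies the number of order-$2$ elliptic fixed points on $\HH/H$ with $|H \backslash \Sigma / \langle S \rangle|$, where $\Sigma := \{g \in G : gSg^{-1} \in H\}$. The direct computation
$$gSg^{-1} \;=\; \begin{pmatrix} ac+bd & -(a^2+b^2) \\ c^2+d^2 & -(ac+bd) \end{pmatrix} \quad\text{for } g = \begin{pmatrix} a & b \\ c & d \end{pmatrix}$$
shows that $g \in \Sigma$ if and only if $c^2+d^2 \equiv 0 \pmod{m}$. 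The bottom-row map $\beta : G/H \to \mathbb{P}^1(\OD/m\OD)$, $gH \mapsto (c:d)$, is always injective (its fibers inside $\SL_2(\OD)$ are the cosets of $\Gamma^D_0(m)$), and it is bijective precisely when $(m,w)=1$ by Theorem~\ref{thm_summarize_euler_calculations}. Since right multiplication by $S$ sends the bottom row $(c,d)$ to $(d,-c)$, the projective identity $(c:d) = (d:-c)$ is itself the equation $c^2+d^2 \equiv 0 \pmod{m}$; hence $\langle S \rangle$ acts trivially on the subset of interest and
$$e_2(H) \;=\; \bigl|\{(c:d) \in \mathrm{image}(\beta) : c^2+d^2 \equiv 0 \pmod{m}\}\bigr|.$$

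For $(m,w)=1$ I would then run a local analysis via the Chinese Remainder Theorem: $\mathbb{P}^1(\OD/m\OD) \cong \prod_{\mathfrak{p}^e \| m} \mathbb{P}^1(\OD/\mathfrak{p}^e)$, and the condition $c^2+d^2 \equiv 0$ factors multiplicatively. At an odd prime $\mathfrak{p}$, Hensel's lemma (applicable because $2$ is a unit in $\OD_\mathfrak{p}$) shows that $-1$ lifts to a square in $\OD/\mathfrak{p}^e$ iff it is a square in the residue field, yielding exactly two projective solutions when $\N(\mathfrak{p})\equiv 1 \pmod{4}$ and none when $\N(\mathfrak{p})\equiv 3 \pmod{4}$ (inert odd primes automatically give $\N(\mathfrak{p}) = p^2 \equiv 1 \pmod 4$). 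At a prime $\mathfrak{p}\mid 2$, a direct case analysis using $c^2+d^2 = (c+d)^2 - 2cd$ and separating the inert, split, and ramified behavior of $2$ in $\OD$ gives exactly one projective solution when $e=1$ and none when $e\geq 2$. Multiplying the local factors yields $0$ whenever condition (i) or (ii) is violated, and $2^k$ otherwise, as claimed.

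For the case $(m,w)\neq 1$, the map $\beta$ is merely injective, so the same arithmetic count bounds the number of fixed points from above, giving the upper bound $2^k$ stated in the theorem. The principal technical hurdle is the local computation at primes above $2$: because the splitting type of $2$ in $\OD$ varies with $D \bmod 8$, the residue rings $\OD/\mathfrak{p}^e$ have different structures (e.g.\ characteristic-$2$ behavior $c^2+d^2 = (c+d)^2$ holds only at the bottom level), and each case must be handled separately to verify uniformly that $\pi^2 \mid m$ with $\pi \mid 2$ always kills the local count. A reassuring aspect of the argument is that the $S$-action issue — which a priori threatens to introduce a nontrivial orbit count — collapses into a tautology from the defining equation $c^2+d^2\equiv 0$ itself.
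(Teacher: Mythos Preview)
Your approach is essentially the same as the paper's: both reduce the elliptic count to the congruence $c^2+d^2 \equiv 0 \pmod m$ on bottom rows, pass to $\mathbb{P}^1(\OD/m\OD)$, invoke Hensel's lemma and the first supplement $(-1)^{(\N(\pi)-1)/2}$ at odd primes, and treat primes above $2$ by a direct case analysis of the residue ring. You are in fact slightly more careful than the paper in explicitly isolating the $\langle S\rangle$-quotient on cosets and noting that it acts trivially on the solution locus; the paper simply counts coset representatives $B$ with $BSB^{-1}\in\Gamma^D_0(m)$ without addressing this point.
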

Note that the result generalizes the well-known formula for the number of elliptic fixed points of order 2 of the modular curves $\HH / \Gamma_0(m)$ for congruence subgroups $\Gamma_0(m) \subset \SL_2(\ZZ)$ (compare \cite{Miy89}, Theorem~4.2.7).\\[11pt]
Since $S$ has order two it is immediately clear that any elliptic element in $\SL(L_D^1) \cap \Gamma^D_0(m)$ has to have order two if it exists. To calculate the number of elliptic fixed points recall that a fundamental domain for $\SL(L_D^1) \cap \Gamma^D_0(m)$ consist of $[\SL(L_D^1) : \SL(L_D^1) \cap \Gamma^D_0(m)]$ copies of the fundamental domain of $\SL(L_D^1)$. A boundary component of the fundamental domain of $\SL(L_D^1) \cap \Gamma^D_0(m)$ might be glued to another boundary component by an elliptic element. This happens if and only if there exists a coset representative $B \in \SL(L_D^1) / (\SL(L_D^1) \cap \Gamma^D_0(m))$ such that $BSB^{-1} \in \SL(L_D^1) \cap \Gamma^D_0(m)$. As $B,S \in \SL(L_D^1)$ we thus just have to decide whether $BSB^{-1} \in \Gamma^D_0(m)$. In order to do this, let us look at an arbitrary coset representative $B = \left( \begin{smallmatrix} a & b \\ c & d \end{smallmatrix} \right)$ of $\SL(L_D^1) / (\SL(L_D^1) \cap \Gamma^D_0(m))$. We then have
$$BSB^{-1} = \left( \begin{smallmatrix} ca+db & -a^2-b^2 \\ c^2+d^2 & -ca -db \end{smallmatrix} \right).$$
If $B \in \SL(L_D^1)$m then it is enough to decide if $m \in \OD$ divides $c^2 + d^2$. As a number is divisible by $m$ if and only if it is divisible by all its prime divisors we first consider  the case $m=\pi^n$, where $\pi \in \OD$ is a prime element  and $n \in \mathbb{N}$. We now use the well-known isomorphism (see e.g. \cite{Kil08}, Section~2.4)
$$\SL_2(\OD) / \Gamma^D_0(\pi^n) \cong \mathbb{P}^1(\OD/\pi^n\OD)$$
where $\mathbb{P}^1(\cdot)$ \label{glo_P12} denotes projective space. The isomorphism is given by mapping a coset representative $\left( \begin{smallmatrix} a & b \\ c & d \end{smallmatrix} \right)$ to $(c:d) \in \mathbb{P}^1(\OD/\pi^n\OD)$. A more precise description of $\mathbb{P}^1(\OD/\pi^n\OD)$ will be very helpful.





\begin{lem} A system of representatives of $\mathbb{P}^1(\OD/\pi^{n+1}\OD)$ is given by
\begin{eqnarray*}
(1:0) & & \\
(k:1), & k \in \OD/\pi^{n+1}\OD & \\
(a_1+...+a_n\pi^{n-1}:\pi), & a_1 \in \OD/\pi\OD  \backslash \left\{ 0 \right\}, & a_i \in \OD/\pi\OD \ \textrm{for} \ i > 1 \\
... & &  \\
(a_1:\pi^n), &  a_1 \in \OD/\pi\OD  \backslash \left\{ 0 \right\} &
\end{eqnarray*}
\end{lem}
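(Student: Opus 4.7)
The plan is to exploit that $R := \OD/\pi^{n+1}\OD$ is a local ring with maximal ideal $(\pi)$ and residue field $\OD/\pi\OD$, so that $\mathbb{P}^1(R)$ may be identified with the set of unimodular pairs $(a,b)$ (meaning $aR + bR = R$, equivalently at least one of $a,b$ is a unit) modulo the diagonal action of $R^*$. The natural invariant of an orbit is the $\pi$-adic valuation $v(b) \in \{0,1,\ldots,n,\infty\}$ of its second coordinate, which is preserved under multiplication by units. I would therefore stratify $\mathbb{P}^1(R)$ by this valuation and produce unique representatives within each stratum.

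For existence I would argue as follows. If $v(b) = 0$ then $b$ is a unit and $(a:b) = (ab^{-1}:1)$, yielding the representatives $(k:1)$ with $k \in R$. If $v(b) = \infty$, i.e.\ $b = 0$, then $a$ must be a unit by unimodularity and $(a:0) = (1:0)$. For $1 \le j \le n$, write $b = u\pi^j$ with $u \in R^*$; scaling by $u^{-1}$ gives $(au^{-1}:\pi^j)$, and unimodularity forces $au^{-1}$ to be a unit. Expanding $au^{-1} = a_1 + a_2\pi + \cdots + a_{n+1}\pi^n$ with digits $a_i \in \OD/\pi\OD$ and $a_1 \ne 0$, and then truncating appropriately (justified by the uniqueness argument below), one obtains a representative of the listed form.

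For uniqueness, representatives from different strata are inequivalent because the valuation of the second coordinate is scaling-invariant. Within the stratum with second coordinate $\pi^j$ (where $1 \le j \le n$), the units $v \in R^*$ preserving this coordinate satisfy $v\pi^j \equiv \pi^j \pmod{\pi^{n+1}}$, i.e.\ $v \equiv 1 \pmod{\pi^{n+1-j}}$. Such a $v$ sends the first coordinate $a$ to $va \equiv a \pmod{\pi^{n+1-j}}$, so the class of $a$ in $R/\pi^{n+1-j}R$ is a complete invariant of the orbit. This precisely matches the digit expansion $a_1 + a_2\pi + \cdots + a_{n+1-j}\pi^{n-j}$ in the lemma: the condition $a_1 \ne 0$ is the unitness of $a$, and the remaining digits are free. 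The strata $b=1$ and $b=0$ are handled similarly ($v=1$ is the only unit fixing $b=1$, and every unit fixes $b=0$).

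The main obstacle is purely bookkeeping: one must verify carefully that in each middle stratum the ``free'' digits $a_2,\ldots,a_{n+1-j}$ can indeed be chosen independently and that the stated ranges are correct. As a sanity check I would count: setting $q = \N(\pi) = |\OD/\pi\OD|$, the total number of listed representatives is
\[
1 + q^{n+1} + \sum_{j=1}^{n} (q-1)q^{n-j} \;=\; 1 + q^{n+1} + (q^n - 1) \;=\; q^n(q+1),
\]
which is the standard cardinality of $\mathbb{P}^1(R)$ for a local principal Artinian ring of length $n+1$ with residue field of size $q$, confirming that no orbits have been missed.
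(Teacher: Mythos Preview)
Your proposal is correct and follows essentially the same route as the paper: the paper's proof simply observes that the list has cardinality $\N(\pi^{n+1})(1+1/\N(\pi))$ and then asserts that pairwise inequivalence is ``easy to check,'' which is precisely what your valuation-stratification argument supplies in detail. Your explicit existence argument and the analysis of the stabilizer of $\pi^j$ in $R^*$ are exactly the content hidden behind the paper's ``easy to check,'' so you have filled in what the paper leaves to the reader rather than taken a different path.
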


\begin{proof}
The number of elements of the list is equal to $\N(\pi^{n+1}) ( 1 + \frac{1}{\N(\pi)})$. So it just remains to prove that the elements are not equal in $\mathbb{P}^1(\OD/\pi^{n+1}\OD)$. This is easy to check. 
\end{proof}
Most of these elements in $\mathbb{P}^1(\OD/\pi^{n}\OD)$ do never yield elliptic elements of order $2$. 

\begin{lem} For representatives of $\SL(L_D^1) / (\SL(L_D^1)\cap\Gamma^D_0(\pi^n))$ with lower row equivalent to $(a_1+..+a_m\pi^{m-1},\pi^{n-m})$ or $(1,0)$ the prime element $\pi$ never divides $c^2+d^2$. \end{lem}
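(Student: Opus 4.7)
The plan is to reduce modulo $\pi$, where the divisibility question becomes entirely transparent.

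First I would observe that whether or not $\pi$ divides $c^2+d^2$ depends only on the class of $(c,d)$ in $(\OD/\pi\OD)^2$. Moreover, any two lifts of the same point of $\mathbb{P}^1(\OD/\pi^n\OD)$ differ by multiplication with an element of $(\OD/\pi^n\OD)^*$, which reduces modulo $\pi$ to a unit $u$ of the residue field $\OD/\pi\OD$; under such a rescaling the quantity $c^2+d^2$ is multiplied by $u^2$, which is again a unit, so divisibility by $\pi$ is preserved. Hence I may work with any convenient lift of the given projective coordinates, and in particular with the canonical representatives listed in the preceding lemma.

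For the representative $(1:0)$ this is immediate: reduction modulo $\pi$ gives $(c,d)\equiv(1,0)$ and therefore $c^2+d^2\equiv 1\not\equiv 0 \pmod\pi$. For a representative of the form $(a_1+a_2\pi+\cdots+a_m\pi^{m-1}\,:\,\pi^{n-m})$ with $n-m\geq 1$ and $a_1\not\equiv 0\pmod\pi$, reduction modulo $\pi$ yields $(c,d)\equiv(a_1,0)$, so $c^2+d^2\equiv a_1^2\pmod\pi$. Since $\pi$ is prime and $\OD/\pi\OD$ is therefore a field, and since $a_1$ is a nonzero element of that field, $a_1^2$ is likewise nonzero, which finishes the argument.

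The one piece of structural bookkeeping is that every class in $\mathbb{P}^1(\OD/\pi^n\OD)$ is actually realized by some matrix already lying in $\SL(L_D^1)$, so that the list of representatives from the previous lemma transfers unchanged to the Veech-group quotient. In the relevant range this is guaranteed by Theorem~\ref{thm_summarize_euler_calculations}, which equates the indexes $[\SL(L_D^1):\SL(L_D^1)\cap\Gamma^D_0(\pi^n)]$ and $[\SL_2(\OD):\Gamma^D_0(\pi^n)]$, forcing the natural map between coset spaces to be a bijection. Beyond this remark the proof presents no real obstacle; it is essentially a one-line reduction modulo $\pi$.
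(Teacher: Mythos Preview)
Your proof is correct and follows essentially the same route as the paper: both arguments reduce modulo $\pi$ and observe that the canonical representative has $c^2+d^2\equiv a_1^2\not\equiv 0$ (respectively $\equiv 1$) in the residue field. Your extra paragraphs on unit-rescaling and on the bijection of coset spaces are more careful than the paper's two-line version but do not change the underlying idea.
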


\begin{proof}
If the lower row of the matrix is equivalent to $(1,0)$ this is clear. 
If the lower row is equivalent to $(a_1+..+a_m\pi^{m-1},\pi^{n-m})$ then $c^2+d^2 = (a_1+a_2\pi+...+a_m\pi^{m-1})^2+(\pi^{n-m})^2$ but $\pi \nmid a_1$. 
\end{proof}

Thus the only coset representatives of interest inside $\SL(L_D^1) / (\SL(L_D^1) \cap \Gamma^D_0(\pi^n))$ are those with lower row $(k,1)$. The relation $\pi^n|k^2+1$ is equivalent to $-1$ being a quadratic residue mod $\pi^n$. It follows from Hensel's lemma (see e.g. \cite{MSP06}, Satz~13.15) that the congruence $-1 \equiv k^2 \mod \pi^n$ is solvable if and only if the congruence $-1 \equiv k^2 \mod \pi$ is solvable and that the number of solutions coincides. So we may restrict to the case $n=1$. Analogously to the first supplement of the quadratic reciprocity law for all prime elements $\pi$ with $(\pi,2)=1$, it is true that $-1$ is a quadratic residue if and only if 
$$(-1)^{\frac{\textrm{N}(\pi)-1}{2}} = 1.$$
As $\OD / \pi \OD$ is an integral domain we know that the polynomial $z^2+1 = 0$ has at most two roots. On the other hand since $(\pi,2)=1$ we always have that if $z_0$ is a root of this polynomial then also $-z_0$ is a root and that $z_0$ and $-z_0$ do not coincide. Hence we have established the following proposition:

\begin{prop} For all prime elements $\pi \in \OD$ with $(\pi,2)=1$ and all $n \in \NN$ there are at most 2 elliptic elements of order $2$ inside $\SL(L_D^1) \cap \Gamma^D_0(\pi^n)$. If in addition $(\pi,w)=1$ holds, there are exactly $2$ elliptic elements of order $2$ if and only if $\textrm{N}(\pi) \equiv 1 \mod 4$ and none if and only if $\textrm{N}(\pi) \equiv 3 \mod 4$. \end{prop}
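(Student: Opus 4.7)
The plan is to translate the problem into counting roots of $z^2+1$ modulo $\pi$, using the framework already set up in the preceding pages. An elliptic element of order $2$ in $\SL(L_D^1)\cap\Gamma^D_0(\pi^n)$ is, by the hypothesis on elliptic elements of $\SL(L_D^1)$, conjugate inside $\SL(L_D^1)$ to $S$. Hence such elements correspond to cosets $B(\SL(L_D^1)\cap\Gamma^D_0(\pi^n))$ with $BSB^{-1}\in \Gamma^D_0(\pi^n)$, and by the explicit formula for $BSB^{-1}$ this is equivalent to $\pi^n \mid c^2+d^2$ where $(c,d)$ is the lower row of the chosen representative $B\in\SL(L_D^1)$.

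When $(\pi,w)=1$, Theorem~\ref{thm_summarize_euler_calculations}~(i) (together with Lemma~\ref{lem_facilitate}) gives $[\SL(L_D^1):\SL(L_D^1)\cap\Gamma^D_0(\pi^n)]=[\SL_2(\OD):\Gamma^D_0(\pi^n)]$, so the natural map
\[
\SL(L_D^1)/(\SL(L_D^1)\cap\Gamma^D_0(\pi^n))\longrightarrow \mathbb{P}^1(\OD/\pi^n\OD),\qquad B\mapsto (c:d),
\]
is a bijection. By the lemma classifying $\mathbb{P}^1(\OD/\pi^n\OD)$ and the subsequent lemma, no representative with lower row of the form $(1:0)$ or $(a_1+\cdots+a_m\pi^{m-1}:\pi^{n-m})$ with $a_1\neq 0$ can satisfy $\pi\mid c^2+d^2$. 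Hence the relevant cosets are exactly those with lower row $(k:1)$ for some $k\in\OD/\pi^n\OD$, and the count of elliptic fixed points equals the number of $k\in\OD/\pi^n\OD$ with $k^2\equiv -1\bmod\pi^n$. If $(\pi,w)\neq 1$ the map above is only surjective, so we get at most this count, yielding the claimed upper bound.

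Next I would apply Hensel's lemma: since $(\pi,2)=1$, the derivative $2k$ of $k^2+1$ is a unit at any root modulo $\pi$, so lifts are unique and the number of solutions modulo $\pi^n$ coincides with the number modulo $\pi$. This reduces the counting to the residue field $\OD/\pi\OD$, which is the finite field $\mathbb{F}_{\N(\pi)}$. In any field, $z^2+1$ has at most two roots, proving the \emph{at most $2$} assertion in the general statement (without the assumption $(\pi,w)=1$, because passing to a subgroup only decreases the count).

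Finally I would characterize when roots exist: $-1$ is a square in $\mathbb{F}_{\N(\pi)}^*$ iff this cyclic group of order $\N(\pi)-1$ contains an element of order $4$, i.e.\ iff $4\mid \N(\pi)-1$, equivalently $\N(\pi)\equiv 1\bmod 4$. When such a root $k_0$ exists, $-k_0$ is a distinct root since $(\pi,2)=1$, giving exactly two. Combined with the bijection established above in the case $(\pi,w)=1$, this yields exactly $2$ elliptic elements of order $2$ if $\N(\pi)\equiv 1\bmod 4$ and none if $\N(\pi)\equiv 3\bmod 4$. The main technical point to be careful about is the bijection with $\mathbb{P}^1(\OD/\pi^n\OD)$: without the hypothesis $(\pi,w)=1$ one only has an injection of $\SL(L_D^1)$-cosets into $\SL_2(\OD)$-cosets, which is why the sharper count requires $(\pi,w)=1$, while the upper bound of $2$ survives in general.
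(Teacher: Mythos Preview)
Your proposal is correct and follows essentially the same route as the paper: reduce to cosets $B$ with $BSB^{-1}\in\Gamma^D_0(\pi^n)$, identify these via $\mathbb{P}^1(\OD/\pi^n\OD)$ with solutions of $k^2\equiv -1\bmod\pi^n$, apply Hensel to pass to $\bmod\ \pi$, and use that $\OD/\pi\OD$ is a field of order $\N(\pi)$ in which $-1$ is a square iff $\N(\pi)\equiv 1\bmod 4$. One wording slip: the natural map $\SL(L_D^1)/(\SL(L_D^1)\cap\Gamma^D_0(\pi^n))\to\SL_2(\OD)/\Gamma^D_0(\pi^n)$ is always \emph{injective} (not ``only surjective'' as you write in the second paragraph); you use injectivity correctly in your final paragraph, and it is precisely this injectivity that gives the ``at most $2$'' bound when $(\pi,w)\neq 1$.
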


\begin{cor} For all $m \in \OD$ with $(m,2)=1$ and $(m,w)=1$ there are elliptic elements of order 2 inside $\SL(L_D^1) \cap \Gamma^D_0(m)$ if and only if for all prime divisors $\pi \notin \ZZ$ of $m$ we have $\textrm{N}(\pi) \equiv 1 \mod 4$. In this case the number of elliptic elements of order 2 is equal to $2^{k}$ where k is the number of prime divisors of $m$. If $(m,w)\neq 1$ then the number of elliptic elements of order 2 is bounded by $2^k$. \end{cor}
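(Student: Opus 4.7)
The plan is to combine the prime-power analysis from the preceding Proposition with the Chinese Remainder Theorem. Since $h_D^+=1$, every $m \in \OD$ decomposes uniquely (up to units) as $m = \pi_1^{n_1}\cdots\pi_k^{n_k}$ where $\pi_i$ are pairwise non-associate prime elements and $k$ is by definition the number of prime divisors of $m$. The hypothesis $(m,2)=(m,w)=1$ descends to each prime factor: $(\pi_i,2)=(\pi_i,w)=1$ for every $i$.

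The first step is to translate the problem to a projective-line count. By the discussion preceding the Proposition, together with Theorem~\ref{thm_summarize_euler_calculations_triangular} (which guarantees that the index of $\SL(L_D^1)\cap\Gamma^D_0(m)$ in $\SL(L_D^1)$ is maximal under the present hypotheses), the coset space $\SL(L_D^1)/(\SL(L_D^1)\cap\Gamma^D_0(m))$ is naturally identified, via the lower row, with $\mathbb{P}^1(\OD/m\OD)$. A representative $B$ with lower row $(c,d)$ contributes an elliptic fixed point of order $2$ precisely when $BSB^{-1}\in\Gamma^D_0(m)$, which by the explicit calculation $(BSB^{-1})_{2,1}=c^2+d^2$ is equivalent to $m\mid c^2+d^2$.

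Next I would apply the Chinese Remainder Theorem. The isomorphism
$$\OD/m\OD \;\cong\; \prod_{i=1}^{k}\OD/\pi_i^{n_i}\OD$$
induces
$$\mathbb{P}^1(\OD/m\OD) \;\cong\; \prod_{i=1}^{k}\mathbb{P}^1(\OD/\pi_i^{n_i}\OD),$$
and the divisibility condition $m\mid c^2+d^2$ decouples as $\pi_i^{n_i}\mid c_i^2+d_i^2$ in each factor. Hence the total number of elliptic fixed points of order $2$ is the product, over $i=1,\dots,k$, of the number of such $(c_i:d_i)\in\mathbb{P}^1(\OD/\pi_i^{n_i}\OD)$. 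By the preceding Proposition applied to each $\pi_i$ (which is legitimate because $(\pi_i,2)=(\pi_i,w)=1$), each factor equals $2$ if $\mathrm{N}(\pi_i)\equiv 1\bmod 4$ and $0$ if $\mathrm{N}(\pi_i)\equiv 3\bmod 4$. Note that prime divisors $\pi\in\mathbb Z$ of $m$ are automatically split or inert with $\mathrm{N}(\pi)=\pi^2\equiv 1\bmod 4$, so they contribute a factor $2$ unconditionally; only the divisors $\pi\notin\mathbb Z$ are constrained. Multiplying, the product is $2^k$ exactly when $\mathrm{N}(\pi_i)\equiv 1\bmod 4$ for every $i$ with $\pi_i\notin\mathbb Z$, and is $0$ otherwise, proving both parts of the main assertion.

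Finally, the bound in the case $(m,w)\neq 1$ follows from the unconditional half of the same Proposition: for every prime element $\pi$ with $(\pi,2)=1$ and every $n\in\mathbb{N}$, there are \emph{at most} two elliptic elements of order $2$ in $\SL(L_D^1)\cap\Gamma^D_0(\pi^n)$, regardless of whether $(\pi,w)=1$. The same CRT argument then yields a product bound of $\prod_{i=1}^k 2 = 2^k$. The main technical obstacle I anticipate is verifying that the CRT-based factorisation of $\mathbb{P}^1(\OD/m\OD)$ is compatible with the lower-row description of coset representatives produced by Theorem~\ref{thm_summarize_euler_calculations_triangular}; this amounts to checking that the bijection of that theorem is well-behaved under reduction modulo each $\pi_i^{n_i}$, which in turn requires the maximal-index statement of Theorem~\ref{thm_summarize_euler_calculations_triangular} at each prime-power level rather than only at level $m$, and can be read off directly from Propositions~\ref{prop_splitting_primes}, \ref{prop_inert_prime_eq1} and \ref{prop_ramifiedI}.
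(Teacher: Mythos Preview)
Your argument is correct and is precisely the Chinese Remainder Theorem deduction that the paper leaves implicit: the Corollary is stated immediately after the prime-power Proposition with no separate proof, and your factorisation via $\mathbb{P}^1(\OD/m\OD)\cong\prod_i\mathbb{P}^1(\OD/\pi_i^{n_i}\OD)$ is exactly how one is meant to pass from the prime-power case to general $m$. One minor point: the maximal-index input you need here is Theorem~\ref{thm_summarize_euler_calculations} (the diagonal case with $n=1$), not Theorem~\ref{thm_summarize_euler_calculations_triangular}; this is the reference the paper itself uses earlier in the same discussion when establishing $\SL(L_D^1)/(\SL(L_D^1)\cap\Gamma^D_0(\pi^n))\cong\mathbb{P}^1(\OD/\pi^n\OD)$.
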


Now we want to analyze the case $(\pi,2)\neq 1$. As usual we first assume that $(\pi,w)=1$. There are two possibilities: the first one is that $2$ is an inert prime number (which means $D \equiv 5 \mod 8$). Then $\OD/2\OD \cong \ZZ/2\ZZ \times \ZZ/2\ZZ$ and therefore $2|c^2+d^2$ is true only for $c=1,d=1$. This means that there is exactly one elliptic element of order 2 in $\SL(L_D^1) \cap \Gamma^D_0(2)$. The second possibility is that $2$ splits (which means $D \equiv 1 \mod 8$). Let $\pi$ be a prime divisor of $2$. Then $\OD/\pi\OD \cong \ZZ/2\ZZ$. Thus there is exactly 1 elliptic element of order $2$. For higher powers of $\pi$ there is a copy of $\ZZ/4\ZZ$ involved in both cases but $-1 \equiv 3 \equiv k^2 \mod 4$ is not possible. Hence there are no elliptic elements for higher powers of $\pi$. Combining everything yields \ref{thm_number_of_elliptic}.
\paragraph{The number of cusps.} \index{twisted Teichmüller curve!cusps} The number of cusps of some simple twisted Teichmüller curves will be calculated next. Let $m=\pi$ be a prime element with $(\pi,w)=1$. Since we know that a cusp $s$ of $\SL(L_D^1) \cap \Gamma^D_0(\pi)$ lies over a cusp $s'$ of $\SL(L_D^1)$ it makes sense to speak of the relative width of the cusp $s$: the width of a cusp $s$ is its local ramification index over $s'$. Then the sum of the width of all cusps lying over $s'$ is exactly equal to the index $[\SL(L_D^1) : \SL(L_D^1) \cap \Gamma^D_0(\pi)] = \N(\pi) + 1$. Let us first consider the cusps $s$ lying over $\infty$.
\begin{prop} If $\pi \in \OD$ with $(\pi,w)=1$ is a prime element and $\pi \notin \ZZ$, then $\SL(L_D^1) \cap \Gamma^D_0(\pi)$ has exactly two cusps lying over $\infty$. One of them has relative width $1$, the other has relative width $\N(\pi)$. \end{prop}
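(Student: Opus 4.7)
The plan is to identify the cusps of $\Gamma := \SL(L_D^1) \cap \Gamma^D_0(\pi)$ that lie above the cusp $\infty$ of $\SL(L_D^1)$ with the orbits of the right translation action of the parabolic stabilizer $\Gamma_\infty \subset \SL(L_D^1)$ on the coset space $\Gamma \backslash \SL(L_D^1)$, and then to compute this orbit set explicitly. Since (up to $\pm\Id$) the stabilizer of $\infty$ in $\SL(L_D^1)$ is generated by $T = \left(\begin{smallmatrix} 1 & w \\ 0 & 1 \end{smallmatrix}\right)$ (compare Section~\ref{sec_fixing_Veech}), each orbit corresponds to one cusp lying above $\infty$, and its cardinality equals the relative width of that cusp.

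First I would combine two ingredients to get a concrete model for $\Gamma\backslash \SL(L_D^1)$. Theorem~\ref{thm_summarize_euler_calculations} (together with Lemma~\ref{lem_facilitate}) gives $[\SL(L_D^1):\Gamma] = [\SL_2(\OD):\Gamma^D_0(\pi)] = \N(\pi)+1$, which forces the natural injection $\Gamma\backslash \SL(L_D^1) \hookrightarrow \Gamma^D_0(\pi)\backslash \SL_2(\OD)$ to be a bijection. Composing with the standard identification $\Gamma^D_0(\pi)\backslash \SL_2(\OD) \cong \mathbb{P}^1(\OD/\pi\OD)$ sending $\left(\begin{smallmatrix} a & b \\ c & d \end{smallmatrix}\right) \mapsto (c:d)$, I obtain a $T$-equivariant bijection
$$\Gamma\backslash \SL(L_D^1) \;\cong\; \mathbb{P}^1(\OD/\pi\OD),$$
under which right multiplication by $T$ becomes the map $(c:d) \mapsto (c: d+cw)$.

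Next I would use the hypothesis $\pi \notin \ZZ$. A prime element of $\OD$ that is not in $\ZZ$ is either a prime divisor of a split rational prime or a prime divisor of a ramified rational prime; in both cases the residue field is $\OD/\pi\OD \cong \mathbb{F}_p$ with $p = \N(\pi)$. The assumption $(\pi,w) = 1$ then guarantees that the image $\overline{w} \in \mathbb{F}_p$ is nonzero, and therefore an additive generator of $\mathbb{F}_p$. Writing $\mathbb{P}^1(\mathbb{F}_p) = \{(0:1)\} \cup \{(1:e) : e \in \mathbb{F}_p\}$, the $T$-action fixes $(0:1)$ and on the remaining affine piece acts by $(1:e) \mapsto (1:e+\overline{w})$. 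Since $\overline{w}$ generates $\mathbb{F}_p$ additively, this yields a single free orbit of size $p = \N(\pi)$.

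Hence there are exactly two cusps above $\infty$: the cusp corresponding to $(0:1)$, of relative width $1$ (which is $\infty$ itself), and one additional cusp of relative width $\N(\pi)$; note that the widths sum to $\N(\pi)+1 = [\SL(L_D^1):\Gamma]$, as required. The step that needs the most care is the translation from double cosets to the $\mathbb{P}^1$-model: one must check that the parabolic stabilizer of $\infty$ inside $\SL(L_D^1)$ really is $\langle \pm T \rangle$ (no smaller translation lies in $\SL(L_D^1)$), which however follows from the description of the generators in Section~\ref{sec_fixing_Veech}. The rest reduces to elementary linear algebra over $\mathbb{F}_p$.
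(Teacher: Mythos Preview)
Your argument is correct and follows essentially the same approach as the paper: both identify the cusps over $\infty$ with the orbits of the right $T$-action on the coset space $\Gamma\backslash\SL(L_D^1)$, use that the index equals $\N(\pi)+1$ so that this coset space is $\mathbb{P}^1(\OD/\pi\OD)$, and then compute the orbit structure from the fact that $\overline{w}\neq 0$ in the residue field $\mathbb{F}_p$. The paper phrases this more geometrically (gluing sides of copies of the fundamental domain via explicit representatives $\left(\begin{smallmatrix}1&0\\a&1\end{smallmatrix}\right)$ and computing $(A_1T^kA_1^{-1})_{2,1}=-kwa^2$), whereas you work directly in the $\mathbb{P}^1$-model, but the underlying computation is the same.
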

\begin{proof} Consider the fundamental domain of $\SL(L_D^1) \cap \Gamma^D_0(\pi)$. It consists of $\N(\pi)+1$ copies of the fundamental $\mathcal{F}$ of $\SL(L_D^1)$. Since $T \in \Gamma^D_0(\pi)$ glues the vertical left side and the vertical right side of $\mathcal{F}$, it follows that $\SL(L_D^1) \cap \Gamma^D_0(\pi)$ has a cusp of relative width $1$. Now suppose that the copy of the vertical left side of $\mathcal{F}$ given by the coset representative $A_1$ and the copy of the vertical right side of $\mathcal{F}$ given by the coset representative $A_2$ are glued. Then $A_1TA_2^{-1} \in \Gamma^D_0(\pi)$. If the corresponding cusp has width $k$ then $A_1TA_2^{-1} \in \Gamma^D_0(\pi)$, $A_2TA_3^{-1} \in \Gamma^D_0(\pi)$,..., $A_kTA_1^{-1} \in \Gamma^D_0(\pi)$. Since $\pi$ is prime we have that $A_1$ is equivalent to $\left( \begin{smallmatrix} 1 & 0 \\ a & 1 \end{smallmatrix} \right)$ with $a \in \NN$ and $1 \leq a < \N(\pi)$. Then we must have that $A_1T^kA_1^{-1} = \left( \begin{smallmatrix} * & * \\ -kwa^2 & * \end{smallmatrix} \right) \in \Gamma^D_0(\pi)$. It follows that $\pi|k$ and hence $k=\N(\pi)$, i.e. we have a second cusp of width $\N(\pi)$.
\end{proof}
If $\pi \in \ZZ$ is an inert prime number then we can immediately deduce that there are between $2$ and $\pi+1$ cusps lying over $\infty$ and one of these cusps has relative width $1$.
\begin{cor} If $\pi \in \ZZ$ is an inert number then there are exactly $\pi+1$ cusps lying over $\infty$. One of them has relative width $1$ all the others have relative width $\pi$. \end{cor}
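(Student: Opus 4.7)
The plan is to identify the cusps of $\Gamma := \SL(L_D^1)\cap\Gamma_0^D(\pi)$ lying above the cusp $\infty$ of $\SL(L_D^1)$ with the orbits of the parabolic stabilizer $\Gamma_\infty^{\SL(L_D^1)} = \langle T\rangle$ acting from the right on the coset space $\Gamma\backslash\SL(L_D^1)$, with the width of each cusp being the size of the corresponding orbit. Since the previous proposition already handles the cusp produced by $T$ itself, the task reduces to a counting problem on $\mathbb{P}^1(\OD/\pi\OD)$.

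First I would transport the action to $\mathbb{P}^1(\OD/\pi\OD)$ via the standard isomorphism that sends a coset to the lower row $(c:d)$ of a representative. A direct matrix computation shows that right multiplication by $T = \left(\begin{smallmatrix}1 & w\\ 0 & 1\end{smallmatrix}\right)$ acts on the lower row by $(c:d)\mapsto (c:cw+d)$. Since $\pi\in\ZZ$ is inert, $\OD/\pi\OD$ is a field of order $\pi^2$, so $\mathbb{P}^1(\OD/\pi\OD)$ has exactly $\pi^2+1=\N(\pi)+1$ elements, which matches the total index. The single element with $c=0$, namely $(0:1)$, is fixed by the whole $\langle T\rangle$, so it contributes one orbit of size $1$; this is the cusp of relative width $1$ that was already identified in the previous proposition.

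For elements with $c\neq 0$ I would normalize to $(1:d)$, so that the $T^n$-action becomes $(1:d)\mapsto (1:d+nw)$. The orbit size equals the smallest $n\in\NN$ with $nw\equiv 0\pmod\pi$; the key input here is that $(\pi,w)=1$, which holds automatically for inert $\pi$ by Lemma~\ref{lem_properties_OD}~(iii) (prime ideal divisors of $w$ are divisors of split or ramified rational primes). Because $\pi$ is inert, $\OD/\pi\OD$ is an integral domain, so this forces $\pi\mid n$, giving orbit size exactly $\pi$. Partitioning the $\pi^2$ elements of the form $(1:d)$ into orbits of size $\pi$ therefore yields $\pi$ additional cusps, each of relative width $\pi$. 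Adding the unique cusp of width $1$ produces exactly $\pi+1$ cusps over $\infty$, and the width sum $1+\pi\cdot\pi = \pi^2+1$ matches the index $[\SL(L_D^1):\Gamma]$, which is the only consistency check required.

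The argument involves no serious obstacle; the only subtle point is invoking $(\pi,w)=1$ at the right moment to guarantee that the $\langle T\rangle$-orbits on the affine chart $c\neq 0$ of $\mathbb{P}^1(\OD/\pi\OD)$ all have maximal size $\pi$. Everything else is a straightforward application of the bijection used throughout Section~\ref{sec_volume_simple} between coset representatives and projective lines over $\OD/\pi\OD$.
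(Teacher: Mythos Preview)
Your proof is correct and follows essentially the same approach as the paper: both arguments reduce the width computation to the divisibility condition $\pi \mid kw$ in the field $\OD/\pi\OD$, using that $(\pi,w)=1$ for inert $\pi$. Your version is simply a cleaner packaging via the $\mathbb{P}^1(\OD/\pi\OD)$ model and the right $T$-action on it, whereas the paper phrases the same computation in terms of coset-representative cycles $A_1 T^k A_1^{-1}$ carried over from the preceding proposition.
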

\begin{proof} Using the same notation as above, it is clear that the relative width of a cusp is divisible by $\pi$ if it is greater than $1$. Now assume that there exists a minimal $l \in \NN$ and  coset representatives $A_1,...A_{l\pi}$ such that $A_1 T^{l\pi} A_{l\pi}^{-1} \in \Gamma^D_0(\pi)$. Then it follows that $A_{l\pi}=A_1$ and $l=1$. This implies the claim. \end{proof}
In particular the number of cusps of simple Teichmüller curves is not universally bounded. We can also gain some general information about the cusp $s = 1/2w$ 
\begin{prop} If $\pi\in \OD$ is a prime element, $\pi \notin \ZZ$ with $(\pi,2)=1$ and $(\pi,w)=1$, then there are exactly $2$ cusps of $\SL(L_D^1) \cap \Gamma^D_0(\pi)$ lying over $1/2w$. If $\pi \in \ZZ$ is an inert prime number, then there are exactly $p+1$ cusps lying over $1/2w$. \end{prop}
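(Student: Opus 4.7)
The strategy mirrors the preceding proposition on cusps lying over $\infty$: the cusps of $\SL(L_D^1) \cap \Gamma^D_0(\pi)$ lying over $w/2$ correspond bijectively to the orbits of $\Stab_{\SL(L_D^1)}(w/2) = \langle L \rangle$ acting on the coset space $\SL(L_D^1) / (\SL(L_D^1) \cap \Gamma^D_0(\pi))$, where $L$ is the parabolic matrix from Lemma~\ref{lem_second_parabolic}, and the relative width of each cusp equals the size of the corresponding orbit.

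First I would invoke Theorem~\ref{thm_summarize_euler_calculations} to identify this coset space with $\SL_2(\OD)/\Gamma^D_0(\pi)$ and hence with $\mathbb{P}^1(\OD/\pi\OD)$; under this identification the action of $L$ becomes the natural linear action of its reduction $\overline{L} \in \SL_2(\OD/\pi\OD)$ on $\mathbb{P}^1(\OD/\pi\OD)$. The hypotheses $(\pi,2)=1$ and $(\pi,w)=1$, together with the explicit form of $L$ from Lemma~\ref{lem_second_parabolic}, force $\overline{L}$ to be a non-identity unipotent element of $\SL_2(\OD/\pi\OD)$: its trace is $2$, and its lower-left entry --- a power of $2$ times $(w+1)$ --- is nonzero modulo $\pi$. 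Since $\overline{L} = \Id + \overline{N}$ with $\overline{N}^{2}=0$, the order of $\overline{L}$ in $\SL_2(\OD/\pi\OD)$ equals the residue characteristic $p$, and a non-identity unipotent element of $\SL_2(\mathbb{F}_q)$ has exactly one fixed point on $\mathbb{P}^1(\mathbb{F}_q)$ while acting freely with orbits of size $p$ elsewhere.

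To conclude, I would count the orbits. If $\pi \notin \ZZ$, then $\OD/\pi\OD \cong \mathbb{F}_{p}$ with $\N(\pi)=p$, so $|\mathbb{P}^1(\OD/\pi\OD)|=p+1$ decomposes into the unique fixed point (contributing a cusp of relative width $1$) plus a single regular orbit of size $\N(\pi)$ (contributing a cusp of relative width $\N(\pi)$), giving exactly $2$ cusps. If $\pi \in \ZZ$ is inert, then $\OD/\pi\OD \cong \mathbb{F}_{p^{2}}$ and $|\mathbb{P}^1(\OD/\pi\OD)|=p^{2}+1$ decomposes into the fixed point plus $p$ regular orbits of size $p$, giving $p+1$ cusps. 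The main obstacle is verifying that $\overline{L}\neq \Id$: this is where the hypothesis $(\pi,2)=1$ is essential, since otherwise the lower-left coefficient of $L$ --- equal to $-4(w+1)$, $-2(w+1)$, or $-(w+1)$ according to $D\bmod 16$ --- could vanish modulo $\pi$, in which case $\overline{L}=\Id$ would act trivially on $\mathbb{P}^1(\OD/\pi\OD)$ and yield the wrong cusp count.
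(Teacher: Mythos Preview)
Your approach is essentially the paper's: both count the $\langle L\rangle$-orbits on the coset space, the paper via an explicit computation of $(BL^kB^{-1})_{2,1}$ for representatives $B=\left(\begin{smallmatrix}1&0\\a&1\end{smallmatrix}\right)$, you via the identification with $\mathbb{P}^1(\OD/\pi\OD)$ and the standard orbit structure of a unipotent element. Your packaging is cleaner and makes the inert case fall out immediately, whereas the paper just refers back to ``the same arguments as in the last proof''.

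One point needs care, however. You assert that the lower-left entry of $L$ --- a power of $2$ times $(w+1)$ --- is nonzero modulo $\pi$ because $(\pi,2)=1$ and $(\pi,w)=1$. But what this actually requires is $(\pi,w+1)=1$, which is \emph{not} implied by $(\pi,w)=1$; if $\pi\mid(w+1)$ then $\overline L=\Id$ and your orbit count collapses to $\N(\pi)+1$ fixed points. The paper's argument has the same blind spot: its ``quadratic in $a$'' in fact factors as a power of~$2$ times $(w+1)(aw+2)^2$, so when $\pi\mid(w+1)$ the ``at most two $a$'' claim fails as well. Since the author explicitly frames this section as a rough guideline rather than a complete treatment, this is acceptable in context, but you should not claim that the stated hypotheses alone force $\overline L\neq\Id$.
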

\begin{proof} We first look at the non-inert case. Let $L$ be the parabolic element stabilizing $1/2w$ which we found in Lemma~\ref{lem_second_parabolic}. Similarly as in the last proof one then has a cusp of width $k$ if there are coset representatives $A_1,...,A_k$ with $A_1 T^{-1} L T A_2^{-1} \in \Gamma^D_0(\pi)$,..., $A_k T^{-1} L T A_1^{-1} \in \Gamma^D_0(\pi)$. So we need to find out for which coset representatives $A$ one has $A T^{-1} L^k T A \in \Gamma^D_0(\pi)$. If $A$ is a coset representative, then so is $AT^{-1}$. So we may assume that $B:=AT^{-1} = \left( \begin{smallmatrix} 1 & 0\\ a & 1 \end{smallmatrix} \right).$ Then
$$(BL^kB^{-1})_{2,1} = lk\left(\frac{a^2}{4}\left(\frac{D+7}{4}+\frac{D-1}{2}\right) + a\left(2w+\frac{D-1}{4}\right)+w+1\right).$$ 
where $l$ is a power of $2$ depending only on the discriminant.  There exist at most two $a$ such that the lower left entry is equivalent to $0 \mod \pi$. If $\pi \in \OD$ is a prime element and $\pi \notin \ZZ$, there thus exists a cusp of relative width $\N(\pi)$. Therefore, there are exactly two cusps in this case.\\
If $\pi$ is an inert prime number, then one sees by the same arguments as in the last proof that there exist $\pi$ cusps of width $\pi$ and $1$ cusp of relative width $1$. \end{proof}
If we want to gain information on the cusps $s \neq \infty, 1/2w$, we have to restrict to the cases $D=13$ and $D=17$ since we do not have a general formula for parabolic elements inside $\SL(L_D^1)$. There is one more cusp in each case, namely $1$. The geometry of the fundamental domains of these Veech groups is so similar that one more or less automatically treats $D=13$ and $D=17$ at the same time. Almost the same calculation as before gives the following result:
\begin{prop} If $\pi \in \OD$ is a prime element, $\pi \notin \ZZ$, with $(\pi,2)=1$ and $(\pi,w)=1$, then there are exactly $2$ cusps in $\SL(L_D^1) \cap \Gamma^D_0(\pi)$ lying over $1$. If $\pi \in \ZZ$ is an inert prime number, then there are exactly $\pi+1$ cusps lying over $1$. \end{prop}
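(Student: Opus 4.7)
My plan is to adapt the argument of the two preceding propositions to the cusp $1$, using the parabolic generator $E=\left(\begin{smallmatrix}1-e&e\\-e&1+e\end{smallmatrix}\right)$ of its stabilizer in $\SL(L_D^1)$, whose value of $e\in\OD$ is recorded in Section~\ref{sec_fixing_Veech}. The cusps of $\HH/(\SL(L_D^1)\cap\Gamma^D_0(\pi))$ lying over $1$ correspond bijectively to the $\langle E\rangle$-orbits on the right coset space $(\SL(L_D^1)\cap\Gamma^D_0(\pi))\backslash\SL(L_D^1)$, and the width of such a cusp equals the size of its orbit, i.e.\ the smallest integer $k\geq 1$ for which $AE^kA^{-1}\in\Gamma^D_0(\pi)$ for a representative $A$. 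By Theorem~\ref{thm_summarize_euler_calculations} every class in $\SL_2(\OD)/\Gamma^D_0(\pi)$ is represented by an element of $\SL(L_D^1)$, so I may use the bijection $\SL(L_D^1)/(\SL(L_D^1)\cap\Gamma^D_0(\pi))\cong\mathbb{P}^1(\OD/\pi\OD)$ given by the bottom row, and the widths I find must sum to $\N(\pi)+1$.

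\paragraph{Key identity.} Writing $E=I+eN$ with $N=\left(\begin{smallmatrix}-1&1\\-1&1\end{smallmatrix}\right)$ and noting that $N^2=0$ gives $E^k=I+keN$. For $A=\left(\begin{smallmatrix}p&q\\a&d\end{smallmatrix}\right)\in\SL_2(\OD)$ a direct computation produces
\[
(AE^kA^{-1})_{2,1}=-ke(a+d)^2,
\]
so that $AE^kA^{-1}\in\Gamma^D_0(\pi)$ if and only if $\pi\mid ke(a+d)^2$ in $\OD$. To exploit this I would first verify $(\pi,e)=1$ for the two discriminants in question: for $D=17$ the cylinder decomposition in direction $(1,1)$ gives $e=2w-5$, which has norm $-1$ and is a unit, so there is nothing to check; for $D=13$ the same decomposition yields $e=w(7-3w)$ with $7-3w$ a unit, so $e$ is an associate of $w$, and the hypothesis $(\pi,w)=1$ then delivers $(\pi,e)=1$. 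With $\pi\nmid e$ and $\pi$ prime in $\OD$, the width of the coset with bottom row $(a:d)$ equals $1$ whenever $\pi\mid(a+d)$ and otherwise equals the smallest $k\in\NN$ with $\pi\mid k$ in $\OD$.

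\paragraph{Counting.} The condition $\pi\mid(a+d)$ cuts out the single point $(1:-1)\in\mathbb{P}^1(\OD/\pi\OD)$, contributing exactly one cusp of width $1$. If $\pi\notin\ZZ$ lies above a split or ramified rational prime $p$, then the smallest $k\in\NN$ divisible by $\pi$ is $p=\N(\pi)$; the remaining $\N(\pi)$ cosets then each have width $\N(\pi)$ and so fuse into a single $\langle E\rangle$-orbit, producing one further cusp of width $\N(\pi)$ and $2$ cusps in total, with widths summing to $\N(\pi)+1$. If instead $\pi\in\ZZ$ is inert, the smallest $k\in\NN$ divisible by $\pi$ is $\pi$ itself (and not $\N(\pi)=\pi^2$), so the remaining $\pi^2$ cosets each have width $\pi$ and split into $\pi$ orbits of length $\pi$; this gives $\pi$ additional cusps of width $\pi$ and $\pi+1$ cusps in total, with widths summing to $1+\pi^2=\N(\pi)+1$.

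\paragraph{Main obstacle.} The only genuinely new ingredient beyond the two preceding propositions is the verification $\pi\nmid e$: without it $E$ itself would lie in $\Gamma^D_0(\pi)$ and every coset would be a width-$1$ cusp, so the stated count would fail. The restriction to $D\in\{13,17\}$ is precisely what makes this verification elementary, since it provides a closed form for $e$ via the explicit value of $\lfloor w\rfloor$; for other discriminants a substitute description of $e$ would first have to be found, which explains why the proposition is stated only for these two cases.
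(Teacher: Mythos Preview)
Your approach is exactly the paper's (``almost the same calculation as before''): conjugate the parabolic $E$ fixing $1$ by coset representatives and read widths from the $(2,1)$-entry. The identity $(AE^kA^{-1})_{2,1}=-ke(a+d)^2$ is correct, and so is your counting once $(\pi,e)=1$ is known.

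The gap is your value of $e$ for $D=13$. From the explicit generator $C=\left(\begin{smallmatrix}-4-4w&5+4w\\-5-4w&6+4w\end{smallmatrix}\right)$ of $\SL(L_{13}^1)$ listed in Section~\ref{sec_max_comm} one reads $e=5+4w$, and $5+4w=-(3w+4)(1-w)$ with $3w+4$ a unit, so $e$ is an associate of $w^\sigma=1-w$, \emph{not} of $w$. (Your value $4w-9=w(7-3w)$ is in fact $-e^\sigma$.) Consequently the hypothesis $(\pi,w)=1$ does \emph{not} force $(\pi,e)=1$: the prime $\pi=1-w$ satisfies $(\pi,2)=1$ and $(\pi,w)=1$ yet divides $e$, so by your own formula every coset has width $1$ and there would be $\N(\pi)+1=4$ cusps over $1$, not $2$. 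For $D=17$ the slip is harmless since there $e=3+2w$ is a unit (as is your $2w-5=-e^\sigma$). The paper's one-line proof does not address this coprimality check either; the stated hypothesis $(\pi,w)=1$ apparently needs to be strengthened to $(\pi,1-w)=1$ for $D=13$.
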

Let us collect everything in one theorem:
\begin{thm} If $D=5$ and if $\pi \in \OD$ is a prime element with $(\pi,w)=1$ and $(\pi,2)=1$ then
\begin{itemize} 
\item[(i)] if $\pi \in \ZZ$, then $\HH / (\SL(L_D^1) \cap \Gamma^D_0(\pi))$ has exactly $2\varphi_1(\pi)$ cusps.
\item[(ii)] If $\pi \notin \ZZ$, then $\HH / (\SL(L_D^1) \cap \Gamma^D_0(\pi))$ has exactly $4$ cusps.
\end{itemize} 
If $D \in \left\{ 13,17 \right\}$ and if $\pi \in \OD$ is a prime element with $(\pi,w)=1$ and $(\pi,2)=1$ then 
\begin{itemize} 
\item[(i)] if $\pi \in \ZZ$, then $\HH / (\SL(L_D^1) \cap \Gamma^D_0(\pi))$ has exactly $\varphi_1(2\pi)$ cusps.
\item[(ii)] If $\pi \notin \ZZ$, then $\HH / (\SL(L_D^1) \cap \Gamma^D_0(\pi))$ has exactly $6$ cusps.
\end{itemize} 
\end{thm}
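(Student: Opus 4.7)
The strategy is to count cusps of $\HH/(\SL(L_D^1)\cap \Gamma_0^D(\pi))$ by partitioning them according to which cusp orbit of $\SL(L_D^1)$ they project to under the natural (finite) covering $\HH/(\SL(L_D^1)\cap \Gamma_0^D(\pi))\to \HH/\SL(L_D^1)$. For each base cusp $s$ of $\SL(L_D^1)$, the number of cusps above $s$ is exactly the number of orbits of the cyclic stabilizer $\Stab_{\SL(L_D^1)}(s)$ on the coset space $\SL(L_D^1)/(\SL(L_D^1)\cap\Gamma_0^D(\pi))$, and the sum of their relative widths equals the index $[\SL(L_D^1):\SL(L_D^1)\cap\Gamma_0^D(\pi)]=\N(\pi)+1$. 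Thus the entire theorem reduces to the three preceding propositions, which handle $s=\infty$, $s=\tfrac{w}{2}$, and $s=1$ in turn.

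For $D\in\{13,17\}$, the fundamental domain pictured in Figure~5.1 (and its analogue for $D=17$) shows that $\SL(L_D^1)$ has exactly three cusp orbits, represented by $\infty$, $\tfrac{w}{2}$, and $1$. For $\pi\notin\ZZ$ with $(\pi,2)=(\pi,w)=1$, each of the three preceding propositions produces exactly two cusps above the respective base cusp (one of width $1$ and one of width $\N(\pi)$), giving $2+2+2=6$ cusps. For $\pi\in\ZZ$ inert, each base cusp contributes $\pi+1$ cusps (one of width $1$ and $\pi$ of width $\pi$), giving $3(\pi+1)$ cusps. The identification with $\varphi_1(2\pi)$ is the Dedekind psi formula $\psi(2\pi)=2\pi(1+\tfrac12)(1+\tfrac{1}{\pi})=3(\pi+1)$.

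For $D=5$ the same strategy applies, but since the Veech group here is the Hecke triangle group $\Delta(2,5,\infty)$ one has fewer base cusps: only the orbits of $\infty$ and of $\tfrac{w}{2}$ contribute, and the propositions about $\infty$ and $\tfrac{w}{2}$ (but not the one about $1$, which relied on explicit generators available only for $D=13,17$) yield the counts $2+2=4$ (for $\pi\notin\ZZ$) and $(\pi+1)+(\pi+1)=2(\pi+1)=2\varphi_1(\pi)$ (for $\pi\in\ZZ$ inert). I expect the main obstacle to be precisely the $D=5$ bookkeeping: one must verify that the cusps $\infty$ and $\tfrac{w}{2}$ are genuinely $\SL(L_5^1)$-inequivalent (so no contribution is double-counted), that no additional base cusp has been missed in the Hecke-triangle-group description, and that the ratio-of-moduli calculation from Section~3.2, which degenerates to $\tfrac{D-9}{4}=-1$ for $D=5$, nonetheless furnishes a legitimate parabolic generator at $\tfrac{w}{2}$ to which the earlier proposition can be applied. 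Once this consistency is settled, the theorem follows by direct addition of the contributions from each base cusp.
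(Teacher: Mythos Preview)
Your approach is exactly the paper's: the theorem is presented there simply as ``collecting'' the three preceding propositions (on cusps above $\infty$, above $w/2$, and above $1$), and the case $D\in\{13,17\}$ you handle correctly, including the identification $\varphi_1(2\pi)=3(\pi+1)$.

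The genuine gap is the one you yourself flag for $D=5$. The Veech group $\SL(L_5^1)$ is the Hecke triangle group $\Delta(2,5,\infty)$, which has signature $(0;2,5;1)$ and therefore exactly \emph{one} cusp orbit. Consequently $\infty$ and $w/2$ are $\SL(L_5^1)$-equivalent, and your proposed decomposition into two base cusps would double-count: applying only the $\infty$-proposition yields $2$ cusps (respectively $\pi+1$) above the unique base cusp, not $4$ (respectively $2(\pi+1)$). The degeneration you observe---that the cylinder-ratio formula gives $(D-9)/4=-1$ and that the second-parabolic lemma is stated only for $D>5$---is precisely a symptom of this collapse: for $D=5$ there is no second inequivalent parabolic direction to which the $w/2$-proposition can be applied. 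So your argument as written does not establish the $D=5$ count; reconciling the stated formula with a single base cusp requires either a separate analysis taking the order-$5$ elliptic element into account, or a closer look at the formula itself.
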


Also this formula generalizes the well-known formula for the number of cusps of the modular curves $\HH / \Gamma_0(m)$ for congruence subgroups $\Gamma_0(m) \subset \SL_2(\ZZ)$ (compare again \cite{Miy89}, Theorem~4.2.7). Summing up, this indicates that simple twisted Teichmüller curves behave very much like modular curves from an arithmetic point of view.
\paragraph{Genus.} \index{twisted Teichmüller curve!genus} The Riemann-Hurwitz formula enables us now to calculate the genus of simple twisted Teichmüller curves for prime elements $\pi \in \OD$. Let $g_\pi$ denote the genus of $\HH/(\SL(L_D^1) \cap \Gamma^D_0(\pi))$.
\begin{cor} 
If $D = 5$ and if $\pi \in \OD$ is a prime element with $(\pi,2)=1$ then 
\begin{itemize} 
\item[(i)] if $\pi \in \ZZ$, then $g_\pi = \frac{3(\N(\pi)+1)+10-20\sigma_1(\pi)}{20}$.
\item[(ii)] If $\pi \notin \ZZ$ and $\textrm{N}(\pi) \equiv 1 \mod 4$ then $g_\pi = \frac{3(\N(\pi)+1)-20}{10}$.
\item[(iii)] If $\pi \notin \ZZ$ and $\textrm{N}(\pi) \equiv 3 \mod 4$ then $g_\pi = \frac{3(\N(\pi)+1)-30}{10}$.
\end{itemize} 
If $D \in \left\{ 13,17 \right\}$ and if $\pi \in \OD$ is a prime element with $(\pi,w)=1$ and $(\pi,2)=1$ then
\begin{itemize} 
\item[(i)] if $\pi \in \ZZ$, then $g_\pi = \frac{3(\N(\pi)+1)+2-2\sigma_1(2\pi)}{4}$.
\item[(ii)] If $\pi \notin \ZZ$ and $\textrm{N}(\pi) \equiv 1 \mod 4$ then $g_\pi = \frac{3(\N(\pi)+1)-8}{4}$.
\item[(iii)] If $\pi \notin \ZZ$ and $\textrm{N}(\pi) \equiv 3 \mod 4$ then $g_\pi = \frac{3(\N(\pi)+1)-10}{4}$.
\end{itemize} 
\end{cor}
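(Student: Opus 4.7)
The plan is a direct application of the Riemann--Hurwitz formula (Theorem~\ref{thm_Riemann_Hurwitz}) to the finite-degree cover $\HH/(\SL(L_D^1)\cap \Gamma^D_0(\pi)) \to \HH/\SL(L_D^1)$, combined with the explicit counts of elliptic points and cusps gathered in the preceding two propositions and theorem. First I would record the basic data of the base orbifold: for $D=5$ the Veech group is the Hecke triangle group $\Delta(2,5,\infty)$ of signature $(0;2,5;1)$, so $\chi(\HH/\SL(L_5^1)) = -3/10$; for $D\in\{13,17\}$ the signature is $(0;2;3)$ (as already used in the proof of Theorem~\ref{thm_maximality}), giving $\chi(\HH/\SL(L_D^1)) = -3/2$.

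Next, I would determine the index $n=[\SL(L_D^1):\SL(L_D^1)\cap \Gamma^D_0(\pi)]$. By Propositions~\ref{prop_splitting_primes}, \ref{prop_inert_prime_eq1} and \ref{prop_ramifiedI} (all applicable since $(\pi,w)=1$ and $(\pi,2)=1$), this index equals $\N(\pi)+1$ in every case, so $\chi(\HH/(\SL(L_D^1)\cap \Gamma^D_0(\pi))) = -\frac{3(\N(\pi)+1)}{10}$ for $D=5$ and $-\frac{3(\N(\pi)+1)}{2}$ for $D\in\{13,17\}$.

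Then I would assemble the elliptic and cusp data. Since $\pi$ is coprime to $2$ and to $w$, Theorem~\ref{thm_number_of_elliptic} gives the number of order-$2$ elliptic points: this is $2$ if $\pi\notin\ZZ$ and $\N(\pi)\equiv 1\pmod 4$, and $0$ if $\pi\notin\ZZ$ and $\N(\pi)\equiv 3\pmod 4$; for inert $\pi\in\ZZ$, the number $e_2(\pi)$ of order-$2$ points can be read off from the same theorem. For $D=5$ I also need the number $e_5(\pi)$ of order-$5$ elliptic points in the cover, which I would compute by the analogous coset-representative argument used for order-$2$ points: a coset $B\in \SL(L_5^1)/(\SL(L_5^1)\cap \Gamma^D_0(\pi))$ contributes an order-$5$ fixed point iff $\pi$ divides the lower-left entry of $B\cdot R\cdot B^{-1}$ for a fixed order-$5$ generator $R$, leading to a count governed by the splitting of $R$'s minimal polynomial modulo $\pi$. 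The number of cusps $s_\pi$ is supplied directly by the preceding theorem; in particular for $\pi\in\ZZ$ inert the formulas $2\varphi_1(\pi)$ (resp.\ $\varphi_1(2\pi)$) can be re-expressed via $\sigma_1$ using that $\pi$ is prime, which is exactly what produces the $\sigma_1(\pi)$ and $\sigma_1(2\pi)$ terms in the statement.

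Finally I would plug everything into
\[
\chi(\HH/(\SL(L_D^1)\cap \Gamma^D_0(\pi))) = 2 - 2g_\pi - \tfrac12 e_2(\pi) - \tfrac45 e_5(\pi) - s_\pi,
\]
(dropping the $e_5$ term for $D\in\{13,17\}$) and solve for $g_\pi$; multiplying through by $20$ in the $D=5$ case and by $4$ in the $D\in\{13,17\}$ case recovers the six stated formulas. The main obstacle is the $D=5$ case: one must carefully verify that for every prime $\pi$ with $(\pi,2)=1$ the relevant count $e_5(\pi)$ contributes in precisely the right way so that, once the cusp counts are rewritten in terms of $\sigma_1$, the bookkeeping telescopes into the clean expression involving $\sigma_1(\pi)$. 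Outside this congruence accounting for order-$5$ fixed points, every other ingredient is already furnished by the earlier results in the chapter and the computation is essentially mechanical.
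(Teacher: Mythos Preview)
Your approach is correct and is exactly what the paper does: the paper simply states that the corollary follows from the Riemann--Hurwitz formula applied with the index, elliptic point counts, and cusp counts already assembled in the preceding subsection, and gives no further argument. Your write-up is in fact more detailed than the paper's, and you have correctly flagged the one piece of extra bookkeeping (the count of order-$5$ elliptic points for $D=5$, which lies outside the scope of Theorem~\ref{thm_number_of_elliptic} since that theorem assumes $S$ is the only elliptic generator up to conjugacy) that must be supplied before the formula can be evaluated.
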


\newpage

\section{Prym Varieties and Teichmüller Curves} \label{chapter_prym_modular}

We have just seen that there are many different twisted Teichmüller curves (Theorem~\ref{thm_classificiation1}). So it is now natural to ask, whether these curves yields all Kobayashi curves. This question will be answered negatively later. The natural candidates for Kobayashi curves which are not a twisted Teichmüller curve stem from certain Teichmüller curves in $\mathcal{M}_3$ and $\mathcal{M}_4$. The main purpose of this chapter is to repeat how these primitive Teichmüller curves of low genus~$2 \leq g \leq 4$ are constructed in \cite{McM06a}. These curves lie on Hilbert modular surfaces. The embedding is given by assigning to each point lying on these Teichmüller curves its Prym variety.\\[11pt] We have already introduced Abelian varieties in Section~\ref{section_abelian_varieties}. 
A special class of polarized Abelian varieties are the Prym varieties in Section~\ref{section_prym}. In Section~\ref{section_prym_teich} we recall C. McMullen's construction of an infinite collection of primitive Teichmüller curves. We can associate to all the points lying on these Teichmüller curves their Prym variety. All these Prym varieties have real multiplication (see Definition~\ref{def_real_multiplication}). In view of the fact that Hilbert modular surfaces parametrize polarized Abelian surfaces with real multiplication (compare Theorem~\ref{thm_Hilbert_modular_moduli}), it follows that all the Teichmüller curves described in this chapter lie on a Hilbert modular surface (Theorem~\ref{thm_modular_embedding}).

\subsection{Prym Varieties} \label{section_prym}

Prym varieties form a special class of examples of polarized Abelian varieties. Although they are from a certain viewpoint more general than Jacobians, they are still accessible geometrically (see \cite{Bea89} for details). Classically they were introduced as a certain subvariety of the Jacobian of a Riemann surface $X'$ which double covers another Riemann surface $X$ with at most $2$ branch points. 
This classical definition is not sufficient for our purposes. That is why we use the notion of Prym varieties from \cite{McM06a}.  The definition just involves double covers without any restriction on the number of branched points.


\begin{defi} Let $X'$ be a compact Riemann surface with an automorphism $\rho: X' \to X'$ of order two, i.e. an involution. Recall that $\Jac(X')=H^0(X',\Omega(X'))^{\vee}/H_1(X',\ZZ)$. The action of $\rho$ \label{glo_rho} determines a splitting of the Abelian differentials into even and odd forms $\Omega(X') = \Omega(X')^+ \oplus \Omega(X')^- $, \label{glo_Om+-} the $+1$ and the $-1$ eigenspace of the involution $\rho$ on $\Omega(X')$,  as well as sublattices $H_1(X',\ZZ)^{\pm}\subset H_1(X',\ZZ)$. Then we define the \textbf{Prym variety} to be \index{Prym!variety}
$$P=\Prym(X',\rho)= H^0(X',\Omega(X')^-)^{\vee}/H_1(X',\ZZ)^{-}. \label{glo_Prym}$$ 
The automorphism $\rho$ is called the \textbf{Prym involution}.  Finally we refer to elements in $\Omega(X')^-$ as \textbf{Prym forms} on $(X',\rho)$. \end{defi}

\begin{rem} There is a very general notion of Prym varieties (see e.g. \cite{BL04}, Chapter~12), which we do not need in its full generality here. \end{rem}

Note that the covering projection $\pi: X' \to X'/\rho$ may have $2n$ branched points. The Prym variety $P$ is canonically polarized by the intersection pairing on $H_1(X',\ZZ)^-$. By the Riemann-Hurwitz formula we have that if $\dim(\Prym(X',\rho))=h$ then $h \leq g(X) \leq 2h+1$ (see \cite{McM06a}, Theorem~3.1).

\subsection{Prym Varieties and Teichmüller Curves} \label{section_prym_teich}
Let us continue by collecting a few important results relating Teichmüller curves and Prym varieties. In this section we will leave out those technicalities that are not of great significance for us. The reader who wants to know more about the details, in particular \textbf{Prym systems} and the \textbf{Thurston-Veech construction}, is referred to \cite{McM06a}, Chapter~4 and 5, as well as to \cite{Möl11a}, Chapter~2.2.\\[11pt]
For $2 \leq g \leq 4$ Prym varieties tell us where to search for an infinite collection of Teichmüller curves: let $P=\Prym(X,\rho)$ be a Prym variety with real multiplication by $\OD$ (especially $\dim_\mathbb{C}P=2$). Then $\OD$ also acts on $\Omega(P) \cong \Omega(X)^-$. 
\begin{defi} We call $\omega \in \Omega(X)^-$ a \textbf{Prym eigenform} \index{Prym!eigenform} if $0 \neq \OD \omega \subset \mathbb{C}\omega$. By $\Omega E_D^g \subset \Omega \Mg$ we denote the space of all pairs $(X,\omega)$ such that there exists a Prym involution $\rho$ such that $P=\Prym(X,\rho)$ admits real multiplication by $\OD$ and $\omega$ is a Prym eigenform of $P$. The space is called the \textbf{space of all Prym eigenforms for real multiplication by $\OD$}.
\end{defi}
\begin{rem} Neither $\rho$ nor the action of $\OD$ is uniquely determined by $\omega$. \end{rem}
We define the \textbf{Weierstrass locus} \index{Weierstrass locus} \label{glo_Wei} by $$\Omega W_D^g:=\Omega E_D^g \cap \Omega \Mg(2g-2).$$ 
The projection of $\Omega W_D^g$ to moduli space is called the \textbf{Weierstrass curve} $W_D^g$. 
\begin{thm} (\cite{McM06a}, Theorem~3.2) The Weierstrass curve $\Omega W_D^g$ is a $\SL_2(\mathbb{R})$-invariant subset of $\Omega \Mg$. \end{thm}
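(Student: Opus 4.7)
The plan is to reduce invariance of $\Omega W_D^g$ to invariance of $\Omega E_D^g$ (using the standard fact that $\SL_2(\mathbb{R})$ preserves each stratum $\Omega\mathcal{M}_g(2g-2)$) and then, given $(X,\omega)\in\Omega E_D^g$ with Prym involution $\rho$ and real multiplication $\iota:\mathcal{O}_D\hookrightarrow\End(\Prym(X,\rho))$, to show that $(X',\omega'):=A\cdot(X,\omega)$ still carries all of this structure for every $A\in\SL_2(\mathbb{R})$.

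The first observation is that $\rho$ remains a holomorphic involution on $X'$: in flat coordinates $\rho$ acts by $-\mathrm{Id}$, which is central in $\GL_2^+(\mathbb{R})$ and hence commutes with the $\SL_2(\mathbb{R})$-action on charts. Because $\rho^{*}\Real(\omega)=-\Real(\omega)$ and $\rho^{*}\Imag(\omega)=-\Imag(\omega)$, the formula for $\omega'$ given in Section~\ref{sec_flat_surfaces} shows $\rho^{*}\omega'=-\omega'$, so $\omega'\in\Omega(X')^{-}$. Since $X'$ is topologically the same surface as $X$, the splittings $H_1(X',\ZZ)=H_1(X',\ZZ)^{+}\oplus H_1(X',\ZZ)^{-}$ and the symplectic form on $H_1(X',\ZZ)^{-}$ are preserved, and so the integral endomorphism $\iota(\lambda)$ acts on $H_1(X',\ZZ)^{-}$ exactly as it did on $H_1(X,\ZZ)^{-}$. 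Self-adjointness of each $\iota(\lambda)$ (Definition~\ref{def_real_multiplication}) is phrased purely in terms of the intersection form and so is automatic.

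The remaining point is that $\iota$ must still be complex-linear with respect to the new complex structure on $\Prym(X',\rho)$ inherited from $X'$. Here the eigenform hypothesis is essential: it implies that $H^{1}(X,\mathbb{R})^{-}$ decomposes as $V_{+}\oplus V_{-}$, where $V_{\pm}$ are the two $\mathcal{O}_D$-eigenspaces on which $\iota$ acts by the two real embeddings of $\mathcal{O}_D$, each $V_{\pm}$ is a real $2$-dimensional subspace, and $\Real(\omega),\Imag(\omega)\in V_{+}$ (that $\omega\in V_{+}\otimes\mathbb{C}$ is the content of $\mathcal{O}_D\omega\subset\mathbb{C}\omega$). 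Because $V_{\pm}$ are cut out over $\mathbb{Q}(\sqrt{D})\subset\mathbb{R}$, the $\SL_2(\mathbb{R})$-action on $H^{1}(X,\mathbb{R})^{-}$ (which is the standard representation in the basis $(\Real(\omega),\Imag(\omega))$ of $V_{+}$ and in a suitable basis of $V_{-}$) preserves each $V_{\pm}$. Consequently $\omega'\in V_{+}\otimes\mathbb{C}$, and $\iota(\lambda)$ acts on $V_{+}$ (resp.\ $V_{-}$) as the real scalar given by the appropriate embedding of $\lambda$. Any such real scalar operator is complex-linear for every compatible complex structure on $V_{\pm}$, in particular for the one determined by $\omega'$. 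Thus $\iota$ remains a proper embedding into $\End(\Prym(X',\rho))$, and $\omega'$ is again a Prym eigenform, so $(X',\omega')\in\Omega E_D^g$.

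The step most likely to cause technical headaches is this last compatibility with the complex structure: one must verify that the decomposition $V_{+}\oplus V_{-}$ is genuinely $\SL_2(\mathbb{R})$-invariant and that the identification of $\omega$ with a generator of $V_{+}\otimes\mathbb{C}$ persists after the action. Everything else—stratum invariance, survival of the involution, and preservation of the integral symplectic data—is formal, so the real content of the proof is the linear-algebraic observation that eigenform decompositions are automatically compatible with the $\SL_2(\mathbb{R})$-action because $\mathcal{O}_D$ acts by real scalars on each real eigenspace.
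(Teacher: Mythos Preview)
The paper does not give its own proof of this statement; it is quoted as Theorem~3.2 of \cite{McM06a} and no argument is reproduced. So there is nothing in the present paper to compare your proposal to.

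That said, your outline is essentially correct and matches McMullen's original argument. The reduction to $\SL_2(\mathbb{R})$-invariance of $\Omega E_D^g$ via invariance of the stratum $\Omega\mathcal{M}_g(2g-2)$ is standard; the observation that $\rho$ acts by $-\mathrm{Id}$ in flat coordinates (hence commutes with every $A\in\SL_2(\mathbb{R})$ and remains holomorphic for the new complex structure) is the right way to see that the Prym involution survives; and the eigenspace argument is the correct reason real multiplication persists. One small clarification: rather than saying the $\SL_2(\mathbb{R})$-action ``preserves each $V_\pm$'', it is cleaner to note that $H^1(X',\mathbb{R})=H^1(X,\mathbb{R})$ as real symplectic spaces (only the complex structure changes), so the real eigenspaces $V_\pm$ are literally the same subspaces before and after, and $\Real\omega',\Imag\omega'$ manifestly lie in $V_+$ by the defining formula for $\omega'$. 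With that phrasing the ``technical headache'' you flag disappears.
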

Another lemma reveals why this construction only works in the case $2 \leq g \leq 4$.
\begin{lem} The Weierstrass curve $W_D^g$ is nonempty only if $2 \leq g \leq 4$. \end{lem}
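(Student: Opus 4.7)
The plan is to combine three ingredients: (i) the Prym eigenform is antiinvariant under $\rho$, (ii) for $(X,\omega) \in \Omega W_D^g$ the form $\omega$ has a single zero of order $2g-2$, and (iii) the Riemann--Hurwitz formula applied to the double cover $\pi \colon X \to X/\rho$. The lower bound $g \geq 2$ is immediate, since $\dim_{\CC} \Prym(X,\rho) = \dim \Omega(X)^- = g(X) - g(X/\rho)$, and this must equal $2$ (so that $\OD$ can act by real multiplication on a two-dimensional abelian variety); in particular $g(X) \geq 2$.

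For the upper bound, first I would observe that the unique zero $p$ of $\omega$ must be a fixed point of $\rho$. Indeed, since $\omega \in \Omega(X)^-$ we have $\rho^*\omega = -\omega$, so $\rho$ permutes the zero set of $\omega$; because $\omega$ has a single zero (of order $2g-2$), this zero must be $\rho$-invariant. Consequently the involution $\rho$ has at least one fixed point, i.e.\ the double cover $\pi \colon X \to X/\rho$ has at least one branch point.

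Next I would count branch points via Riemann--Hurwitz. Writing $b$ for the number of branch points of $\pi$, and using $g(X/\rho) = g(X) - 2$ (from step one), one computes
\begin{equation*}
2g(X) - 2 \;=\; 2\bigl(2g(X/\rho) - 2\bigr) + b \;=\; 4g(X) - 8 - 4\cdot 2 + b,
\end{equation*}
hence $b = 10 - 2g(X)$. Combined with the previous paragraph the requirement $b \geq 1$ forces $g(X) \leq 4$ (and, since $b$ is automatically even, one indeed has $b \in \{6,4,2\}$ for $g=2,3,4$ respectively).

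The one step that requires a little care is the claim that $\omega$ has a single zero forces its unique zero to be $\rho$-fixed: this is really the crux of the argument, and relies on $\omega \in \Omega \Mg(2g-2)$ (so that the zero set consists of a single point) combined with the fact that $\rho$ preserves the divisor of $\omega$. The lower bound, the dimension count for $\Omega(X)^-$, and the Riemann--Hurwitz computation are all routine once this observation is in place, and together yield the desired range $2 \leq g \leq 4$.
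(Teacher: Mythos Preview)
Your argument is correct and follows essentially the same route as the paper: both use that $\dim_{\CC}\Prym(X,\rho)=2$ for the lower bound, and that the unique zero of $\omega\in\Omega(X)^-$ must be $\rho$-fixed (so the cover is ramified) together with Riemann--Hurwitz for the upper bound; the paper phrases the latter as ``$g=5$ would force an unramified cover, hence an even number of zeroes,'' which is the contrapositive of your observation. One small slip: in your displayed computation the intermediate expression should read $4g(X)-12+b$ rather than $4g(X)-8-4\cdot 2+b$, but your conclusion $b=10-2g(X)$ is correct.
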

\begin{proof} By the definition of real multiplication $\dim_{\mathbb{C}}P=2$. Therefore, $\Omega E_D \neq \emptyset$ only if $2 \leq g \leq 5$. If $g=5$ then $X \to X/\rho$ is an unramified double cover. Hence $\omega$ has an even number of zeroes and so $\omega \notin \Omega \Mg(2g-2)$. \end{proof}
$W_D^g$ is itself the main link between Teichmüller curves and Prym varieties.
\begin{thm} (\textbf{McMullen}, \cite{McM06a}, Theorem~3.4) \label{thm_weierstrass_is_union} The Weierstrass curve $W_D^g \subset \Mg$ is a finite union of Teichmüller curves. Each such curve is primitive, provided $D$ is not a square. \end{thm}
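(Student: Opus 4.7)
My plan is to combine the $\SL_2(\RR)$-invariance of $\Omega W_D^g$ from the preceding theorem with a dimension count in period coordinates to identify each $\SL_2(\RR)$-orbit in $\Omega W_D^g$ as the orbit of a Veech surface, and then deduce finiteness and primitivity separately.

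First I would verify that each $(X,\omega) \in \Omega W_D^g$ is a Veech surface. Work in period coordinates on the stratum $\Omega\Mg(2g-2)$, which is locally modeled on $H^1(X,Z(\omega);\CC)$ and has complex dimension $2g$. The Prym involution $\rho$ splits the cohomology into $\pm$-eigenspaces, and the Prym eigenform condition forces $[\omega]$ to lie in the two-dimensional $\OD$-eigenspace of $H^1(X,\Omega(X))^-$; together with the requirement that $\omega$ has a single zero of order $2g-2$, this cuts out a subvariety of complex dimension exactly $2$ locally. Since $\Omega W_D^g$ is $\SL_2(\RR)$-invariant and $\SL_2(\RR)$ itself is $3$-dimensional (with a $1$-dimensional $\SO_2(\RR)$-stabilizer acting only by rotation of $\omega$), the orbit of any $(X,\omega)$ must already be open in the connected component of $\Omega W_D^g$ containing it. Standard arguments (as in \cite{McM03}, or via Smillie's theorem on closed orbits) then imply that these orbits are closed, hence the projection of each $\SL_2(\RR)$-orbit to $\Mg$ is a Teichm�ller curve generated by a Veech surface.

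For finiteness I would argue that $\Omega W_D^g$ is itself a closed algebraic subvariety of $\Omega\Mg(2g-2)$: the real multiplication condition and the eigenform condition are both algebraic, and the space of pairs $(P,\rho_{\OD})$ consisting of a Prym variety with a fixed choice of real multiplication forms a Hilbert modular surface (finitely many components indexed by the cusps/genera of ideal classes). Pulling back by the Prym map $\Omega W_D^g \to X_D$ (which is well-defined up to the finitely many choices of Prym structure), we see that $\Omega W_D^g$ has only finitely many irreducible components. Each component is an $\SL_2(\RR)$-orbit by the dimension count above, so $W_D^g$ is a finite union of Teichm�ller curves.

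Finally, for primitivity when $D$ is not a square: suppose $(X,\omega)$ generates one of these Teichm�ller curves and arises via a translation cover $\pi:(X,\omega)\to(Y,\eta)$ with $g(Y)<g(X)$. Then the Veech groups $\SL(X,\omega)$ and $\SL(Y,\eta)$ are commensurable by the theorem of Gutkin and Judge (Theorem~\ref{thm_Gutkin_Judge2}), so their trace fields coincide. On the other hand, the $\OD$-action on $\Omega(X)^-$ with $\omega$ an eigenform forces $\QQ(\sqrt{D})$ into the trace field of $\SL(X,\omega)$ (this is essentially the argument of \cite{McM03}, Theorem~5.1, applied to the Prym part rather than the full Jacobian). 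Since $D$ is not a square, $\QQ(\sqrt{D}) \neq \QQ$, so the trace field is strictly larger than $\QQ$ and cannot come from a surface of lower genus whose Jacobian would have insufficient room for such a trace field; this contradiction yields primitivity.

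The main obstacle will be the dimension count step: rigorously showing that the Prym eigenform condition together with the single-zero condition cuts out exactly a complex $2$-dimensional subvariety of the stratum, rather than merely bounding the dimension from above. This requires carefully analyzing how $\OD$ acts on the relative cohomology $H^1(X,Z(\omega);\CC)$ and checking that the eigenform locus is transverse to the stratum in the expected way; the Thurston--Veech-type constructions cited in \cite{McM06a} provide concrete families showing the dimension is at least $2$, matching the upper bound.
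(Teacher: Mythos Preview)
The paper does not give its own proof of this statement; it is quoted directly from \cite{McM06a} with the citation ``(\textbf{McMullen}, \cite{McM06a}, Theorem~3.4)'' and no argument is supplied. So there is nothing in the paper to compare against, and your proposal should be measured against McMullen's original reasoning.

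Your dimension-count argument for showing that each $\SL_2(\RR)$-orbit in $\Omega W_D^g$ is already open (hence closed, hence the orbit of a Veech surface) is essentially McMullen's approach, and your finiteness sketch via the Hilbert modular surface is reasonable.

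The primitivity argument, however, has a genuine gap. You correctly deduce that the trace field of $\SL(X,\omega)$ contains $\QQ(\sqrt{D}) \neq \QQ$ when $D$ is not a square, and this does rule out translation covers $(X,\omega)\to(Y,\eta)$ with $g(Y)=1$: such a cover would make $\SL(X,\omega)$ commensurable with $\SL_2(\ZZ)$, forcing trace field $\QQ$. But your assertion that a lower-genus $Y$ ``would have insufficient room for such a trace field'' is false. Genus~$2$ surfaces can and do have trace field $\QQ(\sqrt{D})$ --- the $L_D$ surfaces studied throughout this paper are exactly such examples. So for $g\in\{3,4\}$ the trace-field argument alone does not exclude a cover to a genus~$2$ (or, for $g=4$, genus~$3$) Veech surface. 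McMullen's actual argument uses more: roughly, that a translation cover $\pi$ would force $\omega=\pi^*\eta$ to lie in $\pi^*\Omega(Y)\subset\Omega(X)$, and one then exploits the eigenform condition on the $2$-dimensional $\Omega(X)^-$ together with the constraint that $\omega$ has a \emph{single} zero to reach a contradiction. You would need to supply this missing step.
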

In fact, there are infinitely many \textit{primitive} Teichmüller curves \index{Teichmüller curve!(geometrically) primitive} contained in the union of all the $W_D^g$ for fixed $2 \leq g \leq 4$. Without going into technical details (Thurston-Veech construction, Prym systems), we want to sketch how such an infinite collection of primitive Teichmüller curves can be found for genus~$2 \leq g \leq 4$. Consider the L, S, and X-shaped polygons in Figure~7.1 (corresponding to the case of genus~$2,3$ respectively $4$):

\begin{center}
\psset{xunit=1cm,yunit=1cm,runit=1cm}
\begin{pspicture}(-0.5,-0.5)(11,3)


\psline[linewidth=0.5pt,showpoints=true]{-}(0,3)(0,0.75)
\psline[linewidth=0.5pt,showpoints=true]{-}(0,0.75)(0,0)
\psline[linewidth=0.5pt,showpoints=true]{-}(0,0)(0.75,0)
\psline[linewidth=0.5pt,showpoints=true]{-}(0.75,0)(3,0)
\psline[linewidth=0.5pt,showpoints=true]{-}(3,0)(3,0.75)
\psline[linewidth=0.5pt,showpoints=true]{-}(3,0.75)(0.75,0.75)
\psline[linewidth=0.5pt,showpoints=true]{-}(0.75,0.75)(0.75,3)
\psline[linewidth=0.5pt,showpoints=true]{-}(0.75,3)(0,3)


\psline[linewidth=0.5pt,showpoints=true]{-}(4,0)(5.25,0)
\psline[linewidth=0.5pt,showpoints=true]{-}(5.25,0)(5.75,0)
\psline[linewidth=0.5pt,showpoints=true]{-}(5.75,0)(5.75,0.75)
\psline[linewidth=0.5pt,showpoints=true]{-}(5.75,0.75)(5.75,2.25)
\psline[linewidth=0.5pt,showpoints=true]{-}(5.75,2.25)(7,2.25)
\psline[linewidth=0.5pt,showpoints=true]{-}(7,2.25)(7,3)
\psline[linewidth=0.5pt,showpoints=true]{-}(7,3)(5.75,3)
\psline[linewidth=0.5pt,showpoints=true]{-}(5.75,3)(5.25,3)
\psline[linewidth=0.5pt,showpoints=true]{-}(5.25,3)(5.25,2.25)
\psline[linewidth=0.5pt,showpoints=true]{-}(5.25,2.25)(5.25,0.75)
\psline[linewidth=0.5pt,showpoints=true]{-}(5.25,0.75)(4,0.75)
\psline[linewidth=0.5pt,showpoints=true]{-}(4,0.75)(4,0)


\psline[linewidth=0.5pt,showpoints=true]{-}(8,1.5)(8,2)
\psline[linewidth=0.5pt,showpoints=true]{-}(8,2)(9,2)
\psline[linewidth=0.5pt,showpoints=true]{-}(8,2)(9.5,2)
\psline[linewidth=0.5pt,showpoints=true]{-}(9.5,2)(9.5,3)
\psline[linewidth=0.5pt,showpoints=true]{-}(9.5,3)(10,3)
\psline[linewidth=0.5pt,showpoints=true]{-}(10,3)(10,2)
\psline[linewidth=0.5pt,showpoints=true]{-}(10,2)(10,1.5)
\psline[linewidth=0.5pt,showpoints=true]{-}(10,1.5)(11,1.5)
\psline[linewidth=0.5pt,showpoints=true]{-}(11,1.5)(11,1)
\psline[linewidth=0.5pt,showpoints=true]{-}(11,1)(10,1)
\psline[linewidth=0.5pt,showpoints=true]{-}(10,1)(9.5,1)
\psline[linewidth=0.5pt,showpoints=true]{-}(9.5,1)(9.5,0)
\psline[linewidth=0.5pt,showpoints=true]{-}(9.5,0)(9,0)
\psline[linewidth=0.5pt,showpoints=true]{-}(9,0)(9,1)
\psline[linewidth=0.5pt,showpoints=true]{-}(9,1)(9,1.5)
\psline[linewidth=0.5pt,showpoints=true]{-}(9,1.5)(8,1.5)

\uput[l](0.05,2.0){\scriptsize{1}}
\uput[l](0.05,0.35){\scriptsize{2}}
\uput[u](0.35,-0.05){\scriptsize{3}}
\uput[u](1.85,-0.05){\scriptsize{4}}

\uput[l](5.3,2.65){\scriptsize{1}}
\uput[l](5.3,1.45){\scriptsize{2}}
\uput[l](4.05,0.35){\scriptsize{3}}
\uput[u](4.6,-0.05){\scriptsize{4}}
\uput[u](5.5,-0.05){\scriptsize{5}}
\uput[u](6.5,2.2){\scriptsize{6}}

\uput[l](9.55,2.5){\scriptsize{1}}
\uput[l](8.05,1.75){\scriptsize{2}}
\uput[u](8.5,1.45){\scriptsize{5}}
\uput[l](9.05,1.25){\scriptsize{3}}
\uput[l](9.05,0.5){\scriptsize{4}}
\uput[u](9.25,-0.05){\scriptsize{6}}
\uput[u](9.75,0.95){\scriptsize{7}}
\uput[u](10.5,0.95){\scriptsize{8}}

\end{pspicture}
\\ Figure 7.1. Generators for Teichmüller curves in genus 2,3 and 4 (from \cite{McM06a}).
\end{center}

For $n \in \mathbb{N}$ we write $L_n$ for the L shaped polygon with side lengths $(\frac{1+\sqrt{1+4n}}{2},1,1,\frac{1+\sqrt{1+4n}}{2})$ and $S_n$ for the S shaped polygon with side lengths $(1+\sqrt{1+2n},2n,1+\sqrt{1+2n},n,1+\sqrt{1+2n},n)$ and $X_n$ for the X shaped polygon with side lengths $(n,1+\sqrt{1+n},1+\sqrt{1+n},n,n,1+\sqrt{1+n},1+\sqrt{1+n},n)$. \\[11pt] For each such polygon $Q$ we get a flat surface $(X,\omega) = (Q,dz)/\sim$ when we identify opposite sides of $Q$. The vertices of $Q$ are identified to a single point $p$, namely the unique zero of $\omega$. For each genus there is a unique involution $\rho \in \Aut(X)$ fixing $p$. For $S_n$ and $X_n$ the involution $\rho$ is realized by a $180^\circ$ rotation about the center of the polygon. For $L_n$ the involution $\rho$ is the hyperelliptic involution.  
\begin{thm} \label{thm_polygons_determine_teich} (\textbf{McMullen}, \cite{McM06a}, Theorem~5.4) Each of these polygons $L_n, S_n$ and $X_n$ together with the corresponding involutions determines an element $(X,\omega) \in \Omega W_D^g$ with $D=n$ for $L_n$, $D=1+2n$ for $S_n$ and $D=1+n$ for $X_n$. The projection of the $\SL_2(\RR)$-orbit of $(X,\omega)$ to moduli space $\Mg$ determines a Teichmüller curve. \index{Teichmüller curve!Prym} \end{thm}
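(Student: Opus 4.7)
The plan is to verify directly for each of the three families of polygons that the resulting flat surface $(X,\omega)$ lies in $\Omega W_D^g$ for the claimed discriminant $D$, and then to invoke Theorem~\ref{thm_weierstrass_is_union} to conclude that the $\SL_2(\RR)$-orbit closes up to a Teichm\"uller curve. The second step is immediate once membership in $\Omega W_D^g$ is established, since $\Omega W_D^g$ is $\SL_2(\RR)$-invariant and is a finite union of Teichm\"uller curves; any single $\SL_2(\RR)$-orbit in $\Omega W_D^g$ therefore projects to such a curve in $\Mg$.

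For the first step, I would proceed in stages. First, a direct vertex-count using the gluing pattern shows that in each of the three cases all vertices of the polygon are identified to a single point $p \in X$, so $\omega = dz/\!\sim$ is a holomorphic one-form with a unique zero at $p$ of multiplicity $2g-2$, placing $(X,\omega)$ in $\Omega\Mg(2g-2)$; this is purely combinatorial. Second, I would identify the announced involution $\rho$ and verify that $\rho^*\omega = -\omega$: for $S_n$ and $X_n$ this is immediate from the symmetry of the polygon about its center (which negates $dz$), and for $L_n$ the hyperelliptic involution is realized by the standard $180^\circ$ rotation on each of the two squares and therefore also anti-commutes with $dz$. This forces $\omega \in \Omega(X)^-$, so the task reduces to exhibiting real multiplication by $\OD$ on $\Prym(X,\rho)$ for which $\omega$ is an eigenform.

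Here I would follow the Thurston--Veech route indicated in the paper. Each polygon admits simultaneous decompositions into horizontal and vertical cylinders whose widths and heights involve only $1$ and a generator $\lambda$ of the quadratic field $\QQ(\sqrt{D})$; the side lengths $(1+\sqrt{1+4n})/2$, $1+\sqrt{1+2n}$, $1+\sqrt{1+n}$ have been rigged so that the ratios of the resulting cylinder moduli are rationally commensurable in each direction. The products of the Dehn multitwists in the two directions give affine automorphisms $\phi_h,\phi_v$ whose derivatives are parabolic elements of the Veech group, and their composition has derivative a hyperbolic element whose trace lies in $\ZZ[\lambda]$ and generates the trace field $\QQ(\sqrt{D})$. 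Passing to homology, $\rho$ decomposes $H_1(X,\ZZ) = H_1(X,\ZZ)^+ \oplus H_1(X,\ZZ)^-$, and the action of $(\phi_h)_*(\phi_v)_*$ on $H_1(X,\ZZ)^-$, together with the identity, generates a subring isomorphic to $\OD$; self-adjointness with respect to the restricted intersection form is automatic because Dehn twists are self-adjoint for the intersection pairing. Since $\omega$ is a $\CC$-eigenvector for the derivative of $\phi_h\phi_v$, it is a Prym eigenform for this action, so $(X,\omega)\in \Omega W_D^g$.

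The hard part will be the explicit bookkeeping that pinpoints the discriminant as $n$, $1+2n$, or $1+n$ respectively: this amounts to computing the cylinder moduli in each polygon, forming the two Dehn-multitwist products, and tracing the induced $\ZZ$-linear map on $H_1(X,\ZZ)^-$ to read off the minimal polynomial of a generator. The other potentially delicate point, properness of the embedding $\OD \hookrightarrow \End(\Prym(X,\rho))$ required by Definition~\ref{def_real_multiplication}, is comparatively cheap, because on a two-dimensional Prym variety any extension of $\OD$ in $\End(\Prym(X,\rho))\otimes \QQ$ would have to coincide with the trace field of the Veech group, and this trace field is itself $\QQ(\sqrt{D})$ by the theorem of Gutkin--Judge combined with the presence of the hyperbolic element $(\phi_h)_*(\phi_v)_*$.
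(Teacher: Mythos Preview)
The paper does not actually prove this theorem: it is quoted verbatim from \cite{McM06a}, Theorem~5.4, and the text immediately before it says explicitly that the technical details (Thurston--Veech construction, Prym systems) are being skipped. So there is no in-paper proof to compare against beyond the pointer that Thurston--Veech is the intended mechanism, which is indeed the route you take.

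That said, your outline has a genuine gap at the self-adjointness step. You claim that ``self-adjointness with respect to the restricted intersection form is automatic because Dehn twists are self-adjoint for the intersection pairing.'' This is false. Dehn twists, and hence their products, act \emph{symplectically} on $H_1(X,\ZZ)$: they satisfy $E(Tx,Ty)=E(x,y)$, which for the symplectic adjoint means $T^*=T^{-1}$, not $T^*=T$. Consequently the endomorphism $(\phi_h)_*(\phi_v)_*$ you write down is not self-adjoint, and the subring $\ZZ[(\phi_h)_*(\phi_v)_*]$ does not furnish real multiplication in the sense of Definition~\ref{def_real_multiplication}. What one actually uses is an element of the form $T+T^{-1}=T+T^*$, which \emph{is} self-adjoint for a symplectic $T$; one then has to verify that this element, restricted to $H_1(X,\ZZ)^-$, satisfies a monic integral quadratic equation of the correct discriminant and that the resulting embedding of $\OD$ is proper. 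This is precisely the content McMullen packages into his ``Prym systems,'' and it is where the discriminant identification $D=n$, $1+2n$, $1+n$ is really pinned down. Your properness argument at the end is also not quite right: the issue is not extensions inside $\End(\Prym)\otimes\QQ$ (which is already just $K$), but whether the image lands in a strictly larger \emph{order} $\mathcal{O}_E\supsetneq\OD$ inside $K$, and ruling that out requires the explicit computation, not a trace-field argument.
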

The \index{Fuchsian group!trace field} trace fields of the corresponding Veech groups are $\mathbb{Q}(\sqrt{1+4n})$ for $L_n$ and $\mathbb{Q}(\sqrt{1+2n})$ for $S_n$ and $\mathbb{Q}(\sqrt{1+n})$ for $X_n$ (\cite{McM06a}, Theorem 5.4). Recall that Theorem~\ref{thm_classification_Teichmüller_genus_2} completely classifies Teichmüller curves in genus $2$. A similar classification is also known to hold in $\Omega E_4^4$ by the recent work of E. Lanneau and D.-M. Nguyen (see \cite{LN11}, Theorem~1.1). A classification in the genus~$3$ case is still open. Furthermore, it is not known whether there exist other primitive Teichmüller curves in genus~$3$ and $4$ than those described in this section.

\subsection{Hilbert Modular Embeddings} \label{section_modular_embeddings}

Having established all the necessary background, we are now able to explicitly describe an embedding of Teichmüller curves into Hilbert modular surfaces. In particular, it will turn out that for genus~$2$ and $4$ Teichmüller curves of discriminant $D$ can always be embedded into the same Hilbert modular surface $X_D$. We again mainly repeat results from \cite{McM03} and \cite{McM06a}. \\[11pt]
First let us explain how primitive Teichmüller curves of genus~$2 \leq g \leq 4$ can be embedded into the moduli space of Abelian varieties. The last two propositions of Section~\ref{section_abelian_varieties} enable us to calculate the types of the polarizations of the Prym varieties \index{Prym!variety} associated to the surfaces described in Section~\ref{section_prym_teich}: consider the polygons $L_n$, $S_n$ and $X_n$ with associated involutions $\rho_L$, $\rho_S$ and $\rho_X$. Each of these yields a flat surface $(X,\omega)$. For each of these flat surfaces we now consider the double cover $\pi: X \to Y:=X/\rho$. The corresponding Prym varieties $\Prym(X,\rho)$ are always $2$-dimensional, because $(X,\omega) \in \Omega W_D^g$ (Theorem~\ref{thm_polygons_determine_teich}). Since $\Jac(X) \sim \Jac(Y) \oplus \Prym(X,\rho)$ we get $g(Y)=g(X)-2$. 
\begin{lem} \label{lem_polygon_polarization} For the $L$-shaped polygon $\Prym(X,\rho)$ has polarization of type $(1,1)$, for the $S$-shaped polygon $\Prym(X,\rho)$ has polarization of type $(1,2)$, for the $X$-shaped polygon $\Prym(X,\rho)$ has polarization of type $(2,2)$. \end{lem}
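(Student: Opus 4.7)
The plan is to reduce the statement, in each of the three cases, to a single application of Proposition~\ref{prop_polar_complementary_abelian} (on complementary polarizations inside a principally polarized Abelian variety) combined with Proposition~\ref{prop_polar_cover} (on polarizations induced from a covering of curves). The only input needed beforehand is the genus of the quotient $Y := X/\rho$.

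Since $(X,\omega) \in \Omega W_D^g$, the Prym is $2$-dimensional, so the isogeny $\Jac(X) \sim \Jac(Y) \oplus \Prym(X,\rho)$ forces $g(Y) = g(X) - 2$. This gives $g(Y)=0$ for $L_n$ (where $g(X)=2$), $g(Y)=1$ for $S_n$ (where $g(X)=3$), and $g(Y)=2$ for $X_n$ (where $g(X)=4$). The $L$-shaped case then follows immediately: $Y \cong \mathbb{P}^1(\CC)$, so $\Jac(Y)=0$ and $\Prym(X,\rho)=\Jac(X)$ inherits the canonical principal polarization of type $(1,1)$.

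For $S_n$ and $X_n$ I would apply Proposition~\ref{prop_polar_cover} to the degree-$2$ cover $\pi : X \to Y$: the polarization induced on $\Jac(Y) \hookrightarrow \Jac(X)$ is of type $(2,\ldots,2)$ with $g(Y)$ entries, i.e.\ type $(2)$ for $S_n$ and type $(2,2)$ for $X_n$. Since $\Prym(X,\rho)$ is the Abelian subvariety of the principally polarized $\Jac(X)$ complementary to $\Jac(Y)$ and $\dim \Prym(X,\rho)=2\geq g(Y)$ in both cases, Proposition~\ref{prop_polar_complementary_abelian} applies with $Z=\Jac(Y)$ and $r=g(Y)$. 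Prepending $\dim \Prym - r$ ones to the type of $\Jac(Y)$ produces $(1,2)$ in the $S_n$-case (one $1$ because $2-1=1$) and $(2,2)$ in the $X_n$-case (no $1$'s because $2-2=0$), which is exactly what is claimed.

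The only point that needs genuine justification, rather than a direct citation, is that the pair $(\Prym(X,\rho),\Jac(Y))$ really is a pair of complementary Abelian subvarieties of the principally polarized $\Jac(X)$ so that Proposition~\ref{prop_polar_complementary_abelian} is legitimately applicable. This is standard from the Prym construction: the involution $\rho$ induces an orthogonal decomposition $H_1(X,\ZZ) \otimes \QQ = H_1(X,\ZZ)^+ \otimes \QQ \oplus H_1(X,\ZZ)^- \otimes \QQ$ with respect to the intersection form, and this orthogonality is precisely what identifies $\Jac(Y) \sim \Jac(X)^+$ and $\Prym(X,\rho) = \Jac(X)^-$ as complementary Abelian subvarieties. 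I would cite \cite{BL04}, Chapter~12 for this, so that the proof reduces to the bookkeeping above without any serious further computation.
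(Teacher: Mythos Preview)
Your proof is correct and follows essentially the same route as the paper's own argument: compute $g(Y)=g(X)-2$, apply Proposition~\ref{prop_polar_cover} to the degree-$2$ cover $X\to Y$ to obtain the induced polarization type on $\Jac(Y)$, and then invoke Proposition~\ref{prop_polar_complementary_abelian} to read off the polarization type on the complementary subvariety $\Prym(X,\rho)$. The paper only spells out the $S$-shaped case and declares the others analogous; you have written out all three and added the (standard) justification that $\Jac(Y)$ and $\Prym(X,\rho)$ are genuinely complementary, but the underlying argument is identical.
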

\begin{proof} We just prove the case of $S$-shaped polygons since the other two cases work in the same way. The surface $X$ has genus~$3$ in this case and thus $Y$ has genus~$1$. Using Proposition~\ref{prop_polar_cover} we can observe that $\Jac(Y)$ has polarization of type $(2)$. Then Proposition~\ref{prop_polar_complementary_abelian} implies that the polarization of $\Prym(X,\rho)$ is of type $(1,2)$. \end{proof}
\begin{cor} By mapping each point of the Teichmüller curve $C$ generated by $L_n$ (respectively $S_n$, $X_n$) to its Prym variety one gets an embedding $V \hookrightarrow \mathcal{A}_2^D$ with $D$ as in the lemma. \end{cor}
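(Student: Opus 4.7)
The plan is to construct the Prym period map explicitly and verify its three defining properties: well-definedness, holomorphicity, and injectivity. The first two will be essentially formal consequences of the setup, while injectivity is the genuine content.

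First I would observe that the Prym involution $\rho$ is determined topologically by each of the polygons $L_n$, $S_n$, $X_n$ and is preserved along the entire $\SL_2(\mathbb{R})$-orbit generating the Teichmüller curve $V$. Indeed, the $\SL_2(\mathbb{R})$-action only deforms the flat charts that make $\omega$ holomorphic, leaving the underlying topological surface and any orientation-preserving involution commuting with the affine structure untouched. Consequently the assignment $[X] \mapsto [\Prym(X,\rho)]$ gives a well-defined map from $V$ to the moduli space of polarized Abelian surfaces, and combined with Lemma~\ref{lem_polygon_polarization} (which pins down the polarization type as $(1,1)$, $(1,2)$, or $(2,2)$ respectively) we land in $\mathcal{A}_2^D$ with the correct $D$.

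Next I would verify holomorphicity. The Prym variety $\Prym(X,\rho) = H^{0}(X,\Omega(X)^{-})^{\vee}/H_{1}(X,\mathbb{Z})^{-}$ has period matrix computed by integrating Prym forms against a locally constant symplectic basis of $H_{1}(X,\mathbb{Z})^{-}$; both vary holomorphically with the point of $V$, so the map is holomorphic. This is essentially the same construction already used implicitly in Theorem~\ref{thm_weierstrass_is_union} to establish $\SL_2(\mathbb{R})$-invariance of $\Omega W_D^g$.

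The hard part will be injectivity. For $L_n$ the quotient $Y = X/\rho$ has genus zero, so $\Jac(Y) = 0$ and $\Prym(X,\rho) = \Jac(X)$; injectivity then reduces to Torelli's theorem. For $S_n$ and $X_n$ the general Prym--Torelli problem admits counterexamples (Donagi's tetragonal construction and variants), so one cannot invoke a naïve Torelli statement. Instead I would exploit the extra rigidity coming from the fact that the points of $V$ lie in $\Omega W_D^g$: the form $\omega$ appears, up to rotation, as a $K$-eigenform inside $\Omega(\Prym(X,\rho))$ for the real multiplication by $\OD$. Two points of $V$ with isomorphic polarized Prym varieties would then force an identification of the underlying eigenform data, which together with the canonical Prym involution $\rho$ reconstructs $(X,\omega)$ up to the $\SO_2(\mathbb{R})$-rotation and therefore identifies the two points of $V$. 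The injective holomorphic map $V \hookrightarrow \mathcal{A}_2^D$ so obtained is the claimed embedding.
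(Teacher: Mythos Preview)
The paper gives no proof at all for this corollary: it is stated as an immediate consequence of the preceding lemma (which computes the polarization type) together with the results of McMullen cited at the start of the section. The only content the paper extracts from the lemma is that the Prym variety has the stated polarization type and therefore defines a point of $\mathcal{A}_2^D$; everything else is imported from \cite{McM06a}. Your proposal is therefore far more elaborate than anything the paper supplies.

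That said, your injectivity argument for $S_n$ and $X_n$ has a genuine gap. You write that ``two points of $V$ with isomorphic polarized Prym varieties would then force an identification of the underlying eigenform data,'' but a point of $\mathcal{A}_2^D$ records only the polarized Abelian surface, not the real multiplication. Without the $\OD$-action you cannot single out an eigenline in $\Omega(\Prym(X,\rho))$, so your reconstruction of $(X,\omega)$ from the polarized Prym alone is not justified. The paper is aware of precisely this issue: the very next sentence after the corollary reads ``However by this construction we lost a little information, namely the choice of real multiplication,'' and it is only after refining the target to the Hilbert modular surface $X_D$ (Theorem~\ref{thm_modular_embedding}) that one obtains an honest embedding with the eigenform data recoverable. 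In other words, the argument you sketch is really an argument for injectivity of $V \to X_D$, not of $V \to \mathcal{A}_2^D$, and the paper's own phrasing suggests it does not claim to have established the latter independently.
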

However by this construction we lost a little information, namely the choice of real multiplication. This is why we get an embedding into a space with even more structure: by Theorem~\ref{thm_Hilbert_modular_moduli} the Teichmüller curves in $\mathcal{M}_2$ lie on the Hilbert modular surfaces $X_D$. Recalling that $\mathcal{A}_2^{(2,2)} \cong \mathcal{A}_2$, also the described Teichmüller curves in $\mathcal{M}_4$ lie on $X_D$ and we get eventually the desired theorem. 

\begin{thm} (\textbf{McMullen}, \cite{McM03}, \cite{McM06a}) \label{thm_modular_embedding}Let $C \subset \mathcal{W}_D^g$ be one of the described Teichmüller curves of discriminant $D$ in $\Mg$ with $g \in \left\{2,4 \right\}$. Then there exists an embedding of $C$ into the Hilbert modular surface $X_D$ \index{Hilbert modular surface} such that the following diagram commutes:

 $$
\begin{xy}
 \xymatrix{
 	\mathbb{H} \ar@{^(->}[rr]^{\Phi(z)=(z,\varphi(z))} \ar[d]^{/\SL(X,\omega)}& &	\HH \times \HH^- \ar[d]^{/\SL_2(\OD)}	\ar[d] \\ 
 	C \ar@{^(->}[rr] \ar[d]_f & & X_D \ar[d]^j  \\
 	\mathcal{M}_g \ar@{^(->}[rr] & & \mathcal{A}_2
 	}
\end{xy}
$$
Moreover $\varphi$ is holomorphic but not a Möbius transformation.

\end{thm}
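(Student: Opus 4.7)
The strategy is to mimic the argument sketched for Theorem~\ref{thm_mcm_embedding}, replacing the Jacobian of $X$ by the Prym variety $\Prym(X,\rho)$. First, for a point $X$ of the Teichmüller curve $C$ generated by $L_n$ (for $g=2$) or $X_n$ (for $g=4$), we assign the polarized Abelian surface $\Prym(X,\rho)$. By Lemma~\ref{lem_polygon_polarization}, this Prym variety carries a polarization of type $(1,1)$ in the L-shaped case and of type $(2,2)$ in the X-shaped case. Since $\mathcal{A}_2^{(2,2)}\cong\mathcal{A}_2$ canonically, in both cases the construction yields a holomorphic map $C\to\mathcal{A}_2$. (The S-shaped case is excluded here precisely because the polarization type $(1,2)$ cannot be normalized to $(1,1)$, which is why the theorem restricts to $g\in\{2,4\}$.)

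Next I would lift this map to $X_D$ using Deligne's moduli interpretation, Theorem~\ref{thm_Hilbert_modular_moduli}. By construction, a point $(X,\omega)\in\Omega W_D^g$ is a Prym eigenform, i.e. $\OD$ acts on $\Omega(\Prym(X,\rho))$ by real multiplication in the sense of Definition~\ref{def_real_multiplication}. This gives exactly the extra datum parametrized by $X_D$ over $\mathcal{A}_2$, so the map $C\to\mathcal{A}_2$ factors through an embedding $C\hookrightarrow X_D$, making the lower square of the diagram commute. The fact that the map is an embedding (not merely a map) follows because the Teichmüller curve already injects into $\Mg$, and distinct points of $\Mg$ have distinct Prym varieties together with a fixed action of $\OD$.

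For the universal cover picture, choose a symplectic basis $(a_i,b_i)_{i=1,2}$ of $H_1(\Prym(X,\rho),\RR)$ adapted to the action of $\OD$, together with a dual eigenbasis $(\eta_1(t),\eta_2(t))$ of the $\OD$-eigenspaces in $\Omega(\Prym(X_t,\rho))$, where $(X_t,\omega_t)=A_t(X,\omega)$ and $A_t$ is the standard lift as in the sketch of Theorem~\ref{thm_mcm_embedding}. The embedding $\mathbb{H}\hookrightarrow\mathbb{H}\times\mathbb{H}^-$ then has the explicit form
\[
t\;\longmapsto\;\left(\frac{\int_{b_1}\eta_1(t)}{\int_{a_1}\eta_1(t)},\;\frac{\int_{b_2}\eta_2(t)}{\int_{a_2}\eta_2(t)}\right),
\]
and after a change of base point we may normalize the first component to be the identity, giving $\Phi(z)=(z,\varphi(z))$ for a holomorphic function $\varphi:\mathbb{H}\to\mathbb{H}^-$. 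Holomorphy of $\varphi$ is automatic from the holomorphy of the period integrals in $t$.

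Finally, to see that $\varphi$ is not a Möbius transformation, suppose for contradiction that it were. Then the image of $\Phi$ would be a twisted diagonal in $X_D$, whose stabilizer in $\SL_2(\OD)$ is commensurable to $\SL_2(\ZZ)$ and therefore arithmetic. But $\SL(X,\omega)$ is the Veech group of a primitive Veech surface in $\Mg$ with $g\geq 2$, which is non-arithmetic by the theorem of Gutkin--Judge (Theorem~\ref{thm_Gutkin_Judge2}) together with Theorem~\ref{thm_nonarithmetic_commensurable}, contradicting commensurability. The main obstacle in making this argument rigorous is the explicit bookkeeping of the $\OD$-eigenspace decomposition of the relative homology of the Prym variety across the family; once that is set up as in \cite{McM06a}, the rest of the proof follows the template of Theorem~\ref{thm_mcm_embedding} verbatim.
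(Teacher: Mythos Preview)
Your proposal follows the same route the paper takes: the paper's argument is the discussion immediately preceding the theorem, namely Lemma~\ref{lem_polygon_polarization} (polarization types $(1,1)$ and $(2,2)$), the isomorphism $\mathcal{A}_2^{(2,2)}\cong\mathcal{A}_2$, and Theorem~\ref{thm_Hilbert_modular_moduli} to lift from $\mathcal{A}_2$ to $X_D$; you reproduce this and then add the explicit period-integral description of $\Phi$ taken verbatim from the sketch of Theorem~\ref{thm_mcm_embedding}, which is exactly what the paper intends.

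Two small points. First, your injectivity justification (``distinct points of $\Mg$ have distinct Prym varieties together with a fixed action of $\OD$'') is fine for $g=2$ by Torelli, but for $g=4$ this is a Prym--Torelli type statement that is not true in general and not what is actually used; the embedding is rather a consequence of the fact that the first coordinate of $\Phi$ is the identity on $\HH$ and the Veech group is the full stabilizer, as in \cite{McM03}. Second, in your non-M\"obius argument, the claim that the stabilizer of a twisted diagonal is ``commensurable to $\SL_2(\ZZ)$'' is not literally correct (these stabilizers are unit groups of quaternion orders and can be cocompact); what you need, and what suffices, is only that they are \emph{arithmetic}, which then contradicts the non-arithmeticity of $\SL(X,\omega)$ via Gutkin--Judge. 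With those two adjustments your write-up is a faithful and slightly more detailed version of the paper's argument.
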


Such an embedding is called a \textbf{Hilbert modular embedding}. The case of Teichmüller curves of genus~$3$ is a little more difficult to describe than the genus~$2$ and the genus~$4$ case since the polarization is of type $(1,2)$: although it can be proven exactly in the same way that a similar embedding of Teichmüller curves $C \subset \mathcal{W}_D^3$ into a certain Hilbert modular surface $\mathbb{H} \times \mathbb{H}^- / \SL(\OD \oplus \mathfrak{a})$ exists, it is a quite hard problem to calculate $\mathfrak{a}$ and thus the surface parameterizing the space of all pairs $(X,\rho)$, with $X$ an Abelian surface of type $(1,2)$ and $\rho$ a choice of real multiplication by $\OD$, explicitly (see e.g. \cite{IO97}). Fortunately, this problem does not play an important role for these notes. However, it happens for some $D$ that $\mathfrak{a} = \mathfrak{b}^2$ for some fractional ideal $\mathfrak{b}$. In this case the involved Hilbert modular surface is isomorphic to $X_D$ and thus also the genus $3$ Teichmüller curve lies on $X_D$ (compare \cite{Zag81}, \cite{vdG88}).\\[11pt] Generally, Abelian varieties of a given type with real multiplication are in a natural connection with Hilbert modular varieties (see \cite{McM03}, Chapter~6; \cite{BL04}, Chapter~9). The mentioned result of C. McMullen has been generalized by M. Möller in \cite{Möl06}. There it is shown that every primitive Teichmüller curve of genus~$g\geq 2$ has real multiplication and therefore can be embedded into a Hilbert modular variety.\\[11pt]In \cite{Möl11c}, M. Möller calculates the Euler characteristic of the genus $3$ and $4$ Teichmüller curves. The Euler characteristic is as in M. Bainbridge's theorem a rational multiple of the Euler characteristic of the Hilbert modular surface.

\newpage

\section{Lyapunov Exponents} \label{chapter_Lyapunov_exponents}
\index{Lyapunov exponents|(}
Before we are able to prove that the genus $3$ and the genus $4$ Teichmüller curves are never twists of the genus $2$ Teichmüller curve, we have to introduce another concept, namely Lyapunov exponents. These give to a certain extent information about the long time average behavior of a dynamical system (see e.g. \cite{AB08}, \cite{Möl11a} or \cite{Zor06}). The existence of Lyapunov exponents follows from Oseledets's Multiplicative Ergodic Theorem (Theorem~\ref{thm_Oseledet}). However, Lyapunov exponents are usually very hard to evaluate. Often it is just possible to do numerical approximations. Only on rare occasions Lyapunov exponents are known explicitly. In Section~\ref{sec_Lyap_basic} we introduce the basic theory. We will be especially interested in the Lyapunov exponents of a certain cocycle, namely the Kontsevich Zorich cocycle (Section~\ref{sec_Kontsevich_Zorich}), over the Teichmüller flow. We define the latter in Section~\ref{sec_Teichmüller_flow}. An important tool for our calculations is to relate the Teichmüller flow on a Teichmüller curve (Section~\ref{Teich_flow_Teich_curves}) to the geodesic flow on the tangent space $T^1\mathbb{H}/\SL(X,\omega)$ where $\SL(X,\omega)$ is the Veech group of the flat surface generating the Teichmüller curve (and hence in particular a Fuchsian group). This is why we present the relevant material about the geodesic flow in Section~\ref{sec_geodesic_flow} before we come to the Teichmüller flow.

\subsection{Basic Theory} \label{sec_Lyap_basic}

For the convenience of the reader we recall the definition of Lyapunov exponents for 
continuous time. A good introduction to the discrete case can be found in \cite{AB08}. For the rest of this section let $(X,\mu)$ be a \textbf{probability space}, \index{probability space} i.e. $X$ is a measurable space and $\mu$ a measure on $X$ with $\mu(X)=1$ \label{glo_Xmu}and denote by $L^1(X,\mu)$ the vector-space of integrable, measurable functions $f: X \to \RR$ with respect to $\mu$.

 \begin{defi} Let $\varphi_t: X \to X$ be a measurable, measure-preserving flow on $(X,\mu)$. Let $p: E \to X$ be a finite-dimensional vector bundle over $X$ endowed with a norm. A \textbf{(linear) cocycle over $\varphi_t$} is a measurable flow extension $F_t: E \to E$ such that $p \circ F_t = \varphi_t \circ p$ and such that the action $A^t(x) : E_x \to E_{\varphi_t(x)}$ of $F_t$ on every fiber is a linear isomorphism.\footnote{See \cite{Arn98}, p.6, for a nice figure which describes the situation.} \end{defi}


 Sometimes one also  calls $A$ a \textbf{cocycle over $\varphi_t$}. Note that $A^t(x)$       satisfies the following conditions: 
   \begin{itemize}
    \item[(i)] $A: \mathbb{R} \times E \to E$ is measurable.
    \item[(ii)] $A^0(x)=\Id$ for all $x \in M$
    \item[(iii)] $A^{s+t}(x) = A^t(\varphi_s(x))A^s(x)$ for all $s,t \in \mathbb{R}$.
   \end{itemize}
 By replacing $\mathbb{R}$ by $\ZZ$ in the upper definition, we get  with a grain of salt the definition of a discrete linear cocycle. Albeit it is possible to define Lyapunov exponents in a very general setting (see e.g.   \cite{KH97}, Chapter~S), we restrict to the case when the cocycle $A$ over $T$ fulfills a   certain integrability condition. Then one is able to define Lyapunov exponents with the help of Oseledets's Multiplicative Ergodic Theorem. 

  \begin{thm} \textbf{(Oseledets's Multiplicative Ergodic Theorem)} \label{thm_Oseledet} Let $F_t$ be   a cocycle over a measurable, measure-preserving flow $\varphi_t$ on $(X,\mu)$ and let $A$ be as described above. Assume that the functions
  $$g(x):= \sup_{0 \leq t \leq 1} \log^+ ||A^t(x)|| \quad \textit{and}$$
  $$h(x):= \sup_{0 \leq t \leq 1} \log^+ ||A^{1-t}(\varphi_t(x))||$$
  are in $L^1(X,\mu)$. Then there exists a $\varphi$-invariant set $X' \subset X$ with             $\mu(X')=1$ and the following holds for all $x \in X'$:
   \begin{itemize}
    \item[(i)] The limit $\Lambda_x := \lim_{t \to \infty} ((A^t(x))(A^t(x))^*)^{1/2t}$ exists (* is the      adjoint operator).
    \item[(ii)] Let $\exp \lambda_1(x) < ... < \exp \lambda_{k}(x)$, where $k=k(x)$, be the         eigenvalues of $\Lambda_x$. Then all $\lambda_i(x)$ are real and $\lambda_1(x)$ can be          $-\infty$. Let $U_1(x),...,U_k(x)$ be the corresponding eigenspaces and $l_i(x)= \dim U_i(x)$.     The functions $x \mapsto \lambda_i(x)$ and $x \mapsto l_i(x)$ are $\varphi_t$-invariant.        Let $E_0(x)=\left\{ 0 \right\}$ and $E_r(x) = U_1(x) \oplus ... \oplus U_r(x)$ for              $r=1,...,k$. Then for $v \in E_r(x) \smallsetminus E_{r-1}(x)$, $1 \leq r \leq s$, we have
    $$\lim_{t \to \infty} \frac{1}{t} \log ||A^t(x)v|| = \lambda_r(x).$$
   \end{itemize}  
  \end{thm}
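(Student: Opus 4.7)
The plan is to reduce the continuous-time statement to the classical discrete-time version of the multiplicative ergodic theorem, which I would then prove via Kingman's subadditive ergodic theorem combined with a careful geometric analysis of singular value decompositions. First I would consider the time-$1$ map $\varphi_1$ and the discrete cocycle $B^n(x) := A^n(x) = A^1(\varphi_{n-1}(x)) \cdots A^1(\varphi_1(x)) A^1(x)$. The integrability hypothesis on $g$ and $h$ is designed precisely to guarantee that $\log^+ \|A^1(x)\|$ and $\log^+ \|A^1(x)^{-1}\|$ lie in $L^1(X,\mu)$, which is the integrability condition needed for the discrete Oseledets theorem to apply to $B^n$.

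For the discrete problem, the core step is to apply Kingman's subadditive ergodic theorem to the sequence $f_n(x) := \log \|B^n(x)\|$. Subadditivity $f_{n+m}(x) \leq f_n(\varphi_m(x)) + f_m(x)$ follows from the cocycle identity and submultiplicativity of the operator norm, and integrability of $f_1$ is exactly the hypothesis above. Kingman then gives a $\varphi_1$-invariant measurable function $\lambda(x) = \lim_{n\to\infty} \frac{1}{n} \log \|B^n(x)\|$ $\mu$-almost everywhere, yielding the top Lyapunov exponent. To obtain the full spectrum and the limit $\Lambda_x$ in part~(i), I would apply the same subadditive argument to exterior powers $\bigwedge^k B^n$ to extract $\lambda_1 + \cdots + \lambda_k$ for each $k$, and then perform the polar decomposition analysis: write $B^n(x) = O_n(x) D_n(x) P_n(x)$ with $D_n$ diagonal and $O_n, P_n$ orthogonal, and show that $(B^n(x)(B^n(x))^*)^{1/2n}$ converges by proving that the singular directions stabilize. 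The filtration $E_0(x) \subset E_1(x) \subset \cdots \subset E_k(x)$ would be constructed by defining $E_r(x) := \{v : \limsup \frac{1}{n}\log \|B^n(x)v\| \leq \lambda_r(x)\}$, and one shows via the angular estimates between singular subspaces of $B^n$ and $(B^n)^*$ that these limits exist as actual limits, not just $\limsup$, on generic $v$.

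Finally, I would pass from discrete to continuous time. The integrability of $g$ and $h$ controls the discrepancy: for $t = n + s$ with $s \in [0,1)$, the identity $A^t(x) = A^s(\varphi_n(x)) B^n(x)$ together with the Borel--Cantelli argument applied to $g(\varphi_n(x))$ shows that $\frac{1}{n}\log \|A^s(\varphi_n(x))\| \to 0$ almost surely, so the continuous-time limits agree with the discrete ones, and the flow-invariance of $\lambda_i$ and $l_i$ as functions (as opposed to merely $\varphi_1$-invariance) follows because the spectrum cannot jump under the continuous deformation $\varphi_t$.

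The hard part will be the construction of the Oseledets filtration and the proof that $((A^t)(A^t)^*)^{1/2t}$ converges as an operator (not merely that its eigenvalues converge). This requires the subtle angular control between forward and backward singular subspaces, which is the reason Oseledets's original 1968 proof is regarded as nontrivial; the cleanest modern treatment is via Raghunathan's geometric argument or via the Goldsheid--Margulis approach using the Cartan decomposition, either of which I would adapt here rather than attempting a direct proof.
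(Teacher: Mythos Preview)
Your outline is a correct and standard route to Oseledets's theorem: reduction to the time-one map, Kingman's subadditive ergodic theorem applied to $\log\|B^n\|$ and its exterior powers to extract the Lyapunov spectrum, followed by the singular-value/angular analysis to obtain convergence of $((A^t)(A^t)^*)^{1/2t}$ and the Oseledets filtration, and finally a Borel--Cantelli argument to pass from discrete to continuous time using the integrability of $g$ and $h$. The references you mention (Raghunathan, Goldsheid--Margulis) are indeed the clean modern sources for the delicate geometric part.

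However, the paper does not prove this theorem at all: its entire proof is the single line ``See \cite{Arn98}, Chapter~3.4.'' Oseledets's theorem is quoted here purely as background for the definition of Lyapunov exponents of the Kontsevich--Zorich cocycle, and the author has no intention of reproving it. So while your proposal is mathematically sound as a proof sketch, it goes far beyond what the paper does; if your goal is to match the paper, a citation to a standard reference (Arnold's \emph{Random Dynamical Systems}, or Oseledets's original paper, or Viana's notes) is all that is required.
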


  \begin{proof} See \cite{Arn98}, Chapter~3.4. \end{proof}
  
  A similar result for products of random matrices was proven earlier by H. Furstenberg       in \cite{Fur63}.
  
  \begin{cor} The subspaces $(E_r(x))_{r=0}^k$ form a filtration of $E$ and
  $$E_r(x) = \left\{v \in \mathbb{R}^n \mid \lim_{t \to \infty} \frac{1}{t} \log ||A^t_xv|| \leq     \lambda_r(x) \right\}.$$ 
  \end{cor}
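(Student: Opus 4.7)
The plan is to derive both assertions directly from part (ii) of Oseledets's theorem, using the defining formula $E_r(x) = U_1(x) \oplus \cdots \oplus U_r(x)$. The filtration property is then essentially tautological: since $U_i(x)$ are honest subspaces of the fiber $E_x$ and $E_r(x)$ is the direct sum of the first $r$ of them, the inclusions $E_0(x) = \{0\} \subset E_1(x) \subset \cdots \subset E_k(x) = E_x$ are immediate, and $E_r(x)$ is $\varphi_t$-invariant because each $U_i(x)$ is.

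For the characterization, I would proceed by proving both inclusions using the unique decomposition $v = v_1 + \cdots + v_k$ with $v_i \in U_i(x)$, available because the $U_i(x)$ are the distinct eigenspaces of the self-adjoint operator $\Lambda_x$ and therefore span $E_x$ as an internal direct sum. For every nonzero $v \in E_x$, let $s(v) := \max\{i : v_i \neq 0\}$; then $v \in E_{s(v)}(x)$ and $v \notin E_{s(v)-1}(x)$ (since the decomposition is unique and $v_{s(v)} \neq 0$), so by Oseledets's assertion (ii)
\[
\lim_{t \to \infty} \frac{1}{t} \log \|A^t(x) v\| \;=\; \lambda_{s(v)}(x).
\]
For $v = 0$ the same equality holds with the convention $\log 0 = -\infty$.

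With this in hand, the two inclusions fall out at once. If $v \in E_r(x)$, then $v_i = 0$ for $i > r$, hence $s(v) \leq r$ and the limit equals $\lambda_{s(v)}(x) \leq \lambda_r(x)$. Conversely, if $v \notin E_r(x)$, then some $v_i$ with $i > r$ is nonzero, so $s(v) > r$ and the limit equals $\lambda_{s(v)}(x) > \lambda_r(x)$, in particular $v$ fails the membership condition on the right-hand side. Combining the two directions yields the claimed equality.

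There is no real obstacle here: everything reduces to bookkeeping with the eigenspace decomposition once Oseledets (ii) is granted. The only point that deserves a moment's care is the observation that $s(v)$ is well-defined and that $v \in E_{s(v)}(x) \setminus E_{s(v)-1}(x)$, which is what allows one to invoke (ii) for an arbitrary nonzero $v$; after that, both inclusions in the description of $E_r(x)$ are just comparisons of indices.
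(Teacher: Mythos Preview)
Your argument is correct and is exactly the intended derivation: the paper states this as an immediate corollary of Oseledets's theorem without giving any proof, and your reading of part~(ii) via the index $s(v)$ is the standard way to extract both the filtration property and the characterization of $E_r(x)$. The aside about $\varphi_t$-invariance of the $U_i(x)$ is not needed for the statement as written (and is not part of the theorem as quoted), but it does no harm.
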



 
 \begin{defi} The numbers $\lambda_i(x)$ appearing in the theorem are called the                 \textbf{Lyapunov exponents at $x$}  with respect to the cocycle $A$ (or $F$). The collection     of all $\lambda_i(x)$ is called the \textbf{Lyapunov spectrum} at $x$. Finally $l_i(x) = \dim E_{\lambda_i}(x) - \dim E_{\lambda_{i-1}}(x)$ is the \textbf{multiplicity of the Lyapunov exponent $\lambda_i(x)$}. \end{defi}
 
 As convention we will, if $l_i(x)=m>1$, consider the Lyapunov exponent $\lambda_i$ as $m$        distinct Lyapunov exponents, such that the total number of Lyapunov exponent is always equal to the dimension of the vector bundle $E$ over $X$.\\     Recall that a measure preserving map $\varphi_t: X \to X$ is called \textbf{ergodic} if all measurable  subsets $Y \subset X$ with $\varphi_t^{-1}(Y)=Y$ have measure $0$ or $1$. Note that if $\varphi_t$ is ergodic,  then the $\lambda_i(x)$ are constant almost everywhere. In this case we are able to speak (globally) of  the Lyapunov  exponents $\lambda_i$ of the cocycle $F$ (or $A$). All maps which we will look at  are ergodic. \label{glo_lambdanox}\\[11pt]
  We will be mostly interested in \textbf{symplectic cocycles}. This means that there exists some symplectic form $w_x$ on each fiber $E_x$ which is preserved by the linear cocycle $F$, i.e.
 $$w_{\varphi_t(x)}(A^t(x)u,A^t(x)v)=w_x(u,v) \quad \textnormal{for all } x \in X \textnormal{ and } t \in \RR \textnormal{ and } u,v \in E_x.$$

 
 \index{Lyapunov exponents|)}

\subsection{The Geodesic Flow} \label{sec_geodesic_flow}

 In this section we briefly recall the definition of the geodesic flow \index{geodesic flow} on                  $T^1\mathbb{H}/\Gamma$ where $\Gamma$ is a Fuchsian group. The reader who wants to find out more about the details is referred to \cite{Ein06}, \cite{Kat96}, \cite{Kat08} and \cite{KU07}.\\[11pt]
 An element $\zeta \in T_z\mathbb{H}$ of \label{glo_tangent_loc} the tangent plane of the hyperbolic plane at $z$ is an   element of $\mathbb{R}^2$. For $z=x+iy \in \mathbb{H}$ an inner product on $T_z\mathbb{H}$ can  be defined by $(\zeta,w)_z=\frac{1}{4y^2}(\zeta \cdot w)$. This yields a norm $||\cdot||_z$   on $T_z\mathbb{H}$. Consider the unit tangent bundle $T^1\mathbb{H}$ \label{glo_tangent} of the hyperbolic plane.   It is defined as the collection of all vectors $\zeta \in T_z\mathbb{H}$ of length one, i.e.    $||\zeta||_z=1$, for all $z \in \mathbb{H}$. An element $g = \left( \begin{smallmatrix} a & b \\ c & d     \end{smallmatrix} \right) \in \SL_2(\mathbb{R})$ acts on $T^1\mathbb{H}$ by $(z,\zeta) \in T^1\mathbb{H}    \mapsto (\frac{az+b}{cz+d},Dg(z)(\zeta))$, where $Dg(z)$ is the derivative of $g$ at $z$.       
 This map is well defined. The action is transitive        (see \cite{Ein06}, p. 3-4). $T^1\mathbb{H}$ can be identified with $\PSL_2(\mathbb{R})$ by        sending $v=(z,\zeta)$ to the unique $g \in \PSL_2(\mathbb{R})$ such that $z=g(i)$ and           $\zeta=Dg(z)(\iota)$, where $\iota$ is the unit vector at the point $i$ pointing upwards (see   \cite{Kat92}, Theorem 2.1.1).\\[11pt]
 There is of course a bunch of geodesics going through a point $z \in \mathbb{H}$. For a fixed  direction $\zeta$ there is however a unique geodesic going through $z$ in the direction  $\zeta$. For a unit vector $\zeta$ based at $z \in \mathbb{H}$ the \textbf{geodesic flow}    \label{geodesic}   can therefore be defined as follows: equip (for technical reasons which       become visible   later) $\mathbb{H}$ with the Poincaré metric with constant scalar curvature    $-4$. The geodesic flow is the flow with unit speed along the geodesic which goes         through $(z,\zeta)$ at time $t=0$. For $(i,\iota) \in T^1\mathbb{H}$ the geodesic is just the   imaginary axis and applying \label{glo_geo_flow} $a_t = \left( \begin{smallmatrix} e^{t} & 0 \\ 0 & e^{-t} \end{smallmatrix} \right)$   moves the vector along the geodesic. 
 Applying elements $g \in \PSL_2(\mathbb{R})$ gives a unit speed parametrization of any other geodesic line in $\mathbb{H}$. Since we apply the isometry   corresponding to $g$ after applying the parametrization $a_t$, the geodesic flow corresponds    to right multiplication by $a_t$ on $\PSL_2(\mathbb{R})$. The orbit $ga_t$ projects to a        geodesic through $g(i)$.\\[11pt]
 Neither is this flow on $T^1\mathbb{H}$ especially interesting from a dynamical point of view   nor will it be the right object for us to look at. Instead we analyze geodesic flows on         $T^1\mathbb{H}/\Gamma$, where $\Gamma$ is a Fuchsian group. The geodesic flow on $T^1\mathbb{H}$ descends to the       \textbf{geodesic flow on $T^1\HH/\Gamma$} via the quotient map $\pi: T^1\mathbb{H} \to              T^1\HH/\Gamma$ of the unit tangent bundles.





 
\subsection{The Teichmüller Flow} \label{sec_Teichmüller_flow}
\subsubsection{The Teichmüller Flow on $\Omega\Mg$} \label{sec_Teichmüller_flow_on_Mg}
The Teichmüller (geodesic) flow is a Hamiltonian flow on $\Omega \Mg$ defined as the geodesic flow with respect to the natural metric, namely the Teichmüller metric. For details we refer the reader to \cite{Zor06} and \cite{For06}. Some aspects of this topic are very nicely presented in \cite{HS06}.\\[11pt]
As has been described in Section~\ref{sec_strata} the action of $\SL_2(\RR)$ on $\Omega\Mg$ preserves all topological characteristics of the flat surface (like genus, number and type of conical singularities) and therefore the action of $\SL_2(\RR)$ preserves each stratum. 
\begin{defi} The \textbf{Teichmüller flow} \index{Teichmüller flow} $g_t$ \label{glo_teich_flow} is given by the action of the diagonal subgroup $\left( \begin{smallmatrix} e^t & 0 \\ 0 & e^{-t} \end{smallmatrix} \right) \subset \SL_2(\RR)$ on $\Omega \Mg$. \end{defi}
Geometrically the Teichmüller flow can be realized as follows: take a polygon pattern of the flat surface $X$ by unwrapping it along some geodesic cuts and expand the polygon in one direction and contract it in the other direction with the same factor (see \cite{Zor06},  Chapter~3.2. and 3.3.).\\[11pt]
By a continuity argument, it is evident that the Teichmüller geodesic flow cannot leave the connected component of any stratum. Let $\Omega\Mg^{(1)} \subset \Omega\Mg$ \label{glo_OmMg1} denote the subspace of surfaces with normalized area $\int |\omega|^2 =1$. The Teichmüller flow preserves any hypersurface of constant area, especially $\Omega \Mg^{(1)}$, since the action of $\SL_2(\RR)$ is also area preserving,  \\[11pt]
It was independently shown by H. Masur and W. Veech that there exists a distinguished probability measure $d\nu_1$ on $\Omega \Mg^{(1)}$ invariant by the Teichmüller flow and ergodic (see e.g. \cite{AGY06}). This measure is called \textbf{Masur-Veech measure}. \index{Masur-Veech measure} In fact they proved the following key result: \label{glo_MVmeasure}
\begin{thm} \textbf{(Masur, Veech)} The Teichmüller flow $g_t$ preserves the (finite) measure $d\nu_1$ on $\Omega \Mg^{(1)}$ and is ergodic on each connected component of $\Omega \Mg^{(1)}$. \end{thm}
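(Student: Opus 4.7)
The plan is to establish invariance, finiteness, and ergodicity as three separate steps, using the local structure of the strata in period coordinates that was mentioned in Section~\ref{sec_strata}. First I would fix a connected component of a stratum $\Omega\Mg(k_1,\ldots,k_n)$ and use the period map $(X,\omega)\mapsto [\omega]\in H^1(X,Z(\omega);\CC)$ to introduce local coordinates. Pulling back Lebesgue measure from the relative cohomology via these charts gives a well-defined measure $d\nu$ on the stratum, independent of the chart (because transition functions between period charts are integral linear maps, hence volume-preserving). The area function $\int_X|\omega|^2$ is a real-analytic, positively homogeneous function of degree $2$ on the cohomology, so I can form the hypersurface $\Omega\Mg^{(1)}$ and define the conditional measure $d\nu_1$ by disintegration (the cone construction $d\nu = dr\, d\nu_1$ in the radial direction).

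Next I would verify $g_t$-invariance. In the period coordinates $[\omega]\in H^1(X,Z(\omega);\CC)\cong H^1(X,Z(\omega);\RR)\oplus i\, H^1(X,Z(\omega);\RR)$, the Teichmüller flow acts by the linear transformation $\mathrm{Re}\,\omega\mapsto e^t\mathrm{Re}\,\omega$, $\mathrm{Im}\,\omega\mapsto e^{-t}\mathrm{Im}\,\omega$ on each real summand separately. The Jacobian determinant of this transformation is $1$, which shows immediately that $d\nu$ is $g_t$-invariant; since $g_t$ preserves the area function, it also preserves $d\nu_1$ on $\Omega\Mg^{(1)}$.

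The central obstacle is finiteness of $\nu_1(\Omega\Mg^{(1)})$; this is really Masur's contribution. The stratum is non-compact because flat surfaces can degenerate (short saddle connections), and one must show that the measure of the $\varepsilon$-thin part, defined by the existence of a saddle connection of length $<\varepsilon$, tends to zero fast enough as $\varepsilon\to 0$. My plan would be to cover the thin part by cylinder sets indexed by the homology class of the short saddle connection, estimate each one by the Siegel–Veech formula (or by direct integration in period coordinates over the half-space where a fixed relative period has modulus $<\varepsilon$), and sum over the countably many classes; the key input is a polynomial bound on the number of saddle connections of length at most $R$, which here I would simply quote.

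Finally, for ergodicity on a connected component, I would follow Masur's Hopf-type argument: use the fact that the strong stable and unstable foliations for $g_t$ are given in period coordinates by fixing $\mathrm{Re}\,\omega$ (resp.\ $\mathrm{Im}\,\omega$) and varying the other, together with Masur's criterion that for almost every $(X,\omega)$ the vertical and horizontal foliations are uniquely ergodic. Combining the absolute continuity of the stable/unstable foliations with respect to $\nu_1$ with a standard Hopf-chain argument then shows that every $g_t$-invariant $L^1$ function is almost everywhere constant on the connected component. The main difficulty, besides the finiteness estimate, is precisely the unique ergodicity input for the Hopf argument, which itself is a deep theorem and which I would invoke as a black box rather than prove from scratch.
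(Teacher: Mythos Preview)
The paper does not prove this theorem at all: its entire ``proof'' is the single line \emph{See \cite{Mas82}}. So there is no approach to compare against---the paper treats Masur--Veech as a black-box input and defers entirely to the original reference.

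Your outline is a reasonable and essentially correct sketch of how the theorem is actually proved, and it goes well beyond what the paper provides. A couple of minor historical/technical remarks: the Siegel--Veech formula you invoke for the finiteness step is somewhat more modern than the original 1982 arguments (Masur worked more directly with geometric estimates, Veech via Rauzy induction and zippered rectangles), so if you want to stay close to the cited source you might instead quote Masur's direct volume estimate on the thin part. Likewise, phrasing the ergodicity step as ``Hopf argument plus Masur's unique-ergodicity criterion'' is correct in spirit but slightly circular chronologically; Veech's alternative route via the ergodicity of Rauzy--Veech induction on the simplex of interval-exchange data is another standard way to close the argument. None of this is a gap---your plan would work---but since the paper itself only cites \cite{Mas82}, the cleanest thing for the present purpose is simply to do the same.
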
 
\begin{proof} See \cite{Mas82}. \end{proof}
The Teichmüller flow is often also called \textbf{Teichmüller geodesic flow}. One might ask in which sense the Teichmüller flow is a geodesic flow. This question can be answered in the following way: Teichmüller showed that given two Riemann surfaces $S_0$ and $S_1$ there always exist maps $f_0: S_0 \to S_1$ which minimize the coefficient of quasiconformality (see Section~\ref{sec_moduli_space_rs}) and that the one parameter family of matrices $g_t$ 
applied to $S_0$ forms a geodesic with respect to the Teichmüller metric (see \cite{Zor06}, Section~8.1).\\ 
For $g=1$, the space $\Omega \mathcal{M}_1^{(1)}$ can be identified with $\SL_2(\RR) / \SL_2(\ZZ)$ and the Teichmüller metric coincides with the Poincaré metric on the modular surface. So for $g=1$ the Teichmüller flow is exactly the geodesic flow on the modular surface which was described in Section~\ref{sec_geodesic_flow} (see e.g. \cite{For06}, p. 556).


\subsubsection{The Teichmüller Flow on Teichmüller Curves} \label{Teich_flow_Teich_curves}
As Teichmüller curves have measure $0$ with respect to the Masur-Veech measure, one has to substitute the Masur-Veech measure in order to make sense of the notion of Lyapunov exponents of the Teichmüller flow on Teichmüller curves. To this end, let $(X,\omega)$ be a Veech surface of renormalized area 1 and consider its $\SL_2(\RR)$-orbit $C$ in $\Omega\Mg^{(1)}$. Since $C$ is closed, we obtain a finite measure $\mu_C$ on $\Omega\Mg^{(1)}$ with support $C$, namely the measure induced from the Haar measure $\lambda$ on $\SL_2(\RR)$. This is the desired measure since $\mu_C$ is $\SL_2(\RR)$-invariant and ergodic with respect to the Teichmüller flow (see \cite{CFS82}, Chapter~4, Theorem~1). We will see later that the choices of the two measures fit together in an appropriate way. \\[11pt]
We are now able to give an interpretation of the Teichmüller flow on Teichmüller curves as geodesic flow in the sense of Section~\ref{sec_geodesic_flow}. Recall that Teichmüller curves $f: \mathbb{H} \to \Mg$ are generated by flat surfaces. Indeed $f$ is simply the projection of the $\SL_2(\RR)$-orbit of some $(X,\omega)$ to $\Mg$.\\ 
Since a Teichmüller curve yields an isometric embedding $\HH / \SL(X,\omega) \to f(\HH) \subset \Mg$ (compare \cite{McM03}), we can identify each $Y \in f(\HH)$ with a point on the orbifold $\HH / \SL(X,\omega)$. 
As Teichmüller curves are complex geodesics for the Teichmüller metric 
we observe that via $f$ we can identify the Teichmüller flow on the Teichmüller curve with the geodesic flow on $T^1\HH / \SL(X,\omega)$.\\ 
The main advantage of the interpretation of the Teichmüller geodesic flow as the geodesic flow on $T^1\HH / \SL(X,\omega)$ is that it will in Section~\ref{sec_no_twist_Lyapunov} enable us to show that the Lyapunov exponents of twisted Teichmüller curves agree with the Lyapunov of the ordinary Teichmüller curves with respect to a certain cocycle which will be introduced next.

\subsection{The Kontsevich-Zorich Cocycle and its Lyapunov Exponents} \label{sec_Kontsevich_Zorich} 
In \cite{Kon97}, M. Kontsevich and A. Zorich introduced a certain renormalization cocycle over the Teichmüller flow. In this section we describe the cocycle and some of its properties. A more detailed review on this topic can be found e.g. in \cite{For06} and in \cite{Zor06}.\\[11pt]
As has already been explained, we must replace $\Mg$ by an appropriate fine moduli space by adding a level $l$-structure in order to find a universal family. We do not indicate this change in our notation, but let $f: \mathcal{X} \to \Mg$ be the universal family over $\Mg$ (for details see Section~\ref{sec_moduli_space_rs}). Let $\pi: \Omega\Mg \to \Mg$ be the usual projection. We now consider the local system $\pi^*(R^1f_{*}\RR)$, where $R^1$ is the first right derived functor. 
The fiber over $(X,\omega)$ is hence $H^1(X,\RR)$. We denote the corresponding real $C^\infty$-vectorbundle by $V$. Then $V$ has a natural norm, namely the \textbf{Hodge norm} (see e.g. \cite{Möl11a}, Chapter~4). Since $V$ carries a flat structure, we can lift the Teichmüller flow by parallel transport to a flow $F_t$ on $V$.  This is the \textbf{Kontsevich-Zorich cocycle}. By construction the Kontsevich-Zorich cocycle is indeed a cocycle over the Teichmüller flow. \index{Kontsevich-Zorich cocycle} The Kontsevich-Zorich cocycle is a continuous version of a cocycle introduced by G. Rauzy (see e.g. \cite{Zor06}).\\[11pt]
The real cohomology $H^1(X,\RR)$ of an orientable closed surface $X$ has dimension $2g$ and is endowed with a natural symplectic form, namely the intersection form (see \cite{FK92}, Chapter~III.1). So the Kontsevich-Zorich cocycle is defined on a symplectic vector bundle of dimension $2g$. Therefore, the Kontsevich-Zorich cocycle, both over the Teichmüller flow on $\Mg^{(1)}$ and over a Teichmüller curve $C$, has a symmetric Lyapunov spectrum (compare \cite{Via08} Proposition 5.8.): 
$$\lambda_1=1 \geq \lambda_2 \geq ... \geq \lambda_g \geq 0 \geq -\lambda_g \geq ... \geq - \lambda_2 \geq - \lambda_1 = -1.$$
The fact that $\lambda_1=1$ follows in both cases immediately by comparing the definition of the Teichmüller flow and the assertion of Oseledet's Theorem. Nevertheless there are also differences for the two types of Lyapunov exponents: it was an open problem for a decade whether the upper inequalities are strict for the Teichmüller flow on $\Omega\Mg^{(1)}$. In 2004, A. Avila and M. Viana succeeded in solving this problem. A weaker version of their theorem was earlier proven by G. Forni, see \cite{For02}. \newpage

\begin{thm} (\textbf{Avila, Viana, \cite{AV07}}) For any connected component of any stratum the first $g$ Lyapunov exponents are distinct and greater than 0:
$$1 = \lambda_1^{\nu_1} > \lambda_2^{\nu_1} > ... > \lambda_g^{\nu_1} >0.$$ \end{thm}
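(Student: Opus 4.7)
The plan is to reduce the continuous-time Kontsevich-Zorich cocycle to a discrete symplectic cocycle over a combinatorial base, and then apply a Furstenberg-type simplicity criterion. More precisely, I would first use Rauzy-Veech renormalization to model the Teichmüller flow on a connected component $\mathcal{C}$ of a stratum as a suspension over a discrete dynamical system on a space of interval exchange transformations (zippered rectangles); the Kontsevich-Zorich cocycle then identifies, up to reindexing, with the locally constant $\mathrm{Sp}(2g,\mathbb{Z})$-valued cocycle generated by the elementary Rauzy matrices associated to the Rauzy class $\mathcal{R}(\mathcal{C})$. Since Lyapunov spectra are preserved by measurable suspensions (up to the same positive rescaling for all exponents), it suffices to prove that the discrete cocycle has $g$ distinct positive exponents.

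Next, I would invoke the simplicity criterion of Goldsheid-Margulis / Guivarc'h-Raugi, in the non-random symplectic form developed by Bonatti-Viana: for a locally constant symplectic cocycle driven by a monoid $\mathcal{G} \subset \mathrm{Sp}(2g,\mathbb{R})$, the Lyapunov spectrum satisfies $\lambda_1 > \lambda_2 > \dots > \lambda_g > 0$ provided that $\mathcal{G}$ is \emph{pinching} (contains an element with $2g$ distinct real eigenvalues of pairwise distinct absolute value) and \emph{twisting} (no finite union of proper isotropic subspaces is simultaneously invariant under every Lagrangian flag stabilized by the pinching element). This criterion reduces the analytic problem of distinct Lyapunov exponents to a purely algebraic-combinatorial problem about the Rauzy-Veech monoid.

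The core task then becomes the verification of pinching and twisting within each Rauzy class. For pinching, I would concatenate elementary Rauzy moves to build an explicit loop $\gamma_0$ in the Rauzy diagram whose associated matrix is totally hyperbolic; the construction exploits the fact that long Rauzy words produce matrices that are, up to a controlled correction, positive and large, so that Perron-Frobenius-type arguments deliver simple real spectrum. For twisting, one fixes the eigenflag of the pinching element and constructs additional loops $\gamma_1,\dots,\gamma_N$ whose matrices move every element of that flag off any proposed finite invariant union; this step will be the main obstacle, since it requires a delicate inductive exhaustion of the (finitely many) possibilities, using the connectedness of each Rauzy class established by Kontsevich-Zorich in Theorem~\ref{thm_components_of_strata} to ensure the combinatorial flexibility needed.

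Finally, once pinching and twisting are verified for every Rauzy class associated to a connected component of every stratum, the simplicity criterion yields strict inequalities $\lambda_1 > \lambda_2 > \dots > \lambda_g > 0$ for the discretized cocycle, and hence for the Kontsevich-Zorich cocycle on $\mathcal{C}$. The identification $\lambda_1 = 1$ is immediate from the definition of the Teichmüller flow together with the area-one normalization, and the symmetry $\lambda_i \leftrightarrow -\lambda_i$ follows from the symplectic structure on $H^1(X,\mathbb{R})$; combined with the strict inequalities just established, this gives the claimed simple positive part of the spectrum for every connected component of every stratum.
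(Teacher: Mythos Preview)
The paper does not give a proof of this theorem: it is stated with attribution to Avila and Viana \cite{AV07} and then used as a black box (in fact only to contrast with the situation over individual Teichm\"uller curves). There is therefore no ``paper's own proof'' to compare against.

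That said, your outline is a fair high-level summary of the actual Avila--Viana strategy: discretize via Rauzy--Veech renormalization, reduce to a locally constant symplectic cocycle, and verify an abstract pinching/twisting criterion to conclude simplicity. A few points where your sketch is oversimplified relative to the real argument: the base of the discretized system is a countable-state Markov shift rather than a finite one, so integrability of the cocycle is not automatic and one must pass to the Zorich acceleration; Avila and Viana do not apply the Goldsheid--Margulis or Guivarc'h--Raugi criteria off the shelf but develop their own simplicity criterion tailored to this countable Markov setting (building on the Bonatti--Viana framework you mention); and the verification of twisting is the substantial combinatorial heart of \cite{AV07}, carried out by a careful induction on the Rauzy classes rather than by ad hoc construction of loops. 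Your remark that the connectedness classification of Kontsevich--Zorich (Theorem~\ref{thm_components_of_strata}) supplies the needed combinatorial flexibility is not quite right: that classification concerns components of strata, whereas what the twisting argument uses is the internal combinatorics of a single Rauzy class.
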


This statement is in general not true for the Teichmüller flow on Teichmüller curves (see e.g. \cite{For02} or \cite{BM10a}, Theorem~8.2).\\[11pt] Let us conclude this section by a proposition which shows that the involved measures fit together appropriately. Sometimes it is much easier to calculate the sum of the (positive) Lyapunov exponents than the individual ones. Let $C_d$ denote the union of all Teichmüller curves in a fixed stratum generated by square-tiled surfaces of $d$ squares and let $L(C_d)$ be the average of the Lyapunov exponents of the individual components weighted by the hyperbolic volume of the corresponding component. Then the following holds (see \cite{Che10}, Appendix~A):

\begin{prop} For $d \to \infty$ the weighted sum of Lyapunov exponents $L(C_d)$ of square-tiled surfaces in a component of a stratum of $\Omega\Mg$ converges to the sum of Lyapunov exponents $L_{\nu_1}$. \end{prop}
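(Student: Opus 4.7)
The plan is to invoke the Eskin--Kontsevich--Zorich formula, which expresses the sum $L = \lambda_1+\cdots+\lambda_g$ of nonnegative Lyapunov exponents of the Kontsevich--Zorich cocycle over any $\mathrm{SL}_2(\mathbb{R})$-invariant ergodic probability measure on a stratum $\Omega\mathcal{M}_g(k_1,\dots,k_n)$ as
\[
L \;=\; \frac{1}{12}\sum_{i=1}^{n}\frac{k_i(k_i+2)}{k_i+1} \;+\; \frac{\pi^2}{3}\,c_{\mathrm{area}},
\]
where $c_{\mathrm{area}}$ is the Siegel--Veech constant of the measure counting cylinders weighted by area. The combinatorial first term depends only on the ambient stratum, so it is identical for every component of $C_d$ and for the Masur--Veech measure $\nu_1$ on that stratum. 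Consequently, the difference $L(C_d)-L_{\nu_1}$ equals $\tfrac{\pi^2}{3}$ times the difference of the corresponding Siegel--Veech constants, and the statement reduces to showing that the hyperbolic-volume-weighted average of the Siegel--Veech constants of the square-tiled Teichmüller curves of $d$ squares converges, as $d\to\infty$, to the Siegel--Veech constant of the Masur--Veech measure on the given connected component.

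Next I would handle the equidistribution step. A square-tiled surface of $d$ squares in the chosen connected component of the stratum generates an arithmetic Teichmüller curve, and $C_d$ is the finite union of these curves for all $\mathrm{SL}_2(\mathbb{Z})$-orbits of such surfaces. The hyperbolic volume of each component equals the index of the associated Veech group in $\mathrm{SL}_2(\mathbb{Z})$ (up to the uniform factor $\pi/3$), so weighting by hyperbolic volume amounts to counting with multiplicity the individual square-tiled surfaces in a fundamental domain for the $\mathrm{SL}_2(\mathbb{Z})$-action. Siegel--Veech constants are obtained by averaging a counting function along the $\mathrm{SL}_2(\mathbb{R})$-orbit, and by Masur--Veech ergodicity the orbit average on the full stratum equals the corresponding integral against $\nu_1$. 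Combining this with the Eskin--Okounkov asymptotic counting of square-tiled surfaces of $d$ squares (which shows exactly that the normalized counting measures on square-tiled surfaces converge weakly to the Masur--Veech measure on each connected component), the weighted sum of Siegel--Veech constants along $C_d$ converges to $c_{\mathrm{area}}(\nu_1)$.

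Finally, one has to be careful that integrands in the Siegel--Veech formulas are not bounded, so weak-$*$ convergence of the counting measures does not immediately give convergence of the Siegel--Veech constants. The hard part is therefore to justify this interchange of limit and integration: one needs a uniform tail estimate on the cylinder-counting function along $\mathrm{SL}_2(\mathbb{R})$-orbits of square-tiled surfaces, uniform in $d$, so that the contribution of long cylinders can be made uniformly small. This is the genuinely nontrivial input and is typically supplied by the quantitative non-divergence and integrability estimates of Eskin--Masur that underlie the existence of Siegel--Veech constants in the first place; applied uniformly along the arithmetic curves in $C_d$, they upgrade weak convergence of measures to convergence of the Siegel--Veech constants, completing the proof.
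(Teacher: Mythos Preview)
The paper does not supply its own proof of this proposition; it simply records the statement and cites \cite{Che10}, Appendix~A. So there is nothing in the paper to compare against line by line.

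That said, your outline is a correct and standard route to the result. Reducing via the Eskin--Kontsevich--Zorich formula to convergence of area Siegel--Veech constants, then invoking equidistribution of square-tiled surfaces toward the Masur--Veech measure together with uniform integrability, is exactly how one proves this. You have also correctly flagged the one genuinely delicate point: the Siegel--Veech integrand is unbounded, so weak-$*$ convergence alone is insufficient, and one needs the Eskin--Masur non-divergence/integrability estimates to justify the limit.

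One minor remark on the relation to the cited source: Chen's appendix predates the general proof of the EKZ formula in \cite{EKZ11}, so the argument there is packaged slightly differently. For square-tiled surfaces the sum of Lyapunov exponents admits a direct combinatorial expression as a sum over cusps of the arithmetic Teichm\"uller curve (this special case of Kontsevich's formula was available earlier), and one shows that the volume-weighted average of this cusp sum converges. This is your argument in different clothing---the cusp sum is precisely the area Siegel--Veech constant---but it avoids invoking the general EKZ theorem as a black box.
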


Finally, let us remark that the correspondence which was established in the last section implies that the Kontsevich-Zorich cocycle for Teichmüller curves can also be regarded as a cocycle over the geodesic flow $g_t$ on $T^1 \HH / \SL(X,\omega)$. For brevity reasons we will in the rest of these notes only speak of Lyapunov exponents when we mean the Lyapunov exponents of the Kontsevich-Zorich cocycle.

\newpage

\section{Kobayashi Curves Revisited} \label{chapter_comparing}

We now approach the answer to the question if all Kobayashi curves \index{Kobayashi curve} are twisted Teichmüller curves. In particular, one might ask if the genus~$4$ Teichmüller curve is a twist of a Teichmüller curve $C_{L,D}^\epsilon$ we saw that these curves \textit{always} lie on the \textit{same} Hilbert modular surface $X_D$. In the sequel we will answer this question negatively (Theorem~\ref{thm_notwist_general}). In those cases when the genus~$3$ Teichmüller curve also lies on $X_D$, we will furthermore see that it is not a twist of a Teichmüller curve $C_{L,D}^\epsilon$ either (Corollary~\ref{cor_lyap1/5}).\\[11pt] This chapter starts with a necessary criterion for any curve to be a twisted Teichmüller curve (Proposition~\ref{prop_twisted_implies_commensurable}). 
We then show by a very explicit calculation which uses the arithmetic of $\mathcal{O}_5$ that in discriminant $5$ the genus~$4$ Teichmüller curve is not a twisted Teichmüller curve (Proposition~\ref{prop_notwist_D5}). This arithmetic approach might also work for other discriminants. However, we use the concept of Lyapunov exponents (see Chapter~\ref{chapter_Lyapunov_exponents}) to prove the result for arbitrary discriminants in Section~\ref{sec_no_twist_Lyapunov}. 

\subsection{An Arithmetic Approach} \label{sec_arithmetic_approach}

It is not at all easy to decide whether a given curve $\HH / \Gamma$, where $\Gamma$ is a Fuchsian group, is a twisted Teichmüller curve or not. The first result gives a necessary criterion for this to be possible. As preparation for this criterion let us restate Proposition~\ref{prop_finite_index_GL2K} in the language of commensurators.

\begin{prop} We have $\Comm_{\GL_2(\mathbb{R})} (\SL_2(\OD)) = \GL_2(K).$ \end{prop}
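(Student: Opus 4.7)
The plan is to prove the two inclusions separately. The inclusion $\GL_2(K) \subseteq \Comm_{\GL_2(\RR)}(\SL_2(\OD))$ is essentially the content of Proposition~\ref{prop_finite_index_GL2K}: given any $M \in \GL_2(K)$, after clearing denominators I may assume $M \in \Mat^{2\times 2}(\OD)$ with $\det M =: \delta \in \OD\smallsetminus\{0\}$, and the argument of Proposition~\ref{prop_finite_index_GL2K} (writing $N = I + \delta X$ and using $M^{-1} = \delta^{-1} M^{\mathrm{adj}}$) shows $M \Gamma^D(\delta) M^{-1} \subseteq \SL_2(\OD)$. Since $\Gamma^D(\delta)$ has finite index in $\SL_2(\OD)$, this gives $M \in \Comm_{\GL_2(\RR)}(\SL_2(\OD))$.

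For the reverse inclusion, I would detect elements of the commensurator via their action on boundary cusps. The key observation is that the set of parabolic fixed points of elements of $\SL_2(\OD)$ on $\partial\HH = \mathbb{P}^1(\RR)$ is exactly $\mathbb{P}^1(K)$: for $p = \alpha/\beta \in \mathbb{P}^1(K)$ with $\alpha,\beta \in \OD$ the matrix
\[ A_p := \begin{pmatrix} 1+\alpha\beta & -\alpha^2 \\ \beta^2 & 1-\alpha\beta \end{pmatrix} \in \SL_2(\OD) \]
is parabolic with fixed point $p$ (direct check: $\det A_p = 1$, $\tr A_p = 2$, $A_p(\alpha/\beta) = \alpha/\beta$), while conversely the unique fixed point of any parabolic $\left(\begin{smallmatrix} a & b \\ c & d \end{smallmatrix}\right) \in \SL_2(\OD)$ is $(a-d)/(2c) \in K$. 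Now let $M \in \Comm_{\GL_2(\RR)}(\SL_2(\OD))$ and set $\Gamma := \SL_2(\OD) \cap M^{-1}\SL_2(\OD)M$, a finite-index subgroup of $\SL_2(\OD)$. For any $p \in \mathbb{P}^1(K)$ some positive power $A_p^k$ lies in $\Gamma$ and still fixes $p$; hence $MA_p^kM^{-1}$ is a parabolic element of $\SL_2(\OD)$ with fixed point $M(p)$, forcing $M(p) \in \mathbb{P}^1(K)$. Consequently $M$, viewed as a M\"obius transformation, preserves $\mathbb{P}^1(K)$, and since such a transformation is determined by its values at the three points $0,1,\infty$, it must be induced by a matrix $M' \in \GL_2(K)$, i.e.\ $M = \lambda M'$ for some $\lambda \in \RR^*$.

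The main (and only) subtlety is this final scalar ambiguity: the center $\RR^* I$ of $\GL_2(\RR)$ is contained in the commensurator but not in $\GL_2(K)$, so the equality $\Comm_{\GL_2(\RR)}(\SL_2(\OD)) = \GL_2(K)$ is to be read modulo central scalars -- equivalently as the clean statement $\Comm_{\PGL_2(\RR)}(\PSL_2(\OD)) = \PGL_2(K)$, which is insensitive to the replacement $M \mapsto \lambda M$ (this replacement has no effect on conjugation either). With this convention in place, the two inclusions above give the claim; the truly novel content is the second step, where the arithmetic of parabolic fixed points forces any element of the commensurator to act on $\partial\HH$ by a $K$-rational M\"obius transformation.
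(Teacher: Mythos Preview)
Your proof is correct and in fact more complete than what the paper offers. The paper does not give a proof of this proposition at all: it simply presents it as a ``restatement'' of Proposition~\ref{prop_finite_index_GL2K}, but that earlier result only establishes the inclusion $\GL_2(K) \subseteq \Comm_{\GL_2(\RR)}(\SL_2(\OD))$. The reverse inclusion is left unjustified in the paper, whereas you supply it via the standard cusp argument (parabolic fixed points of $\SL_2(\OD)$ fill out exactly $\mathbb{P}^1(K)$, and a commensurating element must permute these). This is the right approach and your execution is clean.

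You are also right to flag the scalar issue: the center $\RR^* I \subset \GL_2(\RR)$ acts trivially by conjugation and hence lies in the commensurator, yet $\lambda I \notin \GL_2(K)$ for irrational $\lambda$. So the statement is only literally true after passing to $\PGL_2$, or equivalently after identifying $\GL_2(K)$ with its image $\RR^* \cdot \GL_2(K)$ in $\GL_2(\RR)$. The paper is silent on this point; your reading is the correct one, and it is harmless for the applications since only conjugation matters.
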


The criterion itself is easy to state, but practically hard to check: a twisted Teichmüller curve is always commensurable with $\SL(L_D)$.

\begin{prop} \label{prop_twisted_implies_commensurable} Let $\Gamma \subset \SL_2(\OD)$ be a Fuchsian group. If $\mathbb{H}/\Gamma$ is a twisted Teichmüller curve then $\Gamma$ and $\SL(L_D)$ are commensurable in $\SL_2(\mathbb{R})$.
\end{prop}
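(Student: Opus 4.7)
The plan is to unpack the definitions: a twisted Teichm\"uller curve means there exists $M \in \GL_2^+(K)$ with $\Gamma = \SL_M(L_D)$, and I want to produce an explicit common finite-index subgroup after a single conjugation. First I would normalize the twisting matrix: since $\det(M) \succ 0$, the matrix $g := M/\sqrt{\det(M)}$ lies in $\SL_2(\RR)$, and because scalars are central, conjugation by $g$ and conjugation by $M$ produce the same subgroups. So commensurability of $\Gamma$ with $\SL(L_D)$ in $\SL_2(\RR)$ will follow from direct commensurability of $\Gamma$ with $M\SL(L_D)M^{-1}$.

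The natural candidate for the common subgroup is
\[
H := M\SL(L_D)M^{-1} \cap \SL_2(\OD) = M\bigl(\SL(L_D)\cap M^{-1}\SL_2(\OD)M\bigr)M^{-1}.
\]
By Proposition~\ref{prop_stabilzer_twisted} this $H$ is already known to lie in $\SL_M(L_D)=\Gamma$, so only the two finite-index statements remain.

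For $[M\SL(L_D)M^{-1} : H] < \infty$, I would conjugate the question back by $M^{-1}$ and ask for $[\SL(L_D) : \SL(L_D)\cap M^{-1}\SL_2(\OD)M] < \infty$. Proposition~\ref{prop_finite_index_GL2K}, applied to $M^{-1} \in \GL_2^+(K)$, gives $[\SL_2(\OD) : \SL_2(\OD)\cap M^{-1}\SL_2(\OD)M] < \infty$, and intersecting both sides with $\SL(L_D)\subset \SL_2(\OD)$ preserves finite index. Thus $H$ has finite index in $M\SL(L_D)M^{-1}$.

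For $[\Gamma : H]<\infty$, I would argue by covolume rather than give explicit coset representatives. By Proposition~\ref{thm_twisted_finite_volume}, $\Gamma=\SL_M(L_D)$ is a lattice in $\SL_2(\RR)$, hence $\mu(\HH/\Gamma) < \infty$. By the previous paragraph $H$ is a finite-index subgroup of the lattice $M\SL(L_D)M^{-1}$, so $H$ is itself a lattice and $\mu(\HH/H) < \infty$. The inclusion $H\subset \Gamma$ of two lattices forces
\[
[\Gamma : H] \;=\; \frac{\mu(\HH/H)}{\mu(\HH/\Gamma)} \;<\; \infty.
\]
Combining the two finite-index statements, $\Gamma$ and $g\SL(L_D)g^{-1}=M\SL(L_D)M^{-1}$ are directly commensurable, hence $\Gamma$ and $\SL(L_D)$ are commensurable in $\SL_2(\RR)$. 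The only step that requires a genuine input beyond bookkeeping is the covolume argument bounding $[\Gamma:H]$; everything else is immediate from Proposition~\ref{prop_finite_index_GL2K} and the definition of $\SL_M(L_D)$.
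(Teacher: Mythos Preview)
Your proof is correct and follows essentially the same outline as the paper's: normalize $M$ to $g\in\SL_2(\RR)$ by dividing by $\sqrt{\det M}$, then show that $\Gamma$ and $M\SL(L_D)M^{-1}$ are directly commensurable via the common subgroup $H=M\SL(L_D)M^{-1}\cap\SL_2(\OD)$, with both index bounds ultimately resting on Proposition~\ref{prop_finite_index_GL2K}.

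The one place you diverge is in bounding $[\Gamma:H]$. You invoke Proposition~\ref{thm_twisted_finite_volume} to know $\Gamma$ is a lattice and then use the covolume ratio. The paper instead treats this index exactly symmetrically to the first one: writing $M^{-1}\Gamma M=\Stab(\Phi)\cap M^{-1}\SL_2(\OD)M$ and $M^{-1}HM=\Stab(\Phi)\cap\SL_2(\OD)\cap M^{-1}\SL_2(\OD)M$, one gets directly
\[
[\Gamma:H]\;\le\;[M^{-1}\SL_2(\OD)M:\SL_2(\OD)\cap M^{-1}\SL_2(\OD)M]\;=\;[\SL_2(\OD):\SL_2(\OD)\cap M\SL_2(\OD)M^{-1}]<\infty,
\]
again by Proposition~\ref{prop_finite_index_GL2K}. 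So the step you singled out as ``requiring genuine input beyond bookkeeping'' is in fact the same bookkeeping as the first half, just with the roles of $\SL_2(\OD)$ and $M^{-1}\SL_2(\OD)M$ swapped; no appeal to covolumes or to the lattice property of $\Gamma$ is needed.
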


Before we give the proof, let us note that this criterion does not depend on the choice of the generating surface (and thus $\SL(L_D)$) since changing the generating surface changes both, the stabilizer and the Veech group, only by conjugation.

\begin{proof} If $\mathbb{H}/\Gamma$ is a twisted Teichmüller curve then
$$ \textrm{M} \Stab(\Phi)  M^{-1} \cap SL_2(\OD)  = \Gamma.$$
for some $M \in \GL_2^+(K)$. This happens if and only if
$$\Stab(\Phi) \cap M^{-1} \SL_2(\OD) M = M^{-1} \Gamma M $$
Now divide every entry of $M$ by $\sqrt{\det(M)}$ and denote this matrix by $V$. Then $V$ lies in $\SL_2(\mathbb{R})$ and $V^{-1}M$ is a multiple of the identity matrix. Hence
$$V^{-1} (M \Stab(\Phi)  M^{-1} \cap SL_2(\OD))V = \Stab(\Phi) \cap M^{-1} \SL_2(\OD)M. $$
Thus it suffices to show that $\SL(L_D)$ and $M^{-1}\Gamma M$ are directly commensurable.  We have

$$[\SL(L_D) : \SL(L_D) \cap M^{-1}\Gamma M] = $$
$$[\Stab(\Phi) \cap \SL_2(\OD) : \Stab(\Phi) \cap \SL_2(\OD) \cap \Stab(\Phi) \cap M^{-1} \SL_2(\OD) M] \leq$$
$$[\SL_2(\OD) :  \SL_2(\OD) \cap M^{-1} \SL_2(\OD) M] < \infty.$$

And

$$[M^{-1}\Gamma M : M^{-1}\Gamma M \cap \SL(L_D)] = $$
$$[\Stab(\Phi) \cap M^{-1} \SL_2(\OD) M: \Stab(\Phi) \cap \SL_2(\OD) \cap \Stab(\Phi) \cap M^{-1} \SL_2(\OD) M] \leq$$
$$[\SL_2(\OD) :  \SL_2(\OD) \cap M \SL_2(\OD) M^{-1}] < \infty.$$

\end{proof}

\begin{cor} If $\mathbb{H}/\Gamma$ is a twisted Teichmüller curve of $\mathbb{H}/\SL(L_D)$ where $\SL(L_D)$ is maximal then $$\chi(\SL(L_D))|\chi(\Gamma).$$
\end{cor}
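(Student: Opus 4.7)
My plan is to combine Proposition~\ref{prop_twisted_implies_commensurable} with the maximality hypothesis and then apply Riemann-Hurwitz. The key observation is that maximality of $\SL(L_D)$ forces a commensurable subgroup to sit \emph{inside} (a conjugate of) $\SL(L_D)$, not just be commensurable with it; once one has an honest finite-index inclusion, the divisibility is just the multiplicativity of the Euler characteristic in coverings.

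First, I would recall that by Proposition~\ref{prop_twisted_implies_commensurable}, if $\HH/\Gamma$ is a twisted Teichmüller curve with twist matrix $M\in\GL_2^+(K)$, then (following the argument in its proof) $V^{-1}\Gamma V$ is directly commensurable with $\SL(L_D)$, where $V \in \SL_2(\RR)$ is obtained from $M$ by dividing each entry by $\sqrt{\det(M)}$. In particular, every element of $V^{-1}\Gamma V$ lies in $\Comm_{\SL_2(\RR)}(\SL(L_D))$.

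Next I would invoke the maximality assumption. Since $\SL(L_D)$ is non-arithmetic (by the theorem of Gutkin-Judge, Theorem~\ref{thm_Gutkin_Judge2}, combined with Theorem~\ref{thm_nonarithmetic_commensurable}) and has finite covolume, Lemma~\ref{lem_non-arithmetic_commensurator} identifies $\Comm_{\SL_2(\RR)}(\SL(L_D))$ with the unique maximal Fuchsian group containing $\SL(L_D)$. By hypothesis this maximal group is $\SL(L_D)$ itself, so $\Comm_{\SL_2(\RR)}(\SL(L_D))=\SL(L_D)$. Consequently $V^{-1}\Gamma V$ is a finite-index subgroup of $\SL(L_D)$.

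Finally, the Riemann-Hurwitz formula (Theorem~\ref{thm_Riemann_Hurwitz}) gives
\[
\chi(V^{-1}\Gamma V) \;=\; [\SL(L_D):V^{-1}\Gamma V]\cdot \chi(\SL(L_D)),
\]
and since conjugation by $V$ induces an isomorphism of orbifolds $\HH/\Gamma \cong \HH/(V^{-1}\Gamma V)$, the left hand side equals $\chi(\Gamma)$. This yields the divisibility $\chi(\SL(L_D))\mid \chi(\Gamma)$ in the rational sense (i.e.\ the ratio is a positive integer, namely the covering degree). The only conceptual step is the reduction from "commensurable" to "subgroup of", which is precisely what maximality buys us; the rest is bookkeeping, so I do not expect a genuine obstacle here.
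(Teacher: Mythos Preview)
Your proof is correct and follows essentially the same route as the paper. The paper's one-line argument invokes Corollary~\ref{cor_max_conjugated} (commensurable with a maximal group implies conjugate to a subgroup of it), and you have simply unpacked that corollary by going through $\Comm_{\SL_2(\RR)}(\SL(L_D))=\SL(L_D)$ explicitly; the remaining multiplicativity of $\chi$ under finite covers is identical in both.
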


\begin{proof} Since $\Gamma$ and $\SL(L_D)$ are commensurable $\Gamma$ must be conjugated to a subgroup of $\SL(L_D)$ (Corollary~\ref{cor_max_conjugated}). 
\end{proof}

Recall that $\SL(L_5)=\left\langle S,T\right\rangle$. Moreover by calculating two cylinder decompositions of $X_5$ one finds that $\SL(X_5)$, i.e. the Veech group of the genus~$4$ Prym Teichmüller curve in discriminant 5, contains the subgroup $\widetilde{H}=\left\langle S,T^2,C\right\rangle$ with infinite index, where $C=\left( \begin{smallmatrix} -3-6w & 6+10w \\-2-4w & 5+6w \end{smallmatrix} \right)$. With the help of the algorithm described in Appendix~\ref{sec_check_elements} one may check that $C \notin \SL(L_5)$.

\begin{prop} \label{prop_notwist_D5} The groups $\SL(L_5)$ and $\SL(X_5)$ are not commensurable. \end{prop}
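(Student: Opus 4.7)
The plan is to derive a contradiction from the assumption that $\SL(L_5)$ and $\SL(X_5)$ are commensurable, exploiting the maximality of $\SL(L_5)=\Delta(2,5,\infty)$ established in Theorem~\ref{thm_maximality}. By Corollary~\ref{cor_max_conjugated}, such a commensurability would force $\SL(X_5)$ to be conjugate into $\SL(L_5)$: there would exist $g\in\SL_2(\RR)$ with $g\SL(X_5)g^{-1}\subseteq\SL(L_5)$, so that in particular $gSg^{-1}$, $gT^2g^{-1}$, and $gCg^{-1}$ all lie in $\SL(L_5)$. I will narrow $g$ down using the first two matrices and then get a contradiction from $C$.

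First I would normalize $g$ using $S$. The group $\SL(L_5)$ has exactly two conjugacy classes of order-$4$ elements, represented by $S$ and $-S$; a direct calculation shows that the equation $hSh^{-1}=-S$ has no solution in $\SL_2(\RR)$ (it forces $-a^2-b^2=1$). Hence $gSg^{-1}$ must be $\SL(L_5)$-conjugate to $S$, and after replacing $g$ on the left by a suitable element of $\SL(L_5)$ we may assume $gSg^{-1}=S$, i.e.\ $g=r_\theta\in\SO_2(\RR)$.

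Next I would pin down $\theta$ using $T^2$. A direct computation gives
$$r_\theta T^2 r_{-\theta}=\begin{pmatrix} 1-w\sin 2\theta & w(1+\cos 2\theta) \\ -w(1-\cos 2\theta) & 1+w\sin 2\theta \end{pmatrix},$$
so its membership in $\SL_2(\mathcal{O}_5)$ forces $\alpha:=w\cos 2\theta$ and $\beta:=w\sin 2\theta$ to lie in $\mathcal{O}_5$, automatically with $\alpha^2+\beta^2=w^2=1+w$. Expanding $\alpha=a+bw$, $\beta=c+dw$ with $a,b,c,d\in\ZZ$ and comparing rational and $w$-parts gives the system
$$a^2+b^2+c^2+d^2=1,\qquad 2ab+b^2+2cd+d^2=1.$$
From the first relation exactly one of $a,b,c,d$ equals $\pm 1$ and the rest vanish; the second then selects only $b=\pm 1$ or $d=\pm 1$. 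Thus $(\alpha,\beta)\in\{(\pm w,0),(0,\pm w)\}$, forcing $2\theta\in\tfrac{\pi}{2}\ZZ$, so that up to multiplication by $-I$ we have $g\in\{I,\,S,\,r_{\pi/4},\,r_{3\pi/4}\}$.

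It remains to exclude each of these rotations by testing $gCg^{-1}$. For $g\in\{\pm I,\pm S\}$ the element $gCg^{-1}$ is $\SL(L_5)$-conjugate to $C$ (as $S\in\SL(L_5)$) and therefore lies in $\SL(L_5)$ if and only if $C$ does, contradicting the remark preceding the proposition. For $g=r_{\pm\pi/4}$ one verifies by a short direct computation that $r_{\pm\pi/4}Cr_{\mp\pi/4}$ is an element of $\SL_2(\mathcal{O}_5)$, and it remains to check that it does not in fact lie in $\SL(L_5)$. This can be done either by applying the algorithm of Appendix~\ref{sec_check_elements}, or by verifying that the parabolic fixed point $r_{\pm\pi/4}(w)\in\QQ(\sqrt 5)$ is not in the $\SL(L_5)$-orbit of $\infty$. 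This final arithmetic check is the main obstacle; everything before it is a formal consequence of the maximality of $\SL(L_5)$ and the structure of $\mathcal{O}_5$.
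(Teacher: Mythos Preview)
Your approach is essentially the same as the paper's: reduce via maximality (Corollary~\ref{cor_max_conjugated}) to a conjugating element, normalize using $S$ to a rotation, constrain the rotation using $T^2$, and then eliminate the surviving candidates by a direct membership check. Your analysis of the $T^2$ constraint is in fact cleaner than the paper's: you parametrize directly by $\alpha=w\cos 2\theta,\ \beta=w\sin 2\theta\in\mathcal{O}_5$ and solve $\alpha^2+\beta^2=w^2$ over $\ZZ$, whereas the paper works with the scaled matrix $M=\left(\begin{smallmatrix} d & -c\\ c & d\end{smallmatrix}\right)$, $c,d\in\mathcal{O}_5$, and analyzes when $c^2+d^2$ divides $2c^2$ and $2d^2$, arriving at the same finite list of rotations.

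The only substantive difference is the final elimination of the $r_{\pm\pi/4}$ cases. You propose to show $r_{\pm\pi/4}Cr_{\mp\pi/4}\notin\SL(L_5)$; the paper instead checks $MT^2M^{-1}\notin\SL(L_5)$ for $M=\left(\begin{smallmatrix}1&-1\\1&1\end{smallmatrix}\right)$ (and its sign variants), which gives the simpler parabolic $\left(\begin{smallmatrix}1-w & w\\ -w & 1+w\end{smallmatrix}\right)$ fixing $1$. Both routes ultimately reduce to the word-problem algorithm of Appendix~\ref{sec_check_elements}, so neither avoids the computational step you flag as the ``main obstacle''; the paper's choice just yields smaller entries. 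Your alternative criterion via the cusp orbit is valid as a sufficient test (if $r_{\pm\pi/4}(w)=2w-3$ is not in the $\SL(L_5)$-orbit of $\infty$ then the conjugate parabolic certainly is not in $\SL(L_5)$), but note it could in principle be inconclusive if the point \emph{were} in the orbit.
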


The proof will very explicitly make use of the arithmetic of both groups. This is of course not very satisfactory. In Section~\ref{sec_no_twist_Lyapunov} we will therefore present a much more general approach which will imply that any of the Prym Teichmüller curves \index{Teichmüller curve!Prym} is never a twisted Teichmüller curve of a Teichmüller curve in genus~$2$ and vice versa.

\begin{proof} We set $G:=\SL(L_5)$ and $H:=\SL(X_5)$. Suppose that $G$ and $H$ are commensurable. Since the Euler characteristic of $H$ is greater than the Euler characteristic of $G$ we know by the maximality that $H$ must be conjugated to a subgroup of $\SL(L_5)$. This means that there exists a matrix $M \in \SL_2(\mathbb{R})$ with 
$$MHM^{-1} < G.$$
Since $G$ and $H$ are both subgroups of $\SL_2(K)$ and since $M$ has to send cusps to cusps we must have $M \in \GL_2^+(K)$. 
Since $S \in G$ and $S \in H$ and since $S$ is the only elliptic element of order $2$ in $G$ up to conjugation, there exists $A \in G$ with $MSM^{-1} = ASA^{-1}$. Since $A \in G$ we may without loss of generality assume that $MSM^{-1}=S$. It is well-known that $M$ is of the form 
$$M= \begin{pmatrix} d & -c \\ c & d \end{pmatrix}.$$
By multiplying with the common denominator of $c$ and $d$ we may furthermore assume that $c,d \in \OD$. As we know that $T^2 \in H$ we then must have
$$MT^2M^{-1} = \begin{pmatrix} 1+\frac{2wcd}{c^2+d^2} & \frac{2wd^2}{c^2+d^2} \\ \frac{-2wc^2}{c^2+d^2} & 1 - \frac{2wcd}{c^2+d^2} \end{pmatrix} \in \SL_2(\OD).$$
In particular this means (recall that $w$ is the fundamental unit in $\mathcal{O}_5$):
$$\frac{2c^2}{c^2+d^2} \in \OD, \ \ \ \frac{2d^2}{c^2+d^2} \in \OD.$$
Now suppose that $c,d \neq 0$. The above is then equivalent to
$$\frac{2}{1+\frac{d^2}{c^2}} \in \OD, \ \ \ \frac{2}{1+\frac{c^2}{d^2}} \in \OD.$$
This implies that $1 + \frac{c^2}{d^2} = \frac{2w^m}{n}$ or equivalently $\frac{c^2}{d^2} = \frac{2w^m-n}{n}$ with $n \in \OD$ and $m \in \ZZ$. Now let $\pi$ be a prime element in $\OD$ with $\pi | 2w^m-n$ and $\pi|n$. Then $\pi|2$, which means $\pi=2$ because $2$ is an inert prime number. So $\frac{2w^m-n}{n} = \frac{w^m-k}{k}$ with $k \in \OD$. The latter fraction is completely reduced. This means that we have to distinguish two cases. \\[11pt]First suppose that $\frac{c^2}{d^2} = \frac{w^m-k}{k}$. Then $c^2 = (w^m-k)l$ and $d^2=kl$ for some $l \in \OD$. We can forget about $l$ because $l$ yields just a multiplication of $M$ by a multiple of the unit matrix which vanishes under conjugation. Therefore, $c^2 = w^m-k$ and $d^2=k$. Hence 
$$(*) \ \ \ c^2+d^2=w^m.$$ 
We now have again to distinguish two different subcases. First suppose that $m$ is even and suppose that $(c',d')$ is a solution for $(*)$. Then $(\frac{c'}{w^{m/2}},\frac{d'}{w^{m/2}})$ is a solution for $c^2+d^2=1$. Hence it suffices to look at this equation.
Setting $c=e+fw$ and $d=g+hw$ with $e,f,g,h \in \ZZ$ this yields
$$(e+fw)^2+(g+hw)^2 = 1$$
or equivalently 
$$e^2+f^2+g^2+h^2 + w (2fe + 2gh + f^2 + h^2)=1$$
In particular $e^2 + f^2 +g^2 + h^2 =1$ which means that at least three variables must be equal to zero and therefore $c=0$ or $d=0$ which is a contradiction. 
As second subcase suppose that $m=2v+1$ is odd. A solution $(c',d')$ of $(*)$ then yields a solution $(\frac{c'}{w^{v/2}},\frac{d'}{w^{v/2}})$ of $c^2+d^2=w$. By similar considerations as above there is no solution to the latter equation.
\\[11pt]Now look at the second case, namely $\frac{c^2}{d^2}$ is equal to a completely reduced fraction of the form $\frac{2w^m-n}{n}$. By the preceding arguments we must then have that $c^2=2w^m-n$ and $d^2=n$ and therefore $c^2+d^2=2w^m$. We again distinguish two different subcases. If $m$ is even it suffices to look at $c^2+d^2=2$. Using the same notation as above this yields $e^2+f^2+g^2+h^2 = 2$ which means that $2$ variables must be $0$ and the other two $\pm 1$. However, only $e= \pm 1$ and $g= \pm 1$ is possible since in all other cases we must either have that $c=0$ or $d=0$ or the non-integer part does not vanish. Finally, if $m$ is odd there then $M$ would be restricted to the following possibilities:
\begin{itemize}
\item[(i)] $c=0$: So $M=\left( \begin{smallmatrix} d & 0 \\ o & d \end{smallmatrix} \right)$ with $d \in \OD$. This is a multiple of the unit matrix and since the third element $C \in H$ does not lie in $G$, this is not possible.
\item[(ii)] $d=0$: So $M=\left( \begin{smallmatrix} 0 & -c \\ c & 0 \end{smallmatrix} \right)$ is a multiple of the matrix $S$ and thus by the same arguments as in $(i)$ not a possible candidate. 
\item[(iii)] $c,d=w^{2m}$: As we can again forget about multiplication with multiples of the unit matrix we may just look at $c,d=1$: So $M= \left( \begin{smallmatrix} 1 & -1 \\ 1 & 1  \end{smallmatrix} \right)$. Then $MT^2M^{-1} = \left( \begin{smallmatrix} 1-2w & 2w \\ -2w & 1-2w \end{smallmatrix} \right) \notin \SL(L_5).$ This is again a contradiction.
\item[(iv)] $c=-w^{2m}, d=w^{2m}$ which is equivalent to $c=-1,d=1$: So $M= \left( \begin{smallmatrix} 1 & 1 \\ -1 & 1  \end{smallmatrix} \right)$ and $MT^2M^{-1} = \left( \begin{smallmatrix} 1+2w & 2w \\ -2w & 1-2w \end{smallmatrix} \right) \notin \SL(L_5).$ This is again a contradiction.
\item[(v)] $c=-w^{2m}, d=-w^{2m}$ which is equivalent to $c=-1,d=-1$: This yields the same contradiction as in $(iii)$.
\item[(vi)] $c=w^{2m}, d=w^{2m}$ which is equivalent to $c=1,d=-1$: This yields the same contradiction as in $(iv)$.
\end{itemize}
Hence the groups $H$ and $G$ are not commensurable.
\end{proof}

In genus~$2$ there is one additional primitive Teichmüller curve for discriminant $5$, namely the one generated by the regular decagon. Its uniformizing group is given by the triangle group $\Delta(5,\infty,\infty)$. Also a twist of this Teichmüller curve will not yield to the genus~$4$ Teichmüller curve.

\begin{cor} The groups $\SL(X_5)$ and $\Delta(5,\infty,\infty)$ are not commensurable. \end{cor}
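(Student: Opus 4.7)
My plan is to combine maximality of $\Delta(5,\infty,\infty)$ with a torsion obstruction coming from the order-two element $S \in \SL(X_5)$, in parallel with the strategy of Proposition~\ref{prop_notwist_D5}.

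First I would establish that $\Delta(5,\infty,\infty)$ is a maximal Fuchsian group. The group is non-arithmetic: its trace field is $\mathbb{Q}(\sqrt{5}) \neq \mathbb{Q}$, so it cannot be commensurable to $\PSL_2(\mathbb{Z})$, hence non-arithmetic by Theorem~\ref{thm_nonarithmetic_commensurable}. By Lemma~\ref{lem_non-arithmetic_commensurator}, maximality is equivalent to $\Comm_{\PSL_2(\mathbb{R})}(\Delta(5,\infty,\infty)) = \Delta(5,\infty,\infty)$. Every element of the commensurator permutes the two cusps, so after a suitable normalization the commensurator sits inside $\PGL_2(\mathbb{Q}(\sqrt{5}))$. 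Armed with the explicit presentation of $\Delta(5,\infty,\infty)$ by an elliptic element of order 5 together with two parabolic generators, one can run a root-lemma argument (Lemma~\ref{lem_roots} and Theorem~\ref{prop_criterion_discretness}) entirely parallel to the treatment of $\SL(L_{13}^1)$ and $\SL(L_{17}^1)$ in Section~\ref{sec_max_comm}: the only possible additional elements in the commensurator are roots of the known parabolic generators or of the elliptic generator, and each such root either fails the discreteness criterion $\tr[V,W] \in [3,6)$ or produces via a short word a new parabolic element whose fixed point is not equivalent to one of the existing cusps, contradicting the structure of cusps of $\Delta(5,\infty,\infty)$.

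Granted maximality, suppose for contradiction that $\SL(X_5)$ and $\Delta(5,\infty,\infty)$ are commensurable. Since both groups are non-arithmetic, by Margulis' Theorem~\ref{thm_Margulis} their common commensurator in $\PSL_2(\mathbb{R})$ is a Fuchsian group containing both with finite index. Maximality of $\Delta(5,\infty,\infty)$ forces this commensurator to equal $\Delta(5,\infty,\infty)$ itself, so by the same reasoning as in Corollary~\ref{cor_max_conjugated} there exists $M \in \SL_2(\mathbb{R})$ with
\[
   M \, \SL(X_5) \, M^{-1} \;\subset\; \Delta(5,\infty,\infty)
\]
of finite index. In particular $M S M^{-1} \in \Delta(5,\infty,\infty)$. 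But $MSM^{-1}$ has order two, whereas $\Delta(5,\infty,\infty)$ has signature $(0;5;2)$ and therefore contains no element of order two: every torsion element is conjugate to a power of the order-$5$ generator. This contradiction proves that $\SL(X_5)$ and $\Delta(5,\infty,\infty)$ cannot be commensurable.

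The main obstacle is clearly the first step: the Leutbecher-style verification that $\Delta(5,\infty,\infty)$ is parabolic maximal and contains no unexpected elliptic roots. This is a finite check but it requires explicit computation with the generators of $\Delta(5,\infty,\infty)$ and with the arithmetic of $\mathcal{O}_5$, slightly more involved than the case of $\SL(L_{13}^1)$ because $\Delta(5,\infty,\infty)$ has two cusps rather than three and one must rule out commensurators mixing them. Once maximality is available, the torsion obstruction in the second step is essentially immediate.
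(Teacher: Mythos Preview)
Your approach has a fatal gap at the very first step: $\Delta(5,\infty,\infty)$ is \emph{not} maximal. In fact the paper's one-line proof rests precisely on the well-known inclusion $\Delta(5,\infty,\infty) \subset \Delta(2,5,\infty)$ of index~$2$ (this is a standard triangle-group inclusion, see Singerman's list). Since $\Delta(2,5,\infty) = \SL(L_5)$, the group $\Delta(5,\infty,\infty)$ is directly commensurable to $\SL(L_5)$, and by transitivity of commensurability together with Proposition~\ref{prop_notwist_D5} one concludes immediately that $\SL(X_5)$ and $\Delta(5,\infty,\infty)$ are not commensurable. No Leutbecher-style computation is needed.

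Your torsion obstruction also collapses once you feed in the correct commensurator: by Lemma~\ref{lem_commensurable_implies_subgroup} we have $\Comm_{\PSL_2(\mathbb{R})}(\Delta(5,\infty,\infty)) = \Comm_{\PSL_2(\mathbb{R})}(\SL(L_5)) = \SL(L_5) = \Delta(2,5,\infty)$, and this group \emph{does} contain the order-$2$ element $S$. So even if you had pushed through to the containment $M\,\SL(X_5)\,M^{-1} \subset \Comm(\Delta(5,\infty,\infty))$, no contradiction with the presence of $S$ would arise. The genuine obstruction is the arithmetic incompatibility established in Proposition~\ref{prop_notwist_D5}, and the corollary simply inherits it via the index-$2$ inclusion.
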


\begin{proof} This is true because $[\Delta(2,5,\infty):\Delta(5,\infty,\infty)]=2$, see e.g. \cite{Sin72}. \end{proof}

Of course there is a pattern hidden behind these calculations. In Appendix~\ref{sec_check_commens} we describe how to decide if two cofinite Fuchsian groups are commensurable when both of them contain an elliptic element. 

\subsection{The General Case} \label{sec_no_twist_Lyapunov}

In this section, we want to show that \textit{all} twisted Teichmüller curves are essentially new objects, i.e. not one of McMullen's Prym Teichmüller curves. For this we compare twisted Teichmüller curves with the Prym Teichmüller curves in genus~$3$ and $4$. In fact, we show that for any discriminant $D$ the Teichmüller curves in genus~$3$ and $4$ are not a twisted (genus~$2$) Teichmüller curve. This means that although there are infinitely many twisted Teichmüller curves we do never get any other of the low genus Teichmüller curves by this construction. This is really surprising when one recalls the large number of twisted Teichmüller curves (Theorem~\ref{thm_classificiation1}). The proof of this fact involves deep results from the theory of Lyapunov exponents. As we only consider Lyapunov exponents of Teichmüller curves here, we will omit the index of Lyapunov exponents indicating the measure. The main work of this section will consist in deriving the following result:

\begin{thm} \label{thm_notwist_general} For all discriminants $D$ the genus~$4$ Teichmüller curve is not a twist of a Teichmüller curve $C_{L,D}^\epsilon$. \end{thm}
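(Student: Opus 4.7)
The plan is to use the second Lyapunov exponent of the Kontsevich--Zorich cocycle as a commensurability-invariant numerical signature that distinguishes the curves $C_{X,D}$ from any $C_{L,D}^{\epsilon}$. Suppose, for a contradiction, that $C_{X,D}$ is a twisted Teichm\"uller curve of $C_{L,D}^{\epsilon}$, i.e.\ $C_{X,D}=C_M$ for some $M\in\GL_2^+(K)$. By Corollary~\ref{cor_twist_lyapunov}, twisting preserves the Lyapunov exponents. Hence the second Lyapunov exponent $\lambda_2(C_{X,D})$ would coincide with $\lambda_2(C_{L,D}^{\epsilon})$. But by Theorem~\ref{thm_lyapunov_M2} the latter equals $\tfrac{1}{3}$, while by Corollary~\ref{cor_lyap1/7} the former equals $\tfrac{1}{7}$. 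Since $\tfrac{1}{3}\neq \tfrac{1}{7}$, this is the desired contradiction.

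For the second half of the statement, I would argue using the shape of the universal covering map rather than Lyapunov exponents. By the construction in Chapter~\ref{chapter_prym_modular} (the genus~$4$ analogue of Theorem~\ref{thm_modular_embedding}), the Hilbert modular embedding of $C_{X,D}$ lifts to a map $\HH\to\HH\times\HH^-$ of the form $(z,\varphi(z))$, where $\varphi$ is holomorphic but \emph{not} a M\"obius transformation. A twisted diagonal, in contrast, has universal covering $z\mapsto (Mz,-M^\sigma z)$, whose second component \emph{is} a M\"obius transformation. If $C_{X,D}$ were a twisted diagonal, then pre-composing with a suitable element of $\SL_2(\OD)$ and then comparing the two descriptions of the lift would force $\varphi$ to agree with a M\"obius transformation on a non-discrete set, contradicting its known non-M\"obius character.

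I expect the step that requires care is neither of the two bullets above, both of which are essentially one-line arguments given the tools already assembled. The real work is hidden upstream, in Corollary~\ref{cor_twist_lyapunov}: one must show that replacing the orbit graph $(z,\varphi(z))$ by the twisted graph $(Mz,M^\sigma\varphi(z))$ does not alter the Lyapunov spectrum of the Kontsevich--Zorich cocycle, even though the underlying Fuchsian uniformising group changes from $\SL(L_D)$ to $\SL_M(L_D)$. The natural plan there is to realise the Kontsevich--Zorich cocycle over the geodesic flow on $T^1\HH/\SL_M(L_D)$, exploit that this cover is commensurable with $T^1\HH/(M\SL(L_D)M^{-1}\cap \SL_2(\OD))$, and invoke the fact that Lyapunov exponents are invariant under finite covers of the base of the flow. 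Once Corollary~\ref{cor_twist_lyapunov} is in hand, Theorem~\ref{thm_notwist_general} is, as the introduction already advertises, an immediate consequence.
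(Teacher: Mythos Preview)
Your argument is correct and is exactly the paper's approach: use twist-invariance of the Lyapunov spectrum (Corollary~\ref{cor_twist_lyapunov}) together with $\lambda_2=1/3$ for $C_{L,D}^\epsilon$ (Bainbridge) versus $\tilde\mu_2=1/7$ for $C_{X,D}$ (Chen--M\"oller plus Eskin--Kontsevich--Zorich). One small caveat on bookkeeping: Corollary~\ref{cor_lyap1/7} is not quite the right citation---it is stated \emph{after} Theorem~\ref{thm_notwist_general} and records inequalities for the four exponents $\mu_1,\dots,\mu_4$ of the full rank-$8$ bundle; the value $1/7$ you need is the second exponent $\tilde\mu_2$ of the rank-$4$ Prym sub-bundle (the one pulled back from $X_D$), and that identity is derived in the body of the proof itself, so you should make explicit which bundle you are comparing and why it is the Prym piece that matters. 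Your paragraph on twisted diagonals is fine but addresses the stronger version of the theorem stated in the introduction rather than the Section~\ref{sec_no_twist_Lyapunov} statement.
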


For discriminant $D=5$ we have already proven this result in Proposition~\ref{prop_notwist_D5} by a very explicit calculation. We will now prove the general version without explicit knowledge of the involved Veech groups. The analogue statement also holds for the genus~$3$ Teichmüller curves whenever they also lie on $X_D$. Of course, one might also ask whether this theorem is still true for Teichmüller curves in $\Mg$ with $g \geq 5$, if those Teichmüller curves lie on a Hilbert modular surface. There are no such examples known today.\\[11pt]
The main idea of the proof is to look at the Lyapunov exponents of the involved Teichmüller curves (see Section~\ref{sec_Kontsevich_Zorich}) and to show that these are not compatible in a way which will be made more precise on the next pages. This will then immediately imply the assertion of the theorem.\\[11pt]
For the rest of this section we fix the discriminant $D$.\\[11pt]
The purpose of the next few lines is to give another (equivalent) description of the Lyapunov exponents of a Teichmüller curve which can be generalized to arbitrary quotients $\HH/\Gamma$. We first \index{Lyapunov exponents|(}consider the case of genus~$g=2$. Let us recall the setting: in Section~\ref{sec_Teichmüller_flow} we related the Teichmüller flow of the Teichmüller curve to the geodesic flow on $T^1\mathbb{H}/\SL(L_D)$. Then $T^1\mathbb{H}/\SL(L_D)$ has Lyapunov exponents in a natural way: by applying the Torelli map, $C_{L,D}^\epsilon=\mathbb{H}/\SL(L_D)$ can be considered as lying in $\mathcal{A}_2$. Let $A=\Jac(X)$ be a point on this curve. The lattice $\Lambda$ of $A$ can be interpreted as both $H_1(X,\ZZ)$ and $H_1(A,\ZZ)$ (see e.g. \cite{BL04}, Chapters~1.3 and 11.1). \index{Jacobian} This yields the following commutative diagram

$$
\begin{xy}
  \xymatrix{ H^1 (X,\RR) \ar[rrrr]^{\sim}& & & & H^1 (A,\RR)\\
  H_1 (X,\ZZ) \ar@{^(->}[d] \ar@{^(->}[u]& = & \Lambda & = & H_1 (A,\ZZ) \ar@{^(->}[d]            \ar@{^(->}[u]\\
  H^0 (X,\Omega^1 X)^\vee \ar[rrrr]^{\sim} & & & & H^0 (A,\Omega^1 A)^\vee
 	}
\end{xy}
$$
where the upper inclusions \index{Kontsevich-Zorich cocycle} are given by tensoring with $\RR$ and dualizing. As $H_1 (A,\ZZ)$ is a discrete group, $H^1 (A,\RR)$ yields a local system over the image of $T^1\HH/\SL(L_D)$ in $\mathcal{A}_2$ (and hence a cocycle): consider the following composition of maps

$$
\begin{xy}
  \xymatrix{ T^1\HH/\SL(L_D) \ar[r]^{\ \pi} & \HH/\SL(L_D) \ar@{^(->}[r]^{\ \ f} & \mathcal{M}_2  \ar@{^(->}[r]^{\Jac} & \mathcal{A}_2 \\
 	}
\end{xy}
$$
and pull back the local system to $T^1 \mathbb{H}/\SL(L_D)$. One can then look at the corresponding Lyapunov exponents. Note that by construction the flow extension of $g_t$ to the local system coincides with the Kontsevich-Zorich cocycle. 
Now let $\Gamma < \SL(L_D)$ be a subgroup of finite index. Then the same construction as above gives also \textit{natural} Lyapunov exponents for $T^1 \mathbb{H}/\Gamma$ (if we renormalize the Poincaré measure of $\mathbb{H} / \Gamma$ to $1$). 


\begin{lem} \label{lem_fund_subgroup} If $\Gamma < \SL(L_D)$ is a subgroup of finite index, then the Lyapunov exponents  of the geodesic flow on $T^1\HH/\Gamma$ and on $T^1\HH/\SL(L_D)$ as defined above are the same. 
\end{lem}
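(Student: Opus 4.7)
The plan is to exploit the fact that the geodesic flow on $T^1\HH/\Gamma$ factors through the geodesic flow on $T^1\HH/\SL(L_D)$ via the finite covering map, and that the Kontsevich-Zorich cocycle upstairs is, by construction, the pullback of the cocycle downstairs. Consequently the exponential growth rates appearing in Oseledets' theorem are identical at every pair of corresponding points, so the only thing to check is that the notions of ``almost every'' on the two spaces are compatible.

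More precisely, first I would set up the finite covering $p\colon T^1\HH/\Gamma\to T^1\HH/\SL(L_D)$ of degree $[\SL(L_D):\Gamma]$, and observe that $p$ intertwines the two geodesic flows $g_t^\Gamma$ and $g_t^{\SL(L_D)}$. The normalized Liouville/Haar measures $\mu_\Gamma$ and $\mu_{\SL(L_D)}$ on unit tangent bundles are related by $p_*\mu_\Gamma=\mu_{\SL(L_D)}$, since each is obtained by normalizing the Liouville measure on $T^1\HH$ restricted to a fundamental domain. Both flows are ergodic (the geodesic flow on a finite-volume hyperbolic orbifold is ergodic by the Hopf argument, applied to both quotients), so in each case Lyapunov exponents are well-defined constants almost everywhere by Theorem~\ref{thm_Oseledet}.

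Next I would check that the cocycle upstairs really is the pullback. The local system $H^1(A,\RR)$ over the image of $T^1\HH/\SL(L_D)$ in $\mathcal{A}_2$ pulls back under $p$ (followed by $\pi$, the embedding into $\Mg$, and $\Jac$) to the corresponding local system over $T^1\HH/\Gamma$, because $\Gamma<\SL(L_D)$ means the map $\HH/\Gamma\to\mathcal{A}_2$ simply factors as $\HH/\Gamma\to\HH/\SL(L_D)\hookrightarrow\mathcal{A}_2$. Hence for $x\in T^1\HH/\Gamma$ and $v$ in the fiber over $x$, one has $A_\Gamma^t(x)v = A_{\SL(L_D)}^t(p(x))v$ under the canonical identification of fibers, and in particular $\|A_\Gamma^t(x)v\|=\|A_{\SL(L_D)}^t(p(x))v\|$ for all $t$.

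Finally I would conclude: since $p_*\mu_\Gamma=\mu_{\SL(L_D)}$, a full-measure subset of $T^1\HH/\SL(L_D)$ on which Oseledets applies pulls back to a full-measure subset of $T^1\HH/\Gamma$. For any $x$ in this set, the Oseledets limits $\lim_{t\to\infty}\tfrac1t\log\|A_\Gamma^t(x)v\|$ and $\lim_{t\to\infty}\tfrac1t\log\|A_{\SL(L_D)}^t(p(x))v\|$ coincide pointwise by the previous paragraph, and by ergodicity these are the (constant) Lyapunov spectra on either side. The only mild subtlety — and what I view as the one genuinely non-trivial point — is handling the orbifold/ramification locus of $p$ (elliptic fixed points of $\SL(L_D)$ and $\Gamma$): since this locus has measure zero on both sides, it is harmless, but one should note that the Hodge norm extends continuously across it so that the cocycle comparison still makes sense almost everywhere.
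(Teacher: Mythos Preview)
Your proof is correct and takes a cleaner route than the paper's. The paper argues via Poincar\'e recurrence: it fixes a fundamental domain $\mathcal{F}'$ for $\Gamma$ built from $[\SL(L_D):\Gamma]$ copies of a Dirichlet domain $\mathcal{F}$ for $\SL(L_D)$ with $\mathcal{F}\subset\mathcal{F}'$, then uses recurrence to find an unbounded sequence of times $t_i$ at which the $\Gamma$-flow has returned to $\mathcal{F}$; at these times the two cocycle matrices literally coincide, and the Lyapunov exponents are computed as limits along this subsequence. You bypass recurrence entirely by noting that the local system over $T^1\HH/\Gamma$ is the pullback under the finite covering $p$ of the one over $T^1\HH/\SL(L_D)$, so the Hodge norms $\|A^t(x)v\|$ agree for \emph{all} $t$, not just along a subsequence; then $p_*\mu_\Gamma=\mu_{\SL(L_D)}$ transports the Oseledets full-measure set. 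Your argument is more functorial and avoids the slightly delicate step of passing to a subsequence inside the Oseledets limit; the paper's version is more hands-on and keeps the fundamental-domain picture explicit, which resurfaces in the immediate corollary about twists and conjugation.
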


\begin{proof} Let $\mathcal{F}$ be a Dirichlet fundamental domain of $\SL(L_D)$. Then we can choose a (connected) fundamental domain $\mathcal{F}'$ for $\Gamma$ as $[\SL(L_D):\Gamma]$ copies of $\mathcal{F}$ such that $\mathcal{F} \subset \mathcal{F}'$. Let $A^t(x)$ be the cocycle over $g_t$ and $\lambda_1,\lambda_2$ be the Lyapunov exponents of the geodesic flow on $T^1\HH / \SL(L_D)$ (respectively $A'^t(x)$ and $\lambda_1',\lambda_2'$ for the geodesic flow on $T^1\HH / \Gamma$). By Poincaré's Recurrence Theorem (see e.g. \cite{KH97}, Theorem~4.1.19)  there exists for almost all starting points $x$ an unbounded sequence $(t_i)_{i \in \NN}$ such that the geodesic flow on $T^1\HH / \Gamma$ returns to $\mathcal{F}$. At each of the $t_i$ we have by construction that $A'^{t_i}(x)=A^{t_i}(x)$ and hence
\begin{align*} \lambda'_k &=  \lim_{t \to \infty} \frac{1}{t} \log \left\| A'^t(x) v \right\| = \lim_{i \to \infty} \frac{1}{t_i} \log \left\| A'^{t_i}(x) v \right\| \\ &= \lim_{i \to \infty} \frac{1}{t_i} \log \left\| A^{t_i}(x) v \right\| = \lambda_k. \end{align*}	
 \end{proof}


The principal significance of the lemma is the following corollary:

\begin{cor} \label{cor_twist_lyapunov} Twists do not change the Lyapunov exponents. \end{cor}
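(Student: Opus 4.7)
The plan is to chain together three comparisons of Lyapunov spectra, using the diagram of finite covers introduced right after Proposition~\ref{thm_twisted_finite_volume}:
\[
C^M(M) \longrightarrow C_{L,D}, \qquad C_M(M) \longrightarrow C_M, \qquad C_M(M) \xrightarrow{\ \sim\ } C^M(M),
\]
where the first two arrows are finite unramified covers (by Proposition~\ref{prop_finite_index_GL2K}) and the third is the biholomorphism induced by $z \mapsto Mz$ on the universal covers, which conjugates $\SL_M(L_D,M)$ to $\SL^M(L_D,M) \subset \SL(L_D)$.

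First I would extend Lemma~\ref{lem_fund_subgroup} from $\SL(L_D)$ to any of the groups appearing above: the same Poincaré recurrence argument gives that the Lyapunov spectrum of the Kontsevich--Zorich cocycle is unchanged under passage to a finite-index subgroup of the uniformising Fuchsian group, since a Dirichlet fundamental domain for the subgroup can be chosen to contain one for the larger group, and the cocycle matrices $A^{t_i}(x)$ at return times $t_i$ to the smaller domain agree on both sides. Applying this to $\SL^M(L_D,M) \subset \SL(L_D)$ shows that $C^M(M)$ and $C_{L,D}$ have equal Lyapunov exponents; applying it to $\SL_M(L_D,M) \subset \SL_M(L_D)$ shows that $C_M(M)$ and $C_M$ have equal Lyapunov exponents.

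The middle step, the conjugation by $M$, is the one requiring care and I expect it to be the only genuine obstacle. One must argue that the Kontsevich--Zorich cocycle is invariant under the biholomorphism $C_M(M) \cong C^M(M)$ induced by $M$. The point is that this isomorphism is covered by an isogeny of the corresponding families of polarised Abelian surfaces: the twist $(Mz,M^\sigma\varphi(z))$ and the original $(z,\varphi(z))$ parametrise principally polarised Abelian surfaces that differ by the action of $M \in \GL_2^+(K)$ on $X_D$, and since $M \in \GL_2^+(K)$ has entries in $K$ with finite denominators, the induced map on fibres is an isogeny. An isogeny induces a $\QQ$-linear (hence $\RR$-linear after tensoring) isomorphism of the rational cohomology $H^1(\cdot,\RR)$ which intertwines the flat structures pulled back to $T^1\HH$. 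Consequently the two cocycles are conjugate on each fibre by a locally constant matrix, and Oseledets's theorem (Theorem~\ref{thm_Oseledet}) yields identical Lyapunov spectra.

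Combining the three equalities gives
\[
\lambda_i(C_M) \;=\; \lambda_i(C_M(M)) \;=\; \lambda_i(C^M(M)) \;=\; \lambda_i(C_{L,D}),
\]
which is the claim. The same argument works verbatim for the Prym Teichmüller curves in $\mathcal{M}_3$ and $\mathcal{M}_4$, using the corresponding family of Prym varieties instead of Jacobians, so the corollary holds in all cases in which twisted Teichmüller curves arise in these notes.
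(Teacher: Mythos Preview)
Your proposal is correct and follows exactly the paper's strategy: the paper's entire proof reads ``Twisting involves only finite index subgroups and conjugation (compare Proposition~\ref{thm_twisted_finite_volume}).'' You have unpacked this into the explicit chain through $C_M(M)$ and $C^M(M)$ and supplied the isogeny argument for the conjugation step, which the paper leaves implicit. Two minor slips worth fixing: the biholomorphism is induced by $z \mapsto M^{-1}z$ rather than $z \mapsto Mz$ (so as to carry $\SL_M(L_D,M)$ to $M^{-1}\SL_M(L_D,M)M = \SL^M(L_D,M)$), and $M$ acts on $\HH \times \HH^-$, not on $X_D$ itself (cf.\ the paragraph on automorphisms of $X_D$ in Section~\ref{sec_hilbert_modular_surfaces}); neither affects the substance of your argument.
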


\begin{proof} Twisting involves only finite index subgroups and conjugation (compare Proposition~\ref{thm_twisted_finite_volume}). \end{proof}

Now let us analyze the genus~$4$ case. Consider the Prym Teichmüller curve in $W_D^4$ which has been described in Section~\ref{section_prym_teich}. The corresponding cocycle over $T^1\HH/\SL(X_D)$ has $4$ positive Lyapunov exponents since $H^1 (X,\RR)$ has dimension $8$. Let us denote these Lyapunov exponents by $\mu_1,...,\mu_4$. We now split up the Lyapunov exponents of the Teichmüller curve into two groups.\\[11pt] We embedded the genus~$4$ Teichmüller curve into $\mathcal{A}_2$ by mapping each point of the curve to its Prym variety. By definition the Prym variety is given as $H^0(X',\Omega(X')^-)^{\vee}/H_1(X,\ZZ)^{-}$. Thus a natural bundle over $X$ is given as $H^1(X,\RR)^-$ and therefore of dimension $4$. The positive Lyapunov exponents of this bundle are two of the $\mu_1,...,\mu_4$. Since the involution of the $X$-shaped billiard table is the rotation by $180^\circ$ the form $\omega=dz$ lies in $H_1(X,\ZZ)^{-}$ and hence $1=\mu_1$ is in the Lyapunov spectrum of the corresponding cocycle.\\[11pt]
So far the Lyapunov exponents of the genus~$2$ and of the genus~$4$ Teichmüller curves have been treated more or less separately. Of course, the next task is to link both of them. If the genus~$4$ Teichmüller curve was a twisted Teichmüller curve they would by Corollary~\ref{cor_twist_lyapunov} and the preceding considerations have the same Lyapunov exponents. In other words the embeddings of the genus~$2$ and of the genus~$4$ Teichmüller curves from Theorem~\ref{thm_modular_embedding} into the Hilbert modular surface uniquely determine two pairs of Lyapunov exponents and all twisted Teichmüller curves share their pair of Lyapunov exponents with the ordinary Teichmüller curve in genus~$2$. Now there is an obvious plan how to proceed: calculate the two pairs of Lyapunov exponents of the (ordinary) Teichmüller curves of genus~$2$ and $4$ and show that they are not equal.\\
Let $\lambda_1,\lambda_2$ be the Lyapunov exponents of the genus~$2$ curve and $\tilde{\mu}_1,\tilde{\mu}_2$ be the Lyapunov exponents of the genus~$4$ curve. We know that $\lambda_1=\tilde{\mu}_1=1$. At first we want to calculate $\lambda_2$ now. The Lyapunov exponents are in some cases known to depend only on the connected component of the stratum (see Theorem~\ref{thm_components_of_strata}) which the Teichmüller curve lies in. For Teichmüller curves in genus~$2$, the following theorem precisely calculates $\lambda_2$:












\begin{thm} \label{thm_lyapunov_M2} (\textbf{Bainbridge}, \cite{Bai07}, Theorem~15.1) 
If $\mu$ is any finite, ergodic, $\SL_2(\mathbb{R})$-invariant measure on $\Omega\mathcal{M}^{(1)}_2$ then
$$\lambda_2(\mu) = \left\{ \begin{matrix} 1/3 & \textit{if $\mu$ is supported on $\Omega\mathcal{M}^{(1)}_2(2)$} \\ 1/2 & \textit{if $\mu$ is supported on $\Omega\mathcal{M}^{(1)}_2(1,1)$} \end{matrix} \right. .$$
\end{thm}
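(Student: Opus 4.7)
The plan is to compute $\lambda_2(\mu)$ by relating it to degrees of line bundles arising from a splitting of the Hodge bundle. I would first invoke McMullen's classification of $\SL_2(\RR)$-invariant ergodic probability measures on $\Omega\mathcal{M}_2^{(1)}$, by which every such $\mu$ is either the Masur--Veech measure on a connected component of a stratum or is supported on the $\SL_2(\RR)$-orbit closure of an eigenform locus (in particular, of a Veech surface). Since the sum $\lambda_1 + \lambda_2$ depends lower-semicontinuously on $\mu$ and eigenform loci are dense in each stratum, it suffices to establish the formula on a dense family of such measures, most conveniently on Teichmüller curves generated by Veech surfaces.

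Given such a Teichmüller curve $f \colon C \to \mathcal{M}_2$, the pulled-back Hodge bundle $f^{*}\mathbb{H}^1_{\RR}$ splits as a direct sum $\mathbb{V}_1 \oplus \mathbb{V}_2$ of two rank-$2$ local systems under the action of the real quadratic order $\OD$ by real multiplication on the Jacobians (Theorem~\ref{thm_mcm_embedding}). This induces a splitting $\mathcal{L}_1 \oplus \mathcal{L}_2$ of the Hodge filtration into holomorphic line bundles on $C$. A theorem of Kontsevich identifies
\begin{equation*}
\lambda_i \;=\; \frac{2 \deg \overline{\mathcal{L}}_i}{2g(\overline{C}) - 2 + s},
\end{equation*}
where $\overline{C}$ is the smooth compactification of $C$, $s$ is the number of cusps and $\overline{\mathcal{L}}_i$ is the Deligne canonical extension of $\mathcal{L}_i$ across the cusps. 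The identity $\lambda_1 = 1$ reflects that $\mathcal{L}_1$ is tautologically the subbundle generated by the Abelian differential $\omega$, and its degree can be computed via Gauss--Bonnet applied to the Kobayashi metric on $C$.

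The core of the argument is to compute $\deg \overline{\mathcal{L}}_2$ as a sum of local contributions at the cusps of $\overline{C}$. At each cusp, the generating flat surface degenerates along a periodic direction into a union of cylinders that are pinched under the Teichmüller flow. The behaviour of a section of $\mathcal{L}_2$ near such a cusp is governed by the Galois conjugate of the cylinder moduli, which are constrained by the fact that $\omega$ lies in an $\OD$-eigenform locus. A careful local analysis should show that the contribution at a cusp depends only on the stratum: for a cusp in $\Omega\mathcal{M}_2(2)$, where the Abelian differential has a single zero of order $2$, the cylinder decomposition consists of two cylinders whose relative widths are pinned down by the quadratic order, producing exactly the contribution yielding $\lambda_2 = 1/3$; for a cusp in $\Omega\mathcal{M}_2(1,1)$, the additional freedom of separating the two simple zeros alters the pinching geometry and forces the contribution $\lambda_2 = 1/2$.

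The main obstacle is the boundary analysis: one must extend $\mathcal{L}_2$ across the cusps of $\overline{C}$ and match local contributions to the combinatorial data of the cylinder decomposition, while ruling out spurious contributions from orbifold points of $C$. This can be handled by realising $C$ as a curve inside the Hilbert modular surface $X_D$ (Theorem~\ref{thm_mcm_embedding}), pulling back Mumford's canonical extension of the Hodge bundle on a toroidal compactification of $X_D$, and interpreting the local contributions as intersection numbers of $\overline{C}$ with boundary divisors. Once these intersection numbers are computed in closed form, one obtains $\lambda_2 = 1/3$ or $1/2$ uniformly over all Teichmüller curves in the respective stratum, and the value is propagated to every $\SL_2(\RR)$-invariant ergodic measure on the stratum by the density and semicontinuity already noted.
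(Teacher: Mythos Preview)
The paper does not prove this theorem; it is quoted verbatim from Bainbridge's work \cite{Bai07} with the attribution ``(\textbf{Bainbridge}, \cite{Bai07}, Theorem~15.1)'' and no argument is supplied. So there is no proof in the paper to compare your proposal against.

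That said, let me comment on your sketch, since you are effectively proposing a reconstruction of Bainbridge's argument. The backbone --- splitting the Hodge bundle over a Teichm\"uller curve via real multiplication, invoking Kontsevich's formula to express $\lambda_i$ as a normalized degree of a line bundle, and computing that degree through boundary contributions on a compactification --- is indeed the shape of Bainbridge's approach. Where your outline goes wrong is in the reduction step. You propose to establish the formula only on Teichm\"uller curves and then propagate it to arbitrary ergodic $\SL_2(\RR)$-invariant measures by ``density and semicontinuity''. This does not work: lower semicontinuity of $\lambda_1+\lambda_2$ in the measure (even if you had it) yields only an inequality on limits, not equality, so knowing the value on a dense family of measures does not pin down the value on the Masur--Veech measure or on the other measures in McMullen's classification. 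Bainbridge does not argue this way; he treats each class of measure in McMullen's classification directly, and in particular his computation for the full strata does not pass through a limit of Teichm\"uller curves.

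A second soft spot is the local cusp analysis: the assertion that the contribution at a cusp ``depends only on the stratum'' and produces exactly $1/3$ or $1/2$ is the entire content of the theorem, and your sketch gives no mechanism for why those specific numbers arise. In Bainbridge's work this comes from an explicit identification of the second eigenform bundle with a concrete line bundle on the Hilbert modular surface whose degree he can compute, not from an ad hoc cusp-by-cusp count.
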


As the Teichmüller curve in genus~$2$ lies in the stratum $\Omega\mathcal{M}^{(1)}_2(2)$ we therefore know that $\lambda_2=1/3$. For the sum of Lyapunov exponents of this Teichmüller curve we thus have $\sum_{i=1}^{2} \lambda_i=4/3$.\\[11pt]
Now we come to the more complicated case of the genus~$4$ Teichmüller curve and compute $\tilde{\mu}_2$. 
It was already pointed out that these Teichmüller curves lie in $\Omega\mathcal{M}_4(6)$. Another calculation shows that the Teichmüller curves in genus~$4$ in fact lie in the connected component $\Omega\mathcal{M}_4(6)^{even}$.\footnote{Note in the following that our proof would also work if the Teichmüller curves would lie in any other of the connected components.} Recently  D. Chen and M. Möller proved in \cite{CM11} that the sum of the Lyapunov exponents of Teichmüller curves depends in many cases only on the connected component which the Teichmüller curve lies in. The result for the stratum $\Omega\mathcal{M}_4(6)$ is the following: \index{spin invariant}

\begin{thm} \textbf{(Non-varying sum of Lyapunov exponents)} (\textbf{Chen, Möller}, \cite{CM11}) The sum of Lyapunov exponents of a Teichmüller curve in $\Omega\mathcal{M}_4(6)$ only depends on the connected component which the Teichmüller curve lies in. More precisely:
\begin{align*}
\Omega\mathcal{M}_4(6)^{even}: &\quad \sum_{i=1}^{4} \mu_i = 14/7 \\
\Omega\mathcal{M}_4(6)^{odd}: &\quad \sum_{i=1}^{4} \mu_i = 13/7\\
\Omega\mathcal{M}_4(6)^{hyp}: &\quad \sum_{i=1}^{4} \mu_i = 15/7
\end{align*}
			
\end{thm}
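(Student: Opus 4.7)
The plan is to reduce the statement to an intersection-theoretic computation on $\overline{\mathcal{M}}_4$ using the Eskin--Kontsevich--Zorich formula, and then to use effective divisor arguments on the moduli space to pin down the slope. Concretely, the starting point is the formula
\begin{equation*}
\sum_{i=1}^g \lambda_i \;=\; \frac{1}{12}\sum_{j}\frac{m_j(m_j+2)}{m_j+1} \;+\; \frac{\kappa(C)}{|C|},
\end{equation*}
where $|C|$ is the hyperbolic area of the Teichm\"uller curve $C$ and $\kappa(C)$ collects the contributions of the cusps. In our situation $g=4$ and the stratum is $\Omega\mathcal{M}_4(6)$, so the first (combinatorial) summand is $\frac{1}{12}\cdot\frac{6\cdot 8}{7}=\frac{4}{7}$, and the task becomes to show that the cusp contribution $\kappa(C)/|C|$ takes a single value on each connected component.

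First I would rewrite the ratio $\kappa(C)/|C|$ as a slope on the compactification $\overline{C}$ of the Teichm\"uller curve. Using that the generating differential gives a section of the Hodge bundle with a zero of order $6$ at a marked point $p$, I pass from $C$ to its lift $\widetilde{C}\subset\overline{\mathcal{M}}_{4,1}$ that remembers the zero, and interpret the sum of Lyapunov exponents as
\begin{equation*}
\sum_{i=1}^4 \mu_i \;=\; \frac{\lambda\cdot \widetilde{C}}{\tfrac{1}{12}\,\delta\cdot \widetilde{C}},
\end{equation*}
modulo normalisation by the orbifold Euler characteristic, where $\lambda$ is the Hodge class and $\delta$ is the total boundary. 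The second step is to exploit the relation $\mathcal{O}(6p)\cong \omega_X$ on the generic fibre: this places every point of $\widetilde{C}$ on a specific Weierstrass-type locus in $\overline{\mathcal{M}}_{4,1}$, which is classically cut out by a Brill--Noether-type effective divisor $\mathcal{D}$ whose class in terms of $\lambda$, $\psi_p$, and boundary components is explicitly known.

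The third and decisive step is then to combine two such effective divisor classes (one supplying an upper bound, one a lower bound for $\lambda\cdot\widetilde C / \delta\cdot\widetilde C$) and check that for each of the three components $\Omega\mathcal{M}_4(6)^{\mathrm{hyp}}$, $\Omega\mathcal{M}_4(6)^{\mathrm{even}}$, $\Omega\mathcal{M}_4(6)^{\mathrm{odd}}$ the two bounds coincide. This works because the Teichm\"uller curve, being irreducible and not contained in the support of these divisors (as can be checked at a carefully chosen cusp where the stable limit is explicitly known), yields a non-negative intersection; equality of the two bounds then forces $\sum\mu_i$ to take the single value predicted by the slope of $\mathcal{D}$. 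Plugging in the classes, one recovers the values $15/7$, $14/7$, $13/7$ for the hyperelliptic, even, and odd components respectively.

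The main obstacle is step three: producing the pair of effective divisors and computing their restriction to $\widetilde C$. The hyperelliptic component is easy since there one can use the hyperelliptic divisor directly, but for the even and odd spin components one has to find divisors arising from special linear series on genus $4$ curves (e.g.\ variants of the Gieseker--Petri divisor or the $\theta$-null divisor) whose classes are compatible with the stratum structure. Verifying that $\widetilde C$ is not contained in these divisors requires an explicit analysis of the stable limit of a square-tiled (or otherwise explicit) surface in each spin component, which is the technical heart of the Chen--M\"oller proof.
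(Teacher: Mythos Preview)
The paper does not contain a proof of this statement: it is quoted verbatim as an external result of Chen and M\"oller \cite{CM11} and used as a black box in the computation of $\tilde\mu_2$. So there is no ``paper's own proof'' to compare your proposal against.

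That said, your plan is a fair high-level sketch of the actual Chen--M\"oller argument: start from the Eskin--Kontsevich--Zorich formula (your combinatorial term $\tfrac{4}{7}$ is correct for the stratum $(6)$), reinterpret the remaining cusp contribution as an intersection-theoretic slope on $\overline{\mathcal M}_{g,n}$, and then pin down that slope by intersecting the lifted Teichm\"uller curve with suitable effective Brill--Noether/Gieseker--Petri/theta-null type divisors, checking non-containment via explicit degenerations. One small correction to your presentation: in the Chen--M\"oller scheme one does not produce matching upper and lower bounds from two divisors; rather, one shows directly that the Teichm\"uller curve is disjoint from (the interior part of) a single well-chosen effective divisor, which gives a linear relation among $\lambda\cdot\widetilde C$, $\psi\cdot\widetilde C$ and the individual boundary intersections, and this relation together with the stratum-specific relation between $\psi$ and the boundary already determines the slope exactly. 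The ``upper/lower bound'' picture is how one proves slope inequalities for generic curves, not how the equality is obtained here.
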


Indeed, D. Chen and M. Möller showed corresponding results for many different connected components of strata of small genus. Their result on its own does not give us enough information to calculate $\tilde{\mu}_2$: for $\Omega\mathcal{M}_4(6)^{even}$ we only get the trivial inequality $0 \leq \tilde{\mu_2} \leq 1$. We therefore have to combine this equality with another recent result by A. Eskin, M. Kontsevich and A. Zorich in \cite{EKZ11}. This will then do the job. \paragraph{Canonical double covering.} Before we can state their theorem we have to explain a general principle which is often called the canonical double covering: \index{canonical double cover} let $(C,q)$ be a pair consisting of a Riemann surface $C$ of genus~$g$ and a meromorphic quadratic differential $q$ with at most simple poles. Let $d_j$ be the orders of the zeroes of $q$ (respectively $d_j=-1$ for the poles). Then $\sum_j d_j = 4g-4$. One can then canonically associate to $(C,q)$ a double cover $p: \tilde{C} \to C$ such that $p^*q=(\omega)^2$ where $\omega$ is an Abelian differential on the Riemann surface $\tilde{C}$ of genus~$\tilde{g}$. One easily shows that this construction associates to each even $d_j>0$ a pair of zeroes of $\omega$ of orders $(d_j/2,d_j/2)$, to each odd $d_j>0$ a zero of order $d_j+1$ and nothing to simple poles (for details see \cite{Lan04} and \cite{KZ03}). Hence one gets a map from the stratum $\mathcal{Q}(d_1,...,d_n)$ \label{glo_stratq} of meromorphic quadratic differentials to the stratum $\Omega\Mg(k_1,...,k_m)$ of Abelian differentials. This map is in fact an immersion (\cite{KZ03}, Lemma~1).\\[11pt]
Now consider this double cover $p: \tilde{C} \to C$. There exists a natural involution $\sigma: \tilde{C} \to \tilde{C}$ interchanging the sheets of the cover. This decomposes $H^1(\tilde{C},\RR)$ into a direct of sum of subspaces, namely the invariant part $H^1(\tilde{C},\RR)^+$ (i.e. the $+1$-eigenspace of $\sigma^*$) and the anti-invariant part $H^1(\tilde{C},\RR)^-$ (i.e. the $-1$-eigenspace of $\sigma^*$) with respect to the induced involution $\sigma^*$ on the cohomology. Thus we get two natural vector bundles over $\mathcal{Q}(d_1,...,d_n)$, which we denote by $H^{1+}$ and $H^{1-}$. Evidently $H^{1+}$ is canonically isomorphic to the standard Hodge bundle $H^1_\RR$ over $\mathcal{Q}(d_1,...,d_n)$  as its fiber over a point $C$ is $H^1(C,\RR)$: it corresponds to cohomology classes pulled back from $C$ to $\tilde{C}$ via the projection. Now set $g_{eff}=\tilde{g}-g$. Hence $\dim H^{1-} = \dim H^1(\tilde{C},\RR)^- = \dim H^{1+} = \dim H^1(\tilde{C},\RR)^+ = 2g_{eff}$. Let $\lambda^-_1 \geq ... \geq \lambda^-_{g_{eff}}$ denote the top $g_{eff}$ Lyapunov exponents corresponding to the action of the Teichmüller geodesic flow and the vector bundle $H^{1-}$. Then one has:

\begin{thm} (\textbf{Eskin, Kontsevich, Zorich}, \cite{EKZ11})
Consider a stratum $\mathcal{Q}_1(d_1,...d_n)$ in the moduli space of quadratic differentials with at most simple poles, where $d_1+...+d_n=4g-4$. Let $\mathcal{S}$ be any regular $\PSL_2(\mathbb{R})$ suborbifold of $\mathcal{Q}_1(d_1,...d_n)$. Let $\lambda_1^+ \geq...\geq \lambda_g^+$ denote the Lyapunov exponents of the invariant subbundle $H^{1+}$ over $\mathcal{S}$ along the Teichmüller flow \index{Teichmüller flow} and let $\lambda_1^{-} \geq ... \geq \lambda_{g_{eff}}^-$ denote the Lyapunov exponents of the anti-invariant subbundle $H^{1-}$ over $\mathcal{S}$ along the Teichmüller flow. Then the Lyapunov exponents satisfy the following equation $$(\lambda_1^{-}+...+\lambda_{g_{eff}}^-) - (\lambda_1^++...+\lambda_g^+) = \frac{1}{4} \cdot \sum_{j \ such \ that \ d_j \ is \ odd} \frac{1}{d_j+2}.$$
The leading Lyapunov exponent $\mu_1^-$ is equal to one.
\end{thm}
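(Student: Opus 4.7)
The plan is to follow the approach of expressing sums of Lyapunov exponents as normalized degrees of Hodge-type bundles on a suitable compactification of $\mathcal{S}$, and then computing the difference via intersection theory, isolating the boundary contributions coming from the odd-order singularities. I first recall (or invoke) the Kontsevich–Forni formula, which identifies the sum $\lambda_1^+ + \cdots + \lambda_g^+$ with an integral of the trace of the curvature of the Hodge bundle $H^{1+} \supset H^{1,0}_+$ equipped with the Hodge norm, and analogously for the anti-invariant part. Concretely, both sums can be rewritten as ratios of intersection numbers of the form $\frac{2\,\deg \mathcal{H}^{1,0}_{\pm}}{\mathrm{vol}(\mathcal{S})}$, where $\mathcal{H}^{1,0}_{\pm}$ denotes the Deligne extension of the respective Hodge sub-bundle to a smooth compactification $\overline{\mathcal{S}}$ obtained from the Teichmüller disk fibration over $\mathcal{S}$.

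Next I would construct the canonical double cover $p: \tilde{C} \to C$ explicitly in a family over $\mathcal{S}$, giving a universal cover $\tilde{\mathcal{C}} \to \mathcal{C} \to \mathcal{S}$ together with the involution $\sigma$. The key geometric input is the ramification behavior: $p$ ramifies exactly at zeros of $q$ of odd order (and at the simple poles). This dictates the structure of $p_*\omega_{\tilde{\mathcal{C}}/\mathcal{S}}$ as a direct sum of line bundles on $\mathcal{C}$, and hence also how $H^{1,0}_\pm$ decompose relative to the Hodge bundle of $\mathcal{C}$. The plan is then to compute $\deg \mathcal{H}^{1,0}_- - \deg \mathcal{H}^{1,0}_+$ on $\overline{\mathcal{S}}$ via Grothendieck–Riemann–Roch (or the analytic Riemann–Roch / Mumford formula) applied to the family of double covers, which expresses the difference in terms of the normal bundle to the branch divisor and a sum of local contributions indexed by the singularities of $q$.

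The main technical obstacle is handling the boundary: one needs a sufficiently careful analytic compactification of $\mathcal{S}$ together with a controlled extension of the Hodge bundle, so that the formula of Kontsevich-Forni remains valid with boundary correction terms, and then one must compute these local contributions explicitly at each stratum boundary corresponding to the collision of zeros or to simple poles. The precise local computation, carried out for instance via the Eskin–Kontsevich–Zorich analysis of the Siegel–Veech constants at the cusps of $\overline{\mathcal{S}}$, yields exactly the weight $\frac{1}{4(d_j+2)}$ for each odd-order singularity $d_j$; the even-order zeros contribute zero since then the cover is unramified locally and the two Hodge bundles are locally isomorphic. Summing these contributions over $j$ gives the claimed formula. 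The assertion $\mu_1^- = 1$ is then a direct consequence of the fact that the tautological form $\omega$ on the cover is anti-invariant under $\sigma$, so the class $[\omega]$ lies in $H^{1-}$ and generates the top Lyapunov direction, exactly as in the discussion preceding the theorem.
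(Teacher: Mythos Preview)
This theorem is not proved in the paper; it is cited from \cite{EKZ11} (Eskin, Kontsevich, Zorich) and used as a black box to compute the second Lyapunov exponent of the genus~$4$ Prym Teichm\"uller curve. There is therefore no proof in the paper to compare your proposal against.

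Your sketch is broadly aligned with the actual strategy of \cite{EKZ11}: the Kontsevich--Forni formula expressing sums of Lyapunov exponents as normalized degrees of Hodge bundles, the canonical double cover construction, and the local analysis at the odd-order singularities are all genuine ingredients. That said, your outline elides the substantial analytic core of the EKZ paper --- the control of the Hodge norm near the boundary, the translation between Siegel--Veech constants and the boundary intersection numbers, and the regularity hypothesis on $\mathcal{S}$ that makes the integration-by-parts argument go through --- so while the architecture is right, what you have written is a plan rather than a proof. For the purposes of this paper, however, no proof is expected: the result is simply quoted and applied.
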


We can apply this theorem to the case of Teichmüller curves of genus~$4$ as follows: the genus~$4$ Teichmüller curve is the projection of an $\SL_2(\RR)$-orbit of a flat surface $(X,\omega)$ (the X-shaped billiard table, see Section~\ref{section_prym_teich}) to $\mathcal{M}_4$. Let $\rho$ be the involution of $X$. It is a well-known fact that there exists a quadratic differential $q$ on $Y:=X/\rho$ such that $p^*q=\omega^2$, i.e. $X$ is the canonical double cover of $Y$. Therefore, $(Y,q)$ lies in $\mathcal{Q}_2(-5,1)$. Note furthermore that the bundle over the Prym variety corresponds to $H^{1-}$. In our notation the theorem hence gives:
$$2 \sum_i \tilde{\mu}_i - \sum_i \mu_i = \frac{1}{4} (1 + \frac{1}{7}) = \frac{2}{7}.$$ 
Combining this formula with the formula of Chen and Möller we finally get 
$$\sum_i \tilde{\mu}_i = (2 + \frac{2}{7})/2 = \frac{8}{7}.$$ Thus the second Lyapunov of the genus~$4$ Teichmüller curve is $\tilde{\mu}_2=\frac{1}{7}$. We conclude that the genus~$4$ Teichmüller curve is different from any of the twisted Teichmüller curves of  the genus~$2$ Teichmüller curve. This finishes the proof of Theorem~\ref{thm_notwist_general}.\\[11pt]
Although the types of the polarizations of the genus~$3$ Teichmüller curve and of the genus~$2$ Teichmüller are different, also the genus~$3$ Teichmüller curve might lie on $X_D$. This happens in the case, when the Hilbert modular surface parameterizing the corresponding Abelian surfaces is given by $\HH \times \HH^-/\SL(\OD\oplus\mathfrak{b}^2)$ for some fractional ideal $\mathfrak{a}$ since this Hilbert modular surface is then isomorphic to $X_D$ (compare \cite{Zag81}, \cite{vdG88}). We can then use the same methods as above to show:
\begin{cor} \label{cor_lyap1/5} 
\begin{itemize} 
\item[(i)] For all discriminants $D$ the genus~$3$ Teichmüller curve is not a twist of a  Teichmüller curve $C_{L,D}^\epsilon$.
\item[(ii)] The second Lyapunov exponent of the genus $3$ Teichmüller curve is equal to $\frac{1}{5}$.
\end{itemize}
\end{cor}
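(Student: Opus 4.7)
My plan is to imitate the argument used for Theorem~\ref{thm_notwist_general}, now applied to the genus~$3$ Prym Teichm\"uller curve $C_{S,D}$. The two claims will be proven together: I will first establish the value $\tilde{\mu}_2 = 1/5$ (claim~(ii)); claim~(i) will then follow immediately by combining this with Theorem~\ref{thm_lyapunov_M2} and Corollary~\ref{cor_twist_lyapunov}, exactly as in the genus~$4$ case.

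First I would pin down the stratum data. The $S$-shaped polygon $S_n$ generates a flat surface $(X,\omega)$ of genus~$3$ with a single zero of order~$4$, hence a point in $\Omega\mathcal{M}_3(4)$. Since the Prym involution $\rho$ is a $180^\circ$ rotation, Riemann--Hurwitz for $p: X \to Y := X/\rho$ with $g(Y) = g(X) - \dim_\mathbb{C}\mathrm{Prym} = 3 - 2 = 1$ forces $p$ to have exactly~$4$ branch points on $X$. Among these the unique zero of $\omega$ is fixed by $\rho$; the other three fixed points are regular points of $\omega$. A local computation of the canonical double cover construction (as recalled in Section~\ref{sec_no_twist_Lyapunov}) then shows that $(Y,q)$ with $p^*q = \omega^2$ has a single zero of order~$3$ together with three simple poles, i.e.\ $(Y,q) \in \mathcal{Q}_1(3,-1,-1,-1)$, consistently with $3 - 1 - 1 - 1 = 0 = 4g(Y) - 4$. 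One also checks that $(X,\omega)$ lies in the non-hyperelliptic component $\Omega\mathcal{M}_3(4)^{odd}$ of the stratum, because the hyperelliptic involution would have $2g+2 = 8$ fixed points, whereas $\rho$ has only~$4$.

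Next I would apply the Eskin--Kontsevich--Zorich formula to $(Y,q) \in \mathcal{Q}_1(3,-1,-1,-1)$. With $g_{\mathrm{eff}} = g(X) - g(Y) = 2$ one obtains
\[
(\lambda_1^- + \lambda_2^-) - \lambda_1^+ \;=\; \tfrac{1}{4}\!\left(\tfrac{1}{5} + 1 + 1 + 1\right) \;=\; \tfrac{4}{5},
\]
and the splitting $H^1(X,\mathbb{R}) = H^{1+} \oplus H^{1-}$ matches $\sum \mu_i = \sum \lambda_i^+ + \sum \lambda_i^-$, where $\mu_1,\mu_2,\mu_3$ are the positive Kontsevich--Zorich exponents of the Teichm\"uller curve $C_{S,D}$ in $\Omega\mathcal{M}_3(4)^{odd}$. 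Inserting the Chen--M\"oller non-varying value for this component, namely $\sum_{i=1}^3 \mu_i = 8/5$, and adding the two equations yields $2 \sum \lambda_i^- = 8/5 + 4/5 = 12/5$, hence $\lambda_1^- + \lambda_2^- = 6/5$. Since $\omega$ is anti-invariant, $\lambda_1^- = \tilde{\mu}_1 = 1$ (the tautological exponent), so $\tilde{\mu}_2 = 1/5$, proving (ii).

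For part~(i), suppose for contradiction that $C_{S,D}$ is a twist of some $C_{L,D}^\epsilon$. By Corollary~\ref{cor_twist_lyapunov} the Lyapunov exponents on the anti-invariant Hodge bundle are then preserved, so the pair $(1, \tilde{\mu}_2)$ would have to coincide with the corresponding pair for $C_{L,D}^\epsilon$. But $C_{L,D}^\epsilon$ sits in $\Omega\mathcal{M}_2(2)$, and Theorem~\ref{thm_lyapunov_M2} gives second exponent equal to $1/3$, not $1/5$. This is the desired contradiction. The main obstacle in carrying out the plan is the verification that $(X,\omega)$ really lies in the component $\Omega\mathcal{M}_3(4)^{odd}$ and that Chen--M\"oller's non-varying result applies to give exactly $\sum \mu_i = 8/5$ for this component; once these inputs are in place, the rest is a matter of plugging numbers into EKZ as in the genus~$4$ proof.
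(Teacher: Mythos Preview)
Your proposal is correct and follows essentially the same approach as the paper. The paper's own proof is in fact just the single sentence ``Similarly as above, it follows from \cite{CM11} and \cite{EKZ11} that the second Lyapunov exponent of the genus~$3$ Teichm\"uller curve is equal to $\frac{1}{5}$,'' so you have spelled out precisely the details that the paper omits: identifying the quotient stratum $\mathcal{Q}_1(3,-1,-1,-1)$, computing the EKZ correction term $\tfrac{1}{4}(\tfrac{1}{5}+1+1+1)=\tfrac{4}{5}$, and combining it with the Chen--M\"oller value $\sum\mu_i=8/5$ for $\Omega\mathcal{M}_3(4)^{odd}$.

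One small remark on the obstacle you flag: your argument that $\rho$ has only $4$ fixed points shows that $\rho$ is not the hyperelliptic involution, but strictly speaking does not by itself rule out that $X$ carries some \emph{other} hyperelliptic involution. The paper does not address this either (in the genus~$4$ case it simply asserts the component and even remarks in a footnote that the argument for (i) would go through regardless of component). Indeed, the same robustness holds here: the hyperelliptic component $\Omega\mathcal{M}_3(4)^{hyp}$ would yield $\tilde{\mu}_2=3/10$, which is still distinct from $1/3$, so part~(i) is insensitive to the component. Only the exact value $1/5$ in part~(ii) genuinely requires pinning down the odd component.
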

\begin{proof} Similarly as above, it follows from \cite{CM11} and \cite{EKZ11} that the second Lyapunov exponent of the genus $3$ Teichmüller curve is equal to $\frac{1}{5}$. By the same argument the genus~$4$ Teichmüller curve is then also not a twisted genus~$3$ Teichmüller curve for all discriminants $D$ (and vice versa). \end{proof}
As a consequence from the calculations we get the following inequalities for the Lyapunov exponents of the genus~$4$ Teichmüller curve.
\begin{cor} For the Lyapunov exponents of the Prym Teichmüller curves in genus $4$ the following inequalities hold:  \label{cor_lyap1/7} $3/7 \leq \mu_2 \leq 6/7$, $1/7 \leq \mu_3 < 3/7$, $0 < \mu_4 \leq 1/7$ and $\mu_3$ or $\mu_4$ is equal to $1/7$. \end{cor}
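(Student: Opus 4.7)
The plan is to exploit the orthogonal decomposition of the Hodge bundle into the Prym and anti-Prym parts, together with the two global identities already established in the proof of Theorem~\ref{thm_notwist_general}: the nonvarying sum $\mu_1+\mu_2+\mu_3+\mu_4 = 14/7 = 2$ coming from \cite{CM11} for the component $\Omega\mathcal{M}_4(6)^{\mathrm{even}}$, and the Prym contribution $\tilde\mu_1+\tilde\mu_2 = 8/7$ with $\tilde\mu_1=1$ and $\tilde\mu_2=1/7$ coming from \cite{EKZ11}. Since the bundle $H^1(X,\RR)$ splits equivariantly as $H^{1+}\oplus H^{1-}$ and the Teichm\"uller flow preserves this splitting, the four Lyapunov exponents of the full cocycle are the union of the two Prym exponents $\{\tilde\mu_1,\tilde\mu_2\}=\{1,1/7\}$ and the two exponents $\{\nu_1,\nu_2\}$ of $H^{1+}$, with $\nu_1\geq\nu_2\geq 0$ and $\nu_1+\nu_2 = 2-8/7 = 6/7$.

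From here the corollary becomes a short book-keeping exercise. First, since $\mu_1=\tilde\mu_1=1$, the value $1/7=\tilde\mu_2$ must appear among $\mu_2,\mu_3,\mu_4$. If $\mu_2=1/7$ then the ordering forces $\mu_2+\mu_3+\mu_4\leq 3/7$, contradicting $\mu_2+\mu_3+\mu_4 = 1$; hence $\mu_2>1/7$, so $\mu_3$ or $\mu_4$ equals $1/7$. Next I would split according to whether $\nu_2\geq 1/7$ or $\nu_2\leq 1/7$. In the first case the ordered spectrum is $(1,\nu_1,\nu_2,1/7)$, and the constraints $\nu_1\geq\nu_2\geq 1/7$ with $\nu_1+\nu_2=6/7$ give $\nu_1\in[3/7,5/7]$, $\nu_2\in[1/7,3/7]$, so $\mu_2\in[3/7,5/7]$, $\mu_3\in[1/7,3/7]$, $\mu_4=1/7$. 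In the second case the ordered spectrum is $(1,\nu_1,1/7,\nu_2)$, and $\nu_1\geq 1/7\geq\nu_2\geq 0$ with $\nu_1+\nu_2=6/7$ yield $\nu_1\in[5/7,6/7]$, $\nu_2\in[0,1/7]$, so $\mu_2\in[5/7,6/7]$, $\mu_3=1/7$, $\mu_4\in[0,1/7]$. Taking the union of the two cases produces exactly the stated ranges $3/7\leq\mu_2\leq 6/7$, $1/7\leq\mu_3\leq 3/7$, $0\leq\mu_4\leq 1/7$, together with the dichotomy that $1/7$ is attained by $\mu_3$ or by $\mu_4$.

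The one nontrivial point is the \emph{strict} upper bound $\mu_3<3/7$ (and implicitly $\mu_4>0$, which forces $\mu_4>0$ from the Avila--Viana/Forni-type nondegeneracy of the KZ cocycle on a Prym Teichm\"uller curve). Equality $\mu_3=3/7$ in the analysis above forces $\nu_1=\nu_2=3/7$, i.e.\ a coincidence of the two positive exponents of $H^{1+}$; this is the real obstacle, since for general $\SL_2(\RR)$-invariant measures simplicity of the Kontsevich--Zorich spectrum on a sub-bundle is a deep question. I expect to rule it out by combining the fact that $H^{1+}$ pulls back from the genus-two quotient $Y=X/\rho$ (so its top exponent must be strictly less than $1$, giving $\nu_1<1$ but not yet the needed gap) with a direct estimate using Bainbridge's methods from \cite{Bai07} applied to the quadratic differential $(Y,q)\in\mathcal{Q}_2(-5,1)$, ruling out the coincidence $\nu_1=\nu_2$. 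If this simplicity step were to fail one would only lose the strictness and still retain the weaker bound $\mu_3\leq 3/7$, which is all that the rest of the paper uses in practice.
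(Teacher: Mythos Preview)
Your approach is exactly the one the paper has in mind: it states the corollary without proof, merely as ``a consequence from the calculations,'' meaning the two identities $\mu_1+\mu_2+\mu_3+\mu_4=2$ and $\tilde\mu_1+\tilde\mu_2=8/7$ together with the splitting $H^1(X,\RR)=H^{1+}\oplus H^{1-}$. Your case analysis on whether $\nu_2\ge 1/7$ or $\nu_2\le 1/7$ cleanly recovers the ranges $3/7\le\mu_2\le 6/7$, $1/7\le\mu_3\le 3/7$, $0\le\mu_4\le 1/7$, and the dichotomy that $1/7$ occurs as $\mu_3$ or $\mu_4$. This part is correct and in fact more explicit than anything written in the paper.

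You are also right to isolate the strict inequalities $\mu_3<3/7$ and $\mu_4>0$ as the only nontrivial step, and to observe that they amount to excluding $\nu_1=\nu_2=3/7$ (simplicity on $H^{1+}$) and $\nu_2=0$ (positivity on $H^{1+}$). The paper does \emph{not} supply an argument for either strictness; it simply asserts them. One caution: your appeal to Avila--Viana is misplaced, since their simplicity theorem is for the Masur--Veech measure on strata, not for individual Teichm\"uller curves, and the paper itself notes (right after Theorem~8.5) that simplicity can fail for Teichm\"uller curves. Likewise, Bainbridge's genus~$2$ computation in \cite{Bai07} concerns the Lyapunov exponent of the full Hodge bundle over $\Omega\mathcal{M}_2$, not the $H^{1+}$-piece pulled back from $(Y,q)\in\mathcal{Q}_2(5,-1)$, so it does not directly give the needed inequality $\nu_1\neq\nu_2$. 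In short: your bookkeeping proof is what the paper intends, and your caveat about the strict bounds is well taken---the paper leaves this unjustified too.
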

\index{Lyapunov exponents|)}

\newpage

\newpage

\appendix
\section{Appendix}

\subsection{Proof of Theorem~\ref{thm_Hilbert_modular_moduli}} \label{appendix_proof}

\begin{thm3} The Hilbert modular surface $X_D$ \index{Hilbert modular surface} is the moduli space of all pairs $(X,\rho)$, where $X$ is a principally polarized Abelian surface and $\rho$ is a choice of real multiplication on $X$ by $\OD$. \end{thm3}

\begin{proof} Given $\tau=(\tau_1,\tau_2) \in \HH \times \HH$ we define a map $\phi_{\tau}: \OD \oplus \OD^{\vee} \to \mathbb{C}^2$ by $$\phi_{\tau}(x,y)=(x+y\tau_1,x^{\sigma}+y^{\sigma}\tau_2).$$ 
Let $A_{\tau}$ be the complex torus $\mathbb{C}^2 / \phi_{\tau}(\OD \oplus \OD^{\vee})$ with the principal polarization induced by the standard symplectic pairing on $\OD \oplus \OD^{\vee}$, i.e. $\langle (x_1,y_1),(x_2,y_2) \rangle = \tr(x_1y_2-x_2y_1)$. We define real multiplication by $\OD$ on $A_{\tau}$ by $k(z_1,z_2)=(kz_1,k^{\sigma}z_2)$. This construction gives a map $\widetilde{\Psi}$ from $\mathbb{H} \times \mathbb{H}$ to the set of all triples $(X,\nu,\phi)$ where $(X,\nu)$ is a principally polarized Abelian surface $X=\mathbb{C}^2/\Lambda$ with real multiplication by $\OD$ and $\phi$ is a choice of an $\OD$-linear, symplectic isomorphism $\phi: \OD \oplus \OD^{\vee} \to \Lambda$. We need to show that ``forgetting'' the choice of the isomorphism $\phi$ means exactly factoring $\widetilde{\Psi}$ through $\SL(\OD \oplus \OD^\vee)$.\\ Let $g=\left( \begin{smallmatrix} a & b \\ c & d \end{smallmatrix}  \right)\in \SL(\OD \oplus \OD^\vee)$. Then also $g^*= \left( \begin{smallmatrix} a & -b \\ -c & d \end{smallmatrix} \right)$ is an automorphism of $\OD \oplus \OD^{\vee}$. Define
$$\chi(g,\tau) = \begin{pmatrix} (c\tau_1 + d)^{-1} & 0 \\ 0 & (c^{\sigma}\tau_2+d^{\sigma})^{-1} \end{pmatrix}.$$
This yields the following commutative diagram:
$$
\begin{xy}
 \xymatrix{ \OD \oplus \OD^{\vee} \ar[d]_{g^*} \ar[r]^{\ \ \ \phi_{\tau}} & \mathbb{C}^2 \ar[d]^{\chi(g,\tau)} \\ \OD \oplus \OD^{\vee} \ar[r]^{\ \ \ \phi_{g \cdot \tau}} & \mathbb{C}^2
 	}
\end{xy}
$$
Hence $\chi(g,\tau)$ induces an isomorphism between $A_{\tau}$ and $A_{g \cdot \tau}$, that  preserves polarizations and commutes with the action of real multiplication. Therefore, we have a well-defined map $\Psi$ from $X_D$ to the set of all principally polarized Abelian surfaces with a choice of real multiplication. This map will be shown to be a bijection.\\[11pt]
Let us first show that $\Psi$ is injective:  
assume that there exists an isomorphism $f: X=\mathbb{C}^2 / \Lambda \to X'= \mathbb{C}^2 / \Lambda'$. Let us choose two isomorphisms $\phi_{\tau} : \OD \oplus \OD^\vee \to \Lambda= \Lambda_{\tau}$ and $\phi_{\tau'} : \OD \oplus \OD^\vee \to \Lambda' = \Lambda_{\tau'}$. Let $\gamma:=(D+\sqrt{D})/2$. Following \cite{McM07}, Chapter~3, $A_{\tau}:=\CC^2/\phi_{\tau}(\OD \oplus \OD^\vee)$ is then isomorphic to $\CC^2/(\ZZ^2 \oplus \Pi \ZZ^2)$ where
$$\Pi = \frac{1}{D} \begin{pmatrix} \tau_1(\gamma^{\sigma})^2 + \tau_2(\gamma)^2 & -\tau_1 \gamma^{\sigma} - \tau_2 \gamma \\ -\tau_1 \gamma^{\sigma} - \tau_2 \gamma & \tau_1 + \tau_2 \end{pmatrix}$$
and similarly for $A_{\tau'}$. However, two such tori with corresponding matrices $\Pi$ and $\Pi'$ are isomorphic if and only if $\Pi$ and $\Pi'$ are equivalent modulo the action of $\Sp_4(\ZZ)$ on $\HH_2$. It can be checked that this implies that $\tau' = g \tau$ for some $g \in \SL(\OD \oplus \OD^\vee)$. It follows that $\Psi$ is injective. \\[11pt]
The longest step is to show that $\Psi$ is surjective. To do this, we consider an arbitrary complex torus $X=\mathbb{C}^2 / \Lambda$ with principal polarization given by an alternating form $E: \Lambda \times \Lambda \to \ZZ$ and real multiplication by $\rho: \OD \to \End(X)$. For dimension reasons $\Lambda$ is a projective rank 2 $\OD$-module. As first step let us show that $\Lambda \cong \OD \oplus \mathcal{I}$ for some ideal $\mathcal{I}$. This is an immediate consequence of the following two lemmas.

\begin{lem} Let $R$ be a Dedekind domain and $A_1,...,A_n$ be fractional ideals. Then
$$A_1 \oplus ... \oplus A_n \cong R^{n-1} \oplus A_1 \cdots A_n.$$
\end{lem}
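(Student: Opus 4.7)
The plan is to prove this by induction on $n$, reducing everything to the key case $n=2$, which says that for any two fractional ideals $A_1, A_2$ in a Dedekind domain $R$ we have $A_1 \oplus A_2 \cong R \oplus A_1 A_2$. Once the $n=2$ case is established, the induction step is immediate: given $A_1 \oplus \cdots \oplus A_n$, apply the induction hypothesis to the first $n-1$ summands to get $R^{n-2} \oplus (A_1 \cdots A_{n-1}) \oplus A_n$, and then apply the $n=2$ case to the last two summands to obtain $R^{n-1} \oplus A_1 \cdots A_n$.

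For the crucial $n=2$ case, I would first reduce to the situation where $A_1$ and $A_2$ are coprime, i.e. $A_1 + A_2 = R$. This reduction is possible because any fractional ideal can be multiplied by a suitable principal fractional ideal to become coprime to a given fixed ideal (this is a standard consequence of the prime ideal factorization in a Dedekind domain together with the Chinese remainder theorem applied to the finitely many primes dividing $A_2$). Since multiplication by a principal ideal gives an isomorphism of $R$-modules $A_1 \cong (\alpha) A_1$, the direct sums $A_1 \oplus A_2$ and $(\alpha)A_1 \oplus A_2$ are isomorphic, and the product ideal $A_1 A_2$ only changes by a principal factor, so the claimed isomorphism is unaffected.

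Once $A_1 + A_2 = R$, I would exploit the short exact sequence of $R$-modules
\begin{equation*}
0 \longrightarrow A_1 \cap A_2 \stackrel{\Delta}{\longrightarrow} A_1 \oplus A_2 \stackrel{\sigma}{\longrightarrow} A_1 + A_2 \longrightarrow 0
\end{equation*}
where $\Delta(c) = (c,c)$ and $\sigma(a,b) = a - b$. By coprimality the right-hand term is $R$, which is free (hence projective), so the sequence splits and yields $A_1 \oplus A_2 \cong R \oplus (A_1 \cap A_2)$. It remains to identify $A_1 \cap A_2$ with $A_1 A_2$. One always has $A_1 A_2 \subseteq A_1 \cap A_2$; conversely, if $x \in A_1 \cap A_2$ and $1 = a_1 + a_2$ with $a_i \in A_i$, then $x = a_1 x + a_2 x \in A_2 A_1 + A_1 A_2 = A_1 A_2$. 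This gives $A_1 \oplus A_2 \cong R \oplus A_1 A_2$, completing the case $n = 2$.

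The main potential obstacle is the reduction to coprime ideals, since one must justify that in an arbitrary Dedekind domain (not merely a principal ideal domain) every ideal class contains a representative coprime to any prescribed ideal; this is ultimately a consequence of unique factorization into prime ideals and weak approximation at the finitely many primes dividing the obstructing ideal. Everything else is formal manipulation of short exact sequences and fractional ideal arithmetic.
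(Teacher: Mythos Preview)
Your proof is correct and follows essentially the same approach as the paper: induction reducing to $n=2$, scaling by principal ideals to make $A_1$ and $A_2$ coprime, and then splitting the short exact sequence $0 \to A_1 \cap A_2 \to A_1 \oplus A_2 \to A_1 + A_2 = R \to 0$ using that $R$ is free. The paper uses the sum map $(a,b)\mapsto a+b$ rather than your difference map $(a,b)\mapsto a-b$, but this is an inconsequential sign change; you also spell out the coprimality reduction and the identity $A_1 \cap A_2 = A_1 A_2$ in more detail than the paper does.
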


\begin{proof} By induction, it suffices to consider the case $n=2$. So we are dealing with fractional ideals $A$ and $B$. Multiplying by suitable elements $x$ and $y$ of $R$ we may assume that $A$ and $B$ are relatively prime ideals in $R$. Define $\pi: A \oplus B \to R$ by $\pi(a,b) = a + b $. The kernel of $\pi$ is $A \cap B = AB$ since $A$ and $B$ are relatively prime. The short exact sequence
$$0 \longrightarrow AB \longrightarrow A \oplus B \stackrel{\pi}{\longrightarrow} R \longrightarrow 0$$
splits since $R$ is (as Dedekind domain) free, and this yields the claim.
\end{proof}

\begin{lem} For a finitely generated projective $\OD$-module $M$ of rank n
$$M = \OD^{n-1} \oplus I$$
where $I$ is an ideal.
\end{lem}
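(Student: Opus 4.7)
The plan is to proceed by induction on the rank $n$, reducing the statement to the preceding lemma on collapsing a direct sum of fractional ideals. The base case $n=1$ asserts that every rank-one finitely generated projective $\OD$-module is isomorphic to a fractional ideal of $\OD$, which is a classical fact for Dedekind domains (recall that $\OD$ is a Dedekind domain when $D$ is a fundamental discriminant, which is the setting in which the theorem is being applied; for non-maximal orders one works instead with proper fractional ideals, and the argument goes through verbatim).

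For the inductive step, assume the result holds for rank $n-1$ and let $M$ be a finitely generated projective $\OD$-module of rank $n \geq 2$. The first step is to show that $M$ decomposes as a direct sum of $n$ fractional ideals, say $M \cong I_1 \oplus \cdots \oplus I_n$. To achieve this, I would pick a nonzero $\OD$-linear map $\phi: M \to K$ (which exists since $M \otimes_{\OD} K \cong K^n$), replace it by a surjection $\phi: M \twoheadrightarrow I$ onto a fractional ideal $I$ (by scaling), and use that $I$ is projective (every fractional ideal of a Dedekind domain is projective) to split the short exact sequence
\[
0 \longrightarrow \ker(\phi) \longrightarrow M \longrightarrow I \longrightarrow 0.
\]
This yields $M \cong I \oplus \ker(\phi)$, where $\ker(\phi)$ is finitely generated, projective (as a direct summand of a projective module) of rank $n-1$.

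Applying the induction hypothesis to $\ker(\phi)$ gives $\ker(\phi) \cong \OD^{n-2} \oplus J$ for some fractional ideal $J$, hence $M \cong \OD^{n-2} \oplus I \oplus J$. Finally, invoking the previous lemma collapses the two fractional ideal summands into $\OD \oplus IJ$, producing
\[
M \cong \OD^{n-2} \oplus \OD \oplus IJ = \OD^{n-1} \oplus I',
\]
with $I' = IJ$, as required.

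The main obstacle is the splitting step: one needs to know that a surjection from $M$ onto a fractional ideal $I$ always splits. This rests on the projectivity of $I$, which in turn uses that $\OD$ is a Dedekind domain (or, in the non-maximal order case, the analogous statement for proper fractional ideals, which are precisely the invertible and hence projective ones). Once this splitting is in place, the rest of the argument is a mechanical application of induction and the previous lemma.
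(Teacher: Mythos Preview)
Your proof is correct and follows essentially the same approach as the paper: induction on the rank, splitting off a rank-one projective quotient, and then invoking the preceding lemma to collapse the two ideal summands. The only cosmetic difference is that the paper first chooses a rank-$(n-1)$ submodule $N\subset M$ and argues that the rank-one quotient $M/N$ is projective, whereas you produce the rank-one quotient directly as the image of a map $M\to K$; your variant is arguably a touch cleaner since the image is manifestly a fractional ideal and hence projective.
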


\begin{proof}
As $M$ is projective, it is torsion free. Now we proceed the proof by induction. If $n=1$, then $M$ is a $\OD$-submodule of $M \otimes_{\OD} K \cong K$ and is therefore isomorphic to a fractional ideal. By choosing $n-1$ elements of M that span a vector space of that dimension in $M \otimes_{\OD} K$, we can construct an $\OD$-submodule $N$ of rank $n-1$. The exact sequence
$$0 \longrightarrow N \longrightarrow M \longrightarrow M/N \longrightarrow 0$$
remains exact upon tensoring with K, hence $M/N$ has rank 1. Thus $M/N$ is projective and the sequence splits. Since any fractional ideal is isomorphic to an ideal, the conclusion follows from the inductive hypothesis and the last lemma.
\end{proof}

Hence we have that as $\OD$-modules $\Lambda \cong \OD \oplus \mathcal{I}$ for some ideal $\mathcal{I}$. So we may assume that $X= \mathbb{C}^2 / \varphi(\OD \oplus \mathcal{I})$ with real multiplication $\rho$ by $\OD$ for some embedding $\varphi: \OD \oplus \mathcal{I} \to \CC^2$. 

\begin{lem} There exists a symplectic isomorphism $\Theta: (\OD \oplus \mathcal{I}, E,\rho) \to (\OD \oplus \OD^\vee, \left\langle,\right\rangle, \rho')$, where $\left\langle ,\right\rangle$ is the symplectic pairing on $\OD \oplus \OD^\vee$ that is compatible with the real multiplications $\rho$ and $\rho'$. \end{lem}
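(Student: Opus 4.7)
The plan is to promote the $\ZZ$-valued form $E$ to an $\OD^\vee$-valued $\OD$-bilinear form $\widetilde{E}$ using self-adjointness, and then show that principality of the polarization says precisely that $\widetilde{E}$ identifies the ideal $\mathcal{I}$ with $\OD^\vee$; the desired $\Theta$ is then $(c,d) \mapsto (c, \widetilde{E}((1,0),(0,d)))$.

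First I would construct $\widetilde{E}$. Given $x,y \in \Lambda$, the map $\OD \to \ZZ$, $k \mapsto E(\rho(k)x,y)$, is $\ZZ$-linear, and the trace pairing $\OD \times \OD^\vee \to \ZZ$ is perfect by definition of $\OD^\vee$. Hence there is a unique $\widetilde{E}(x,y) \in \OD^\vee$ with $\tr(k\widetilde{E}(x,y)) = E(\rho(k)x,y)$ for all $k \in \OD$. Self-adjointness of $\rho$ together with alternation of $E$ makes $\widetilde{E}$ alternating and $\OD$-bilinear, and it satisfies $E = \tr \circ \widetilde{E}$ by construction.

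Next I would exploit that $\Lambda \cong \OD \oplus \mathcal{I}$ has $\OD$-rank $2$, so $\widetilde{E}$ factors through $\wedge^2_\OD \Lambda$. Since $\wedge^2_\OD(\OD \oplus \mathcal{I}) \cong \OD \otimes_\OD \mathcal{I} \cong \mathcal{I}$ (with $(1,0)\wedge(0,m) \leftrightarrow m$), $\widetilde{E}$ corresponds to an $\OD$-linear map $\phi: \mathcal{I} \to \OD^\vee$, $\phi(m) := \widetilde{E}((1,0),(0,m))$, and a direct unpacking gives the explicit formula $\widetilde{E}((c',d'),(c,d)) = c'\phi(d) - c\phi(d')$. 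In particular, this matches the shape of the standard $\OD^\vee$-valued form $((x_1,y_1),(x_2,y_2)) \mapsto x_1y_2 - x_2y_1$ on $\OD \oplus \OD^\vee$ whose trace recovers $\langle\,,\,\rangle$.

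The key step (and the main obstacle) is to show that $\phi$ is an $\OD$-module isomorphism; this is equivalent to principality of the polarization. Using a $\ZZ$-basis $\{1,w\}$ of $\OD$ and any $\ZZ$-basis $\{\beta_1,\beta_2\}$ of $\mathcal{I}$, the Gram matrix of $E$ is block-antidiagonal with block $M_{ij} = \tr(w^{i-1}\phi(\beta_j))$. Writing each $\phi(\beta_j)$ in the trace-dual basis of $\OD^\vee$ identifies $M$ with the matrix of $\phi$ in these bases, so $|\det M| = 1$ (unimodularity, i.e.\ principal polarization) is exactly the statement that $\phi: \mathcal{I} \to \OD^\vee$ is a $\ZZ$-isomorphism, hence, being $\OD$-linear, an $\OD$-isomorphism. (A possible sign of $\det M$ can be absorbed by composing $\phi$ with $-\mathrm{id}$ or, if needed, swapping $\beta_1$ and $\beta_2$, which does not affect alternation.)

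Finally I would define $\Theta: \OD \oplus \mathcal{I} \to \OD \oplus \OD^\vee$ by $\Theta(c,d) = (c,\phi(d))$. By the explicit formula for $\widetilde{E}$ and the standard form, $\Theta$ intertwines $\widetilde{E}$ and $\widetilde{\langle\,,\,\rangle}$, hence $E$ and $\langle\,,\,\rangle$ after applying $\tr$. Because $\Theta$ is $\OD$-linear and the real multiplications on both sides are simply the $\OD$-module structures, it automatically intertwines $\rho$ with $\rho'$, proving the lemma.
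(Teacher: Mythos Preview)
Your argument is correct and takes a genuinely different route from the paper. The paper first asserts that one can choose a symplectic $\ZZ$-basis $a_1,a_2\in\OD$, $b_1,b_2\in\mathcal{I}$ for $E$, defines $\Theta$ by sending this basis to the standard symplectic basis of $\OD\oplus\OD^\vee$, and then verifies $K$-linearity (hence $\OD$-linearity) of $\Theta$ by hand, checking $\Theta(kx)=k\Theta(x)$ for the single element $k=b_1/b_2\in K\smallsetminus\QQ$ using the symplectic relations. Your approach instead lifts $E$ to the $\OD^\vee$-valued $\OD$-bilinear form $\widetilde{E}$ via the perfect trace pairing and reduces everything to the single $\OD$-linear map $\phi:\mathcal{I}\to\OD^\vee$ coming from $\wedge^2_{\OD}\Lambda\cong\mathcal{I}$; principality then becomes exactly the statement that $\phi$ is an isomorphism. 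What your approach buys is conceptual transparency and a built-in proof of a point the paper takes for granted: the block-antidiagonal shape of the Gram matrix (equivalently, that $\OD$ and $\mathcal{I}$ are each isotropic for $E$) is for you an automatic consequence of $\widetilde{E}$ factoring through $\wedge^2_{\OD}$, whereas the paper's opening sentence ``we can always choose a symplectic basis with $a_1,a_2\in\OD$, $b_1,b_2\in\mathcal{I}$'' implicitly uses this. The paper's approach, on the other hand, is more elementary in that it never names the $\OD^\vee$-valued lift or exterior powers, working only with $\ZZ$-bases and explicit symplectic identities.
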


\begin{proof} We can always choose a symplectic basis with $a_1,a_2 \in \OD$, $b_1,b_2 \in \mathcal{I}$ for $\Lambda$, i.e. $E(a_i,b_j) = \delta_{ij}$ and $E(a_i,a_j)=0$ and $E(b_i,b_j)=0$. The form $E$ extends to $E: K^2 \times K^2 \to \mathbb{Q}$ such that $E(kx,y)=E(x,ky)$ for all $x,y \in \Lambda$ and $k \in K$. Let $\left\langle ,\right\rangle$ be the standard symplectic form on $\OD \oplus \OD^\vee$ with its standard basis $c_1,c_2,d_1,d_2$ and together with its standard real multiplication $\rho'$ (see e.g. \cite{McM07}). We choose a $\ZZ$-linear symplectic isomorphism $\Theta : \OD \oplus \mathcal{I} \to \OD \oplus \OD^\vee$ with $\Theta(a_i)=c_i$ and $\Theta(b_i)=d_i$ for $i=1,2$. Then we may identify the $a_i$ with the $c_i$ since the $c_i$ are a basis of $\OD$ as $\ZZ$-module. We now want to prove that this symplectic isomorphism is $\OD$ linear. By tensoring with $K$ we also extend this map to $\Theta: K \times K \to K \times K$ and verify that $\Theta$ is indeed $K$-linear. Since $\Theta$ is $\mathbb{Q}$-linear, it suffices to show that $\Theta(kx)=k\Theta(x)$ for a fixed $k \in K \smallsetminus \mathbb{Q}$ and for all $x \in K \times K$.  We now choose $k = \frac{b_1}{b_2}$. By definition we have $\Theta(ka_1) = k \Theta(a_1)$ and $\Theta(ka_2) = k \Theta(a_2)$. We now show that $\Theta(kb_1)=k\Theta(b_1)$ for this fixed $k \in K \smallsetminus \mathbb{Q}$. This is equivalent to showing that $kd_1=d_2$. By definition of $k$ we have
$$1 = E(a_2,b_2) = E(a_2,kb_1) = E(ka_2,b_1),$$
which yields 
\begin{eqnarray} \label{eqn_sym1} 1 = \left\langle \Theta(ka_2), \Theta(b_1)\right\rangle = \left\langle k\Theta(a_2),d_1\right\rangle=\left\langle c_2,kd_1\right\rangle. \end{eqnarray}
Similarly we have
$$0 = E(a_1,b_2) = E(a_1,kb_1) = E(ka_1,b_1)$$
which yields 
\begin{eqnarray} \label{eqn_sym2} 0 = \left\langle \Theta(ka_1),\Theta(b_1)\right\rangle=\left\langle k\Theta(a_1),d_1\right\rangle=\left\langle c_1,kd_1\right\rangle. \end{eqnarray}
Equations (\ref{eqn_sym1}) and (\ref{eqn_sym2}) imply that $kd_1=d_2$. By considering appropriate equations it follows similarly as above that $\Theta(kb_2) = k \Theta(b_2)$, which implies $K$-linearity of $\Theta$. This means that the following diagram commutes:
$$
\begin{xy}
  \xymatrix{ \OD \oplus \mathcal{I} \ar[r]^{\ \Theta} \ar[d]_{\rho} & \OD \oplus \OD^\vee  \ar[d]^{\rho'} \\
   \OD \oplus \mathcal{I} \ar[r]^{\ \Theta} & \OD \oplus \OD^\vee
 	}
\end{xy}
$$
Or in other words the chosen symplectic isomorphism respects the real multiplication. 
\end{proof}

We may therefore assume that $\Lambda = \varphi(\OD \oplus \OD^\vee)$ where $\varphi: \OD \oplus \OD^\vee \to \mathbb{C}^2$ is an embedding. 
It remains to show that there exists a $\tau \in \HH^2$ with $\phi_\tau = \varphi$.\\
Recall that $$(a_1,a_2,b_1,b_2)=((1,0),(\gamma,0),(0,-\gamma^\sigma/\sqrt{D}),(0,1/\sqrt{D}))$$
is the standard basis of $\OD \oplus \OD^\vee$ with respect to the standard symplectic form, where $\gamma= (D+\sqrt{D})/2$. With respect to this basis $\phi_\tau: \OD \oplus \OD^\vee \to \CC^2$ is given by the matrix
$$\phi_\tau= \begin{pmatrix} 1 & \gamma & - \tau_1 \gamma^\sigma /\sqrt{D} & \tau_1/\sqrt{D} \\
1 & \gamma^\sigma & \tau_2 \gamma /\sqrt{D} & -\tau_2/\sqrt{D} \end{pmatrix}$$
or equivalently
$$\phi_\tau=(B,D_\tau(B^{t})^{-1})$$
where $B=\left( \begin{smallmatrix} 1 & \gamma \\ 1 & \gamma^\sigma \end{smallmatrix} \right)$ and $D_\tau= \left( \begin{smallmatrix}\tau_1 & 0 \\ 0 & \tau_2 \end{smallmatrix} \right)$.
Consequently we have $\Psi(\tau) \cong \CC^2 / (\ZZ^2 \oplus \Pi \ZZ^2)$, where
$$\Pi=B^{-1}D_\tau(B^t)^{-1} = \frac{1}{D} \begin{pmatrix} \tau_1(\gamma^\sigma)^2 + \tau_2 \gamma^2  & - \tau_1 \gamma^\sigma - \tau_2 \gamma \\ - \tau_1 \gamma^\sigma - \tau_2 \gamma & \tau_1 + \tau_2 \end{pmatrix}.$$
Let us now consider $\varphi$. As $\rho(\gamma)$ has two distinct eigenvalues, namely $\gamma$ and $\gamma^\sigma$, we may choose a basis of $\CC^2$ such that the action of $\rho(\gamma)$ is given by $\left( \begin{smallmatrix} \gamma & 0 \\ 0 & \gamma^\sigma \end{smallmatrix} \right)$. One easily calculates that the action of $\rho(\gamma)$ on the lattice $\OD \oplus \OD^\vee$ with respect to the basis $(a_1,a_2,b_1,b_2)$ is given by 
$$\rho(\gamma) (a_1,a_2) = C \begin{pmatrix} a_1 \\ a_2 \end{pmatrix}$$
$$\rho(\gamma) (b_1,b_2) = C^t \begin{pmatrix} b_1 \\ b_2 \end{pmatrix},$$
where $C= \left( \begin{smallmatrix} 0 & 1 \\ (D-D^2)/4 & D \end{smallmatrix} \right)$.\\ 
Since the real multiplication commutes with $\varphi$, we have
$$\varphi(\rho(\gamma) a_i) = \begin{pmatrix} \gamma & 0 \\ 0 & \gamma^\sigma \end{pmatrix} \varphi(a_i)$$
$$\varphi(\rho(\gamma) b_i) = \begin{pmatrix} \gamma & 0 \\ 0 & \gamma^\sigma \end{pmatrix} \varphi(b_i).$$
Setting $\varphi(a_1)= \left( \begin{smallmatrix} r \\ s \end{smallmatrix} \right)$ and $\varphi(b_2) = \left( \begin{smallmatrix} v/\sqrt{D} \\ - w/\sqrt{D}\end{smallmatrix} \right)$ with $r,s,v,w \in \CC$ we get that $\varphi: \OD \oplus \OD^\vee \to \CC^2$ is given by the matrix
$$\varphi=(\begin{pmatrix} r & 0 \\ 0 & s \end{pmatrix}B, \begin{pmatrix} v & 0 \\ 0 & w \end{pmatrix}(B^t)^{-1}) .$$
Consequently $\CC^2 / \varphi(\OD \oplus \OD^\vee) \cong \CC^2 / (\ZZ \oplus \Pi' \ZZ)$, where $$\Pi'= \begin{pmatrix} \tau_1(\gamma^\sigma)^2 + \tau_2 \gamma^2  & - \tau_1 \gamma^\sigma - \tau_2 \gamma \\ - \tau_1 \gamma^\sigma - \tau_2 \gamma & \tau_1 + \tau_2 \end{pmatrix}.$$
with $\tau_1,\tau_2 \in \CC$. Since $\CC^2 / (\ZZ \oplus \Pi' \ZZ)$ is a principal polarized Abelian variety we have in fact that $\Pi' \in \HH_2$ and hence $\tau_1,\tau_2 \in \HH$. This shows that $\Psi$ is indeed surjective. So $\Psi$ is a bijection and we have finally proven the assertion of the theorem.
\end{proof}

\newpage

\subsection{Elements of Non-cocompact Cofinite Fuchsian Groups} \label{sec_check_elements}
\index{Fuchsian group} 
Let $\Gamma \subset \PSL_2(\RR)$ be an arbitrary Fuchsian group. It is in general a very hard problem to decide whether a given element $N \in \PSL_2(\RR)$ is in $\Gamma$ or not. For example one would like to check if a certain matrix lies inside a Veech group $\SL(X,\omega)$ or not. In fact, one is often even interested in writing $N$ as a word in given generators if possible. These problems are very much reminiscent of the famous - generally unsolvable - word problem (see e.g. \cite{Bau93}).
For non-cocompact, cofinite Fuchsian groups we are able to give an algorithm which does solve both of the above problems - at least if the Dirichlet fundamental domain of the Fuchsian group is known sufficiently well.\\[11pt]
Let $\Gamma$ until the end of Appendix~\ref{sec_check_elements} be a non-cocompact cofinite Fuchsian groups. Then $\Gamma$ contains at least one parabolic element (see \cite{Kat92}, Corollary~4.2.7). Before we explain the algorithm, let us collect a bunch of well-known facts. The first one describes the Dirichlet fundamental domain of a Fuchsian group in Euclidean metric (compare \cite{Kat92}, Chapter~3).

\begin{lem} The Dirichlet fundamental domain of $\Gamma$ can be described using Euclidean metric as follows:
$$ D_p(\Gamma) = \left\{ z \in \HH \mid \left|  \frac{T(z)-p}{z-p} \right| \geq \frac{1}{|cz+d|} \ \forall T=\begin{pmatrix} a & b \\ c & d \end{pmatrix} \in \Gamma \right\}$$
\end{lem}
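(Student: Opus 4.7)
The statement is a purely metric reformulation: it rewrites the defining inequality of the Dirichlet fundamental domain in terms of the Euclidean distance in $\HH \subset \CC$. The proof will therefore be a short computation resting on two classical identities, so there is no serious structural obstacle; the only decision is to set up the equivalent form of $D_p(\Gamma)$ that makes the computation clean.

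First I would recall that by definition
$$D_p(\Gamma) \;=\; \{ z \in \HH \mid d(z,p) \leq d(z,T(p)) \text{ for all } T \in \Gamma \},$$
where $d$ is the Poincar\'e (hyperbolic) distance. Since every $T \in \Gamma$ acts as an isometry and since the condition is quantified over all $T \in \Gamma$ (hence in particular over $T^{-1}$), the equality $d(z,T(p)) = d(T^{-1}(z),p)$ lets us rewrite this as
$$D_p(\Gamma) \;=\; \{ z \in \HH \mid d(z,p) \leq d(T(z),p) \text{ for all } T \in \Gamma \}.$$

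Next I would invoke the standard identity
$$\sinh^2\!\bigl(\tfrac{1}{2}d(z,w)\bigr) \;=\; \frac{|z-w|^2}{4\,\Imag(z)\,\Imag(w)}$$
(see e.g.\ \cite{Bea83}, Theorem~7.2.1). Since $\sinh$ is monotone on $[0,\infty)$, the inequality $d(z,p) \leq d(T(z),p)$ is equivalent to
$$\frac{|z-p|^2}{\Imag(z)} \;\leq\; \frac{|T(z)-p|^2}{\Imag(T(z))},$$
the factor $\Imag(p)$ canceling from both sides. Then I would apply the classical transformation law $\Imag(T(z)) = \Imag(z)/|cz+d|^2$ for $T = \bigl(\begin{smallmatrix}a&b\\c&d\end{smallmatrix}\bigr) \in \SL_2(\RR)$, which converts the inequality into $|z-p|^2 \leq |T(z)-p|^2\,|cz+d|^2$, i.e.
$$\left| \frac{T(z)-p}{z-p} \right| \;\geq\; \frac{1}{|cz+d|}.$$
Taking the intersection over $T \in \Gamma$ yields the claimed description of $D_p(\Gamma)$. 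As noted, the computation is direct; the only mild subtlety is the initial replacement of $T(p)$ by $T(z)$ via the isometry property, which is what allows the Möbius transformation law for the imaginary part to be applied.
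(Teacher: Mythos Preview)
Your proof is correct and follows the standard route; the paper itself does not give a proof of this lemma at all but simply refers to \cite{Kat92}, Chapter~3, so your argument supplies exactly the computation one finds there. The only step worth a second glance is the switch from $d(z,p)\leq d(z,T(p))$ to $d(z,p)\leq d(T(z),p)$, which you justify correctly via the isometry property together with the fact that the quantification runs over all of $\Gamma$.
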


A long but straightforward calculation then yields:

\begin{cor} \label{cor_Dirichlet_infinity_domain} For $\lim_{k \to \infty} D_{ki}(\Gamma)=: D_{\infty}(\Gamma)$ all bounding geodesics of the Dirichlet fundamental domain which are not vertical lines are given by
$$ \left| z - \left( - \frac{d}{c} \right) \right| = \frac{1}{|c|}$$
for some $\left( \begin{smallmatrix} a & b \\ c & d \end{smallmatrix}\right) \in \Gamma$. The interior of this limit fundamental domain is
$$ \inter(D_{\infty}(\Gamma)) = \left\{ z \in \HH \mid  \left| z - \left( - \frac{d}{c} \right) \right| > \frac{1}{|c|} \right\}.$$ \end{cor}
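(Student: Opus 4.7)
The plan is to perform a direct computation of the limit of the defining inequality from the preceding lemma, keeping careful track of the cases $c = 0$ and $c \neq 0$ separately. First, I would simplify the inequality $\left|\frac{T(z)-p}{z-p}\right| \geq \frac{1}{|cz+d|}$ by expanding
$$T(z) - p = \frac{az+b}{cz+d} - p = \frac{(a-pc)z + (b-pd)}{cz+d},$$
so that after multiplying both sides by $|cz+d|$ the defining condition for $D_p(\Gamma)$ becomes the equivalent, cleaner statement
$$|(a-pc)z + (b-pd)| \geq |z-p| \quad \text{for all } T = \left(\begin{smallmatrix} a & b \\ c & d \end{smallmatrix}\right) \in \Gamma.$$

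Next, specializing to $p = ki$, I would rewrite $(a-kic)z + (b-kid) = (az+b) - ki(cz+d)$. For the elements $T \in \Gamma$ with $c \neq 0$ — which are precisely those that can contribute non-vertical bounding geodesics — I would divide both sides of the inequality by $k$ and take the limit as $k \to \infty$. The left-hand side becomes $\bigl|\tfrac{1}{k}(az+b) - i(cz+d)\bigr|$, which tends to $|cz+d|$, while $|z-ki|/k = \sqrt{(\Real z / k)^2 + (1 - \Imag z / k)^2}$ tends to $1$. Hence the limiting condition is
$$|cz+d| \geq 1,$$
and rewriting $|cz+d|^2 = |c|^2|z+d/c|^2$ yields exactly $\left|z - \bigl(-\tfrac{d}{c}\bigr)\right| \geq 1/|c|$, the exterior of the claimed isometric circle.

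The description of the interior then follows by replacing each such inequality with its strict version, since the points of $D_\infty(\Gamma)$ that lie on the boundary are precisely those for which equality is attained in one of the defining inequalities. The elements $T \in \Gamma$ with $c = 0$ — which under the cofinite, non-cocompact hypothesis and the assumption that $\infty$ is a cusp of $\Gamma$ are exactly the parabolic translations $z \mapsto z + b$ in the stabilizer of $\infty$ — contribute perpendicular bisectors of the form $\Real(z) = -b/(2a)$, independent of $k$; these are the vertical bounding geodesics that the corollary's statement explicitly excludes.

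The main obstacle is not algebraic but conceptual: one needs to justify that the set-theoretic limit $\lim_{k \to \infty} D_{ki}(\Gamma)$ is well-defined and inherits the local structure of a fundamental domain, and that the two types of bounding geodesics — the vertical lines coming from $c = 0$ and the circles coming from $c \neq 0$ — decouple cleanly under the limit. Once one fixes the convention that the vertical portion of the boundary is treated separately (which is built into the wording of the statement), the remaining work is the routine computation sketched above.
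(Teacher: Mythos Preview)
Your proposal is correct and is exactly the ``long but straightforward calculation'' that the paper alludes to without writing out; the paper gives no further proof beyond that phrase, so your expansion of $T(z)-p$, the passage to $|(a-pc)z+(b-pd)| \geq |z-p|$, and the limit $k\to\infty$ separating the cases $c=0$ and $c\neq 0$ constitute precisely the intended argument. Your remark that the conceptual subtlety lies in making sense of the set-theoretic limit is apt, and the paper is equally informal on that point.
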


Furthermore we remind the reader of the following fact (see \cite{FB06}, Hilfssatz~V.7.1):

\begin{lem} If $M=\left( \begin{smallmatrix} a & b \\ c & d \end{smallmatrix} \right) \in SL_2(\mathbb{R})$, then for all $z \in \mathbb{H}$
$$ \Imag (Mz) = \frac{\Imag (z)}{|cz+d|^2}$$
holds.
\end{lem}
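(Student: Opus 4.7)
The plan is the direct computation familiar from the theory of modular forms. First I would write the M\"obius transformation as a fraction and rationalize the denominator:
$$Mz \;=\; \frac{az+b}{cz+d} \;=\; \frac{(az+b)\overline{(cz+d)}}{|cz+d|^{2}} \;=\; \frac{(az+b)(c\bar{z}+d)}{|cz+d|^{2}}.$$
Since the denominator $|cz+d|^{2}$ is a positive real number, the imaginary part of $Mz$ is just $|cz+d|^{-2}$ times the imaginary part of the numerator, so the task reduces to computing $\Imag\bigl((az+b)(c\bar{z}+d)\bigr)$.

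Next I would expand the numerator as
$$(az+b)(c\bar{z}+d) \;=\; ac|z|^{2} \,+\, adz \,+\, bc\bar{z} \,+\, bd.$$
Here $ac|z|^{2}$ and $bd$ are real (because $a,b,c,d\in\RR$), so they contribute nothing to the imaginary part. For the remaining two terms, $\Imag(adz)=ad\,\Imag(z)$ and $\Imag(bc\bar{z})=-bc\,\Imag(z)$. Summing gives
$$\Imag\bigl((az+b)(c\bar{z}+d)\bigr) \;=\; (ad-bc)\,\Imag(z) \;=\; \Imag(z),$$
where the last equality uses $M\in\SL_2(\RR)$, i.e.\ $\det M = ad-bc = 1$. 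Dividing by $|cz+d|^{2}$ yields the claim. There is no real obstacle here; the only point to keep in mind is that one must have $\det M = 1$ (not merely $\det M \neq 0$) in order to get $\Imag(z)$ in the numerator rather than $(\det M)\,\Imag(z)$, which is why the lemma is stated for $\SL_2(\RR)$ and not $\GL_2(\RR)$.
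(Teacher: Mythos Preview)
Your proof is correct and is the standard direct computation. The paper does not actually give its own proof of this lemma; it simply cites \cite{FB06}, Hilfssatz~V.7.1, as a reminder of a well-known fact, and your argument is precisely the textbook verification one finds in such references.
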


The algorithm is based on the following lemma:  

\begin{lem} \label{lem_app_orbits} Let $\Gamma$ be a Fuchsian group which is a lattice in $\PSL_2(\mathbb{R}$). Then we have:
\begin{itemize}
\item[(i)] For all $z \in \mathbb{H}$ there are only finitely many $a_1,...,a_n \in \mathbb{R}$ with $a_i \geq \Imag(z)$ which appear as imaginary parts in the elements of $\Gamma z$.
\item[(ii)] Every orbit $\Gamma z$ contains points of maximal imaginary part. 
\item[(iii)] The points $z \in \HH$ which have maximal imaginary part in their $\Gamma$-orbit are those with
$$|cz+d| \geq 1 \ \forall c,d \ \rm{with} \ \ \ \begin{pmatrix} * & * \\ c & d \end{pmatrix} \in \Gamma.$$
\end{itemize}
\end{lem}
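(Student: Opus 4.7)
The plan is to base the entire argument on the identity $\Imag(Mz)=\Imag(z)/|cz+d|^{2}$ recalled just before the lemma, and to extract from it the observation that $\Imag(Mz)\geq \Imag(z)$ if and only if $|cz+d|\leq 1$. Once this is in place, parts (ii) and (iii) are essentially immediate consequences of (i), so the real work is concentrated in proving the finiteness statement.

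For part (i), I would first note that the set of bottom rows
\[
\mathcal{B}:=\left\{(c,d)\in\mathbb{R}^{2}\;\Big|\;\begin{pmatrix} * & * \\ c & d\end{pmatrix}\in\Gamma\right\}
\]
is a discrete subset of $\mathbb{R}^{2}$: if $(c_{n},d_{n})\to(c,d)$ were an accumulation point, one could lift to a sequence in $\Gamma$ whose bottom rows converge, and together with the discreteness of $\Gamma$ this forces eventual constancy. Next, writing $z=x+iy$ with $y>0$, the inequality $|cz+d|\leq 1$ expands to $(cx+d)^{2}+c^{2}y^{2}\leq 1$, which yields the explicit bounds
\[
|c|\leq \frac{1}{y},\qquad |cx+d|\leq 1,\quad\text{hence}\quad |d|\leq 1+\frac{|x|}{y}.
\]
Thus $(c,d)$ lies in a bounded box, and combined with discreteness of $\mathcal{B}$ only finitely many pairs $(c,d)\in\mathcal{B}$ satisfy $|cz+d|\leq 1$. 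Each such pair produces a single imaginary part $y/|cz+d|^{2}\geq y$, so the set of values of $\Imag(Mz)$ that are $\geq y$ is finite.

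Part (ii) is then an immediate corollary: by (i) the set $\{\Imag(w):w\in\Gamma z,\,\Imag(w)\geq \Imag(z)\}$ is a nonempty finite subset of $\mathbb{R}$, and so has a maximum element, which is realized by some orbit point. Part (iii) is a direct translation of the equivalence stated at the beginning: $z$ attains the maximal imaginary part in $\Gamma z$ if and only if $\Imag(Mz)\leq \Imag(z)$ for every $M\in\Gamma$, and by the formula $\Imag(Mz)=\Imag(z)/|cz+d|^{2}$ this is equivalent to $|cz+d|\geq 1$ for every bottom row $(c,d)\in\mathcal{B}$.

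The only nontrivial step is the discreteness of $\mathcal{B}\subset\mathbb{R}^{2}$ used in (i); this is the main obstacle since it is the one place where the hypothesis that $\Gamma$ is discrete (rather than just, say, a subset of $\PSL_{2}(\mathbb{R})$) genuinely enters. Note that neither the cofinite nor the non-cocompact hypothesis on $\Gamma$ is actually needed for the lemma itself — they only ensure that the situation is interesting, i.e. that parabolic elements exist and that the subsequent algorithm terminates when applied in practice.
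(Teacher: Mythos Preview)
Your overall strategy matches the paper's: reduce everything to the identity $\Imag(Mz)=\Imag(z)/|cz+d|^{2}$, read off (iii) from the equivalence $\Imag(Mz)\ge\Imag(z)\Leftrightarrow|cz+d|\le 1$, and obtain (i)--(ii) from the claim that only finitely many bottom rows $(c,d)$ of elements of $\Gamma$ satisfy $|cz+d|\le 1$. The paper dispatches that last claim in one line (``since $\Gamma$ is discrete''); you try to justify it by asserting that the set $\mathcal{B}$ of bottom rows is itself discrete in $\RR^{2}$.

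That step contains a real gap, and your closing remark that nothing beyond discreteness of $\Gamma$ is needed is in fact false. Convergence of bottom rows $(c_n,d_n)$ does not force convergence of the lifts $M_n\in\Gamma$, since the top rows are completely unconstrained; so ``discreteness of $\Gamma$ forces eventual constancy'' is a non sequitur. Worse, the conclusion itself can fail: if $\Gamma$ contains a hyperbolic element fixing $\infty$, say $H=\left(\begin{smallmatrix}\lambda&0\\0&\lambda^{-1}\end{smallmatrix}\right)$ with $\lambda>1$, then the bottom rows $(0,\lambda^{-n})$ accumulate at the origin and $\Imag(H^{n}i)=\lambda^{2n}$ gives infinitely many imaginary parts $\ge 1$ in the orbit of $i$. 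Conjugating any cocompact lattice so that a hyperbolic axis ends at $\infty$ produces exactly this situation, so even the lattice hypothesis alone is insufficient. What makes the lemma true is the section's standing assumption that $\infty$ is a \emph{cusp} of $\Gamma$: then $\Gamma$ contains a translation $A=\left(\begin{smallmatrix}1&s\\0&1\end{smallmatrix}\right)$, and Shimizu's lemma (equivalently, the existence of a precisely invariant horoball at $\infty$) gives $|c|\ge 1/s$ whenever $c\ne 0$. Combined with your bound $|c|\le 1/y$ this yields $\Imag(M_n z)\le s^{2}/y$; after adjusting real parts into $[0,s)$ by powers of $A$ (which leaves the bottom row unchanged), the points $M_n z$ lie in a fixed compact box, and now proper discontinuity legitimately gives finitely many $M_n$, hence finitely many bottom rows.
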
 

\begin{proof} Let $z=x+iy$. The last lemma gives $$\Imag(Mz) \geq \Imag(z) \ \textrm{if and only if} \ |cz+d| \leq 1.$$ This proves $(iii)$. The inequality $|cz+d| \leq 1$ has only finitely many solutions since $\Gamma$ is discrete. This yields $(i)$ and $(ii)$. \end{proof}

In \cite{Koh06} one finds an algorithm which decomposes each element $\SL_2(\ZZ)$ into a word in the standard generators of $\SL_2(\ZZ)$. Simultaneously it gives a possibility to decide whether a given matrix lies in $\SL_2(\ZZ)$, although this is of course trivial. We now imitate this algorithm.\\[11pt]
Recall that $\Gamma$ is a Fuchsian group containing at least one parabolic element. By conjugation we may assume that $\Gamma$ contains $A=\left( \begin{smallmatrix} 1 & s \\ 0 & 1 \end{smallmatrix} \right)$ with $s \in \mathbb{R}$. Furthermore we may assume that $|s|$ is minimal, i.e. there is no matrix of this form in $\Gamma$ with an upper right entry of smaller absolute value. The Dirichlet fundamental domain $D_{\infty}(\Gamma)$ then looks like:
\begin{center}
\psset{xunit=1cm,yunit=1cm,runit=1cm}
\begin{pspicture}(-0.5,-0.5)(8,4) 
\psline[linewidth=0.5pt]{->}(0,0)(8,0)
\psline[linewidth=0.5pt]{->}(4,-0.1)(4,4)
\psline[linewidth=0.5pt]{-}(7,0)(7,4)
\psline[linewidth=0.5pt]{-}(1,0)(1,4)
\psarc[linewidth=0.5pt](6.5,0){0.5cm}{0}{180}
\psarc[linewidth=0.5pt](5.7,0){0.3cm}{0}{180}
\psarc[linewidth=0.5pt](4.6,0){0.8cm}{0}{180}
\psarc[linewidth=0.5pt](3.5,0){0.3cm}{0}{180}
\psarc[linewidth=0.5pt](3.2,0){0.4cm}{0}{180}
\psarc[linewidth=0.5pt](2.6,0){0.3cm}{0}{180}
\psarc[linewidth=0.5pt](2.2,0){0.5cm}{0}{180}
\psarc[linewidth=0.5pt](1.35,0){0.35cm}{0}{180}

\end{pspicture}\\
Figure A.1. A typical fundamental domain.
\end{center}
The left and right boundary of $D_{\infty}(\Gamma)$ are the geodesics joining $-s/2$ (respectively $s/2$) and $\infty$. The other boundary geodesics fill the gap between $-s/2$ and $s/2$. 
Now let $X \in \PSL_2(\RR)$. Let us try to write $X$ as a word in the generators of $\Gamma$. Choose an arbitrary point $z_0 \in \inter(D_{\infty}(\Gamma))$ and\\[11pt]
(1) look at $y_0 = X z_0$ and apply $k$-times (with $k \in \ZZ$) the matrix $A$ until $|\Real(A^k y_0)| \leq \frac{s}{2}$.\\[11pt]
(2) If $y_1 := A^k y_0\in \inter(D_{\infty}(\Gamma))$, then check if $y_1=z_0$. If $y_1 \neq z_0$ then $X \notin \Gamma$ since $\mathcal{F}$ is a fundamental domain.\\[11pt]
(3) If $y_1 = z_0$, then check if $X = A^{-k}$ in $\PSL_2(\mathbb{R}$). In either case we are finished.\\[11pt]
(4) If $y_1 \notin \inter(D_{\infty}(\Gamma))$, then $y_1$ lies beneath (at least) one of the boundary geodesics and above the $x$-axis. Then find a matrix $B=\left( \begin{smallmatrix} a & b \\ c & d \end{smallmatrix} \right) \in \Gamma$ which generates this geodesic in the sense of Corollary~\ref{cor_Dirichlet_infinity_domain}. And set $y_2=By_1$.\\[11pt]
Since $B$ generates the geodesic
$$ \left|y_1 - \left(- \frac{d}{c} \right) \right| < \frac{1}{|c|}$$
or since equivalently $|cy_1 +d| < 1$ holds, we must have that $\Imag(y_2) > \Imag (y_1)$. If $y_2 \in \inter(D_{\infty}(\Gamma))$, then we are again finished (after checking if $y_2 = z_0$ and if $A^{-k}B^ {-1}=X$). Otherwise we continue with step (I) with $y_2$ instead of $y_0$.\\[11pt] 
As the imaginary part grows with every iteration, the algorithms really determines by Lemma~\ref{lem_app_orbits} (iii). We have thus shown:

\begin{thm} Let $\Gamma$ be a non-cocompact, cofinite Fuchsian group. If $A_1,...,A_n$ generate $\Gamma$ in the sense of Corollary~\ref{cor_Dirichlet_infinity_domain} then the algorithm above decides whether an arbitrary element $X \in \PSL_2(\RR)$ lies in $\Gamma$. If so, the algorithm returns $X$ as a word in the $A_i$. \end{thm}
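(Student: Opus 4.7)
The proof proposal is essentially to verify three claims about the algorithm: (i) each iteration strictly increases the imaginary part of the current point; (ii) the process therefore terminates in finitely many steps; (iii) upon termination the algorithm correctly decides membership in $\Gamma$ and, in the affirmative case, yields a word decomposition.

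First I would analyze a single iteration. After step (1) we have $y_1 = A^{k} X z_0$, obtained from $y_0 = Xz_0$ by a power of the parabolic generator $A$. Since $A \in \Gamma$, the point $y_1$ lies in the $\Gamma$-orbit of $Xz_0$, with $|\Real(y_1)| \leq s/2$. If $y_1 \notin \inter(D_\infty(\Gamma))$, then by Corollary~\ref{cor_Dirichlet_infinity_domain} there is some boundary-generating element $B = \left( \begin{smallmatrix} a & b \\ c & d \end{smallmatrix}\right) \in \Gamma$ with $|cy_1 + d| < 1$. Combining with the standard formula $\Imag(By_1) = \Imag(y_1)/|cy_1+d|^2$ recalled just before the algorithm, I get $\Imag(y_2) = \Imag(By_1) > \Imag(y_1)$, so the imaginary part strictly increases with each pass through step~(4).

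Next I would establish termination. Each $y_j$ produced by the algorithm lies in the $\Gamma$-orbit of $y_0 = Xz_0$ (everything we apply is in $\Gamma$). Apply Lemma~\ref{lem_app_orbits}~(i) to this orbit: only finitely many points of $\Gamma y_0$ have imaginary part at least $\Imag(y_0)$. Since the sequence $(\Imag y_j)_j$ is strictly increasing and stays inside this finite set, the algorithm must halt after finitely many iterations, producing some $y_N \in \inter(D_\infty(\Gamma))$ together with an element $W = B_N A^{k_N} \cdots B_1 A^{k_1} \in \Gamma$ satisfying $y_N = W \cdot (Xz_0)$.

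Finally, I would verify correctness. Because $z_0$ was chosen in $\inter(D_\infty(\Gamma))$ and fundamental domains have disjoint interior translates, any two distinct points of $\inter(D_\infty(\Gamma))$ lie in different $\Gamma$-orbits. Hence $y_N = z_0$ precisely when $WX$ fixes $z_0$, i.e. precisely when $WX = \mathrm{Id}$ in $\PSL_2(\mathbb{R})$, which gives $X = W^{-1}$ expressed as a word in the given generators; if instead $y_N \neq z_0$, then $Xz_0$ and $z_0$ lie in different $\Gamma$-orbits and therefore $X \notin \Gamma$. Steps~(2)--(3) implement exactly this dichotomy.

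The main obstacle in making this rigorous is the effectivity of step~(4): given $y_1$ strictly below some boundary arc, one must algorithmically exhibit a generator $B$ with $|cy_1+d|<1$. This is precisely why the hypothesis asks for generators \emph{in the sense of Corollary~\ref{cor_Dirichlet_infinity_domain}}, i.e. a finite list of matrices $A_1,\dots,A_n$ whose associated isometric circles form the non-vertical boundary of $D_\infty(\Gamma)$; with such a list in hand, one simply checks the finitely many inequalities $|c_i y_1 + d_i| < 1$ and picks any witness. A secondary subtlety is the initial choice of $z_0$: one must pick it generically in $\inter(D_\infty(\Gamma))$ so that $\Gamma_{z_0} = \{\mathrm{Id}\}$, which ensures the equivalence ``$y_N = z_0 \Longleftrightarrow WX = \mathrm{Id}$'' used in step~(3).
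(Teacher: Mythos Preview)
Your proof is correct and follows the same outline as the paper's: strict increase of the imaginary part via $\Imag(Bz)=\Imag(z)/|cz+d|^2$, termination from the finiteness in Lemma~\ref{lem_app_orbits} (you invoke part~(i), the paper invokes~(iii); yours is the more direct reference), and correctness from the fundamental-domain property. One small imprecision: the equivalence ``$y_N=z_0 \Longleftrightarrow WX=\mathrm{Id}$'' does not follow from $\Gamma_{z_0}=\{\mathrm{Id}\}$, since for $X\notin\Gamma$ the element $WX$ lies outside $\Gamma$ and could well fix $z_0$ without being trivial---but this does no damage, because step~(3) checks the matrix identity $X=W^{-1}$ directly rather than relying on that equivalence.
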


\newpage

\subsection{Checking Commensurability} \label{sec_check_commens}
\index{commensurator} \index{Fuchsian group}

If we knew that in genus~$2$ all Veech groups of Teichmüller curves were maximal and if we wanted to check if a curve $\HH / \Gamma$ is a twisted Teichmüller curve then Proposition~\ref{prop_twisted_implies_commensurable} and Lemma~\ref{lem_commensurable_implies_subgroup} make it necessary to find a sufficiently general algorithm which decides if a Fuchsian group $H \subset \SL_2(\mathbb{R})$ is conjugated to a subgroup of another Fuchsian group $G \subset \SL_2(\mathbb{R})$.\\[11pt]
Since the signature of a Fuchsian group does not change under conjugation it is necessary that $G$ contains a subgroup of the same signature as $H$. This is of course a much stronger criterion than just looking at the Euler characteristics. In \cite{Sin70}, D. Singerman gave a criterion which decides whether a given group $G$ contains a subgroup of the same signature as $H$.

\begin{defi} If $\Gamma$ is of signature $(g;m_1,...,m_r;s;t)$  then the integers $m_1,...,m_r$ are called the \textbf{periods} of $\Gamma$. \index{Fuchsian group!signature} \end{defi}

Let $n$ now be a period of $\Gamma' \subset \Gamma$. Then $n$ is by definition the order of an elliptic element $y \in \Gamma'$ and $y$ is a power of a conjugate of one of the elliptic generators $x_j \in \Gamma$ with order $m_j$. We shall then say that $n$ has been induced by $m_j$. Of course, this implies that $n|m_j$. 

\begin{thm} \label{thm_singerman_1970} \textbf{(Singerman, 1970)} Let $\Gamma$ be a Fuchsian group of signature $(g;m_1,...,m_r;s;t)$. Then $\Gamma$ contains a subgroup $\Gamma_1$ of index $N$ with signature $(g'n_{11},n_{12},...,n_{1p_1},...,n_{r1}, ...,n_{rp_r};s';t')$ if and only if
\begin{itemize}
\item[(I)] There exists a finite permutation group $G$ transitive on $N$ points and an epimorphism $\theta : \Gamma \to G$ satisfying the following properties
\begin{itemize}
\item[(i)] The permutation $\theta(x_j)$ has precisely $p_j$ cycles of lengths less than $m_j$, the lengths of these cycles being $m_j/n_{j1},...,m_j/n_{jp_j}$,
\item[(ii)] If we denote the number of cycles in the permutation $\theta(\gamma)$ by $\delta(\gamma)$ then
$$s'=\sum_{k=1}^s \delta(p_k), \ \ \ t'=\sum_{l=1}^t \delta(h_l)$$
where $p_k$ are the parabolic and $h_l$ are the hyperbolic generators of $\Gamma$.
\end{itemize}
\item[(II)] $vol(\Gamma_1)/vol(\Gamma)=N$.
\end{itemize}
\end{thm}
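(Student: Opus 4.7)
The plan is to translate the question about the existence of a subgroup into a question about permutation representations, in the spirit of the standard correspondence
$$\{\text{subgroups of index } N \text{ in } \Gamma\} \longleftrightarrow \{\text{transitive } \Gamma\text{-actions on } N\text{-point sets}\}/\text{conjugation}.$$
For the necessity direction, I would start with a subgroup $\Gamma_1 \leq \Gamma$ of index $N$ and signature as in the statement. Let $G$ denote the image of $\Gamma$ in the permutation group on the coset space $\Gamma/\Gamma_1 \cong \{1,\ldots,N\}$, so that the coset action yields an epimorphism $\theta: \Gamma \to G$ which is transitive. The point is then to read off the signature of $\Gamma_1$ from the cycle structure of the permutations $\theta(x_j)$, $\theta(p_k)$, $\theta(h_l)$. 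The key observation is that if $\theta(x_j)$ has a cycle of length $\ell$ containing the coset $\Gamma_1 g$, then $\Gamma_1$ contains a conjugate $g x_j^\ell g^{-1}$ of an appropriate power of $x_j$; this element is elliptic of order $m_j/\ell$ whenever $\ell<m_j$, and trivial (or rather not contributing a new period) when $\ell=m_j$. This is exactly condition (I)(i). The analogous analysis at parabolic and hyperbolic generators, where no nontrivial power gives a new conjugacy class, shows that each cycle contributes precisely one boundary generator of its own kind, which is condition (I)(ii).

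For the sufficiency direction, I would start from an epimorphism $\theta: \Gamma \to G$ satisfying (I) and define $\Gamma_1$ as the stabilizer of a point; this has index $N$ by transitivity. The same cycle analysis, now read in the opposite direction, shows that the periods of $\Gamma_1$ are precisely the numbers $n_{jk}$ and that the numbers of parabolic and hyperbolic conjugacy classes are $s'$ and $t'$. The only datum not pinned down by (I) is the genus $g'$, which is where hypothesis (II) enters: the Riemann--Hurwitz formula (Theorem~\ref{thm_Riemann_Hurwitz}), combined with the already identified periods and boundary numbers, expresses $g'$ linearly in terms of the ratio $\mathrm{vol}(\Gamma_1)/\mathrm{vol}(\Gamma)$. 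Imposing this ratio to equal $N$ forces $g'$ to take exactly the value appearing in the claimed signature, and conversely the existence of a subgroup of this signature automatically makes the ratio equal $N$. Thus (II) is both necessary and the missing piece needed to fix the genus.

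The main obstacle is the bookkeeping in the cycle analysis for elliptic generators: one must check carefully that a cycle of length $\ell < m_j$ produces exactly one conjugacy class of elliptic elements of order $m_j/\ell$ in $\Gamma_1$ (not several, and not of the wrong order), and that cycles of length $m_j$ produce no new periods at all. This requires showing that two cycles of $\theta(x_j)$ give rise to conjugate elliptic elements in $\Gamma_1$ if and only if they are the same cycle, which in turn rests on the fact that distinct fixed points of conjugates of $x_j$ on $\mathbb{H}$ are inequivalent under $\Gamma$. Once this accounting is done correctly, both directions become a direct translation between the combinatorics of $\theta$ and the signature of $\Gamma_1$, and the Riemann--Hurwitz formula seals the identification of $g'$.
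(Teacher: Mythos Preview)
The paper does not prove this theorem; it is simply quoted as a result of Singerman (\cite{Sin70}) and used as a black box in Appendix~\ref{sec_check_commens}. There is thus no ``paper's own proof'' to compare against.

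That said, your sketch is correct and is essentially Singerman's original argument: the correspondence between index-$N$ subgroups and transitive permutation representations on $N$ points, the cycle analysis of the images of the elliptic, parabolic, and hyperbolic generators to read off the periods and boundary data of $\Gamma_1$, and the use of Riemann--Hurwitz to pin down the genus via condition (II). The one point worth tightening is exactly the one you flag: the claim that distinct cycles of $\theta(x_j)$ yield $\Gamma_1$-inequivalent elliptic conjugacy classes. This is what makes the count in (I)(i) exact rather than just an upper bound, and it is the only place where the geometry of $\HH$ (as opposed to pure group theory) genuinely enters.
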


Nevertheless, there may exist non-conjugated groups of the same signature (see e.g. \cite{SW00}). So Theorem~\ref{thm_singerman_1970} only gives necessary conditions for $H$ and $G$. Hence we really need an algorithm to decide, whether there exists an $M \in SL_2(\mathbb{R})$ with $MHM^{-1} \leq G$. For the rest of the section we will as two assumptions suppose that $G$ is a cofinite Fuchsian group and $H$ (and thus necessarily also $G$) contains elliptic elements. Note that both assumptions are fulfilled for the Veech groups of Teichmüller curves of genus~$2$ and $4$. The idea of the algorithm is that, if $MHM^{-1} \leq G$, then there must exist a fundamental domain $\mathcal{F}_H$ of $H$ and a fundamental domain $\mathcal{F}_G$ of $G$ such that the tessellation of $\mathbb{H}$ by copies of $\mathcal{F}_G$ is a refinement of the tessellation of $\mathbb{H}$ by copies of $M\mathcal{F}_HM^{-1}$. The algorithm now works in the following way:
\begin{itemize}
\item[(1)] Choose an arbitrary elliptic fixed point $x$ of $H$ of period $n_x$. 
\item[(2)] For each conjugacy class of elliptic fixed points of period $m_i$ with $n_x|m_i$ in $G$, choose an arbitrary elliptic fixed point $z_i$ and calculate all matrices $M_{x,z_i}$ which send $x$ to $z_i$.
\end{itemize}
Of course, elliptic fixed points of $H$ must be sent by $M_{x,z_i}$ to elliptic fixed points of $G$ of suitable period. Thus the upper matrices $M_{x,z_i}$ are the only possible candidates for $M$. Note that the choices of the elliptic fixed points $z_i$ and $x$ are really free since choosing another representative than $x$ means multiplying $M_{x,z_i}$ by a matrix in $H$ from the right and choosing another representative than $z_i$ means multiplying $M_{x,z_i}$ by a matrix in $G$ from the left.
\begin{itemize}
\item[(3)] Choose another arbitrary elliptic fixed point $y$ of $H$ of period $n_y$ and calculate $d(x,y)$.
\end{itemize}
For computational reasons it is convenient to choose $y$ with minimal distance $d(x,y)$.
\begin{itemize}
\item[(4)] Calculate parts of the tessellation of $\mathbb{H}$ by copies of $\mathcal{F}_G$ until $B_{d(x,y)}(z_i)$ is completely covered.
\end{itemize}
Note that step (4) can be performed in finite time since $G$ is a cofinite Fuchsian group. Since $M_{x,z_i}$ is an isometry the next step of the algorithm is naturally given by:
 	\begin{itemize}
\item[(5)] For all elliptic fixed points $z_k$ of $G$ of period $m_k$ with $z_k \in \delta B_{d(x,y)}(z_i)$ and $n_y|m_k$ calculate if there exists a matrix $M_{x,z_i}$ with $M_{x,z_i}y=z_k$. If so this matrix is unique and called $M_{x,y,z_i,z_k}$.
\end{itemize}
This yields a complete list of possible candidates $M_{x,y,z_i,z_k}$. For each of these matrices now perform step (6).
\begin{itemize}
\item[(6)] Set $M:=M_{x,y,z_i,z_k}^{-1}$. Then for all generators $u_j$ of $H$ decide whether $Mu_jM^{-1} \in G$. If so (for all $u_j$), then $MHM^{-1} \leq G$. 
\end{itemize}
Step (6) can for example be done by the algorithm given in section \ref{sec_check_elements}.\\[11pt]
In the following example we now show how the algorithm explicitly works:

\begin{exa} Let $H=\left\langle S, \left( \begin{smallmatrix} 1 + 2/9 \sqrt{8} & 2/9 \\ -16/9 & 1 - 2/9 \sqrt{8} \end{smallmatrix} \right) \right\rangle$ and let $G=\SL_2(\ZZ)$. Both groups $H$ and $G$ have only one conjugacy class of elliptic fixed points of order $2$. Thus we may choose $x=z_i=i$ and it is well-known that $M_{x,z_i} = \left( \begin{smallmatrix} c & d \\ -d & c \end{smallmatrix} \right)$ with $c^2+d^2=1$. Another elliptic fixed point of $H$ is 
$$\begin{pmatrix} 1 + 2/9 \sqrt{8} & 2/9 \\ -16/9 & 1 - 2/9 \sqrt{8} \end{pmatrix}i =  \underbrace{\frac{1}{1553} \left( -702 - 220 \sqrt{8} + i (369 + 36 \sqrt{8})\right)}_{=:y}.$$ $d(x,y)=d(i,i+2)$ and $i+2$ is an elliptic fixed point of $G$. Solving the equation $M_{x,z_i} y = i+2$ yields  $$M:=M_{i,y,i,i+2}^{-1}= \begin{pmatrix} 1/3 & -1/3\sqrt{8} \\ 1/3\sqrt{8} & 1/3 \end{pmatrix}.$$ We have  $$M\begin{pmatrix} 0 & -1 \\ 1 & 0 \end{pmatrix}M^{-1}=\begin{pmatrix} 0 & -1 \\ 1 & 0 \end{pmatrix}$$ and $$M \begin{pmatrix} 1 + 2/9 \sqrt{8} & 2/9 \\ -16/9 & 1 - 2/9 \sqrt{8} \end{pmatrix} M^{-1} = \begin{pmatrix} 1 & 2 \\ 0 & 1 \end{pmatrix}$$ and therefore $MHM^{-1} \leq G$. \end{exa}

\newpage

\section{Tables} \label{appendix_tables}

Tables with some numerical data for the volume of diagonal twisted Teichmüller curves in the cases $D=13$ and $D=17$ can be found on the following pages. As we have seen in Chapter~\ref{chapter_calculations} diagonal twisted Teichmüller curves carry the  information about the volume of almost all twisted Teichmüller curves in the case $h_D=1$. Therefore, diagonal twisted Teichmüller curves are especially interesting from a numerical point of view. More precisely, the tables on the next two pages contain the indexes $[\SL(L_D) : \SL(L_D) \cap \Gamma^{D}_0(m) \cap \Gamma^{D,0}(n)]$ for many $m, n \in \OD$. From this data one can calculate the volume of the corresponding twisted Teichmüller curves (Theorem~\ref{thm_summarize_euler_II}). All calculations were done with PARI/GP.

\newpage 

\begin{landscape}

\begin{tabular}{|p{0.95cm}||p{0.95cm}|p{0.95cm}|p{0.95cm}|p{0.95cm}|p{0.95cm}|p{0.95cm}|p{0.95cm}|p{0.95cm}|p{0.95cm}|p{0.95cm}|p{0.95cm}|p{0.95cm}|p{0.95cm}|p{0.95cm}|p{0.95cm}|p{0.95cm}|}
\hline
$n \; \backslash \; m$ & 1 & 2 & 3 & 4 & \textit{w} & \textit{w}+1 & \textit{w}+2 & \textit{w}+3 & 2\textit{w} & 2\textit{w}+1 & 2\textit{w}+2 & 2\textit{w}+3 & 3\textit{w} & 3\textit{w}+1 & 3\textit{w}+2 & 3\textit{w}+3 \\
\hline \hline
1 & \textbf{1} & \textbf{5} & \textbf{16} & \textbf{20} & \textbf{4} & \textbf{1} & \textbf{4} & \textbf{12} & \textbf{20} & \textbf{12} & \textbf{5} & \textbf{4} & \textbf{48} & \textbf{24} & \textbf{18} & \textbf{16}\\
\hline
2 & \textbf{5} & - & \textbf{80} & - & \textbf{20} & \textbf{5} & \textbf{20} & \textbf{60} & - & \textbf{60} & - & \textbf{20} & \textbf{240} & \textbf{120} & \textbf{90} & \textbf{80}\\
\hline
3 & \textbf{16} & \textbf{80} & - & \textbf{320} & - & \textbf{16} & - & - & - & - & \textbf{80} & - & - & \textbf{384} &  \textbf{288} & -\\
\hline
4 & 	\textbf{20} &	- &	\textbf{320} &	- &	\textbf{80} &	\textbf{20} &	\textbf{80} &	\textbf{240} &	- &	\textbf{240} &	- &	\textbf{80} &	\textbf{960} &	\textbf{480} &	\textbf{360} &	\textbf{320}  \\
\hline
\textit{w} & 	\textbf{4} &	\textbf{20} &	- &	\textbf{80} &	- &	\textbf{4} &	\textbf{16} & - &	- &	\textbf{48} &	\textbf{20} &	- &	- &	\textbf{96} &	\textbf{72} &	- \\
  
\hline
\textit{w}+1 & 	\textbf{1} & 	\textbf{5} & 	\textbf{16} & \textbf{20} & \textbf{4} & 	\textbf{1} & 	\textbf{4}  & 	\textbf{12} &	\textbf{20} &	\textbf{12} &	\textbf{5} &	\textbf{4} &	\textbf{48} &	\textbf{24} &	\textbf{18} &	\textbf{16} \\
\hline
\textit{w}+2 & 	\textbf{4} &	\textbf{20} &	- &	\textbf{80} &	\textbf{16} &	\textbf{4} &	- &	\textbf{48} &	\textbf{80} &	- &	\textbf{20} &	\textbf{16} &	- &	\textbf{96} &	\textbf{72} &	- \\
\hline
\textit{w}+3 &	\textbf{12} &	\textbf{60} &	- &	\textbf{240} &	- &	\textbf{12} &	\textbf{48} &	- &	- &	\textbf{144} &	\textbf{60} &	- &	- &	\textbf{288} &	\textbf{216} &	- \\
\hline
2\textit{w} & 	\textbf{20} &	- &	- &	- &	- &	\textbf{20} &	\textbf{80} &	- &	- &	\textbf{240} & - &	- &	- &	\textbf{480} &	\textbf{360} &	- \\
\hline
2\textit{w}+1 &	\textbf{12} &	\textbf{60} &	- &	\textbf{240} &	\textbf{48} &	\textbf{12} &	- &	\textbf{144} &	\textbf{240} &	- &	\textbf{60} &	\textbf{48} &	\textbf{48} &	\textbf{288} &	\textbf{216} &	- \\
\hline
2\textit{w}+2 & \textbf{5} &	- &	\textbf{80} &	- &	\textbf{20} &	\textbf{5} &	\textbf{20} &	\textbf{60} &	- &	\textbf{60} &	- &	\textbf{20} &	\textbf{240} &	\textbf{120} &	\textbf{90} &	\textbf{80} \\
\hline
2\textit{w}+3 &	\textbf{4} &	\textbf{20} & - &	\textbf{80} &	- &	\textbf{4} &	\textbf{16} &	- &	- &	\textbf{48} &	\textbf{20} &	- &	- &	\textbf{96} &	\textbf{72} &	- \\
\hline 
3\textit{w} &	\textbf{48} &	\textbf{240} &	- &	\textbf{960} & - &	\textbf{48} &	- &	- &	- &	\textbf{48} &	\textbf{240} &	- &	- &	\textbf{1152} &	\textbf{864} &	- \\
\hline
3\textit{w}+1 &	\textbf{24} &	\textbf{120} &	\textbf{384} &	\textbf{480} &	\textbf{96} &	\textbf{24} &	\textbf{96} &	\textbf{288} &	\textbf{480} &	\textbf{288} &	\textbf{120} &	\textbf{96} &	\textbf{1152} &	- &	\textbf{432} &	\textbf{384} \\
\hline
3\textit{w}+2 &	\textbf{18} &	\textbf{90} &	\textbf{288} &	\textbf{360} &	\textbf{72} &	\textbf{18} &	\textbf{72} &	\textbf{216} &	\textbf{360} &	\textbf{216} &	\textbf{90} &	\textbf{72} &	\textbf{864} &	\textbf{432} &	- &	\textbf{288} \\
\hline
3\textit{w}+3 &	\textbf{16} &	\textbf{80} &	- &	\textbf{320} &	- &	\textbf{16} &	- &	- &	- &	- &	\textbf{80} &	- &	- &	\textbf{384} &	\textbf{288} &	- \\
\hline
\end{tabular}

\begin{center} 
\begin{tabular}{ll} Table 1: & Index of $\SL(L_{13}) \cap \Gamma^{13}_0(m) \cap \Gamma^{13,0}(n)$ in $\SL(L_{13})$ if $(m,n)=1$.\\
& It is equal to the index of $\Gamma^{13}_0(m) \cap \Gamma^{13,0}(n)$ in $\SL(\mathcal{O}_{13})$ if $(n,m)=1$.
\end{tabular}
\end{center}
\end{landscape}

\newpage

\begin{landscape}

\begin{tabular}{|p{0.95cm}||p{0.95cm}|p{0.95cm}|p{0.95cm}|p{0.95cm}|p{0.95cm}|p{0.95cm}|p{0.95cm}|p{0.95cm}|p{0.95cm}|p{0.95cm}|p{0.95cm}|p{0.95cm}|p{0.95cm}|p{0.95cm}|p{0.95cm}|p{0.95cm}|}
\hline
$n \; \backslash \; m$ & 1 & 2 & 3 & 4 & \textit{w} & \textit{w}+1 & \textit{w}+2 & \textit{w}+3 & 2\textit{w} & 2\textit{w}+1 & 2\textit{w}+2 & 2\textit{w}+3 & 3\textit{w} & 3\textit{w}+1 & 3\textit{w}+2 & 3\textit{w}+3 \\
\hline \hline
1 &	\textbf{1} &	6 &	\textbf{10} &	24 &	4 &	\textbf{3} &	2 &	\textbf{12} &	24 &	\textbf{14} &	12 &	\textbf{1} &	40 &	\textbf{48} &	28 &	\textbf{30} \\
\hline
2 &	6 &	- &	60 &	- &	- &	- &	- &	- &	- &	84 &	- &	6 &	- &	- &	- &	- \\
\hline
3 &	\textbf{10} &	60 &	- &	240 &	40 &	\textbf{30} &	20 &	\textbf{120} &	240 &	\textbf{140} &	120 &	\textbf{10} &	- &	\textbf{480} &	280 &	- \\
\hline
4 &	24 &	- &	240 &	- &	- &	- &	- &	- &	- &	336 & - &	24 &	- &	- &	- &	- \\
\hline
\textit{w}&	4 &	- &	40 &	- &	- &	12 &	- &	48 &	- &	56 &	- &	4 &	- &	192 &	- &	120 \\
\hline
\textit{w}+1 &	\textbf{3} &	- &	\textbf{30} &	- &	12 &	- &	6 &	- & -&	\textbf{42} &	- &	\textbf{3} &	120 &	- &	84 &	- \\
\hline
\textit{w}+2 &	2 &	- &	20 &	- &	- &	6 &	- &	24 &	- &	28 &	- &	2 &	- &	96 &	- &	60 \\
\hline
\textit{w}+3 &	\textbf{12} &	- &	\textbf{120} &	- &	48 &	- &	24 &	- &	- &	\textbf{168} &	- &	\textbf{12} &	480 &	- &	336 &	- \\
\hline
2\textit{w} &	24 &	- &	240 &	- &	- &	- &	- &	- &	- &	336 &	- &	24 &	- &	- &	- &	- \\
\hline
2\textit{w}+1 &	\textbf{14} &	84 &	\textbf{140} &	336 &	56 &	\textbf{42} &	28 &	\textbf{168} & 336 &	- &	168 &	\textbf{14} &	560 &	\textbf{672} &	392 &	\textbf{420} \\
\hline
2\textit{w}+2 &	12 &	- &	120 &	- &	- &	- &	- &	- &	- &	168 &	- &	12 &	- &	- &	- &	- \\
\hline
2\textit{w}+3 &	\textbf{1} &	6 &	\textbf{10} &	24 &	4 &	\textbf{3} &	2 &	\textbf{12} &	24 &	\textbf{14} &	12 &	\textbf{1} &	40 &	\textbf{48} &	28 &	\textbf{30} \\
\hline 
3\textit{w} &	40 &	- &	- &	- &	- &	120 &	- &	480 &	- &	560 &	- &	40 &	- &	1920 & - &	- \\
\hline
3\textit{w}+1 &	\textbf{48} &	- &	\textbf{480} &	- &	192 &	- &	96 &	- &	- &	\textbf{672} &	- &	\textbf{48} &	1920 &	- &	1344 &	- \\
\hline
3\textit{w}+2 &	28 &	- &	280 &	- &	- &	84 &	- &	336 &	- &	392 &	- &	28 &	- &	1344 &	- &	840 \\
\hline
3\textit{w}+3 &	\textbf{30} &	- &	- &	- &	120 &	- &	60 &	- &	- &	\textbf{420} &	- &	\textbf{30} &	- &	- &	840 &	- \\
\hline\end{tabular}
\begin{center} 
\begin{tabular}{ll} Table 2: & Index of $\SL(L_{17}^1) \cap \Gamma^{17}_0(m) \cap \Gamma^{17,0}(n)$ in $\SL(L_{17}^1)$.\\
& The index is either the same (bold face) or $2/3$ as large (normal face)\\ & as the index of  $\Gamma^{17}_0(m) \cap \Gamma^{17,0}(n)$ in $\SL(\mathcal{O}_{17})$.
\end{tabular}
\end{center}
\end{landscape}

\newpage
\section*{List of Symbols}
\addcontentsline{toc}{section}{List of Symbols}
\begin{tabular}{ll}
$K$ & Real quadratic number field $\QQ(\sqrt{D})$, \pageref{glo_K_real_quadratic}\\
$\textrm{N}(\cdot)$ & Norm of an element in $K$, \pageref{glo_N}\\
$\mathcal{N}(\cdot)$ & Absolute value of the norm of an \\&element in $K$, \pageref{glo_N}\\
$\tr(\cdot)$ & Trace of an element in $K$, \pageref{glo_tr}\\
$\OD$ & Ring of integers in $K$, \pageref{glo_OD}\\
	& resp. real quadratic order, \pageref{glo_ODgen}\\
$w$ & Second basis element of $\OD$, \pageref{glo_w}, \pageref{glo_ODgen}\\
$\OD^*$ & Units in $\OD$, \pageref{glo_OD*}\\
$\epsilon$ & Fundamental unit, \pageref{glo_epsilon}\\
$\OD^\vee$ & Inverse different, \pageref{glo_ODvee}\\
$h_D$ & Class number of $\OD$, \pageref{glo_hK}\\
$(a)$ & Fractional ideal generated by $a \in K^*$, \pageref{glo_(a)}\\
$\mathfrak{p}$ & Prime ideal, \pageref{glo_matp}\\
$\left( \frac{D}{p} \right)$ & Legendre symbol, \pageref{glo_Legendre}\\
$\HH$ & Complex upper half plane, \pageref{glo_H}\\
$\mathbb{P}^1(\CC)$ & Projective plane, \pageref{glo_P1}\\
$\overline{\HH}$ & Closure of $\HH$, \pageref{glo_Hbar}\\
$\mu(\cdot)$ & Hyperbolic area, \pageref{glo_hyp}\\
$\mathbb{D}$ & Unit disc, \pageref{glo_D}\\
$\SL_2(\cdot)$ & Special linear group, \pageref{glo_SL2}\\
$\PSL_2(\RR)$ & $\SL_2(\RR)/{\pm \Id}$, \pageref{glo_PSL2}\\
$\Gamma$ & Fuchsian group, \pageref{glo_Gamma}\\
$\tr(\cdot)$ & Trace of a matrix, \pageref{glo_trace}\\
$\mathcal{F}$ & Fundamental domain, \pageref{glo_F}\\
$\Gamma^M$ & $M^{-1}\Gamma M$, \pageref{glo_conj}\\
$(g;m_1,...m_r;s)$ & Signature of a Fuchsian group, \pageref{glo_signature}\\
$\chi(\cdot)$ & Euler characteristic, \pageref{glo_eul}\\
$[A:B]$ & Index of an subgroup $B$ in $A$, \pageref{glo_Index}\\
$\Comm_G(A)$ & Commensurator of $A$ in $G$, \pageref{glo_Comm}\\
$\Gamma^D(n)$ & Principal congruence subgroup, \pageref{glo_GammaD}\\
$\Gamma^D_0(n)$ & Hecke congruence subgroup, \pageref{glo_GammaD0}\\
$\Gamma^{D,0}(n)$ & Hecke congruence subgroup, \pageref{glo_GammaD0}\\
$\Gamma^{D}(m,n)$ & $\Gamma^D_0(m) \cap \Gamma^{D,0}(n)$, \pageref{glo_GammaD0}\\
$(X,\omega)$ & Flat surface, \pageref{glo_Xw}\\
$\SL(X,\omega)$ & Veech group of a flat surface, \pageref{glo_SLXw}\\
$\mathcal{T}(S), \mathcal{T}_g$ & Teichmüller space, \pageref{glo_TS}\\
$\Mod(S)$ & Mapping class group, \pageref{glo_ModS}\\
$\Mg$ & Moduli space of compact Riemann surfaces\\ & of genus g, \pageref{glo_Mg}\\
$\overline{\Mg}$ & Deligne-Mumford compactification of $\Mg$, \pageref{glo_Mgbar}\\
\end{tabular}

\newpage

\begin{tabular}{ll}
$\Omega(X)$ & Nonzero holomorphic $1-$forms on $X$, \pageref{glo_Omx}\\
$\Omega\Mg$ & Set of flat surfaces, \pageref{glo_OmMg}\\
$\Omega\Mg(k_1,...,k_n)$ & Stratum of $\Omega\Mg$, \pageref{glo_strat}\\
$H^i(X,S)$ & $i$-th Cohomology of $X$ with coefficients in $S$, \pageref{glo_coh}\\
$H_i(X,S)$ & $i$-th Homology of $X$ with coefficients in $S$, \pageref{glo_hom}\\
$\epsilon(X,\omega)$ & Spin invariant, \pageref{glo_spin}\\
$\Omega\Mg^{\textrm{hyp}}(\cdot), \Omega\Mg^{\textrm{odd}}(\cdot),$ & Connected components of $\Omega\Mg(\cdot)$, \pageref{thm_components_of_strata}\\
$\Omega\Mg^{\textrm{even}}(\cdot)$\\
$\Lambda$ & Lattice, \pageref{glo_Lambda}\\
$c_1(L)$ & Chern class of the line bundle $L$, \pageref{glo_c1}\\
$\Pi$ & Period matrix, \pageref{glo_Pi}\\
$\Jac(X)$ & Jacobian of $X$, \pageref{glo_Jac}\\
$\End(X)$ & Endomorphisms of $X$, \pageref{glo_End}\\
$\rho$ & Real multiplication, \pageref{glo_End}\\
$\HH_g$ & Siegel upper half space, \pageref{glo_Hg}\\
$\mathcal{A}_g^D$ & Moduli space of Abelian varieties\\ & of polarization $D$, \pageref{glo_Agd}\\
$\mathcal{A}_g$ & Moduli space of principally polarized	\\ & Abelian varieties, \pageref{glo_Ag}\\
$\SL(\OD \oplus \mathfrak{a})$ & Hilbert modular group with respect to $\mathfrak{a}$, \pageref{glo_HMG}\\
$X_D$ & Hilbert modular surface, \pageref{glo_XD}\\
$G(M,V)$ & Type of a cusp, \pageref{glo_type}\\
$\zeta_K$ & Zeta-function of $K$, \pageref{glo_zeta}\\
$\sigma_1(a)$ & Sum of divisors of $a \in \ZZ$, \pageref{glo_sigma}\\
$\GL_2^+(K)$ & Group of matrices with totally \\ &positive determinant, \pageref{glo_gl2+}\\ $M^{\sigma}$ & Galois conjugate of a Matrix in $\GL_2^+(K)$, \pageref{glo_Msigma}\\
$T_N$ & Curve on $X_D$, \pageref{glo_TN}\\
$P_D$ & Reducible locus in $X_D$, \pageref{glo_PD}\\
$k_W(\cdot,\cdot)$ & Kobayashi metric, \pageref{glo_KW}\\
$g(X)$ & Genus of Riemann surface $X$, \pageref{glo_g(X)}\\
$O_2(\RR)$ & Orthogonal group, \pageref{glo_O2}\\
$P(a,b)$ & L-shaped polygon, \pageref{glo_billiard}\\
$\SL(L_D^1)$ & Veech group of an odd spin L-shaped \\ & polygon of discriminant $D$, \pageref{glo_SLLD1}\\
$C_{L,D}^\epsilon$ & Teichmüller curve in $\mathcal{M}_2$\\ & with discriminant $D$ and spin $\epsilon$, \pageref{glo_CLD}\\
$\SL(L_D^0)$ & Veech group of an even spin L-shaped\\ & polygon of discriminant $D$, \pageref{glo_SLLD0}\\
$\SL(L_D)$ & Veech group of an arbitrary  L-shaped\\ & polygon of discriminant $D$, \pageref{glo_SLLD}\\
$\Phi(z)=(z,\varphi(z))$ & Graph of the Teichmüller curve, \pageref{glo_Phi}\\
\end{tabular}

\newpage

\begin{tabular}{ll}
$T$ & $\begin{pmatrix} 1 & w \\ 0 & 1 \end{pmatrix}$ or $\begin{pmatrix} 1 & w-1 \\ 0 & 1 \end{pmatrix}$ or $\begin{pmatrix} 1 & w+1 \\ 0 & 1 \end{pmatrix}$, \pageref{glo_T}, \pageref{glo_T2}, \pageref{glo_T3}\\
$S$ & $\begin{pmatrix} 0 & -1 \\ 1 & 0 \end{pmatrix}$, \pageref{glo_S}\\
$Z$ & $\begin{pmatrix} 1 & 0 \\ w & 1 \end{pmatrix}$ or $\begin{pmatrix} 1 & 0 \\ w+1 & 1 \end{pmatrix}$, \pageref{glo_Z}, \pageref{glo_Z2}, \pageref{glo_Z3}\\
$\Stab(\Phi)$ & Stabilizer of the graph of a Teichmüller curve, \pageref{glo_stabphi}\\
$C_M$ & Twisted Teichmüller curve, \pageref{glo_C_M}\\
$\SL_M(L_D)$ & Stabilizer of the graph of the \\&twisted Teichmüller curve, \pageref{glo_SLMLD}\\
$\SL_2(\OD,M)$ & $\SL_2(\mathcal{O_D}) \cap M^{-1}\SL_2(\OD)M$, \pageref{glo_XDM}\\
$X_D(M)$ & Level covering of $X_D$, \pageref{glo_XDM}\\
$\SL(L_D,M)$ & $\SL(L_D) \cap M\SL_2(\OD)M^{-1}$, \pageref{glo_SL(L_D,M)}\\
$C(M)$ & $\HH / \SL(L_D,M)$, \pageref{glo_C(M)}\\
$\SL_M(L_D,M)$ & $\SL_M(L_D) \cap M\SL_2(\OD)M^{-1}$, \pageref{glo_SL_M(L_D,M)}\\
$C_M(M)$ & $\HH / \SL_M(L_D,M)$, \pageref{glo_C_M(M)}\\
$\SL^M(L_D)$ & $\SL_M(L_D)^M$, \pageref{glo_SLMLD2}\\
$\SL^M(L_D,M)$ & $\SL_M(L_D,M)^M$, \pageref{glo_SLMLD2}\\
$C^M(M)$ & $\HH / \SL^M(L_D,M)$, \pageref{glo_SLMLD2}\\
$\mathfrak{p}_2$ & Common prime ideal divisor of $(2)$ and $(w)$, \pageref{glo_P2}\\
$\SL_2^{tr}(K)$ & $\left\{x \in \SL_2(K) \mid \tr(x) \in \OD \right\}$, \pageref{glo_SL2tr}\\
$\eta^+$ & Upper right entry of $T$, \pageref{glo_etas}\\
$\eta^-$ & Lower left entry of $Z$, \pageref{glo_etas}\\
$\eta^*$ & Least common multiple of $\eta^+$ and $\eta^-$, \pageref{glo_etas}\\
$\St_{\SL_2(\RR)^2}(C)$ & Twisting stabilizer of Teichmüller curve $C$ in $\SL_2(\RR)^2$, \pageref{glo_St}\\
$\St_{\GL_2^+(K)}(C)$ & Twisting stabilizer of Teichmüller curve $C$ in $\GL_2^+(K)$, \pageref{glo_St}\\
$\mathbb{P}^1(\cdot)$ & Projective space, \pageref{glo_P12}\\
$\Omega(X)^+$ & Even Abelian differentials, \pageref{glo_Om+-}\\
$\Omega(X)^-$ & Odd Abelian differentials, \pageref{glo_Om+-}\\
$\Prym(X',\rho)$ & Prym variety of $(X',\rho)$, \pageref{glo_Prym}\\
$\Omega E_D^g$ & Prym eigenspace, \pageref{glo_Wei}\\
$\Omega W_D^g$ & Weierstrass locus, \pageref{glo_Wei}\\
$(X,\mu)$ & Probability space, \pageref{glo_Xmu}\\
$L^1(X,\mu)$ & Space of integrable, measurable functions \\
& $f: X \to \RR$ with respect to $\mu$, \pageref{glo_Xmu}\\
$\lambda_i$ & $i$-th Lyapunov exponent, \pageref{glo_lambdanox}\\
$l_i$ & multiplicity of the $i$-th Lyapunov exponent, \pageref{glo_lambdanox}\\
$T_z\mathbb{H}$ & Tangent plane of $\HH$ at $z$, \pageref{glo_tangent_loc}\\
$T^1\HH$ & Tangent space of $\HH$, \pageref{glo_tangent}\\
$a_t$ & Geodesic flow, \pageref{glo_geo_flow}\\
$g_t$ & Teichmüller flow, \pageref{glo_teich_flow}\\
\end{tabular}

\newpage

\begin{tabular}{ll}
$\Omega\Mg^{(1)}$ & Flat surfaces with renormalized area, \pageref{glo_OmMg1}\\
$d\nu_1$ & Masur-Veech-measure, \pageref{glo_MVmeasure}\\
$\mathcal{Q}(d_1,...,d_n)$ & Stratum of meromorphic differentials, \pageref{glo_stratq}\\
\end{tabular}

\newpage

\addcontentsline{toc}{section}{Index}
\printindex

\newpage

\textsc{Goethe-Universtität Frankfurt, Institut für Mathematik, Robert-Mayer-Str. 6-8, D-60325 Frankfurt (Main)}\\
\textit{E-mail address:} \texttt{weiss@math.uni-frankfurt.de}

\end{document}